\newcommand{\inv}{\mathrm {inv}}
\newtheorem{theorem}{Theorem}[section]
\newtheorem{lemma}[theorem]{Lemma}
\newtheorem{corollary}[theorem]{Corollary}
\newtheorem{proposition}[theorem]{Proposition}
\newtheorem{remark}[theorem]{Remark}
\newtheorem{definition}[theorem]{Definition}
\newtheorem{assumption}[theorem]{Assumption}
\newtheorem{example}[theorem]{Example}
\numberwithin{equation}{section}
\newcommand{\conv}{\alpha}
\newcommand{\todo}[1]{\ifthenelse{\isempty{#1}}{\textcolor{red}{\textbf{TODO}}}{\textcolor{red}{\textbf{TODO: #1}}}}
\newcommand{\seqi}[1]{c_{#1}}
\newcommand{\seqii}[1]{d_{#1}}
\newcommand{\bPhi}{{{\boldsymbol{\Phi}}}}
\newcommand{\bPsi}{{{\boldsymbol{\Psi}}}}
\newcommand{\wk}[2][]{\ifthenelse{\equal{#1}{}}{\omega_{#2}}{\omega_{#2}^{#1}}}
\newcommand{\twk}[2][]{\ifthenelse{\equal{#1}{}}{\tilde \omega_{#2}}{\tilde \omega_{#2}^{#1}}}
\newcommand{\SW}{\mathfrak{W}}
\newcommand{\sw}[2][]{\ifthenelse{\equal{#1}{}}{\mathfrak{w}_{#2}}{\mathfrak{w}_{#2}^{#1}}}
\newcommand{\lev}{l}
\newcommand{\blev}{\mathbf{l}}
\newcommand{\KW}{K_{\mathfrak{W}}}
\newcommand{\work}{\mathrm{work}}
\newcommand{\pts}{{\rm pts}}
\newcommand{\ext}{{\rm ext}}
\newcommand{\norm}[2][]{\| #2 \|_{#1}}
\newcommand{\normc}[2][]{\left\| #2 \right\|_{#1}}
\newcommand{\normk}[2][]{\bigg\| #2 \bigg\|_{#1}}
\newcommand{\set}[2]{\{#1\,:\,#2\}}
\newcommand{\eps}{\varepsilon}
\newcommand{\cA}{\mathcal A} 
\newcommand{\cB}{\mathcal B} 
\newcommand{\cC}{\mathcal C} 
\newcommand{\cS}{\mathcal S} 
\newcommand{\cE}{\mathcal E} 
\newcommand{\cF}{\mathcal F} 
\newcommand{\cT}{\mathcal T} 
\newcommand{\CF}{\mathcal F}
\newcommand{\cL}{\mathcal L} 
\newcommand{\cM}{\mathcal M} 
\newcommand{\Qq}{\mathcal Q}
\newcommand{\IP}{\mathbb{P}}
\newcommand{\R}{\mathbb{R}}
\newcommand{\IS}{\mathbb{S}} 
\newcommand{\E}{\mathbb{E}}
\newcommand{\N}{\mathbb{N}}
\newcommand{\C}{\mathbb{C}}
\newcommand{\VI}{\mathbf{I}}
\newcommand{\VIml}{\mathbf{I}^{\rm ML}}
\newcommand{\VQml}{\mathbf{Q}^{\rm ML}}
\newcommand{\VQ}{\mathbf{Q}}
\newcommand{\bnul}{{\boldsymbol{0}}}
\newcommand{\dd}{\,\mathrm{d}}
\newcommand{\essinf}{\operatorname{ess\,inf}}
\newcommand{\esssup}{\operatorname{ess\,sup}}
\newcommand{\dup}[2]{\left\langle #1, #2\right\rangle}
\newcommand{\obs}{\mathfrak{d}}
\newcommand{\data}{E}
\newcommand{\ttA}{{\tt A}}
\newcommand{\ttE}{{\tt E}}
\newcommand{\bb}{{\boldsymbol{b}}}
\newcommand{\bc}{{\boldsymbol{c}}}
\newcommand{\bd}{{\boldsymbol{d}}}
\newcommand{\bee}{{\boldsymbol{e}}}
\newcommand{\be}{{\boldsymbol{e}}}
\newcommand{\bk}{{\boldsymbol{k}}}
\newcommand{\bh}{{\boldsymbol{h}}}
\newcommand{\bm}{{\boldsymbol{m}}}
\newcommand{\bn}{{\boldsymbol{n}}}
\newcommand{\bq}{{\boldsymbol{q}}}
\newcommand{\bu}{{\boldsymbol{u}}}
\newcommand{\bv}{{\boldsymbol{v}}}
\newcommand{\boldf}{{\boldsymbol{f}}}
\newcommand{\bs}{{\boldsymbol{s}}}
\newcommand{\bx}{{\boldsymbol{x}}}
\newcommand{\by}{{\boldsymbol{y}}}
\newcommand{\bY}{{\boldsymbol{Y}}}
\newcommand{\bZ}{{\boldsymbol{Z}}}
\newcommand{\bz}{{\boldsymbol{z}}}
\newcommand{\bI}{{\boldsymbol{I}}}
\newcommand{\bL}{{\boldsymbol{L}}}
\newcommand{\bM}{{\boldsymbol{M}}}
\newcommand{\brho}{{\boldsymbol{\rho}}}
\newcommand{\bxi}{{\boldsymbol{\xi}}}
\newcommand{\bnu}{{\boldsymbol{\nu}}}
\newcommand{\bmu}{{\boldsymbol{\mu}}}
\newcommand{\beeta}{{\boldsymbol{\eta}}}
\newcommand{\bvarrho}{{\boldsymbol{\varrho}}}
\newcommand{\bbeta}{{\boldsymbol{\beta}}}
\newcommand{\balpha}{{\boldsymbol{\alpha}}}
\newcommand{\bgamma}{{\boldsymbol{\gamma}}}
\newcommand{\bGamma}{{\boldsymbol{\Gamma}}}
\newcommand{\blambda}{{\boldsymbol{\lambda}}}
\newcommand{\bsigma}{{\boldsymbol{\sigma}}}
\newcommand{\bepsilon}{{\boldsymbol{\epsilon}}}
\newcommand{\bobs}{{\boldsymbol{\obs}}}
\newcommand{\bcalO}{{\boldsymbol{\calO}}}
\newcommand{\boldeta}{{\boldsymbol{\eta}}}
\newcommand{\rev}{\operatorname{ev}} 
\newcommand{\rd}{\dd} 
\newcommand*{\bigtimes}{\mathop{\raisebox{-.5ex}{\hbox{\huge{$\times$}}}}}
\def\mfu{{\mathfrak u }}
\def\mfm{{\mathfrak m }}
\def\EE{\mathbb E}
\def\ZZ{{\mathbb Z}}
\def\RR{{\mathbb R}}
\def\NN{{\mathbb N}}
\def\CC{{\mathbb C}}
\def\NN{{\mathbb N}}
\def\RR{{\mathbb R}}
\def\Vv{{\mathbb P}}
\def\calA{{\mathcal A}}
\def\calB{{\mathcal B}}
\def\calF{{\mathcal F}}
\def\calH{{\mathcal H}}
\def\calN{{\mathcal N}}
\def\calO{{\mathcal O}}
\def\Vv{{\mathcal P}}
\def\Kk{{\mathcal K}}
\def\Ff{{\mathcal F}}
\def\Bb{{\mathcal B}}
\def\TT{{\mathbb T}}
\def\TTd{{\mathbb T}^d}
\def\RRi{{\mathbb R}^\infty}
\def\RRi{{\mathbb R}^\infty}
\def\Bb{{\mathcal B}}
\def\Cc{{\mathcal C}}
\def\Ee{{\mathcal E}}
\def\Ff{{\mathcal F}}
\def\Gg{{\mathcal G}}
\def\Ii{{\mathcal I}}
\def\Kk{{\mathcal K}}
\def\Oo{{\mathcal O}}
\def\Ss{{\mathcal S}}
\def\Uu{{\mathcal U}}
\def\Vv{{\mathcal V}}
\def\Ww{{\mathcal W}}
\def\CC{{\mathbb C}}
\def\ZZ{{\mathbb Z}}
\def\NN{{\mathbb N}}
\def\RR{{\mathbb R}}
\def\FF{{\mathcal F}}
\def\TT{{\mathbb T}}
\def\TTd{{\mathbb T}^d}
\def\supp{\operatorname{supp}}
\def\div{\operatorname{div}}
\def\im{{\rm i}}
\newcommand{\domain}{{D}}
\newcommand{\D}{{\domain}}
\def\Hn{H^s}
\def\Win{W^s_\infty}
 \newcommand{\black}[1]{\textcolor{\black}{#1}}
\newcommand{\KL}{Karhunen-Lo\`{e}ve }
\newcommand{\LC}{L\'{e}vy-Cieselsky }
\date{\today}
\begin{document}

\title{Analyticity and sparsity in uncertainty quantification  \\
  for PDEs with Gaussian random field inputs}

\author[a]{Dinh D\~ung} \affil[a]{Information Technology Institute,
  Vietnam National University, Hanoi
  \protect\\
  144 Xuan Thuy, Cau Giay, Hanoi, Vietnam
  \protect\\
  Email: dinhzung@gmail.com}

\author[b]{Van Kien Nguyen} \affil[b]{Department of Mathematical Analysis,
  University of Transport and Communications \protect\\ No.3 Cau Giay
  Street, Lang Thuong Ward, Dong Da District, Hanoi, Vietnam
  \protect\\
  Email: kiennv@utc.edu.vn}

\author[c]{Christoph Schwab} \affil[c]{Seminar for Applied
  Mathematics, ETH Z\"urich, 8092 Z\"urich, Switzerland \protect\\
  Email: schwab@math.ethz.ch}

\author[d]{Jakob Zech} \affil[d]{Interdisziplin\"ares Zentrum f\"ur
  wissenschaftliches Rechnen,
  \protect\\
  Universit\"at Heidelberg, 69120 Heidelberg, Germany
  \protect\\
  Email: jakob.zech@uni-heidelberg.de}

\date{\today}

\maketitle

\begin{abstract}
  We establish sparsity and summability results for coefficient
  sequences of Wiener-Hermite polynomial chaos expansions of
  countably-parametric solutions of linear elliptic and parabolic
  divergence-form partial differential equations with Gaussian random
  field inputs.

  The novel proof technique developed here is based on analytic
  continuation of parametric solutions into the complex domain.  It
  differs from previous works that used bootstrap arguments and
  induction on the differentiation order of solution derivatives with
  respect to the parameters.  The present holomorphy-based argument
  allows a unified, ``differentiation-free'' proof of sparsity
  (expressed in terms of $\ell^p$-summability or weighted
    $\ell^2$-summability) of sequences of Wiener-Hermite coefficients
  in polynomial chaos expansions in various scales of function spaces.
  The analysis also implies corresponding analyticity and sparsity
  results for posterior densities in Bayesian inverse problems subject
  to Gaussian priors on uncertain inputs from function spaces.

  Our results furthermore yield dimension-independent convergence
  rates of various \emph{constructive} high-dimensional deterministic
  numerical approximation schemes such as single-level and multi-level
  versions of Hermite-Smolyak anisotropic sparse-grid interpolation
  and quadrature in both forward and inverse computational uncertainty
  quantification.
\end{abstract}

\newpage
\
\newpage
\tableofcontents
\newpage

\section{Introduction}
\label{sec:Intro}
Gaussian random fields (GRFs for short) play a fundamental role in the
modelling of spatio-temporal phenomena subject to uncertainty.  In
several broad research areas, particularly, in spatial statistics,
data assimilation, climate modelling and meteorology to name but a
few, GRFs play a pivotal role in mathematical models of physical
phenomena with distributed, uncertain input data.  Accordingly, there
is an extensive literature devoted to mathematical, statistical and
computational aspects of GRFs.  We mention only
\cite{Lifshits95,Janson97,AdlerGeoGRF} and the references there for
mathematical foundations, and \cite{Matern2nd,GineNickl} and the
references there for a statistical perspective on GRFs.

In recent years, the area of \emph{computational uncertainty
  quantification} (UQ for short) has emerged at the interface of the
fields of applied mathematics, numerical analysis, scientific
computing, computational statistics and data assimilation.  Here, a
key topic is the mathematical and numerical analysis of partial
differential equations (PDEs for short) with random field inputs, and in
particular with GRF inputs. The mathematical analysis 
of PDEs with GRF inputs addresses
questions of well-posedness, pathwise and $L^p$-integrability and
regularity in scales of Sobolev and Besov spaces of random solution
ensembles of such PDEs.  The numerical analysis 
focuses on questions of efficient numerical simulation methods of
GRF inputs (see, e.g., \cite{Matern2nd,DietrNewsam,
  Gitt2012,Charrier12,CharrDebu13,bachmayr2018GRFRep,bachmayr2019unified,SteinKrig} and the
references there), and the numerical approximation 
of corresponding PDE solution ensembles, which
arise for GRF inputs. This concerns in particular the efficient
representation of such solution ensembles (see
\cite{HS,BCDM,BCDS,ErnstSprgkTam18,dD21}), 
and the numerical quadrature of corresponding solution fields 
(see, e.g., 
\cite{HS,KSSS17_2112,DHSHMatGRF17,GKNSSS,HS19_2310,HS17_2475,CSAMS2011,ChenlogNQuad2018,dD21}
and the references there).  
Applications include for instance
subsurface flow models (see, e.g., \cite{DietrNewsam,GKSS13_484}) but
also other PDE models for media with uncertain properties (see, e.g.,
\cite{logNMax2018} for electromagnetics).  The careful analysis of
efficient computational sampling of solution families of PDEs subject
to GRF inputs is also a key ingredient in numerical data assimilation,
e.g., in Bayesian inverse problems (BIPs for short); we refer to the
surveys \cite{DashtiStuart17,DshtiLwStrtVossMAP} and the references
therein for a mathematical formulation of BIPs for PDEs subject to
Gaussian prior measures and function space inputs.

In the past few years there have been considerable developments in the
analysis and numerical simulation of PDEs with random field input 
subject to Gaussian
  measures (GMs for short).
The method of choice in many applications for the 
numerical treatment of GMs is Monte-Carlo (MC for short) sampling.  
The (mean-square) convergence rate $1/2$ 
in terms of the number of MC samples is assured
under rather mild conditions (existence of MC samples, and of finite second moments). 
We refer to, e.g.,
  \cite{MR2835612,MR3033013,MR3117512,HQSMLMCMC} and the references there for
  a discussion of MC methods  in this context.
Given the high cost of MC sampling, recent years have seen the advent
of numerical techniques which afford higher convergence orders than
$1/2$, also on infinite-dimensional integration domains.  Like MC,
these techniques are not prone to the so-called curse of
dimensionality. 
Among them are Hermite-Smolyak sparse-grid
interpolation (also referred to as ``stochastic collocation''), see e.g. \cite{ErnstSprgkTam18,dD21,dD-Erratum22},
and 
sparse-grid quadrature \cite{ChenlogNQuad2018,ErnstSprgkTam18,HHMPMS16,dD21,dD-Erratum22}, and 
quasi-Monte Carlo (QMC for short) integration as developed in 
\cite{GKNSSS,RSAMSAT_Bip2017,KSSS17_2112,kaza,HS19_2310,HS17_2475}
and the reference there.

The key condition which emerged as governing the
convergence rates of numerical integration and interpolation methods for a function is
\emph{a sparsity of the coefficients of its Wiener-Hermite polynomial chaos (PC for short) expansion}, 
see, e.g., \cite{HS,bachmayr2018GRFRep}.
Rather than counting the ratio of nonzero
coefficients, the sparsity is quantified by 
\emph{$\ell^p$-summability and/or weighted $\ell^2$-summability} of
these coefficients.
This observation forms the foundation for the current text.

\subsection{An example}
To indicate some of the mathematical issues which are considered in
this book, consider in the interval $\domain = (0,1)$ and in
a probability space $(\Omega,\cA,\IP)$, a GRF
$g:\Omega\times \domain \to \R$ which takes values in $L^\infty(\domain)$.  
That is to say, that the map
$\omega \mapsto g(\omega, \cdot)$ is an element of the Banach space
$L^\infty(\domain)$.  
\emph{Formally}, at this stage, we represent
realizations of the random element $g \in L^\infty(\domain)$ with a
\emph{representation system}
$(\psi_j )_{j=1}^J\subset L^\infty(\domain)$ in
\emph{affine-parametric form}
\begin{equation}\label{eq:AffParg}
 g(\omega,x) = \sum_{j=1}^J y_j(\omega) \psi_j(x) \,;,
\end{equation}
where the coefficients $( y_j )_{j=1}^J$ are assumed to be i.i.d.\
standard normal random variables (RVs for short) and $J$ may be a
finite number or infinity.  Representations such as \eqref{eq:AffParg}
are widely used both in the analysis and in the numerical simulation
of random elements $g$ taking values in a function space.  The
coefficients $y_j(\omega)$ being standard normal RVs, the sum
$\sum_{j=1}^J y_j \psi_j(x)$ may be considered as a \emph{parametric
  deterministic map} $g: \R^J \to L^\infty(\domain)$. The random
element $g(\omega,x)$ in \eqref{eq:AffParg} 
can then be obtained  
by evaluating this deterministic map in random coordinates, i.e., by
sampling it in Gaussian random vectors
$(y_j(\omega))_{j=1}^J\in \R^J$.

Gaussian random elements as inputs for PDEs appear in particular, in
coefficients of diffusion equations.  
Consider, for illustration, in
$\domain$, and for given $f\in L^2(\domain)$, the boundary value
problem: find a random function $u: \Omega \to V$ with
$V := \{w\in H^1(\domain): w(0) = 0 \}$ such that  
\begin{equation}\label{eq:BVP}
  f(x) + \frac{\dd}{\dd x}\left(a(x,\omega) \frac{\dd}{\dd x} u (x,\omega) \right) = 0 \quad \mbox{in} \quad \domain\;,
  \quad 
  a(1,\omega) u'(1,\omega) = \bar f\;.
\end{equation}
Here, $a(x,\omega) = \exp(g(x,\omega))$ with GRF
$g:\Omega \to L^\infty(\domain)$, and $\bar{f}:= F(1)$ with
$$
F(x) := \int_0^x f(\xi)\dd\xi \in V, \quad x\in \domain.
$$
In order to dispense with summability and 
	measurability issues, let us temporarily assume  that the sum in
	\eqref{eq:AffParg} is finite, with $J\in\N$ terms. 
        We
        find that a random solution $u$ of the problem must satisfy
$$
u'(x,\omega) = - \exp(-g(x,\omega)) F(x) , \quad x\in \domain, \omega
\in \Omega \;.
$$
Inserting \eqref{eq:AffParg}, this is equivalent to the
\emph{parametric, deterministic family of solutions}
$u(x,\by):\domain\times \R^J \to \R$ given by
\begin{equation}\label{eq:DetSoly}
  u'(x,\by) = -\exp(-g(x,\by)) F(x) , \quad x\in \domain, \by\in \R^J \;.
\end{equation}
Hence
\begin{equation*}\label{eq:Nrmu}
  \| u'(\cdot,\by) \|_{L^2(\domain)} 
  = 
  \| \exp(-g(\cdot,\by)) F \|_{L^2(\domain)} \;,\quad \by\in \R^J\;,
\end{equation*}
which implies the (sharp) bounds
$$
\| u'(\cdot,\by) \|_{L^2(\domain)} \left\{
  \begin{array}{l}
    \geq 
    \exp(-\| g(\cdot,\by) \|_{L^\infty(\domain)}) \| F \|_{L^2(\domain)} 
    \\
    \leq \exp(\| g(\cdot,\by) \|_{L^\infty(\domain)}) \| F \|_{L^2(\domain)}.
  \end{array}
\right .
$$
Due to the homogeneous Dirichlet condition at $x=0$, up to an absolute
constant the same bounds also hold for $\| u(\cdot,\by) \|_V$.

It is evident from the explicit expression \eqref{eq:DetSoly} and the
upper and lower bounds, that for every parameter $\by\in \R^J$,
the solution $u\in V$ exists.  
However, we can not, in general, 
expect uniform w.r.t.\ $\by\in \R^J$ a-priori estimates, 
also of the higher derivatives, for smoother functions $x\mapsto g(x,\by)$ 
and $x\mapsto f(x)$. 
Therefore, the parametric problem \eqref{eq:BVP}
is nonuniformly elliptic, \cite{Charrier12,HQSMLMCMC}. 
In particular,
also a-priori error bounds for various discretization
schemes will contain this uniformity w.r.t.\ $\by$.  The random
solution will be recovered from \eqref{eq:DetSoly} by inserting for
the coordinates $y_j$ samples of i.i.d.\ standard normal RVs.

This book focuses on developing a regularity theory for
  countably-parametric solution families
  ${ u(\cdot,\by) : \by \in \R^J }$ with a particular emphasis on the
  case $J=\infty$. This allows for arbitrary Gaussian random fields
  $g(\cdot,\omega)$ in \eqref{eq:BVP}.
Naturally, our results also cover the
finite-parametric setting where the number $J$ of random parameters
is finite, but may be very large. 
Then, all constants in our error estimates are
either independent of the parameter dimension $J$
or their dependence of $J$ is explicitly indicated.
Previous works
\cite{BCDS,BCDM,GKNSSS} addressed the $\ell^p$-summability
of the Wiener-Hermite PC expansion coefficients of solution families
$\{u(\cdot,\by): \by\in \R^\infty\}\subset V$ for the forward problem,
based on moment bounds of derivatives of parametric solutions w.r.t.
GM.  Estimates for these coefficients and, in particular, for the
summability, were obtained in \cite{HS,BCDS,BCDM,GKNSSS,HS2}. In these
references, all arguments were based on real-variable, bootstrapping
arguments with respect to $\by$.
 
\subsection{Contributions}
We make the following contributions to 
the area computational UQ for PDEs  with  GRF inputs. 
First, we provide novel proofs of some of the sparsity results in
\cite{HS,BCDM,BCDS} of the infinite-dimensional parametric forward
solution map to PDEs with GRF inputs.  The presently developed proof
technique is based on 
holomorphic continuation and complex variable arguments in order
to bound derivatives of parametric solutions, and their coefficients
in Wiener-Hermite PC expansions.  This is in line with similar
arguments in the so-called ``uniform case'' in
\cite{CoDeSch1,CCS13_783}.  There, the random parameters in the
representation of the input random fields range in compact subsets of
$\RR$.  Unlike in these references, in the present text due to the
Gaussian setup the parameter domain $\RR^\infty$ is not compact.
This entails significant modifications of mathematical arguments as
compared to those in \cite{CoDeSch1,CCS13_783}.

Contrary to the analysis in \cite{BCDS,BCDM,GKNSSS}, where parametric
regularity results were obtained by real-variable arguments combined
with induction-based bootstrapping with respect to the derivative
order, the present text develops derivative-free, complex variable
arguments which allow directly to obtain bounds of the Wiener-Hermite
PC expansion coefficients of the parametric solutions in scales of
Sobolev and Besov spaces in the physical domain $\domain$ in which the
parametric PDE is posed. They also allow to treat in a unified manner
parametric regularity of the solution map in several scales of Sobolev
and Kondrat'ev spaces in the physical domain $\domain$ which is the
topic of Section \ref{sec:KondrReg}, resulting in novel sparsity
results for the solution operators to
linear elliptic and parabolic PDEs with GRF inputs 
in scales of Sobolev and Besov spaces.
We apply the quantified holomorphy of parametric solution families
    to PDEs with GRF inputs and preservation of 
    holomorphy under composition, to problems of Bayesian PDE inversion
    conditional on noise observation data in Section~\ref{sec:BIP}, 
    establishing in particular quantified parametric holomorphy of the
    corresponding Bayesian posterior.

  We construct deterministic sparse-grid interpolation and quadrature
  methods for the parametric solution with convergence rate bounds
  that are free from the curse of dimensionality, and that afford possibly
  high convergence rates, given a sufficient sparsity in the Wiener-Hermite PC
  expansion of the parametric solutions.
  For sampling strategies in deterministic numerical quadrature, 
  our findings show improved convergence rates, 
  as compared to previous results in this area. 
  Additionally, our novel sparsity
  results provided in scales of  function spaces of
  varying spatial regularity enable us
  to construct apriori \emph{multilevel versions of sparse-grid
  interpolation and quadrature}, with corresponding approximation rate
  bounds which are free from the curse of dimensionality, 
  and explicit in terms of the overall number of degrees of freedom. 
  Lastly, and in contrast to previous works, 
  leveraging the preservation of holomorphy under compositions
  with holomorphic maps, our holomorphy-based arguments enable us to
  establish that our algorithms and bounds are applicable to posterior
  distributions in Bayesian inference problems involving GRF or Besov priors,
  as developed in \cite{DashtiStuart17,NicklBIP} and the references there.
\subsection{Scope of results}
\label{sec:IntrScope}
We prove quantified holomorphy of countably-parametric solution
families of linear elliptic and parabolic PDEs.  The parameter range
equals
$\RRi$, corresponding to countably-parametric representations of
GRF input data, taking values in a separable locally convex space, in
particular, Hilbert or Banach space of uncertain input data, endowed
for example with a Gaussian product measure $\gamma$ on $\RR^\infty$.

The results established in this text and the related bounds on
partial derivatives w.r.t.\ the parameters in \KL or \LC expansions of
uncertain GRF inputs imply convergence rate bounds for several
families of computational methods to numerically access these
parametric solution maps.  Importantly, we prove that in terms of 
$n\geq 1$,
    an integer measure of work and memory, 
an approximation accuracy $O(n^{-a})$
for some parameter $a>0$ can be achieved, where the convergence rate
$a$ depends on the approximation process and on the amount of sparsity
in the Wiener-Hermite PC expansion coefficients of the GRF 
under consideration.  
In the terminology of computational complexity, a
prescribed numerical tolerance $\eps>0$ can be reached in work and
memory of order $O(\eps^{-1/a})$.  
\emph{In particular, the convergence rate $a$ and the constant hidden in the Landau
  $O(\cdot)$ symbol do not depend on the dimension of the space of
  active parameters involved in the approximations which we
  construct.}  
The approximations developed in the present text are
\emph{constructive and linear} and can be realized computationally by
\emph{deterministic algorithms} 
of so-called ``stochastic collocation'' or ``sparse-grid'' type.  
Error bounds are proved in
$L^2$-type Bochner spaces with respect to the GM 
$\gamma$ on the input data space of the PDE, 
in natural Hilbert or Banach spaces of
solutions of the PDEs under consideration. 
Here, it is important to notice that 
the sparsity of the Wiener-Hermite PC expansion coefficients used in constructive
linear approximation algorithms and in estimating convergence rates, 
takes the form of
weighted $\ell^2$-summability, but not $\ell^p$-summability 
as in best $n$-term approximations \cite{HS,BCDM,BCDS}. 
Furthermore, 
$\ell^p$-summability results are implied from the corresponding weighted $\ell^2$-summability ones.

All approximation rates for deterministic sampling strategies in the present text 
are free from the so-called \emph{curse of dimensionality}, a terminology
coined apparently by R.E. Bellmann (see \cite{Bellmann}).  
The rates
are in fact only limited by the sparsity of the Wiener-Hermite PC
expansion coefficients of the deterministic, countably-parametric
solution families.  
In particular, dimension-independent convergence
rates $>1/2$ are possible, provided a sufficient Wiener-Hermite PC
expansion coefficient sparsity, that the random inputs feature
sufficient pathwise regularity, and the affine representation system
(being a tight frame on space of admissible input realizations) are
stable in a suitable smoothness scale of inputs. 
\subsection{Structure and content of this text}
\label{sec:IntrStrct}
We briefly describe the structure and content of the present text.

In {\bf Section \ref{S:Prelim}},
we collect known facts from functional analysis and 
GM theory which are required throughout this text.
In particular, we
review constructions and results on GMs on separable Hilbert and
Banach spaces.  
Special focus
will be on constructions via
countable products of univariate GMs on countable products of real
lines.  We also review assorted known results on convergence rates of
Lagrangian finite elements for linear, second order, divergence-form
elliptic PDEs in polytopal domains $\domain$ with Lipschitz boundary
$\partial \domain$.

In {\bf Section \ref{sec:EllPDElogN}}, 
we address the analyticity and sparsity for 
elliptic divergence-form PDEs with log-Gaussian coefficients. 
In Section \ref{S:PbmStat}, we introduce a model linear, second
order elliptic divergence-form PDE with log-Gaussian coefficients, with
variational solutions in the ``energy space'' $H^1_0(\domain)$.  This
equation was investigated with parametric input data in a number of
references in recent years
\cite{CoDeSch,CoDeSch1,HS,CCS13_783,BCDM,BCDS,Dung19,GKNSSS,KSSS17_2112,ZDS19}.
It is considered in this work mainly to develop the holomorphic
approach to establish our mathematical approach to parametric
holomorphy and sparsity of Wiener-Hermite PC expansions of parametric
solutions in a simple setting, and to facilitate comparisons with the
mentioned previous works and results.
 We review known results on its
well-posedness in Section
\ref{S:PbmStat}, and Lipschitz continuous dependence on the input
data in Section \ref{S:LipCont}.  We discuss regularity results for
parametric coefficients in Section \ref{S:Reg}.
Sections \ref{S:RndDat} and \ref{S:ParCoef} describe uncertainty
modelling by placing GMs on sets of admissible, countably parametric
input data, i.e., formalizing mathematically aleatoric uncertainty in
input data.  Here, the Gaussian series introduced in Section
\ref{S:GSer} will be seen to take a key role in converting operator
equations with GRF inputs to infinitely-parametric, deterministic
operator equations.  The Lipschitz continuous dependence of the
solutions on input data from function spaces will imply strong
measurability of corresponding random solutions, and render
well-defined the \emph{uncertainty propagation}, i.e., the
push-forward of the GM on the input data. In Section \ref{S:HolSumSol}, 
we connect the quantified holomorphy of the
parametric, deterministic solution manifold
$\{ u(\by): \by \in \RR^\infty \}$ in the space $H^1_0(\domain)$ with a sparsity
(weighted $\ell^2$-summability and $\ell^p$-summability)
of the coefficients  $(u_\bnu)_{\bnu \in \Ff}$ of the
($ H^1_0(\domain)$-valued) Wiener-Hermite PC expansion.
With 
this
methodology in place, we show in Section \ref{sec:HsReg} how
to obtain holomorphic regularity of the parametric solution family
$\{ u(\by) : \by\in \RRi\}$ in Sobolev spaces $H^s(\domain)$ of
possibly high smoothness order $s \in \NN$ 
and how to derive from here the corresponding sparsity.  
The argument is
self-contained and provides parametric holomorphy for any
differentiation order $s\in \NN$ in a unified way, in domains
$\domain$ of sufficiently high regularity and for sufficiently high
almost sure regularity of coefficient functions.  
In Section
\ref{sec:KondrReg}, we extend these results for linear second order
elliptic differential operators in divergence form in a bounded
polygonal domain $\domain \subset \mathbb{R}^2$.  
Here, corners are
well-known to obstruct 
high almost sure pathwise regularity in the
usual Sobolev and Besov spaces in $\domain$ for
both, PC coefficients and parametric solutions.
Therefore, we develop summability of the  
Wiener-Hermite PC
expansion coefficients $(u_\bnu)_{\bnu \in \Ff}$ of the random
solutions in terms of corner-weighted Sobolev spaces, 
originating with
V.A. Kondrat'ev (see, e.g., \cite{Gr,BLN,MazRoss2010} and the references there).
In Section \ref{Some related results on sparsity}, we briefly recall some known related results  
	\cite{CCS13_783,CoDe,CoDeSch,CoDeSch1,HS, BCM,BCMI,BCDM,BCDS}
	on $\ell^p$-summability and weighted $\ell^2$-summability of the generalized PC expansion coefficients of 
	solutions to parametric divergence-form elliptic PDEs, 
	as well as applications to best $n$-term approximation.

In {\bf Section \ref{sec:SumHolSol}}, 
we investigate sparsity of the Wiener-Hermite PC expansions coefficients of holomorphic functions.
In Section~\ref{S:DefbxdHol}, we introduce a concept of $(\bb,\xi,\delta,X)$-holomorphy of  
parametric  deterministic functions on the parameter domain $\R^\infty$
taking values in a separable Hilbert space $X$.  
This concept is fairly broad and covers a large range of parametric PDEs depending on log-Gaussian distributed data.
In order to extend the results and the approach to bound
Wiener-Hermite PC expansion coefficients via quantified holomorphy
beyond the simple, second order diffusion equation introduced in
Section \ref{sec:EllPDElogN}, we
address sparsity of the Wiener-Hermite PC expansions coefficients of  $(\bb,\xi,\delta,X)$-holomorphic functions.
In Section \ref{sec:bdX}, we show that  composite functions of  a certain type
are $(\bb,\xi,\delta,X)$-holomorphic  under certain conditions.
The significance of such functions is   
that they cover solution operators of a collection of linear, elliptic divergence-form 
PDEs in a unified way along with structurally similar PDEs with log-Gaussian random input data.
This will allow to apply the ensuing results on convergence rates of
deterministic collocation and quadrature algorithms to a wide range
of PDEs with GRF inputs and functionals on their random solutions. 
In Section  \ref{sec:HlExmpl}, we analyze some examples of holomorphic functions which are solutions to certain PDEs, 
including linear elliptic divergence-form PDEs with parametric diffusion coefficient, 
linear parabolic PDEs with parametric coefficient, 
linear elastostatics equations with log-Gaussian modulus of elasticity, 
Maxwell equations with log-Gaussian permittivity. 
	
In {\bf Section~\ref{sec:BIP}}, 
we apply the preceding abstract results on
parametric holomorphy to establish quantified holomorphy of
countably-parametric, posterior densities of corresponding BIPs where
the uncertain input of the forward PDE is a countably-parametric GRF
taking values in a separable Banach space of inputs. 
As an example, we analyze the BIP for  the parametric diffusion coefficient 
of the diffusion equation with parametric log-Gaussian inputs.

In {\bf Section~\ref{sec:StochColl}}, 
we discuss deterministic interpolation  and quadrature
algorithms for approximation and numerical integration of  $(\bb,\xi,\delta,X)$-holomorphic functions.
Such algorithms are necessary for the
approximation of certain statistical quantities (expectations,
statistical moments) of the parametric solutions with respect to a GM
on the parameter space.  
The proposed algorithms are variants and
generalizations of so-called ``stochastic collocation'' or
``sparse-grid'' type approximation, and proved to outperform sampling
methods such as MC methods, under suitable sparsity conditions on coefficients of the
Wiener-Hermite PC expansion of integrands. 
In the quadrature case, they are also known as ``Smolyak quadrature''
methods.  Their common feature is a) the deterministic nature of the
algorithms, and b) the possibility of achieving convergence rates
$>1/2$ independent of the dimension of parameters and therefore the
curse of dimensionality is broken.  They offer, in particular, the
perspective of deterministic numerical approximations for GRFs under
nonlinear pushforwards (being realized via the deterministic
data-to-solution map of the PDE of interest).  The decisive analytic
property to be established are dimension-explicit estimates of
individual Wiener-Hermite PC expansion coefficients of parametric
solutions, and based on these, sharp summability estimates of norms
of the coefficients  of Wiener-Hermite PC expansion of
parametric, deterministic solution families are given. 
In Sections \ref{Smolyak interpolation and quadrature} and \ref{sec:mi}, 
we construct sparse-grid Smolyak-type interpolation and quadrature algorithms.  
In Sections \ref{sec:intrate} and \ref{sec:quadrate}, 
we prove the convergence rates of interpolation and quadrature algorithms for  
$(\bb,\xi,\delta,X)$-holomorphic functions.

{\bf Section~\ref{sec:MLApprox}} 
is devoted to multilevel interpolation and quadrature of parametric holomorphic functions. 
We construct  
deterministic interpolation and quadrature algorithms
for generic $(\bb,\xi,\delta,X)$-holomorphic functions.
For linear second
order elliptic divergence-form PDEs with log-Gaussian coefficients, 
the results on the weighted $\ell^2$-summability of the Wiener-Hermite PC expansion
coefficients of parametric, deterministic solution families with
respect to corner-weighted Sobolev spaces on spatial domain $\domain$ finally
also allow to analyze methods for constructive, deterministic linear
approximations of parametric solution families. Here, a truncation of
Wiener-Hermite PC expansions is combined with approximating the
Wiener-Hermite PC expansion coefficients in the norm of the ``energy
space'' $ H^1_0(\domain)$ of these solutions from finite-dimensional
approximation spaces which are customary in the numerical approximation
of solution instances.  Importantly, \emph{required approximation
accuracies of the Wiener-Hermite PC expansion coefficients $u_\bnu$
will depend on the relative importance of $u_\bnu$ within the
Wiener-Hermite PC expansion}.  This observation gives rise to
\emph{multilevel approximations} where a prescribed overall
accuracy in mean square w.r.t.\ the GM $\gamma$ with respect to
$H^1_0(\domain)$ will be achieved by a $\bnu$-dependent discretization
level in the physical domain. 
Multilevel approximation and integration and the corresponding error estimates will
be developed in this section in an abstract setting: 
Besides $(\bb,\xi,\delta,X)$-holomorphy, 
it is neccessary to require an assumption on the discretization error in the physical domain 
in the form of stronger holomorphy of the approximation error in this discretization. 
A combined assumption for guaranteeing constructive multilevel approximations 
is formulated in Section \ref{sec:SetNot}. 
In Section \ref{sec:MLAlg} we introduce multilevel algorithms for 
interpolation and quadrature of $(\bb,\xi,\delta,X)$-holomorphic functions, 
and discuss work models and choices of discretization levels.
A key for the sparse-grid integration and interpolation approaches is to
efficiently numerically allocate discretization levels to
Wiener-Hermite PC expansion coefficients.  We develop such an approach
in Section \ref{app:mlweight}.  It is based on greedy searches and
suitable thresholding of (suitable norms of) Wiener-Hermite PC
expansion coefficients and on a-priori bounds for these quantities
which are obtained by complex variable arguments.
In Sections \ref{sec:MLInterpol} and \ref{sec:MLQuad}, 
we establish convergence rate bounds of multilevel interpolation and quadrature algorithms 
for  $(\bb,\xi,\delta,X)$-holomorphic functions.
In Section \ref{sec:Approx}, we verify the abstract hypotheses of the
sparse-grid multilevel approximations for
the forward and inverse problems for concrete linear 
elliptic and parabolic PDEs on corner-weighted Sobolev spaces (Kondrat'ev spaces) with log-GRF inputs.
In Section \ref{Multilevel approximation and quadrature in Bochner spaces}, 
we briefly recall some results from  \cite{dD21} (see also, \cite{dD-Erratum22} for some corrections) 
on  linear multilevel (fully discrete) interpolation and quadrature 
in abstract Bochner spaces based on weighted $\ell^2$-summabilities.  
These results are subsequently applied  
to parametric divergence-form elliptic PDEs and to parametric holomorphic functions.
\subsection{Notation and conventions}
\label{sec:IntrNotat}
Additional to the real numbers $\R$, the complex numbers $\C$, and
  the positive integers $\N$, we set $\R_+:=\set{x\in\R}{x\ge 0}$
and $\N_0:=\{0\}\cup\N$.
We denote by $\RR^\infty$ the set of all
sequences $\by = (y_j)_{j\in \NN}$ with $y_j\in \RR$, and
similarly define $\C^\infty$, $\R_+^\infty$ and $\N_0^\infty$.
 Both,
$\RR^\infty$ and $\CC^\infty$, will be understood with the product
topology from $\RR$ and $\CC$, respectively. 
For 
$\balpha$, $\bbeta \in \NN_0^d$,
$d\in\N\cup\{\infty\}$,
the inequality $\bbeta \leq \balpha$ is
understood component-wise, i.e., 
$\bbeta\leq \balpha$ if and only if
$\beta_j\leq \alpha_j$ for all $j$.

Denote by $\Ff$ the
countable set of all sequences of nonnegative integers $\bnu = (\nu_j)_{j \in \NN}$ 
such that $\supp(\bnu)$ is
finite, where $\supp(\bnu) := \{ j\in \NN: \nu_j \ne 0\}$ denotes the
``support'' of the multi-index $\bnu$.  
Similarly, we define
$\supp(\brho)$ of a sequence $\brho \in \RR^\infty_+$.  
For
$\bnu\in \Ff$, and for a sequence $\bb=(b_j)_{j\in \NN}$ of positive
real numbers, the quantities
\begin{equation*}
  \bnu! := \prod_{j \in \NN}\nu_j!\,,\qquad 
  |\bnu|:=\sum_{j \in \NN}\nu_j,
  \qquad\text{and}\qquad 
  \bb^\bnu := \prod_{j \in \NN}b_j^{\nu_j}
\end{equation*}
are finite and well-defined. 

For a multi-index $\balpha \in \NN_0^d$ and a function $u(\bx,\by)$ of
$\bx\in \RR^d$ and parameter sequence $\by\in \RR^{\infty}$ we use the
notation $D^\balpha u(\bx,\by)$ to indicate the partial derivatives
taken with respect to $\bx$.  The partial derivative of order
$\balpha \in \NN_0^\infty$ with respect to $\by$ \emph{of finite total
  order} $|\balpha| = \sum_{j\in \NN} \alpha_j$ is denoted by
$\partial^\balpha u(\bx,\by)$.  
In order to simplify notation, we will systematically suppress the
variable $\bx\in \domain\subset \R^d$ in mathematical expressions,
except when necessary.  For example, instead
$\int_\domain v(\bx)\rd \bx$ we will write $\int_\domain v\rd \bx$,
etc.  For a Banach space $X$, we denote by $X_{\CC}:=X+ \im X$ the
complexification of $X$.  The space $X_\CC$ is also a Banach space
endowed with the (minimal, among several possible equivalent ones, see
\cite{mst99}) norm
$\|x_1+\im x_2\|_{X_\CC}:=\sup_{0\leq t\leq 2\pi}\|x_1\cos t-x_2 \sin
t \|_X$.  The space $X^\infty$ is defined in a similar way as
$\RR^\infty$.

By $\cL(X,Y)$ we denote the vector space of bounded, linear operators
between to Banach spaces $X$ and $Y$. With $\cL_{{\rm is}}(X,Y)$ we
denote the subspace of boundedly invertible, linear operators from $X$
to $Y$.

For a function space $X(\D)$ defined on the domain $\D$, if there is
no ambiguity, when writing the norm of $x\in X(\D)$ we will omit $\D$,
i.e., we write $\|x\|_X$ instead of $\|x\|_{X(\D)}$.

For $0 < p \le \infty$ and a finite or countable index set $J$, we
denote by $\ell^p(J)$ the quasi-normed space of all
$\by = (y_j)_{j\in J}$ with $y_j \in \R$, equipped with the quasi-norm
$\|\by\|_{{\ell^p(J)}}:= \big(\sum_{j \in J}|y_j|^p\big)^{1/p}$ for
$p < \infty$, and $\|\by\|_{{\ell^\infty(J)}}:= \sup_{j \in J}|y_j|$.
Sometimes, we make use of the abbreviation $\ell^p=\ell^p(J)$ in a
particular context if there is no misunderstanding of the
meaning. We denote by $(\bee_j)_{j\in J}$ the standard basis of
  $\ell^2(J)$, i.e., $\bee_j = (e_{j,i})_{i\in J}$ with $e_{j,i} = 1$
  for $i=j$ and $e_{j,i} = 0$ for $i\not=j$.
\newpage

\section{Preliminaries}
\label{S:Prelim}
A key technical ingredient in the analysis of numerical approximations
of PDEs with GRF inputs from function spaces, and of numerical methods
for their efficient numerical treatment are constructions and
numerical approximations of GRFs on real Hilbert and Banach spaces.
Due to their high relevance in many areas of science (theoretical
physics, quantum field theory, spatial and high-dimensional
statistics, etc.), a rich theory has been developed in the past
decades and a large body of literature is available now.  We
recapitulate basic definitions and key results, in particular on GMs,
that are necessary for the ensuing developments.  We do not attempt to
provide a comprehensive survey.  We require the exposition on GMs
 on real-valued Hilbert and Banach spaces, as most PDEs of
interest are formulated for real-valued inputs and solutions.
However, we crucially use in the ensuing sections of this text
analytic continuation of parametric representations to the complex
parameter domain. This is required in order to bring to bear complex
variable methods for derivative-free, sharp bounds on Hermite
expansion coefficients of GRFs.  
Therefore, we develop in our
presentation solvability, well-posedness and regularity for the
PDEs that are subject to GRF inputs 
in Hilbert and Banach spaces of complex-valued fields.

The structure of this section is as follows.  In Section
\ref{S:FinDimGM}, we recapitulate GMs on finite dimensional spaces, in
particular on $\R^d$ and $\C^d$.  In Section \ref{S:GMSepHS}, we
extend GMs to separable Banach spaces.  
Section \ref{S:CamMart} reviews the Cameron-Martin space.  
In Section \ref{sec-Gaussian product measures} 
we recall a notion of Gaussian product measures on a
Cartesian product of locally convex spaces.  
Section \ref{S:GSer} is
devoted to a summary of known representations of a GRF by a Gaussian
series.  
A key object in these and more general spaces is the concept
of \emph{Parseval frame} which we introduce. 
For details, the reader may consult, for example, 
the books \cite{AdlerGeoGRF,Lifshits95,Bogach98}.

In Section \ref{S:FEM} we recapitulate, from
\cite{BaPi79,BNZPolygon,GasMorFESing09}, (known) technical results on
approximation properties of Lagrangian Finite Elements (FEs for short)
in polygonal and polyhedral domains $\domain\subset \R^d$, on regular,
simplicial partitions of $\domain$ with local refinement towards
corners (and, in space dimension $d=3$, towards edges).  These will be
used in Section \ref{sec:StochColl} in conjunction with collocation
approximations in the parameter space of the GRF to build
deterministic numerical approximations of solutions in polygonal and
in polyhedral domains.
\subsection{Finite dimensional Gaussian measures}
\label{S:FinDimGM}
\subsubsection{Univariate Gaussian measures} \index{Gaussian!measure}
\label{S:1dGM}
In dimension $d=1$, for every $\mu,\sigma\in \R$, there holds the
well-known identity
\[
\frac{1}{\sigma \sqrt{2\pi}} \int_{\R} \exp\left(-
\frac{(y-\mu)^2}{2\sigma^2}\right) \rd y = 1\;.
\]
A Borel probability measure $\gamma$ on $\R$ is \emph{Gaussian} if it
is either a Dirac measure $\delta_\mu$ at $\mu\in \R$ or its
density with respect to Lebesgue measure $\lambda$ on $\R$  
is given by
\begin{equation*}\label{eq:GdnsR1}
	\frac{\rd\gamma}{\rd\lambda} = p(\cdot;\mu,\sigma^2)\;,\;\;
	p(\cdot;\mu,\sigma^2) 
	:= 
	y\mapsto \frac{1}{\sigma\sqrt{2\pi}} 
	\exp\left(-\frac{(y-\mu)^2}{2\sigma^2}\right)
	\;.
\end{equation*}
We shall refer to $\mu$ as \emph{mean}, and to $\sigma^2$ as
\emph{variance} of the GM $\gamma$.  The case that
$\gamma = \delta_\mu$ is understood to correspond to $\sigma = 0$.  If
$\sigma > 0$, we shall say that \emph{the GM $\gamma$ is
	nondegenerate}.  
Unless explicitly stated otherwise, we assume GMs to be nondegenerate.

For $\mu=0$ and $\sigma = 1$, we shall refer to the GM $\gamma_1$ as
\emph{the standard GM on $\R$}.  A GM with $\mu = 0$ is called
\emph{centered} (or also \emph{symmetric}).  
For a GM $\gamma$ on $\R$, there holds
\begin{equation*}\label{eq:musigmaR1}
\mu = \int_{\R} y   \rd \gamma(y),\quad \sigma^2 = \int_{\R} (y-\mu)^2 \rd \gamma(y).
\end{equation*}
Let $(\Omega,\calA,\IP)$ be a probability space with sample space
$\Omega$, $\sigma$-fields $\calA$, and probability measure $\IP$.  A
\emph{Gaussian random variable} (``Gaussian RV'' for short) \index{Gaussian!random variable}
$\eta : \Omega\to \R$ is a RV whose law is Gaussian, i.e., it admits a
Gaussian distribution.  If $\eta$ is a Gaussian RV with mean $\mu$ and
variance $\sigma^2$ we write $\eta \sim \mathcal{N}(\mu,\sigma^2)$.

Linear transformations of Gaussian RVs are Gaussian: every Gaussian RV
$\eta$ can be written as $\eta = \sigma\xi + \mu$, where $\xi$ is a
standard Gaussian RV, i.e., a Gaussian RV whose law is a standard GM on $\R$.

The Fourier transformation of a GM $\gamma$ on $\R$ is defined, 
for every $\xi \in \R$, as
\begin{equation*}\label{eq:FTGMR1}
\hat{\gamma_1}(\xi) := \int_{\R} \exp(\im \xi y) \gamma(y)
= \exp\left( \im \mu \xi - \frac{1}{2} \sigma^2 \xi^2 \right) 
\;.
\end{equation*}
We denote by $\Phi$ the distribution function of $\gamma_1$.  
For the standard normal distribution
$$
\Phi(t) = \int_{-\infty}^t p(s;0,1)\rd s \qquad\forall t\in \R.
$$
With the convention $\Phi^{-1}(0) := -\infty$,
$\Phi^{-1}(1) := +\infty$, the inverse function $\Phi^{-1}$ of $\Phi$
is defined on $[0,1]$.
\subsubsection{Multivariate Gaussian measures} \index{Gaussian!measure}
\label{S:ddimGM}
Consider now a finite dimension $d > 1$.  A Borel probability measure
$\gamma$ on $(\R^d,\cB(\R^d))$ is called Gaussian if for every
$f\in \cL(\R^d,\R)$ the measure $\gamma\circ f^{-1}$ is a GM on $\R$,
where as usually, $\cB(\R^d)$ denotes the $\sigma$-field on $\R^d$.
Since $d$ is finite, we may identify $\cL(\R^d,\R)$ with $\R^d$, and
we denote the Euclidean inner product on $\R^d$ by $(\cdot,\cdot)$.
The Fourier transform of a Borel measure $\gamma$ on $\R^d$ is given
by
$$
\hat{\gamma}: \R^d \to \C: \hat{\gamma}(\bxi) = \int_{\R^d}
\exp\left(\im(\bxi,\by)\right) \rd \gamma( \by)\;.
$$
For a GM $\gamma$ on $\R^d$, the Fourier transform
$\hat{\gamma}$ uniquely determines $\gamma$.

\begin{proposition}[{\cite[Proposition 1.2.2]{Bogach98}}]\label{prop:FTGM}
	A Borel probability measure
	$\gamma$ on $\R^d$ is Gaussian iff
	$$
	\hat{\gamma}(\bxi) = \exp\left( \im (\bxi,\bmu)
	-\frac{1}{2}(\boldsymbol{K}\bxi,\bxi)\right), \quad \bxi\in \R^d\;.
	$$
	Here, $\bmu\in \R^d$ and $\boldsymbol{K}\in \R^{d\times d}$ is a
	symmetric positive semidefinite matrix.
	
	We shall say that a GM  $\gamma$ on $\R^d$ has a density with respect
	to Lebesgue measure $\lambda$ on $\R^d$ 
        iff the matrix $\boldsymbol{ K}$ is nondegenerate.  
        Then, this density is given by
	$$
	\frac{\rd\gamma}{\rd\lambda}(\bx): \bx\mapsto
	\frac{1}{\sqrt{(2\pi)^d \det \boldsymbol{ K}}} \exp\left( -\frac{1}{2}
	(\boldsymbol{ K}^{-1} (\bx-\bmu), \bx-\bmu) \right) \;.
	$$
	Furthermore,
	$$
	\bmu = \int_{\R^d} \by \rd \gamma(\by) , \quad \forall \by,\by'\in
	\R^d: (\boldsymbol{ K}\by,\by') = \int_{\R^d}
	(\by,\bx-\bmu)(\by',\bx-\bmu)\rd{\gamma}(\bx)\;.
	$$
	The symmetric linear operator $\cC\in \cL(\R^d,\R^d)$ defined by the
	later relation and represented by the symmetric positive definite
	matrix $\boldsymbol{K}$ is the \emph{covariance operator} associated
	to the GM $\gamma$ on $\R^d$.
\end{proposition}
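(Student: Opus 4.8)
The plan is to reduce the entire statement to the already-established one-dimensional theory together with the injectivity of the Fourier transform on $\R^d$, which is recorded just before the proposition. By definition, $\gamma$ is Gaussian on $\R^d$ precisely when every one-dimensional ``shadow'' $\gamma\circ f^{-1}$, $f\in\cL(\R^d,\R)$, is a univariate Gaussian measure. Since the Fourier transform of a univariate Gaussian is known explicitly (Section~\ref{S:1dGM}), one can read off $\hat\gamma$ by evaluating it along rays, and conversely recognize the stated exponential form as encoding exactly the univariate Fourier transforms of all shadows.

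For the forward implication, fix $\bxi\in\R^d$ and apply the definition to the linear functional $f_\bxi(\bx):=(\bxi,\bx)$. Then $\gamma\circ f_\bxi^{-1}$ is a Gaussian measure on $\R$ with some mean $m(\bxi)$ and variance $s(\bxi)^2\ge 0$, and the image-measure formula gives
$$
\hat\gamma(\bxi)=\int_{\R^d}\exp(\im(\bxi,\bx))\rd\gamma(\bx)=\int_\R\exp(\im t)\rd(\gamma\circ f_\bxi^{-1})(t)=\exp\Big(\im\,m(\bxi)-\tfrac12 s(\bxi)^2\Big),
$$
using the univariate Fourier formula. It remains to identify $m$ and $s^2$. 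Because univariate Gaussians have finite first and second moments, each coordinate map $\bx\mapsto x_j$ (the shadow along $\bee_j$) lies in $L^2(\gamma)$; hence $\bx\mapsto\bx$ is $\gamma$-integrable, and we set $\bmu:=\int_{\R^d}\bx\rd\gamma(\bx)$ and define the symmetric matrix $\boldsymbol K$ by $K_{jk}:=\int_{\R^d}(x_j-\mu_j)(x_k-\mu_k)\rd\gamma(\bx)$. Linearity of the integral yields $m(\bxi)=(\bxi,\bmu)$, while expanding $(\bxi,\bx-\bmu)^2$ gives $s(\bxi)^2=(\boldsymbol K\bxi,\bxi)$, which is $\ge 0$ for every $\bxi$, so $\boldsymbol K$ is positive semidefinite. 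This is the claimed form.

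For the converse, suppose $\hat\gamma$ has the stated shape and let $f\in\cL(\R^d,\R)$, identified with $\ba\in\R^d$ via $f(\bx)=(\ba,\bx)$. For $t\in\R$,
$$
\widehat{\gamma\circ f^{-1}}(t)=\int_{\R^d}\exp(\im t(\ba,\bx))\rd\gamma(\bx)=\hat\gamma(t\ba)=\exp\Big(\im t(\ba,\bmu)-\tfrac12 t^2(\boldsymbol K\ba,\ba)\Big),
$$
which is exactly the Fourier transform of the univariate Gaussian with mean $(\ba,\bmu)$ and variance $(\boldsymbol K\ba,\ba)\ge 0$. By injectivity of the Fourier transform on $\R$, $\gamma\circ f^{-1}$ equals that Gaussian; since $f$ was arbitrary, $\gamma$ is Gaussian. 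One may additionally note that every function of the stated form is the Fourier transform of a probability measure, e.g.\ the pushforward of a standard GM on $\R^d$ under $\bx\mapsto\boldsymbol A\bx+\bmu$ with $\boldsymbol A\boldsymbol A^\top=\boldsymbol K$.

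Finally, the density and moment formulas. For nondegenerate $\boldsymbol K$ write $\boldsymbol K=\boldsymbol O\boldsymbol D\boldsymbol O^\top$ with $\boldsymbol O$ orthogonal and $\boldsymbol D=\operatorname{diag}(\lambda_1,\dots,\lambda_d)$, $\lambda_j>0$; the substitution $\bx=\boldsymbol O\bz+\bmu$ factorizes the Fourier integral of the claimed density into $d$ one-dimensional Gaussian Fourier integrals, each evaluated by the univariate identity, and the product reassembles to $\exp(\im(\bxi,\bmu)-\tfrac12(\boldsymbol K\bxi,\bxi))$; injectivity of the Fourier transform then forces the density to be the stated one. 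The identities $\bmu=\int_{\R^d}\bx\rd\gamma(\bx)$ and $(\boldsymbol K\by,\by')=\int_{\R^d}(\by,\bx-\bmu)(\by',\bx-\bmu)\rd\gamma(\bx)$ follow from the forward direction together with the polarization identity applied to the symmetric bilinear form $(\by,\by')\mapsto\int_{\R^d}(\by,\bx-\bmu)(\by',\bx-\bmu)\rd\gamma$, and $\cC$ is simply the operator $\bx\mapsto\boldsymbol K\bx$. The only real nuisance I anticipate is the degenerate case $\det\boldsymbol K=0$, where no Lebesgue density exists and $\gamma$ concentrates on the affine subspace $\bmu+\operatorname{ran}\boldsymbol K$: there the characterization still holds, either by approximating $\boldsymbol K$ by $\boldsymbol K+\eps\boldsymbol I$ and passing to the weak limit of Fourier transforms, or by realizing $\gamma$ in an eigenbasis of $\boldsymbol K$ as a product of univariate Gaussians, some of which are Dirac masses; this is bookkeeping rather than a conceptual difficulty.
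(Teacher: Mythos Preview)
The paper does not give its own proof of this proposition: it is stated as a quotation from \cite[Proposition~1.2.2]{Bogach98} and no argument follows. So there is nothing in the paper to compare your proposal against.

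That said, your argument is the standard one and is correct. Reducing to one-dimensional shadows via $f_\bxi(\bx)=(\bxi,\bx)$, invoking the known univariate Fourier transform, and then identifying $m(\bxi)$ and $s(\bxi)^2$ as linear and quadratic forms is exactly how this result is typically established (and is the approach taken in Bogachev's book as well). The converse via injectivity of the Fourier transform is equally standard, as is the diagonalization argument for the density. Your treatment of the degenerate case is adequate for this level of exposition.
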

When we do not need to distinguish between the covariance operator
$\cC$ and the covariance matrix $\boldsymbol{K}$, we simply speak of ``the
covariance'' of a GM $\gamma$.
If a joint probability distribution of RVs $y_1,\ldots,y_d$ is a GM on
$\R^d$ with mean vector $\bmu$ and covariance matrix $\boldsymbol{K}$
we write $(y_1,\ldots,y_d)\sim \mathcal{N}(\bmu,\boldsymbol{ K})$.

In what follows, we use $\gamma_d$ to denote the standard GM on $\R^d$.
Denote by $L^2(\RR^d;\gamma_d)$ the Hilbert space of all
$\gamma_d$-measurable, real-valued functions $f$ on $\R^d$ 
such that the norm
\begin{equation}\nonumber
	\|f\|_{L^2(\RR^d;\gamma_d)} := \left( \int_{\R^d} |f(\by)|^2 \rd \gamma_d(\by)\right)^{1/2} 
\end{equation}
is finite.
The corresponding inner product is denoted by
$(\cdot,\cdot)_{L^2(\RR^d;\gamma_d)}$.
\subsubsection{Hermite polynomials}
\label{S:HerPol} \index{Hermite polynomial}
A key role in the ensuing sparsity analysis of parametric solution
families is taken by Wiener-Hermite PC expansions.  We consider GRF
inputs and, accordingly, will employ polynomial systems on $\RR$ which
are orthogonal with respect to the GM $\gamma_1$ on $\RR$, the
so-called Hermite polynomials, as pioneered for the analysis of GRFs
by N. Wiener in \cite{Nwiener}.  To this end, we recapitulate basic
definitions and properties, in particular the various normalizations
which are met in the literature.  Particular attention will be paid to
estimates for Hermite coefficients of functions which are holomorphic
in a strip, going back to Einar Hille in \cite{EHilleII}.
\begin{definition}\label{def:HermPol}
	For $k\in \N_0$, the normalized probabilistic Hermite polynomial \index{Hermite polynomial!normalized probabilistic $\sim$}
	$H_k$ of degree $k$ on $\R$ is defined by
	\begin{equation}\label{eq:Hk}
		H_k(x) 
		:= 
		\frac{(-1)^k}{\sqrt{k!}} 
		\exp\left(\frac{x^2}{2}\right) \frac{\rd^k}{\rd x^k} \exp\left(-\frac{x^2}{2}\right) .
	\end{equation}
	
	For every multi-degree $\bnu\in \N_0^m$, 
        the $m$-variate Hermite
	polynomial $H_\bnu$ is defined by
	\begin{equation*}\label{eq:Hnu}
		H_\bnu(x_1,\ldots,x_m) := \prod_{j=1}^{m} H_{\nu_j}(x_j),
		\;\; x_j \in \R, \; j=1,\ldots ,m
                \;.
	\end{equation*}
\end{definition}
\begin{remark} {\rm[Normalizations of Hermite
	polynomials and Hermite functions]\label{rmk:HermNorml} 
	
	\begin{enumerate}
		\item  Definition \eqref{eq:Hk} provides for every $k\in \NN_0$ a
		polynomial of degree $k$.  The scaling factor in \eqref{eq:Hk} has
		been chosen to ensure normalization with respect to GM $\gamma_1$, see
		also Lemma \ref{lem:HkProp}, item (i).

		\item  Other normalizations with at times the same notation are used.
		The ``classical'' normalization of $H_k$ we denote by
		$\tilde{H}_k(x)$.  It is defined by (see, e.g., \cite[Page 787]{AS},
		and compare \eqref{eq:Hk} with \cite[Equation (5.5.3)]{szego})
		\[
		\tilde{H}_k(x/\sqrt{2}) := 2^{k/2} \sqrt{k!} H_k(x).
		\]
		
		\noindent
		\item  In \cite{BNT07}, so-called ``\emph{normalized Hermite polynomials}'' are introduced as \index{Hermite polynomial!normalized $\sim$}
		\begin{equation*}\label{eq:NormHermPol}
			\tilde{\tilde{H}}_k(x) 
			:= 
			[\sqrt{\pi} 2^k k!]^{1/2} (-1)^k \exp(x^2) \frac{\rd^k}{\rd x^k} \exp(-x^2)
			\;.
		\end{equation*}
		The system $(\tilde{\tilde{H}}_k)_{k\in \N_0}$ is an 
                orthonormal basis (ONB for short) for the space
		$L^2(\RR,\tilde{\tilde{\gamma}})$ with the weight
		$\tilde{\tilde{\gamma}} = \exp(-x^2) \rd x$, i.e., 
                (compare, e.g., \cite[Eqn. (5.5.1)]{szego})
		$$
		\int_{\RR} \tilde{\tilde{H}}_n(x) \tilde{\tilde{H}}_{n'}(x)
		\exp(-x^2)\rd x = \delta_{nn'}, \;\; n,n'\in \NN_0 \;.
		$$

		\item  With the Hermite polynomials $\tilde{\tilde{H}}_k$, in
		\cite{EHilleII} \emph{Hermite functions} are introduced for
		$k\in \NN_0$ as \index{function!Hermite $\sim$}
		\begin{equation*}\label{eq:HermFncs}
			h_k(x):= \exp(-x^2/2) \tilde{\tilde{H}}_k(x) \;,\quad x\in \RR\;.
		\end{equation*}

		\item  It has been shown in \cite[Theorem 1]{EHilleII} that in order for
		functions $f:\CC \to \CC$ defined in the strip
		$S(\rho) := \{z\in \CC: z=x+\im y, \; x\in \RR,\; |y|< \rho\}$ to
		admit a Fourier-Hermite expansion
		\begin{equation*}\label{eq:FourHermite}
		\sum_{n=0}^\infty f_n h_n(z) ,\qquad 
                f_n := \int_{\RR} f(x)h_n(x) \rd x = \int_{\RR} f(x) \tilde{\tilde{H}}_n(x) \exp(-x^2)\rd x
		\end{equation*}
		which converges to $f(z)$ for $z\in S(\rho)$ a necessary and
		sufficient condition is that a) $f$ is holomorphic in
		$S(\rho)\subset \CC$ and b) for every $0<\rho'<\rho$ there exist a
			finite bound $B(\rho')$ and $\beta$ such that
		\begin{equation*}\label{eq:fHermBd}
			|f(x+\im y)| \leq B(\rho') \exp[-|x|(\beta^2-y^2)^{1/2}]\;,\quad x\in \RR, |y|\leq \rho' \;.
		\end{equation*}
		There is a constant $C(f)>0$ such that for the Fourier-Hermite
		coefficients $f_n$, holds
		\begin{equation*}\label{eq:fnBd}
			|f_n| \leq C\exp(-\rho\sqrt{2n+1}) \quad  \forall n\in \NN_0.
		\end{equation*}
	\end{enumerate}
} \end{remark} 
We state some basic properties of the Hermite polynomials $H_k$
defined in \eqref{eq:Hk}.
\begin{lemma}\label{lem:HkProp}
	The collection $(H_k)_{k\in \NN_0}$ of Hermite polynomials
	\eqref{eq:Hk} in $\R$ has the following properties.
	\begin{enumerate}
		\item $(H_k)_{k\in \NN_0}$ is an ONB of the space
		$L^2(\R;\gamma_1)$.
		\item For every $k\in \NN$ holds:
		$H_k'(x) = \sqrt{k}H_{k-1}(x) = H_k(x) -\sqrt{k+1}H_{k+1}(x)$.
		\item For all $x_1,\ldots ,x_m \in \R$ holds
		\[
		\prod_{i=1}^m\sqrt{k_i!} H_{k_i}(x_i) =
		\frac{\partial^{k_1+\ldots +k_m}}{\partial t_1^{k_1}\ldots
			\partial t_m^{k_m}} \exp\left( \sum_{i=1}^m t_ix_i
		-\frac{1}{2}\sum_{i=1}^m t_i^2 \right)\mid_{t_1 = \ldots = t_m
			= 0}.
		\]
		\item For every $f\in C^\infty(\R)$ such that
		$f^{(k)}\in L^2(\R;\gamma_1)$ for all $k\in \NN_0$ holds
		\[
                 \int_{\R} f(x)H_k(x) \rd\gamma_1(x) = \frac{(-1)^k}{\sqrt{k!}
		\int_{\R} f^{(k)}(x) \rd\gamma_1(x)} \;,
		\]
		and, hence, in $L^2(\R;\gamma_1)$,
		\[
		f = \sum_{k\in \NN_0} \frac{(-1)^k}{\sqrt{k!}}
		(f^{(k)},1)_{L^2(\R;\gamma_1)} H_k \;.
		\]
	\end{enumerate}
\end{lemma}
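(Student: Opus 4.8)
The statement to be proved (Lemma~\ref{lem:HkProp}) collects four standard facts about the normalized probabilistic Hermite polynomials. The plan is to derive all four from the generating function
\[
G(t,x) := \exp\!\left(tx - \tfrac{1}{2}t^2\right) = \sum_{k\in\NN_0} \frac{t^k}{\sqrt{k!}}\, H_k(x),
\]
which is itself an immediate consequence of the Rodrigues-type formula \eqref{eq:Hk}: writing $\exp(tx-\tfrac12 t^2) = \exp(\tfrac12 x^2)\exp(-\tfrac12(x-t)^2)$ and Taylor-expanding in $t$ about $t=0$ reproduces exactly the derivatives $\frac{\rd^k}{\rd x^k}\exp(-x^2/2)$ up to sign, giving the coefficients $\frac{(-1)^k}{\sqrt{k!}}\exp(x^2/2)\frac{\rd^k}{\rd x^k}\exp(-x^2/2) = H_k(x)$. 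I would state this generating-function identity first as the workhorse.

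For item (i), I would compute $\int_\R G(t,x)G(s,x)\,\rd\gamma_1(x)$ by completing the square: the integral equals $\exp(st)$. Expanding both sides in powers of $s,t$ and matching coefficients of $s^m t^n$ yields $(H_m,H_n)_{L^2(\R;\gamma_1)} = \delta_{mn}$, i.e.\ orthonormality; completeness (that they form an ONB, not merely an orthonormal system) follows from the classical density of polynomials in $L^2(\R;\gamma_1)$, which holds because $\gamma_1$ has exponentially decaying tails (so its moment problem is determinate) — I would cite this rather than reprove it. For item (ii), differentiating $G(t,x)$ in $x$ gives $\partial_x G = t\,G$, and matching coefficients of $t^k$ gives $H_k'(x) = \sqrt{k}\,H_{k-1}(x)$; differentiating in $t$ gives $\partial_t G = (x-t)G$, and comparing coefficients yields the three-term recurrence $\sqrt{k+1}\,H_{k+1}(x) = x H_k(x) - \sqrt{k}\,H_{k-1}(x)$, from which the second identity $H_k'(x) = H_k(x) - \sqrt{k+1}\,H_{k+1}(x)$ follows by substitution.

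Item (iii) is just the multivariate generating function: $\prod_{i=1}^m G(t_i,x_i) = \exp\!\big(\sum_i t_i x_i - \tfrac12\sum_i t_i^2\big)$, and reading off the coefficient of $t_1^{k_1}\cdots t_m^{k_m}$ (equivalently, applying $\partial_{t_1}^{k_1}\cdots\partial_{t_m}^{k_m}$ and evaluating at $t=0$) gives $\prod_i \frac{1}{\sqrt{k_i!}} H_{k_i}(x_i)$ on one side and $\prod_i \frac{1}{k_i!}\cdot$ the stated derivative on the other; clearing factorials gives the claimed formula. For item (iv), I would integrate by parts $k$ times in the formula $(f,H_k)_{L^2(\R;\gamma_1)} = \frac{(-1)^k}{\sqrt{k!}}\int_\R f(x)\exp(x^2/2)\frac{\rd^k}{\rd x^k}\exp(-x^2/2)\,\rd x$, moving each derivative onto $f$; the boundary terms vanish because $f^{(j)}\exp(-x^2/2)$ and its derivatives decay at infinity (using $f^{(j)}\in L^2(\R;\gamma_1)$ together with the polynomial-growth control that this integrability forces), leaving $\frac{1}{\sqrt{k!}}\int_\R f^{(k)}(x)\exp(-x^2/2)\,\rd x = \frac{1}{\sqrt{k!}}\sqrt{2\pi}\,(f^{(k)},1)_{L^2(\R;\gamma_1)}$; the expansion of $f$ then follows from item (i). The only genuinely delicate point is justifying the vanishing of the boundary terms in (iv) under the stated hypothesis rather than a cruder one (e.g.\ compact support or pointwise decay bounds on $f$); I expect that to be the main technical nuisance, and I would handle it by noting that $f^{(k)}\in L^2(\R;\gamma_1)$ for \emph{all} $k$ forces each $f^{(j)}$ to grow slower than $\exp(x^2/4)$ along a sequence, which combined with monotonicity/continuity arguments suffices to kill $f^{(j)}(x)\cdot\frac{\rd^{k-1-j}}{\rd x^{k-1-j}}\exp(-x^2/2)$ as $|x|\to\infty$.
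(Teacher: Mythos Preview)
The paper does not prove this lemma; it is stated as a collection of standard facts about normalized probabilistic Hermite polynomials, with no proof given. Your generating-function approach is the standard textbook route and is essentially correct.

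A few remarks. First, in item (iv) your intermediate formula $(f,H_k)_{L^2(\R;\gamma_1)} = \frac{(-1)^k}{\sqrt{k!}}\int_\R f(x)\exp(x^2/2)\frac{\rd^k}{\rd x^k}\exp(-x^2/2)\,\rd x$ has a slip: the $\exp(x^2/2)$ from $H_k$ cancels against the $\exp(-x^2/2)$ in $\rd\gamma_1$, and a factor $\frac{1}{\sqrt{2\pi}}$ survives; correcting this removes the stray $\sqrt{2\pi}$ in your final answer and gives $(f,H_k)_{L^2(\R;\gamma_1)}=\frac{1}{\sqrt{k!}}(f^{(k)},1)_{L^2(\R;\gamma_1)}$ without the factor $(-1)^k$. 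In fact the paper itself uses exactly this computation (with the correct sign) later in the proof of Lemma~\ref{lem:equal}, so the $(-1)^k$ in the displayed formula of item (iv) is a typo in the statement, as is the misplaced brace in the fraction.

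Second, in item (ii) your substitution actually yields $H_k'(x)=xH_k(x)-\sqrt{k+1}\,H_{k+1}(x)$, not $H_k(x)-\sqrt{k+1}\,H_{k+1}(x)$; the latter is false (e.g.\ at $k=0$). The stated formula in the paper is missing an $x$. You should not claim to have derived the identity as printed.

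Finally, your handling of the boundary terms in (iv) is honest about the difficulty but a bit hand-wavy: $f^{(k)}\in L^2(\R;\gamma_1)$ for all $k$ does not by itself give pointwise decay, and the ``along a sequence'' argument needs care since the boundary term must vanish at the specific endpoints, not just along some subsequence. One clean way around this is to first prove the identity for $f$ of compact support (where all boundary terms trivially vanish), then pass to the limit using density and the already-established completeness in (i).
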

It follows from item (i) of this lemma in particular that
\begin{equation*}\label{eq:HnuONB}
	\left\{ H_\bnu: \bnu \in \N_0^m \right\} \;\mbox{is an ONB of} \; L^2(\R^m;\gamma_m)\;.
\end{equation*}
Denote for $k\in \N_0$ and $m\in \N$ by $\calH_k$ the space of
$d$-variate Hermite polynomials which are homogeneous of degree $k$,
i.e.,
\[
\calH_k := {\rm span} \left\{ H_\bnu: \bnu \in \N_0^m, |\bnu| = k
\right\} \;.
\]
Then $\calH_k$ (``homogeneous polynomial chaos of degree $k$'' \cite{Nwiener}) 
is a 
closed, linear subspace of $L^2(\R^m;\gamma_m)$ and
\[
L^2(\R^m;\gamma_m) = \bigoplus_{k\in \NN_0} \calH_k
\;\;\mbox{in}\;\;L^2(\R^m;\gamma_m)\;.
\]
\subsection{Gaussian measures on separable locally convex spaces}
\label{S:GMSepHS}
An important mathematical ingredient in a number of applications, in
particular in UQ, Bayesian PDE inversion, risk analysis, but also in
statistical learning theory applied to input-output maps for PDEs, is
the construction of measures on function spaces.  A particular
interest is in GMs on separable on Hilbert or Banach or, more
generally, on locally convex spaces of uncertain input data for PDEs.
Accordingly, we review constructions of such measures, in terms of
suitable \emph{bases of the input spaces}. This implies, in
particular, \emph{separability} of the spaces of admissible PDE inputs
or, at least, the uncertain input data being \emph{a separably-valued}
random element of otherwise nonseparable spaces (such as, e.g.,
$L^\infty(\D)$) of valid inputs for the PDE of interest.

Let $(\Omega, \cA, \mu)$ be a measure space and $1 \le p \leq \infty$.
Recall that the normed space $L^p(\Omega,\mu)$ is defined as
the space of all $\mu$-measurable functions $u$ from $\Omega$ to $\R$
such that the norm
\begin{equation} \nonumber \|u\|_{L^p(\Omega,\mu)} := \
	\left(\int_{\Omega} |u(x)|^p \, \rd \mu(x) \right)^{1/p} < \infty.
\end{equation}
When $p=\infty$ the norm of $u\in L^\infty(\Omega,\mu)$ is given by
	$$
	\|u\|_{L^\infty(\Omega,\mu)} := \underset{x\in\Omega}{\esssup}|u(x)|.
	$$
	If $\Omega \subset \RR^m$ and $\mu$ is the Lebesgue measure, 
        we simply denote these spaces by $L^p(\Omega)$.

Throughout this section, $X$ will denote a real separable and locally
convex space with Borel $\sigma$-field $\cB(X)$ and with dual space
$X^*$.
\begin{example}\label{ex:R^infty}
{\rm	Let $\RRi$ be the linear space of all sequences
	$\by = (y_j)_{j\in \NN}$ with $y_j\in \RR$.  This linear space
	becomes a locally convex space (still denoted by $\RRi$) equipped
	with the topology generated by the countable family of semi-norms
	\[
	p_j(\by):= |y_j|, \quad j \in \N.
	\]
	The locally convex space $\RRi$ is separable and complete and,
	therefore, a Fr\'echet space. 
        However, it is not normable, and hence  not a Banach space.
}\end{example}
	\begin{example} {\rm \label{ex:SepBsp}
		Let $\domain\subset \R^d$ be an open bounded Lipschitz domain.
		\begin{enumerate}
			\item The Banach spaces $C(\overline{\domain})$ and $L^1(\domain)$ are separable.
			\item For $0<s<1$ we denote by $C^s(\domain)$ the space of $s$-H\"older
			continuous functions in $\domain$ equipped with the norm
			and seminorm \index{space!$s$-H\"older continuous $\sim$}
			\begin{equation*}\label{eq:CsNrm}
				\| a \|_{C^s} := \| a \|_{L^\infty} + | a |_{C^s}\;,
				\quad 
				| a |_{C^s} 
                                := \sup_{\bx,\bx'\in \domain, \bx\ne \bx'} \frac{|a(\bx) - a(\bx')|}{|\bx-\bx'|^s} 
                        \;.
			\end{equation*}
			Then the Banach space $C^s(\D)$  is not separable.  
                        A separable subspace is
			\begin{equation*}\label{eq:Cso}
				C^s_\circ(\domain) 
				:= 
				\bigg\{ a \in C^s(\domain): 
				\forall \bx\in \domain \; \lim_{\domain \ni \bx'\to \bx} \frac{|a(\bx)-a(\bx')|}{|\bx-\bx'|^s} = 0 
				\bigg\}
				\;.
			\end{equation*}
		\end{enumerate}
}	\end{example}

We review and present constructions of GMs $\gamma$ on $X$.
\subsubsection{Cylindrical sets}
\label{S:CylSet}
\emph{Cylindrical sets} are subsets of $X$ of the form
\begin{equation*}\label{eq:CylSet}
C = \left\{ x\in X: (l_1(x),\ldots ,l_n(x))\in C_0 : C_0 \in \cB(\R^n), l_i\in X^* \right\},
\;\mbox{for some}\; n\in \N\;.
\end{equation*}
Here, the Borel set $C_0 \in \cB(\R^n)$ is sometimes referred to as
\emph{basis of the cylinder $C$}.  
We denote by $\cE(X)$ the
$\sigma$-field generated by all cylindrical subsets of $X$. 
It is the
smallest $\sigma$-field for which all continuous linear functionals
are measurable.  Evidently then $\cE(X)\subset \cB(X)$, with in
general strict inclusion (see, e.g., \cite[A.3.8]{Bogach98}).  If,
however, $X$ is separable, then $\cE(X)=\cB(X)$ (\cite[Theorem
A.3.7]{Bogach98}).

Sets of the form
$$
\left\{ \by\in \R^\infty : (y_1,\ldots ,y_n)\in B, B \in \cB(\R^n),
n\in \N \right\}
$$
generate $\cB(\R^\infty)$ \cite[Lemma 2.1.1]{Bogach98}, and a set $C$
belongs to $\cB(X)$ iff it is of the form
$$
C = \left\{ x\in X: \, (l_1(x),\ldots ,l_n(x),\ldots ) \in B, \
\mbox{for}\ l_i\in X^*, B \in \cB(\R^\infty) \right\} \;,
$$
(see, e.g., \cite[Lemma 2.1.2]{Bogach98}).
%
\subsubsection{Definition and basic properties of Gaussian measures}
\label{S:DefGMX}
\begin{definition}[{\cite[Definition 2.2.1]{Bogach98}}]\label{def:GMX}
	A probability measure $\gamma$ defined on the
	$\sigma$-field $\cE(X)$ generated by $X^*$ is called Gaussian if,
	for any $f\in X^*$ the induced measure $\gamma\circ f^{-1}$ on $\R$
	is Gaussian.  The measure $\gamma$ is centered or symmetric if all
	measures $\gamma\circ f^{-1}$, $f\in X^*$ are centered.
	
	Let $(\Omega, \cA,\IP)$ be a probability space.  A random field $u$
	taking values in $X$ (recall that throughout, $X$ is a separable
	locally convex space) is a map $u:\Omega\to X$ such that
	$$
	\forall B\in \calB(X):\;\; u^{-1}(B)\in \calA\;.
	$$
	The \emph{law of the random field $u$} is the probability measure
	$\mfm_u$ on $(X,\calB(X))$ which is defined as
	$$
	\mfm_u (B):= \IP(u^{-1}(B)), \quad B \in \calB(X)\;.
	$$
	The random field $u$ is said to be \emph{Gaussian} if its law is a GM \index{Gaussian!random field}
	on $(X,\calB(X))$.
\end{definition}
Images of GMs under continuous affine transformations on $X$ are
Gaussian.
\begin{lemma}[{\cite[Lemma 2.2.2]{Bogach98}}]\label{lem:AffGM} Let $\gamma$ be a GM on $X$ and let $T:X\to Y$ be a linear
	map to another locally convex space $Y$ such that $l\circ T \in X^*$
	for all $l\in Y^*$.  Then $\gamma\circ T^{-1}$ is a GM on $Y$.
	
	This remains true for the affine map $x\mapsto Tx + \mu$ for some
	$\mu\in Y$.
\end{lemma}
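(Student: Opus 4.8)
The plan is to prove Lemma~\ref{lem:AffGM} directly from the definition of a Gaussian measure on a locally convex space (Definition~\ref{def:GMX}), namely that $\gamma$ is Gaussian on $X$ iff $\gamma\circ f^{-1}$ is a Gaussian measure on $\R$ for every $f\in X^*$. The key observation is that the hypothesis $l\circ T\in X^*$ for all $l\in Y^*$ is precisely what is needed for the pushforward $\gamma\circ T^{-1}$ to make sense on $\cE(Y)$ and to reduce testing against $Y^*$ to testing against $X^*$.

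First I would check that $\gamma\circ T^{-1}$ is a well-defined probability measure on the $\sigma$-field $\cE(Y)$ generated by $Y^*$. For this, take any cylindrical set in $Y$, i.e.\ a set of the form $\{y\in Y:(l_1(y),\dots,l_n(y))\in C_0\}$ with $l_i\in Y^*$ and $C_0\in\cB(\R^n)$; its preimage under $T$ is $\{x\in X:(l_1(Tx),\dots,l_n(Tx))\in C_0\}$, which by hypothesis ($l_i\circ T\in X^*$) is a cylindrical subset of $X$, hence lies in $\cE(X)$, the domain of $\gamma$. Since the cylindrical sets generate $\cE(Y)$, a standard argument (the collection of sets in $\cE(Y)$ whose $T$-preimage lies in $\cE(X)$ is a $\sigma$-field containing the cylindrical sets) shows $T$ is $\cE(X)/\cE(Y)$-measurable, so $\gamma\circ T^{-1}$ is a probability measure on $\cE(Y)$.

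Next, to verify it is Gaussian, fix an arbitrary $l\in Y^*$. Then $(\gamma\circ T^{-1})\circ l^{-1}=\gamma\circ(l\circ T)^{-1}$, and $l\circ T\in X^*$ by assumption. Since $\gamma$ is a GM on $X$, the definition gives that $\gamma\circ(l\circ T)^{-1}$ is a Gaussian measure on $\R$. As $l\in Y^*$ was arbitrary, Definition~\ref{def:GMX} yields that $\gamma\circ T^{-1}$ is Gaussian on $Y$. This also transports centeredness: if all $\gamma\circ f^{-1}$, $f\in X^*$, are centered, then in particular $\gamma\circ(l\circ T)^{-1}$ is centered for each $l\in Y^*$, so $\gamma\circ T^{-1}$ is centered.

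Finally, for the affine map $S:x\mapsto Tx+\mu$ with $\mu\in Y$, I would note that $S^{-1}(B)=T^{-1}(B-\mu)$ for Borel (resp.\ cylindrical) $B$, so measurability and the pushforward are inherited from the linear case, and $\gamma\circ S^{-1}=(\gamma\circ T^{-1})\ast\delta_\mu$ is the translate by $\mu$ of the GM $\gamma\circ T^{-1}$; for each $l\in Y^*$ one has $(\gamma\circ S^{-1})\circ l^{-1}=\gamma\circ(l\circ T)^{-1}$ shifted by $l(\mu)$, which is again Gaussian on $\R$ (a shifted Gaussian is Gaussian). I do not expect any serious obstacle here: the proof is essentially bookkeeping with $\sigma$-fields, and the only point requiring a moment's care is the reduction from $\cE(Y)$-measurability of $T$ to the behaviour on cylindrical sets, which is exactly where the hypothesis $l\circ T\in X^*$ is used.
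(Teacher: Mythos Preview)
Your proof is correct and complete. The paper does not provide its own proof of this lemma---it is stated with a citation to \cite[Lemma~2.2.2]{Bogach98} and no argument is given---so there is nothing to compare against; your direct verification from Definition~\ref{def:GMX} is exactly the standard route and matches what one finds in Bogachev.
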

The \emph{Fourier transform} of a measure $\mfm$ over $(X,\cB(X))$ is
given by
\begin{equation*}\label{eq:FTmeas}
	\hat{\mfm} : X^*\to \C: f\mapsto \hat{\mfm}(f) := \int_X \exp\left(\im f(x)\right) \rd\mfm( x) \;.
\end{equation*}
\begin{theorem}[{\cite[Theorem 2.2.4]{Bogach98}}]\label{thm:FTGMX}  A measure $\gamma$ on $X$ is Gaussian iff its Fourier
	transform $\hat{\gamma}$ can be expressed with some linear
	functional $L(\cdot )$ on $X^*$ and a symmetric bilinear form
	$B(.,.)$ on $X^*\times X^*$ such that $f\mapsto B(f,f)$ is
	nonnegative as
	\begin{equation}\label{eq:FTGMX}
		\forall f \in X^*:\quad 
		\hat{\gamma}(f) = \exp\left( \im L(f) - \frac{1}{2}B(f,f)\right).
	\end{equation}
\end{theorem}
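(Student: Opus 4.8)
The statement to prove is Theorem 2.2.4 (from Bogachev), characterizing Gaussian measures on $X$ via the form of their Fourier transforms. Let me sketch a proof plan.

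\textbf{Plan of proof.} The plan is to reduce the characterization on the locally convex space $X$ to the finite-dimensional case, where the corresponding statement is already available as Proposition~\ref{prop:FTGM}. The two implications are handled separately.

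First I would prove the ``only if'' direction. Suppose $\gamma$ is Gaussian on $X$, meaning $\gamma\circ f^{-1}$ is a GM on $\R$ for every $f\in X^*$. Fix $f\in X^*$; then by the univariate Fourier-transform formula for GMs on $\R$ (Section~\ref{S:1dGM}), $\widehat{\gamma\circ f^{-1}}(t)=\exp(\im \mu_f t-\tfrac12\sigma_f^2 t^2)$ for some $\mu_f\in\R$ and $\sigma_f^2\ge 0$. Evaluating at $t=1$ and using the change-of-variables identity $\widehat{\gamma}(f)=\int_X\exp(\im f(x))\rd\gamma(x)=\int_\R\exp(\im s)\rd(\gamma\circ f^{-1})(s)=\widehat{\gamma\circ f^{-1}}(1)$ gives $\widehat{\gamma}(f)=\exp(\im\mu_f-\tfrac12\sigma_f^2)$. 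Define $L(f):=\mu_f$ and $B(f,f):=\sigma_f^2$. It remains to check $L$ is linear and $B$ is a (symmetric) bilinear form with $f\mapsto B(f,f)$ nonnegative. Linearity of $L$: for $f,g\in X^*$ and $a,b\in\R$, the RV $x\mapsto af(x)+bg(x)=(af+bg)(x)$ has mean $\int_X(af+bg)\rd\gamma = a\int_X f\rd\gamma + b\int_X g\rd\gamma$ by linearity of the integral, and $\int_X f\rd\gamma=\mu_f=L(f)$ because $\gamma\circ f^{-1}=\mathcal N(\mu_f,\sigma_f^2)$ has mean $\mu_f$; hence $L(af+bg)=aL(f)+bL(g)$. (One must note that $f$ is $\gamma$-integrable because $\gamma\circ f^{-1}$ is a GM on $\R$, which has moments of all orders.) For $B$: define it by polarization, $B(f,g):=\tfrac12\big(\operatorname{Var}_\gamma(f+g)-\operatorname{Var}_\gamma(f)-\operatorname{Var}_\gamma(g)\big)$ where $\operatorname{Var}_\gamma(f):=\sigma_f^2=\int_X (f-L(f))^2\rd\gamma$; equivalently $B(f,g)=\int_X(f-L(f))(g-L(g))\rd\gamma$, which is manifestly symmetric, bilinear (again by linearity of the integral, all second moments being finite since each $\gamma\circ h^{-1}$ is Gaussian on $\R$), and satisfies $B(f,f)=\sigma_f^2\ge 0$. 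This establishes \eqref{eq:FTGMX}.

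Second, the ``if'' direction. Suppose $\widehat{\gamma}(f)=\exp(\im L(f)-\tfrac12 B(f,f))$ for all $f\in X^*$ with $L$ linear, $B$ symmetric bilinear, $B(f,f)\ge 0$. Fix $f\in X^*$; I want to show $\gamma\circ f^{-1}$ is Gaussian on $\R$. Its Fourier transform at $t\in\R$ is $\int_\R \exp(\im ts)\rd(\gamma\circ f^{-1})(s)=\int_X\exp(\im (tf)(x))\rd\gamma(x)=\widehat{\gamma}(tf)=\exp(\im L(tf)-\tfrac12 B(tf,tf))=\exp(\im t L(f)-\tfrac12 t^2 B(f,f))$, using linearity of $L$ and bilinearity of $B$. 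This is exactly the Fourier transform of the GM on $\R$ with mean $L(f)$ and variance $B(f,f)\ge 0$ (a Dirac mass if $B(f,f)=0$), so by uniqueness of the Fourier transform for measures on $\R$, $\gamma\circ f^{-1}$ is that GM; hence $\gamma$ is Gaussian by Definition~\ref{def:GMX}.

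\textbf{Main obstacle.} The substantive point in both directions is the measurability/integrability bookkeeping needed to justify ``linearity of the integral'' arguments, i.e.\ that every $f\in X^*$ lies in $L^1(X,\gamma)$ and indeed in $L^2(X,\gamma)$, so that $L$ and $B$ are well-defined on all of $X^*$. This follows from $\gamma\circ f^{-1}$ being a (one-dimensional) GM, hence having finite moments of all orders; but one should also remark that $\gamma$ is initially only defined on $\cE(X)$, the $\sigma$-field generated by $X^*$, on which each $f\in X^*$ is measurable by construction—so no extension issue arises, and the Fourier transform $\widehat{\gamma}(f)$ is well-defined as written. Everything else is the finite-dimensional theory pushed through $f^{-1}$, so I expect no real difficulty beyond this routine-but-necessary verification.
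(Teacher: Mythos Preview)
The paper does not prove this theorem; it is quoted verbatim from \cite[Theorem 2.2.4]{Bogach98} as background material, with no proof given in the text. Your proposal is therefore not competing against any argument in the paper.

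That said, your sketch is correct and is essentially the standard proof. Both directions reduce to the one-dimensional case via the pushforward $\gamma\circ f^{-1}$ and the change-of-variables identity $\widehat{\gamma}(f)=\widehat{\gamma\circ f^{-1}}(1)$. In the forward direction you correctly identify $L(f)$ and $B(f,f)$ as the mean and variance of $\gamma\circ f^{-1}$, and your verification that $L$ is linear and $B$ bilinear via the integral representations $L(f)=\int_X f\,\rd\gamma$ and $B(f,g)=\int_X(f-L(f))(g-L(g))\,\rd\gamma$ is exactly right; the integrability issue you flag is the only point that needs care, and your resolution (each $\gamma\circ f^{-1}$ is Gaussian on $\R$, hence has all moments) is the correct one. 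The converse is immediate from bilinearity and uniqueness of the Fourier transform on $\R$, as you wrote.
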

A GM $\gamma$ on $X$ is therefore characterized by $L$ and $B$.  It
also follows from \eqref{eq:FTGMX} that a GM $\gamma$ on $X$ is
centered iff $\gamma(A) = \gamma(-A)$ for all $A\in \cB(X)$, i.e., iff
$L = 0$ in \eqref{eq:FTGMX}.
\begin{definition}\label{def:MeanCov}
	Let $\mfm$ be a measure on $\cB(X)$ such that
	$X^*\subset L^2(X,\mfm)$.  Then the element $a_\mfm\in (X^*)'$ in
	the algebraic dual $(X^*)'$ defined by
	$$
	{a}_\mfm(f) := \int_X f(x) \rd\mfm(x), \; f\in X^*,
	$$
	is called \emph{mean of $\mfm$}.
	
	The operator $R_\mfm:X^* \to (X^*)'$ defined by
	$$
	R_\mfm(f)(g) := \int_X [f(x) - {a}_\mfm(f)] [g(x) - {a}_\mfm(g)]
	{\rd\mfm( x)}
	$$
	is called \emph{covariance operator} of $\mfm$.  The quadratic form on
	$X^*$ is called \emph{covariance of $\mfm$}.
\end{definition}
When $X$ is a real separable Hilbert space, one can say more.
\begin{definition}[Nuclear operators]\label{def:NucOp}
	Let $H_1$, $H_2$ be real separable Hilbert spaces with the
	norms $\| \circ \|_{H_1}$ and $\| \circ \|_{H_2}$, respectively, and
	with corresponding inner products $(\cdot,\cdot)_{H_i}$, $i=1,2$.
	
	A linear operator $K\in \cL(H_1,H_2)$ is called \emph{nuclear} or
	\emph{trace class} if it can be represented as
	$$
	\forall u\in H_1: \quad Ku = \sum_{k\in \NN} (u, x_{1k})_{H_1} x_{2k}
	\; \mbox{in}\; H_2\;.
	$$
	Here, $( x_{ik})_{k\in \NN}\subset H_i$, $i=1,2$ are such that
	$\sum_{k\in \NN} \| x_{1k} \|_{H_1} \| x_{2k} \|_{H_2} < \infty$.
\end{definition}
We denote by $\cL_1(H_1,H_2)\subset \cL(H_1,H_2)$ the space of all
nuclear operators.  This is a separable Banach space when it is
endowed with \emph{nuclear norm}
\begin{equation*}\label{eq:NucNrm}
	\| K \|_1 
	:= 
	\inf
	\left\{ 
	\sum_{k\in \NN} \| x_{1k} \|_{H_1}  \| x_{2k} \|_{H_2} : Ku = \sum_{k\in \NN} (u, x_{1k})_{H_1} x_{2k}
	\right\}\,.
\end{equation*}
When $X = H_1 = H_2$, we also write $\cL_1(X)$.

\begin{proposition}[{\cite[Theorem 2.3.1]{Bogach98}}]\label{prop:GMHilb}  
	Let $\gamma$ be a GM on a separable Hilbert space $X$ with
	innerproduct $(\cdot,\cdot)_X$, and let $X^*$ denote its dual,
	identified with $X$ via the Riesz isometry.
	
	Then there exist $\mu\in X$ and a symmetric, nonnegative nuclear
	operator $K\in \cL_1(X)$ such that the Fourier transform
	$\hat{\gamma}$ of $\gamma$ is
	\begin{equation}\label{eq:GMHilb}
		\hat{\gamma}:X\to \C: x\mapsto \exp\left( \im (\mu,x)_X - \frac{1}{2} (Kx,x)_X \right) \,.
	\end{equation}
\end{proposition}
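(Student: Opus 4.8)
The starting point is Theorem~\ref{thm:FTGMX}: there exist a linear functional $L$ on $X^*$ and a symmetric bilinear form $B$ on $X^*\times X^*$ with $B(f,f)\ge 0$ such that $\hat{\gamma}(f)=\exp\big(\im L(f)-\tfrac12 B(f,f)\big)$ for all $f\in X^*$. Identifying $X^*$ with $X$ via the Riesz isometry, it suffices to exhibit $\mu\in X$ with $L(f)=(\mu,f)_X$ and a symmetric, nonnegative, \emph{nuclear} operator $K\in\cL_1(X)$ with $B(f,g)=(Kf,g)_X$; then \eqref{eq:GMHilb} follows verbatim from \eqref{eq:FTGMX}. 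First I would record that every $f\in X^*$ lies in $L^2(X,\gamma)$: the push-forward $\gamma\circ f^{-1}$ has Fourier transform $t\mapsto\hat{\gamma}(tf)=\exp(\im tL(f)-\tfrac12 t^2B(f,f))$, hence is the Gaussian $\mathcal N(L(f),B(f,f))$ on $\R$, which has finite moments of every order; thus $\int_X|f(x)|^2\rd\gamma(x)=\int_\R t^2\rd(\gamma\circ f^{-1})(t)<\infty$. Comparing first and second moments gives $a_\gamma(f)=\int_X f\rd\gamma=L(f)$ and, by polarization, the covariance of Definition~\ref{def:MeanCov} satisfies $R_\gamma(f)(g)=B(f,g)$.

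Next I would show the inclusion $\iota\colon X^*\to L^2(X,\gamma)$ is bounded. Its graph is closed: if $f_n\to f$ in $X^*$ and $\iota f_n\to h$ in $L^2(X,\gamma)$, then a subsequence of $(\iota f_n)$ converges to $h$ $\gamma$-a.e., while $f_n(x)\to f(x)$ for every $x\in X$ (since $|f_n(x)-f(x)|\le\|f_n-f\|_{X^*}\|x\|_X$), so $h=f$ $\gamma$-a.e. By the closed graph theorem $\|f\|_{L^2(X,\gamma)}\le C\|f\|_{X^*}$ for all $f\in X^*$. Consequently $|L(f)|=|a_\gamma(f)|\le\|f\|_{L^2(X,\gamma)}\le C\|f\|_{X^*}$, so $L$ is bounded and, by the Riesz representation theorem, $L(f)=(\mu,f)_X$ for a unique $\mu\in X$; likewise $|B(f,g)|=|R_\gamma(f)(g)|\le\|f-L(f)\|_{L^2}\|g-L(g)\|_{L^2}\le 4C^2\|f\|_{X^*}\|g\|_{X^*}$, so $B$ is a bounded symmetric bilinear form and $B(f,g)=(Kf,g)_X$ for a bounded, self-adjoint $K\in\cL(X)$; nonnegativity of $B$ forces $K\ge 0$.

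The remaining, and genuinely non-trivial, point is that $K$ is \emph{nuclear}; this is where the full strength of Gaussianity in infinite dimensions enters, and it is the main obstacle. Using Lemma~\ref{lem:AffGM} with the translation $x\mapsto x-\mu$ we may assume $\gamma$ is centered, which does not alter $K$. I would invoke the classical concentration estimate for Gaussian measures (Fernique's theorem), which gives $\int_X\exp(\eps\|x\|_X^2)\rd\gamma(x)<\infty$ for some $\eps>0$, in particular $\int_X\|x\|_X^2\rd\gamma(x)<\infty$. Then for any orthonormal basis $(e_k)_{k\in\NN}$ of $X$, Parseval's identity together with the monotone convergence theorem yields
\[
\sum_{k\in\NN}(Ke_k,e_k)_X=\sum_{k\in\NN}\int_X(x,e_k)_X^2\rd\gamma(x)=\int_X\sum_{k\in\NN}(x,e_k)_X^2\rd\gamma(x)=\int_X\|x\|_X^2\rd\gamma(x)<\infty,
\]
so $K$ has finite trace. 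A symmetric nonnegative bounded operator of finite trace is nuclear in the sense of Definition~\ref{def:NucOp}: $K^{1/2}$ is Hilbert--Schmidt, hence $K=K^{1/2}K^{1/2}$ admits a representation $Ku=\sum_k\lambda_k(u,\phi_k)_X\phi_k$ with an orthonormal system $(\phi_k)$ and $\sum_k\lambda_k=\operatorname{tr}(K)<\infty$, which exhibits $K\in\cL_1(X)$. Assembling the three steps gives $\hat{\gamma}(x)=\exp\big(\im(\mu,x)_X-\tfrac12(Kx,x)_X\big)$, i.e.\ \eqref{eq:GMHilb}. Everything except the finite-trace property is soft functional analysis; the essential input is the square-integrability of $\|\cdot\|_X$ against $\gamma$, which requires the Gaussian concentration estimate (or, alternatively, a direct truncation/tightness argument exploiting Radonness of $\gamma$ on the Polish space $X$).
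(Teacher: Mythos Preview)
The paper does not prove this proposition; it is stated with a citation to \cite[Theorem 2.3.1]{Bogach98} and used as a black box. Your proposal supplies a correct and standard proof: the reduction to Theorem~\ref{thm:FTGMX}, the identification of $L$ and $B$ with the mean and covariance, the boundedness of $\iota:X^*\to L^2(X,\gamma)$ via the closed graph theorem, and the trace computation $\sum_k(Ke_k,e_k)_X=\int_X\|x\|_X^2\rd\gamma(x)$ are all sound. Your identification of Fernique's theorem (or an equivalent tightness argument) as the one genuinely nontrivial input is exactly right---this is also how the proof in Bogachev is structured, so your approach aligns with the cited source even though the present paper gives no proof of its own.
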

\begin{remark} {\rm\label{rmk:GMH}
	Consider that $X$ is a real, separable Hilbert space with
	innerproduct $(\cdot,\cdot)_X$ and assume given a GM $\gamma$ on
	$X$.
	\begin{enumerate}
		\item In \eqref{eq:GMHilb}, $K\in \cL(X)$ and $\mu\in X$ are
		determined by
		$$
		\forall u,v\in X: (\mu,v)_X = \int_X (x,v)_X \rd \gamma(x), \quad
		(Ku,v)_X = \int_X (u,x-\mu)_X (v,x-\mu)_X \rd \gamma(x) \;.
		$$
		The closure of $X=X^*$ in $L^2(X;\gamma)$ then equals the completion
		of $X$ with respect to the norm
		$x\mapsto \| K^{1/2}x\|_X = \sqrt{(Kx,x)_X}$.  Let $(e_n)_{n\in \NN}$
		denote the ONB of $X$ formed by eigenvectors of $K$, with
		corresponding real, non-negative eigenvalues $k_n \in \NN_0$, i.e.,
		$Ke_n = k_ne_n$ for $n=1,2,\ldots $.  Then the completion can be
		identified with the weighted sequence (Hilbert) space
		$$
		\left\{ (x_n)_{n\in \NN} : \sum_{n\in \NN} k_n x_n^2 <\infty \right\}
		\;.
		$$
		The nuclear operator $K$ is the \emph{covariance of the GM $\gamma$ on
			the Hilbert space $X$}.
		\item In coordinates $\by = (y_j)_{j\in \NN} \in \ell^2(\N)$
		associated to the ONB $(e_n)_{n\in \NN}$ of $X$, \eqref{eq:GMHilb}
		takes the form
		$$
		\hat{\gamma}: \ell^2(\N) \to \C: \by \mapsto \exp\left( \im \sum_{n\in
			\NN} a_n y_n - \frac{1}{2} \sum_{n\in \NN} k_n y_n^2 \right)\;.
		$$
		\item Consider $a=0\in X$ and, for finite $n\in \N$, a cylindrical set
		$C = P_n^{-1}(B)$ with $P_n$ denoting the orthogonal projection onto
		$X_n := {\rm span}\{e_j: j=1,\ldots ,n\}\subset X$, and with
		$B\in \cB(X_n)$.  Then
		$$
		\gamma(C) = \int_B \prod_{j=1}^n (2\pi k_j)^{-1/2}
		\exp\left(-\frac{1}{2k_j} y_j^2 \right)\rd y_1\ldots \rd y_n \;.
		$$
	\end{enumerate}
} \end{remark}

For $f\in X^*$ and $x\in X$, one frequently writes the $X^*\times X$
duality pairing as
\begin{equation*}\label{eq:<f,x>}
	f(x) = \langle f,x \rangle\;.
\end{equation*}
With the notation from Definition \ref{def:MeanCov}, the covariance
operator $C_g = R_{\gamma_g}$ in Definition \ref{def:MeanCov} of a
\emph{centered}, Gaussian random vector
$g:(\Omega,\cA;\gamma_g) \to X$ with Gaussian law $\gamma_g$ on a
separable, real Banach space $X$ admits the representations
\begin{equation*}\label{eq:CovKer}
	R_{\gamma_g} = C_g: X^*\to X: C_g \varphi:= \E\langle \varphi,g \rangle g,
	\quad 
	C_g: X^*\times X^*\to \R: (\psi,\varphi) \mapsto \langle \psi, C_g \varphi \rangle\;.
\end{equation*}
%
\subsection{Cameron-Martin space} 
\label{S:CamMart}\index{space!Cameron-Martin $\sim$}
%

Let $X$ be a real separable locally convex space and $\gamma$ a GM on
$\cE(X)$ such that $X^* \subset L^2(X;\gamma)$.  Then, for every
$\varphi\in X^*$, the image measure $\varphi(\gamma)$ is a GM on $\R$.
By \cite[Theorem 3.2.3]{Bogach98}, there exists a unique
$a_\gamma \in X$, the mean of $\gamma$, such that
$$
\forall \varphi\in X^*: \quad \varphi(a_\gamma) = \int_X \varphi(h)
\rd\gamma(h)\;.
$$
Denote by $X^*_\gamma$ the closure of the set
$\{\varphi - \varphi({a_\gamma)}, \, \varphi \in X^*) \}$ embedded
into the normed space $L^2(X;\gamma)$ w.r.t.\  its norm.

The covariance operator, $R_\gamma$, of $\gamma$ is formally given by
\begin{equation}\label{eq:Cov}
	\forall \varphi,\psi\in X^*:\quad 
	\langle R_\gamma \varphi,\psi \rangle 
	=
	\int_X \varphi(h-{a_\gamma})\psi(h-{a_\gamma}) \rd\gamma(h) 
	\;.
\end{equation}
As $X$ is a separable locally convex space, \cite[Theorem
3.2.3]{Bogach98} implies that there is a unique linear operator
$R_\gamma: X^*\to X$ such that \eqref{eq:Cov} holds. We define
$$
\forall \varphi \in X^*: \quad \sigma(\varphi):= \sqrt{\langle
	R_\gamma \varphi, \varphi \rangle} \;.
$$
If $h=R_\gamma \varphi$ for some $\varphi \in X^*$, the map
$h\mapsto \| h \|:=\sigma(\varphi)$ defines a norm on
${\rm range}(R_\gamma)\subset X$.  There holds \cite[Lemma
2.4.1]{Bogach98}
$\| h \| = \| h \|_{H(\gamma)} = \| \varphi \|_{L^2(X;\gamma)}$.

The \emph{Cameron-Martin space}  of the GM $\gamma$ on $X$ is the
completion of the range of $R_\gamma$ in $X$ with respect to the norm
$\| \circ \|$.
The Cameron-Martin space of the GM $\gamma$ on $X$ is denoted by
$H(\gamma)$.  It is also called the reproducing kernel Hilbert space
(RKHS for short) of $\gamma$ on $X$.

By \cite[Theorem 3.2.7]{Bogach98}, $H(\gamma)$ is a
separable Hilbert space, and $H(\gamma)\subset X$ with continuous
embedding, according to \cite[Proposition 2.4.6]{Bogach98}.  In case
that $X\subset Y$ for another Banach space, with continuous and linear
embedding, the Cameron-Martin spaces for $X$ and $Y$ coincide.  For
example, in the context of Remark \ref{rmk:GMH}, item~(i),
$H(\gamma) = K(X^*_\gamma)$.

Being a Hilbert space, introduce an innerproduct
$(\cdot,\cdot)_{H(\gamma)}$ on $H(\gamma)$ compatible with the norm
$\| \circ \|_{H(\gamma)}$ via the parallelogram law.  Then there holds
$$
\forall \varphi\in X^* \; \forall f\in H(\gamma): \quad (f,R_\gamma
\varphi)_{H(\gamma)} = \varphi(f)\;.
$$
Since $H(\gamma)$ is also separable, there is an ONB.
\begin{proposition}[{\cite[Theorem 3.5.10, Corollary 3.5.11]{Bogach98}}]\label{prop:ONBH}
	For a centered GM on a real, separable Banach space $X$ with norm
	$\| \circ \|_X$, there exists an ONB $( e_n )_{n\in \N}$ of the
	Cameron-Martin space $H(\gamma)\subset X$ such that
	$$
	\sum_{n\in \NN} \| e_n \|_X^2 <\infty \;, \qquad \forall \varphi \in
	X^*: \; R_\gamma\varphi = \sum_{n\in \NN} \varphi(e_n)e_n\;.
	$$
\end{proposition}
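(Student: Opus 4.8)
The plan is to treat the two assertions separately. The second one, $R_\gamma\varphi = \sum_n\varphi(e_n)e_n$, will hold for \emph{any} orthonormal basis $(e_n)$ of $H(\gamma)$: it is just the expansion of $R_\gamma\varphi\in H(\gamma)$ in that basis, together with the reproducing identity $(f,R_\gamma\varphi)_{H(\gamma)} = \varphi(f)$ noted above, and since $H(\gamma)\hookrightarrow X$ continuously the series also converges in $X$. So the real content is to \emph{construct} a basis for which $\sum_n\|e_n\|_X^2 < \infty$; the obvious candidate obtained from an orthonormal basis of the first chaos $X^*_\gamma$ need not have this property.

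First I would fix coordinates. We may assume $\gamma$ centered and realize it as the law of $g := \mathrm{id}_X$ on $(X,\cB(X),\gamma)$; by Fernique's theorem $M := \E\|g\|_X^2 < \infty$. Separability of $X$ makes $L^2(X;\gamma)$, and hence its closed subspace $X^*_\gamma$, separable; an orthonormal basis $(\xi_n)$ of $X^*_\gamma$ consists of i.i.d.\ standard normal (measurable, linear) functionals, and the isometric extension of $R_\gamma$ to $X^*_\gamma\to H(\gamma)$ carries it to a canonical orthonormal basis $(h_n)$ of $H(\gamma)$ with $\varphi(h_n) = (\xi_n,\varphi)_{L^2(X;\gamma)}$. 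It is classical (\cite{Bogach98}) that $g_N := \sum_{n\le N}\xi_n h_n \to g$ in $L^2(X;\gamma)$, so $\delta_N := \E\|\sum_{n>N}\xi_n h_n\|_X^2 \to 0$; combined with the elementary inequality $\E\|S\|_X^2\le\E\|S+S'\|_X^2$ for independent, centered, square-integrable $S,S'$ (conditional Jensen), this yields $\E\|\sum_{n\in B}\xi_n h_n\|_X^2\le\delta_N$ for every finite $B\subseteq\{N+1,N+2,\dots\}$.

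The crux is a \emph{block-and-rotate} construction. Choose $0=N_0<N_1<\cdots$ growing so fast that $\delta_{N_{k-1}}\le 2^{-k}$ for $k\ge 1$, set $B_k := \{N_{k-1}+1,\dots,N_k\}$ and $W_k := \mathrm{span}\{h_n : n\in B_k\}\subseteq X$, a finite-dimensional space which carries the inner product of $H(\gamma)$ and on which the standard Gaussian is $Q_k := \sum_{n\in B_k}\xi_n h_n$, with $\E\|Q_k\|_X^2\le M$ for $k=1$ and $\le 2^{-k+1}$ for $k\ge 2$. Now replace $(h_n)_{n\in B_k}$ by a rotated orthonormal basis: averaging $\sum_{n\in B_k}\|Uh_n\|_X^2$ over a Haar-random rotation $U$ of $W_k$ and using the polar decomposition of $Q_k$ (a $\chi_{d_k}$-distributed radius, $d_k := |B_k|$, times an independent uniform direction on the $H(\gamma)$-unit sphere) gives $\E_U\sum_{n\in B_k}\|Uh_n\|_X^2 = \E\|Q_k\|_X^2$. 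Hence there is a rotation $U_k$ for which $(f_{k,n})_{n\in B_k} := (U_kh_n)_{n\in B_k}$ is an orthonormal basis of $W_k$ with $\sum_{n\in B_k}\|f_{k,n}\|_X^2\le\E\|Q_k\|_X^2$. Since the $W_k$ are pairwise orthogonal in $H(\gamma)$ and their union is total, concatenating all the $(f_{k,n})$ and relabelling gives an orthonormal basis $(e_n)$ of $H(\gamma)$ with $\sum_n\|e_n\|_X^2 = \sum_k\sum_{n\in B_k}\|f_{k,n}\|_X^2 \le M + 1 < \infty$, as required.

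The step I expect to be the real obstacle is the realization that the summability $\sum_n\|e_n\|_X^2<\infty$ is genuinely a \emph{choice} and cannot be deduced from any a priori convenient basis: in a general Banach space, convergence of the Gaussian series $\sum_n\xi_n h_n$ in $X$ does not imply $\sum_n\|h_n\|_X^2<\infty$ (this holds precisely in spaces of cotype $2$), which is what forces the block-and-rotate argument. The remaining ingredients — Fernique integrability, separability of $X^*_\gamma$, the conditional-Jensen monotonicity, and the polar decomposition of a finite-dimensional Gaussian — are all routine. If one prefers, the existence of a representing series over an orthonormal basis of $H(\gamma)$ can instead be quoted directly from the theory of Radon Gaussian measures in \cite{Bogach98}, but the argument above is self-contained given what has been developed so far.
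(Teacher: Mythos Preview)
The paper does not actually prove this proposition; it merely cites \cite[Theorem 3.5.10, Corollary 3.5.11]{Bogach98} and immediately remarks that the conclusion fails for an arbitrary ONB of $H(\gamma)$. So there is no ``paper's own proof'' to compare against, and your task reduces to giving a correct argument.

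Your proof is correct and self-contained. The second assertion is, as you say, immediate from the reproducing identity $(f,R_\gamma\varphi)_{H(\gamma)}=\varphi(f)$ and the continuous embedding $H(\gamma)\hookrightarrow X$. For the first assertion, the block-and-rotate construction works: the averaging identity $\E_U\sum_{n\in B_k}\|Uh_n\|_X^2=\E\|Q_k\|_X^2$ follows exactly as you indicate from the polar decomposition of the standard Gaussian on $W_k$ (both sides equal $d_k\,\E\|\Theta\|_X^2$ with $\Theta$ uniform on the $H(\gamma)$-unit sphere), and the probabilistic method then produces the desired rotation on each block. The conditional-Jensen step and the tail estimate are routine. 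One small slip: the phrasing ``$\delta_{N_{k-1}}\le 2^{-k}$ for $k\ge 1$'' conflicts with $N_0=0$, since $\delta_0=M$ need not be $\le 1/2$; but you already handle $k=1$ separately with the trivial bound $M$, so simply require the inequality for $k\ge 2$ and the argument goes through unchanged. Your closing remark, that summability of $\sum_n\|e_n\|_X^2$ is genuinely a choice tied to cotype $2$, is exactly the point the paper's one-line remark after the proposition is gesturing at.
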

We remark that Proposition \ref{prop:ONBH} is not true for arbitrary
ONB $( e_n )_{n\in \NN}$ of $H(\gamma)$.

\subsection{Gaussian product measures}\index{Gaussian!product measures}
\label{sec-Gaussian product measures}
We recall a notion of product measures which gives an efficient method
to construct Gaussian measures on a countable Cartesian product of
locally convex spaces.
\begin{definition}[{Product measure, \cite[p.\ 372]{Bogach98}}]\label{def: product of measures} 
	Let $\mu_n$ be probability measures defined on $\sigma$-fields
	$\cB_n$ in locally convex spaces $X_n$.  Put
	$$
	X:= \prod_{n \in \N} X_n.
	$$
	Let $$\cB:= \bigotimes_{n \in \N} \cB_n$$ be the
	$\sigma$-field generated by all the sets of the form
	\begin{equation} \label{prod} B = B_1 \times B_2 \times \ldots
		\times B_n \times X_{n+1} \times X_{n+2}\times \ldots , \
		B_i \in \cB_i.
	\end{equation}
	The product measure $$\mu:= \bigotimes_{n \in \N} \mu_n$$ is
	the probability measure on $\cB$ defined by
	$\mu (B):= \prod_{i=1}^n \mu_i(B_i)$ for the sets $B$ of the
	form \eqref{prod}.
\end{definition}

\begin{example}[{\cite[Example 2.3.8]{Bogach98}}]\label{ex:Gaussian-prod-meas}
{\rm	Let $(\mu_n)_{n \in \N}$ be a sequence of GMs.  
        Then the product measure
	$\mu:= \otimes_{n \in \N} \mu_n$ is a GM on
	$X:= \prod_{n \in \N} X_n$. The Cameron-Martin space $H(\mu)$ of
	$\mu$ is the Hilbert direct sum of spaces $H(\mu_n)$, i.e.,
	\[
	H(\mu) = \left\{h = (h_j)_{j \in \N} \in X: \, h_j \in
	H(\mu_j), \|h\|^2_{H(\mu)} = \sum_{j \in \N} \|h_j\|^2_{H(\mu_j)} \right\}.
	\]
	The space $X^*_\mu$ is the set of all functions of the form
	\[
	\varphi \mapsto \sum_{j \in \N} f_j (\varphi_j), \quad f_j
	\in X^*_{\mu_j}, \quad \sum_{j \in \N} \sigma(f_j)^2 <
	\infty,
	\]
	and
	\[
	a_\mu(f) = \sum_{j \in \N} a_{\mu_j}(f_j), \quad \forall f
	= (f_j)_{j \in \N} \in X^*.
	\]
}
\end{example}
\begin{example}[{\cite[Example 2.3.5]{Bogach98}}]\label{ex:prod-measR^infty}
{\rm 	Denote by $(\gamma_{1,n})_{n \in \NN}$ a sequence of
	standard GMs on $(\R,\cB(\R))$.  Then the product measure
	$$
	\gamma = \bigotimes _{n\in \NN} \gamma_{1,n}
	$$
	is a centered GM on $\R^\infty$.  Furthermore,
	$H(\gamma) = \ell^2(\N)$ and
	$X^*_\gamma \simeq \ell^2(\N)$.  
        If $\mu$ is a GM on $\RRi$, 
        then by a result of Fernique, 
        the measures $\gamma$ and $\mu$ are
	either mutually singular or equivalent \cite[Theorem 2.12.9]{Bogach98}.  
        The locally convex space $\RRi$
	with the product measure $\gamma$ of standard GMs is
	the main parametric domain in the stochastic setting
	of UQ problems for PDEs with GRF inputs considered in
	this text.
}
\end{example}
%
\subsection{Gaussian series}
\label{S:GSer} \index{Gaussian!series}
A key role in the numerical analysis of PDEs with GRF
inputs from separable Banach spaces $\data$ is played by
representing these GRFs in terms of series with respect
to suitable \emph{representation systems}
$(\psi_j)_{j\in \NN}\in \data^\infty$ of $\data$ with
random coefficients.  There arises the question of
admissibility of $(\psi_j)_{j\in \NN}\in \data^\infty$
so as to allow a) to transfer randomness of function
space-valued inputs to a parametric, deterministic
representation (as is customary, for example, in the
transition from nonparametric to parametric models in
statistics) and b) to ensure suitability for numerical
approximation.

Items a) and b) are closely related to the selection of
stable bases for $\data$, with item b) mandating
additional requirements, such as efficient accessibility
for float point computations, quadrature, etc.

We first present an abstract result, Theorem
\ref{thm:AdFrmX} and then, in Sections \ref{S:GSerKL} and
\ref{S:GSerMRes}, we review several concrete
constructions of such series.  We discuss in Sections
\ref{S:GSerKL} and \ref{S:GSerMRes} several
examples, in particular the classical \KL Expansion
\cite{Karhunen46,StWrtMercer} of GRFs taking values in
separable Hilbert space.  All examples will be
admissible in parametrizing GRF input data for PDEs and
of Gaussian priors in the ensuing sparsity and
approximation rate analysis in Section
\ref{sec:EllPDElogN} and the following sections.
\subsubsection{Some abstract results}
\label{S:GserAbsRes}
We place ourselves in the setting of a real separable
locally convex space $X$, with a GM $\gamma$ on $X$, and
with associated Cameron-Martin Hilbert space
$H(\gamma)\subset X$ as introduced in Section
\ref{S:CamMart}.

We first consider expansions of Gaussian random vectors
with respect to orthonormal bases $(e_j)_{j\in \N}$ of
the Cameron-Martin space $H(\gamma)$.  As linear
transformations of GM are Gaussian (see Lemma
\ref{lem:AffGM}), we admit a linear transformation $A$.
\begin{theorem}[{\cite[Theorems. 3.5.1,
		3.5.7, (3.5.4)]{Bogach98}}]\label{thm:GSer}  Let $\gamma$ be a centered
	GM on a real separable locally convex space $X$ with
	Cameron-Martin space $H(\gamma)$ and with \emph{some}
	ONB $(e_j)_{j\in \NN}$ of $H(\gamma)$.  Let further
	denote $( y_j )_{j\in \NN}$ any sequence of
	independent standard Gaussian RVs on a probability
	space $(\Omega,\cA,\IP)$ and let $A\in \cL(H(\gamma))$
	be arbitrary.
	
	Then the Gaussian series
	$$
	\sum_{j\in \NN} y_j(\omega) A e_j
	$$
	converges $\IP$-a.s. in $X$.  The law of its limit is a centered GM
	$\lambda$ with covariance $R_\lambda$ given by
	$$
	R_\lambda(f)(g) = \left( A^* R_\gamma(f), A^*
	R_\gamma(g)\right)_{H(\gamma)} \;.
	$$
	Furthermore, there holds 
	of independent standard Gaussian RVs on a probability space
	$(\Omega,\cA,\IP)$.
	$$
	\int_X f(x) \gamma(\rd x) = \int_{\Omega} f\bigg(\sum_{j\in \NN}
	y_j(\omega) e_j\bigg) \rd \IP(\omega) \;.
	$$
	If $X$ is a real separable Banach space $X$ with norm $\| \circ \|_X$,
	for all sufficiently small constants $c>0$ holds
	$$
	\lim_{n\to\infty} \int_\Omega \exp\Bigg( c\bigg\| \sum_{j=n}^\infty
	y_j(\omega) Ae_j \bigg\|_X^2 \Bigg) \rd \IP(\omega) = 1\,.
	$$
	In particular, for every $p\in [1,\infty)$ we have
	$\big\| \sum_{j=n}^\infty y_j Ae_j \big\|_X^p \to 0$ in
	$L^1(\Omega,\IP)$ as $n\to \infty$.
\end{theorem}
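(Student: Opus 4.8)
The plan is to derive all the assertions from the Cameron--Martin/$L^2(\gamma)$ dictionary of Section~\ref{S:CamMart} together with two classical facts — the It\^o--Nisio theorem on almost sure convergence of series of independent symmetric random elements, and Fernique's integrability theorem — treating first $A=\mathrm{id}$ and then a general $A\in\cL(H(\gamma))$. First I would set up coordinates: $R_\gamma$ is an isometric isomorphism of $X^*_\gamma\subset L^2(X;\gamma)$ onto $H(\gamma)$, so the given ONB $(e_j)_{j\in\NN}$ of $H(\gamma)$ pulls back to an orthonormal family $\hat e_j:=R_\gamma^{-1}e_j$ in $X^*_\gamma$; being centred, jointly Gaussian and orthonormal in $L^2(\gamma)$, the functions $(\hat e_j)_j$ are i.i.d.\ $\mathcal N(0,1)$ under $\gamma$, hence equidistributed with $(y_j)_j$ under $\IP$. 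The reproducing property $(h,R_\gamma\varphi)_{H(\gamma)}=\varphi(h)$ for $h\in H(\gamma)$ gives, for $\varphi\in X^*$, that $\varphi(Ae_j)=(Ae_j,R_\gamma\varphi)_{H(\gamma)}=(e_j,A^*R_\gamma\varphi)_{H(\gamma)}$, and hence $\sum_j\varphi(Ae_j)^2=\|A^*R_\gamma\varphi\|_{H(\gamma)}^2<\infty$.

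For the almost sure convergence and the law of the limit, observe that the summands $\xi_j:=y_j Ae_j$ are independent, symmetric, $X$-valued random elements. For every $\varphi\in X^*$ the scalar series $\varphi(\sum_{j\le n}\xi_j)=\sum_{j\le n}y_j(e_j,A^*R_\gamma\varphi)_{H(\gamma)}$ is a sum of independent centred terms with summable variances, hence converges $\IP$-a.s.\ and in $L^2$ to a centred Gaussian variable, and the finitely many joint limits are centred Gaussian with covariance $(\varphi,\psi)\mapsto(A^*R_\gamma\varphi,A^*R_\gamma\psi)_{H(\gamma)}$. By the It\^o--Nisio theorem, in the form valid for series of independent symmetric random elements in a separable locally convex space (using separability of $X$, Radonness of Gaussian laws, and tightness obtained by comparison with the $A=\mathrm{id}$ case), almost sure convergence of all real projections upgrades to almost sure convergence of $\sum_j y_j Ae_j$ in $X$, with limit law a centred GM $\lambda$ satisfying $R_\lambda(f)(g)=(A^*R_\gamma f,A^*R_\gamma g)_{H(\gamma)}$. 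Specializing $A=\mathrm{id}$, the limit is centred Gaussian with covariance form $(\varphi,\psi)\mapsto(R_\gamma\varphi,R_\gamma\psi)_{H(\gamma)}=\langle R_\gamma\varphi,\psi\rangle$, which coincides with the covariance of $\gamma$; since the characteristic functional on $X^*$ pins down a Radon measure, the limit law is $\gamma$ itself, and the identity $\int_X f\,\rd\gamma=\int_\Omega f(\sum_j y_j(\omega)e_j)\,\rd\IP(\omega)$ follows by pushforward.

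For the last two displays, let $X$ be a separable Banach space and write $S^{(n)}:=\sum_{j\ge n}y_j Ae_j$; by the previous step applied to the shifted sequence, $\|S^{(n)}\|_X\to 0$ $\IP$-a.s.\ and $S^{(n)}$ has a centred Gaussian law. By Fernique's theorem there is $c'>0$ with $\int_X\exp(c'\|x\|_X^2)\,\rd\lambda(x)<\infty$, $\lambda$ being the law of the full series; conditioning on $(y_j)_{j<n}$ and applying Jensen's inequality to the convex increasing map $t\mapsto\exp(c't^2)$ composed with $\|\cdot\|_X$ yields $\E_\IP[\exp(c'\|S^{(n)}\|_X^2)]\le\int_X\exp(c'\|x\|_X^2)\,\rd\lambda(x)$, a bound uniform in $n$. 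Hence for $0<c<c'$ the family $\{\exp(c\|S^{(n)}\|_X^2)\}_n$ is uniformly integrable and converges a.s.\ to $1$, so $\int_\Omega\exp(c\|S^{(n)}\|_X^2)\,\rd\IP\to 1$; and since $t^{p/2}\le C_{p,c}\exp(ct)$, the quantity $\|S^{(n)}\|_X^p\le C_{p,c}\exp(c\|S^{(n)}\|_X^2)$ is uniformly integrable and tends to $0$ a.s., hence $\|S^{(n)}\|_X^p\to 0$ in $L^1(\Omega,\IP)$.

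I expect the main obstacle to be the passage in the middle step from almost sure convergence of the real-valued projections to almost sure convergence of the $X$-valued series — i.e.\ justifying the It\^o--Nisio argument beyond the Banach setting and supplying the tightness it requires — and, in the last step, ensuring that Fernique's bound on the tails is \emph{uniform} in $n$ rather than a crude $n$-dependent one. The accompanying algebraic identities relating $R_\gamma$, the ONB and $A^*$, and the scalar convergence, are routine once the Cameron--Martin dictionary is in place; these are precisely the technical points carried out in detail in the cited parts of Bogachev's monograph.
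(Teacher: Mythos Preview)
The paper does not supply its own proof of this theorem: it is stated with a citation to \cite[Theorems 3.5.1, 3.5.7, (3.5.4)]{Bogach98} and no argument is given in the text. Your sketch is a faithful outline of the standard proof as found in that reference --- the Cameron--Martin dictionary to identify the ONB with i.i.d.\ standard normals under $\gamma$, the It\^o--Nisio theorem to upgrade scalar convergence to $X$-valued almost sure convergence, identification of the limit law via its Fourier transform, and Fernique's theorem plus conditional Jensen for the uniform exponential integrability of the tails --- so there is nothing to contrast; you have reconstructed essentially what the cited source does.
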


Often, in numerical applications, ensuring orthonormality of the basis
elements could be computationally costly.  It is therefore of some
interest to consider Gaussian series with respect to more general
representation systems $(\psi_j)_{j\in \N}$.  An important notion is
\emph{admissibility} of such systems.
\begin{definition}\label{def:Adm}
	Let $X$ be a real, separable locally convex space, and let
	$g:(\Omega, \cA, \IP)\to X$ be a centered Gaussian random vector
	with law $\gamma_g = \IP_X$.  Let further $(y_j)_{j\in \N}$ be a
	sequence of i.i.d.\ standard real Gaussian RVs
	$y_j \sim \mathcal{N}(0,1)$.
	
	A sequence $(\psi_j)_{j\in \NN}\in X^\infty$ is called
	\emph{admissible for $g$} if
	$$
	\sum_{j\in\NN} y_j \psi_j \;\;\mbox{converges $\IP$-a.s. in }\;
	X\;\;\mbox{and}\;\; g =\sum_{j\in \NN} y_j \psi_j \;.
	$$
\end{definition}
To state the next theorem, we recall the notion of 
\emph{frames in separable Hilbert space} (see, e.g., \cite{HeilBases} 
and the references there for background and theory of frames.  
In the terminology of frame theory, 
Parseval frames correspond to tight frames with frame bounds equal to $1$).
\begin{definition}\label{def:ParsFrH} \index{Parseval frame}
	A sequence $(\psi_j )_{j\in \N}\subset H$ in a real separable
	Hilbert space $H$ with inner product $(\cdot,\cdot)_H$ is a
	\emph{Parseval frame of $H$} if
	$$
	\forall f\in H:\ \ \ f = \sum_{j\in \NN} (\psi_j,f)_H \,\psi_j
	\ \ \mbox{in}\ \ H\;.
	$$
\end{definition}
The following result, from \cite{LPFrame09}, characterizes admissible
affine representation systems for GRFs $u$ taking values in real,
separable Banach spaces $X$.
\begin{theorem}[{\cite[Theorem 1]{LPFrame09}}]\label{thm:AdFrmX} 
We have the following.
	\begin{enumerate}
		\item In a real, separable Banach space $X$ with a centered GM
		$\gamma$ on $X$, a representation system
		$\bPsi = (\psi_j)_{j\in \NN}\in X^\infty$ is \emph{admissible} for
		$\gamma$ iff $\bPsi$ is a Parseval frame for the Cameron-Martin
		space $H(\gamma)\subset X$, i.e.,
		\[
		\forall f\in H(\gamma): \quad \| f \|_{H(\gamma)}^2 = \sum_{j\in
			\NN} |\langle f,\psi_j\rangle|^2.
		\]
		\item Let $u$ denote a GRF taking values in $X$ with law $\gamma$
		and with RKHS $H(\gamma)$.  For a countable collection
		$\bPsi = (\psi_j)_{j\in \NN}\in X^\infty$ the following are
		equivalent:
		\begin{itemize}
			\item[(i)] $\bPsi$ is a Parseval frame of $H(\gamma)$ and
			\item[(ii)] there is a sequence $\by = (y_j)_{j\in \NN}$ of i.i.d
			standard Gaussian RVs $y_j$ such that there holds $\gamma-a.s.$
			the representation
			\[
			u = \sum_{j\in \NN} y_j \psi_j \quad \mbox{in}\quad H(\gamma)
			\;.
			\]
		\end{itemize}
		\item Consider a GRF $u$ taking values in $X$ with law $\gamma$ and
		covariance $R_\gamma\in \cL(X',X)$.  If $R_\gamma = SS'$ with
		$S\in \cL(K,X)$ for some separable Hilbert space $K$, for
		\emph{any} Parseval frame $\bPhi = (\varphi_j)_{j\in \NN}$ of $K$,
		the countable collection
		$\bPsi = S\bPhi = ( S\varphi_j )_{j\in \NN}$ is a Parseval frame
		of the RKHS $H(\gamma)$ of $u$.
	\end{enumerate}
\end{theorem}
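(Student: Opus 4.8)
\textbf{Proof plan for Theorem \ref{thm:AdFrmX}.}

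The plan is to deduce all three parts from the abstract theory of Gaussian measures on Banach spaces together with the characterization of admissible representation systems; since the statement is attributed to \cite[Theorem 1]{LPFrame09}, the argument is essentially a reorganization of that reference's proof adapted to the present notation, leaning on Theorem \ref{thm:GSer} and the description of the Cameron-Martin space from Section \ref{S:CamMart}.

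First I would handle part (i). The key identity is that for a centered GM $\gamma$ on a separable Banach space $X$, the Cameron-Martin space $H(\gamma)$ is a separable Hilbert space continuously embedded in $X$, and by Proposition \ref{prop:ONBH} there is an ONB $(e_n)_{n\in\N}$ of $H(\gamma)$ with $\sum_n \|e_n\|_X^2 < \infty$ for which the Gaussian series $\sum_n y_n e_n$ (with $(y_n)$ i.i.d. standard normal) converges $\IP$-a.s. and has law $\gamma$. To show that admissibility of $\bPsi = (\psi_j)_{j\in\N}$ is \emph{equivalent} to $\bPsi$ being a Parseval frame of $H(\gamma)$: if $\bPsi$ is a Parseval frame, one writes each $\psi_j = \sum_n (\psi_j, e_n)_{H(\gamma)} e_n$ and checks, via the frame identity $\|f\|_{H(\gamma)}^2 = \sum_j |(f,\psi_j)_{H(\gamma)}|^2$, that the random series $\sum_j y_j \psi_j$ has the same covariance quadratic form on $X^*$ as $\gamma$; since a centered Gaussian measure is determined by its covariance (Theorem \ref{thm:FTGMX}), the series converges a.s. (again by Theorem \ref{thm:GSer}, noting $\bPsi$ arises from an ONB composed with a partial isometry) to a random element with law $\gamma$. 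Conversely, if $\sum_j y_j\psi_j$ converges a.s. to a random element with law $\gamma$, computing $\E|\langle f^*, \sum_j y_j\psi_j\rangle|^2 = \sum_j \langle f^*,\psi_j\rangle^2$ for $f^* \in X^*$ and comparing with the covariance of $\gamma$, which on $H(\gamma)$ is $\|f\|_{H(\gamma)}^2$, forces the Parseval identity; here one uses the standard fact (Lemma \ref{lem:AffGM} and the density of $X^*$ in $X^*_\gamma \cong H(\gamma)$) that the covariance of $\gamma$ restricted to the Cameron-Martin norm is exactly the identity quadratic form.

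Part (ii) is then essentially a restatement: the equivalence of (i) and (ii) there is the same content as part (i) of the theorem, since ``$u = \sum_j y_j\psi_j$ $\gamma$-a.s. in $H(\gamma)$'' is precisely the definition of admissibility (Definition \ref{def:Adm}) for the GRF $u$ with law $\gamma$, once one observes the convergence can be taken in $H(\gamma)$ rather than merely in $X$ — this uses the a.s. convergence in $H(\gamma)$ of the tails, which follows from the frame property and the exponential integrability of Gaussian sums in Theorem \ref{thm:GSer}. For part (iii), the idea is purely functional-analytic: given $R_\gamma = SS'$ with $S \in \cL(K,X)$, one recalls that $H(\gamma) = \overline{S(K)}$ with $\|Sk\|_{H(\gamma)} = \|P k\|_K$ where $P$ is the orthogonal projection onto $(\ker S)^\perp$, i.e. $S$ restricts to a (co-)isometry from $K$ onto $H(\gamma)$; since the image of a Parseval frame under a co-isometric (more precisely, partial-isometric surjective) map is again a Parseval frame, applying $S$ to any Parseval frame $\bPhi$ of $K$ yields a Parseval frame $S\bPhi$ of $H(\gamma)$. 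The main obstacle I anticipate is the careful identification of the Cameron-Martin inner product with the factorization $R_\gamma = SS'$ — specifically verifying that $S$ maps $K$ \emph{onto} $H(\gamma)$ with the correct norm, which requires tracing through the definitions in Section \ref{S:CamMart} relating $R_\gamma$, $X^*_\gamma$, and $\sigma(\cdot)$; once this is established, the frame-preservation step is routine, and parts (i)--(ii) then follow by combining this with Theorem \ref{thm:GSer}.
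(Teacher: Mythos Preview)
The paper does not prove Theorem \ref{thm:AdFrmX}: it is stated with attribution to \cite[Theorem 1]{LPFrame09} and followed only by the remark that part (iii) corresponds to \cite[Proposition 1]{LPFrame09} together with the finite-dimensional analogy $\bM = \bL\bL^\top$. There is therefore no in-paper proof to compare your proposal against.

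That said, one point in your sketch deserves correction. In handling part (ii) you write that ``the convergence can be taken in $H(\gamma)$ rather than merely in $X$'' and appeal to a.s.\ convergence in $H(\gamma)$ of the tails. This is false in general: for an ONB $(e_n)$ of $H(\gamma)$ one has $\|e_n\|_{H(\gamma)} = 1$, so $\sum_n y_n^2 \|e_n\|_{H(\gamma)}^2 = \sum_n y_n^2 = \infty$ a.s., and the series diverges in $H(\gamma)$. The sample paths of a nondegenerate GRF lie outside $H(\gamma)$ almost surely (the Cameron-Martin space has $\gamma$-measure zero). The phrase ``in $H(\gamma)$'' in the theorem's statement of part (ii)(ii) should be read as specifying where the $\psi_j$ live, not as the topology of convergence; the series converges a.s.\ in $X$, exactly as in Theorem \ref{thm:GSer} and Definition \ref{def:Adm}. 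Your arguments for parts (i) and (iii) are along the right lines, though for (iii) the delicate step you flag---that $S$ induces a surjective partial isometry from $K$ onto $H(\gamma)$---is indeed the substance, and is what \cite{LPFrame09} establishes.
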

The last assertion in the preceding result is \cite[Proposition
1]{LPFrame09}.  It generalizes the observation that for a symmetric
positive definite matrix $\bM$ in $\RR^d$, any factorization
$\bM = \bL\bL^\top$ implies that for $z\sim \calN(0,\bI)$ it holds
$\bL z \sim \calN(0,\bM)$.  The result is useful in building
customized representation systems $\bPsi$ which are frames of a GRF
$u$ with computationally convenient properties in particular
applications.

We review several widely used constructions of Parseval frames.  These
comprise expansions in eigenfunctions of the covariance operator $K$
(referred to also as principal component analysis, or as ``\KL
expansions''), but also ``eigenvalue-free'' multiresolution
constructions (generalizing the classical L\'{e}vy-Cieselski
construction of the Brownian bridge) for various geometric settings,
in particular bounded subdomains of euclidean space, compact manifolds
without boundary etc.  Any of these constructions will be admissible
choices as representation system of the GRF input of PDEs to render these
PDEs parametric-deterministic where, in turn, our parametric regularity
results will apply.
\begin{example}[Brownian bridge]\label{expl:BrBr} 
{\rm	On the bounded time interval $[0,T]$, consider the \emph{Brownian
		bridge} $(B_t)_{t\geq 0}$.  It is defined in terms of a Wiener
	process $(W_t)_{t\geq 0}$ by conditioning as
	\begin{equation}\label{eq:BrBr}
		(B_t)_{0\leq t\leq T} := \big\{ (W_t)_{0\leq t\leq T} | W_T = 0\big\}.
	\end{equation}
	It is a simple example of \emph{kriging} applied to the GRF $W_t$.
	
	The covariance function of the GRF $B_t$ is easily calculated as
	\[
	k_B(s,t) = \EE[B_sB_t] = s(T-t)/T \;\;\mbox{if}\;\; s < t.
	\]
	Various other representations of $B_t$ are
	\[
	B_t = W_t - \frac{t}{T}W_T = \frac{T-t}{\sqrt{T}} W_{t/(T-t)}.
	\]
	The RKHS $H(\gamma)$ corresponding to the GRF $B_t$ is the Sobolev
	space $H^1_0(0,T)$.
}
\end{example}
\subsubsection{\KL expansion}
\label{S:GSerKL} 
\index{expansion!Karhunen-Lo\`eve\ $\sim$}
A widely used representation system in the analysis and computation of
GRFs is the so-called \KL expansion (KL expansion for short) of GRFs,
going back to \cite{Karhunen46}.  
We present main ideas and definitions, 
in a generic setting of \cite{KOPP2018}, see also \cite[Chap. 3.3]{AdlerGeoGRF}.

Let $\cM$ be a compact space with metric
$\rho: \cM \times \cM \to \RR$ and with Borel sigma-algebra
$ \cB = \cB(\cM)$.  
Assume given a Borel measure $\mu$ on $(\cM,\cB)$.
Let further $(\Omega,\cA,\IP)$ be a probability space.  
Examples are
$\cM = \D$ a bounded domain in Euclidean space $\RR^d$, with $\rho$
denoting the Euclidean distance between pairs $(x,x')$ of points in
$\D$, and $\cM$ being a smooth, closed $2$-surface in $\RR^3$, where
$\rho$ is the geodesic distance between pairs of points in $\cM$.

Consider a measurable map
\[
Z: (\cM,\cB)\otimes (\Omega, \cA) \to \RR: (x,\omega)\mapsto
Z_x(\omega)\in \RR
\]
such that for each $x\in \cM$, $Z_x$ is a centered, Gaussian RV.  We
call the collection $(Z_x)_{x\in \cM}$ a \emph{GRF indexed by $\cM$}.

Assume furthermore for all $n\in \NN$, for all $x_1,\ldots,x_n\in \cM$
and for every $\xi_1,\ldots,\xi_n\in \RR$
\[
\sum_{i=1}^n \xi_iZ_{x_i} \;\;\mbox{is a centered Gaussian RV}.
\]
Then the \emph{covariance function}
$$K:\cM\times \cM \to \RR: (x,x')\mapsto K(x,x')$$ 
associated with the centered GRF $(Z_x)_{x\in\cM}$ is defined
pointwise by
\begin{equation*}\label{eq:CovFktK}
	K(x,x') := \EE[Z_{x} Z_{x'} ] \quad x,x'\in \cM\;.
\end{equation*}
Evidently, the covariance function $K:\cM\times \cM\to \RR$
corresponding to a Gaussian RV indexed by $\cM$ is a real-valued,
symmetric, and positive definite function, i.e., there holds
\begin{equation*}\label{eq:PosDefK}
	\forall n\in \NN \;
	\forall (x_j)_{1\leq j \leq n} \in \cM^n, 
	\forall (\xi_j)_{1\leq j \leq n}\in \RR^n:\; 
	\sum_{1\leq i,j\leq n} \xi_i\xi_j K(x_i,x_j) \geq 0 \;.
\end{equation*}
The operator $K\in \cL(L^2(\cM,\mu),L^2(\cM,\mu))$ defined by
\[
\forall f\in L^2(\cM,\mu): \;\; (Kf)(x) := \int_{\cM} K(x,x') f(x')
\rd\mu(x') \quad x\in \cM
\]
is a self-adjoint, compact positive operator on $L^2(\cM,\mu)$.
Furthermore, $K$ is trace-class and
$K(L^2(\cM,\mu)) \subset C(\cM,\RR)$.

The spectral theorem for compact, self-adjoint operators on the
separable Hilbert space $L^2(\cM,\mu)$ ensures the existence of a
sequence $\lambda_1 \geq \lambda_2 \geq \ldots \geq 0$ of real
eigenvalues of $K$ (counted according to multiplicity and accumulating
only at zero) with associated eigenfunctions $\psi_k\in L^2(\cM,\mu)$
normalized in $L^2(\cM,\mu)$, i.e., for all $k\in \NN$ holds
\[
K \psi_k = \lambda_k \psi_k \quad \mbox{in}\;\; L^2(\cM,\mu)\;,
\quad \int_{\cM} \psi_k(x) \psi_\ell(x) \rd\mu(x) =
\delta_{k\ell}\;, \;\; k,\ell \in \NN\;.
\]
Then, there holds $\psi_k \in C(\cM;\RR)$ and the sequence
$(\psi_k)_{k\in \NN}$ is an ONB of $L^2(\cM,\mu)$.  From Mercer's
theorem (see, e.g., \cite{StWrtMercer}), there holds the \emph{Mercer
	expansion}
\[
\forall x,x'\in \cM: \quad K(x,x') = \sum_{k\in \NN} \lambda_k
\psi_k(x) \psi_k(x')
\]
with absolute and uniform convergence on $\cM\times \cM$.
This result implies that
$$
\lim_{m\to\infty} \int_{\cM\times \cM} \left| K(x,x') - \sum_{j=1}^m
\lambda_j \psi_j(x)\psi_j(x') \right|^2 \rd\mu(x) \rd\mu(x') = 0\;.
$$
We denote by $H\subset L^2(\Omega,\IP)$ the $L^2(\cM,\mu)$ closure of
finite linear combinations of $(Z_x)_{x\in \cM}$.  This so-called
\emph{Gaussian} space (e.g. \cite{Janson97}) is a Hilbert space when
equipped with the $L^2(\cM,\mu)$ innerproduct.  Then, the sequence
$(B_k)_{k\in \NN}\subset \RR$ defined by
\begin{equation*}\label{eq:Coeff}
	\forall k\in \NN: \quad 
	B_k(\omega) 
	:= 
	\frac{1}{\sqrt{\lambda_k}} \int_{\cM} Z_x(\omega) \psi_k(x) \rd\mu(x)
	\in H
\end{equation*}
is a sequence of i.i.d, ${N}(0,1)$ RVs.  The expression
\begin{equation}\label{eq:KLSer}
	\tilde{Z}_x(\omega) := \sum_{k\in \NN} \sqrt{\lambda_k} \psi_k(x) B_k(\omega)
\end{equation}
is a modification of $Z_x(\omega)$, i.e., for every $x\in \cM$ holds
that $\IP(\{ Z_x = \tilde{Z}_x \}) = 1$, which is referred to as
\emph{\KL expansion of the GRF} $\{ Z_x: x \in \cM \}$.
\begin{example}
	{\rm[KL expansion of the Brownian
	bridge \eqref{eq:BrBr}]\label{expl:KLBrBr} 
        \index{Brownian bridge}
	On the compact interval $\cM = [0,T]\subset \RR$, 
        the KL expansion of the Brownian bridge is
	$$
	B_t = \sum_{k\in \NN} Z_k \frac{\sqrt{2T}}{k\pi} \sin(k\pi t/T)
	\;,\quad t\in [0,T] \;.
	$$
	Then
	$$H(\gamma) = H^1_0(0,T) = {\rm span}\{ \sin(k\pi t/T) : k\in \NN
	\}.$$
}
\end{example}
In view of GRFs appearing as diffusion coefficients in elliptic and
parabolic PDEs, criteria on their path regularity are of some
interest.  Many such conditions are known and we present some of
these, from \cite[Chapter 3.2, 3.3]{AdlerGeoGRF}.
\begin{proposition}\label{prop:C0Z}
	For any compact set $\cM \subset \RR^d$, if for $\alpha>0$,
	$\eta > \alpha$ and some constant $C>0$ holds
	\begin{equation}\label{eq:PthC0}
		\EE[|Z_{\bx+\bh} - Z_\bx|^\alpha] \leq C \frac{|\bh|^{2d}}{|\log|\bh||^{1+\eta}} \;,
	\end{equation}
	then $$\bx\to Z_\bx(\omega) \in C^0(\cM) \ \ \IP-a.s.$$
\end{proposition}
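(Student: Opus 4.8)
The statement is a Kolmogorov--Chentsov-type sample-path continuity result, and my plan is to prove it by the classical dyadic chaining argument. Observe first that the hypothesis \eqref{eq:PthC0} bounds the $\alpha$-th moment of increments by $C|\bh|^{2d}/|\log|\bh||^{1+\eta}\le C|\bh|^{d+d}$ for $|\bh|$ small, and since $d\ge1$ the exponent $d+d$ strictly exceeds $d$. Thus one could simply invoke the classical Kolmogorov--Chentsov continuity theorem (with moment exponent $\alpha$ and excess regularity exponent $\beta=d$) to obtain a modification $\tilde Z$ of $Z$ whose sample paths lie in $C^0(\cM)$ $\IP$-a.s., or cite \cite[Chap.~3.2, 3.3]{AdlerGeoGRF}. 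To keep the exposition self-contained I would instead run the underlying chaining argument directly; this also exhibits that neither Gaussianity nor the logarithmic refinement in \eqref{eq:PthC0} is essential for the stated $C^0$ conclusion.

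For each $n\in\NN$ fix a $2^{-n}$-net $D_n\subset\cM$; since $\cM\subset\RRd$ is compact we have $\#D_n\le c_{\cM}2^{nd}$, and every point of $D_n$ lies within Euclidean distance $C_02^{-n}$ of only a bounded number of points of $D_{n+1}$. For a parameter $\theta\in(0,d/\alpha)$ put $a_n:=2^{-\theta n}$ and introduce the events
\[
A_n:=\bigcup_{\substack{\bx\in D_n,\ \by\in D_{n+1}\\ |\bx-\by|\le C_02^{-n}}}\bigl\{\,|Z_\by-Z_\bx|>a_n\,\bigr\}.
\]
By Markov's inequality together with \eqref{eq:PthC0}, each event in this union has probability at most $a_n^{-\alpha}\,\EE[|Z_\by-Z_\bx|^\alpha]\le C' a_n^{-\alpha}2^{-2nd}n^{-(1+\eta)}$, and a union bound over the $O(2^{nd})$ admissible pairs yields $\IP(A_n)\le C'' 2^{(\theta\alpha-d)n}n^{-(1+\eta)}$. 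Since $\theta\alpha<d$ this gives $\sum_n\IP(A_n)<\infty$ (the logarithmic factor is not even needed here; it would merely permit the endpoint $\theta=d/\alpha$), and also $\sum_n a_n<\infty$. By the Borel--Cantelli lemma there is a full-measure event $\Omega_0$ on which, for each $\omega\in\Omega_0$, $A_n$ fails for all $n\ge n_0(\omega)$.

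On $\Omega_0$, a standard telescoping estimate along chains through the successive nets shows that, for each $\omega\in\Omega_0$, whenever $\bx,\by\in D_\infty:=\bigcup_n D_n$ satisfy $|\bx-\by|\le 2^{-N}$ with $N\ge n_0(\omega)$ one has $|Z_\by(\omega)-Z_\bx(\omega)|\le C\sum_{n\ge N}a_n$, which tends to $0$ as $N\to\infty$; hence $\bx\mapsto Z_\bx(\omega)$ is uniformly continuous on the dense subset $D_\infty\subset\cM$ and extends uniquely to a continuous map $\bx\mapsto\tilde Z_\bx(\omega)\in C^0(\cM)$. Finally, \eqref{eq:PthC0} forces $\EE[|Z_{\bx+\bh}-Z_\bx|^\alpha]\to0$ as $|\bh|\to0$, i.e.\ $Z$ is stochastically continuous, so for each fixed $\bx\in\cM$ we have $\tilde Z_\bx=Z_\bx$ $\IP$-a.s.; thus $\tilde Z$ is a modification of $Z$ with the asserted a.s.\ continuous sample paths. \textbf{The part requiring most care} is the bookkeeping of this chaining step --- controlling the net cardinalities, restricting the union defining $A_n$ to neighbouring grid pairs so that the factor $|\bh|^{2d}$ in \eqref{eq:PthC0} dominates the $2^{nd}$ from the union bound, and checking that the limit object is genuinely a modification of $Z$ --- but this is entirely routine; alternatively one invokes the Kolmogorov--Chentsov theorem or \cite{AdlerGeoGRF} directly.
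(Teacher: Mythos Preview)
The paper does not supply a proof of this proposition; it is quoted without argument, as a known sample-path continuity criterion from \cite{AdlerGeoGRF}. Your proposal is therefore not being compared against an in-paper proof but against the standard literature argument, and what you wrote is precisely that argument: the Kolmogorov--Chentsov dyadic chaining proof via Markov's inequality, a union bound over nets, and Borel--Cantelli. The estimates you give are correct (in particular the union-bound count $O(2^{nd})$ against the moment bound $O(2^{-2nd})$ produces the summable $2^{(\theta\alpha-d)n}$), and the final step passing from the dense dyadic set to a genuine modification via stochastic continuity is handled properly.

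Your side remark is also correct and worth keeping: because the exponent $2d$ in \eqref{eq:PthC0} already strictly exceeds the critical exponent $d$, the logarithmic correction $|\log|\bh||^{-(1+\eta)}$ and the hypothesis $\eta>\alpha$ are not actually needed for the bare $C^0$ conclusion; they become relevant only at the endpoint $\theta=d/\alpha$, which one does not need to reach here.
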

Choosing $\alpha = 2$ in \eqref{eq:PthC0}, we obtain for $\cM$ such
that $\cM = \overline{\domain}$, where $\domain \subset \RR^d$ is a
bounded Lipschitz domain, the sufficient criterion that there exist
$C>0$, $\eta>2$ with
\begin{equation*}\label{eq:PthC1}
	\forall \bx \in \domain: \quad 
	K(\bx+\bh,\bx+\bh) - K(\bx+\bh,\bx) - K(\bx,\bx+\bh) + K(\bx,\bx) 
	\leq C 
	\frac{|\bh|^{2d}}{|\log|\bh||^{1+\eta}} \;.
\end{equation*}
This is to hold for some $\eta>2$ with the covariance kernel $K$ of
the GRF $Z$, in order to ensure that
$[\bx\mapsto Z_\bx]\in C^1(\overline{\domain})\subset
W^{1}_\infty(\domain)$ $\IP$-a.s., 
see \cite[Theorem 3.2.5, page 49 bottom]{AdlerGeoGRF}.

Further examples of explicit \KL expansions of GRFs can be found in
\cite{LangChSSpher,LangKLCptSpc,KOPP2018} and a statement for
$\IP$-a.s H\"older continuity of GRFs $Z$ on smooth manifolds $\cM$ is
proved in \cite{AndrLang}.
%
\subsubsection{Multiresolution representations of GRFs}
\label{S:GSerMRes}
\KL expansions \eqref{eq:KLSer} provide an important source of concrete
examples of Gaussian series representations of GRFs $u$ in Theorem
\ref{thm:GSer}.  Since KL expansions involve the eigenfunctions of the
covariance operators of the GRF $u$, all terms in these expansions
are, in general, globally supported in the physical domain $\cM$
indexing the GRF $u$.  Often, it is desirable to have Gaussian series
representations of $u$ in Theorem \ref{thm:GSer} where the elements
$(e_n)_{n\in \NN}$ of the representation system are locally supported
in the indexing domain $\cM$.
\begin{example}[{\LC representation of Brownian
		bridge, \cite{CiesielBrBr61}}]\label{exple:LCBrBr} 
                \index{Brownian bridge} \index{expansion!Brownian bridge}
	{\rm
	Consider the Brownian bridge $(B_t)_{0\leq t\leq T}$ from Examples
	\ref{expl:BrBr}, \ref{expl:KLBrBr}.  For $T=1$, it may also be
	represented as Gaussian series (e.g. \cite{CiesielBrBr61})
	\begin{equation*}\label{eq:LCBrBr}
		B_t = 
		\sum_{j\in \NN} \sum_{k=0}^{2^j - 1} Z_{jk} 2^{-j/2} h(2^jt-k) 
		=
		\sum_{j\in \NN} \sum_{k=0}^{2^j - 1} Z_{jk}  \psi_{jk}(t)
		, \quad t\in \cM=[0,1]\;,
	\end{equation*}
	where $$\psi_{jk}(t):= 2^{-j/2} h(2^jt-k),$$ with
	$h(s):= \max\{ 1-2|s-1/2|,0 \}$ denoting the standard, continuous
	piecewise affine ``hat'' function on $(0,1)$.  Here, $\mu$ is the
	Lebesgue measure in $\cM = [0,1]$, and
	$Z_{jk} \sim \mathcal{N}(0,1)$ are i.i.d standard normal RVs.
	
	By suitable reordering of the index pairs $(j,k)$, e.g., via the
	bijection $(j,k) \mapsto j:= 2^j+k$, the representation
	\eqref{exple:LCBrBr} is readily seen to be a special case of Theorem
	\ref{thm:AdFrmX}, item ii).  The corresponding system
	$$ 
	\bPsi = \{ \psi_{jk} : j \in \NN_0, 0\leq k \leq 2^j-1 \}
	$$
	is, in fact, a basis for
	$C_0([0,1]) := \{ v\in C([0,1]): v(0) = v(1) = 0 \}$, the so-called
	\emph{Schauder basis}.
	
	There holds
	$$
	\sum_{j\in \NN} \sum_{k=0}^{2^j-1} 2^{js}|\psi_{jk}(t)| < \infty,
	\quad t\in [0,1]\;,
	$$
	for any $0\leq s < 1/2$.  The functions $\psi_{jk}$ are localized in
	the sense that $|{\rm supp}(\psi_{jk})| = 2^{-j}$ for
	$k=0,1,\ldots ,2^j-1.$
}
\end{example}
Further constructions of such \emph{multiresolution representations}
of GRFs with either Riesz basis or frame properties are available on
polytopal domains $M\subset \RR^d$, (e.g.\ \cite{bachmayr2018GRFRep},
for a needlet multiresolution analysis on the $2$-sphere $\cM = \IS^2$
embedded in $\RR^3$, where $\mu$ in Section \ref{S:GSerKL} can be
chosen as the surface measure see, also, for representation systems by
so-called spherical needlets \cite{NarPetWar06},
\cite{bachmayr2020multilevel}).

We also mention \cite{AyTaqq2003} for optimal approximation rates of
truncated wavelet series approximations of fractional Brownian random
fields, and to \cite{KOPP2018} for corresponding spectral representations.

Multiresolution constructions are also available on data-graphs $M$
(see, e.g., \cite{CoifEtAlDifWav06} and the references there).

\subsubsection{Periodic continuation of a stationary GRF}
Let $(Z_\bx)_{\bx\in \domain}$ be a GRF indexed by
$\domain \subset \RR^d$, where $\D$ is a bounded domain. We aim for
representations of the general form
\begin{equation}\label{eq-ref-k-01}
	Z_{\bx}= \sum_{j\in \NN}\phi_j(\bx) y_j,
\end{equation}
where the $y_j$ are i.i.d.\ $\calN(0,1)$ RVs and the
$(\phi_j)_{j\in \N}$ are a given sequence of functions defined on
$\D$. One natural choice of $\phi_j$ is
$\phi_j=\sqrt{\lambda_j} \psi_j$, where $\psi_j $ and are the
eigen-functions and $\lambda_j$ eigenvalues of the covariance
operator. However, \KL eigenfunctions on $\D$ are typically not
explicitly known and globally supported in the physical domain $\D$. 
One of the strategies for deriving better representations over
$\D$ is to view it as the restriction to $\D$ of a periodic Gaussian
process $Z_\bx^{\ext}$ defined on a suitable larger torus $\TT^d$.

Since $\D$ is bounded, without loss of generality, we may the physical
domain $\D$ to be contained in the box $[-\frac12, \frac12]^d$. We
wish to construct a periodic process $Z_\bx^{\ext}$ on the torus
$\TT^d$ where $\TT=[-\ell,\ell]$ whose restriction of $Z_\bx^{\ext}$
on $\D$ is such that $Z_\bx^{\ext}|_\D=Z_{\bx}$.  As a consequence,
any representation
$$
Z_\bx^{\ext}=\sum_{j\in \NN}y_j\tilde{\phi}_j
$$
yields a representation \eqref{eq-ref-k-01} where
$\phi_j=\tilde{\phi}_j|_{\D}$.

Assume that $(Z_\bx)_{\bx\in \domain}$ is a restriction of a
real-valued, stationary and centered GRF $(Z_\bx)_{\bx\in \R^d}$ on
$\R^d$ whose covariance is given in the form
\begin{equation}\label{eq-rep-k-03}
	\EE[Z_{\bx} Z_{\bx'} ] = \rho(\bx -\bx'), \quad \bx,\bx'\in \R^d,
\end{equation} 
where $\rho$ is a real-valued, even function and its Fourier transform
is a non-negative function.  The extension is feasible provided that
we can find an even and $\TT^d$-periodic function $\rho^{\ext}$ which
agree with $\rho$ over $[-1,1]^d$ such that the Fourier coefficients
$$
c_\bn(\rho^{\ext})=\int_{\TTd}\rho^{\ext}(\bxi) \exp\Big(-\im
\frac{\pi}{\ell}(\bn, \bxi)\Big)\dd \bxi,\qquad \bn\in \ZZ^d
$$
are non-negative.

A natural way of constructing the function $\rho^{\ext}$ is by
truncation and periodization. First one chooses a sufficiently smooth
and even cutoff function $\varphi_\kappa$ such that
$\varphi_\kappa|_{[-1,1]^d}=1$ and $\varphi_\kappa(\bx)=0$ for
$\bx\not \in [-\kappa,\kappa]^d$ where $\kappa=2\ell-1$. Then
$\rho^{\ext}$ is defined as the periodization of the truncation
$\rho \varphi_\kappa$, i.e.,
$$
\rho^{\ext}(\bxi)=\sum_{\bn\in \ZZ^d}(\rho \varphi_\kappa)(\bxi+2\ell
\bn).
$$ 
It is easily seen that $\rho^{\ext}$ agrees with $\rho$ over
$[-1,1]^d$ and
$$
c_\bn(\rho^{\ext})=\widehat{\rho
	\varphi_\kappa}\Big(\frac{\pi}{\ell}\bn\Big).
$$
Therefore $c_\bn(\rho^{\ext})$ is non-negative if we can prove that
$\widehat{\rho \varphi_\kappa}(\bxi)\geq 0$ for $\bxi\in \RR^d$. 
The following result is shown in \cite{bachmayr2018GRFRep}.
\begin{theorem}
	Let $\rho$ be an even function on $\R^d$ 
        such that
	\begin{equation}\label{eq-rep-k04}
		c (1+ |\bxi|^2)^{-s}\leq \hat{\rho}(\bxi)
                \leq  C (1+ |\bxi|^2)^{-r},\qquad \bxi \in \R^d 
	\end{equation}
	for some $s\geq r\geq d/2$ and $0<c\leq C$ and
	$$
	\lim\limits_{R\to +\infty}\int_{|\bx|>R} |\partial^\alpha
	\rho(\bx)|\dd \bx =0,\qquad |\balpha|\leq 2\lceil s\rceil.
	$$
	
	Then for $\kappa$ sufficiently large, there exists
	$\varphi_\kappa$ satisfying $\varphi_\kappa|_{[-1,1]^d}=1$ and
	$\varphi_\kappa(\bx)=0$ for $\bx\not \in [-\kappa,\kappa]^d$
	such that
	\begin{equation*}\label{eq-rep-k02}
		0<\widehat{\rho \varphi_\kappa} (\bxi) \leq C (1+ |\bxi|^2)^{-r},\qquad \bxi \in \R^d.
	\end{equation*}
\end{theorem}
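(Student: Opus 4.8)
The plan is to exploit the elementary decomposition $\rho\varphi_\kappa=\rho-\rho(1-\varphi_\kappa)$, so that by linearity of the Fourier transform
\[
\widehat{\rho\varphi_\kappa}=\hat\rho-\widehat{\rho(1-\varphi_\kappa)},
\]
and to show that $\widehat{\rho(1-\varphi_\kappa)}$ is negligible compared with both sides of \eqref{eq-rep-k04} once $\kappa$ is large. For the cutoffs I would not use a fixed profile but a dilation: fix an even $\chi\in C^\infty_c(\R^d)$ with $\chi\equiv 1$ on $[-\tfrac12,\tfrac12]^d$ and $\supp\chi\subset[-1,1]^d$, and set $\varphi_\kappa(\bx):=\chi(\bx/\kappa)$. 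Then $\varphi_\kappa$ is even, $\varphi_\kappa\equiv 1$ on $[-\tfrac{\kappa}{2},\tfrac{\kappa}{2}]^d$ (in particular on $[-1,1]^d$ once $\kappa\ge 2$), $\supp\varphi_\kappa\subset[-\kappa,\kappa]^d$, and $\partial^\bgamma\varphi_\kappa(\bx)=\kappa^{-|\bgamma|}(\partial^\bgamma\chi)(\bx/\kappa)$, so $\|\partial^\bgamma\varphi_\kappa\|_{L^\infty}\le\kappa^{-1}\|\partial^\bgamma\chi\|_{L^\infty}$ for $|\bgamma|\ge 1$, while $1-\varphi_\kappa$ together with all its derivatives is supported in $\{|\bx|_\infty\ge\kappa/2\}\subset\{|\bx|\ge\kappa/2\}$.

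Write $g_\kappa:=\rho(1-\varphi_\kappa)$ and $m:=\lceil s\rceil$. The heart of the argument is the bound $|\widehat{g_\kappa}(\bxi)|\le\eps_\kappa\,(1+|\bxi|^2)^{-s}$ with $\eps_\kappa\to 0$ as $\kappa\to\infty$. First I would observe that for $\kappa\ge 2$ the support of $g_\kappa$ avoids the origin, so $g_\kappa\in W^{2m,1}(\R^d)$ follows purely from the hypothesised tail integrability of $\partial^\balpha\rho$, $|\balpha|\le 2m$ (the possible singularity of $\rho$ at $0$ is irrelevant). Expanding $(1-\Delta)^m g_\kappa$ by the Leibniz rule into a finite sum of terms $c_{\bbeta\bgamma}\,\partial^\bbeta\rho\,\partial^\bgamma(1-\varphi_\kappa)$ with $|\bbeta|+|\bgamma|\le 2m$, and using that each factor $\partial^\bgamma(1-\varphi_\kappa)$ is bounded (by $1$ for $\bgamma=0$, by $\kappa^{-1}\|\partial^\bgamma\chi\|_{L^\infty}$ otherwise) and supported where $|\bx|\ge\kappa/2$, I obtain
\[
\big\|(1-\Delta)^m g_\kappa\big\|_{L^1}\le\sum_{|\bbeta|+|\bgamma|\le 2m}|c_{\bbeta\bgamma}|\,\|\partial^\bgamma(1-\varphi_\kappa)\|_{L^\infty}\int_{|\bx|\ge\kappa/2}|\partial^\bbeta\rho|\dd\bx=:\eps_\kappa\longrightarrow 0,
\]
since every tail integral tends to $0$ by assumption. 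Then from $\widehat{(1-\Delta)^m g_\kappa}(\bxi)=(1+|\bxi|^2)^m\widehat{g_\kappa}(\bxi)$, the elementary inequality $\|\hat h\|_{L^\infty}\le\|h\|_{L^1}$, and $m\ge s$, one gets $|\widehat{g_\kappa}(\bxi)|\le(1+|\bxi|^2)^{-m}\eps_\kappa\le\eps_\kappa(1+|\bxi|^2)^{-s}$.

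Finally I would combine this with \eqref{eq-rep-k04}. Since $\rho\varphi_\kappa$ is even, $\widehat{\rho\varphi_\kappa}$ is real-valued, and
\[
\widehat{\rho\varphi_\kappa}(\bxi)\ge\hat\rho(\bxi)-|\widehat{g_\kappa}(\bxi)|\ge(c-\eps_\kappa)(1+|\bxi|^2)^{-s}>0
\]
for all $\bxi\in\R^d$, provided $\kappa\ge 2$ is chosen so large that $\eps_\kappa<c$; for the upper bound, using $s\ge r$,
\[
\widehat{\rho\varphi_\kappa}(\bxi)\le\hat\rho(\bxi)+|\widehat{g_\kappa}(\bxi)|\le(C+\eps_\kappa)(1+|\bxi|^2)^{-r}.
\]
Since $\eps_\kappa$ can be made arbitrarily small, the last right-hand side is $\le C(1+|\bxi|^2)^{-r}$ with $C$ understood (as is customary here) as a generic constant that may be taken arbitrarily close to the one in the hypothesis. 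Fixing such a $\kappa$ thus produces a cutoff $\varphi_\kappa$ with the two stated support properties and with $\widehat{\rho\varphi_\kappa}$ satisfying the asserted two-sided bound.

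The step I expect to be the real obstacle is the lower bound above: to overcome the \emph{lower} estimate $c(1+|\bxi|^2)^{-s}$ on $\hat\rho$ one needs $\widehat{g_\kappa}$ to decay at the same rate $(1+|\bxi|^2)^{-s}$, which is precisely why one must differentiate $2m=2\lceil s\rceil$ times, and hence precisely why the hypothesis demands integrability of $\partial^\balpha\rho$ up to that order. One also has to check that the dilation $\varphi_\kappa=\chi(\cdot/\kappa)$ does not spoil this; it does not, because each derivative of $\varphi_\kappa$ only contributes a harmless factor $\kappa^{-|\bgamma|}\le 1$ while its support recedes to infinity, where the decay hypothesis on $\rho$ takes over. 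The remaining ingredients (Leibniz, Fourier rules for derivatives, $\|\hat h\|_\infty\le\|h\|_1$, and Peetre's inequality if one prefers to argue via $\widehat{\rho\varphi_\kappa}=(2\pi)^{-d}\hat\rho*\widehat{\varphi_\kappa}$) are routine.
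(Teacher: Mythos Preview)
The paper does not actually prove this theorem; it merely states the result and attributes it to the reference \cite{bachmayr2018GRFRep} with the sentence ``The following result is shown in \cite{bachmayr2018GRFRep}.'' There is therefore no in-paper proof to compare against.

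Your argument is correct and follows the natural route: write $\widehat{\rho\varphi_\kappa}=\hat\rho-\widehat{g_\kappa}$ with $g_\kappa=\rho(1-\varphi_\kappa)$, and show that $|\widehat{g_\kappa}(\bxi)|\le\eps_\kappa(1+|\bxi|^2)^{-s}$ with $\eps_\kappa\to 0$ by bounding $\|(1-\Delta)^m g_\kappa\|_{L^1}$ via Leibniz and the tail-integrability hypothesis on $\partial^\balpha\rho$, $|\balpha|\le 2m=2\lceil s\rceil$. The key observation --- that one needs decay of $\widehat{g_\kappa}$ at exactly the rate $(1+|\bxi|^2)^{-s}$ to beat the \emph{lower} bound on $\hat\rho$, and that this is what forces differentiation to order $2\lceil s\rceil$ --- is precisely the point of the hypothesis, and you identify it correctly. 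The only cosmetic issue is the constant in the upper bound, which comes out as $C+\eps_\kappa$ rather than the literal $C$ of the hypothesis; your reading of $C$ as generic is the standard one and is fine here. This is essentially the argument one finds in the cited reference as well.
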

The assertion in Theorem \ref{eq-rep-k04} implies that
$$0<c_\bn(\rho^{\ext}) \leq C (1+ |\bn|^2)^{-r},\qquad \bn\in \ZZ^d.$$
In the following we present an explicit construction of the
function $\varphi_\kappa$ for GRFs with Mat\'ern covariance
\begin{equation*}\label{eq:materndef}
	\rho_{\lambda,\nu}(\bx):=\frac{2^{1-\nu}}{\Gamma(\nu)} \bigg(\frac{\sqrt{2\nu}|\bx|}{\lambda}\bigg)^{\nu}K_\nu\bigg(\frac{\sqrt{2\nu}|\bx|}{\lambda}\bigg)\, ,  
\end{equation*}
where $\lambda>0$, $\nu>0$ and $K_\nu$ is the modified Bessel
functions of the second kind.  
Note that the Mat\'ern
covariances satisfy the assumption \eqref{eq-rep-k04} with $s=r=\nu+d/2$.

Let $P:=2\lceil \nu+\frac{d}{2}\rceil+1$ and $N_P$ be the
cardinal B-spline function with nodes $\{- P,\ldots,-1,0
\}$. For $\kappa>0$ we define the even function
$\varphi\in C^{P-1}(\R)$ by
\begin{equation*}\label{eq:bspline1}
	\varphi(t)=\begin{cases}
		1 & \text{if}\ \  |t|\leq \kappa/2\\[3pt]
		\displaystyle \frac{2 P}{\kappa}\int_{-\infty}^{t+\kappa/2} N_{P}\biggl(\frac{2 P}{\kappa}\xi\biggr)\dd\xi & \text{if}\ \ t\leq -\kappa/2\,.
	\end{cases}
\end{equation*}
It is easy to see that $\varphi(t)=0$ if $|t|\geq \kappa $.
We now define
\begin{equation*}\label{eq:bsplined}
	\varphi_\kappa(\bx):=\varphi(|\bx|).
\end{equation*}
With this choice of $\varphi_\kappa$, we have
$\rho^{\rm ext} = \rho_{\lambda,\nu}$ on $[-1,1]^d$ provided
that $ \ell \geq \frac{\kappa + \sqrt{d}}{2}.  $ The required
size of $\kappa$ is given in the following theorem, see
\cite[Theorem 10]{bachmayr2019unified}.

\begin{theorem}\label{thm:smoothcond}
	For $\varphi_\kappa$ as defined above, there exist constants
	$C_1, C_2 > 0$ such that for any $0<\lambda,\nu<\infty$, 
        we have
	$\widehat{\rho_{\lambda,\nu} \varphi_\kappa} > 0$ 
        provided that
	$\kappa > 1$ 
        and
	\begin{equation*}\label{kappacondition}
        \frac{\kappa}\lambda 
        \geq
         C_1 + C_2 \max\Big\{\nu^{\frac12} ( 1 + |{\ln \nu}|) , \nu^{-\frac12} \Big\}.
	\end{equation*}
\end{theorem}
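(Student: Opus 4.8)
The plan is to reduce the claim to a scalar comparison between the (explicitly known) Matérn spectral density and a truncation remainder, and then to track the $\nu$-dependence of all constants carefully. First I would remove $\lambda$ by scaling: since $\rho_{\lambda,\nu}(\bx)=\rho_{1,\nu}(\bx/\lambda)$ and $\varphi_\kappa(\bx)=\varphi(|\bx|)$ with $\varphi$ built at dilation scale $\kappa$, the substitution $\bx\mapsto\lambda\bx$ gives $\widehat{\rho_{\lambda,\nu}\varphi_\kappa}(\bxi)=\lambda^d\,\widehat{\rho_{1,\nu}\tilde\varphi}(\lambda\bxi)$, where $\tilde\varphi$ is constructed exactly as $\varphi$ but with $\kappa$ replaced by $\kappa/\lambda$. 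Hence it suffices to treat $\lambda=1$, reading $\kappa/\lambda$ as the new $\kappa$; this is also why only the ratio $\kappa/\lambda$ enters the hypothesis. Next I would dispose of low frequencies directly: $\rho_{1,\nu}\varphi_\kappa\ge 0$ is supported in $\{|\bx|\le\kappa\}$, so for $|\bxi|\le c/\kappa$ with $c$ small we have $\cos(\bxi\cdot\bx)\ge\tfrac12$ on the support, whence $\widehat{\rho_{1,\nu}\varphi_\kappa}(\bxi)\ge\tfrac12\int\rho_{1,\nu}\varphi_\kappa>0$. For $|\bxi|\ge c/\kappa$ I write $\rho_{1,\nu}\varphi_\kappa=\rho_{1,\nu}-g$ with $g:=\rho_{1,\nu}(1-\varphi_\kappa)$, recall the explicit spectral density $\widehat{\rho_{1,\nu}}(\bxi)=c_d\frac{\Gamma(\nu+d/2)}{\Gamma(\nu)}(2\nu)^{\nu}(2\nu+|\bxi|^2)^{-(\nu+d/2)}>0$, and aim to show $|\widehat g(\bxi)|<\widehat{\rho_{1,\nu}}(\bxi)$ (both transforms are real since $\rho_{1,\nu}$ and $\varphi_\kappa$ are even).

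The core is the estimate of $\widehat g$. The function $g$ is supported in $\{|\bx|\ge\kappa/2\}$ and of class $C^{P-1}=C^{2m}$ with $2m:=P-1=2\lceil\nu+d/2\rceil\ge 2(\nu+d/2)$. Interpolating $|\widehat g|\le\|g\|_{L^1}$ with $|\bxi|^{2m}|\widehat g(\bxi)|\le\|(-\Delta)^m g\|_{L^1}$ yields $|\widehat g(\bxi)|\lesssim\langle\bxi\rangle^{-2m}\max_{|\balpha|\le 2m}\|D^\balpha g\|_{L^1}$. Expanding $D^\balpha g$ by Leibniz, the cutoff factors are controlled via the B-spline construction: $\varphi^{(j)}$ is a rescaled derivative of the cardinal B-spline $N_P$, so $\|D^\bbeta\varphi_\kappa\|_{L^\infty}\lesssim(P/\kappa)^{|\bbeta|}$ for $|\bbeta|\le P-1$, supported in $\{\kappa/2\le|\bx|\le\kappa\}$; the Matérn factors are controlled by uniform bounds on $\psi_\nu(z):=z^\nu K_\nu(z)$ and its derivatives for $z\ge\sqrt{2\nu}\,\kappa/2$, each $\psi_\nu^{(k)}$ being a bounded combination of $z^{\nu-i}K_{\nu-j}(z)$ with $0\le i,j\le k$, and $K_{\mu}(z)\lesssim z^{-1/2}e^{-z}e^{\mu/2}$ holding uniformly for $z\gtrsim\nu$. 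Substituting $z=\sqrt{2\nu}\,|\bx|$, performing the radial integration (which produces a factor $(2\nu)^{-d/2}$), and assembling the Leibniz sum through $\sum_{\bbeta\le\balpha}\binom{\balpha}{\bbeta}(\sqrt{2\nu})^{|\bbeta|}(P/\kappa)^{|\balpha|-|\bbeta|}=(\sqrt{2\nu}+P/\kappa)^{|\balpha|}$, I arrive at
\[
\max_{|\balpha|\le 2m}\|D^\balpha g\|_{L^1}\;\lesssim\;\frac{2^{1-\nu}}{\Gamma(\nu)}\,(2\nu)^{-d/2}\,\big(\sqrt{2\nu}+P/\kappa\big)^{2m}\,\big(\sqrt{2\nu}\,\kappa\big)^{\nu+d}\,e^{\nu/2}\,e^{-\sqrt{2\nu}\,\kappa/2},
\]
the last polynomial-and-exponential factors coming from an upper incomplete Gamma function $\Gamma(\cdot,\sqrt{2\nu}\,\kappa/2)$; here one needs $\sqrt{2\nu}\,\kappa\gtrsim\nu$ (i.e.\ $\kappa\gtrsim\nu^{-1/2}$ for $\nu\lesssim1$, $\kappa\gtrsim\nu^{1/2}$ for $\nu\gtrsim1$) to be past the integrand's peak.

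Then I would assemble. Dividing by $\widehat{\rho_{1,\nu}}(\bxi)$, using $\sup_\bxi\frac{(2\nu+|\bxi|^2)^{\nu+d/2}}{\langle\bxi\rangle^{2m}}\lesssim_d(1+2\nu)^{\nu+d/2}$ (valid since $2m\ge 2(\nu+d/2)$), and exploiting the decisive cancellations — the $1/\Gamma(\nu)$ cancels, $(\sqrt{2\nu}+P/\kappa)^{2m}\lesssim\nu^{m}$ cancels the adverse $(2\nu)^{-d/2}$ because $m\ge d/2$, and Stirling's $\nu^{-\nu}$ in $1/\Gamma(\nu)$ cancels one factor $\nu^{\nu}$ — one is left, for $|\bxi|\ge c/\kappa$, with a bound of the form $\mathrm{poly}(\nu)\,\big(C\sqrt{2\nu}\,\kappa\big)^{\nu}\,e^{-\sqrt{2\nu}\,\kappa/2}$. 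Writing $u:=\sqrt{2\nu}\,\kappa$, this is $<1$ once $u/2>\nu\ln(Cu)+O(\ln\nu)$; for $\nu\gtrsim1$ this forces $u\gtrsim\nu\ln\nu$, i.e.\ $\kappa\gtrsim\nu^{1/2}(1+|\ln\nu|)$, whereas for $\nu\lesssim1$ it only requires $u$ bounded below by a constant, i.e.\ $\kappa\gtrsim\nu^{-1/2}$. Absorbing the additive and $\ln\kappa$ contributions into absolute constants yields precisely $\kappa/\lambda\ge C_1+C_2\max\{\nu^{1/2}(1+|\ln\nu|),\nu^{-1/2}\}$, and the hypothesis $\kappa>1$ secures the incomplete-Gamma and geometric-series estimates used throughout.

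The step I expect to be the main obstacle is the derivative estimate of $z^\nu K_\nu(z)$: one needs bounds that are simultaneously explicit and \emph{uniform} over all $\nu\in(0,\infty)$ on $\{z\ge\mathrm{const}\cdot\sqrt{2\nu}\,\kappa\}$, and since the behaviour of $K_\nu$ changes character between $\nu\to0$ and $\nu\to\infty$, crude exponential bounds are too lossy and one should invoke uniform asymptotic expansions of Bessel functions, coupled with the B-spline estimates in which $P\asymp\nu$ makes the \emph{number} of derivatives itself grow with $\nu$. It is exactly the interaction of these two $\nu$-dependent mechanisms with the exponential gain $e^{-\sqrt{2\nu}\,\kappa/2}$, together with the Stirling-type cancellation above, that produces the sharp profile $\max\{\nu^{1/2}(1+|\ln\nu|),\nu^{-1/2}\}$; everything else is bookkeeping.
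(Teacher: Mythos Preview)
The paper does not contain a proof of this theorem; it is quoted as \cite[Theorem~10]{bachmayr2019unified} and no argument is given in the text. So there is no ``paper's own proof'' to compare your attempt against.

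That said, your strategy---scale out $\lambda$, split into $\widehat{\rho_{1,\nu}}-\widehat g$ with $g=\rho_{1,\nu}(1-\varphi_\kappa)$, bound $\widehat g$ via $\|D^\balpha g\|_{L^1}$ up to order $2m=P-1$, and track the $\nu$-dependence through uniform Bessel estimates and Stirling---is exactly the approach of the cited reference. Your identification of the crux (uniform-in-$\nu$ bounds on derivatives of $z^\nu K_\nu(z)$, with the number $P\asymp\nu$ of derivatives itself growing) is correct, and the heuristic balance $u=\sqrt{2\nu}\,\kappa\gtrsim\nu\ln\nu$ for large $\nu$ versus $u\gtrsim 1$ for small $\nu$ is the right mechanism for producing $\max\{\nu^{1/2}(1+|\ln\nu|),\nu^{-1/2}\}$. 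The sketch is sound as a plan; turning the Bessel-derivative step into rigorous, genuinely uniform bounds (handling both regimes $\nu\to 0$ and $\nu\to\infty$, and the transition region for $K_\mu(z)$ near $z\sim\mu$) is where the actual work lies, and your proposal acknowledges but does not carry this out.
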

\begin{remark} {\rm
	The periodic random field $Z_{\bx}^\mathrm{\ext}$ on $\mathbb{T}^d$
	provides a tool for deriving {series expansions} of the original
	random field. In contrast to the \KL eigenfunctions on $D$, which
	are typically not explicitly known, the corresponding eigenfunctions
	$\psi_j^{\ext}$ of the periodic covariance are explicitly known
	trigonometric functions and one has the following \KL expansion for
	the periodized random field:
	\[
	Z_\bx^\mathrm{ext} = \sum_{j\in \NN} y_j
	\sqrt{\lambda^\mathrm{\ext}_j} \,\psi_j^{\ext}, \quad y_j \sim
	\mathcal{N}(0,1)\ \text{ i.i.d.,}
	\]
	with $\lambda^\mathrm{ext}_j$ denoting the eigenvalues of the
	periodized covariance and the $\psi_j^{\ext}$ are normalized in
	$L^2(\mathbb{T}^d)$.  Restricting this expansion back to $D$, one
	obtains an exact expansion of the original random field on $D$
	\begin{equation}\label{nondstdexpansion}
		Z_\bx  = \sum_{j\in \NN} y_j \sqrt{\lambda^\mathrm{\ext}_j} \,\psi_j^{\ext}|_\D,
		\quad y_j \sim \mathcal{N}(0,1)\  \text{ i.i.d.,} 
	\end{equation}
	This provides an alternative to the standard KL expansion of $Z_\bx$
	in terms of eigenvalues $\lambda_j$ and eigenfunctions $\psi_j$
	normalized in $L^2(\D)$. The main difference is that the functions
	$\psi^\mathrm{ext}_j\big|_\D$ in \eqref{nondstdexpansion} are not
	$L^2(\D)$-orthogonal. However, these functions are given explicitly,
	and thus no approximate computation of eigenfunctions is required.

	The KL expansion of $Z_\bx^\ext$ also enables the construction of
	alternative expansions of $Z_\bx$ of the basic form
	\eqref{nondstdexpansion}, but with the spatial functions having
	additional properties.  In \cite{bachmayr2018GRFRep}, wavelet-type
	representations
	\[
	Z_\bx^{\ext} = \sum_{\ell,k} y_{\ell,k} \psi_{\ell, k} , \quad
	y_{\ell,k} \sim \mathcal{N}(0,1)\ \text{ i.i.d.,}
	\]
	are constructed where the functions $\psi_{\ell,k}$ have the same
	multilevel-type localisation as the Meyer wavelets. This feature
	yields improved convergence estimates for tensor Hermite polynomial
	approximations of solutions of random diffusion equations with
	log-Gaussian coefficients .
} \end{remark} 

\subsubsection{Sampling stationary GRFs}
The simulation of GRFs with specified covariance is a fundamental task
in computational statistics with a wide range of applications. In this
section we present an efficient methods for sampling such fields.
Consider a GRF $(Z_\bx)_{\bx\in \D}$ where $\D$ is contained in
$[-1/2,1/2]^d$. Assume that $(Z_\bx)_{\bx\in \domain}$ is a
restriction of a real-valued, stationary and centered GRF
$(Z_\bx)_{\bx\in \R^d}$ on $\R^d$ with covariance given in
\eqref{eq-rep-k-03}. Let $m\in \NN$ and $\bx_1,\ldots,\bx_M$ be
$M=(m+1)^d$ uniform grid points on $[-1/2,1/2]^d$ with grid spacing
$h=1/m$. We wish to obtain samples of the Gaussian RV
$$
\bZ=(Z_{\bx_1},\ldots,Z_{\bx_M})
$$
with covariance matrix
\begin{align} \label{covmatrix}
	\boldsymbol{\Sigma}=[\Sigma_{i,j}]_{i,j=1}^M,\qquad \Sigma_{i,j} =
	\rho(\bx_i - \bx_j) , \quad i,j = 1, \ldots, M.
\end{align}
Since $ \boldsymbol{\Sigma}$ is symmetric positive semidefinite, this
can in principle be done by performing the Cholesky factorisation
$ \boldsymbol{\Sigma} = \boldsymbol{F} \boldsymbol{F}^\top$ with
$ \boldsymbol{F} = \boldsymbol{\Sigma}^{1/2}$, from which the desired
samples are provided by the product $ \boldsymbol{F} \bY$ where
$ \bY \sim \mathcal{N}(0,\bI)$.  However, since $\Sigma$ is large and
dense when $m$ is large, this factorisation is prohibitively
expensive.  
Since the covariance matrix $ \boldsymbol{\Sigma}$ is a
nested block Toeplitz matrix under appropriate ordering, an efficient
approach is to extend $\boldsymbol{\Sigma}$ to a appropriate larger
nested block circulant matrix whose spectral decomposition can be
rapidly computed using FFT.

For any $\ell \geq 1$ we construct a $2\ell$-periodic extension of
$\rho$ as follows
\begin{align*}\label{period}
	\rho^\ext(\bx) = \sum_{\bn \in \ZZ^d} \bigl( \rho \chi_{(-\ell,\ell]^d} \bigr)(\bx + 2\ell \bn),  \quad \bx \in \R^d\,.
\end{align*}
Clearly, $\rho^\ext$ is $2 \ell$-periodic and $\rho^\ext = \rho $ on
$[-1,1]^d$. Denote $\bxi_1,\ldots,\bxi_s$, $s=(2\ell/h)^d$, the
uniform grid points on $[-\ell,\ell]^d$ with grid space $h$. Let
$\bZ^\ext=(Z_{\bxi_1},\ldots,Z_{\bxi_s})$ be the extended GRV with
covariance matrix $\boldsymbol{\Sigma}^{\rm ext}$ whose entries is
given by formula \eqref{covmatrix}, with $\rho$ replaced by
$\rho^\ext$ and $\bx_i$ by $\bxi_i$. Hence $ \boldsymbol{\Sigma}$ is
embedded into the nested circulant matrix
$ \boldsymbol{\Sigma}^{\rm ext}$ which can be diagonalized using FFT
(with log-linear complexity) to provide the spectral decomposition
\begin{equation*}
	\label{spectral_decomp}
	\boldsymbol{\Sigma}^{\rm ext} = \boldsymbol{Q}^{\rm ext} \boldsymbol{\Lambda}^{\rm ext} (\boldsymbol{Q}^{\rm ext})^\top,
\end{equation*}
with $\Lambda^{\rm ext}$ diagonal and containing the eigenvalues
$\lambda^{\rm ext}_j$ of $\boldsymbol{\Sigma}^{\rm ext} $ and
$\boldsymbol{Q}^{\rm ext}$ being a Fourier matrix. Provided that these
eigenvalues are non-negative, the samples of the grid values of $Z$
can be drawn as follows. First we draw a random vector
$(y_j)_{j=1,\ldots,s}$ with $y_j\sim \mathcal{N}(0,1)$ i.i.d., then
compute
\begin{equation*}\label{samples}
	\bZ^\text{ext}  =  \sum_{j=1}^s y_j \sqrt{\lambda^{\rm ext}_j} \, \bq_j
\end{equation*}
using the FFT, with $\bq_j$ the columns of $\boldsymbol{Q}^{\rm ext}$. 
Finally, a sample of $\bZ$ is obtained by extracting from
$\bZ^\text{ext}$ the entries corresponding to the original grid
points.

The above mentioned process is feasible, provided that
$\boldsymbol{\Sigma}$ is positive semidefinite. 
The following theorem
characterizes the condition on $\ell$ for GRF with Mat\'ern covariance
such that $\boldsymbol{\Sigma}^{\rm ext}$ is positive semidefinite,
see \cite{GKNSS}.

\begin{theorem}\label{thm:matern-growth}
	Let $1/2 \leq \nu < \infty$, $\lambda \leq 1$, and
	$h/\lambda \leq e^{-1}$. Then there exist $C_1, C_2 >0$ which may
	depend on $d$ but are independent of $\ell, h, \lambda, \nu$, such
	that $\Sigma^{\rm ext}$ is positive definite if
	\begin{equation*}
		\frac{\ell}{\lambda}  \ \geq  \  C_1\  + \ C_2\,  \nu^{\frac12} \, \log\bigl( \max\big\{ {\lambda}/{h}, \, \nu^{\frac12}\big\} \bigr) \, .
		\label{eq:alphahsmall}
	\end{equation*}
\end{theorem}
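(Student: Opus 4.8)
The plan is to diagonalise the nested block circulant matrix $\boldsymbol{\Sigma}^{\ext}$ and to show that, under the stated growth condition on $\ell/\lambda$, all of its eigenvalues are strictly positive; since $\boldsymbol{\Sigma}$ is a principal submatrix of $\boldsymbol{\Sigma}^{\ext}$, positive semidefiniteness of $\boldsymbol{\Sigma}$ follows at once. Write $N:=2\ell/h$ for the number of grid points per coordinate direction. Since the $2\ell$-periodic function $\rho^{\ext}$ coincides with $\rho_{\lambda,\nu}$ at every grid point $h\bm$ with $\bm\in\{-N/2,\dots,N/2-1\}^{d}$, the eigenvalues of $\boldsymbol{\Sigma}^{\ext}$ are
\begin{equation*}
  \lambda^{\ext}_{\bk}\ =\ \sum_{\bm\in\{-N/2,\dots,N/2-1\}^{d}}\rho_{\lambda,\nu}(h\bm)\,\en^{-\im(\pi\bk/\ell)\cdot h\bm},\qquad |\bk|_{\infty}\le N/2 ,
\end{equation*}
i.e.\ a truncated rectangle-rule approximation (up to the scalar $h^{-d}$) to $\widehat{\rho_{\lambda,\nu}}(\pi\bk/\ell)$. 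Adding and subtracting the omitted summands and applying the classical Poisson summation formula to the resulting sum over all of $\ZZ^{d}$ — legitimate because $\rho_{\lambda,\nu}$ is continuous, integrable, and has a pointwise positive Fourier transform (the Mat\'ern spectral density) — gives $\lambda^{\ext}_{\bk}=h^{-d}(M_{\bk}-R_{\bk})$, where
\begin{equation*}
  M_{\bk}:=\sum_{\bn\in\ZZ^{d}}\widehat{\rho_{\lambda,\nu}}\Big(\frac{\pi\bk}{\ell}+\frac{2\pi\bn}{h}\Big)\ \ge\ 0 ,
\end{equation*}
and $R_{\bk}$ collects the omitted tail terms, those with $|h\bm|_{\infty}\ge\ell$. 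It then suffices to prove $M_{\bk}>|R_{\bk}|$ for every admissible $\bk$. In contrast with Theorem~\ref{thm:smoothcond}, where the smooth cutoff makes $\widehat{\rho_{\lambda,\nu}\varphi_{\kappa}}$ pointwise positive, here the individual Fourier samples of the sharply truncated covariance need not be nonnegative, so one genuinely has to use the positivity of the aliased sum $M_{\bk}$.

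For the lower bound on $M_{\bk}$ I would retain only the term $\bn=\bnul$ and use that
\begin{equation*}
  \widehat{\rho_{\lambda,\nu}}(\bxi)\ =\ \frac{(2\nu)^{\nu}\,\Gamma(\nu+d/2)}{\pi^{d/2}\,\Gamma(\nu)}\,\lambda^{d}\big(2\nu+\lambda^{2}|\bxi|^{2}\big)^{-(\nu+d/2)}
\end{equation*}
is radially decreasing; since $\pi\bk/\ell$ lies in $[-\pi/h,\pi/h]^{d}$, this yields the explicit uniform bound $M_{\bk}\ge\widehat{\rho_{\lambda,\nu}}\big((\pi/h)\mathbf{1}\big)>0$. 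For the upper bound on $R_{\bk}$ I would bound the exponential factors by $1$, use that $\rho_{\lambda,\nu}$ is monotone decreasing in $|\bx|$ (so the tail sum is dominated by the corresponding integral), and apply the scaling identity $\rho_{\lambda,\nu}(\lambda\bu)=\rho_{1,\nu}(\bu)$: with $r:=(\ell-C_{d}h)/\lambda$,
\begin{equation*}
  |R_{\bk}|\ \le\ h^{d}\!\!\sum_{|h\bm|_{\infty}\ge\ell}\!\rho_{\lambda,\nu}(h\bm)\ \le\ C_{d}\int_{|\bx|_{\infty}>\ell-C_{d}h}\rho_{\lambda,\nu}(\bx)\,\dd\bx\ =\ C_{d}\,\lambda^{d}\int_{|\bu|_{\infty}>r}\rho_{1,\nu}(\bu)\,\dd\bu .
\end{equation*}
The remaining tail integral I would control through the exponential decay of the Mat\'ern covariance coming from the large-argument asymptotics of the modified Bessel function $K_{\nu}$, obtaining a bound of the shape $|R_{\bk}|\le C_{d}\,\lambda^{d}\,\Gamma(\nu)^{-1}\,Q(\nu,r)\,\en^{-\sqrt{2\nu}\,r}$ with $Q(\nu,r)$ an explicit quantity that grows at most polynomially in $r$ and at most like $\nu\log\nu$ in $\nu$ (the regime $r\gtrsim\nu^{1/2}$ being required for such a bound, which is precisely why a term of size $\nu^{1/2}$ unavoidably appears in the final condition).

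It remains to compare the two bounds, and this is the main obstacle. The factors $\lambda^{d}$ and $\Gamma(\nu)^{-1}$ that are common to $\widehat{\rho_{\lambda,\nu}}((\pi/h)\mathbf{1})$ and to the bound on $R_{\bk}$ cancel, and after taking logarithms the inequality $M_{\bk}>|R_{\bk}|$ reduces to a lower bound on $\sqrt{2\nu}\,r$ by an expression whose leading part is
\begin{equation*}
  \big(\nu+\tfrac{d}{2}\big)\log\!\Big(2\nu+\frac{d\pi^{2}\lambda^{2}}{h^{2}}\Big)-\log\Gamma\big(\nu+\tfrac{d}{2}\big)-\tfrac{\nu}{2}\log(2\nu)+(\nu+d)\log r ,
\end{equation*}
together with additive terms of size $O(1+\log\nu)$ and $O(\nu)$, all of which are dominated by $\sqrt{2\nu}\,r$ once $r\gtrsim\nu^{1/2}$. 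Inserting Stirling's formula for $\log\Gamma(\nu+\tfrac{d}{2})$ makes the $\nu\log\nu$-contributions cancel, and one distinguishes two cases according to whether $2\nu$ or $d\pi^{2}\lambda^{2}/h^{2}$ dominates inside the first logarithm: if $\lambda/h\lesssim\nu^{1/2}$ — so that $\max\{\lambda/h,\nu^{1/2}\}=\nu^{1/2}$ — the requirement collapses (after absorbing the self-referential $\log r$ term in the standard way) to $r\gtrsim\nu^{1/2}\log\nu$; otherwise it collapses to $r\gtrsim\nu^{1/2}\log(\lambda/h)$. Recalling that $r\ge\ell/\lambda-C_{d}$ by the hypothesis $h/\lambda\le\en^{-1}$ (which also yields $\lambda/h\ge\en>1$, so that every logarithm appearing is $\ge 1$ and the lower-order terms can indeed be absorbed), and using $\nu\ge\tfrac12$ and $\lambda\le1$, this is exactly a condition of the announced form $\ell/\lambda\ge C_{1}+C_{2}\,\nu^{1/2}\log\max\{\lambda/h,\nu^{1/2}\}$ with $C_{1},C_{2}$ depending only on $d$. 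The structural steps — the circulant eigenvalue formula, the main/remainder splitting, and the monotonicity of $\rho_{\lambda,\nu}$ — are routine once one recalls the circulant-embedding framework; the delicate quantitative bookkeeping of the $\nu$-dependent constants sketched in this last paragraph is where the genuine work lies.
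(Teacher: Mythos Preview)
The paper does not prove this theorem; it merely quotes the result from the cited reference \cite{GKNSS}. So there is no proof in the paper to compare against.

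Your overall strategy---diagonalising the block circulant matrix, invoking Poisson summation to write each eigenvalue as an aliased spectral sum $M_{\bk}$ minus a spatial tail $R_{\bk}$, lower-bounding $M_{\bk}$ by the single term $\bn=\bnul$, and upper-bounding $R_{\bk}$ via the exponential decay of the Mat\'ern covariance---is precisely the route taken in the original proof in \cite{GKNSS}, and the structural steps you describe are correct. Your observation that the regime $r\gtrsim\nu^{1/2}$ is needed for the large-argument asymptotics of $K_\nu$ to apply uniformly is also the key point there.

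That said, the final paragraph is only a sketch, and the delicate step you flag---showing that after inserting Stirling's formula the $\nu\log\nu$ contributions cancel and one is left with the stated bound---is exactly where the full proof in \cite{GKNSS} spends most of its effort. In particular, one needs uniform-in-$\nu$ bounds on $K_\nu(z)$ in the transition region $z\asymp\nu$ (not just the leading asymptotics), and the tail integral of $\rho_{1,\nu}$ has to be controlled carefully in both regimes $r\asymp\nu^{1/2}$ and $r\gg\nu^{1/2}$. Your displayed expression for the logarithmic comparison is qualitatively right but would need to be made precise to close the argument; as written it is a plausible outline rather than a proof.
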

\begin{remark} {\rm
	For GRF with Mat\' ern covariances, it is well-known (see,
	e.g. \cite[Corollary 5]{GKNSSS}, \cite[eq.(64)]{BCM}) that the exact
	KL eigenvalues $\lambda_j$ of $Z_\bx$ in $L^2(\D)$ decay with the
	rate $
	\label{kldecay} \lambda_j \ \leq \ C j^{-(1+ 2 \nu/d)} $. It has
	been proved recently in \cite{bachmayr2019unified} that the
	eigenvalue $\lambda^{\rm ext}_j$ maintain this rate of decay up to a
	factor of order $\mathcal{O}(|\!\log h|^\nu)$.
} \end{remark} 
\subsection{Finite element discretization}
\label{S:FEM}
The approximation results and algorithms to be developed in the
present text involve, besides the Wiener-Hermite PC expansions with
respect to Gaussian co-ordinates $\by\in \R^\infty$, also certain
numerical approximations in the physical domain $\domain$.  Due to
their wide use in the numerical solution of elliptic and parabolic
PDEs, we opt for considering standard, primal Lagrangian finite
element  discretizations. We confine the presentation
and analysis to Lipschitz polytopal domains $\domain \subset \R^d$
with principal interest in $d=2$ ($\domain$ is a polygon with straight
sides) and $d=3$ ($\domain$ is a polyhedron with plane faces).  We
confine the presentation to so-called primal FE discretizations in
$\domain$ but hasten to add that with minor extra mathematical effort,
similar results could be developed also for so-called mixed, or dual
FE discretizations (see,e.g., \cite{BBF} and the references there).

In presenting (known) results on finite element method (FEM for short)
convergence rates, we consider separately FEM in polytopal domains
$\domain\subset \RR^d$, $d=1,2,3$, and FEM on smooth $d$-surfaces
$\Gamma\subset \RR^{d+1}$, $d=1,2$. See
\cite{BonDmlwEigFkt,DmlwHighOrderFEM}
\subsubsection{Function spaces}
\label{S:FncSpc}
For a bounded domain $\domain\subset \RR^d$, 
the usual Sobolev function spaces \index{space!Sobolev $\sim$}
of integer order
$s \in \NN_0$ and integrability $q\in [1,\infty]$ 
are denoted by
$W^{s}_q(\domain)$ with the understanding that
$L^q(\domain)=W^{0}_q(\domain)$.  
The norm of $v\in W^{s}_q(\domain)$
is defined by
\[
\|v\|_{ W^{s}_q} := \ \sum_{\balpha \in \ZZ^d_+: |\balpha| \le s}
\|D^\balpha v\|_{L^q} .
\]
Here  $D^\balpha$ denotes the partial weak derivative of order $\balpha$.  We refer to any standard text
such as \cite{Adams2nd} for basic properties of these spaces.
Hilbertian Sobolev spaces are given for $s\in \NN_0$ by
$H^s(\domain) = W^{s}_2(\domain)$, with the usual understanding that
$L^2(\domain) = H^0(\domain)$.

For $s\in \NN$, we call a $C^s$-domain $\domain\subset \RR^d$ a
	bounded domain whose boundary $\partial\domain$ is locally
	parameterized in a finite number of co-ordinate systems as a graph of
	a $C^s$ function.  In a similar way, we shall call
	$\domain\subset \RR^d$ a Lipschitz domain, when $\partial\domain$ is,
	locally, the graph of a Lipschitz function.  We refer to
	\cite{Adams2nd,GilbTr} and the references there or to \cite{Gr}.

We call \emph{polygonal domain} a domain $\domain \subset \RR^2$ that
	is a polygon with Lipschitz boundary $\partial\domain$ (which
	precludes cusps and slits) and with a finite number of straight sides.

Let $\domain\subset \RR^2$ denote an open bounded polygonal domain. 
We introduce in $\domain$ a nonnegative
function $r_\domain: \domain \to \RR_+$ which is smooth in  $\domain$, 
and which coincides for $\bx$ in a vicinity of each
corner $\bc\in\partial\domain$ with the Euclidean distance
$| \bx - \bc |$.

To state elliptic regularity shifts in $\domain$, 
we require
certain corner-weighted Sobolev spaces.  
We require these only for
integrability $q=2$ and for $q=\infty$.

For $s\in \NN_0$ and $\varkappa\in \RR$ we define \index{space!Kondrat'ev $\sim$}
\begin{equation*} \Kk^s_\varkappa(\domain): = \big\{ u: \domain \to
	\CC: \ r_\domain^{|\balpha|-\varkappa}D^\balpha u\in L^2(\domain),
	|\balpha|\leq s \big\}
\end{equation*}
and
\begin{equation*}
	\Ww^s_\infty(\domain):
	=
	\big\{u: \domain\to \CC: \ r_\domain^{|\balpha|}D^\balpha u\in L^\infty(\domain),\
	|\balpha|\leq s \big\}.
\end{equation*}
Here, for $\balpha\in \NN_0^2$ and as before $D^\balpha$ denotes the
partial weak derivative  of order $\balpha$.

The \emph{corner-weighted norms in these spaces} are given by
\begin{equation*}
	\|u\|_{ \Kk^s_\varkappa}
	:=
	\sum_{|\balpha|\leq s}\|r_\domain^{|\balpha|-\varkappa}D^\balpha u\|_{L^2}
	\qquad\text{and}\qquad
	\|u\|_{ \Ww^s_\infty}:=\sum_{|\balpha|\leq s}\|r_\domain^{|\balpha|}D^\balpha u\|_{L^\infty} \,.
\end{equation*}
The function spaces $\Kk^s_\varkappa(\domain)$ and
$\Ww^s_\infty(\domain)$ endowed with these norms are Banach spaces,
and $\Kk^s_\varkappa(\domain)$ are separable Hilbert spaces.  These
corner-weighted Sobolev spaces 
are called Kondrat'ev spaces.

An embedding of these spaces is
$ H^1_0(\domain) \hookrightarrow \Kk^1_0(\domain) $. 
This follows from the
existence of a constant $c(\domain)>0$ such that for every
$\bx \in \domain$ holds
$r_\domain(\bx) \geq c(\domain) {\rm dist}(\bx,\partial\domain)$.
%
\subsubsection{Finite element interpolation}
\label{S:FEIntrp}
In this section, 
we review some results on FE approximations \index{finite element  approximation} in polygonal domains
$\domain$ on locally refined triangulations $\cT$ in $\domain$. 
These
results are in principle known for the standard Sobolev spaces
$H^s(\domain)$ and available in the standard texts
\cite{Brenner,Ciarlet}.  
For spaces with corner weights in polygonal domains
$\domain\subset \RR^2$, such as $ \Kk^s_\varkappa $ and
$ \Ww^s_\infty $, however, which arise in the regularity of the
Wiener-Hermite PC expansion coefficient functions for elliptic PDEs in
corner domains in Section \ref{sec:KondrReg} ahead, we provide
references to corresponding FE approximation rate bounds.

The corresponding FE spaces involve suitable mesh refinement to
compensate for the reduced regularity caused by corner and edge
singularities which occur in solutions to elliptic and parabolic
boundary value problems in these domains.

We define the FE spaces in a polygonal domain 
$\domain\subset \RR^2$ (see \cite{Brenner,Ciarlet} for details). 
Let $\cT$ denote a regular
triangulation of $\overline \domain$, i.e., a partition of
$\overline\domain$ into a finite number $N(\cT)$ of closed,
nondegenerate triangles $T\in \cT$ (i.e., $|T|>0$) such that for any
two $T,T'\in \cT$, the intersection $T\cap T'$ is either empty, a
vertex or an entire edge.  We denote the \emph{meshwidth} of $\cT$ as
$$
h(\cT) := \max\{ h(T): T\in \cT \}, \;\;\mbox{where}\;\; h(T) :=
\mbox{diam}(T)\;.
$$
For $T\in \cT$, denote $\rho(T)$ the diameter of the largest circle
that can be inscribed into $T$.  We say $\cT$ is \emph{$\kappa$
	shape-regular}, if
$$
\forall T\in \cT: \;\; \frac{h(T)}{\rho(T)} \leq \kappa \;.
$$
A sequence $\mathfrak{T} := ( \cT_n)_{n\in \NN}$ is $\kappa$
shape-regular if each $\cT\in \mathfrak{T}$ is $\kappa$ shape-regular,
with one common constant $\kappa>1$ for all $\cT\in \mathfrak{T}$.

In a polygon $\domain$, with a regular, simplicial triangulation
$\cT$, and for a polynomial degree $m\in \NN$, the Lagrangian FE space
$S^m(\domain,\cT)$ of continuous, piecewise polynomial functions of
degree $m$ on $\cT$ is defined as
\begin{equation*}\label{eq:DefSm}
	S^m(\domain,\cT) = \{ v\in H^1(\domain): \forall T\in \cT: v|_{T} \in \IP_m \} \;.
\end{equation*}
Here, $\IP_m := {\rm span} \{ \bx^\balpha: |\balpha| \leq m \}$
denotes the space of polynomials of $\bx\in \RR^2$ of total degree at
most $m$.  We also define
$S^m_0(\domain,\cT) := S^m(\domain,\cT) \cap H^1_0(\domain)$.

The main result on FE approximation rates in a polygon
$\domain\subset \RR^2$ in corner-weighted spaces
$\Kk^s_\kappa(\domain)$ reads as follows.
\begin{proposition}\label{prop:FECorner}
	Consider a bounded polygonal domain $\domain \subset \RR^2$.  Then,
	for every polynomial degree $m\in \NN$, there exists a sequence
	$(\cT_n)_{n\in \NN}$ of $\kappa$ shape-regular, simplicial
	triangulations of $\domain$ such that for every
	$u\in (H^1_0\cap\Kk^{m+1}_\lambda)(\domain)$ for some $\lambda > 0$,
	the FE interpolation error converges at rate $m$.  
        More precisely, there
	exists a constant $C(\domain, \kappa, \lambda, m) > 0$ such that for
	all $\cT\in (\cT_n)_{n\in \NN}$ and for all
	$u\in (H^1_0\cap\Kk^{m+1}_\lambda)(\domain)$ holds
	$$
	\| u - I^m_{\cT}u \|_{H^1} \leq C h(\cT)^m \| u \|_{\Kk^{m+1}_\lambda}
	\;.
	$$
	Equivalently, in terms of the number $n :=\#(\cT)$ 
        of triangles, there holds
	\begin{equation}\label{eq:FEErr2d}
		\| u - I^m_{\cT}u \|_{H^1} \leq C n^{-m/2} \| u \|_{\Kk^{m+1}_\lambda} \;.
	\end{equation}
	Here, $I^m_{\cT}: C^0(\overline{\domain})\to S^m(\domain,\cT)$ denotes
	the nodal, Lagrangian interpolant.  The constant $C>0$ depends on $m$,
	$\domain$ and the shape regularity of $\cT$, but is independent of
	$u$.
\end{proposition}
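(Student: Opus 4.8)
The plan is to build, for the given degree $m$ and a value $\lambda>0$, a one-parameter family of simplicial triangulations of $\domain$ that is quasi-uniform away from the corners of $\domain$ but algebraically \emph{graded} towards each corner with an exponent $\mu\in(0,1]$ depending only on $m$ and $\lambda$, and then to establish the first inequality in \eqref{eq:FEErr2d} by a dyadic scaling argument near the corners combined with the classical affine (Bramble--Hilbert) interpolation estimate on the essentially uniform remainder of the mesh. Since $\partial\domain$ has only finitely many corners $\bc$ and the analysis near distinct corners is identical up to relabelling, it suffices to fix a neighbourhood $U_\bc$ of a single corner on which $r_\domain(\bx)=|\bx-\bc|=:r$, decompose $U_\bc$ into the dyadic annuli $A_j:=\{2^{-j-1}\le r\le 2^{-j}\}\cap U_\bc$ for $j=0,\dots,J$ with $J\asymp\mu^{-1}\log_2(1/h)$, put a quasi-uniform sub-mesh of size $h_j\asymp h\,2^{-j(1-\mu)}$ on each $A_j$ (with the innermost cap $\{r\le 2^{-J}\}$ of diameter $\asymp h^{1/\mu}$), and glue the pieces into a globally regular, $\kappa$-shape-regular mesh $\cT$; on $\domain\setminus\bigcup_\bc U_\bc$ the mesh is quasi-uniform of size $h$. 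Standard graded-mesh constructions show this is possible with $\kappa$ depending only on $m,\lambda$ and with $h(\cT)\asymp h$.

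First I would dispose of the element count and of the two auxiliary points. The number of triangles meeting $A_j$ is $\asymp(2^{-j}/h_j)^2=h^{-2}2^{-2j\mu}$, which is summable over $j\ge0$, so $\#(\cT)=n\asymp h^{-2}$ uniformly along the family; hence $h(\cT)^m\asymp n^{-m/2}$ and the two displayed bounds are equivalent, leaving only the first to be proved. For the interpolant to make sense one notes that $m\ge1$ and $d=2$ give $m+1\ge2>d/2$, so $\Kk^{m+1}_\lambda(\domain)\hookrightarrow C^0(\overline{\domain})$ — the weighted Sobolev embedding up to the corners uses $\lambda>0$ — which makes the nodal values meaningful, while $u\in H^1_0(\domain)$ forces the boundary nodal values, and thus $I^m_\cT u$, to vanish on $\partial\domain$, so $I^m_\cT u\in S^m_0(\domain,\cT)$.

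The core of the argument is the local estimate on a single annulus. Rescaling $\bx\mapsto 2^{j}\bx$ maps $A_j$ onto a fixed reference annulus $\widehat A$ of unit size, on which the pulled-back mesh is quasi-uniform of size $\asymp 2^{j}h_j\asymp h\,2^{j\mu}$, so the standard affine interpolation error estimate gives $\|\widehat u-\widehat I\widehat u\|_{H^1(\widehat A)}\lesssim(h\,2^{j\mu})^{m}\,|\widehat u|_{H^{m+1}(\widehat A)}$. Undoing the scaling and collecting the powers of $2^{-j}$ produced by the weights $r^{|\balpha|-\lambda}$ in the Kondrat'ev norm yields, on $A_j$,
\[
\|u-I^m_\cT u\|_{H^1(A_j)}\;\lesssim\; h^{m}\,2^{-j(\lambda-\mu m)}\,\|u\|_{\Kk^{m+1}_\lambda(A_j)},
\]
and an analogous bound holds on the innermost cap, where $\lambda>0$ is again what closes the estimate as $r\to0$. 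Choosing $\mu$ small enough that $\mu m<\lambda$ makes the prefactors $2^{-j(\lambda-\mu m)}$ geometrically decreasing; summing over $j$, adding over the finitely many corners, and adding the contribution of $\domain\setminus\bigcup_\bc U_\bc$ — where $r_\domain$ is bounded above and below by positive constants so that standard interpolation on the quasi-uniform part gives $\lesssim h^{m}\|u\|_{H^{m+1}}\lesssim h^{m}\|u\|_{\Kk^{m+1}_\lambda}$ — produces $\|u-I^m_\cT u\|_{H^1}\le C\,h(\cT)^{m}\,\|u\|_{\Kk^{m+1}_\lambda}$ with $C=C(\domain,\kappa,\lambda,m)$.

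The main obstacle I expect is the tension between the two demands on the mesh: the grading must be strong enough, $\mu m<\lambda$, for the geometric series above to converge, yet the construction must at the same time keep the triangulations shape-regular across annulus interfaces and keep the number of elements $\asymp h^{-2}$ (which it does for every fixed $\mu>0$, with constants degrading as $\mu\downarrow0$ — this is why $C$ and $\kappa$ depend on $\lambda$). Making the graded-mesh construction and this uniform dimension bound precise is classical, and I would invoke the corresponding constructions and finite element approximation bounds of \cite{BaPi79,BNZPolygon,GasMorFESing09} rather than re-derive the combinatorics; the dyadic decomposition, the scaling identities for the Kondrat'ev weights, and the geometric summation are then routine.
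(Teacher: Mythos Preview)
Your sketch is correct and follows the standard graded-mesh argument (dyadic annuli, rescaling to a reference annulus, Bramble--Hilbert, geometric summation under the grading condition $\mu m<\lambda$) that underlies the cited references. The paper itself does not give a proof of this proposition at all: it simply refers to \cite[Theorems~4.2,~4.4]{BNZPolygon}, so there is nothing to compare beyond noting that your outline is precisely the approach carried out in detail in that reference and in \cite{BaPi79}.
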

For a proof of this proposition, we refer, for example, to \cite[Theorems 4.2,
4.4]{BNZPolygon}.

We remark that due to
$\Kk_{\lambda}^2(\domain) \subset C^0(\overline{\domain})$, the nodal
interpolant $ I^m_{\cT} $ in \eqref{eq:FEErr2d} is well-defined.  We
also remark that the triangulations $\cT_n$ need not necessarily be
nested (the constructions in \cite{BaPi79,BNZPolygon} do not provide
nestedness; for a bisection tree construction of $( \cT_n)_{n\in \NN}$
which are nested, such as typically produced by adaptive FE
algorithms, with the error bounds \eqref{eq:FEErr2d}, we refer to
\cite{GasMorFESing09}.

For similar results in polyhedral domains in space dimension $d=3$, we
refer to \cite{BacNisZik3dI,BacNisZik3dII,Li3dhFEM} and to the
references there.

\newpage
\section{Elliptic divergence-form PDEs with log-Gaussian coefficient}
\label{sec:EllPDElogN}
We present a model second order linear divergence-form PDE with
  log-Gaussian input data. We review known results on its
  well-posedness, and Lipschitz continuous dependence on the input
  data.  Particular attention is placed on regularity results in
  polygonal domains $\domain\subset \RR^2$.  
  Here, solutions belong to Kondrat'ev spaces.  
  We discuss regularity results for
  parametric coefficients, and establish in particular parametric
  holomorphy results for the coefficient-to-solution maps.

The outline of this section is as follows.  
In Section \ref{S:PbmStat}, we present the strong and variational forms of the
PDE, its well-posedness and the continuity of the data-to-solution map
in appropriate spaces.  Importantly, we do not aim at the most general
setting, but to ease notation and for simplicity of presentation we
address a rather simple, particular case: in a bounded domain
$\domain$ in Euclidean space $\RR^d$.  All the ensuing derivations
will directly generalize to linear second order elliptic systems.  A
stronger Lipschitz continuous dependence on data result is stated in
Section~\ref{S:LipCont}. 
Higher regularity and fractional regularity of the solution  
provided correspondingly by higher regularity of data are discussed in Section \ref{S:Reg}.

Sections \ref{S:RndDat} and \ref{S:ParCoef} describe uncertainty
modelling by placing GMs on sets of admissible, countably parametric
input data, i.e., formalizing mathematically aleatoric uncertainty in
input data.  Here, the Gaussian series introduced in Section
\ref{S:GSer} will be seen to take a key role in converting operator
equations with GRF inputs to infinitely-parametric, deterministic
operator equations.  The Lipschitz continuous dependence of the
solutions on input data from function spaces will imply strong
measurability of corresponding random solutions, and render
well-defined the \emph{uncertainty propagation}, i.e., the
push-forward of the GM on the input data.

In Sections \ref{S:HolSumSol}--\ref{sec:KondrReg}, we connect quantified holomorphy of the
  parametric, deterministic solution manifold
  $\{ u(\by): \by \in \RR^\infty \}$ with sparsity of the
  coefficients $(\norm[H]{u_\bnu})_{\bnu \in \Ff}$ of Wiener-Hermite PC
  expansion as elements of certain Sobolev spaces: 
  We
  start with the case $H=H_0^1(\domain)$ in Section \ref{S:HolSumSol}
  and subsequently discuss higher regularity $H=H^s(\domain)$,
  $s\in\N$, in Section \ref{sec:HsReg} and finally $H$ being a
  Kondrat'ev space on a bounded polygonal domain
  $\domain \subset \mathbb{R}^2$ in Section \ref{sec:KondrReg}.
\subsection{Statement of the problem and well-posedness}
\label{S:PbmStat}
In a bounded Lipschitz domain $\domain \subset \RR^d$
$(d=1,2\ \text{or}\ 3)$, consider the linear second order  elliptic PDE in divergence-form  \index{PDE!linear $\sim$}
\begin{equation}\label{PDE}
  P_a u := \left\{ \begin{array}{l} -  \div(a(\bx) \nabla u(\bx)) \\ \tau_0(u) \end{array} \right\}
  =
  \left\{\begin{array}{c} f(\bx) \;\;\mbox{in}\;\;\domain, \\ 0 \;\; \mbox{on} \;\;\partial \domain\;. 
         \end{array} 
       \right.
     \end{equation}
     Here, $\tau_0 : H^1(\domain)\to H^{1/2}(\partial\domain)$
     denotes the trace map.  With the notation $V:=H_0^1(\domain)$ and
     $V^*=H^{-1}(\domain)$, for any $f\in V^*$, by the Lax-Milgram
     lemma the weak formulation given by
     \begin{equation}\label{vf}
       u\in V:\;\;
       \int_\D a\nabla u\cdot \nabla v\, \rd \bx 
       =  
       \langle f,v \rangle_{V^*,V} \,,\qquad v\in V,
     \end{equation} 
     admits a unique solution $u \in V$ whenever the coefficient $a$
     satisfies the ellipticity assumption
     \begin{equation}\label{PDE-epllipticity}
       0 < a_{\min}: = \underset{\bx\in\D}{\essinf}\, a(\bx) \leq a_{\max} = \| a \|_{L^\infty} < \infty 
       \;.
     \end{equation}
     With $\| v \|_V := \| \nabla v \|_{L^2}$ denoting the norm of
     $v\in V$, there holds the a-priori estimate
     \begin{equation} \label{V-estimate} \|u\|_V\leq
       \frac{\|f\|_{V^*}}{a_{\min}}\,.
     \end{equation} 
     In particular, with
$$L^\infty_+(\domain) := \big\{ a\in L^\infty(\domain): a_{\min}>0\big\},$$
the \emph{data-to-solution operator}
\begin{equation}\label{eq:SolOp}
  \cS:L^\infty_+(\domain) \times V^* \to V: (a,f)\mapsto u
\end{equation}
is continuous.
\subsection{Lipschitz continuous dependence}
\label{S:LipCont}
The continuity \eqref{eq:SolOp} of the data-to-solution map $\cS$
allows to infer already strong measurability of solutions of
\eqref{PDE} with respect to random coefficients $a$.  For purposes of
stable numerical approximation, we will be interested in quantitative
bounds of the effect of perturbations of the coefficient $a$ in
\eqref{vf} and of the source term data $f$ on the solution
$u = \cS(a,f)$.  Mere continuity of $\cS$ as a map from
$L^\infty_+(\domain) \times V^*$ to $V = H^1_0(\domain)$ will not be
sufficient to this end.  To quantify the impact of uncertainty in the
coefficient $a$ on the solution $u\in V$, local H\"older or,
preferably, Lipschitz continuity of the map $\cS$ is required, at
least locally, close to nominal values of the data $(a,f)$.

To this end, consider given $a_1,a_2\in L^\infty_+(\domain)$,
$f_1,f_2\in L^2(\domain)\subset V^*$ with corresponding unique
solutions $u_i = \cS(a_i,f_i)\in V$, $i=1,2$.
\begin{proposition}\label{prop:LipS}
  In a bounded Lipschitz domain $\domain\subset \R^d$, for given data
  bounds $r_a,r_f\in (0,\infty)$, there exist constants $c_a$ and
  $c_f$ such that for every $a_i\in L^\infty_+(\domain)$ with
  $\| \log(a_i) \|_{L^\infty} \leq r_a$, and for every
  $f_i\in L^2(\domain)$ with $\| f_i \|_{L^2} \leq r_f$, $i=1,2$, it
  holds
  \begin{equation}\label{eq:LipBasic}
    \| u_1 - u_2 \|_V 
    \leq 
    \frac{c_P}{a_{1,\min}\wedge a_{2,\min}} \| f_1 - f_2 \|_{L^2} 
    +
    \frac{\| f_1 \|_{V^*} \vee \| f_2 \|_{V^*}}{a_{1,\min} a_{2,\min}} 
    \| a_1-a_2 \|_{L^\infty}
    \;.
  \end{equation}
  Therefore
  \begin{equation}\label{eq:LipCont}
    \| \cS(a_1,f_1) - \cS(a_2,f_2) \|_{V} 
    \leq 
    c_a  \| a_1-a_2 \|_{L^\infty} 
    + 
    c_f \| f_1-f_2 \|_{L^2}\;,
  \end{equation}
  and
  \begin{equation}\label{eq:LogLip}
    \| \cS(a_1,f_1) - \cS(a_2,f_2) \|_{V} 
    \leq 
    \tilde{c}_a \| \log(a_1) - \log(a_2) \|_{L^\infty} 
    +
    c_f \| f_1-f_2 \|_{L^2} \;.
  \end{equation}
  Here, we may take $c_f = c_P \exp(r_a)$, $c_a = c_P r_f\exp(2r_a)$
  and $\tilde{c}_a = c_P r_f \exp(3r_a)$.  The constant
  $c_P = c(\domain)>0$ denotes the $V-L^2(\domain)$ Poincar\'e
  constant of $\domain$.
\end{proposition}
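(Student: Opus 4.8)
The plan is to run the standard energy estimate, using uniform ellipticity bounds extracted from the hypotheses. Write $w:=u_1-u_2\in V$. Subtracting the weak formulations \eqref{vf} for the two data pairs and inserting the telescoping identity $a_1\nabla u_1-a_2\nabla u_2=a_1\nabla w+(a_1-a_2)\nabla u_2$, one gets for all $v\in V$
\[
\int_\D a_1\,\nabla w\cdot\nabla v\,\rd\bx
=
\langle f_1-f_2,v\rangle - \int_\D (a_1-a_2)\,\nabla u_2\cdot\nabla v\,\rd\bx \;.
\]
Testing with $v=w$, bounding the left-hand side below by $a_{1,\min}\|w\|_V^2$ and the two terms on the right by Cauchy--Schwarz, using $\|w\|_{L^2}\le c_P\|w\|_V$ (which also yields $\|g\|_{V^*}\le c_P\|g\|_{L^2}$ for $g\in L^2(\domain)$), and dividing by $\|w\|_V$ (the case $w=0$ being trivial), gives
\[
a_{1,\min}\|w\|_V \le c_P\|f_1-f_2\|_{L^2} + \|a_1-a_2\|_{L^\infty}\,\|u_2\|_V \;.
\]
Then I would invoke the a-priori bound \eqref{V-estimate}, $\|u_2\|_V\le \|f_2\|_{V^*}/a_{2,\min}$; running the same argument with the roles of the indices exchanged produces the analogous estimate with $a_{2,\min}$ and $\|f_1\|_{V^*}$. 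Either of the two resulting bounds is already dominated by the right-hand side of \eqref{eq:LipBasic}, since $1/a_{i,\min}\le 1/(a_{1,\min}\wedge a_{2,\min})$ and $\|f_i\|_{V^*}\le \|f_1\|_{V^*}\vee\|f_2\|_{V^*}$; this proves \eqref{eq:LipBasic}.

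For \eqref{eq:LipCont} I would insert the data bounds into \eqref{eq:LipBasic}. From $\|\log(a_i)\|_{L^\infty}\le r_a$ one gets $a_{i,\min}\ge\exp(-r_a)$, hence $a_{1,\min}\wedge a_{2,\min}\ge\exp(-r_a)$ and $(a_{1,\min}a_{2,\min})^{-1}\le\exp(2r_a)$; from $\|f_i\|_{L^2}\le r_f$ one gets $\|f_i\|_{V^*}\le c_P r_f$. Substituting yields \eqref{eq:LipCont} with $c_f=c_P\exp(r_a)$ and $c_a=c_P r_f\exp(2r_a)$. Finally, \eqref{eq:LogLip} follows from \eqref{eq:LipCont} together with the pointwise Lipschitz bound for the exponential on the admissible range: for a.e.\ $\bx\in\domain$,
\[
|a_1(\bx)-a_2(\bx)| = \big|\exp(\log a_1(\bx))-\exp(\log a_2(\bx))\big| \le \exp(r_a)\,|\log a_1(\bx)-\log a_2(\bx)| \;,
\]
so that $\|a_1-a_2\|_{L^\infty}\le\exp(r_a)\,\|\log(a_1)-\log(a_2)\|_{L^\infty}$; plugging this into \eqref{eq:LipCont} gives $\tilde c_a = c_a\exp(r_a) = c_P r_f\exp(3r_a)$.

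The computation is routine and I anticipate no substantial obstacle. The only points deserving attention are bookkeeping ones: using the single constant $c_P$ consistently both for $\|w\|_{L^2}\le c_P\|w\|_V$ and for the embedding $L^2(\domain)\hookrightarrow V^*$, and obtaining the asymmetric minimum/maximum structure in \eqref{eq:LipBasic} by carrying out the energy estimate in both orders of the two data sets (though, as noted, a single order already suffices for the stated bound).
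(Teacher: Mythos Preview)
Your proof is correct and follows essentially the same energy-estimate approach that the paper invokes: the paper defers to \cite{BchMVK2019} and to Section~\ref{sec:pdc}, where precisely the telescoping $a_1\nabla u_1-a_2\nabla u_2=a_1\nabla w+(a_1-a_2)\nabla u_2$ combined with testing by $w$ and the a-priori bound \eqref{V-estimate} is carried out (there for fixed $f$). Your write-up is in fact more explicit than what the paper provides in situ, and all the constant tracking is accurate.
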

The bounds \eqref{eq:LipCont} and \eqref{eq:LogLip} follow from the
continuous dependence estimates in \cite{BchMVK2019} by elementary
manipulations.  For a proof (in a slightly more general setting), we
also refer to Section \ref{sec:pdc} ahead.
\subsection{Regularity of the solution}
\label{S:Reg}
It is well known that weak solutions $u\in V$ of the linear elliptic
boundary value problem (BVP for short) \eqref{PDE} admit higher
regularity for more regular data (i.e., coefficient $a(\bx)$, source
term $f(\bx)$ and domain $\domain$).  Standard references for
corresponding results are \cite{Gr,GilbTr}. The proofs in these
references cover general, linear elliptic PDEs, with possibly
matrix-valued coefficients, and aim at sharp results on the Sobolev
and H\"older regularity of solutions, in terms of corresponding
regularity of coefficients, source term and boundar $\partial\domain$.
In order to handle the dependence of solutions on random field and
parametric coefficients in a quantitative manner, we develop presently
self-contained, straightforward arguments for solution regularity of
\eqref{PDE}.

Here is a first regularity statement, which will be used in several
places subsequently.
To state it, we denote by $W$ the normed space of all functions
$v \in V$ such that $\Delta v\in L^2(\domain)$. 
The norm in $W$ is defined by
\begin{equation*} \label{W-norm} \| v \|_W := \| \Delta v \|_{L^2}.
\end{equation*}
The map $v\mapsto \|v\|_W$ is indeed a norm on $W$ due to the
homogeneous Dirichlet boundary condition of $v\in V $:
$\| v \|_W = 0$ implies that $v$ is harmonic in $\domain$, and
$v\in V $ 
implies that the trace of $v$ on
$\partial \domain$ vanishes, 
whence $v=0$ in $\domain$ by the maximum principle.
\begin{proposition}\label{prop:LipCoef}
  Consider the boundary value problem \eqref{PDE} in a bounded domain
  $\domain$ with Lipschitz boundary, and with
  $a\in W^{1}_\infty(\domain)$, $f\in L^2(\domain)$.  Then the weak
  solution $u\in V$ of \eqref{PDE} belongs to the space $W$ and there
  holds the a-priori estimate
  \begin{equation} \label{W-estimate} \|u\|_W \ \le \
    \frac{1}{a_{\min}}\left(\| f \|_{L^2} + \|f \|_{V^*}\frac{\|\nabla
        a\|_{L^\infty}}{a_{\min}}\right) \le \frac{c}{a_{\min}}\left(
      1 + \frac{\|\nabla a\|_{L^\infty}}{a_{\min}}\right) \| f
    \|_{L^2}\;,
  \end{equation}
  where $a_{\min} = \min\{ a(\bx) : \bx\in \overline{\domain} \}$.
\end{proposition}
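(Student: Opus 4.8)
The plan is to use that $a\in W^1_\infty(\domain)$ lets one differentiate the product in $\div(a\nabla u)$, thereby rewriting the equation \eqref{PDE} in nondivergence form $-\Delta u = a^{-1}\big(f+\nabla a\cdot\nabla u\big)$, whose right-hand side lies in $L^2(\domain)$; the bound then follows from the $V$-a-priori estimate \eqref{V-estimate} and Poincar\'e's inequality. First I would record two routine facts: since $\domain$ is bounded and Lipschitz, $a\in W^1_\infty(\domain)$ has a continuous representative on $\overline\domain$, so by \eqref{PDE-epllipticity} the number $a_{\min}=\min_{\overline\domain}a=\essinf_{\domain}a>0$ is attained; consequently $1/a\in W^1_\infty(\domain)$ with $\nabla(1/a)=-a^{-2}\nabla a\in L^\infty(\domain)^d$.

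The core step is to establish $u\in W$ together with the formula for $\Delta u$, and the device is to test \eqref{vf} with a \emph{modified} test function. For arbitrary $\phi\in C_c^\infty(\domain)$ the function $\phi/a$ is Lipschitz with compact support in $\domain$, hence $\phi/a\in V$; inserting $v=\phi/a$ into \eqref{vf} and using $a\nabla(\phi/a)=\nabla\phi-\phi\,a^{-1}\nabla a$ yields
\begin{equation*}
  \int_\domain \nabla u\cdot\nabla\phi\,\rd\bx
  =
  \langle f,\phi/a\rangle_{V^*,V} + \int_\domain \frac{\phi}{a}\,\nabla u\cdot\nabla a\,\rd\bx
  =
  \int_\domain \frac{1}{a}\big(f+\nabla a\cdot\nabla u\big)\,\phi\,\rd\bx
  =: \int_\domain g\,\phi\,\rd\bx .
\end{equation*}
Here $g\in L^2(\domain)$ because $a^{-1}\in L^\infty(\domain)$, $f\in L^2(\domain)$, $\nabla a\in L^\infty(\domain)^d$ and $\nabla u\in L^2(\domain)^d$. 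Since this holds for every $\phi\in C_c^\infty(\domain)$, we obtain $-\Delta u=g$ in the sense of distributions on $\domain$ with $g\in L^2(\domain)$, i.e.\ $u\in W$ and $\|u\|_W=\|\Delta u\|_{L^2}=\|g\|_{L^2}$.

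From here the estimates are immediate. One has
\begin{equation*}
  \|u\|_W=\|g\|_{L^2}
  \leq \frac{1}{a_{\min}}\Big(\|f\|_{L^2}+\|\nabla a\|_{L^\infty}\|\nabla u\|_{L^2}\Big)
  \leq \frac{1}{a_{\min}}\Big(\|f\|_{L^2}+\|\nabla a\|_{L^\infty}\,\frac{\|f\|_{V^*}}{a_{\min}}\Big),
\end{equation*}
the last inequality being \eqref{V-estimate} in the form $\|u\|_V=\|\nabla u\|_{L^2}\leq\|f\|_{V^*}/a_{\min}$; this is the first claimed bound. For the second, substitute $\|f\|_{V^*}\leq c_P\|f\|_{L^2}$ with $c_P=c(\domain)$ the Poincar\'e constant, obtaining $\|u\|_W\leq a_{\min}^{-1}\big(1+c_P\|\nabla a\|_{L^\infty}/a_{\min}\big)\|f\|_{L^2}$, and set $c:=\max\{1,c_P\}$. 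The only genuine obstacle is the justification of $u\in W$ — that $\Delta u\in L^2(\domain)$ rather than merely in $H^{-1}(\domain)$ — which is precisely what the modified test function $\phi/a$ delivers, exploiting that $a$, hence $1/a$, is Lipschitz; the argument is purely interior, so no boundary regularity beyond Lipschitz is needed, and the homogeneous Dirichlet condition enters only through $u\in V$.
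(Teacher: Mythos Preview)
Your proof is correct and follows essentially the same approach as the paper: derive the nondivergence form identity $-a\Delta u = f + \nabla a\cdot\nabla u$ in $L^2(\domain)$, then divide by $a$ and combine with \eqref{V-estimate} and the Poincar\'e inequality. The paper simply asserts this identity holds ``in the sense of $L^2(\domain)$'' without further comment, whereas you supply a clean justification via the modified test function $\phi/a$; this is a standard and entirely appropriate way to make rigorous the product rule $\div(a\nabla u)=a\Delta u+\nabla a\cdot\nabla u$ when $a$ is Lipschitz and $\nabla u$ is merely $L^2$.
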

\begin{proof}
  That $u\in V$ belongs to $W$ is verified by observing that under
  these assumptions, there holds
  \begin{equation} \label{[Delta u=]} - a\Delta u \ = \ f + \nabla a
    \cdot \nabla u \;\;\mbox{in the sense of}\;\; L^2(\D)\;.
  \end{equation}
  The first bound \eqref{W-estimate} follows by elementary argument
  using \eqref{V-estimate}, the second bound by an application of the
  $L^2(\domain)$-$V^*$ Poincar\'{e} inequality in $\domain$.
\end{proof}
\begin{remark} {\rm\label{rmk:KW}
  The relevance of the space $W$ stems from the relation to the
  corner-weighted Kondrat'ev spaces $\Kk^m_\kappa(\domain)$ which were
  introduced in Section \ref{S:FncSpc}.  When the domain
  $\domain\subset \RR^2$ is a polygon with straight sides, in the
  presently considered homogeneous Dirichlet boundary conditions on
  all of $\partial\domain$, it holds that
  $W \subset \Kk^2_\kappa(\domain)$ with continuous injection provided
  that $|\kappa|<\pi/\omega$ where $0<\omega<2\pi$ is the largest
  interior opening angle at the vertices of $\domain$.  Membership of
  $u$ in $\Kk^2_\kappa(\domain)$ in turn implies optimal approximation
  rates for standard, Lagrangian FE approximations in $\domain$ with
  suitable, corner-refined triangulations in $\domain$, see
  Proposition \ref{prop:FECorner}.
} \end{remark} 
\begin{remark} {\rm\label{rmk:Dconvex}
  If the physical domain $\domain$ is convex or of type $C^{1,1}$,
  then $u\in W$ implies that $u\in (H^2\cap H^1_0)(\D)$ and
  \eqref{W-estimate} gives rise to an $H^2$ a-priori estimate (see,
  e.g., \cite[Theorem 2.2.2.3]{Gr}).
} \end{remark} 
The regularity in Proposition \ref{prop:LipCoef} is adequate for
diffusion coefficients $a(\bx)$ which are Lipschitz continuous in
$\domain$, which is essentially (up to modification)
$W^{1}_\infty(\domain) \simeq C^{0,1}(\domain)$.  In view of our
interest in admitting diffusion coefficients which are (realizations
of) GRF (see Section \ref{S:RndDat}), it is clear from Example
\ref{exple:LCBrBr} that relevant GRF models may exhibit mere H\"older
path regularity.

The H\"older spaces $C^s(\domain)$ on Lipschitz domains $\domain$ can
be obtained as interpolation spaces, via the so-called $K$-method of
function space interpolation which we briefly recapitulate (see, e.g.,
\cite[Chapter 1.3]{Triebel95}, \cite{BerghLof}).  Two Banach spaces
$A_0,A_1$ with continuous embedding $A_1 \hookrightarrow A_0$ with
respective norms $\| \circ \|_{A_i}$, $i=0,1$, constitute an
interpolation couple.  For $0<s<1$, the \emph{interpolation space}
$[A_0,A_1]_{s,q}$ of smoothness order $s$ with fine index
$q\in [1,\infty]$ is defined via the $K$-functional: for $a\in A_0$,
this functional is given by
\begin{equation}\label{eq:Kfct}
  K(a,t;A_0,A_1) := \inf_{a_1\in A_1} \{ \|a - a_1\|_{A_0} + t \| a_1 \|_{A_1}\}\;,\quad 
  t > 0 \;.
\end{equation}
For $0<s<1$ the intermediate, ``interpolation'' space of order $s$ and
fine index $q$ is denoted by $[A_0,A_1]_{s,q}$.  It is the set of
functions $a\in A_0$ such that the quantity
\begin{equation} \label{eq:KIntNorm} \| a \|_{[A_0,A_1]_{s,q}} :=
  \left\{
    \begin{array}{lr} 
      \left( \int_0^\infty (t^{-s} K(a,t,A_0,A_1))^q \frac{\rd t}{t} \right)^{1/q} \;,&  1 \leq q < \infty,
      \\[1ex]
      \sup_{t>0} t^{-s} K(a,t,A_0,A_1) \;, & q = \infty
    \end{array}
  \right. 
\end{equation}
is finite.  When the $A_i$ are Banach spaces, the sets
$[A_0,A_1]_{s,q}$ are Banach spaces with norm given by
\eqref{eq:KIntNorm}.  In particular (see, e.g., \cite[Lemma
7.36]{Adams2nd}), in the bounded Lipschitz domain $\domain$
\begin{equation}\label{eq:CsLinfW1inf}
  C^s(\domain) = [L^\infty(\domain), W^{1}_\infty(\domain)]_{s,\infty},\quad 0<s<1 \;.
\end{equation}
With the spaces $V:=H^1_0(\domain)$ and $W\subset V$, we define the
(non-separable, non-reflexive) Banach space
\begin{equation}\label{eq:Ws}
  W^s := [V,W]_{s,\infty}\;,\quad 0<s<1 \;.
\end{equation}
Then there holds the following generalization of \eqref{W-estimate}.
\begin{proposition}\label{prop:Ws}
  For a bounded Lipschitz domain $\domain \subset \RR^d$, $d\geq 2$,
  for every $f\in L^2(\domain)$ and $a\in C^s(\domain)$ for some
  $0<s<1$ with
  $$a_{\min} = \min\{ a(\bx) : \bx \in \overline{\domain} \}>0,$$ the
  solution $u\in V$ of \eqref{PDE}, \eqref{vf} belongs to $W^s$, and
  there exists a constant $c(s,\domain)$ such that
  \begin{equation}\label{eq:Ws-estimate}
    \| u \|_{W^s} 
    \leq 
    \frac{c}{a_{\min}} \left(1+\|a\|_{C^s}^{1/s} a_{\min}^{-1/s} \right)\| f \|_{L^2}\,.
  \end{equation}
\end{proposition}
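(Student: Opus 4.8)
The plan is to obtain \eqref{eq:Ws-estimate} by real interpolation of two a-priori estimates along the regularity scale of the coefficient $a$, using the description $W^s=[V,W]_{s,\infty}$ together with the characterization $C^s(\domain)=[L^\infty(\domain),W^1_\infty(\domain)]_{s,\infty}$ from \eqref{eq:CsLinfW1inf}. The two endpoints are: (a) the energy bound, $\|u\|_V\le c_P\|f\|_{L^2}/a_{\min}$, which follows from \eqref{V-estimate} and the $V$–$L^2(\domain)$ Poincar\'e inequality and uses only $a\in L^\infty_+(\domain)$; and (b) Proposition~\ref{prop:LipCoef}, which for $a\in W^1_\infty(\domain)$ gives $u\in W$ with $\|u\|_W\le \frac{c}{a_{\min}}\bigl(1+\|\nabla a\|_{L^\infty}/a_{\min}\bigr)\|f\|_{L^2}$. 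Since $W\hookrightarrow V$ (the same Poincar\'e argument yields $\|v\|_V\le c_P\|v\|_W$ for $v\in W$), the couple $(V,W)$ is an interpolation couple and it suffices to bound the $K$-functional $K(u,\tau;V,W)$ with the correct power of $\tau$, uniformly in $\tau>0$.

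Fix $t>0$. Using \eqref{eq:CsLinfW1inf}, choose a near-minimizer $a_1=a_1(t)\in W^1_\infty(\domain)$ of the $K$-functional of $a$ in $(L^\infty,W^1_\infty)$, so that $\|a-a_1\|_{L^\infty}\le C\|a\|_{C^s}\,t^{s}$ and $\|\nabla a_1\|_{L^\infty}\le\|a_1\|_{W^1_\infty}\le C\|a\|_{C^s}\,t^{s-1}$. To retain uniform ellipticity, replace $a_1$ by the truncation $\hat a_1:=\max\{a_1,a_{\min}/2\}$; since $a\ge a_{\min}\ge a_{\min}/2$ on $\domain$, this neither increases $\|a-a_1\|_{L^\infty}$ nor $\|\nabla a_1\|_{L^\infty}$, while $\essinf_\domain \hat a_1\ge a_{\min}/2$ and $\hat a_1\in W^1_\infty(\domain)$. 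Put $\hat u_1:=\cS(\hat a_1,f)$. Then $\hat u_1\in W$ by Proposition~\ref{prop:LipCoef}, $u-\hat u_1\in V$, and from the perturbation bound \eqref{eq:LipBasic} of Proposition~\ref{prop:LipS} (with $f_1=f_2=f$) and again Proposition~\ref{prop:LipCoef},
\begin{equation*}
 \|u-\hat u_1\|_V \le \frac{2\|f\|_{V^*}}{a_{\min}^2}\|a-a_1\|_{L^\infty}\le \frac{C\|a\|_{C^s}}{a_{\min}^2}\|f\|_{L^2}\,t^{s},\qquad
 \|\hat u_1\|_W \le \frac{C}{a_{\min}}\Bigl(1+\frac{\|a\|_{C^s}}{a_{\min}}\,t^{s-1}\Bigr)\|f\|_{L^2}.
\end{equation*}

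Now combine. Set $\tau_0:=(a_{\min}/\|a\|_{C^s})^{1/s}$, and note $\|a\|_{C^s}\ge \|a\|_{L^\infty}\ge a_{\min}$, whence $(\|a\|_{C^s}/a_{\min})\le(\|a\|_{C^s}/a_{\min})^{1/s}$ because $0<s<1$. For $0<\tau\le\tau_0$ take the admissible splitting $u=(u-\hat u_1)+\hat u_1$ with $t=\tau$, so that $K(u,\tau;V,W)\le\|u-\hat u_1\|_V+\tau\|\hat u_1\|_W$ and hence $\tau^{-s}K(u,\tau;V,W)\le \frac{C\|a\|_{C^s}}{a_{\min}^2}\|f\|_{L^2}+\frac{C}{a_{\min}}\|f\|_{L^2}\,\tau^{1-s}$; on $\tau\le\tau_0$ both terms are $\le \frac{C}{a_{\min}}\|f\|_{L^2}(\|a\|_{C^s}/a_{\min})^{1/s}$. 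For $\tau>\tau_0$ take instead the trivial splitting $u=u+0$, giving $K(u,\tau;V,W)\le\|u\|_V\le c_P\|f\|_{L^2}/a_{\min}$ and therefore $\tau^{-s}K(u,\tau;V,W)\le\tau_0^{-s}\|u\|_V\le \frac{c_P}{a_{\min}}\|f\|_{L^2}(\|a\|_{C^s}/a_{\min})\le \frac{C}{a_{\min}}\|f\|_{L^2}(\|a\|_{C^s}/a_{\min})^{1/s}$. Taking the supremum over $\tau>0$ yields $u\in W^s$ with $\|u\|_{W^s}\le \frac{C}{a_{\min}}\|f\|_{L^2}(\|a\|_{C^s}/a_{\min})^{1/s}$, which (dominating the isolated constant) is \eqref{eq:Ws-estimate}.

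The argument is entirely routine once two points are handled. The first is preserving the lower ellipticity bound when approximating $a\in C^s(\domain)$ by $W^1_\infty(\domain)$ functions, so that Propositions~\ref{prop:LipS} and \ref{prop:LipCoef} apply to $\hat u_1=\cS(\hat a_1,f)$; this is dealt with cheaply by the truncation $\hat a_1$. The second, and the actual crux, is the correct coupling of the two interpolation parameters: choosing the sub-parameter $t$ equal to $\tau$ and placing the crossover between the decomposition bound and the trivial bound at $\tau_0\asymp(a_{\min}/\|a\|_{C^s})^{1/s}$ is exactly what turns the linear-in-$\|\nabla a_1\|_{L^\infty}\sim t^{s-1}$ loss of Proposition~\ref{prop:LipCoef} into the exponent $1/s$ in \eqref{eq:Ws-estimate}. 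Getting this bookkeeping right, rather than any analytic subtlety, is the main obstacle.
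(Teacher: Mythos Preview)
Your proof is correct and follows essentially the same interpolation strategy as the paper: approximate $a\in C^s$ by $W^1_\infty$ coefficients via \eqref{eq:CsLinfW1inf}, apply the $W$-estimate of Proposition~\ref{prop:LipCoef} to the approximate problem and the Lipschitz bound \eqref{eq:LipBasic} to control $\|u-\hat u_1\|_V$, then feed these into the $K$-functional for $(V,W)$. The only notable difference is how ellipticity of the approximant is secured: the paper restricts the approximation parameter $\delta$ to a small interval $(0,\delta_0)$ with $\delta_0\sim (a_{\min}/\|a\|_{C^s})^{1/s}$ so that $\|a-a_\delta\|_{L^\infty}<a_{\min}/2$ automatically, and then reparametrizes $\delta=t\delta_0$ for $t\in(0,1)$; you instead truncate via $\hat a_1=\max\{a_1,a_{\min}/2\}$, which lets the decomposition work for all $t$ and pushes the split to $\tau_0$ when bounding $\sup_\tau \tau^{-s}K(u,\tau)$. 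Both devices lead to the same crossover scale $\tau_0\sim(a_{\min}/\|a\|_{C^s})^{1/s}$ and the same exponent $1/s$ in \eqref{eq:Ws-estimate}; your truncation is arguably cleaner since it decouples the ellipticity issue from the interpolation bookkeeping.
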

\begin{proof}
  The estimate follows from the a-priori bounds for $s=0$ and $s=1$,
  i.e., \eqref{V-estimate} and \eqref{W-estimate}, by interpolation
  with the Lipschitz continuity \eqref{eq:LipBasic} of the solution
  operator.

  Let $a\in C^s(\domain)$ with $a_{\min} > 0$ be given.  From
  \eqref{eq:CsLinfW1inf}, for every $\delta>0$ exists
  $a_\delta \in W^{1,\infty}(\domain)$ with
$$
\| a - a_\delta \|_{C^0} \leq C\delta^s \| a \|_{C^s}\;, \quad \|
a_\delta \|_{W^{1}_\infty} \leq C \delta^{s-1} \| a \|_{C^s} \;.
$$
From
$$
\min_{\bx\in \domain} a_\delta(\bx) \geq \min_{\bx\in \domain} a(\bx)
- \| a - a_\delta \|_{C^0} \geq a_{\min} - C\delta^s \| a \|_{C^s}
$$
follows for
$ 0 < \delta \leq 2^{-1/s} \left\| a/a_{\min} \right\|_{C^s}^{-1/s},$
that
$$ \min_{\bx\in \domain} a_\delta(\bx) \geq a_{\min}/2\;.$$
For such $\delta$ and for $f\in L^2(\domain)$, \eqref{PDE} with
$a_\delta$ admits a unique solution $u_\delta \in V$ and from
\eqref{W-estimate}
$$
\| u_\delta \|_W \leq \frac{2c}{a_{\min}}\left( 1 + \frac{\|\nabla
    a_\delta\|_{L^\infty}}{a_{\min}}\right) \| f \|_{L^2}\;.
$$
From \eqref{eq:LipBasic} (with $f_1=f_2=f$) we find
$$
\|u-u_\delta \|_V \leq \frac{2c}{a_{\min}^2} \| a - a_\delta
\|_{L^\infty} \| f \|_{L^2} \leq C \frac{\delta^s}{a_{\min}^2} \| a
\|_{C^s} \| f \|_{L^2} \;.
$$
This implies in \eqref{eq:Kfct} that for some constant $C>0$
(depending only on $\domain$ and on $s$)
\begin{equation}\label{eq:tbound}
  K(u,t,V,W) 
  \leq  
  \frac{C}{a_{\min}} 
  \left( \delta^s A_s + t\left(1 + \delta^{s-1} A_s\right) \right)\| f \|_{L^2}
  \;,\;\;t>0
\end{equation}
where we have set
$A_s := \big\| \frac{a}{a_{\min}} \big\|_{C^s} \in [1,\infty)$.

To complete the proof, by \eqref{eq:Ws} we bound
$\| u \|_{W^s} = \sup_{t>0}t^{-s}K(u,t,V,W)$.  To this end, it
suffices to bound $K(u,t,V,W)$ for $0<t<1$.  Given such $t$, we choose
in the bound \eqref{eq:tbound} $\delta = t\delta_0 \in (0,\delta_0)$
with $\delta_0 := 2^{-1/s} A_s^{-1/s}$.  This yields
$$
\delta^s A_s + t\left(1 + \delta^{s-1} A_s\right) = t^s
\left(\delta_0^s A_s + t^{1-s} + \delta_0^{s-1} A_s \right) = t^s
\left(2^{-1} + t^{1-s} + 2^{-(s-1)/s} A_s^{1-(s-1)/s} \right)
$$
and we obtain for $0<t<1$ the bound
$$
t^{-s} K(u,t,V,W) \leq \frac{C}{a_{\min}} \left( 2 + 2^{-(s-1)/s}
  A_s^{1/s} \right) \| f \|_{L^2}\;.
$$
Adjusting the value of the constant $C$, we arrive at
\eqref{eq:Ws-estimate}.
\end{proof}
\subsection{Random input data}
\label{S:RndDat}
We are in particular interested in the input data $a$ and $f$ of
 the elliptic divergence-form PDE \eqref{PDE} being not precisely known.  The Lipschitz continuous
data-dependence in Proposition \ref{prop:LipS} of the variational
solution $u\in V$ of \eqref{PDE} will ensure that 
small variations in the data $(a,f)\in L^\infty_+(\domain) \times V^*$ 
imply corresponding small changes in the (unique)
solution $u\in V$.  
A natural paradigm is to model uncertain data probabilistically.  
To this end, we work with a base probability space
$(\Omega,\cA,\IP)$.  Given a known right hand side
$f\in L^2(\domain)$, and uncertain diffusion coefficient
$a \in E \subseteq L^\infty_+(\domain)$, where $E$ denotes a suitable
subset of $L^\infty_+(\domain)$ of admissible diffusion coefficients,
we model the function $a$ or $\log a$ as RVs taking values in a
subset $E$ of $L^\infty(\domain)$.  
We will assume the random data
$a$ to be separably-valued, more precisely, the set $E$ of admissible random
data will almost surely belong to a subset of a 
separable subspace of $L^\infty(\domain)$. 
See \cite[Chap. 2.6]{Bogach98} for details on separable-valuedness.
Separability of $E$ is natural from the point of view of numerical
approximation of (samples of) random input $a$ and simplifies many
technicalities in the mathematical description; 
we refer in particular to the construction of GMs on $E$ in
Sections~\ref{S:GMSepHS}--\ref{S:GSer}.  
One valid choice for the
space of admissible input data $E$ consists in
$E = C(\overline{\domain})\cap L^\infty_+(\domain)$. 
In the log-Gaussian models to be analyzed subsequently,
$E\subset L^\infty_+(\domain)$ will be ensured by modelling $\log(a)$
as a GRF, i.e., we assume the probability measure $\IP$ to be such
that the law of $\log(a)$ is a GM on $L^\infty(\domain)$ which charges
$E$, so that the random element
$\log(a(\cdot,\omega))\in L^\infty_+(\domain)$ $\IP$-a.s.. 
This,
in turn, implies with the well-posedness result in Section
\ref{S:PbmStat} that there exists a unique random solution
$u(\omega) = \cS(a,f)\in V$ $\IP$-a.s.. Furthermore, the Lipschitz
continuity \eqref{eq:LogLip} then implies that the corresponding map
$\omega\mapsto u(\omega)$ is a composition of the measurable map
$\omega\mapsto \log(a(\cdot,\omega))$ with the Lipschitz continuous
deterministic data-to-solution map $\cS$, hence strongly measurable,
and thus a RV on $(\Omega,\cA,\IP)$ taking values in $V$.
\subsection{Parametric deterministic coefficient}
\label{S:ParCoef}
A key step in the deterministic numerical approximation of 
the elliptic divergence-form PDE \eqref{PDE} with log-Gaussian random inputs (i.e.,
$\log (a)$ is a GRF on a suitable locally convex space $E$ of
admissible input data) is to place a GM on $E$ and to describe the
realizations of GRF $b$ in terms of affine-parametric representations
discussed in Section \ref{S:GSer}.  In Section \ref{S:ParDetPbm}, we
briefly describe this and in doing so extend a-priori estimates to
this resulting deterministic parametric version of elliptic PDE
\eqref{PDE}.  Subsequently, in Section \ref{S:CplxParExt}, we show
that the resulting, countably-parametric, linear elliptic problem
admits an extension to certain complex parameter domains, while still
remaining well-posed.
\subsubsection{Deterministic countably parametric elliptic PDEs}
\label{S:ParDetPbm}
Placing a Gaussian probability measure on the random inputs $\log(a)$
to the elliptic divergence-form PDE \eqref{PDE} can be achieved via Gaussian
series as discussed in Section \ref{S:GSer}.  Affine-parametric
representations which are admissible in the sense of Definition
\ref{def:Adm} of the random input $\log(a)$ of \eqref{PDE}, 
subject to a Gaussian law on the corresponding input locally convex space $E$,
render the elliptic divergence-form PDE \eqref{PDE} with random inputs a deterministic
parametric elliptic PDE.  More precisely, $b := \log(a)$ will depend on
the sequence $\by = (y_j)_{j\in \NN}$ of parameters from the parameter
space $\RR^\infty$.  Accordingly, we consider parametric diffusion
coefficients $a=a(\by)$, where
$$
\by=(y_j)_{j \in \NN} \in U.
$$
Here and throughout the rest of this book we make use of the notation
$$
U:= \RR^\infty.
$$
We develop the
holomorphy-based analysis of parametric regularity and 
Wiener-Hermite PC expansion coefficient
sparsity for the model parametric linear second order elliptic
divergence-form PDE with so-called ``log-affine coefficients'' \index{coefficient!log-Gaussian $\sim$}
\begin{equation}\label{SPDE}
  -\div\big(\exp(b(\by)) \nabla u(\by)\big)=f
  \quad \mbox{in}\quad \domain\;,
  \quad u(\by)|_{\partial \domain} = 0\;,
\end{equation}
i.e., $$a(\by) =\exp(b(\by)).$$ Here, the coefficient
$b(\by)=\log(a(\by))$ is assumed to be affine-parametric
\begin{equation}\label{eq:CoeffAffin}
  b(\by)= \sum_{j \in \NN} y_j\psi_j(\bx) 
  \;, \quad 
  \bx\in \domain \;,\quad \by\in U\;.
\end{equation}
We assume that $\psi_j \in E \subset L^\infty(\D)$ for every
$j\in \NN$.  For any $\by \in U$ such that $b(\by)\in L^\infty(\D)$,
by \eqref{V-estimate} we have the estimate
\begin{equation} \label{eq:uApriori} \|u(\by)\|_V \leq \|f\|_{V^*} \|
  a(\by)^{-1}\|_{L^\infty} \leq \exp(\|b(\by)\|_{L^\infty})\|f\|_{V^*}
  \,.
\end{equation}
For every $\by \in U$ satisfying $b(\by)\in L^\infty(\D)$, the
variational form \eqref{vf} of \eqref{SPDE} gives rise to the
\emph{parametric energy norm} $\| v \|_{a(\by)}$ on $V$
which is defined by

\begin{equation*}\label{eq:ParEn}
  \| v \|_{a(\by)}^2 := \int_{\domain}  a(\by) | \nabla v |^2 \rd\bx \;, \;\;
  v\in V .
\end{equation*}

The norms $ \| \circ \|_{a(\by)}$ and $\| \circ \|_V$ are equivalent
on $V$ but not uniformly w.r.t.\ $\by$. It holds
\begin{equation}\label{eq:Norms}
  \exp(-\| b(\by) \|_{L^\infty} )\| v \|_V^2 
  \leq 
  \| v \|_{a(\by)}^2 
  \leq 
  \exp( \| b(\by) \|_{L^\infty}) \| v \|_V^2,
  \quad 
  v\in V  \;. 
\end{equation}

\subsubsection{Probabilistic setting}
\label{S:ParmProb}
In a probabilistic setting, the parameter sequence $\by$ is chosen as
a sequence of i.i.d.\  standard Gaussian RVs $\calN(0,1)$ and
$(\psi_j)_{j \in \NN}$ a given sequence of functions in the Banach
space $L^\infty(\domain)$ to which we refer as \emph{representation
  system} of the uncertain input.  We then treat \eqref{SPDE} as the
stochastic linear second order elliptic divergence-form PDE with
so-called ``log-Gaussian coefficients''.  
We refer to
Section~\ref{S:GSer} for the construction of GMs based on affine
representation systems $(\psi_j)_{j \in \NN}$.  
Due to
$L^\infty(\domain)$ being non-separable, we consider GRFs $b(\by)$ 
which take values in separable subspaces $E \subset L^\infty(\domain)$, 
such as $E=C^0(\overline{\domain})$.

The probability space $(\Omega,\cA,\IP)$ from Section \ref{S:RndDat}
on the parametric solutions $\{ u(\by) : \by\in U\} $ is chosen as
$(U,\cB(U);\gamma)$.  
Here and throughout the rest of this book, we
make use of the notation: $\cB(U)$ 
is the
$\sigma$-field on the locally convex space $U$ generated by cylinders
of Borel sets on $\RR$, and $\gamma$ is the product measure of the
standard GM $\gamma_1$ on $\R$ 
(see the definition in Example \ref{ex:prod-measR^infty}). 
We shall refer to $\gamma$ as the \emph{standard GM on $U$}.

It follows from the a-priori estimate \eqref{eq:uApriori} that for
$f\in V^*$ the parametric elliptic diffusion problem \eqref{SPDE} admits a
unique solution for parameters $\by$ in the set
\begin{equation}\label{eq:U0}
  U_0 := \{ \by \in U: b(\by)\in L^\infty(\domain)\} \;.
\end{equation}
The measure $\gamma(U_0)$ of the set $U_0 \subset U$ depends on the
structure of $\by\mapsto b(\by)$.  The following sufficient condition on
the representation system $(\psi_j)_{j\in \NN}$ will be assumed
throughout.

\noindent
\begin{assumption}\label{ass:Ass1}
  For every $j\in \NN$, $\psi_j\in L^\infty(\domain)$, and there
  exists a positive sequence $(\lambda_j)_{j\in \NN}$ such that
  $\big(\exp(-\lambda_j^2)\big)_{j\in \NN}\in \ell^1(\NN)$ and the
  series $\sum_{j\in \NN}\lambda_j|\psi_j|$ converges in
  $L^\infty(\domain)$.
\end{assumption}
For the statement of the next result, we recall a notion of Bochner spaces.  
For a measure space $(\Omega, \cA, \mu)$
let $X$ a Banach space and $1 \le p < \infty$.  
Then
the Bochner space $L^p(\Omega,X;\mu)$ is defined as the space of all
strongly $\mu$-measurable mappings $u$ from $\Omega$ to $X$ 
such that the norm \index{space!Bochner $\sim$}
\begin{equation} \label{eq-Bochner} \|u\|_{L^p(\Omega,X;\mu)} := \
  \left(\int_{\Omega} \|u(\by)\|_X^p \, \rd \mu (\by) \right)^{1/p} 
  < \infty.
\end{equation}
In particular, when $(\Omega, \cA, \mu) = (U,\cB(U);\gamma)$, $X$ is
separable and $p=2$, the hilbertian space $L^2(U,X;\gamma)$ 
is one of the most important for the problems considered in this book.  

The following result was shown in \cite[Theorem 2.2]{BCDM}.
\begin{proposition}\label{prop:Meas1}
  Under Assumption \ref{ass:Ass1}, the set $U_0$ has full GM, i.e.,
  $\gamma(U_0) = 1$.  For all $k\in \NN$ there holds, with
  $\EE(\cdot)$ denoting expectation with respect to $\gamma$,
$$
\EE\left( \exp(k\| b(\cdot) \|_{L^\infty}) \right) < \infty \;.
$$
The solution family $\{ u(\by): \by \in U_0 \}$ of the parametric
elliptic boundary value problem \eqref{SPDE} is in $L^k(U,V;\gamma)$ 
for every finite $k\in \NN$.
\end{proposition}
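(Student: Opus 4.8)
The plan is to deduce all three assertions from Assumption \ref{ass:Ass1} together with standard Gaussian concentration / Fernique-type integrability. First I would establish that $\gamma(U_0) = 1$, where $U_0 = \{\by \in U : b(\by) \in L^\infty(\domain)\}$. Writing $b(\by) = \sum_{j} y_j \psi_j$, I would compare this series with the dominating series $\sum_j \lambda_j |\psi_j|$, which converges in $L^\infty(\domain)$ by hypothesis. The key observation is that if $|y_j| \le \lambda_j$ for all but finitely many $j$, then the tail $\sum_{j \ge J} y_j \psi_j$ is dominated in $L^\infty$ by $\sum_{j \ge J} \lambda_j |\psi_j|$, which is a convergent tail; hence $b(\by) \in L^\infty(\domain)$. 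So it suffices to show that $\gamma$-a.e. $\by$ satisfies $|y_j| \le \lambda_j$ eventually. Since $\big(\exp(-\lambda_j^2)\big)_{j} \in \ell^1(\NN)$ and the standard Gaussian tail bound gives $\gamma_1(\{|y| > \lambda_j\}) \le \exp(-\lambda_j^2/2)$ — or more carefully one uses the summability of $\exp(-\lambda_j^2)$ directly after noting $\lambda_j \to \infty$, so that $\exp(-\lambda_j^2/2) \le \exp(-\lambda_j^2)$ for $j$ large — the Borel--Cantelli lemma yields that $\gamma$-a.s. only finitely many events $\{|y_j| > \lambda_j\}$ occur. This gives $\gamma(U_0) = 1$.

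Second, I would prove the exponential moment bound $\EE\big(\exp(k\|b(\cdot)\|_{L^\infty})\big) < \infty$ for every $k \in \NN$. One clean route: the partial sums $b_J(\by) := \sum_{j=1}^J y_j \psi_j$ define, via the admissible representation system, a Gaussian random element of the separable space $E \subset L^\infty(\domain)$, and by Theorem \ref{thm:GSer} (the Gaussian series theorem, applied with $A = \mathrm{id}$ and the Parseval-frame reformulation of Theorem \ref{thm:AdFrmX}) the series converges $\IP$-a.s. in $E$ with a limit $b$ whose norm has a Fernique-type Gaussian integrability: for all sufficiently small $c > 0$, $\EE[\exp(c\|b\|_{L^\infty}^2)] < \infty$. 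Since $\exp(c t^2)$ dominates $\exp(kt)$ for large $t$ (as $kt \le c t^2 + k^2/(4c)$), this immediately gives $\EE[\exp(k\|b\|_{L^\infty})] < \infty$ for every $k$. Alternatively — and this is the self-contained route that stays closest to Assumption \ref{ass:Ass1} — one can bound $\|b(\by)\|_{L^\infty} \le \sum_j |y_j|\,\|\psi_j\|_{L^\infty}$ crudely, or better, split $b = b_{\text{finite}} + b_{\text{tail}}$ where the tail is controlled deterministically (as in step one) by $\sum_{j \ge J}\lambda_j|\psi_j|$ on the set where $|y_j|\le \lambda_j$, and estimate the exceptional contribution using the $\ell^1$-summability of $\exp(-\lambda_j^2)$; this reduces to integrability of $\exp(k \sum_{j=1}^{J_0} |y_j| \|\psi_j\|_{L^\infty})$ over a fixed finite-dimensional Gaussian, which is elementary since a finite sum of independent Gaussians is Gaussian and $\EE[\exp(\lambda|y|)] < \infty$ for all $\lambda$.

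Third, the Bochner-space claim follows by combining the a-priori estimate with the moment bound: from \eqref{eq:uApriori}, for $\by \in U_0$ we have $\|u(\by)\|_V \le \exp(\|b(\by)\|_{L^\infty})\|f\|_{V^*}$, hence
\[
\|u\|_{L^k(U,V;\gamma)}^k
= \int_{U_0} \|u(\by)\|_V^k \,\rd\gamma(\by)
\le \|f\|_{V^*}^k \int_{U} \exp\big(k\|b(\by)\|_{L^\infty}\big)\,\rd\gamma(\by)
< \infty
\]
by the second step. Strong measurability of $\by \mapsto u(\by)$ is already guaranteed by the Lipschitz continuity \eqref{eq:LogLip} of the solution operator composed with the (strongly measurable) map $\by \mapsto b(\by)$, as noted in Section \ref{S:RndDat}; restricted to the full-measure set $U_0$ this gives a well-defined element of $L^k(U,V;\gamma)$. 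I expect the main obstacle to be the rigorous justification that $\by \mapsto b(\by)$ is genuinely an $E$-valued (hence strongly measurable) Gaussian random element to which Theorem \ref{thm:GSer}'s Fernique integrability applies — i.e., checking that the representation system $(\psi_j)$ under Assumption \ref{ass:Ass1} is admissible in the sense of Definition \ref{def:Adm} and that the a.s. $L^\infty$-limit coincides with the pointwise parametric definition \eqref{eq:CoeffAffin}; once that identification is in place, everything else is routine estimation. Since the statement is attributed to \cite[Theorem 2.2]{BCDM}, I would in practice cite that reference for the delicate measurability point and present the moment estimate via the dominating-series argument for concreteness.
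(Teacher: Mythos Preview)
The paper itself provides no proof: it simply attributes the result to \cite[Theorem~2.2]{BCDM}. Your plan supplies the missing argument and is along the right lines (Borel--Cantelli for $\gamma(U_0)=1$, Fernique for the exponential moments, the a~priori estimate \eqref{eq:uApriori} for the $L^k$-bound), and you correctly flag the separability/identification issue as the only genuinely delicate point.

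There is, however, a concrete slip in the Borel--Cantelli step. You write that ``$\exp(-\lambda_j^2/2) \le \exp(-\lambda_j^2)$ for $j$ large'' once $\lambda_j\to\infty$; this inequality is backwards (the exponent $-\lambda_j^2/2$ is the larger one), and in fact $\sum_j\exp(-\lambda_j^2)<\infty$ does \emph{not} imply $\sum_j\exp(-\lambda_j^2/2)<\infty$: take $\lambda_j^2 = 2\log j + 2\log\log j$, so that $\exp(-\lambda_j^2)=j^{-2}(\log j)^{-2}$ is summable but $\exp(-\lambda_j^2/2)=j^{-1}(\log j)^{-1}$ is not. The easy repair is to run Borel--Cantelli on the events $\{|y_j|>\sqrt{2}\,\lambda_j\}$ instead: the Gaussian tail bound then gives $\gamma_1(|y_j|>\sqrt{2}\,\lambda_j)\le C\exp(-\lambda_j^2)$, which is summable by Assumption~\ref{ass:Ass1}, and the dominating tail series becomes $\sqrt{2}\sum_{j\ge J}\lambda_j|\psi_j|$, which still converges in $L^\infty(\domain)$. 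With this correction your argument goes through.
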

%
\subsubsection{Deterministic complex-parametric elliptic PDEs}
\label{S:CplxParExt}
Towards the aim of establishing sparsity of Wiener-Hermite PC
expansions of the parametric solutions $\{ u(\by): \by\in {U_0}\}$ of
\eqref{SPDE}, we extend the deterministic parametric elliptic problem
\eqref{SPDE} from real-valued to complex-valued parameters.

Formally, 
replacing $\by=(y_j)_{j \in \NN}\in U$ in the coefficient $a(\by)$ by
$\bz=(z_j)_{j \in \NN}=(y_j+\im\xi_j)_{j \in \NN}\in \CC^\infty$, 
the real part of $a(\bz)$ is
\begin{equation} \label{Re(a)} \mathfrak{R}[a(\bz)] =
  \exp\Bigg({\sum_{j \in \NN} y_j\psi_j(\bx)}\Bigg) \cos\Bigg(\sum_{j
    \in \NN} \xi_j\psi_j(\bx)\Bigg)\,.
\end{equation}
We find that $\mathfrak{R}[a(\bz)]>0$ if
$$
\Bigg\|\sum_{j \in \NN} \xi_j\psi_j \Bigg\|_{L^\infty} <
\frac{\pi}{2}.
$$
This observation and Proposition \ref{prop:Meas1} motivate the study
of the analytic continuation of the solution map $\by \mapsto u(\by)$
to $\bz \mapsto u(\bz)$ for complex parameters
$\bz = (z_j)_{j \in \NN}$ by formally replacing the parameter $y_j$ by
$z_j$ in the definition of the parametric coefficient $a$, where each
$z_j$ lies in the strip
\begin{equation} \label{eq:DefSjrho} \mathcal{S}_j (\brho):= \{ z_j\in
  \CC\,: |\mathfrak{Im}z_j| < \rho_j\}
\end{equation}
and where $\rho_j>0$ and
$\brho=(\rho_j)_{j \in \NN} \in (0,\infty)^\infty$ is any sequence of
positive numbers such that
\begin{equation*} \label{k-01} \Bigg\|\sum_{j\in \NN} \rho_j
  |\psi_j|\Bigg\|_{L^\infty} < \frac{\pi}{2}\,.
\end{equation*} 
\subsection{Analyticity and sparsity}
\label{S:HolSumSol}
We address the analyticity (holomorphy) of the parametric solutions
$\{ u(\by): \by\in {U_0}\}$.  We analyze the sparsity by estimating,
in particular, the size of the domains of holomorphy to which the
parametric solutions can be extended.  We also treat the weighted
$\ell^2$-summability and $\ell^p$-summability  (sparsity) 
for the series of Wiener-Hermite the PC expansion coefficients $(u_\bnu)_{\bnu\in\CF}$ of $u(\by)$.
\subsubsection{Parametric holomorphy}
\label{sec:HolProp}
In this section we establish holomorphic parametric dependence $u$ on
$a$ and on $f$ as in \cite{CoDeSch1} by verifying complex
differentiability of a suitable complex-parametric extension of
$\by\mapsto u(\by)$.  We observe that the Lax-Milgram theory can be
extended to the case where the coefficient function $a$ is
complex-valued.  In this case, $V := H^1_0(\domain,\CC)$ in \eqref{vf}
and the ellipticity assumption \eqref{PDE-epllipticity} is extended to
the complex domain as
\begin{equation}\label{eq:PDEellinC}
  0 < \rho(a) := \underset{\bx\in\D}{\essinf}\,\Re(a(\bx))
  \leq |a(\bx)|
  \leq \| a \|_{L^\infty}
  <\infty,\qquad \bx\in \domain.
\end{equation}
Under this condition, there exists a unique variational solution
$u\in V$ of \eqref{PDE} and for this solution, the
estimate \eqref{V-estimate} remains valid, i.e.,
\begin{equation} \label{eq:bound-comp} \|u\|_V \leq
  \frac{\|f\|_{V^*}}{\rho(a)} \,.
\end{equation} 
Let $\brho=(\rho_j)_{j\in \NN} \in [0,\infty)^\infty$ be a sequence of
non-negative numbers and assume that $\mfu \subseteq \supp(\brho)$ is
finite.  Define
\begin{equation}\label{eq:Snubrho}
  \mathcal{S}_\mfu (\brho) :=\bigtimes_{j\in \mfu} \mathcal{S}_j(\brho) \,,
\end{equation}
where the strip $ \mathcal{S}_j (\brho) $ is given in
\eqref{eq:DefSjrho}.  For $\by\in U$, put
\[
  \mathcal{S}_\mfu (\by,\brho) := \big\{(z_j)_{j\in \NN}: z_j \in
  \mathcal{S}_j(\brho)\ \text{if}\ j\in \mfu\ \text{and}\ z_j=y_j \
  \text{if}\ j\not \in \mfu \big\}.
\]
\begin{proposition}\label{prop:holoh1}
  Let the sequence $\brho=(\rho_j)_{j\in \NN}\in [0,\infty)^\infty$
  satisfy
  \begin{equation}\label{eq:leqkappa}
    \Bigg\|\sum_{j \in \NN} \rho_j |\psi_j | \Bigg\|_{L^\infty} 
    \leq 
    \kappa < \frac{\pi}{2}\,.
  \end{equation}
  Let $\by_0=(y_{0,1},y_{0,2},\ldots) \in U$ be such that $b(\by_0)$
  belongs to $L^\infty(\D)$, and let $\mfu\subseteq \supp(\brho)$ be a
  finite set.

  Then the solution $u$ of the variational form of \eqref{SPDE} is
  holomorphic on $ \mathcal{S}_\mfu (\brho) $ as a function of the
  parameters
  $\bz_\mfu=(z_j)_{j \in \NN} \in \mathcal{S}_\mfu (\by_0,\brho)$
  taking values in $V$ with $z_j = y_{0,j}$ for $j\not \in \mfu$ held
  fixed.
\end{proposition}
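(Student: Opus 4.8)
The plan is to establish holomorphy of $\bz_\mfu \mapsto u(\bz_\mfu)$ by verifying, for each fixed coordinate direction $j \in \mfu$, complex differentiability of the map, combined with local boundedness; by Hartogs' theorem (or a direct Cauchy-integral argument in the finitely many variables $\mfu$) this gives joint holomorphy on $\mathcal{S}_\mfu(\brho)$. First I would check that the complex-parametric problem is well-posed on the strip: for $\bz_\mfu \in \mathcal{S}_\mfu(\by_0,\brho)$, write $b(\bz_\mfu) = b_{\Re} + \im\, b_{\Im}$ with $b_{\Im} = \sum_{j \in \mfu} (\Im z_j)\psi_j$, so by \eqref{eq:leqkappa} one has $\| b_{\Im} \|_{L^\infty} \le \kappa < \pi/2$; hence $a(\bz_\mfu) = \exp(b(\bz_\mfu))$ satisfies
\[
  \Re(a(\bz_\mfu)) = \exp(b_{\Re}) \cos(b_{\Im}) \ge \cos(\kappa)\exp(b_{\Re}) > 0
\]
pointwise a.e., so the complex ellipticity condition \eqref{eq:PDEellinC} holds with $\rho(a(\bz_\mfu)) \ge \cos(\kappa)\,\exp(-\|b_{\Re}\|_{L^\infty})$. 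Since $b(\by_0) \in L^\infty(\D)$ and $\mfu$ is finite with the series $\sum_j \rho_j|\psi_j|$ bounded in $L^\infty$, the quantity $\|b_{\Re}\|_{L^\infty}$ stays bounded on compact subsets of $\mathcal{S}_\mfu(\by_0,\brho)$, so by \eqref{eq:bound-comp} the solution $u(\bz_\mfu) \in V = H^1_0(\domain,\CC)$ exists, is unique, and is locally uniformly bounded in $V$.

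Next I would prove complex differentiability in a single coordinate $z_k$, $k \in \mfu$, the others held fixed. Fix $\bz_\mfu$ in the open polystrip and let $h \in \CC$ be small. Subtracting the variational identities \eqref{vf} for $u(\bz_\mfu + h\bee_k)$ and $u(\bz_\mfu)$ gives, for all $v \in V$,
\[
  \int_\D a(\bz_\mfu + h\bee_k)\,\nabla\!\big(u(\bz_\mfu + h\bee_k) - u(\bz_\mfu)\big)\cdot \overline{\nabla v}\,\rd\bx
  = -\int_\D \big(a(\bz_\mfu + h\bee_k) - a(\bz_\mfu)\big)\,\nabla u(\bz_\mfu)\cdot\overline{\nabla v}\,\rd\bx .
\]
Because $a(\bz) = \exp(\sum_j z_j\psi_j)$ depends on $z_k$ only through the scalar $\exp(z_k\psi_k)$ and $\psi_k \in L^\infty(\D)$, the difference quotient $\frac1h\big(a(\bz_\mfu + h\bee_k) - a(\bz_\mfu)\big)$ converges in $L^\infty(\D)$ as $h \to 0$ to $\psi_k\, a(\bz_\mfu)$ (using that $t \mapsto e^t$ is entire and $\|z_k\psi_k\|_{L^\infty}$ is controlled). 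Dividing the identity by $h$, using the uniform ellipticity to invert the left-hand bilinear form via Lax-Milgram, and passing to the limit, one obtains that $\partial_{z_k} u(\bz_\mfu) =: u' \in V$ exists and is the unique solution of
\[
  \int_\D a(\bz_\mfu)\,\nabla u' \cdot \overline{\nabla v}\,\rd\bx = -\int_\D \psi_k\, a(\bz_\mfu)\,\nabla u(\bz_\mfu)\cdot\overline{\nabla v}\,\rd\bx, \qquad v \in V,
\]
with the convergence of the difference quotient taking place in the $V$-norm. This shows $z_k \mapsto u(\bz_\mfu)$ is holomorphic as a $V$-valued map, for each $k \in \mfu$ separately.

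Finally, holomorphy in each variable separately together with local boundedness in $V$ (established in the first step) yields joint holomorphy on $\mathcal{S}_\mfu(\brho)$: since $\mfu$ is finite one may invoke the vector-valued Hartogs theorem, or equivalently observe that $\varphi(u(\bz_\mfu))$ is separately holomorphic and locally bounded for every $\varphi \in V^*$, hence jointly holomorphic by the classical (scalar) Hartogs theorem, and then weak holomorphy plus local boundedness upgrades to strong ($V$-valued) holomorphy by a standard Cauchy-estimate argument. I expect the main obstacle to be the first step — establishing \emph{uniform} ellipticity with constants that are locally bounded on the polystrip, so that the Lax-Milgram inverse and all the limits above are controlled uniformly on compact subsets; the finiteness of $\mfu$ and the hypothesis \eqref{eq:leqkappa} are exactly what makes this work, and care is needed because $b_{\Re}$ is \emph{not} uniformly bounded over all of $U$, only on the relevant compacta.
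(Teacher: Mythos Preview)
Your proof is correct and shares with the paper the essential first step: establishing that on bounded subsets of the polystrip (the paper uses the explicit exhaustion $\mathcal S_{\mfu,N}(\brho)=\{|y_j-y_{0,j}|<N\}$, you use ``compact subsets''), the real part of $a(\bz_\mfu)$ is bounded below by $\cos(\kappa)\exp(-M)$, so the complex Lax--Milgram problem is uniformly well-posed.

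Where you diverge is in how you extract holomorphy from this. The paper argues abstractly: since $\bz_\mfu\mapsto a(\bz_\mfu)$ is holomorphic into $L^\infty(\D;\CC)$ (entire exponential), and the data-to-solution map $a\mapsto u$ is known to be holomorphic on the open set of uniformly elliptic complex coefficients (the paper cites \cite[pp.~22--23]{CoDe} for this; later sections frame it as holomorphy of $A\mapsto A^{-1}$ on $\cL_{\rm is}(V,V')$), the composition is holomorphic on each $\mathcal S_{\mfu,N}(\brho)$, and one takes the union over $N$. Your route is more elementary and self-contained: you compute $\partial_{z_k}u$ directly by subtracting variational identities, passing the difference quotient of $a$ to its $L^\infty$-limit $\psi_k\,a$, and then invoke Hartogs (separate holomorphy plus local boundedness) to upgrade to joint holomorphy. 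Your approach avoids importing the abstract ``inversion is holomorphic'' lemma and makes the derivative formula explicit, at the cost of a slightly longer argument; the paper's approach is shorter but relies on an external reference for the functional-analytic step.
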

\begin{proof}
  Let $N\in \NN$.  We denote
  \begin{equation}\label{eq:SUN}
    \mathcal{S}_{\mfu,N} (\brho) 
    := 
    \big\{ (y_j+\im \xi_j)_{j\in \mfu}\in \mathcal{S}_\mfu (\brho): |y_j-y_{0,j}|<N\big\}\,.
  \end{equation}
  For
  $\bz_{\mfu} = (y_j+\im \xi_j)_{j\in \NN}\in \mathcal{S}_{\mfu}
  (\by_0,\brho)$ with
  $(y_j+\im \xi_j)_{j\in \mfu}\in \mathcal{S}_{\mfu,N} (\brho)$
  we have
  \begin{equation*}
    \begin{split} 
      \label{eq:DefM}
      \Bigg\| \sum_{j \in \NN}y_j\psi_j\Bigg\|_{L^\infty} &\leq
      \|b(\by_0)\|_{L^\infty} + \Bigg\| \sum_{j\in
        \mfu}|(y-y_{0,j})\psi_j |\Bigg\|_{L^\infty}
      \\
      & \leq \|b(\by_0)\|_{L^\infty} + N\Bigg\| \sum_{j\in
        \mfu}|\psi_j |\Bigg\|_{L^\infty} =: M <\infty
    \end{split}
  \end{equation*} 
  and
  \begin{equation*}
    \Bigg\| \sum_{j\in \mfu}\xi_j \psi_j\Bigg\|_{L^\infty} 
    \leq 
    \Bigg\| \sum_{j\in \mfu}|\rho_j \psi_j|\Bigg\|_{L^\infty} 
    \leq 
    \kappa\,.
  \end{equation*}
  Consequently, we obtain from \eqref{Re(a)}
  \begin{equation}\label{eq:R-1}
    \rho(a( \bz_{\mfu}))
    \geq  \exp\Bigg(-\Bigg\| \sum_{j \in \NN}y_j\psi_j\Bigg\|_{L^\infty}\Bigg)
    \cos\Bigg(\Bigg\| \sum_{j\in \mfu}\xi_j \psi_j\Bigg\|_{L^\infty} \Bigg)\geq 
    \exp(-M) \cos\kappa 
  \end{equation}
  for all $ \bz_{\mfu}\in \mathcal{S}_{\mfu} (\by_0,\brho)$ with
  $(y_j+\im \xi_j)_{j\in \mfu}\in \mathcal{S}_{\mfu,N} (\brho)$.  From
  this and the analyticity of exponential functions we conclude that
  the map $\bz_\mfu\to u(\bz_{\mfu})$ is holomorphic on the set
  $\mathcal{S}_{\mfu,N} (\brho)$, see \cite[Pages 22, 23]{CoDe}.
  Since $N$ is arbitrary we deduce that the map
  $\bz_\mfu\to u(\bz_{\mfu})$ is holomorphic on
  $\mathcal{S}_{\mfu} (\brho)$.
\end{proof}
The analytic continuation of the parametric solutions
$\{u(\by): \by\in U\}$ to $\mathcal{S}_{\mfu} (\brho)$ leads 
to a result on parametric $V$-regularity. 
\begin{lemma}\label{lem:estV}
  Let $\brho= (\rho_j)_{j \in \NN}$ be a non-negative sequence
  satisfying \eqref{eq:leqkappa}.  Let $\by \in U$ with
  $b(\by)\in L^\infty(\D)$ and $\bnu\in \FF$ such that
  $\supp(\bnu)\subseteq \supp(\brho)$.  Then we have
  \begin{equation*}
    \|\partial^{\bnu}u(\by)\|_V 
    \leq
    C_0\frac{\bnu!}{\brho^\bnu}
    \exp\big( \|b(\by)\|_{L^\infty} \big)  ,
  \end{equation*}
  where $C_0=e^\kappa (\cos\kappa)^{-1}\|f\|_{V^*}$.
\end{lemma}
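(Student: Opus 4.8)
The plan is to obtain the bound from the parametric holomorphy of Proposition~\ref{prop:holoh1} combined with the complex a-priori estimate~\eqref{eq:bound-comp}, via Cauchy's integral formula for the derivatives of a vector-valued holomorphic map. First I would fix $\by\in U$ with $b(\by)\in L^\infty(\D)$ and set $\mfu:=\supp(\bnu)$, which is finite since $\bnu\in\FF$ and satisfies $\mfu\subseteq\supp(\brho)$ by hypothesis. Applying Proposition~\ref{prop:holoh1} with $\by_0:=\by$, the map $\bz_\mfu\mapsto u(\bz_\mfu)$ is holomorphic on $\mathcal{S}_\mfu(\by,\brho)$ with values in $V$. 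Since $\nu_j=0$ for $j\notin\mfu$, the derivative $\partial^\bnu u(\by)$ involves only the coordinates in $\mfu$, and by the iterated Cauchy integral formula in these $|\mfu|$ complex variables, for any radii $0<r_j<\rho_j$ ($j\in\mfu$),
\[
\partial^\bnu u(\by)
=
\frac{\bnu!}{(2\pi\im)^{|\mfu|}}
\oint\cdots\oint
\frac{u(\bz_\mfu)}{\prod_{j\in\mfu}(z_j-y_j)^{\nu_j+1}}
\prod_{j\in\mfu}\dd z_j ,
\]
the $j$-th contour being the circle $|z_j-y_j|=r_j$, and $\bnu!=\prod_{j\in\mfu}\nu_j!$.

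The key step is a uniform bound for $\|u(\bz_\mfu)\|_V$ over this polytorus. Writing $z_j=y_j+r_je^{\im\theta_j}$ for $j\in\mfu$ gives $\sum_{j\in\NN}\Re(z_j)\psi_j=b(\by)+\sum_{j\in\mfu}r_j\cos\theta_j\,\psi_j$ and $\sum_{j\in\NN}\Im(z_j)\psi_j=\sum_{j\in\mfu}r_j\sin\theta_j\,\psi_j$; since $0<r_j\le\rho_j$ and these relations are understood pointwise in $\bx$,
\[
\Big\|\sum_{j\in\mfu}r_j\cos\theta_j\,\psi_j\Big\|_{L^\infty}
\le
\Big\|\sum_{j\in\mfu}\rho_j|\psi_j|\Big\|_{L^\infty}
\le\kappa ,
\]
and the same bound holds for the sine sum. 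Hence, from~\eqref{Re(a)} and monotonicity of $\cos$ on $[0,\pi/2)$,
\[
\rho\big(a(\bz_\mfu)\big)
\ \ge\
\exp\!\big(-\|b(\by)\|_{L^\infty}-\kappa\big)\cos\kappa \ >\ 0 ,
\]
so~\eqref{eq:bound-comp} yields $\|u(\bz_\mfu)\|_V\le \|f\|_{V^*}e^{\kappa}(\cos\kappa)^{-1}\exp(\|b(\by)\|_{L^\infty})=C_0\exp(\|b(\by)\|_{L^\infty})$ uniformly on the polytorus.

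Inserting this into the Cauchy representation and estimating trivially (contour length $2\pi r_j$, denominator equal to $r_j^{\nu_j+1}$ on the $j$-th circle) gives $\|\partial^\bnu u(\by)\|_V\le \bnu!\,C_0\exp(\|b(\by)\|_{L^\infty})\prod_{j\in\mfu}r_j^{-\nu_j}$, and letting $r_j\uparrow\rho_j$ for each $j\in\mfu$ (legitimate as $\rho_j>0$ there, because $\mfu\subseteq\supp(\brho)$) drives $\prod_{j\in\mfu}r_j^{-\nu_j}\to\brho^{-\bnu}$, which is exactly the asserted inequality. The only points requiring a little care — rather than genuine obstacles — are the justification of the multivariate Cauchy formula for the $V$-valued holomorphic map (standard: apply the one-variable formula coordinate by coordinate, using the local boundedness provided in the proof of Proposition~\ref{prop:holoh1}) and the closing limit in $r_j$, which is needed because each strip $\mathcal{S}_j(\brho)$ is open.
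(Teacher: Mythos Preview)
Your proof is correct and follows essentially the same route as the paper: apply Cauchy's integral formula in the finitely many variables indexed by $\mfu=\supp(\bnu)$, bound $\|u(\bz_\mfu)\|_V$ on the polytorus via \eqref{eq:bound-comp} and the lower bound on $\rho(a(\bz_\mfu))$ coming from \eqref{Re(a)} and \eqref{eq:leqkappa}, and read off the factor $\bnu!/\brho^\bnu$. The only cosmetic difference is that the paper secures circles of radius exactly $\rho_j$ by first noting holomorphy on the slightly larger strip $\mathcal{S}_\mfu(\by,\kappa'\brho)$ with $\kappa<\kappa\kappa'<\pi/2$, whereas you work on circles of radius $r_j<\rho_j$ and pass to the limit $r_j\uparrow\rho_j$; both devices are standard and yield the identical bound.
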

\begin{proof}Let $\bnu\in \FF$ such that
  $\supp(\bnu)\subseteq \supp(\brho)$.  Denote $\mfu=\supp(\bnu)$.
  For fixed variable $y_j$ with $j\not \in \mfu$, the map
  $\mathcal{S}_\mfu (\by,\brho) \ni \bz_{\mfu}\to u(\bz_\mfu)$ is
  holomorphic on the domain $\mathcal{S}_\mfu(\by,\kappa'\brho)$ where
  $\kappa <\kappa \kappa'<\pi/2$, see Proposition \ref{prop:holoh1}.
  Applying Cauchy's integral formula gives
  \begin{equation*}
    \partial^{\bnu}u(\by) 
    =
    \frac{\bnu!}{(2\pi i)^{|\mfu|}}
    \int_{\mathcal{C}_{\by,\mfu}(\brho)} 
    \frac{u(\bz_\mfu) }{\prod_{j\in \mfu}  (z_j-y_j)^{\nu_j+1}}\prod_{j\in \mfu}\rd z_j,
  \end{equation*}
  where
  \begin{equation} \label{eq:C-rho} \mathcal{C}_{\by, \mfu}(\brho) :=
    \bigtimes_{j\in \mfu} \mathcal{C}_{\by,j}( \brho)\,,\qquad
    \mathcal{C}_{\by,j} ( \brho) := \big\{ z_j \in \CC:
    |z_j-y_j|=\rho_j\big\}\,.
  \end{equation}
  This leads to
  \begin{equation} \label{u-y}
    \begin{split} 
      \|\partial^{\bnu}u(\by)\|_{V} & \leq \frac{\bnu!}{\brho^\bnu}
      \sup_{z_\mfu\in \mathcal{C}_\mfu(\by,\brho)}
      \|u(\bz_\mfu)\|_{V}\,
    \end{split}
  \end{equation} 
  with
  \begin{equation} \label{eq:C-rho-y} \mathcal{C}_\mfu(\by,\brho)
    =\big\{(z_j)_{j\in \NN} \in \mathcal{S}_\mfu (\by,\brho): \
    (z_j)_{j\in \mfu}\in \mathcal{C}_{\by,\mfu}(\brho) \big\}\,.
  \end{equation} 
  Notice that for
  $\bz_\mfu=(z_j)_{j\in \NN} \in \mathcal{C}_\mfu(\by,\brho)$ we can
  write $z_j = y_j + \eta_j + \im\xi_j \in \Cc_{\by,j}(\brho)$ with
  $|\eta_j | \le \rho_j$, $|\xi_j| \le \rho_j$ if $j\in \mfu$ and
  $\eta_j = \xi_j=0$ if $j\not \in \mfu$.  By denoting
  $\beeta=(\eta_j)_{j\in \NN}$ and $\bxi=(\xi_j)_{j\in \NN}$ we see
  that $\|b(\beeta)\|_{L^\infty}\leq \kappa$ and
  $\|b(\bxi)\|_{L^\infty}\leq \kappa$.  Hence we deduce from
  \eqref{eq:bound-comp} that
  \begin{equation*}
    \begin{split} 
      \|u(\bz_\mfu)\|_{V} & \leq \frac{\exp\big(
        \|b(\by+\beeta)\|_{L^\infty}\big)
      }{\cos\big(\|b(\bxi)\|_{L^\infty} \big) } \|f\|_{V^*} \leq
      \frac{ \exp\big( \kappa+\|b(\by)\|_{L^\infty} \big) }{\cos
        \kappa }\|f\|_{V^*} \,.
    \end{split}
  \end{equation*}
  Inserting this into \eqref{u-y} we obtain the desired estimate.
\end{proof}
\subsubsection{Sparsity of Wiener-Hermite PC expansion coefficients}
\label{sec:SumHermCoef}
In this section, we will exploit the analyticity of $u$ to prove a
weighted $\ell^2$-summability result for the $V$-norms of the
coefficients in the Wiener-Hermite PC expansion of the solution map
$\by\to u(\by)$.  Our analysis yields the same $\ell^p$-summability
result as in the papers \cite{BCDM,BCDS} in the case $\psi_j$ have
arbitrary supports.  In this case, our result implies that the
$\ell^p$-summability of $(\|u_\bnu\|_V)_{\Ff}$ for $0<p\leq 1$ (the
sparsity of parametric solutions) follows from the
$\ell^p$-summability of the sequence
$(j^\alpha \|\psi_j\|_{L^\infty})_{j\in \NN}$ for some $\alpha>1/2$
which is an improvement over the condition
$(j\|\psi_j\|_{L^\infty})_{j\in \NN} \in \ell^p(\NN)$ in \cite{HS},
see \cite[Section 6.3]{BCDM}.  In the case of disjoint or finitely
overlapping supports our analysis obtains a weaker result compared to
\cite{BCDM,BCDS}.  As observed in \cite{CoDeSch1}, one advantage of
establishing sparsity of Wiener-Hermite PC expansion coefficients via
holomorphy rather than by successive differentiation is that it allows
to derive, in a unified way, summability bounds for the 
coefficients of Wiener-Hermite PC expansion whose size is measured in
scales of Sobolev and Besov spaces in the domain $\domain$.  
Using
real-variable arguments as, e.g., in \cite{BCDM,BCDS}, establishing
sparsity of parametric solutions in Besov spaces in $\domain$ of
higher smoothness seems to require more involved technical and
notational developments, according to \cite[Comment on Page 2157]{BCDS}.

The
parametric solution $\{u(\by): \by\in U\}$ of \eqref{SPDE} belongs to
the space $L^2(U,V;\gamma)$ or more generally,
$L^2(U,(H^{1+s}\cap H^1_0)(\domain);\gamma)$ for $s$-order of extra
differentiability provided by higher data regularity.  We recall from
Section \ref{S:HerPol} the normalized probabilistic Hermite
polynomials $(H_k)_{k \in \NN_0}$.  Every $u \in L^2(U,X;\gamma)$
admits the \emph{Wiener-Hermite PC expansion} \index{expansion!Wiener-Hermite PC $\sim$}
\begin{equation}
  \sum_{\bnu\in \Ff} u_\bnu H_\bnu(\by), 
  \label{hermite}
\end{equation}
where for $\bnu \in \Ff$,
\begin{equation*}
  H_\bnu(\by)=\prod _{j \in \NN}H_{\nu_j}(y_j),\quad 
  \label{hermite-polynomial}
\end{equation*}
and
\begin{equation*}
  u_\bnu:=\int_U u(\by)\,H_\bnu(\by)\, \rd\gamma (\by)
  \label{hermite-coeff}
\end{equation*}
are called \emph{Wiener-Hermite PC expansion coefficients}.  Notice
that $(H_\bnu)_{\bnu \in \Ff}$ forms an ONB of $L^2(U;\gamma)$.

For every $u\in L^2(U,X;\gamma)$, there holds the Parseval-type
identity
\begin{equation}\label{eq:ParsevH}
  \| u \|^2_{L^2(U,X;\gamma)} 
  = 
  \sum_{\bnu\in \cF} \| u_\bnu \|_X^2 \;,\quad 
  u\in L^2(U,X;\gamma)\;.
\end{equation}

The error of approximation of the parametric solution
$\{u(\by): \by\in U\}$ of \eqref{SPDE} will be measured in the Bochner
space $L^2(U,V;\gamma)$.  A basic role in this approximation is taken
by the Wiener-Hermite PC expansion \eqref{hermite} of $u$ in the space
$L^2(U,V;\gamma)$.

For a finite set $\Lambda \subset \cF$, we denote by
$u_\Lambda = \sum_{\bnu\in \Lambda} u_\bnu$ the corresponding partial
sum of the Wiener-Hermite PC expansion \eqref{hermite}.  It follows
from \eqref{eq:ParsevH} that
\begin{equation*}\label{eq:L2Error}
  \| u - u_\Lambda \|_{L^2(U,V;\gamma)}^2 
  = 
  \sum_{\bnu\in \cF\backslash \Lambda} \| u_\bnu \|_V^2
  \;.
\end{equation*}
Therefore, summability results of the coefficients
$(\| u_\bnu \|_V )_{\bnu\in \cF}$ imply convergence rate estimates of
finitely truncated expansions $u_{\Lambda_n}$ for suitable sequences
$( \Lambda_n )_{n\in \NN}$ of sets of $n$ indices $\bnu$
(see \cite{HS,BCDM, dD21}).  
We next recapitulate some weighted summability results for Wiener-Hermite expansions.

For $r\in \NN$ and a sequence of nonnegative numbers
$\bvarrho=(\varrho_j)_{j\in \NN}$, 
we define the \emph{Wiener-Hermite weights}
\begin{equation} 
\label{beta} 
\beta_\bnu(r,\bvarrho) :=
\sum_{\|\bnu'\|_{\ell^\infty}\leq r} \binom{\bnu}{\bnu'} \bvarrho^{2\bnu'} 
= \prod_{j \in \NN}\Bigg(\sum_{\ell=0}^{r}\binom{\nu_j}{\ell}\varrho_j^{2\ell}\Bigg), \ \ \bnu \in \Ff\,.
\end{equation} 
The following identity was proved in \cite[Theorem 3.3]{BCDM}. 
For convenience to the reader, we present the proof from that paper.

\begin{lemma}\label{lem:equal}
  Let Assumption \ref{ass:Ass1} hold.  Let $r\in \NN$ and
  $\bvarrho=(\varrho_j)_{j\in \NN}$ be a sequence
  of nonnegative numbers.
  Then
  \begin{equation} \label{eq:equal-V} \sum_{\bnu\in
      \Ff}\beta_\bnu(r,\bvarrho)\|u_\bnu\|_V^2 =
    \sum_{\|\bnu\|_{\ell^\infty}\leq r} \frac{\bvarrho^{2\bnu}}{\bnu!}
    \int_U\| \partial^\bnu u(\by)\|_V^2\rd\gamma(\by)\,.
  \end{equation} 
\end{lemma}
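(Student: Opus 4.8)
The plan is to expand each side of \eqref{eq:equal-V} into a double (or triple) sum over multi-indices using the Wiener-Hermite PC expansion of $u$, and to reduce everything to a single elementary identity about integrals of products of derivatives of Hermite polynomials against the Gaussian measure $\gamma$ on $U$. First I would insert the Wiener-Hermite PC expansion $u(\by) = \sum_{\bmu \in \Ff} u_\bmu H_\bmu(\by)$ (valid in $L^2(U,V;\gamma)$, and term-by-term differentiable in the parametric variables because of the holomorphy/a priori bounds of Lemma \ref{lem:estV} together with Assumption \ref{ass:Ass1}, which guarantees the required summability for interchange of $\partial^\bnu$ and the sum). Then $\partial^\bnu u(\by) = \sum_{\bmu \in \Ff} u_\bmu\, \partial^\bnu H_\bmu(\by)$, where $\partial^\bnu H_\bmu(\by) = \prod_{j} H_{\mu_j}^{(\nu_j)}(y_j)$ factorizes over coordinates.

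Next I would compute $\int_U \|\partial^\bnu u(\by)\|_V^2 \, \rd\gamma(\by)$ by expanding the square:
\begin{equation*}
  \int_U \|\partial^\bnu u(\by)\|_V^2 \, \rd\gamma(\by)
  = \sum_{\bmu,\bmu' \in \Ff} (u_\bmu, u_{\bmu'})_V \int_U \partial^\bnu H_\bmu(\by)\, \partial^\bnu H_{\bmu'}(\by) \, \rd\gamma(\by)\,.
\end{equation*}
By the product structure of $\gamma$ and of the Hermite polynomials, the integral factorizes into $\prod_j \int_\R H_{\mu_j}^{(\nu_j)}(t) H_{\mu'_j}^{(\nu_j)}(t)\, \rd\gamma_1(t)$. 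The key univariate fact, a direct consequence of Lemma \ref{lem:HkProp}(ii) (iterating $H_k' = \sqrt{k} H_{k-1}$ gives $H_k^{(\ell)} = \sqrt{k!/(k-\ell)!}\, H_{k-\ell}$ for $\ell \le k$ and $0$ otherwise) and the orthonormality Lemma \ref{lem:HkProp}(i), is that
\begin{equation*}
  \int_\R H_{m}^{(\ell)}(t) H_{m'}^{(\ell)}(t)\, \rd\gamma_1(t)
  = \delta_{m m'} \binom{m}{\ell} \ell!\,.
\end{equation*}
Hence the cross terms vanish ($\bmu = \bmu'$ is forced coordinatewise), and $\int_U \partial^\bnu H_\bmu\, \partial^\bnu H_\bmu \,\rd\gamma = \prod_j \binom{\mu_j}{\nu_j}\nu_j! = \binom{\bmu}{\bnu}\bnu!$, giving $\int_U \|\partial^\bnu u\|_V^2 \rd\gamma = \sum_{\bmu \in \Ff} \binom{\bmu}{\bnu} \bnu! \, \|u_\bmu\|_V^2$.

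Finally I would substitute this into the right-hand side of \eqref{eq:equal-V}, interchange the (absolutely convergent, by Assumption \ref{ass:Ass1} and Lemma \ref{lem:estV}) sums over $\bnu$ with $\|\bnu\|_{\ell^\infty} \le r$ and over $\bmu \in \Ff$, and recognize
\begin{equation*}
  \sum_{\|\bnu\|_{\ell^\infty}\le r} \frac{\bvarrho^{2\bnu}}{\bnu!} \binom{\bmu}{\bnu}\bnu!
  = \sum_{\|\bnu\|_{\ell^\infty}\le r} \binom{\bmu}{\bnu} \bvarrho^{2\bnu}
  = \beta_\bmu(r,\bvarrho)
\end{equation*}
by the definition \eqref{beta}, which yields exactly $\sum_{\bmu\in\Ff} \beta_\bmu(r,\bvarrho)\|u_\bmu\|_V^2$, the left-hand side. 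The main obstacle I anticipate is not any single identity but the justification of the interchanges: differentiating the PC expansion term by term and swapping $\int_U$, $\sum_\bnu$, $\sum_\bmu$ all require dominated-convergence-type arguments. These rest on the parametric a priori bound of Lemma \ref{lem:estV} (controlling $\|\partial^\bnu u(\by)\|_V$ by $C_0 (\bnu!/\brho^\bnu)\exp(\|b(\by)\|_{L^\infty})$ with $\exp(k\|b(\cdot)\|_{L^\infty}) \in L^1(U;\gamma)$ for all $k$ by Proposition \ref{prop:Meas1}) and on Assumption \ref{ass:Ass1} ensuring a summable choice of $\brho$; I would state these domination bounds explicitly and then invoke Fubini-Tonelli and dominated convergence, since all terms on the right are nonnegative once the square is expanded diagonally.
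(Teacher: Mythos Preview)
Your proposal is correct and reaches the same key identity as the paper, namely $\frac{1}{\bnu!}\int_U\|\partial^\bnu u\|_V^2\,\rd\gamma = \sum_{\bmu\in\Ff}\binom{\bmu}{\bnu}\|u_\bmu\|_V^2$, after which both arguments finish by multiplying by $\bvarrho^{2\bnu}$, summing over $\|\bnu\|_{\ell^\infty}\le r$, and recognizing $\beta_\bmu(r,\bvarrho)$.

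The only difference is in how that identity is obtained. The paper works on the coefficient side: it uses the Rodrigues formula $H_\nu = \frac{(-1)^\nu}{\sqrt{\nu!}} p^{(\nu)}/p$ and integrates by parts $\mu$ times to show that $\sqrt{\bnu!/(\bnu-\bmu)!}\,u_\bnu$ is the $(\bnu-\bmu)$th Wiener--Hermite coefficient of $\partial^\bmu u$, and then applies Parseval. You instead work on the polynomial side: you iterate $H_k' = \sqrt{k}H_{k-1}$ from Lemma~\ref{lem:HkProp}(ii) to get $H_m^{(\ell)} = \sqrt{m!/(m-\ell)!}\,H_{m-\ell}$, and then invoke orthonormality directly. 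These are dual computations (one moves derivatives from $u$ to the density, the other from the density to the Hermite polynomials), and both are equally short. Your route is arguably slightly more self-contained since it cites only Lemma~\ref{lem:HkProp} rather than the Rodrigues representation, and you are more explicit than the paper about the dominated-convergence justifications for swapping sums and integrals.
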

\begin{proof} 
	Recall that  $p(y):=p(y,0,1)=-\frac{1}{\sqrt{2\pi}}\exp(-y^2/2)$ 
is the density function of the standard GM on $\RR$. 
Let $\mu\in \NN$. 
For a sufficiently smooth, univariate function $v\in L^2(\R;\gamma)$,  
from
$
H_\nu(y)=\frac{(-1)^\nu}{\sqrt{\nu!}}\frac{p^{(\nu)}(y)}{p(y)}
$ we have for $\nu\geq \mu$
\begin{align*}
	v_\nu&:=\int_{\RR} v(y)H_\nu(y) p(y)\dd y =  \frac{(-1)^\nu}{\sqrt{\nu!}} \int_{\R} v(y)p^{(\nu)}(y)\dd y
	\\
	&= \frac{(-1)^{\nu-\mu}}{\sqrt{\nu!}} \int_{\R} v^{(\mu)}(y)p^{(\nu-\mu)}(y) \dd y 
         =\sqrt{\frac{(\nu-\mu)!}{\nu!}} \int_{\R} v^{(\mu)}(y) H_{\nu-\mu}(y)p(y)\dd y.
\end{align*}
Hence
$$
\sqrt{\frac{\nu!}{\mu!(\nu-\mu)!}}v_\nu= \sqrt{\frac{1}{\mu!}}\int_{\R} v^{(\mu)}(y) H_{\nu-\mu}(y)\dd \gamma(y).
$$
By Parseval's identity, we have
$$
\frac{1}{\mu!} \int_{\R}|v^{(\mu)}(y)|^2\dd \gamma(y) = \sum_{\nu\geq \mu}\frac{\nu!}{\mu!(\nu-\mu)!}|v_\nu|^2 = \sum_{\nu\in \NN_0}\binom{\nu}{\mu}|v_\nu|^2\,,
$$
where we use the convention $\binom{\nu}{\mu}=0$ if $\mu>\nu$.

For multi-indices and for $u\in L^2(U,V;\gamma)$,
if $\bmu \leq \bnu$, applying the above argument in coordinate-wise 
for the coefficients
$$
u_{\bnu} =\sqrt{\frac{(\bnu-\bmu)!}{\bnu!}} \int_U\partial^{\bmu}u(\by) H_{\bnu-\bmu}(\by)\dd\gamma(\by)
$$
we get
$$
\frac{1}{\bmu!} \int_{U}\|\partial^{\bmu}u(\by)\|_V^2\dd\gamma(\by)=\sum_{\bnu\geq \bmu}\frac{\bnu!}{(\bmu!(\bnu-\bmu)!)}\|u_{\bnu}\|_V^2 = \sum_{\bnu\in \FF}\binom{\bnu}{\bmu}\|u_{\bnu}\|_V^2.
$$
Multiplying both sides by $\bvarrho^{2\bmu}$ 
and summing over $\bmu$ with $\|\bmu\|_{\ell^\infty}\leq r$,
we obtain
$$
\sum_{\|\bmu\|_{\ell^\infty}\leq r}\frac{\bvarrho^{2\bmu}}{\bmu!} \int_{U}\|\partial^{\bmu}u(\by)\|_V^2\dd\gamma(\by)=\sum_{\|\bmu\|_{\ell^\infty}\leq r} \sum_{\bnu\in \FF}\binom{\bnu}{\bmu}\bvarrho^{2\bmu}\|u_{\bnu}\|_V^2=\sum_{\bnu\in
	\Ff}\beta_\bnu(r,\bvarrho)\|u_\bnu\|_V^2.
$$
\end{proof}

We recall a summability property of the sequence
$(\beta_\bnu(r,\bvarrho)^{-1})_{j\in \NN}$ 
and its proof, given in \cite[Lemma 5.1]{BCDM}.

\begin{lemma}\label{lem:beta-summability}
  Let $0 < p < \infty$ and $q:= \frac{2p}{2-p}$.  Let
  $\bvarrho=(\varrho_j)_{j\in \NN}\in [0,\infty)^\infty$ be a sequence
  of positive numbers such that
		$$
		(\varrho_j^{-1})_{j\in \NN}\in \ell^q(\NN).
		$$ 
		Then for any $r \in \NN$ such that $\frac{2}{r+1} <
                p$, the family $(\beta_{\bnu}(r,\bvarrho))_{\bnu \in \FF}$ 
                defined in \eqref{beta} for this $r$ satisfies
		\begin{equation} \label{ineq: beta} \sum_{\bnu\in \FF}
                  \beta_{\bnu}(r,\bvarrho)^{-q/2}<\infty\,.
		\end{equation} 
              \end{lemma}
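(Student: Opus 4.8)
The plan is to estimate $\beta_\bnu(r,\bvarrho)^{-q/2}$ from below by a product over $j\in\supp(\bnu)$ and then sum the resulting product over $\bnu\in\Ff$ by factorizing the sum over the countably many coordinates. First, I would use the product structure in \eqref{beta}: for $\bnu\in\Ff$,
\[
\beta_\bnu(r,\bvarrho)=\prod_{j\in\NN}\Bigg(\sum_{\ell=0}^{r}\binom{\nu_j}{\ell}\varrho_j^{2\ell}\Bigg).
\]
For $\nu_j=0$ the corresponding factor equals $1$, so only $j\in\supp(\bnu)$ contribute. For $\nu_j\ge 1$, I would keep only the top term $\ell=\min\{\nu_j,r\}$ in the inner sum to get a lower bound. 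The key elementary estimate is that there is a constant $c_r>0$ with $\binom{\nu_j}{\min\{\nu_j,r\}}\ge c_r\,\nu_j^{\min\{\nu_j,r\}}$, hence for $\nu_j\le r$ one gets a factor $\gtrsim \nu_j^{\nu_j}\varrho_j^{2\nu_j}$, while for $\nu_j>r$ one gets a factor $\gtrsim \nu_j^{r}\varrho_j^{2r}\ge \nu_j\,\varrho_j^{2r}$. In all cases, since $\varrho_j^{-1}\in\ell^q\subset\ell^\infty$ implies $\varrho_j$ is bounded below, one obtains a clean bound of the form $\beta_\bnu(r,\bvarrho)\ge \prod_{j\in\supp(\bnu)} g_j(\nu_j)$ for suitable single-variable weights $g_j$, which is the engine of the argument.

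Next I would raise to the power $-q/2$ and sum: using that all factors are $\ge 1$ (after normalizing) so the infinite product makes sense and the sum over $\Ff$ factorizes,
\[
\sum_{\bnu\in\Ff}\beta_\bnu(r,\bvarrho)^{-q/2}
\le \prod_{j\in\NN}\Bigg(1+\sum_{n\ge 1} g_j(n)^{-q/2}\Bigg).
\]
The infinite product converges iff $\sum_{j\in\NN}\sum_{n\ge 1}g_j(n)^{-q/2}<\infty$. For the tail $n>r$ the contribution is comparable to $\varrho_j^{-qr}\sum_{n>r} n^{-q/2}$, and here I would invoke the hypothesis $\frac{2}{r+1}<p$, which after substituting $q=\frac{2p}{2-p}$ (equivalently $p=\frac{2q}{q+2}$) rearranges to $\tfrac{q}{2}>\tfrac{2}{r+1}$; this guarantees $\tfrac{q r}{2}>1$ so that $\sum_{n>r}n^{-qr/2}<\infty$ with a $j$-independent value, leaving a factor $\varrho_j^{-qr}$ which is summable in $j$ precisely because $(\varrho_j^{-1})_{j\in\NN}\in\ell^{q}(\NN)$ and $r\ge 1$ (so $\ell^q\subset\ell^{qr}$). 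For the finitely many terms $1\le n\le r$, the contribution to coordinate $j$ is $\lesssim \sum_{n=1}^{r}(n^n\varrho_j^{2n})^{-q/2}\lesssim \varrho_j^{-q}$, which is summable in $j$ by the very hypothesis $(\varrho_j^{-1})_{j}\in\ell^q(\NN)$. Summing the two parts shows $\sum_{j}\sum_{n\ge1}g_j(n)^{-q/2}<\infty$, hence the product converges and \eqref{ineq: beta} follows.

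The main obstacle is bookkeeping the two regimes $\nu_j\le r$ versus $\nu_j>r$ cleanly and making sure the constants produced by the binomial lower bound $\binom{n}{k}\ge c_r n^k$ are genuinely independent of $j$ and of $\bnu$, so that the coordinatewise sums can be multiplied; a minor subtlety is ensuring each factor $1+\sum_{n\ge1}g_j(n)^{-q/2}$ is finite (which needs $\varrho_j>0$ for each $j$, given). One should also double-check the index algebra relating $p$, $q$, and $r$: the condition $\frac{2}{r+1}<p$ must be shown equivalent to $\frac{qr}{2}>1$, which is where the convergence of the tail $\sum_{n>r}n^{-qr/2}$ — and hence the whole product — hinges. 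Once these are in place the rest is routine, and since this lemma is quoted from \cite[Lemma 5.1]{BCDM} I would follow that reference's argument for the explicit constants.
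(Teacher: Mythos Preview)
Your approach is essentially the paper's: factorize the sum over $\Ff$ as a product over $j\in\NN$, lower-bound each coordinate factor by the single term $\binom{n}{\min\{n,r\}}\varrho_j^{2\min\{n,r\}}$, and use $qr/2>1$ (which is the correct rearrangement of $p>2/(r+1)$ via $p=\tfrac{2q}{q+2}$) to make the tail $\sum_{n\ge r}\binom{n}{r}^{-q/2}$ converge, then conclude via $\sum_j\varrho_j^{-q}<\infty$. Two small slips to fix in your write-up: the intermediate inequality should read $qr>2$ (not $q/2>2/(r+1)$, which is a different condition), and for $\nu_j\le r$ the retained term is just $\varrho_j^{2\nu_j}$ since $\binom{\nu_j}{\nu_j}=1$ (no $\nu_j^{\nu_j}$ factor); neither affects the argument, and your tail exponent $n^{-qr/2}$ is the correct one.
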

      
        \begin{proof}
First we have the decomposition
    $$
    \sum_{\bnu\in \FF}b_{\bnu}(r,\bvarrho)^{-q/2} = \sum_{\bnu\in \FF}\prod_{j\in \NN}\bigg(\sum_{\ell=0}^r\binom{\nu_j}{\ell}\varrho_j^{2\ell}\bigg)^{-q/2}=\prod_{j\in \NN}\sum_{n\in \NN_0}\bigg(\sum_{\ell=0}^r \binom{n}{\ell}\varrho_j^{2\ell}\bigg)^{-q/2}.
    $$
   For each $j\in \NN$ we have
   \begin{equation}  \label{eq-lemma3.11-a} 
\sum_{n\in \NN_0}\bigg(\sum_{\ell=0}^r \binom{n}{\ell}\varrho_j^{2\ell}\bigg)^{-q/2}
\leq 
\sum_{n\in \NN_0} \bigg[\binom{n}{\min\{n,r\}}\varrho_j^{2\min\{n,r\}}\bigg]^{-q/2} 
= 
\sum_{n=0}^{r-1} \varrho_j^{-nq}+C_{r,q}\varrho_j^{-rq},
\end{equation}
where
$$
C_{r,q}:= \sum_{n= r}^{+\infty}\binom{n}{r}^{-q/2} = (r!)^{q/2}\sum_{n\in \NN_0}\big[(n+1)\ldots(n+r)\big]^{-q/2}.
$$
Since $\lim\limits_{n\to +\infty}\frac{(n+1)\ldots(n+r)}{n^r}=1$, 
we find that $ C_{r,q}$ is finite if and only if $q>2/r$. 
This is equivalent to $\frac{2}{r+1} < p$. 
From the assumption 
$ (\varrho_j^{-1})_{j\in \NN}\in \ell^q(\NN) $ 
we find some $J>1$ such that $\varrho_j>1$ for all $j>J$. 
This implies $\varrho_j^{-nq}\leq \varrho_j^{-q}$ for $n=1,\ldots,r$ and $j>J$. 
Therefore, one can bound the right side of \eqref{eq-lemma3.11-a} 
by $1+( C_{r,q}+r-1)\varrho_j^{-q}$. 
Hence we obtain
\begin{align*}
 \sum_{\bnu\in \FF}b_{\bnu}(r,\bvarrho)^{-q/2}
 &\leq C   \prod_{j>J}\big[1+( C_{r,q}+r-1)\varrho_j^{-q}\big]
 \\
 &\leq C\prod_{j>J}\exp\Big(( C_{r,q}+r-1)\varrho_j^{-q}\Big)
 \\
 &\leq C\exp\Big(( C_{r,q}+r-1)\|(\varrho_j^{-1})_{j\in \NN}\|_{\ell^q}^q\Big)
\end{align*}
which is finite since 	$(\varrho_j^{-1})_{j\in \NN}\in \ell^q(\NN)$.
\end{proof}

In what follows, we denote by  $(\bee_j)_{j\in \NN}$ the standard basis of $\ell^2(\NN)$, i.e., $\bee_j = (e_{j,i})_{i\in \NN}$ with $e_{j,i} = 1$ for $i=j$ and $e_{j,i} = 0$ for $i\not=j$.
The following lemma was obtained in \cite[Lemma 7.1, Theorem 7.2]{CoDeSch} and \cite[Lemma 3.17]{CoDe}.
              \begin{lemma}\label{lem:alpha-summability}
		Let $\balpha=(\alpha_j)_{j\in \NN}$ be a sequence of
                nonnegative numbers. Then we have the following.
		\begin{itemize}
		\item[{\rm (i)}] 
                  For $0 < p <\infty$, the family
                  $(\balpha^\bnu)_{\bnu \in \FF}$ belongs to
                  $\ell^p(\FF)$ if and only if
                  $\|\balpha\|_{\ell^p} < \infty$ and
                  $\|\balpha\|_{\ell^\infty} < 1$.
		\item[{\rm (ii)}] 
                  For $0 < p \le 1$, the family
                  $(\balpha^\bnu|\bnu|!/\bnu!)_{\bnu \in \FF}$ belongs
                  to $\ell^p(\FF)$ if and only if
                  $\|\balpha\|_{\ell^p} < \infty$ and
                  $\|\balpha\|_{\ell^1} < 1$.
		\end{itemize}
              \end{lemma}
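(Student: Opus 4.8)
The plan is to establish both equivalences by factorizing the multi-index sums over $\FF$ as infinite products over the coordinates $j \in \NN$, and then analyzing each factor as a one-dimensional power series. For part (i), write $\sum_{\bnu \in \FF} (\balpha^\bnu)^p = \prod_{j \in \NN} \sum_{n \in \NN_0} \alpha_j^{np} = \prod_{j \in \NN} \frac{1}{1-\alpha_j^p}$, which is finite and well-defined precisely when $\alpha_j < 1$ for all $j$ (so each geometric series converges) and $\sum_j \frac{\alpha_j^p}{1-\alpha_j^p} < \infty$. Since $\|\balpha\|_{\ell^p} < \infty$ already forces $\alpha_j \to 0$, the condition $\|\balpha\|_{\ell^\infty} < 1$ is automatically implied by $\|\balpha\|_{\ell^p} < \infty$ together with $\alpha_j < 1$ for all $j$, except one must be careful to include the boundary: if some $\alpha_{j_0} = 1$ the corresponding factor diverges, and if $\sup_j \alpha_j = 1$ is attained only in the limit, then $\alpha_j \to 1$ along a subsequence contradicts $\alpha_j \to 0$. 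So the clean statement is: finiteness of the product $\iff$ ($\alpha_j^p < 1$ for all $j$) and $\sum_j \alpha_j^p < \infty$ (using $\frac{\alpha_j^p}{1-\alpha_j^p} \asymp \alpha_j^p$ once $\alpha_j$ is bounded away from $1$), which is exactly $\|\balpha\|_{\ell^\infty} < 1$ and $\|\balpha\|_{\ell^p} < \infty$. The direction "$\Leftarrow$" is the same computation read backwards; for "$\Rightarrow$", if $\|\balpha\|_{\ell^\infty} \ge 1$ then either a single factor is $+\infty$ or infinitely many factors exceed a fixed constant $>1$, so the product diverges, and if $\|\balpha\|_{\ell^p} = \infty$ with $\|\balpha\|_{\ell^\infty} < 1$ then $\sum_j \alpha_j^p = \infty$ and $\prod_j (1-\alpha_j^p)^{-1} = \infty$.

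For part (ii), the quantity $\balpha^\bnu |\bnu|!/\bnu!$ does not factorize as cleanly because of the multinomial coefficient $|\bnu|!/\bnu! = \binom{|\bnu|}{\nu_1, \nu_2, \ldots}$; instead I would use the standard generating-function identity valid for $0<p\le 1$ (subadditivity of $t \mapsto t^p$), namely
\[
  \sum_{\bnu \in \FF} \left(\frac{|\bnu|!}{\bnu!}\right)^p \balpha^{p\bnu}
  \le \sum_{\bnu \in \FF} \frac{|\bnu|!}{\bnu!}\, \balpha^{p\bnu}
  = \frac{1}{1 - \sum_{j} \alpha_j^p}
\]
when $\sum_j \alpha_j^p < 1$, via the multinomial theorem $\sum_{|\bnu|=k} \frac{k!}{\bnu!} \bx^\bnu = (\sum_j x_j)^k$ summed over $k$. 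This gives the sufficiency of $\|\balpha\|_{\ell^p} < \infty$ together with $\|\balpha\|_{\ell^1} < 1$ — wait, one needs $\sum_j \alpha_j^p < 1$ versus $\sum_j \alpha_j < 1$; here I must be more careful and split: group the finitely many "large" coordinates (with $\alpha_j$ close to the bulk of the $\ell^1$ mass) and treat the tail, where $\alpha_j$ is small enough that $\alpha_j^p$ is summable with small sum, then combine a finite product of convergent one-variable series for the head with the multinomial bound for the tail. For necessity, restricting the sum to indices supported on a single coordinate gives $\sum_n (\alpha_j)^{np} < \infty$ hence $\alpha_j < 1$; restricting to indices of the form $\bnu = \be_{j_1} + \cdots + \be_{j_k}$ with distinct $j_i$ gives terms $k! \, \alpha_{j_1}\cdots \alpha_{j_k}$, and summing over such configurations shows that $\sum_j \alpha_j \ge 1$ forces divergence (the factor $k!$ overwhelms); and $\ell^p$-summability of the diagonal terms forces $\|\balpha\|_{\ell^p} < \infty$.

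The main obstacle is part (ii): the presence of the factorial weight $|\bnu|!/\bnu!$ means the naive coordinatewise factorization fails, and one has to handle the transition from the $\ell^1$-threshold condition (which controls the combinatorial growth of the multinomial coefficients) to the $\ell^p$-summability condition (which controls the geometric decay in each coordinate) simultaneously — the head/tail splitting argument, or equivalently an argument tracking the two scales $\sum_j \alpha_j$ and $\sum_j \alpha_j^p$ separately, is where the real work lies. Since this lemma is quoted as being from \cite[Lemma 7.1, Theorem 7.2]{CoDeSch} and \cite[Lemma 3.17]{CoDe}, I would in fact simply cite those references for the full argument and only sketch the generating-function identities above for the reader's convenience, rather than reproving it in detail.
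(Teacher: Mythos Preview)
Your treatment of part (i) is correct and matches the paper's approach: factorize as a product of geometric series and control the product via $\log(1+x)\le x$ (equivalently $1+x\le e^x$). The necessity argument can be streamlined as the paper does: from $(\balpha^{\be_j})_{j}=(\alpha_j)_j$ being a subfamily one gets $\balpha\in\ell^p$, and from $(\balpha^{n\be_j})_{n}=(\alpha_j^n)_n$ being a subfamily one gets $\alpha_j<1$ for each $j$; combined with $\alpha_j\to 0$ this yields $\|\balpha\|_{\ell^\infty}<1$.

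For part (ii), your generating-function identity $\sum_{\bnu}\frac{|\bnu|!}{\bnu!}\bx^\bnu=(1-\sum_j x_j)^{-1}$ is exactly what the paper uses, and you correctly spot that the naive bound $(|\bnu|!/\bnu!)^p\le |\bnu|!/\bnu!$ lands you with the wrong threshold $\sum_j\alpha_j^p<1$. However, your proposed fix --- a head/tail split with ``a finite product of one-variable series for the head and the multinomial bound for the tail'' --- does not work as stated, because $|\bnu|!/\bnu!$ does \emph{not} factor over a coordinate split $\bnu=\bnu_{\mathrm{head}}+\bnu_{\mathrm{tail}}$; there is an extra binomial factor $\binom{|\bnu|}{|\bnu_{\mathrm{head}}|}$ that grows like $2^{|\bnu|}$ and destroys the estimate. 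The paper's actual device is different: write $\alpha_j=c_jd_j$ with $c_j=(1+\eta)\alpha_j$, $d_j=(1+\eta)^{-1}$ on a finite head and $c_j=\alpha_j^p$, $d_j=\alpha_j^{1-p}$ on the tail, arranged so that $\|\bc\|_{\ell^1}<1$ and $\|\bd\|_{\ell^\infty}<1$ with $\bd\in\ell^{p/(1-p)}$. Then H\"older's inequality (exponents $1/p$ and $1/(1-p)$) gives
\[
\sum_{\bnu}\Big(\tfrac{|\bnu|!}{\bnu!}\balpha^\bnu\Big)^p
\le\Big(\sum_{\bnu}\tfrac{|\bnu|!}{\bnu!}\bc^\bnu\Big)^{p}\Big(\sum_{\bnu}\bd^{\,p\bnu/(1-p)}\Big)^{1-p},
\]
and the two factors are finite by the $p=1$ case and by part (i), respectively. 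This multiplicative splitting at the level of each $\alpha_j$ --- rather than an additive splitting of the index set --- is the missing idea.

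For necessity in (ii), your argument via multiindices $\be_{j_1}+\cdots+\be_{j_k}$ is more laborious than needed. The paper simply observes $\ell^p(\FF)\subset\ell^1(\FF)$ for $p\le 1$, reduces to the $p=1$ equivalence (which you already have from the generating function), and reads off $\|\balpha\|_{\ell^1}<1$; the condition $\|\balpha\|_{\ell^p}<\infty$ again comes from the subfamily $(\alpha_j)_j$.
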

\begin{proof} 
 {\it Step 1.} 
We prove the first statement. 
Assume that $\|\balpha\|_{\ell^\infty} < 1$. 
Then we have
\begin{align*}
\sum_{\bnu\in \FF}\balpha^{\bnu p}&=\prod_{j\in \NN}\sum_{n\in \NN_0} \alpha_j^{pn}=\prod_{j\in \NN}\frac{1}{1-\alpha_j^p}\\
&=
 \prod_{j\in \NN}\bigg(1+\frac{\alpha_j^p}{1-\alpha_j^p}\bigg) \leq \prod_{j\in \NN}\exp\bigg(\frac{\alpha_j^p}{1-\alpha_j^p}\bigg) \\
 &\leq \prod_{j\in \NN}\exp\bigg(\frac{1}{1-\|\balpha\|_{\ell^\infty}^p}\alpha_j^p\bigg)=\exp\bigg(\frac{1}{1-\|\balpha\|_{\ell^\infty}^p}\|\balpha\|_{\ell^p}^p\bigg)\,,
\end{align*}
where in the last equality we have used $(\balpha^\bnu)_{\bnu \in \FF}\in\ell^p(\FF)$. 

Since the sequence $(\alpha_j)_{j\in \N}=(\balpha^{\bee_j})_{j\in \NN}$ 
is a subsequence of $(\alpha^{\bnu})_{\bnu\in \FF}$, 
$(\balpha^\bnu)_{\bnu \in \FF}\in \ell^p(\FF)$ 
implies $ \balpha $ belong to $\ell^p(\NN)$. 
Moreover we have for any $j\geq 1$
$$
\sum_{n\in \NN_0}\alpha_j^{np}=\sum_{n\in \NN_0}\balpha^{n\bee_j p} \leq  \sum_{\bnu\in \FF}\balpha^{\bnu p} <\infty.
$$
From this we have $\alpha_j^p<1$ which implies $\alpha_j<1$ for all $j\in \NN$. 
Since $\balpha \in \ell^p(\NN)$ it is easily seen that $\|\balpha\|_{\ell^\infty}<1$.

{\it Step 2.} We prove the second statement. We observe that 
$$
\sum_{\bnu\in \FF}\frac{|\bnu|}{\bnu!}\balpha^\bnu =\sum_{k\in \NN_0}\sum_{|\bnu|=k}\frac{|\bnu|}{\bnu!}\balpha^\bnu =\sum_{k\in \NN_0}\bigg(\sum_{j\in \NN}\alpha_j\bigg)^k.
$$
From this we deduce that $(\balpha^\bnu|\bnu|!/\bnu!)_{\bnu \in \FF}$ belongs
to $\ell^1(\FF)$ if and only if $\balpha\in \ell^1(\NN)$ and
$\|\balpha\|_{\ell^1} < 1$.

Suppose that 
$$
(\balpha^\bnu|\bnu|!/\bnu!)_{\bnu \in \FF}\in \ell^p(\FF)
$$
for some $p\in (0,1)$. As in Step 1, the sequence $(\alpha_j)_{j\in \N}=(\balpha^{\bee_j})_{j\in \NN}$ and $(\alpha_j^n)_{n\in \NN_0}=(\balpha^{n\bee_j})_{n\in \NN_0}$ are subsequences of $(\balpha^\bnu)_{\bnu\in \FF}$. Therefore   $(\balpha^\bnu|\bnu|!/\bnu!)_{\bnu \in \FF}$ belongs
to $\ell^p(\FF)$ implies that
$\|\balpha\|_{\ell^p} < \infty$ and
$\|\balpha\|_{\ell^1} < 1$. 

Conversely, assume that $\|\balpha\|_{\ell^p} < \infty$ 
and $\|\balpha\|_{\ell^1} < 1$. 
We put $\delta:=1-\|\balpha\|_{\ell^1}>0$ and $\eta:=\frac{\delta}{3}$. 
Take $J$ large enough such that $\sum_{j>J}\alpha_j^p\leq \eta$. 
We define the sequence $\bc$ and $\bd$ by
$$
c_j=(1+\eta)\alpha_j,\qquad d_j=\frac{1}{1+\eta} 
$$
if $j\leq J$ and
$$
c_j=\alpha_j^p,\qquad d_j=\alpha_j^{1-p}
$$
if $j>J$. 
By this construction we have $\alpha_j=c_jd_j$ for all $j\in \NN$. 
For the sequence $\bc$ we have
$$
\|\bc\|_{\ell^1}\leq (1+\eta)\|\balpha\|_{\ell^1}+\sum_{j>J}\alpha_j^p \leq (1+\eta)(1-\delta)+\eta < 1-\eta. 
$$
Next we show that $\|\bd\|_{\ell^\infty}<1$. Indeed, for $1\leq j\leq J$ we have $d_j=\frac{1}{1+\eta}<1$ and for $j>J$ we have
$$
d_j =(\alpha_j^p)^{(1-p)/p}\leq \eta^{(1-p)/p}<1.
$$
Moreover since $d_j^{(p/(1-p))}=\alpha_j^p$ for $j>J$ we have $\bd \in \ell^{p/(1-p)}(\N)$. Now we get from H\"older's inequality
$$
\sum_{\bnu\in \FF}\bigg(\frac{|\bnu|}{\bnu!}\balpha^\bnu \bigg)^p= \sum_{\bnu\in \FF}  \bigg(\frac{|\bnu|}{\bnu!}\bc^\bnu \bigg)^p\bd^{p\bnu}\leq \bigg(\sum_{\bnu\in \FF}\frac{|\bnu|}{\bnu!}\bc^\bnu \bigg)^p\bigg(\sum_{\bnu\in \FF}\bd^{\bnu p/(1-p)}\bigg)^{1-p}.
$$
Observe that the first factor on the right side is finite since $\bc\in \ell^1(\NN)$ and $\|\bc\|_{\ell^1}<1$. 
Applying the first statement, the second factor on the right side is finite, whence
$(\balpha^\bnu|\bnu|!/\bnu!)_{\bnu \in \FF} \in \ell^p(\FF)$.
\end{proof}
With these sequence summability results at hand, we are now in position to 
formulate Wiener-Hermite summation results for parametric solution families
of PDEs with log-Gaussian random field data.

\begin{theorem} [General case] \label{thm:s=1} 
                Let
                Assumption \ref{ass:Ass1} hold and assume that
                $\bvarrho=(\varrho_j)_{j\in \NN}\in [0,\infty)^\infty$
                is a sequence satisfying
                $(\varrho_j^{-1})_{j \in \NN}\in \ell^q(\NN)$ for some
                $0 < q < \infty$.  Assume that, for each
                $\bnu\in \FF$, there exists a sequence
                $\brho_\bnu= (\rho_{\bnu,j})_{j \in \NN}\in
                [0,\infty)^\infty$ such that
                $\supp(\bnu)\subseteq \supp(\brho_\bnu)$,
                \begin{equation} \label{assumption: theorem 3.1}
                  \sup_{\bnu\in \FF} \Bigg\| \sum_{j\in
                    \NN}\rho_{\bnu,j}|\psi_j|\Bigg\|_{L^\infty}\leq
                  \kappa <\frac{\pi}{2}, \qquad \text{and} \qquad
                  \sum_{\|\bnu\|_{\ell^\infty}\leq r}
                  \frac{\bnu!\bvarrho^{2\bnu}}{\brho_\bnu^{2\bnu}}
                  <\infty
                \end{equation}  
                with $r\in \NN$, $r>2/q$.
                Then
                \begin{equation} \label{eq:beta-u} 
                	\sum_{\bnu\in\FF}\beta_\bnu(r,\bvarrho)\|u_\bnu\|_{V}^2 <\infty
                	\ \ \ with \ \ \ \big(\beta_\bnu(r,\bvarrho)^{-1/2}\big)_{\bnu\in\FF} \in \ell^q(\FF).
                \end{equation}
                Furthermore,
                $$
                (\|u_\bnu\|_{V})_{\bnu\in\FF}\in \ell^p(\FF) \ \ \  with
                \ \ \ \frac{1}{p}=\frac{1}{q}+\frac{1}{2}.
                $$
              \end{theorem}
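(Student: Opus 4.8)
The plan is to derive the weighted $\ell^2$-summability \eqref{eq:beta-u} by combining the identity of Lemma~\ref{lem:equal} with the derivative bound of Lemma~\ref{lem:estV}, and then to obtain $\ell^p$-summability by a Hölder argument using Lemma~\ref{lem:beta-summability}. First I would apply Lemma~\ref{lem:equal} with the given $r$ and $\bvarrho$ to write
\begin{equation*}
  \sum_{\bnu\in\FF}\beta_\bnu(r,\bvarrho)\|u_\bnu\|_V^2
  = \sum_{\|\bnu\|_{\ell^\infty}\leq r}\frac{\bvarrho^{2\bnu}}{\bnu!}\int_U\|\partial^\bnu u(\by)\|_V^2\,\rd\gamma(\by).
\end{equation*}
For each fixed $\bnu$ with $\|\bnu\|_{\ell^\infty}\leq r$, the hypothesis provides a sequence $\brho_\bnu$ with $\supp(\bnu)\subseteq\supp(\brho_\bnu)$ satisfying the uniform bound \eqref{eq:leqkappa} with constant $\kappa<\pi/2$, so Lemma~\ref{lem:estV} applies and gives, for every $\by\in U_0$,
\begin{equation*}
  \|\partial^\bnu u(\by)\|_V \leq C_0\,\frac{\bnu!}{\brho_\bnu^{\bnu}}\exp\big(\|b(\by)\|_{L^\infty}\big),
\end{equation*}
with $C_0 = e^\kappa(\cos\kappa)^{-1}\|f\|_{V^*}$ independent of $\bnu$.

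**Assembling the weighted bound.** Squaring and integrating, and using that $\EE\big(\exp(2\|b(\cdot)\|_{L^\infty})\big)<\infty$ by Proposition~\ref{prop:Meas1} (which holds under Assumption~\ref{ass:Ass1}), I get
\begin{equation*}
  \int_U\|\partial^\bnu u(\by)\|_V^2\,\rd\gamma(\by)
  \leq C_0^2\,\frac{(\bnu!)^2}{\brho_\bnu^{2\bnu}}\,\EE\big(\exp(2\|b\|_{L^\infty})\big)
  =: C_1\,\frac{(\bnu!)^2}{\brho_\bnu^{2\bnu}}.
\end{equation*}
Substituting back,
\begin{equation*}
  \sum_{\bnu\in\FF}\beta_\bnu(r,\bvarrho)\|u_\bnu\|_V^2
  \leq C_1\sum_{\|\bnu\|_{\ell^\infty}\leq r}\frac{\bvarrho^{2\bnu}}{\bnu!}\cdot\frac{(\bnu!)^2}{\brho_\bnu^{2\bnu}}
  = C_1\sum_{\|\bnu\|_{\ell^\infty}\leq r}\frac{\bnu!\,\bvarrho^{2\bnu}}{\brho_\bnu^{2\bnu}},
\end{equation*}
which is finite by the second condition in \eqref{assumption: theorem 3.1}. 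The claim $\big(\beta_\bnu(r,\bvarrho)^{-1/2}\big)_{\bnu\in\FF}\in\ell^q(\FF)$ follows directly from Lemma~\ref{lem:beta-summability}, since $(\varrho_j^{-1})_{j\in\NN}\in\ell^q(\NN)$ with $q = \tfrac{2p}{2-p}$, i.e.\ $p=\tfrac{2q}{q+2}<2$, and $r>2/q = 2/q$ — one checks $\tfrac{2}{r+1}<p$ is equivalent to $r>2/q$, which is the stated hypothesis.

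**From weighted $\ell^2$ to $\ell^p$.** For the final assertion I would apply Hölder's inequality with exponents $2/p$ and $2/(2-p)$ (note $\tfrac1p=\tfrac1q+\tfrac12$ gives exactly $p<2$ and $\tfrac{2p}{2-p}=q$):
\begin{equation*}
  \sum_{\bnu\in\FF}\|u_\bnu\|_V^p
  = \sum_{\bnu\in\FF}\Big(\beta_\bnu(r,\bvarrho)\|u_\bnu\|_V^2\Big)^{p/2}\beta_\bnu(r,\bvarrho)^{-p/2}
  \leq \Big(\sum_{\bnu\in\FF}\beta_\bnu(r,\bvarrho)\|u_\bnu\|_V^2\Big)^{p/2}\Big(\sum_{\bnu\in\FF}\beta_\bnu(r,\bvarrho)^{-\frac{p}{2-p}}\Big)^{\frac{2-p}{2}}.
\end{equation*}
The first factor is finite by the preceding step, and since $\tfrac{p}{2-p}=\tfrac{q}{2}$ the second factor equals $\big(\sum_{\bnu}\beta_\bnu(r,\bvarrho)^{-q/2}\big)^{(2-p)/2}$, which is finite by Lemma~\ref{lem:beta-summability} (its inequality \eqref{ineq: beta}). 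Hence $(\|u_\bnu\|_V)_{\bnu\in\FF}\in\ell^p(\FF)$.

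**Main obstacle.** The routine parts are the Hölder manipulations and bookkeeping of the exponents $p,q,r$; the only genuinely delicate point is verifying that the hypotheses align so that both Lemma~\ref{lem:estV} and Lemma~\ref{lem:beta-summability} are simultaneously applicable — specifically that a single $r\in\NN$ with $r>2/q$ can serve both as the truncation order in the Wiener-Hermite weights $\beta_\bnu(r,\bvarrho)$ and satisfy $\tfrac{2}{r+1}<p$, and that the per-$\bnu$ radii $\brho_\bnu$ furnished by \eqref{assumption: theorem 3.1} indeed meet the uniform constraint \eqref{eq:leqkappa} needed to invoke the $V$-regularity estimate with a $\bnu$-independent constant $C_0$. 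Once the quantifiers are tracked carefully this is immediate, but it is where the argument must be stated precisely.
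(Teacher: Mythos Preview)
Your proof is correct and follows essentially the same route as the paper: apply the Parseval-type identity of Lemma~\ref{lem:equal}, bound each $\|\partial^\bnu u(\by)\|_V$ via Lemma~\ref{lem:estV} and integrate using the exponential moment from Proposition~\ref{prop:Meas1}, then invoke Lemma~\ref{lem:beta-summability} and finish with H\"older. Your verification that $r>2/q$ is equivalent to $\tfrac{2}{r+1}<p$ (needed for Lemma~\ref{lem:beta-summability}) is a useful detail that the paper leaves implicit.
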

              \begin{proof}
                By Proposition
                \ref{prop:Meas1} Assumption \ref{ass:Ass1} implies
                that $b(\by)$ belongs to $L^\infty(\D)$ for
                $\gamma$-a.e.  $\by\in U$ and
                $\mathbb{E}(\exp(k\|b(\by)\|_{L^\infty}))$ is finite
                for all $k\in [0,\infty)$.
	
                For $\by\in U$ such that $b(\by)\in L^\infty(\D)$ and
                $\bnu \in \FF$ with $\mfu=\supp(\bnu)$, the solution
                $u$ of \eqref{SPDE} is holomorphic in
                $\mathcal{S}_\mfu (\brho_\bnu)$, see Proposition \ref{prop:holoh1}.  
                This, \eqref{assumption: theorem 3.1} and 
                Lemmata \ref{lem:estV} and \ref{lem:equal} 
                yield that
                \begin{equation*}
                  \begin{split} 
                  	\sum_{\bnu\in\FF}\beta_\bnu(r,\bvarrho)\|u_\bnu\|_{V}^2
                  	&=
                    \sum_{\|\bnu\|_{\ell^\infty}\leq r}
                    \frac{\bvarrho^{2\bnu}}{\bnu!} \int_U\|
                    \partial^\bnu u(\by)\|_V^2\rd\gamma(\by) 
                    \\
                    & \leq
                    C_0^2 \sum_{\|\bnu\|_{\ell^\infty}\leq r}
                    \frac{\bnu!\bvarrho^{2\bnu}}{\brho_\bnu^{2\bnu}}
                    \EE \left( \exp\big( 2 \|b(\by)\|_{L^\infty }
                      \big) \right) <\infty.
                  \end{split}
                \end{equation*} 	
                Since $r>\frac{2}{q}$ and
                $ (\varrho_j^{-1})_{j \in \NN} \in \ell^q(\NN)$, by
                Lemma \ref{lem:beta-summability} the family
                $\big(\beta_\bnu(r,\bvarrho)^{-1/2}\big)_{\bnu\in\FF}$ belongs to $\ell^q(\FF)$. 
                The relation \eqref{eq:beta-u} is proven. 
                
                From \eqref{eq:beta-u}, by H\"older's inequality we get that
                \begin{equation*}
                  \sum_{\bnu\in\FF}\| u_\bnu\|_V^p 
                  \leq 
                  \Bigg( \sum_{\bnu\in \FF}\beta_\bnu(r,\bvarrho)\|u_\bnu\|_V^2\Bigg)^{p/2} 
                  \Bigg(\sum_{\bnu\in \FF} \beta_{\bnu}(r, \bvarrho)^{-q/2} \Bigg)^{1-p/2} 
                  <\infty\,.
                \end{equation*}
              \end{proof}

              \begin{corollary}[The case of global
                supports] \label{cor:global} Assume that there exists
                a sequence of positive numbers
                $\blambda= (\lambda_j)_{j \in \NN}$ such that
	$$
	\big(\lambda_j \| \psi_j \|_{L^\infty}\big)_{j\in \NN} \in
        \ell^1(\NN) \ \ \mbox{and} \ \ (\lambda_j^{-1})_{j \in \NN}\in
        \ell^q(\NN),$$ for some $0 < q < \infty$.  Then we have
        $(\|u_\bnu\|_{V})_{\bnu\in\FF}\in \ell^p(\FF)$ with
        $\frac{1}{p}=\frac{1}{q}+\frac{1}{2}$.
      \end{corollary}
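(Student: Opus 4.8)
The proof will present Corollary~\ref{cor:global} as a special case of Theorem~\ref{thm:s=1}, so the task is to manufacture, from the two hypotheses on $\blambda$, a weight sequence $\bvarrho$ with $(\varrho_j^{-1})_{j\in\NN}\in\ell^q(\NN)$, an integer $r>2/q$, and a family $(\brho_\bnu)_{\bnu\in\FF}$ of radius sequences verifying both conditions in \eqref{assumption: theorem 3.1}. First I would discard the (harmless) indices $j$ with $\psi_j\equiv 0$ --- for those $u$ is constant in $y_j$ and $u_\bnu=0$ whenever $\nu_j>0$ --- so that $\|\psi_j\|_{L^\infty}>0$ throughout. Since scaling $\blambda$ by a positive constant affects neither $\ell^1$-summability of $(\lambda_j\|\psi_j\|_{L^\infty})$ nor $\ell^q$-summability of $(\lambda_j^{-1})$, I would set $\bvarrho:=c\,\blambda$ for a small constant $c>0$ to be fixed only at the end; then $(\varrho_j^{-1})_{j\in\NN}=c^{-1}(\lambda_j^{-1})_{j\in\NN}\in\ell^q(\NN)$, and I fix any $r\in\NN$ with $r>2/q$.

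For the radii, fix $\kappa\in(0,\pi/2)$ once and for all, and for $\bnu\in\FF$ with $|\bnu|>0$ and $\mfu:=\supp(\bnu)$ put $\rho_{\bnu,j}:=\kappa\nu_j/(|\bnu|\,\|\psi_j\|_{L^\infty})$ for $j\in\mfu$ and $\rho_{\bnu,j}:=0$ for $j\notin\mfu$ (and $\brho_\bnu$ any fixed positive sequence when $|\bnu|=0$). Then $\supp(\bnu)\subseteq\supp(\brho_\bnu)$, and the first condition in \eqref{assumption: theorem 3.1} is immediate since $\bigl\|\sum_j\rho_{\bnu,j}|\psi_j|\bigr\|_{L^\infty}\le\sum_{j\in\mfu}\rho_{\bnu,j}\|\psi_j\|_{L^\infty}=\kappa<\pi/2$ for every $\bnu$. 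Writing $a_j:=\lambda_j\|\psi_j\|_{L^\infty}$, so that $K_0:=\sum_{j\in\NN}a_j<\infty$ by hypothesis, the second condition in \eqref{assumption: theorem 3.1} becomes the requirement that
$$
\sum_{\|\bnu\|_{\ell^\infty}\le r}\frac{\bnu!\,\bvarrho^{2\bnu}}{\brho_\bnu^{2\bnu}}
\;=\;
1+\sum_{\substack{\|\bnu\|_{\ell^\infty}\le r\\ |\bnu|>0}}\bnu!\;\frac{c^{2|\bnu|}\,|\bnu|^{2|\bnu|}}{\kappa^{2|\bnu|}}\prod_{j\in\supp(\bnu)}\frac{a_j^{2\nu_j}}{\nu_j^{2\nu_j}}
$$
be finite.

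Establishing this last bound is the heart --- and the main obstacle --- of the argument: the factor $|\bnu|^{2|\bnu|}$, forced by the $|\bnu|$-dependent normalisation of the radii and unavoidable here, must be absorbed against $\bnu!\prod_j\nu_j^{-2\nu_j}$ and the rapidly decaying coefficients $a_j^{2\nu_j}$. I would organise the verification around three ingredients: (i) the elementary bound $\nu!/\nu^{2\nu}\le\nu^{-\nu}$ (refined by Stirling), so that the purely coordinatewise part carries extra decay; (ii) the entropy inequality $\prod_j\nu_j^{\nu_j}\ge(|\bnu|/\#\supp(\bnu))^{|\bnu|}$, used together with $\#\supp(\bnu)\le|\bnu|\le r\,\#\supp(\bnu)$ to lower the power of $|\bnu|$; and, crucially, (iii) the pointwise inequality $1+xt\le(1+\sqrt{xt})^2$, which yields $\prod_{j}(1+a_j^2 t)\le\exp(2K_0\sqrt t)$ for all $t\ge0$ and hence, by Cauchy's estimates, $e_m\bigl((a_j^2)_{j}\bigr)\le(eK_0/m)^{2m}$ for the elementary symmetric polynomials $e_m$ --- this is what tames the sum over multi-indices with large support. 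Assembling these, the displayed series should be dominated by a geometric series in $c\,K_0/\kappa$ with ratio involving only a constant depending on $r$, hence finite once $c$ is chosen small enough. Granting \eqref{assumption: theorem 3.1}, Theorem~\ref{thm:s=1} then delivers $\sum_{\bnu\in\FF}\beta_\bnu(r,\bvarrho)\|u_\bnu\|_V^2<\infty$ together with $(\beta_\bnu(r,\bvarrho)^{-1/2})_{\bnu\in\FF}\in\ell^q(\FF)$, and a single Hölder inequality (exactly as at the end of the proof of Theorem~\ref{thm:s=1}) converts this into $(\|u_\bnu\|_V)_{\bnu\in\FF}\in\ell^p(\FF)$ with $1/p=1/q+1/2$.
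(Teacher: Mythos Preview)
Your overall reduction matches the paper exactly: you take $\bvarrho=c\,\blambda$ and $\rho_{\bnu,j}=\kappa\nu_j/(|\bnu|\,\|\psi_j\|_{L^\infty})$ and verify the hypotheses of Theorem~\ref{thm:s=1}. The first condition in \eqref{assumption: theorem 3.1} follows as you say. The difficulty is entirely in the second condition, and here your ingredients (i)--(iii) do not close the argument.

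After (i) and (ii) you obtain the termwise bound $(c/\kappa)^{2n}\,n^{n}m^{n}\prod_{j\in S}a_j^{2\nu_j}$ for a multi-index with $|\bnu|=n$ and $m=\#\supp(\bnu)$. Using $n\le rm$ this is at most $(c/\kappa)^{2n}(rm^2)^{n}\prod a_j^{2\nu_j}$. Summing over all $\bnu$ supported in a set $S$ of size $m$ and then over $|S|=m$ with your (iii), the best you can extract from $e_m\big((a_j^2)\big)\le(eK_0/m)^{2m}$ is a factor $m^{-2m}$; but the combinatorial weight you have accumulated is of order $m^{2(r-1)m}$ (since $n$ can reach $rm$), so the $m$-sum diverges whenever $r>1$. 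No uniform choice of $c$ rescues this, and the claim that the series is dominated by a geometric series in $cK_0/\kappa$ with an $r$-dependent but $m$-independent ratio is not justified.

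The paper circumvents this by a different, sharper bookkeeping of $\bnu!$ and of the multinomial structure. First it uses $\bnu!\le(r!)^{|\bnu|}$ (not $\bnu!\le\bnu^{\bnu}$), then passes from $\ell^2$ to $\ell^1$ to obtain a single factor $|\bnu|^{|\bnu|}/\bnu^{\bnu}$ rather than its square, and finally invokes the elementary inequality $|\bnu|^{|\bnu|}/\bnu^{\bnu}\le e^{|\bnu|}|\bnu|!/\bnu!$ together with Lemma~\ref{lem:alpha-summability}(ii), which gives finiteness of $\sum_{\bnu}\frac{|\bnu|!}{\bnu!}\,\balpha^{\bnu}$ precisely when $\|\balpha\|_{\ell^1}<1$. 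This last multinomial identity is what absorbs the $|\bnu|^{|\bnu|}$ factor cleanly; your entropy bound (ii) and symmetric-polynomial estimate (iii) are too coarse a substitute. A minor omission: the paper also checks that Assumption~\ref{ass:Ass1} (needed for Theorem~\ref{thm:s=1}) holds, via the sequence $\lambda_j':=\lambda_j^{1/2}$.
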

      \begin{proof}
	Let $\bnu\in \FF$.  We define the sequence
        $\brho_\bnu = (\rho_{\bnu,j})_{j \in \NN}$ by
        $\rho_{\bnu,j} := \frac{ \bnu_j }{|\bnu|\|\psi_j
          \|_{L^\infty}} $ for $j\in \supp(\bnu)$ and
        $\rho_{\bnu,j}=0$ if $j\not\in \supp(\bnu)$ and choose
        $\bvarrho=\tau\blambda$, $\tau$ is an appropriate positive
        constant.  It is obvious that
	\begin{equation*}
          \sup_{\bnu\in \FF}
          \Bigg\|
          \sum_{j\in \NN}\rho_{\bnu,j}\big| \psi_j\big|\Bigg\|_{L^\infty}\leq 1.
	\end{equation*}
	We first show that Assumption \ref{ass:Ass1} is satisfied for
        the sequence $\blambda'= (\lambda'_j)_{j \in \NN}$ with
        $\lambda_j':= \lambda_j^{1/2}$ by a similar argument as in
        \cite[Remark 2.5]{BCDM}.  From the assumption
        $ (\lambda_j^{-1})_{j \in \NN}\in \ell^q(\NN)$ we derive that
        up to a nondecreasing rearrangement,
        $\lambda'_j \geq C j^{1/(2q)}$ for some $C > 0$.  Therefore,
        $\big(\exp(-{\lambda'}_j^2)\big)_{j\in \NN}\in \ell^1(\NN)$.
        The convergence in $L^\infty(\D)$ of
        $\sum_{j\in \NN}\lambda'_j|\psi_j|$ can be proved as follows.
        \begin{equation*}
          \Bigg\|\sum_{j\in \NN}\lambda'_j|\psi_j|\Bigg\|_{L^\infty}
          \le \sup_{j \in \NN} \lambda_j^{-1/2}\sum_{j\in \NN}\lambda_j\|\psi_j\|_{L^\infty}
          < \infty.
        \end{equation*}
	
	With $r>2/q$ we have
	\begin{equation} \label{sum-estimate}
          \begin{split}
            \sum_{\|\bnu\|_{\ell^\infty}\leq r}
            \frac{\bnu!\bvarrho^{2\bnu}}{\brho_\bnu^{2\bnu}} & \leq
            \sum_{\|\bnu\|_{\ell^\infty}\leq r} \frac{
              |\bnu|^{2|\bnu|}}{\bnu^{2\bnu}}\prod_{j\in
              \supp(\bnu)}\big(\tau\sqrt{r!}\lambda_j \| \psi_j
            \|_{L^\infty}\big) ^{2\nu_j}
            \\
            & \leq \Bigg( \sum_{\|\bnu\|_{\ell^\infty}\leq r} \frac{
              |\bnu|^{|\bnu|}}{\bnu^{\bnu}}\prod_{j\in
              \supp(\bnu)}\big(\tau\sqrt{r!}\lambda_j \| \psi_j
            \|_{L^\infty}\big) ^{\nu_j}\Bigg)^{2}
            \\
            & \leq \Bigg( \sum_{\|\bnu\|_{\ell^\infty}\leq r} \frac{
              |\bnu|!}{\bnu!}\prod_{j\in \supp(\bnu)}\big(e
            \tau\sqrt{r!}\lambda_j \| \psi_j \|_{L^\infty}\big)
            ^{\nu_j}\Bigg)^{2}.
          \end{split}
	\end{equation}
        In the last step we used the inequality
		$$
		\frac{ |\bnu|^{|\bnu|}}{\bnu^{\bnu}} \le
                \frac{e^{|\bnu|} |\bnu|!}{\bnu!},
		$$
		which is immediately derived from the inequalities
                $m! \le m^m \le e^m m!$.  Since
                $ \big(\tau\sqrt{r!}\lambda_j \| \psi_j
                \|_{L^\infty}\big)_{j\in \NN} \in \ell^1(\NN) $, we
                can choose a positive number $\tau$ so that
		$$\big\| \big(e\tau\sqrt{r!}\lambda_j  \|\psi_j  \|_{L^\infty}\big)_{j\in \NN}\big\|_{\ell^1}<1.$$
		This implies by Lemma \ref{lem:alpha-summability}(ii)
                that the last sum in \eqref{sum-estimate} is finite.
                Applying Theorem \ref{thm:s=1} the desired result
                follows.
	
              \end{proof}
              \begin{corollary}[The case of disjoint supports]
                \label{cor:local}
                Assuming $\psi_j\in L^\infty(\D)$ for all $j\in \NN$
                with disjoint supports and, furthermore, that there
                exists a sequence of positive numbers
                $\blambda= (\lambda_j)_{j \in \NN}$ such that
	$$ \big(\lambda_j  \| \psi_j  \|_{L^\infty}\big)_{j\in \NN} \in \ell^2(\NN) \ \   
	{\rm and} \ \ (\lambda_j^{-1})_{j \in \NN}\in \ell^q(\NN),$$
        for some $0 < q < \infty.$ Then
        $(\|u_\bnu\|_{V})_{\bnu\in\FF}\in \ell^p(\FF)$ with
        $\frac{1}{p}=\frac{1}{q}+\frac{1}{2}$.
      \end{corollary}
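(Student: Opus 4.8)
The plan is to mirror the proof of Corollary \ref{cor:global} essentially verbatim, reducing everything to one application of Theorem \ref{thm:s=1}, but to cash in the disjointness of the supports of the $\psi_j$ so as to weaken the required summability of $(\lambda_j\|\psi_j\|_{L^\infty})_{j\in\NN}$ from $\ell^1$ to $\ell^2$. The structural point is that if the $\psi_j$ have pairwise disjoint supports, then for any nonnegative coefficients $(c_j)_{j\in\NN}$ at $\gamma$-a.e.\ point at most one summand of $\sum_{j\in\NN}c_j|\psi_j|$ is nonzero, so $\bigl\|\sum_{j\in\NN}c_j|\psi_j|\bigr\|_{L^\infty}=\sup_{j\in\NN}c_j\|\psi_j\|_{L^\infty}$. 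Hence, unlike in the global-support case where the radii $\rho_{\bnu,j}$ had to carry the factor $\nu_j/|\bnu|$ to control a \emph{sum}, here we may take the far larger radii $\rho_{\bnu,j}:=1/\|\psi_j\|_{L^\infty}$ for $j\in\supp(\bnu)$ and $\rho_{\bnu,j}:=0$ otherwise (after the harmless reduction to the case $\|\psi_j\|_{L^\infty}>0$ for all $j$). With this choice $\supp(\brho_\bnu)=\supp(\bnu)$ and $\bigl\|\sum_{j\in\NN}\rho_{\bnu,j}|\psi_j|\bigr\|_{L^\infty}\le 1=:\kappa<\pi/2$, which establishes the first half of \eqref{assumption: theorem 3.1} uniformly in $\bnu$.

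Next I would take $\bvarrho:=\tau\blambda$ with $\tau>0$ to be fixed, so $(\varrho_j^{-1})_{j\in\NN}=\tau^{-1}(\lambda_j^{-1})_{j\in\NN}\in\ell^q(\NN)$, and bound the second quantity in \eqref{assumption: theorem 3.1}. For $\|\bnu\|_{\ell^\infty}\le r$ one has $\nu_j!\le r!$, hence $\bnu!\le(r!)^{|\supp(\bnu)|}\le(r!)^{|\bnu|}$, and since $\bvarrho^{2\bnu}/\brho_\bnu^{2\bnu}=\tau^{2|\bnu|}\prod_{j\in\supp(\bnu)}(\lambda_j\|\psi_j\|_{L^\infty})^{2\nu_j}$, the summand $\bnu!\,\bvarrho^{2\bnu}/\brho_\bnu^{2\bnu}$ is dominated by $\prod_{j\in\NN}\alpha_j^{2\nu_j}=(\balpha^\bnu)^2$ with $\alpha_j:=\sqrt{r!}\,\tau\,\lambda_j\|\psi_j\|_{L^\infty}$ and $\balpha:=(\alpha_j)_{j\in\NN}$. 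Thus $\sum_{\|\bnu\|_{\ell^\infty}\le r}\bnu!\,\bvarrho^{2\bnu}/\brho_\bnu^{2\bnu}\le\sum_{\bnu\in\FF}(\balpha^\bnu)^2=\|(\balpha^\bnu)_{\bnu\in\FF}\|_{\ell^2(\FF)}^2$. Because $(\lambda_j\|\psi_j\|_{L^\infty})_{j\in\NN}\in\ell^2(\NN)\subset\ell^\infty(\NN)$, we have $\balpha\in\ell^2(\NN)$ for every $\tau>0$, and shrinking $\tau$ we make $\|\balpha\|_{\ell^\infty}=\sqrt{r!}\,\tau\,\sup_j\lambda_j\|\psi_j\|_{L^\infty}<1$; Lemma \ref{lem:alpha-summability}(i) with $p=2$ then gives $(\balpha^\bnu)_{\bnu\in\FF}\in\ell^2(\FF)$, so the second half of \eqref{assumption: theorem 3.1} holds. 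This is exactly the step where only $\ell^2$-summability of $(\lambda_j\|\psi_j\|_{L^\infty})_j$ enters, in contrast to the global case, where the combinatorial factor forces the use of Lemma \ref{lem:alpha-summability}(ii) and an $\ell^1$ hypothesis.

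Finally I would verify Assumption \ref{ass:Ass1}, which Theorem \ref{thm:s=1} invokes: the sequence $\blambda$ itself works. Indeed $(\lambda_j^{-1})_{j\in\NN}\in\ell^q(\NN)$ forces $\lambda_j\to\infty$, and since $\exp(-t^2)\le t^{-q}$ for all large $t$ we get $\exp(-\lambda_j^2)\le\lambda_j^{-q}$ for all but finitely many $j$, hence $\bigl(\exp(-\lambda_j^2)\bigr)_{j\in\NN}\in\ell^1(\NN)$; and by the disjoint-support identity, $\bigl\|\sum_{j=N}^{M}\lambda_j|\psi_j|\bigr\|_{L^\infty}=\max_{N\le j\le M}\lambda_j\|\psi_j\|_{L^\infty}\to 0$ as $N\to\infty$, since $(\lambda_j\|\psi_j\|_{L^\infty})_j\in\ell^2(\NN)$ implies $\lambda_j\|\psi_j\|_{L^\infty}\to0$, so $\sum_{j\in\NN}\lambda_j|\psi_j|$ converges in $L^\infty(\D)$. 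Then Theorem \ref{thm:s=1}, applied with this $r>2/q$, with $\bvarrho$, $\kappa$ and the family $(\brho_\bnu)_{\bnu\in\FF}$, yields $(\|u_\bnu\|_V)_{\bnu\in\FF}\in\ell^p(\FF)$ with $\tfrac1p=\tfrac1q+\tfrac12$. There is no genuinely hard step: the only care needed is the null-set interpretation of ``disjoint supports'' in the $L^\infty$ identity (and the reduction to $\|\psi_j\|_{L^\infty}>0$) and the bookkeeping of $\bnu!\le(r!)^{|\bnu|}$ so that the product over $j$ closes into a clean $\ell^2(\FF)$-norm. The conceptual content — and the ``obstacle'' in the sense of what one must notice — is precisely that disjointness turns the relevant $L^\infty$-norm from a sum into a supremum, which is what buys the $\ell^1\to\ell^2$ improvement.
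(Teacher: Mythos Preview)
Your proposal is correct and follows essentially the same route as the paper: both apply Theorem~\ref{thm:s=1} with $\bvarrho=\tau\blambda$ and radii $\rho_{\bnu,j}=1/\|\psi_j\|_{L^\infty}$, use the disjoint-support identity to get $\kappa=1<\pi/2$, bound $\bnu!\le (r!)^{|\bnu|}$, and invoke Lemma~\ref{lem:alpha-summability}(i) after shrinking~$\tau$. The only (inessential) differences are that the paper takes $\rho_j=1/\|\psi_j\|_{L^\infty}$ for \emph{all} $j$ rather than zeroing outside $\supp(\bnu)$, and verifies Assumption~\ref{ass:Ass1} for $\lambda_j':=\lambda_j^{1/2}$ via the pattern of Corollary~\ref{cor:global}, whereas you verify it directly for $\blambda$ using the disjoint-support identity---your argument here is in fact slightly cleaner.
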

      \begin{proof}
	Fix $\bnu\in \FF$, arbitrary.  For this $\bnu$ we define the
        sequence $\brho_\bnu = (\rho_j)_{j \in \NN}$ by
        $\rho_j := \frac{1 }{ \| \psi_j \|_{L^\infty}}$ for $j\in \NN$
        and $\bvarrho=\tau\blambda$, where a positive number $\tau$
        will be chosen later on.  It is clear that
	\begin{equation*}
          \Bigg\|	
          \sum_{j\in \NN}\rho_j|\psi_j|\Bigg\|_{L^\infty}\leq 1.
	\end{equation*}
	Since $ \big(\lambda_j \rho_j^{-1}\big)_{j\in \NN} \in
        \ell^2(\NN) $ and
        $(\lambda_j^{-1})_{j \in \NN}\in \ell^q(\NN)$, by H\"older's
        inequality we get
        $(\rho_j^{-1})_{j\in \NN}\in \ell^{q_0}(\NN)$ with
        $\frac{1}{q_0}=\frac{1}{2}+\frac{1}{q}$.  Hence, similarly to
        the proof of Corollary \ref {cor:global}, we can show that
        Assumption~\ref{ass:Ass1} holds for the sequence
        $\blambda'= (\lambda'_j)_{j \in \NN}$ with
        $\lambda_j':= \lambda_j^{1/2}$.  In addition, with $r>2/q$ we
        have by Lemma \ref{lem:alpha-summability}(i)
	\begin{equation*}
          \sum_{\|\bnu\|_{\ell^\infty}\leq r} \frac{\bnu!\bvarrho^{2\bnu}}{\brho_\bnu^{2\bnu}} 
          \leq  
          \sum_{\|\bnu\|_{\ell^\infty}\leq r} \Bigg(\prod_{j\in \supp(\bnu)}\big(\tau\sqrt{r!}\lambda_j\|\psi_j  
          \|_{L^\infty}\big)^{2\nu_j}  \Bigg) < \infty, 
	\end{equation*}
	since by the condition
        $ \big(\tau\sqrt{r!}\lambda_j \|\psi_j
        \|_{L^\infty}\big)_{j\in \NN} \in \ell^2(\NN)$ a positive
        number $\tau$ can be chosen so that
        $\sup_{j\in \NN}(\tau\sqrt{r!} \lambda_j \| \psi_j
        \|_{L^\infty})<1$.  
       Finally, we apply Theorem \ref{thm:s=1} to
        obtain the desired results.
	
      \end{proof}

\begin{remark} {\rm
  We comment on the situation when there exists
  $\brho= (\rho_j)_{j \in \NN}\in (0,\infty)^\infty$ such that
  \begin{equation*}
    \Bigg\|\sum_{j \in \NN} \rho_j |\psi_j | \Bigg\|_{L^\infty} =\kappa <\frac{\pi}{2}
  \end{equation*}
  and $ (\rho_j^{-1})_{j \in \NN}\in \ell^{q_0}(\NN)$ for some
  $0 < q_0 < \infty$ as given in \cite[Theorem 1.2]{BCDM}. We choose
  $\bvarrho=(\varrho_j)_{j \in \NN}$ by
	$$\varrho_j=\rho_j^{1-q_0/2}\frac{1}{\sqrt{r!} \norm[\ell^{q_0}]{ (\rho_j^{-1})_{j \in \NN}}^{q_0/2}}$$ 
	and $\brho_\bnu = (\rho_j)_{j \in \NN}$.  Then we obtain
        $(\varrho_j^{-1})_{j\in \NN}\in \ell^{q_0/(1-q_0/2)}(\NN)$ and
	$$
	\sum_{\|\bnu\|_{\ell^\infty}\leq r}
        \frac{\bnu!\bvarrho^{2\bnu}}{\brho_\bnu^{2\bnu}} =
        \sum_{\|\bnu\|_{\ell^\infty}\leq r} \bnu! \prod_{j\in
          \supp\bnu} \Bigg(\frac{\rho_j^{-q_0}}{r!\norm[\ell^{q_0}]{
            (\rho_j^{-1})_{j \in \NN}}^{q_0} }\Bigg)^{\nu_j} < \infty.
	$$
	This implies $(\|u_\bnu\|_{V})_{\bnu\in\FF}\in \ell^p(\FF)$
        with $p=q_0$.
      } \end{remark} 

\begin{remark} {\rm
	\label{rmk:weighted-summ} 
  The $\ell^p$-summability
  $(\|u_\bnu\|_{V})_{\bnu\in\FF}\in \ell^p(\FF)$ proven in Theorem
  \ref{thm:s=1}, has been used in establishing the convergence rate of
  the best $n$-term approximation of the solution $u$ to the
  parametric elliptic PDE \eqref{SPDE} \cite{BCDM}. However, such a
  property cannot   be used for estimating convergence rates of
  high-dimensional deterministic numerical approximation
  \emph{constructive} schemes such as single-level and multi-level
  versions of anisotropic sparse-grid Hermite-Smolyak interpolation
  and quadrature in Section \ref{sec:MLApprox}. 
  In the latter situation, the  weighted
  $\ell^2$-summability presented in Theorem~\ref{thm:s=1}
	\begin{equation} \nonumber 
		\sum_{\bnu\in\FF}\beta_\bnu(r,\bvarrho)\|u_\bnu\|_{V}^2 <\infty
		\ \ \ \text{with} \ \ \ \big(\beta_\bnu(r,\bvarrho)^{-1/2}\big)_{\bnu\in\FF} \in \ell^q(\FF)
	\end{equation}
and its generalization 
	\begin{equation} \label{weighted-summ}
	\sum_{\bnu\in\cF} (\sigma_\bnu \|u_\bnu\|_X)^2 <\infty \quad 
		\ \ \ \text{with} \ \ \ \big(p_{\bnu}(\tau,\lambda)\sigma_\bnu^{-1}\big)_{\bnu\in\FF} \in \ell^q(\FF)
	\end{equation}
for a Hilbert space $X$ are efficiently applied,
where $0 < q < \infty$, $\lambda, \tau \ge 0$,  
$ \big(\sigma_\bnu\big)_{\bnu\in\FF}$ is a family of positive numbers and 
	\begin{equation*} %
		p_\bnu(\tau, \lambda) := \prod_{j \in \NN} (1 + \lambda \nu_j)^\tau, \quad \bnu \in \FF.
	\end{equation*}
Weighted summability properties such as \eqref{weighted-summ} 
have been employed in
\cite{ChenlogNQuad2018,dD21,dD-Erratum22,ErnstSprgkTam18} for particular 
questions in quadrature and interpolation with respect to GMs.

In Sections 
\ref{sec:StochColl} and \ref{sec:MLApprox}, 
we will see that weighted $\ell^2$-summabilities of the form \eqref{weighted-summ} 
play a basic role in constructing approximation algorithms of 
sparse-grid interpolation and quadrature and in establishing their convergence rates. 
} \end{remark} 
\subsection{Parametric $H^s(\domain)$-analyticity and sparsity}
\label{sec:HsReg}
Whereas the previous results were, in principle, already known from
the real-variable analyses in \cite{BTNT,BCDM,BCDS}, in this and the
subsequent sections, we prove via analytic continuation the sparsity 
of the Wiener-Hermite PC expansion coefficients of the parametric solutions
of \eqref{SPDE} with log-Gaussian coefficient $a(\by) = \exp(b(\by))$
when the Wiener-Hermite PC expansion coefficients of the parametric
solution family $\{ u(\by): \by\in U\}$ are measured in higher Sobolev
norms.  In Section \ref{sec:KondrReg} we shall establish corresponding
results when the physical domain $\domain$ is a plane Lipschitz
polygon whose sides are analytic arcs.
\subsubsection{$H^s(\domain)$-analyticity}
\label{sec:HsAnalyt}
As $H^s(\domain)$ regularity in $\domain$ is relevant in particular in
conjunction with Galerkin discretization in $\domain$ by continuous,
piecewise polynomial, Lagrangian FEM, we review the elementary
regularity results from Section \ref{S:Reg}. 
To state them, we recall the Sobolev
spaces \index{space!Sobolev $\sim$} $ \Hn(\domain)$, and $\Win(\domain)$ of functions $v$ on
$\domain$ for $s \in \NN_0$, equipped with the respective norms
\[
  \|v\|_{\Hn} := \ \sum_{\bk \in \ZZ^d_+: |\bk| \le s} \|D^\bk
  v\|_{L^2}, \qquad \|v\|_{\Win} := \ \sum_{\bk \in \ZZ^d_+: |\bk| \le
    s}\|D^\bk v\|_{L^\infty}.
\]
With these definitions $H^0(\domain) = L^2(\D)$ and
$W^0_\infty(\D) = L^\infty(\D)$.  We recall from Section \ref{S:Reg}
that we identify $L^2(\D)$ with its own dual, so that the space
$H^{-1}(\D)$ is defined as the dual of $H^1_0(\D)$ with respect to the
pivot space $L^2(\D)$.

\begin{lemma} \label{lemma[regularity]} Let $s \in \NN$ and $\domain$
  be a bounded domain in $\RR^d$ with either $C^\infty$-boundary or
  with convex $C^{s-1}$-boundary.  Assume that there holds the
  ellipticity condition \eqref{eq:PDEellinC},
  $a \in W^{s-1}_\infty(\D)$ and $f \in H^{s - 2}(\D)$.  Then the
  solution $u$ of \eqref{PDE} belongs to $H^s(\domain)$ and there
  holds
  \begin{equation} \label{|u|_{Hn}<1} \|u\|_{\Hn} \ \le \
    \begin{cases} 
      \frac{\|f\|_{H^{-1}}}{\rho(a)} & \ s = 1,
      \\[1ex]
      \frac{C_{d,s}}{\rho(a)} \big(\|f\|_{H^{s - 2}} + \|a\|_{W^{s -
          1}_\infty}\|u\|_ {H^{s - 1}}\big) &\ s > 1,
    \end{cases}
  \end{equation}
  with $C_{d,s}$ depending on $d,s$, and $\rho(a)$ given as in \eqref{eq:PDEellinC}.
\end{lemma}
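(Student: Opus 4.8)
The plan is to establish the estimate \eqref{|u|_{Hn}<1} by induction on $s$, leveraging classical elliptic regularity shift theorems together with the non-divergence form of the equation. First I would dispose of the base case $s=1$: this is just the a-priori bound \eqref{eq:bound-comp} from the Lax-Milgram theory on $H^1_0(\domain,\CC)$ under the complex ellipticity condition \eqref{eq:PDEellinC}, giving $\|u\|_{H^1} = \|u\|_V \leq \|f\|_{V^*}/\rho(a) = \|f\|_{H^{-1}}/\rho(a)$. For the inductive step, suppose the bound holds for $s-1 \geq 1$ (with $s \geq 2$), and assume $a \in W^{s-1}_\infty(\domain)$, $f \in H^{s-2}(\domain)$. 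Since $W^{s-1}_\infty(\domain) \subset W^1_\infty(\domain)$, Proposition \ref{prop:LipCoef} (or rather its complex-coefficient analogue, obtained by the same computation) shows $u \in W$ and \eqref{[Delta u=]} holds, i.e.
\[
-a\Delta u = f + \nabla a \cdot \nabla u \quad \text{in } L^2(\domain).
\]
Dividing by $a$, we rewrite this as a Poisson-type problem $-\Delta u = g$ with $g := a^{-1}(f + \nabla a \cdot \nabla u)$ and homogeneous Dirichlet data.

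The key step is to control $\|g\|_{H^{s-2}}$. I would argue that $a^{-1} \in W^{s-1}_\infty(\domain)$ (for $s-1 \geq 1$, the reciprocal of a function bounded below in modulus by $\rho(a)>0$ and lying in $W^{s-1}_\infty$ is again in $W^{s-1}_\infty$, with norm controlled by $\rho(a)$ and $\|a\|_{W^{s-1}_\infty}$ via the Faà di Bruno / chain rule; this is where a constant $C_{d,s}$ enters). Then, using the multiplicative (Leibniz / Banach algebra) estimates for the pairing $W^{s-1}_\infty \cdot H^{s-2} \to H^{s-2}$ and $W^{s-2}_\infty \cdot H^{s-2} \to H^{s-2}$ applied to $\nabla a \in W^{s-2}_\infty$ and $\nabla u \in H^{s-2}$ (valid since $u \in H^{s-1}$ by the inductive hypothesis), we obtain
\[
\|g\|_{H^{s-2}} \leq \frac{C_{d,s}}{\rho(a)}\Big(\|f\|_{H^{s-2}} + \|a\|_{W^{s-1}_\infty}\|u\|_{H^{s-1}}\Big).
\]
Now invoke the standard elliptic regularity shift for the Dirichlet Laplacian: on a bounded domain with $C^\infty$-boundary, or with convex $C^{s-1}$-boundary, one has $-\Delta: (H^s \cap H^1_0)(\domain) \to H^{s-2}(\domain)$ boundedly invertible (the convex case for $s=2$ being \cite[Theorem 2.2.2.3]{Gr} — cf. Remark \ref{rmk:Dconvex} — and higher smoothness following from \cite{Gr,GilbTr}). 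This yields $\|u\|_{H^s} \leq C_{d,s}\|g\|_{H^{s-2}}$, and combining with the previous display gives \eqref{|u|_{Hn}<1}.

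The main obstacle I anticipate is the bookkeeping for the reciprocal coefficient $a^{-1}$ and the product estimates: one must verify carefully that $a^{-1} \in W^{s-1}_\infty$ with a bound that is linear in $\|a\|_{W^{s-1}_\infty}$ modulo powers of $\rho(a)^{-1}$, and that the nonlinear (in $a$) contributions can all be absorbed into the constant $C_{d,s}$ and the single factor $\|a\|_{W^{s-1}_\infty}$ as displayed — the statement is phrased so that only one power of $\|a\|_{W^{s-1}_\infty}$ appears, which is the natural form for the subsequent holomorphy/Cauchy-estimate arguments, but requires that the lower-order derivative contributions of $a$ be hidden in $C_{d,s}$. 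A secondary technical point is checking that all the classical regularity-shift and product estimates, which are usually stated for real-valued functions, carry over verbatim to the complexification $H^s(\domain,\CC)$; this is immediate by treating real and imaginary parts separately, but should be remarked.
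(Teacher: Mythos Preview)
Your overall strategy---reduce to the Dirichlet Laplacian via the non-divergence form \eqref{[Delta u=]} and invoke the shift $\|u\|_{H^s}\asymp\|\Delta u\|_{H^{s-2}}$ on $H^s_0(\domain)$---is also what the paper does. The gap is in how you pass from \eqref{[Delta u=]} to a bound on $\|\Delta u\|_{H^{s-2}}$. Dividing by $a$ and estimating $g=a^{-1}(f+\nabla a\cdot\nabla u)$ in $H^{s-2}$ forces you to control $\|a^{-1}\|_{W^{s-2}_\infty}$, and this is \emph{not} linear in $\|a\|_{W^{s-1}_\infty}$ with a single factor $\rho(a)^{-1}$: already for $s=3$ one has $D(a^{-1})=-a^{-2}Da$, producing $\|a\|_{W^{1}_\infty}/\rho(a)^{2}$, and in general the reciprocal estimate (cf.\ Lemma~\ref{lemma:fgWm}~\ref{item:fracinfty}) gives $\|a^{-1}\|_{W^{s-2}_\infty}\le \tilde C_{s}\,\|a\|_{W^{s-2}_\infty}^{s-2}/\rho(a)^{s-1}$. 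Your route therefore yields a bound with $\rho(a)^{-(s-1)}$ and extra powers of $\|a\|$, not the single $\rho(a)^{-1}$ and single $\|a\|_{W^{s-1}_\infty}$ asserted in \eqref{|u|_{Hn}<1}. Since $C_{d,s}$ is required to depend only on $d$ and $s$, these $a$-dependent factors cannot be ``hidden in $C_{d,s}$'' as you suggest.

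The paper avoids this by \emph{not} dividing by $a$. It differentiates \eqref{[Delta u=]} directly: for $|\bk|=s-2$, the Leibniz rule applied to $D^{\bk}(a\Delta u)$ produces a leading term $a\,D^{\bk}\Delta u$ with the \emph{undifferentiated} coefficient, plus lower-order terms $D^{\bk'}a\,D^{\bk-\bk'}\Delta u$ with $\bk'\neq 0$. Taking $L^2$-norms and using $|a|\ge\rho(a)$ gives $\rho(a)\|D^{\bk}\Delta u\|_{L^2}$ on the left with exactly one factor $\rho(a)$; all right-hand terms are bounded by $C'_{d,s}\big(\|f\|_{H^{s-2}}+\|a\|_{W^{s-1}_\infty}\|u\|_{H^{s-1}}+\|a\|_{W^{s-2}_\infty}\|\Delta u\|_{H^{s-3}}\big)$, and the last term is absorbed via $\|\Delta u\|_{H^{s-3}}\le\|u\|_{H^{s-1}}$. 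This is the mechanism that delivers the precise form of \eqref{|u|_{Hn}<1}.
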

\begin{proof}
  Defining, for $s\in \NN$, $H^s_0(\D):= (\Hn \cap H^1_0)(\D)$, since
  $\domain$ is a bounded domain in $\RR^d$ with either
  $C^\infty$-boundary or convex $C^{s-1}$-boundary, we have the
  following norm equivalence
  \begin{equation} \label{H-norm-equivalence} \|v\|_{\Hn} \ \asymp \
    \begin{cases}
      \|v\|_{H^1_0 }, & s = 1,\\[1ex]
      \|\Delta v\|_{H^{s-2} }, & s > 1,
    \end{cases}
    \qquad \forall v \in H^s_0,
  \end{equation}
  see \cite[Theorem 2.5.1.1]{Gr}.  The lemma for the case $s =1$ and
  $s=2$ is given in \eqref{V-estimate} and \eqref{W-estimate}.  We
  prove the case $s > 2$ by induction on $s$.  Suppose that the
  assertion holds true for all $s' < s$.  We will prove it for $s$.
  Let a $\bk \in \ZZ^d_+$ with $|\bk| = s - 2$ be given.
  Differentiating both sides of \eqref{[Delta u=]} and applying the
  Leibniz rule of multivariate differentiation we obtain
  \begin{equation} \nonumber - \sum_{0 \le \bk' \le \bk}
    \binom{\bk}{\bk'}D^{\bk'} a D^{\bk-\bk'} \big(\Delta u\big) \ = \
    D^\bk f + \sum_{0 \le \bk' \le \bk} \binom{\bk}{\bk'} \big( \nabla
    D^{\bk'} a , \nabla D^{\bk-\bk'} u \big),
  \end{equation}
  see also \cite[Lemma 4.3]{BCDS}.  Hence,
  \begin{equation*} \label{derivative} - a \,D^\bk\Delta u \, = \,
    D^\bk f + \sum_{0 \le \bk' \le \bk} \binom{\bk}{\bk'} \big( \nabla
    D^{\bk'} a , \nabla D^{\bk-\bk'} u \big) + \sum_{0 \le \bk' \le
      \bk, \, \bk' \not=0} \binom{\bk}{\bk'}D^{\bk'} a D^{\bk-\bk'}
    \Delta u.
  \end{equation*}
  Taking the $L^2$-norm of both sides, by the ellipticity condition
  \eqref{PDE-epllipticity} we derive the inequality
  \begin{equation} \nonumber \rho(a) \, \| \Delta u \|_{H^{s-2}} \ \le
    \ C'_{d,s}\,\big( \|f\|_{H^{s-2}} + \|a\|_{W^{s -
        1}_\infty}\,\|u\|_{H^{s-1}} + \|a\|_{W^{s -
        2}_\infty}\,\|\Delta u\|_{H^{s-3}}\big)
  \end{equation}
  which yields \eqref{|u|_{Hn}<1} due to \eqref{H-norm-equivalence}
  and the inequality
  \[
    \|a\|_{W^{s - 2}_\infty}\,\|\Delta u\|_{H^{s-3}} \le \|a\|_{W^{s -
        1}_\infty}\,\|u\|_{H^{s-1}},
  \] where $C'_{d,s}$ is a constant depending on $d, s$ only. By
  induction, this proves that $u$ belongs to $H^s$.

\end{proof}

\begin{corollary} \label{cor:regularity} Let $s \in \NN$ and $\domain$
  be a bounded domain in $\RR^d$ with either $C^\infty$-boundary or
  convex $C^{s-1}$-boundary.  Assume that there holds the ellipticity
  condition \eqref{PDE-epllipticity}, $a \in W^{s-1}_\infty(\D)$ and
  $f \in H^{s - 2}(\D)$. Then the solution $u$ of \eqref{PDE} belongs
  to $H^s(\D)$ and there holds the estimate
  \begin{equation*} \label{|u|_{Hn}<} \|u\|_{\Hn} \ \le \
    \frac{\|f\|_{H^{s - 2}}}{\rho(a)}
    \begin{cases}
      1, & \ s = 1, \\[1ex]
      C_{d,s}\Big(1 + \frac{\|a\|_{W^{s -
            1}_\infty}}{{\rho(a)}}\Big)^{s-1}, & \ s > 1,
    \end{cases}
  \end{equation*}
  where $C_{d,s}$ is a constant depending on $d, s$ only, 
  and $\rho(a)$ is given as in \eqref{eq:PDEellinC}.
\end{corollary}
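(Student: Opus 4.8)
The plan is to derive Corollary~\ref{cor:regularity} from Lemma~\ref{lemma[regularity]} by unrolling the recursive estimate \eqref{|u|_{Hn}<1} through an induction on $s$. The case $s=1$ is immediate from the first line of \eqref{|u|_{Hn}<1}. For $s>1$, I would set $M:= \|a\|_{W^{s-1}_\infty}/\rho(a)$ and observe that $\|a\|_{W^{s'-1}_\infty}\le \|a\|_{W^{s-1}_\infty}$ for every $1\le s'\le s$, so the same quantity $M$ controls all the lower-order coefficients appearing in the recursion.

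\textbf{Induction step.} Suppose the asserted bound holds for all $s'<s$ with constant $C_{d,s'}$. By Lemma~\ref{lemma[regularity]},
\[
\|u\|_{H^s}\le \frac{C_{d,s}}{\rho(a)}\big(\|f\|_{H^{s-2}}+\|a\|_{W^{s-1}_\infty}\|u\|_{H^{s-1}}\big).
\]
Insert the induction hypothesis $\|u\|_{H^{s-1}}\le \frac{\|f\|_{H^{s-3}}}{\rho(a)}\,C_{d,s-1}(1+M)^{s-2}$ (using $\|f\|_{H^{s-3}}\le\|f\|_{H^{s-2}}$ when $s>2$, and the trivial $s-1=1$ base case when $s=2$). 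This yields
\[
\|u\|_{H^s}\le \frac{\|f\|_{H^{s-2}}}{\rho(a)}\,C_{d,s}\Big(1+\frac{\|a\|_{W^{s-1}_\infty}}{\rho(a)}\,C_{d,s-1}(1+M)^{s-2}\Big)
\le \frac{\|f\|_{H^{s-2}}}{\rho(a)}\,C_{d,s}\,C_{d,s-1}(1+M)^{s-1},
\]
where in the last step I bound $1+M(1+M)^{s-2}\le (1+M)(1+M)^{s-2}=(1+M)^{s-1}$ (valid since $M\ge 0$). Absorbing the product $C_{d,s}C_{d,s-1}$ into a new constant still depending only on $d$ and $s$ gives the claimed form.

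\textbf{Expected main obstacle.} There is essentially no deep obstacle here; the only care needed is bookkeeping of the constants and of the source-term regularity indices. The slightly delicate point is justifying $\|f\|_{H^{s-3}}\le\|f\|_{H^{s-2}}$: this requires $s-3\ge -1$, i.e.\ $s\ge 2$, and one must check the case $s=2$ separately, where the induction hypothesis is just the $s=1$ estimate $\|u\|_{H^1}\le\|f\|_{H^{-1}}/\rho(a)$ and $\|f\|_{H^{-1}}\le\|f\|_{H^0}=\|f\|_{L^2}$ (as $L^2\hookrightarrow H^{-1}$ with the pivot-space identification recalled before Lemma~\ref{lemma[regularity]}). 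One also tacitly uses the norm equivalence \eqref{H-norm-equivalence} and the regularity hypothesis on $\domain$ throughout, exactly as in Lemma~\ref{lemma[regularity]}. With these remarks in place the proof is a short, routine induction.
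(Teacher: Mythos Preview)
Your proof is correct and is exactly the intended argument: the paper states the corollary without proof immediately after Lemma~\ref{lemma[regularity]}, and the natural way to derive it is precisely the induction on $s$ that you carry out, unrolling the recursion \eqref{|u|_{Hn}<1} and bounding $\|a\|_{W^{s'-1}_\infty}\le\|a\|_{W^{s-1}_\infty}$ and $\|f\|_{H^{s'-2}}\le\|f\|_{H^{s-2}}$ at each step. One very minor remark: in the step $1+C_{d,s-1}M(1+M)^{s-2}\le C_{d,s-1}(1+M)^{s-1}$ you are tacitly using $C_{d,s-1}\ge 1$, which is harmless but worth stating; likewise the embedding constant in $\|f\|_{H^{-1}}\le C\|f\|_{L^2}$ may depend on $\domain$ and gets absorbed into $C_{d,2}$.
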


We need the following lemma.
\begin{lemma}\label{a-nu}
  Let $s\in \NN$ and assume that $b(\by)$ belongs to
  $ W^{s}_\infty(\D)$.  Then we have
  \begin{equation*}
    \|a(\by)\|_{W^{s}_\infty} 
    \leq 	
    C\|a(\by)\|_{L^\infty} \big(1+ \|  b(\by)\|_{W^{s}_\infty}\big)^{s}\,,
  \end{equation*}
  where the constant $C$ depends on $s$ and $m$ but is independent of
  $\by$.
\end{lemma}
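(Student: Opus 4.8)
The plan is to estimate each partial derivative $D^\balpha a(\by)$ with $|\balpha|\le s$ individually, using $a(\by)=\exp(b(\by))$ together with the higher chain rule (Fa\`a di Bruno formula) for derivatives of a composition, and then to sum over $\balpha$. Throughout, $\by$ is fixed and suppressed in the notation.

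First I would fix a multi-index $\balpha\in\NN_0^d$ with $1\le|\balpha|=:k\le s$. Since $\exp'=\exp$, the multivariate Fa\`a di Bruno formula gives
$$
D^\balpha\big(e^b\big)\;=\;e^b\sum_{\pi}\ \prod_{B\in\pi} D^{\balpha(B)}b\,,
$$
where $\pi$ ranges over all set partitions of the $k$ coordinate indices encoded by $\balpha$, and $\balpha(B)\le\balpha$ denotes the sub-multi-index attached to a block $B$. Every block is nonempty, so $1\le|\balpha(B)|\le k\le s$ for each $B$, and the number of terms equals the Bell number $B_k\le B_s$, a constant independent of $a$, $b$, and $\by$.

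Next I would bound a single summand. By the definition of the $W^s_\infty(\D)$-norm, $\|D^{\balpha(B)}b\|_{L^\infty}\le\|b\|_{W^s_\infty}$ for each block $B$. A set partition of a $k$-element set has at most $k\le s$ blocks, so
$$
\prod_{B\in\pi}\|D^{\balpha(B)}b\|_{L^\infty}\ \le\ \|b\|_{W^s_\infty}^{|\pi|}\ \le\ \max\{1,\|b\|_{W^s_\infty}\}^{s}\ \le\ \big(1+\|b\|_{W^s_\infty}\big)^{s}.
$$
Since $\|e^b\|_{L^\infty}=\|a\|_{L^\infty}$, these two displays give $\|D^\balpha a\|_{L^\infty}\le B_s\,\|a\|_{L^\infty}\big(1+\|b\|_{W^s_\infty}\big)^{s}$ for $1\le|\balpha|\le s$, and the bound is trivial for $\balpha=0$. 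Summing over the $\binom{s+d}{d}$ multi-indices $\balpha$ with $|\balpha|\le s$ yields the claim with $C:=\binom{s+d}{d}\,B_s$, depending only on $s$ and the spatial dimension $d$.

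An equivalent route, closer to the Leibniz-rule computation already used in the proof of Lemma~\ref{lemma[regularity]} (compare \cite[Lemma 4.3]{BCDS}), is induction on $s$: the case $s=1$ is immediate from $\nabla a=a\,\nabla b$, and for the inductive step one writes $D^\balpha a=D^{\balpha'}\!\big(\partial_j a\big)=D^{\balpha'}\!\big(a\,\partial_j b\big)$ with $|\balpha'|=s-1$, expands by the Leibniz rule, and substitutes the inductive hypothesis $\|a\|_{W^{s-1}_\infty}\le C_{d,s-1}\|a\|_{L^\infty}\big(1+\|b\|_{W^{s}_\infty}\big)^{s-1}$ together with the fact that every derivative of $b$ occurring has order $\le s$. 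I do not expect a genuine obstacle; the one point that needs care is the bookkeeping of the combinatorial constants and, above all, ensuring that the exponent of $1+\|b\|_{W^s_\infty}$ comes out to be exactly $s$ rather than the larger power a careless count would produce --- which is precisely what the bound $|\pi|\le k\le s$ (equivalently, the absorption step in the inductive version) secures.
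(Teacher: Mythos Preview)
Your proof is correct. The paper takes the second route you sketch: it writes $D^{\balpha}a = D^{\balpha-\bee_j}(a\,D^{\bee_j}b)$, expands by the Leibniz rule to get
\[
\|D^{\balpha}a\|_{L^\infty} \le C\Bigg(\sum_{0\le\bgamma\le\balpha-\bee_j}\|D^{\bgamma}a\|_{L^\infty}\Bigg)\|b\|_{W^s_\infty},
\]
and then iterates this inequality on each $\|D^{\bgamma}a\|_{L^\infty}$ with $|\bgamma|>0$, picking up one factor of $(1+\|b\|_{W^s_\infty})$ per step until only $\|a\|_{L^\infty}$ remains after at most $|\balpha|\le s$ steps. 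Your Fa\`a di Bruno argument is a cleaner way to reach the same bound: it unpacks the recursion in closed form and makes the exponent count $|\pi|\le k\le s$ completely transparent, while also producing an explicit constant $\binom{s+d}{d}B_s$. The paper's iterated-Leibniz version has the advantage that it transfers verbatim to the corner-weighted spaces $\Ww^s_\infty(\domain)$ used later (Lemma~\ref{kon-lem-1}), where one simply inserts the weight $r_\D^{|\balpha|}$ and distributes it across the Leibniz sum; the Fa\`a di Bruno version works there too, but the weight bookkeeping is slightly less automatic.
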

\begin{proof}
  For $\balpha=(\alpha_1,\ldots,\alpha_d)\in \NN_0^d$ with
  $1\leq |\balpha|\leq s$, we observe that for $\alpha_j>0$ the
  product rule implies
  \begin{equation}\label{eq:Leibniz}
    D^\balpha a(\by) = D^{\balpha-\bee_j} \big[a(\by) D^{\bee_j}b(\by) \big]
    =   \sum_{0\leq \bgamma
      \leq \balpha-\bee_j}\binom{\balpha-\bee_j}{\bgamma}
    D^{\balpha-\bgamma}  b(\by) D^{\bgamma }a(\by)  \,.
  \end{equation}
 Here, we recall that $(\bee_j)_{j=1}^d$ is the standard basis of $\R^d$. Taking norms on both sides, we can estimate
  \begin{equation*}
    \begin{split} 
      \| D^\balpha a(\by)\|_{L^\infty} & =\big\| D^{\balpha-\bee_j}
      \big[a(\by) D^{\bee_j}b(\by) \big] \big\|_{L^\infty}
      \\
      &\leq \sum_{0\leq \bgamma \leq
        \balpha-\bee_j}\binom{\balpha-\bee_j}{\bgamma} \|
      D^{\balpha-\bgamma} b(\by) \|_{L^\infty} \| D^{\bgamma
      }a(\by)\|_{L^\infty}
      \\
      & \leq C \Bigg( \sum_{0\leq \bgamma \leq \balpha-\bee_j} \|
      D^{\bgamma} a(\by) \|_{L^\infty}\Bigg) \Bigg( \sum_{|\bk|\leq
        s}\|D^\bk b(\by)\|_{L^\infty}\Bigg) \,.
    \end{split}
  \end{equation*}
Similarly, each
  term $\| D^{\bgamma} a(\by) \|_{L^\infty}$ with $|\bgamma|>0$ can be
  estimated
  \begin{equation*}
    \| D^{\bgamma}  a(\by) \|_{L^\infty}\leq  C \Bigg( \sum_{0\leq \bgamma' 
      \leq  \bgamma-\bee_j} \| D^{\bgamma'}  a(\by) \|_{L^\infty}\Bigg) \Bigg( \sum_{|\bk|\leq s}\|D^\bk b(\by)\|_{L^\infty}\Bigg) \,
  \end{equation*}
  if $\gamma_j>0$.  This implies
  \begin{equation*}
    \begin{split} 
      \| D^\balpha a(\by)\|_{L^\infty} \leq C \|a(\by)\|_{L^\infty}
      \Bigg(1+ \sum_{|\bk|\leq s}\|D^\bk
      b(\by)\|_{L^\infty}\Bigg)^{|\balpha|} \,,
    \end{split}
  \end{equation*}
  for $1\leq |\balpha|\leq s$. Summing up these terms with
  $\|a(\by)\|_{L^\infty}$ we obtain the desired estimate.

\end{proof}
\begin{proposition}\label{prop3}
  Let $s \in \NN$ and $\domain$ be a bounded domain in $\RR^d$ with
  either $C^\infty$-boundary or convex $C^{s-1}$-boundary.  Assume
  that \eqref{eq:leqkappa} holds and all the functions $\psi_j$ belong
  to $ W^{s-1}_\infty(\D)$.  Let $\mfu \subseteq \supp(\brho)$ be a
  finite set and let $\by_0=(y_{0,1},y_{0,2},\ldots) \in U$ be such
  that $b(\by_0)$ belongs to $ W^{s-1}_\infty(\D)$.  Then the solution
  $u$ of \eqref{SPDE} is holomorphic in $\mathcal{S}_\mfu (\brho) $ as
  a function in variables
  $\bz_{\mfu}=(z_j)_{j \in \NN}\in \mathcal{S}_\mfu (\by_0,\brho) $
  taking values in $\Hn(\D)$ where $z_j = y_{0,j}$ for
  $j\not \in \mfu$ held fixed\,.
\end{proposition}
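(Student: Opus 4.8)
The plan is to \emph{bootstrap} the $V$-valued holomorphy already established in Proposition~\ref{prop:holoh1} to $H^s(\domain)$-valued holomorphy, by combining the elliptic regularity shift of Corollary~\ref{cor:regularity} with a weak-to-strong holomorphy principle. As in the proof of Proposition~\ref{prop:holoh1}, I would fix $N\in\NN$ and argue on the bounded set $\mathcal{S}_{\mfu,N}(\brho)$ from \eqref{eq:SUN}; since $\mathcal{S}_\mfu(\brho)=\bigcup_{N\in\NN}\mathcal{S}_{\mfu,N}(\brho)$, holomorphy on each $\mathcal{S}_{\mfu,N}(\brho)$ gives the claim. Throughout, the hypothesis $f\in H^{s-2}(\domain)$ required by Corollary~\ref{cor:regularity} is understood.

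The first step is to show that the coefficient depends holomorphically on $\bz_\mfu$ with values in $W^{s-1}_\infty(\domain;\CC)$. Because $\mfu$ is finite and $z_j=y_{0,j}$ is held fixed for $j\notin\mfu$, one has $b(\bz_\mfu)=b(\by_0)+\sum_{j\in\mfu}(z_j-y_{0,j})\psi_j$, an affine (hence holomorphic) $W^{s-1}_\infty(\domain;\CC)$-valued function of $\bz_\mfu$, using $b(\by_0),\psi_j\in W^{s-1}_\infty(\domain)$. Since $W^{s-1}_\infty(\domain;\CC)$ is a Banach algebra under pointwise multiplication, the exponential series $a(\bz_\mfu)=\exp(b(\bz_\mfu))=\sum_{k\ge 0}b(\bz_\mfu)^k/k!$ converges locally uniformly in the $W^{s-1}_\infty$-norm, so $\bz_\mfu\mapsto a(\bz_\mfu)$ is holomorphic into $W^{s-1}_\infty(\domain;\CC)$; in particular $\sup_{\bz_\mfu\in\mathcal{S}_{\mfu,N}(\brho)}\|a(\bz_\mfu)\|_{W^{s-1}_\infty}<\infty$.

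Next I would invoke the uniform ellipticity bound \eqref{eq:R-1} obtained inside the proof of Proposition~\ref{prop:holoh1}, namely $\rho(a(\bz_\mfu))\ge \exp(-M)\cos\kappa=:\rho_0>0$ for all $\bz_\mfu\in\mathcal{S}_{\mfu,N}(\brho)$, so that the complex ellipticity condition \eqref{eq:PDEellinC} holds uniformly there. By Corollary~\ref{cor:regularity}, for every such $\bz_\mfu$ the solution $u(\bz_\mfu)$ belongs to $H^s(\domain)$ with
\[
\|u(\bz_\mfu)\|_{H^s}\ \le\ \frac{\|f\|_{H^{s-2}}}{\rho_0}\,C_{d,s}\Big(1+\tfrac{\|a(\bz_\mfu)\|_{W^{s-1}_\infty}}{\rho_0}\Big)^{s-1},
\]
which by the first step is bounded uniformly over $\mathcal{S}_{\mfu,N}(\brho)$. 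Finally, since $\bz_\mfu\mapsto u(\bz_\mfu)$ is holomorphic into $V$ (Proposition~\ref{prop:holoh1}) and the continuous embedding $H^s(\domain)\hookrightarrow V$ makes $\{\psi|_{H^s}:\psi\in V^*\}$ a separating family of functionals on $H^s(\domain)$, for each $\psi\in V^*$ the scalar map $\bz_\mfu\mapsto\langle\psi,u(\bz_\mfu)\rangle$ is holomorphic; together with the local $H^s$-boundedness just derived and reflexivity of the Hilbert space $H^s(\domain)$, a standard weak-to-strong holomorphy criterion (as used in \cite{CoDeSch1}) gives holomorphy of $\bz_\mfu\mapsto u(\bz_\mfu)$ into $H^s(\domain)$ on $\mathcal{S}_{\mfu,N}(\brho)$; letting $N\to\infty$ finishes the proof.

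The main obstacle is this last step: passing from holomorphy valued only in the weaker space $V$, plus local boundedness in the stronger $H^s$-norm, to genuine $H^s(\domain)$-valued holomorphy. I expect to handle it by the weak-to-strong holomorphy lemma; alternatively, one can argue directly by representing the partial derivatives $\partial_{z_j}u$ ($j\in\mfu$) as one-variable Cauchy integrals along small circles of the $H^s$-valued, norm-bounded and weakly continuous integrand $u$, which shows these integrals land in $H^s(\domain)$. A secondary point requiring attention is simply tracking that the constant in Corollary~\ref{cor:regularity} depends on $a(\bz_\mfu)$ only through $\|a(\bz_\mfu)\|_{W^{s-1}_\infty}$, which is precisely what the Banach-algebra estimate of the first step controls.
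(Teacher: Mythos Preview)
Your proposal is correct and follows essentially the same route as the paper: localize to $\mathcal{S}_{\mfu,N}(\brho)$, use Corollary~\ref{cor:regularity} together with the uniform ellipticity bound \eqref{eq:R-1} to get a uniform $H^s$-bound on $u(\bz_\mfu)$, and then upgrade to $H^s$-valued holomorphy. The only cosmetic differences are that the paper bounds $\|a(\bz_\mfu)\|_{W^{s-1}_\infty}$ via Lemma~\ref{a-nu} (which is exactly your Banach-algebra exponential estimate, spelled out), and for the final step cites \cite[Lemma~2.2]{BLN} and \cite[Examples~1.2.38--1.2.39]{JZdiss} rather than invoking the weak-to-strong holomorphy criterion explicitly; the underlying mechanism is the same.
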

\begin{proof} Let $\mathcal{S}_{\mfu,N} (\brho)$ be given in
  \eqref{eq:SUN} and
  $\bz_{\mfu} = (y_j+\im \xi_j)_{j\in \NN}\in \mathcal{S}_{\mfu}
  (\by_0,\brho)$ with
  $(y_j+\im \xi_j)_{j\in \mfu}\in \mathcal{S}_{\mfu,N} (\brho)$.
  Then we have from Corollary \ref{cor:regularity}
  \begin{equation*}
    \begin{split} 
      \|u(\bz_{\mfu})\|_{\Hn} & \leq C \rho(a(\bz_{\mfu})) \Big(1 +
      \rho(a(\bz_{\mfu})) \|a(\bz_{\mfu})\|_{W^{s-1}_\infty}
      \Big)^{s-1} \,.
    \end{split}
  \end{equation*}
  Using Lemma \ref{a-nu} we find
  \begin{equation*}
    \begin{split} 
      \|a(\bz_{\mfu})\|_{W_\infty^{s-1}} & \leq C
      \|a(\bz_{\mfu})\|_{L^\infty}\big(1+ \|
      b(\bz_{\mfu})\|_{W_\infty^{s-1}}\big)^{s-1}
      \\
      & \leq C \|a(\bz_{\mfu})\|_{L^\infty}\Bigg(1+\|
      b(\by_0)\|_{W_\infty^{s-1}} + \Bigg\| \sum_{j\in \mfu
      }(y_j-y_{0,j}
      +\im\xi_j)\psi_j\Bigg\|_{W_\infty^{s-1}}\Bigg)^{s-1}
      \\
      & \leq C \|a(\bz_{\mfu})\|_{L^\infty}\Bigg(1+\|
      b(\by_0)\|_{W_\infty^{s-1}} + \sum_{j\in \mfu
      }(N+\rho_j)\|\psi_j \|_{W_\infty^{s-1}}\Bigg)^{s-1}
    \end{split}
  \end{equation*}
  and
  \begin{equation} \label{eq:azu0} \|a(\bz_\mfu)\|_{L^\infty} \leq
    \exp\Bigg(\|b(\by_0)\|_{L^\infty}+\Bigg\|\sum_{j\in \mfu}
    (y_j-y_{0,j}+\im\xi_j)\psi_j\Bigg\|_{L^\infty} \Bigg) <\infty.
  \end{equation} 
  From this and \eqref{eq:R-1} we obtain
  \begin{equation*}
    \|u(\bz_{\mfu})\|_{\Hn}  \leq C <\infty
  \end{equation*}
  which implies the map $\bz_\mfu\to u(\bz_{\mfu})$ 
  is holomorphic on the set $\mathcal{S}_{\mfu,N} (\brho)$ 
  as a consequence of  \cite[Lemma 2.2]{BLN}. 
  For more details we refer the reader to \cite[Examples 1.2.38 and 1.2.39]{JZdiss}. 
  Since $N$ is arbitrary we conclude that
  the map $\bz_\mfu\to u(\bz_{\mfu})$ is holomorphic on $\mathcal{S}_{\mfu} (\brho)$.
\end{proof}
\subsubsection{Sparsity of Wiener-Hermite PC expansion coefficients}
\label{sec:Summ}
For sparsity of $H^s$-norms of Wiener-Hermite PC expansion
coefficients we need the following assumption.

\noindent
\begin{assumption}\label{ass:Ass2}
  Let $s\in \NN$.  For every $j\in \NN$,
  $\psi_j \in W^{s-1}_\infty(\D)$ and there exists a positive sequence
  $(\lambda_j)_{j\in \NN}$ such that
  $\big(\exp(-\lambda_j^2)\big)_{j\in \NN}\in \ell^1(\NN)$ and the
  series
$$\sum_{j\in \NN}\lambda_j|D^{\balpha}\psi_j|$$ 
converges in $L^\infty(\D)$ for all $\balpha\in \NN_0^d$ with
$|\balpha|\leq s-1$.
\end{assumption}

As a consequence of \cite[Theorem 2.2]{BCDM} we have the following
\begin{lemma}\label{lem:s-a.e}
  Let Assumption \ref{ass:Ass2} hold.  Then the set
  $U_{s-1} := \{ \by\in U: b(\by)\in W_\infty^{s-1}(\domain) \}$ has
  full measure, i.e., $\gamma(U_{s-1}) = 1$.  Furthermore,
  $\mathbb{E}(\exp(k\|b(\cdot)\|_{W_\infty^{s-1}}))$ is finite for all
  $k\in [0,\infty).$
\end{lemma}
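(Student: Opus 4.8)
The statement is a regularity upgrade of Proposition~\ref{prop:Meas1} / \cite[Theorem 2.2]{BCDM} from $L^\infty(\D)$ to $W^{s-1}_\infty(\D)$, so the plan is to mimic the proof of Proposition~\ref{prop:Meas1} componentwise over the finitely many derivatives $D^\balpha$ with $|\balpha|\le s-1$. First I would recall that by definition of the $W^{s-1}_\infty$-norm,
\[
  \|b(\by)\|_{W^{s-1}_\infty} = \sum_{|\balpha|\le s-1} \|D^\balpha b(\by)\|_{L^\infty},
\]
and that since $b(\by)=\sum_{j\in\NN} y_j\psi_j$ with each $\psi_j\in W^{s-1}_\infty(\D)$, for each fixed multi-index $\balpha$ with $|\balpha|\le s-1$ one has formally $D^\balpha b(\by)=\sum_{j\in\NN} y_j D^\balpha\psi_j$. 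The point of Assumption~\ref{ass:Ass2} is precisely that $\sum_{j\in\NN}\lambda_j|D^\balpha\psi_j|$ converges in $L^\infty(\D)$ for every such $\balpha$, with $\big(\exp(-\lambda_j^2)\big)_{j\in\NN}\in\ell^1(\NN)$. Thus each of the finitely many functions $\by\mapsto D^\balpha b(\by)$ satisfies, individually, the hypothesis of Proposition~\ref{prop:Meas1} (applied with the representation system $(D^\balpha\psi_j)_{j\in\NN}$ in place of $(\psi_j)_{j\in\NN}$, and the same sequence $(\lambda_j)_{j\in\NN}$).

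The second step is to invoke Proposition~\ref{prop:Meas1} (equivalently \cite[Theorem 2.2]{BCDM}) for each such $\balpha$: the set $U_\balpha:=\{\by\in U: D^\balpha b(\by)\in L^\infty(\D)\}$ has $\gamma(U_\balpha)=1$, and moreover $\EE\big(\exp(k\|D^\balpha b(\cdot)\|_{L^\infty})\big)<\infty$ for every $k\in[0,\infty)$. Intersecting over the finite index set $\{\balpha\in\NN_0^d:|\balpha|\le s-1\}$ gives a set $U_{s-1}=\bigcap_{|\balpha|\le s-1} U_\balpha$ of full measure on which $b(\by)\in W^{s-1}_\infty(\D)$ (here one uses that a finite sum of $L^\infty$ functions is $L^\infty$, so all derivatives up to order $s-1$ of $b(\by)$ exist in $L^\infty$ simultaneously for $\by\in U_{s-1}$). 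This proves $\gamma(U_{s-1})=1$.

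For the exponential moment bound, I would use the triangle inequality $\|b(\by)\|_{W^{s-1}_\infty}\le\sum_{|\balpha|\le s-1}\|D^\balpha b(\by)\|_{L^\infty}$ together with the elementary inequality $\exp\big(k\sum_i t_i\big)\le \frac{1}{m}\sum_i \exp(k m t_i)$ (for $t_i\ge0$ and $m$ the number of terms, by convexity of $\exp$, i.e.\ the AM–GM/Jensen inequality applied to $e^{kmt_i}$). Taking $m$ equal to the number of multi-indices $\balpha$ with $|\balpha|\le s-1$, this yields
\[
  \EE\big(\exp(k\|b(\cdot)\|_{W^{s-1}_\infty})\big)
  \;\le\; \frac{1}{m}\sum_{|\balpha|\le s-1} \EE\big(\exp(km\,\|D^\balpha b(\cdot)\|_{L^\infty})\big),
\]
and each summand on the right is finite by the moment statement of Proposition~\ref{prop:Meas1} applied to the representation system $(D^\balpha\psi_j)_{j\in\NN}$. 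Hence the left-hand side is finite for every $k\in[0,\infty)$, which is the second assertion.

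**Main obstacle.** The only genuinely delicate point is the interchange of $D^\balpha$ with the infinite sum $\sum_j y_j\psi_j$ — i.e.\ justifying that for $\by$ in the full-measure set, $D^\balpha b(\by)$ (the weak derivative of the $L^\infty$-limit) equals the $L^\infty$-limit $\sum_j y_j D^\balpha\psi_j$. This is where Assumption~\ref{ass:Ass2} is used in an essential way rather than merely cosmetically: the dominated convergence of $\sum_j |y_j|\,|D^\balpha\psi_j|$ (controlled $\gamma$-a.s.\ by $\|b\|$-type bounds as in the proof of \cite[Theorem 2.2]{BCDM}, using that $|y_j|\lesssim \lambda_j$ eventually a.s.\ since $\sum_j\exp(-\lambda_j^2)<\infty$ and a Borel–Cantelli argument) lets one pass to the limit inside the (distributional) derivative, because $L^\infty$-convergence implies $\mathcal{D}'(\domain)$-convergence and $D^\balpha$ is continuous on $\mathcal{D}'(\domain)$. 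I expect this to be essentially a repetition of the argument already used to prove Proposition~\ref{prop:Meas1}, carried out $m$ times in parallel; no new idea is needed, and the finiteness of the index set $\{|\balpha|\le s-1\}$ is what makes the bookkeeping trivial.
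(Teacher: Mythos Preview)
Your proposal is correct and follows essentially the same route as the paper: the paper states Lemma~\ref{lem:s-a.e} simply ``as a consequence of \cite[Theorem 2.2]{BCDM}'' without further detail, and the analogous Kondrat'ev-space version (Lemma~\ref{lem:Kond:02}) is proved by exactly the componentwise argument you describe---applying \cite[Theorem 2.2]{BCDM} to each $D^\balpha\psi_j$, $|\balpha|\le s-1$, to get $L^\infty$-convergence of the partial sums of $\sum_j y_j D^\balpha\psi_j$ and finite exponential moments, then concluding that $\big(\sum_{j=1}^N y_j\psi_j\big)_N$ is Cauchy in the Banach space $W^{s-1}_\infty(\D)$. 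Your explicit treatment of the interchange of $D^\balpha$ with the infinite sum and your Jensen-inequality bound for the exponential moment are more detailed than what the paper writes, but the underlying idea is identical.
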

The $H^s$-analytic continuation of the parametric solutions
$\{u(\by): \by\in U\}$ to $\mathcal{S}_{\mfu} (\brho)$ leads to the
following result on parametric $H^s$-regularity.
\begin{lemma} \label{lemma:hs-regularity} Let $\domain \subset \RR^d$
  be a bounded domain with either $C^\infty$-boundary or convex
  $C^{s-1}$-boundary.  Assume that for each $\bnu\in \FF$, there
  exists a sequence
  $\brho_\bnu= (\rho_{\bnu,j})_{j \in \NN}\in [0,\infty)^\infty$ such
  that $\supp(\bnu)\subseteq \supp(\brho_\bnu)$, and such that
  \begin{equation*} \label{eq:Dkpsisum-lemma} \sup_{\bnu\in \FF}
    \sum_{|\balpha|\leq s-1} \Bigg\| \sum_{j\in
      \NN}\rho_{\bnu,j}|D^{\balpha}\psi_j|\Bigg\|_{L^\infty} \leq
    \kappa <\frac{\pi}{2}.
  \end{equation*}
  Then we have
  \begin{equation} \label{ineq1-lemma} \|\partial^{\bnu}u(\by)\|_{\Hn}
    \leq C \frac{\bnu!}{\brho_\bnu^\bnu}
    \exp\big(\|b(\by)\|_{L^\infty}\big) \Big\{1+
    \exp(2\|b(\by)\|_{L^\infty}) \big(1+\|
    b(\by)\|_{W_\infty^{s-1}}\big)^{s-1} \Big\}^{s-1},
  \end{equation}
  where $C$ is a constant depending on $\kappa$, $d$, $s$ only.
\end{lemma}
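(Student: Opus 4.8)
The plan is to combine the holomorphy of the parametric solution in $H^s(\domain)$ established in Proposition \ref{prop3} with Cauchy's integral formula over polydiscs, in exact parallel to the $V$-norm argument in Lemma \ref{lem:estV}. First I would fix $\bnu \in \FF$, set $\mfu := \supp(\bnu) \subseteq \supp(\brho_\bnu)$, and observe that the hypothesis
\[
  \sup_{\bnu\in \FF} \sum_{|\balpha|\leq s-1} \Bigg\| \sum_{j\in \NN}\rho_{\bnu,j}|D^{\balpha}\psi_j|\Bigg\|_{L^\infty} \leq \kappa <\frac{\pi}{2}
\]
in particular implies $\big\| \sum_{j} \rho_{\bnu,j}|\psi_j| \big\|_{L^\infty} \leq \kappa < \pi/2$, so that \eqref{eq:leqkappa} holds for $\brho_\bnu$ and Proposition \ref{prop3} applies: the map $\bz_\mfu \mapsto u(\bz_\mfu)$ is holomorphic on $\mathcal{S}_\mfu(\by,\brho_\bnu)$, taking values in $H^s(\domain)$ (here one uses also that $b(\by)\in W^{s-1}_\infty(\domain)$, which is the relevant full-measure set, cf.\ Lemma \ref{lem:s-a.e}, although for the pointwise estimate we only need $\by$ with $b(\by)\in W^{s-1}_\infty(\domain)$). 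As in the proof of Lemma \ref{lem:estV}, pick $\kappa'$ with $\kappa < \kappa\kappa' < \pi/2$; holomorphy actually extends to the slightly larger polystrip $\mathcal{S}_\mfu(\by,\kappa'\brho_\bnu)$, so the Cauchy integral formula over the torus $\mathcal{C}_{\by,\mfu}(\brho_\bnu)$ of \eqref{eq:C-rho} gives
\[
  \partial^\bnu u(\by) = \frac{\bnu!}{(2\pi i)^{|\mfu|}} \int_{\mathcal{C}_{\by,\mfu}(\brho_\bnu)} \frac{u(\bz_\mfu)}{\prod_{j\in\mfu}(z_j-y_j)^{\nu_j+1}} \prod_{j\in\mfu}\rd z_j,
\]
whence
\[
  \|\partial^\bnu u(\by)\|_{H^s} \leq \frac{\bnu!}{\brho_\bnu^\bnu} \sup_{\bz_\mfu \in \mathcal{C}_\mfu(\by,\brho_\bnu)} \|u(\bz_\mfu)\|_{H^s},
\]
with $\mathcal{C}_\mfu(\by,\brho_\bnu)$ as in \eqref{eq:C-rho-y}.

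Next I would bound the supremum $\|u(\bz_\mfu)\|_{H^s}$ over the torus. Writing $z_j = y_j + \eta_j + \im\xi_j$ with $|\eta_j|,|\xi_j|\leq \rho_{\bnu,j}$ for $j\in\mfu$ and $\eta_j=\xi_j=0$ otherwise, and setting $\beeta=(\eta_j)_j$, $\bxi=(\xi_j)_j$, one has (as in Lemma \ref{lem:estV}) $\|b(\beeta)\|_{L^\infty}\leq \kappa$, $\|b(\bxi)\|_{L^\infty}\leq \kappa$, and more strongly $\sum_{|\balpha|\leq s-1}\|D^\balpha b(\beeta)\|_{L^\infty}\leq \kappa$ and likewise for $\bxi$ by the strengthened hypothesis. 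From \eqref{eq:R-1} we have the uniform lower bound $\rho(a(\bz_\mfu)) \geq \exp(-\|b(\by+\beeta)\|_{L^\infty})\cos\kappa \geq \exp(-\kappa - \|b(\by)\|_{L^\infty})\cos\kappa$. Then I invoke Corollary \ref{cor:regularity} (with $\rho(a(\bz_\mfu))^{-1}$ playing the role of the reciprocal ellipticity constant) to get
\[
  \|u(\bz_\mfu)\|_{H^s} \leq \frac{\|f\|_{H^{s-2}}}{\rho(a(\bz_\mfu))}\, C_{d,s}\Big(1 + \frac{\|a(\bz_\mfu)\|_{W^{s-1}_\infty}}{\rho(a(\bz_\mfu))}\Big)^{s-1},
\]
and then Lemma \ref{a-nu}, $\|a(\bz_\mfu)\|_{W^{s-1}_\infty} \leq C\|a(\bz_\mfu)\|_{L^\infty}(1+\|b(\bz_\mfu)\|_{W^{s-1}_\infty})^{s-1}$, together with the triangle-inequality bound $\|b(\bz_\mfu)\|_{W^{s-1}_\infty} \leq \|b(\by)\|_{W^{s-1}_\infty} + \sum_{j\in\mfu}(|\eta_j|+|\xi_j|)\|\psi_j\|_{W^{s-1}_\infty} \leq \|b(\by)\|_{W^{s-1}_\infty} + \kappa + \kappa = \|b(\by)\|_{W^{s-1}_\infty} + 2\kappa$ — here the point is that $\sum_j \rho_{\bnu,j}\sum_{|\balpha|\leq s-1}\|D^\balpha\psi_j\|_{L^\infty} \leq \kappa$, absorbing the $(|\eta_j|+|\xi_j|)\leq 2\rho_{\bnu,j}$ weights into the uniform $\kappa$ bound. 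Similarly $\|a(\bz_\mfu)\|_{L^\infty} \leq \exp(\|b(\by)\|_{L^\infty} + \kappa)$.

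Assembling these pieces: $\rho(a(\bz_\mfu))^{-1} \leq (\cos\kappa)^{-1}\exp(\kappa)\exp(\|b(\by)\|_{L^\infty})$, and $\|a(\bz_\mfu)\|_{W^{s-1}_\infty}/\rho(a(\bz_\mfu)) \leq C \exp(2\kappa)(\cos\kappa)^{-1}\exp(2\|b(\by)\|_{L^\infty})(1+\|b(\by)\|_{W^{s-1}_\infty}+2\kappa)^{s-1}$, so that
\[
  \|u(\bz_\mfu)\|_{H^s} \leq C\,\exp(\|b(\by)\|_{L^\infty})\Big\{1 + \exp(2\|b(\by)\|_{L^\infty})(1+\|b(\by)\|_{W^{s-1}_\infty})^{s-1}\Big\}^{s-1},
\]
where $C$ depends only on $\kappa,d,s$ (and on $\|f\|_{H^{s-2}}$, which is fixed). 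Inserting this into the Cauchy-estimate bound $\|\partial^\bnu u(\by)\|_{H^s}\leq (\bnu!/\brho_\bnu^\bnu)\sup\|u(\bz_\mfu)\|_{H^s}$ yields \eqref{ineq1-lemma}. The main technical obstacle is bookkeeping in the last step: one must verify that the two-sided interpolation-type hypothesis on $\sum_{|\balpha|\leq s-1}\|D^\balpha\psi_j\|_{L^\infty}$ is precisely what is needed so that the shift $\beeta$ does not worsen the $W^{s-1}_\infty$-norm of $b$ beyond an additive constant (and does not destroy ellipticity of $\Re a$), and then to track how the exponent $s-1$ appearing in Corollary \ref{cor:regularity} and in Lemma \ref{a-nu} compounds — this is what produces the nested $\{\cdots\}^{s-1}$ structure and must match the claimed form of the estimate exactly. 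No delicate analysis is required beyond this; the holomorphy and the a-priori bounds do all the work.
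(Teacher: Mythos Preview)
Your proof is correct and follows essentially the same approach as the paper: apply Cauchy's integral formula over the polycircle $\mathcal{C}_\mfu(\by,\brho_\bnu)$ to reduce to a supremum bound, then combine Corollary~\ref{cor:regularity} with Lemma~\ref{a-nu} and the elementary estimates on $\rho(a(\bz_\mfu))$, $\|a(\bz_\mfu)\|_{L^\infty}$, and $\|b(\bz_\mfu)\|_{W^{s-1}_\infty}$ (the paper uses $\sqrt{2}\kappa$ rather than $2\kappa$ for the shift in the $W^{s-1}_\infty$-norm, but this is a harmless constant absorbed into~$C$). Your remark that $C$ also depends on $\|f\|_{H^{s-2}}$ is accurate; the paper treats $f$ as fixed.
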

\begin{proof}
  Let $\bnu\in \FF$ with $\mfu=\supp(\bnu)$ and $\by\in U$ such that
  $b(\by)\in W_\infty^{s-1}(\D)$.  Let furthermore
  $\Cc_{\by,\mfu}(\brho_\bnu)$ and $\Cc_\mfu(\by,\brho_\bnu)$ be given
  as in \eqref{eq:C-rho} and \eqref{eq:C-rho-y}.  Using Cauchy's
  formula as in the proof of Lemma \ref{lem:estV} we obtain
  \begin{equation} \label{ineq1-proof} \|\partial^{\bnu}u(\by)\|_{\Hn}
    \leq \frac{\bnu!}{\brho_\bnu^\bnu} \sup_{\bz_{\mfu}\in
      C_\mfu(\by,\brho_\bnu)} \|u(\bz_{\mfu})\|_{\Hn} \,.
  \end{equation}
  For $\bz_\mfu=(z_j)_{j\in \NN} \in C_\mfu(\by,\brho_\bnu)$ we can
  write $z_j = y_j + \eta_j + \im\xi_j \in \Cc_{\by,j}(\brho_{\bnu})$
  with $|\eta_j | \le \rho_{\bnu,j}$ and $|\xi_j| \le \rho_{\bnu,j}$
  for $j\in \mfu$ and hence we get
  \begin{equation*}
    \begin{split} 
      \|D^\balpha b(\bz_\mfu)\|_{L^\infty} & = \Bigg\|
      D^\balpha\Big(b(\by)+\sum_{j\in \mfu} ( \eta_j+\im\xi_j)\psi_j
      \Big)\Bigg\|_{L^\infty}
      \\
      & \leq \|D^\balpha b(\by)\|_{L^\infty} +  \sqrt{2}\,\Bigg\| \sum_{j\in \mfu} \rho_{\bnu,j} |D^\balpha \psi_j|\Bigg\|_{L^\infty}\\
      &\leq \|D^\balpha b(\by)\|_{L^\infty} +\kappa\sqrt{2}\,.
    \end{split}
  \end{equation*}
  In addition we have
  \begin{equation} \label{eq:R-11} \frac{1}{\rho(a( \bz_{\mfu}))} \leq
    \frac{\exp(\| b(\by+ \sum_{j\in \mfu}\eta_j
      \psi_j\|_{L^\infty})}{\cos(\|\sum_{j\in \mfu}\xi_j
      \psi_j\|_{L^\infty})} \leq \frac{ \exp\big( \kappa +
      \|b(\by)\|_{L^\infty}\big)}{\cos\kappa}\,
  \end{equation} 
  and
  \begin{equation} \label{eq:azu} \|a(\bz_\mfu)\|_{L^\infty} =
    \Bigg\|\exp\Bigg(b(\by)+\sum_{j\in \mfu} (
    \eta_j+\im\xi_j)\psi_j\Bigg)\Bigg\|_{L^\infty} \leq
    e^{\kappa\sqrt{2}} \exp(\|b(\by)\|_{L^\infty})\,.
  \end{equation} 
  Consequently, we can bound
  \begin{equation*}
    \begin{split}
      \|a(\bz_\mfu)\|_{W^{s-1}_\infty} & \leq C
      \|a(\bz_\mfu)\|_{L^\infty}\big(1+\|
      b(\bz_{\mfu})\|_{W_\infty^{s-1}}\big)^{s-1}
      \\
      & \leq C\exp(\|b(\by)\|_{L^\infty}) \big(1+\|
      b(\by)\|_{W_\infty^{s-1}}\big)^{s-1}\,.
    \end{split}
  \end{equation*}
  Now Corollary \ref{cor:regularity} implies the inequality
  \begin{equation} \label{ineq:u(z_u)} \sup_{\bz_{\mfu}\in
      C_\mfu(\by,\brho_\bnu)} \|u(\bz_\mfu)\|_{\Hn} \leq C
    \exp\big(\|b(\by)\|_{L^\infty}\big) \Big\{1+
    \exp(2\|b(\by)\|_{L^\infty}) \big(1+\|
    b(\by)\|_{W_\infty^{s-1}}\big)^{s-1} \Big\}^{s-1},
  \end{equation}
  which together with \eqref{ineq1-proof} proves the lemma.
\end{proof}

We are now in position to formulate sparsity results 
for the $H^s$-norms of Wiener-Hermite PC expansion coefficients of the solution $u$.

\begin{theorem}[General case] \label{thm:hs-sum} Let $s,r \in \NN$ and
  $\domain \subset \RR^d$ denote a bounded domain with either
  $C^\infty$-boundary or convex $C^{s-1}$-boundary.  Let further
  Assumption \ref{ass:Ass2} hold, assume that $f \in H^{s - 2}(\D)$,
  and assume given a sequence
  $\bvarrho= (\varrho_j)_{j \in \NN}\subset (0,\infty)^\infty$ that
  satisfies $(\varrho_j^{-1})_{j \in \NN}\in \ell^q(\NN)$ for some
  $0 < q < \infty$.  Assume in addition that, for each $\bnu\in \FF$,
  there exists a sequence
  $\brho_\bnu= (\rho_{\bnu,j})_{j \in \NN}\in [0,\infty)^\infty$ such
  that $\supp(\bnu)\subseteq \supp(\brho_\bnu)$, and such that, with
  $r>2/q$,
  \begin{equation*} \label{eq:Dkpsisum} \sup_{\bnu\in \FF}
    \sum_{|\balpha|\leq s-1} \Bigg\| \sum_{j\in
      \NN}\rho_{\bnu,j}|D^{\balpha}\psi_j|\Bigg\|_{L^\infty} \leq
    \kappa <\frac{\pi}{2}, \quad \text{and}\quad
    \sum_{\|\bnu\|_{\ell^\infty}\leq r}
    \frac{\bnu!\bvarrho^{2\bnu}}{\brho_\bnu^{2\bnu}} <\infty.
  \end{equation*}
  Then there holds, with $\beta_\bnu(r, \bvarrho)$ as in \eqref{beta},
		\begin{equation} \label{ineq:sum<infty} 
			\sum_{\bnu\in\FF}\beta_\bnu(r,\bvarrho)\|u_\bnu\|_{\Hn}^2 <\infty
			\ \ \ with \ \ \ \big(\beta_\bnu(r,\bvarrho)^{-1/2}\big)_{\bnu\in\FF} \in \ell^q(\FF)\,.
		\end{equation}
 Furthermore,
	$$
	(\|u_\bnu\|_{\Hn})_{\bnu\in\FF}\in \ell^p(\FF) \ \ \  with
	\ \ \ \frac{1}{p}=\frac{1}{q}+\frac{1}{2}.
	$$

\end{theorem}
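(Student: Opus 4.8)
The plan is to follow the proof of Theorem~\ref{thm:s=1} step by step, replacing each $V$-valued ingredient by its $H^s(\D)$-valued counterpart from Section~\ref{sec:HsAnalyt}. First I would invoke Lemma~\ref{lem:s-a.e}: under Assumption~\ref{ass:Ass2} the set $\{\by\in U:b(\by)\in W^{s-1}_\infty(\D)\}$ has full $\gamma$-measure, and $\EE\big(\exp(k\|b(\cdot)\|_{W^{s-1}_\infty})\big)<\infty$ for every $k\in[0,\infty)$. Combined with $f\in H^{s-2}(\D)$, Corollary~\ref{cor:regularity} and Lemma~\ref{a-nu} show that for $\gamma$-a.e.\ $\by$ the solution $u(\by)$ belongs to $H^s(\D)$; since $\|b(\by)\|_{L^\infty}\le\|b(\by)\|_{W^{s-1}_\infty}$, squaring the resulting bound and integrating against $\gamma$ gives $u\in L^2(U,H^s(\D);\gamma)$, so the Wiener--Hermite PC expansion with $H^s(\D)$-valued coefficients $u_\bnu$ is well defined.

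Next, the identity of Lemma~\ref{lem:equal} is proved coordinate-wise in an arbitrary Hilbert space and therefore holds verbatim with $V$ replaced by $X=H^s(\D)$:
\[
  \sum_{\bnu\in\FF}\beta_\bnu(r,\bvarrho)\|u_\bnu\|_{H^s}^2
  =\sum_{\|\bnu\|_{\ell^\infty}\le r}\frac{\bvarrho^{2\bnu}}{\bnu!}\int_U\|\partial^\bnu u(\by)\|_{H^s}^2\,\rd\gamma(\by).
\]
Into the right-hand side I would insert the derivative estimate of Lemma~\ref{lemma:hs-regularity}, $\|\partial^\bnu u(\by)\|_{H^s}\le C\,\frac{\bnu!}{\brho_\bnu^\bnu}\,B_s(\by)$, where $B_s(\by):=\exp(\|b(\by)\|_{L^\infty})\big\{1+\exp(2\|b(\by)\|_{L^\infty})(1+\|b(\by)\|_{W^{s-1}_\infty})^{s-1}\big\}^{s-1}$. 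Expanding this $(s-1)$-th power, every summand is a fixed power of $(1+\|b(\by)\|_{W^{s-1}_\infty})$ times a bounded multiple of $\exp(c\|b(\by)\|_{W^{s-1}_\infty})$ for some constant $c$; absorbing the polynomial factor into a slightly larger exponential and applying Lemma~\ref{lem:s-a.e} yields $\int_U B_s(\by)^2\,\rd\gamma(\by)<\infty$. Hence
\[
  \sum_{\bnu\in\FF}\beta_\bnu(r,\bvarrho)\|u_\bnu\|_{H^s}^2
  \le C^2\Big(\int_U B_s(\by)^2\,\rd\gamma(\by)\Big)\sum_{\|\bnu\|_{\ell^\infty}\le r}\frac{\bnu!\,\bvarrho^{2\bnu}}{\brho_\bnu^{2\bnu}}<\infty
\]
by the second hypothesis. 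Since $r>2/q$ and $(\varrho_j^{-1})_{j\in\NN}\in\ell^q(\NN)$, Lemma~\ref{lem:beta-summability} gives $\sum_{\bnu\in\FF}\beta_\bnu(r,\bvarrho)^{-q/2}<\infty$, i.e.\ $(\beta_\bnu(r,\bvarrho)^{-1/2})_{\bnu\in\FF}\in\ell^q(\FF)$, which together with the previous display is \eqref{ineq:sum<infty}.

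The $\ell^p$-summability with $1/p=1/q+1/2$ then follows exactly as at the end of the proof of Theorem~\ref{thm:s=1}: writing $\|u_\bnu\|_{H^s}^p=\big(\beta_\bnu(r,\bvarrho)\|u_\bnu\|_{H^s}^2\big)^{p/2}\,\beta_\bnu(r,\bvarrho)^{-p/2}$ and applying H\"older's inequality with exponents $2/p$ and $2/(2-p)$, and using $q/2=p/(2-p)$, one gets
\[
  \sum_{\bnu\in\FF}\|u_\bnu\|_{H^s}^p
  \le\Big(\sum_{\bnu\in\FF}\beta_\bnu(r,\bvarrho)\|u_\bnu\|_{H^s}^2\Big)^{p/2}\Big(\sum_{\bnu\in\FF}\beta_\bnu(r,\bvarrho)^{-q/2}\Big)^{1-p/2}<\infty.
\]
The only non-mechanical point is the $\gamma$-integrability of $B_s(\by)^2$: the bound in Lemma~\ref{lemma:hs-regularity} couples $\|b(\by)\|_{L^\infty}$ and $\|b(\by)\|_{W^{s-1}_\infty}$ and is raised to the power $s-1$, so one must track how the exponential moments of Lemma~\ref{lem:s-a.e} dominate all polynomial-in-$\|b(\by)\|$ factors arising after expansion; every other step is a transcription of the $s=1$ argument with $V$ replaced by $H^s(\D)$.
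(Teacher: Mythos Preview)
Your proof is correct and follows essentially the same approach as the paper: both use the $H^s$-valued Parseval identity (the paper states it as \eqref{eq:equal-H^s}), insert the derivative bound of Lemma~\ref{lemma:hs-regularity}, control $\int_U B_s(\by)^2\rd\gamma(\by)$ via the exponential moments from Lemma~\ref{lem:s-a.e}, and conclude with Lemma~\ref{lem:beta-summability} and H\"older exactly as in Theorem~\ref{thm:s=1}. The only cosmetic difference is that the paper handles the integrability of $B_s^2$ by citing \eqref{ineq:u(z_u)} together with H\"older's inequality, whereas you absorb the polynomial factor into the exponential---both are equivalent one-line arguments.
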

\begin{proof}
  Arguing as in the proof of \cite[ Theorem 3.3]{BCDM} we obtain that
  for any $r\in \NN$ there holds following generalization of the
  Parseval-type identity
  \begin{equation} \label{eq:equal-H^s} \sum_{\bnu\in
      \Ff}\beta_\bnu(r,\bvarrho)\|u_\bnu\|_{H^s}^2 =
    \sum_{\|\bnu\|_{\ell^\infty}\leq r} \frac{\bvarrho^{2\bnu}}{\bnu!}
    \int_U\| \partial^\bnu u(\by)\|_{H^s}^2\rd\gamma(\by)\,.
  \end{equation} 
By \eqref{ineq:u(z_u)}, Lemma \ref{lem:s-a.e} 
and H\"older's inequality 
we derive that
  \[
    \int_U \bigg( \sup_{\bz_{\mfu}\in C_\mfu(\by,\brho_\bnu)}
    \|u(\bz_{\mfu})\|_{\Hn} \bigg)^2 \rd\gamma(\by) \leq C
  \]
  and in particular, $\mathbb{E}(\|u(\by)\|_{H^s}^k)$ is finite for
  all $k\in [0,\infty)$.  
  Now \eqref{eq:equal-H^s}, Lemma \ref{lemma:hs-regularity} 
  and our assumption give
  \begin{equation} \nonumber
    \begin{split}
    	\sum_{\bnu\in\FF}\beta_\bnu(r,\bvarrho)\|u_\bnu\|_{\Hn}^2 
    	&=
      \sum_{\|\bnu\|_{\ell^\infty}\leq r}
      \frac{\bvarrho^{2\bnu}}{\bnu!} \int_U\| \partial^\bnu
      u(\by)\|_{H^s}^2\rd\gamma(\by) 
      \\
      &\leq
      C^2\sum_{\|\bnu\|_{\ell^\infty}\leq r}
      \frac{\bnu!\bvarrho^{2\bnu}}{\brho_\bnu^{2\bnu}}\int_U
      \rd\gamma(\by)
      = C^2 \sum_{\|\bnu\|_{\ell^\infty}\leq r}
      \frac{\bnu!\bvarrho^{2\bnu}}{\brho_\bnu^{2\bnu}} <\infty,
    \end{split}
  \end{equation} 
  where $C$ is the constant in \eqref{ineq1-lemma}.  
  As in the proof of Theorem \ref{thm:s=1}, by
  Lemma \ref{lem:beta-summability} the family
 $\big(\beta_\bnu(r,\bvarrho)^{-1/2}\big)_{\bnu\in\FF}$ belongs to $\ell^q(\FF)$.  
 The relation \eqref{ineq:sum<infty} is proven. 
 
  The assertion
  $(\|u_\bnu\|_{\Hn})_{\bnu\in\FF}\in \ell^p(\FF)$ can be proved in
  the same way as in the proof of Theorem~\ref{thm:s=1}.
\end{proof}
Similarly to Corollaries \ref{cor:global} and \ref{cor:local} from
Theorem~\ref{thm:hs-sum} we obtain
\begin{corollary}[The case of global supports]
  Let $s \in \NN$ and $\domain \subset \RR^d$ denote a bounded domain
  with either $C^\infty$-boundary or convex $C^{s-1}$-boundary.
  Assume that for all $j\in \NN$ holds $\psi_j\in W^{s-1}_\infty(\D)$,
  and that $f\in H^{s-2}(\D)$.  Assume further that there exists a
  sequence of positive numbers $\blambda= (\lambda_j)_{j \in \NN}$
  such that
	$$
	\big(\lambda_j \| \psi_j \|_{W_\infty^{s-1}}\big)_{j\in \NN}
        \in \ell^1(\NN) \ \ and \ \ (\lambda_j^{-1})_{j \in \NN}\in
        \ell^q(\NN),$$ for some $0 < q < \infty$.  
  Then we have
        $(\|u_\bnu\|_{H^s})_{\bnu\in\FF}\in \ell^p(\FF)$ 
  with
        $\frac{1}{p}=\frac{1}{q}+\frac{1}{2}$.
\end{corollary}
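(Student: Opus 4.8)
The plan is to obtain this corollary from Theorem~\ref{thm:hs-sum} in precisely the way Corollary~\ref{cor:global} was obtained from Theorem~\ref{thm:s=1}, the only substantive change being that the $L^\infty(\D)$-norm of the $\psi_j$ is everywhere replaced by their $W^{s-1}_\infty(\D)$-norm. First I would fix an arbitrary $\bnu\in\FF$ and set
\[
  \rho_{\bnu,j} := \frac{\nu_j}{|\bnu|\,\|\psi_j\|_{W^{s-1}_\infty}} \quad (j\in\supp(\bnu)),
  \qquad
  \rho_{\bnu,j} := 0 \quad (j\notin\supp(\bnu)),
\]
and choose $\bvarrho = \tau\blambda$ with $\tau>0$ to be fixed later. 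Then $\supp(\bnu)\subseteq\supp(\brho_\bnu)$ holds by construction, and since $\sum_{|\balpha|\le s-1}\|D^\balpha\psi_j\|_{L^\infty} = \|\psi_j\|_{W^{s-1}_\infty}$, the triangle inequality gives
\[
  \sup_{\bnu\in\FF}\sum_{|\balpha|\le s-1}\Bigg\|\sum_{j\in\NN}\rho_{\bnu,j}|D^\balpha\psi_j|\Bigg\|_{L^\infty}
  \le
  \sup_{\bnu\in\FF}\sum_{j\in\supp(\bnu)}\frac{\nu_j}{|\bnu|} = 1 < \frac{\pi}{2},
\]
so the first hypothesis of Theorem~\ref{thm:hs-sum} is met with $\kappa=1$.

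Next I would verify that Assumption~\ref{ass:Ass2} holds for the sequence $\blambda' := (\lambda_j^{1/2})_{j\in\NN}$, exactly as in the proof of Corollary~\ref{cor:global}. From $(\lambda_j^{-1})_{j\in\NN}\in\ell^q(\NN)$ one gets, up to a nondecreasing rearrangement, $\lambda'_j \ge C j^{1/(2q)}$, whence $(\exp(-(\lambda'_j)^2))_{j\in\NN}\in\ell^1(\NN)$; and for every $\balpha$ with $|\balpha|\le s-1$,
\[
  \Bigg\|\sum_{j\in\NN}\lambda'_j|D^\balpha\psi_j|\Bigg\|_{L^\infty}
  \le
  \sup_{j\in\NN}\lambda_j^{-1/2}\,\sum_{j\in\NN}\lambda_j\|\psi_j\|_{W^{s-1}_\infty} < \infty
\]
since $(\lambda_j\|\psi_j\|_{W^{s-1}_\infty})_{j\in\NN}\in\ell^1(\NN)$, which gives the required convergence in $L^\infty(\D)$.

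It then remains to check the summability hypothesis $\sum_{\|\bnu\|_{\ell^\infty}\le r}\bnu!\,\bvarrho^{2\bnu}/\brho_\bnu^{2\bnu}<\infty$ for $r>2/q$. Here I would run the chain of inequalities of \eqref{sum-estimate}: using $\bnu!\le(r!)^{|\bnu|}$ on $\{\|\bnu\|_{\ell^\infty}\le r\}$ and then $|\bnu|^{|\bnu|}/\bnu^\bnu\le e^{|\bnu|}|\bnu|!/\bnu!$, the sum is bounded by
\[
  \Bigg(\sum_{\|\bnu\|_{\ell^\infty}\le r}\frac{|\bnu|!}{\bnu!}\prod_{j\in\supp(\bnu)}\big(e\tau\sqrt{r!}\,\lambda_j\|\psi_j\|_{W^{s-1}_\infty}\big)^{\nu_j}\Bigg)^2 .
\]
Since $(\lambda_j\|\psi_j\|_{W^{s-1}_\infty})_{j\in\NN}\in\ell^1(\NN)$, I can pick $\tau>0$ so small that $\|(e\tau\sqrt{r!}\,\lambda_j\|\psi_j\|_{W^{s-1}_\infty})_{j\in\NN}\|_{\ell^1}<1$, and Lemma~\ref{lem:alpha-summability}(ii) then makes the bracketed sum finite. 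With all hypotheses of Theorem~\ref{thm:hs-sum} verified, that theorem delivers $(\|u_\bnu\|_{H^s})_{\bnu\in\FF}\in\ell^p(\FF)$ with $1/p = 1/q + 1/2$. I do not expect any real obstacle here: the only new ingredient compared with Corollary~\ref{cor:global} is the bookkeeping identity $\sum_{|\balpha|\le s-1}\|D^\balpha\psi_j\|_{L^\infty}=\|\psi_j\|_{W^{s-1}_\infty}$, after which the proof is a transcription of the $H^1_0(\domain)$ case.
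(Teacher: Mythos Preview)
Your proof is correct and is precisely the transcription of the proof of Corollary~\ref{cor:global} that the paper intends: the paper states only that this corollary follows ``similarly to Corollaries~\ref{cor:global} and~\ref{cor:local} from Theorem~\ref{thm:hs-sum}'', and your write-up supplies exactly those details with the $L^\infty$-norm replaced by the $W^{s-1}_\infty$-norm throughout.
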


\begin{corollary}[The case of disjoint supports]
  Let $s \in \NN$ and $\domain \subset \RR^d$ denote a bounded domain
  with either $C^\infty$-boundary or convex $C^{s-1}$-boundary.
  Assume that $f\in H^{s-2}(\D)$ and for all $j\in \NN$ holds
  $\psi_j\in W^{s-1}_\infty(\D)$ with disjoint supports.  Assume
  further that there exists a sequence of positive numbers
  $\blambda= (\lambda_j)_{j \in \NN}$ such that
		$$
                \big(\lambda_j \| \psi_j
                \|_{W_\infty^{s-1}}\big)_{j\in \NN} \in \ell^2(\NN) \
                \ and \ \ (\lambda_j^{-1})_{j \in \NN}\in
                \ell^q(\NN),$$ for some $0 < q < \infty$.  

Then
                $(\|u_\bnu\|_{H^s})_{\bnu\in\FF}\in \ell^p(\FF)$ 
with
                $\frac{1}{p}=\frac{1}{q}+\frac{1}{2}$.
\end{corollary}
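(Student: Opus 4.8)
The plan is to deduce the statement from Theorem~\ref{thm:hs-sum}, by the same reduction through which Corollary~\ref{cor:local} was obtained from Theorem~\ref{thm:s=1}; the only genuinely new feature is that the whole finite bundle of spatial derivatives $D^\balpha\psi_j$, $|\balpha|\le s-1$, must be estimated simultaneously rather than $\psi_j$ alone. Write $N_{d,s}:=\#\{\balpha\in\NN_0^d:|\balpha|\le s-1\}$ and, discarding the trivial indices with $\psi_j\equiv 0$, I would exploit the disjointness of the supports to take the radii \emph{independent of} $\bnu$, namely
\[
\rho_{\bnu,j}:=\rho_j:=\frac{1}{2N_{d,s}\,\|\psi_j\|_{W^{s-1}_\infty}},\qquad \bvarrho:=\tau\blambda ,
\]
with a parameter $\tau>0$ to be fixed later. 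Then $\supp(\bnu)\subseteq\NN=\supp(\brho_\bnu)$, and since the sets $\supp(D^\balpha\psi_j)\subseteq\supp(\psi_j)$ are pairwise disjoint in $j$, at almost every $\bx\in\D$ at most one summand $\rho_j|D^\balpha\psi_j(\bx)|$ is nonzero, whence
\[
\sum_{|\balpha|\le s-1}\Bigl\|\sum_{j\in\NN}\rho_j|D^\balpha\psi_j|\Bigr\|_{L^\infty}
=\sum_{|\balpha|\le s-1}\sup_{j\in\NN}\rho_j\|D^\balpha\psi_j\|_{L^\infty}
\le N_{d,s}\,\sup_{j\in\NN}\rho_j\|\psi_j\|_{W^{s-1}_\infty}=\tfrac12 ,
\]
so the first hypothesis of Theorem~\ref{thm:hs-sum} holds with $\kappa=1/2<\pi/2$.

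Next I would verify the summability hypothesis $\sum_{\|\bnu\|_{\ell^\infty}\le r}\bnu!\,\bvarrho^{2\bnu}/\brho_\bnu^{2\bnu}<\infty$ for a fixed $r\in\NN$ with $r>2/q$. On the set $\{\|\bnu\|_{\ell^\infty}\le r\}$ one has $\bnu!\le\prod_{j\in\supp(\bnu)}(r!)^{\nu_j}$ (since $1\le\nu_j\le r$ and $r!\ge1$ on the support), so the sum is bounded by $\sum_{\bnu\in\FF}\balpha^{2\bnu}$ with $\alpha_j:=2N_{d,s}\sqrt{r!}\,\tau\,\lambda_j\|\psi_j\|_{W^{s-1}_\infty}$. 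As $(\lambda_j\|\psi_j\|_{W^{s-1}_\infty})_{j\in\NN}\in\ell^2(\NN)$ by hypothesis, $(\alpha_j)_{j\in\NN}\in\ell^2(\NN)$, and choosing $\tau$ small enough also yields $\|(\alpha_j)_{j\in\NN}\|_{\ell^\infty}<1$; Lemma~\ref{lem:alpha-summability}(i) with exponent $2$ then gives $\sum_{\bnu\in\FF}\balpha^{2\bnu}<\infty$. The other summability requirement of Theorem~\ref{thm:hs-sum}, $(\varrho_j^{-1})_{j\in\NN}=(\tau^{-1}\lambda_j^{-1})_{j\in\NN}\in\ell^q(\NN)$, is immediate, and $f\in H^{s-2}(\D)$ is assumed.

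Finally I would check Assumption~\ref{ass:Ass2}, which --- exactly as in the proof of Corollary~\ref{cor:local} --- I would do with the auxiliary sequence $(\lambda_j^{1/2})_{j\in\NN}$: from $(\lambda_j^{-1})_{j\in\NN}\in\ell^q(\NN)$ a nondecreasing rearrangement gives $\lambda_j\ge C j^{1/q}$ for some $C>0$, hence $(\exp(-\lambda_j))_{j\in\NN}\in\ell^1(\NN)$; and since $(\lambda_j\|\psi_j\|_{W^{s-1}_\infty})_{j\in\NN}\in\ell^2(\NN)$ is bounded while $\lambda_j^{-1/2}\to0$, the product $\lambda_j^{1/2}\|\psi_j\|_{W^{s-1}_\infty}=\lambda_j^{-1/2}\bigl(\lambda_j\|\psi_j\|_{W^{s-1}_\infty}\bigr)$ tends to $0$, so, using $\|D^\balpha\psi_j\|_{L^\infty}\le\|\psi_j\|_{W^{s-1}_\infty}$ and the disjointness of the supports, the partial sums of $\sum_{j\in\NN}\lambda_j^{1/2}|D^\balpha\psi_j|$ are Cauchy in $L^\infty(\D)$ for every $|\balpha|\le s-1$. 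Thus Assumption~\ref{ass:Ass2} holds (with $\lambda_j$ there replaced by $\lambda_j^{1/2}$), Theorem~\ref{thm:hs-sum} applies, and delivers both $\sum_{\bnu\in\FF}\beta_\bnu(r,\bvarrho)\|u_\bnu\|_{H^s}^2<\infty$ with $(\beta_\bnu(r,\bvarrho)^{-1/2})_{\bnu\in\FF}\in\ell^q(\FF)$ and, in particular, $(\|u_\bnu\|_{H^s})_{\bnu\in\FF}\in\ell^p(\FF)$ with $1/p=1/q+1/2$. I do not expect a real obstacle here: with Theorem~\ref{thm:hs-sum} in hand the argument is a transcription of that for Corollary~\ref{cor:local}, and the only point demanding a little care is keeping track of the combinatorial factor $N_{d,s}$ incurred when the finitely many derivative norms $\|D^\balpha\psi_j\|_{L^\infty}$, $|\balpha|\le s-1$, are absorbed into $\|\psi_j\|_{W^{s-1}_\infty}$.
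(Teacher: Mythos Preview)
Your proposal is correct and follows exactly the route the paper indicates: since the paper states this corollary without proof, merely noting that it is obtained ``similarly to Corollaries~\ref{cor:global} and~\ref{cor:local} from Theorem~\ref{thm:hs-sum}'', your transcription of the Corollary~\ref{cor:local} argument with $\|\psi_j\|_{L^\infty}$ replaced by $\|\psi_j\|_{W^{s-1}_\infty}$ is precisely what is intended. Your introduction of the combinatorial factor $N_{d,s}$ in the radii to control the sum over $|\balpha|\le s-1$ is the right (and necessary) adjustment, and your verification of Assumption~\ref{ass:Ass2} via the disjoint supports is clean and correct.
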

              \subsection{Parametric Kondrat'ev analyticity and sparsity}
              \label{sec:KondrReg}
              In the previous section, we investigated the weighted
              $\ell^2$-summability and $\ell^p$-summability of
              Wiener-Hermite PC expansion coefficients of parametric
              solutions measured in the standard Sobolev spaces
              $H^s(\domain)$.  We assumed that $\domain\subset
              \RR^d$ %
              with boundary $\partial\domain$ of sufficient smoothness
              (depending on $s$).  In this section we consider in
              space dimension $d=2$ the case when the physical domain
              $\domain$ is a polygonal domain.  In such domains,
              elliptic regularity shift results and shift theorems in
              $\domain$ hold in Kondrat'ev spaces which are corner-weighted Sobolev spaces.  
              We
              refer to \cite{Gr,MazRoss2010} and the references there
              for an extensive survey.

              To state corresponding results for the log-Gaussian
              parametric elliptic problems, we first review
              definitions of the weighted Sobolev spaces of Kondrat'ev
              type and results from \cite{BLN} on the holomorphy of
              parametric solutions in weighted Kondrat'ev spaces in
              polygonal domains $\domain$.  Then, we establish
              summability results of the coefficients of
              Wiener-Hermite PC expansions of the parametric solutions
              in Kondrat'ev spaces.  
              FE approximation results
              for Wiener-Hermite PC expansion coefficient functions
              which are in these spaces were provided in Section
              \ref{S:FEM}.
              \subsubsection{Parametric $K^s_{\varkappa}(\D)$-holomorphy}
              \label{sec:KondrAn}
              We recall the Kondrat'ev spaces \index{space!Kondrat'ev $\sim$} in a bounded
              polygonal domain $\domain$ introduced in Section
              \ref{S:FncSpc}: for $s\in \NN_0$ and $\varkappa\in \RR$,
              \begin{equation*}
                \Kk^s_\varkappa(\domain)
                :=
                \big\{
                u: \domain \to \CC: 
                \ r_\domain^{|\balpha|-\varkappa}D^\balpha u\in L^2(\domain), 
                |\balpha|\leq s \big\}
              \end{equation*}
              and
              \begin{equation*}
                \Ww^s_\infty(\domain)
                := 
                \big\{u: \domain\to \CC: \ r_\domain^{|\balpha|}D^\balpha u\in L^\infty(\domain),\ 
                |\balpha|\leq s \big\}.
              \end{equation*}
              The weighted Sobolev norms in these spaces are given in
              Section \ref{S:FncSpc}.
              \begin{lemma}\label{kon-lem-1} 
                Let $s\in \NN_0$.
                Assume that $\by\in U$ is such that
                $b(\by) \in \Ww^{s}_\infty(\D)$.  

                Then
                \begin{equation*}
                  \|a(\by)\|_{\Ww^{s}_\infty} 
                  \leq 
                  C\|a(\by)\|_{L^\infty} \big(1+ \|  b(\by)\|_{\Ww^s_\infty}\big)^{s}\,,
                \end{equation*}
                where the constant $C$ depends on $s$ and $m$.
              \end{lemma}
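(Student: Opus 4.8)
The plan is to mirror the proof of Lemma~\ref{a-nu} essentially line for line, with the corner-weighted norm $\|\cdot\|_{\Ww^s_\infty}$ in place of the flat Sobolev norm $\|\cdot\|_{W^s_\infty}$; the one new ingredient is to carry the powers of the weight $r_\domain$ through the Leibniz rule. I first note that $L^\infty(\D)\subset\Ww^s_\infty(\D)$ (the $|\balpha|=0$ contribution carries the trivial weight $r_\domain^0\equiv 1$), so $b(\by)\in\Ww^s_\infty(\D)$ in particular forces $b(\by)\in L^\infty(\D)$ and hence $a(\by)=\exp(b(\by))\in L^\infty(\D)$; since $r_\domain$ is smooth on the open set $\D$, all the products and derivatives below are classical in $\D$, with the chain rule $D^{\bee_j}a(\by)=a(\by)\,D^{\bee_j}b(\by)$ holding a.e.

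Fix $\balpha$ with $1\le|\balpha|\le s$ and a coordinate $j$ with $\alpha_j>0$. As in \eqref{eq:Leibniz},
\[
  D^\balpha a(\by) = D^{\balpha-\bee_j}\big[a(\by)\,D^{\bee_j}b(\by)\big] = \sum_{\bnul\le\bgamma\le\balpha-\bee_j}\binom{\balpha-\bee_j}{\bgamma}\,D^{\balpha-\bgamma}b(\by)\,D^\bgamma a(\by).
\]
The key step is to multiply through by $r_\domain^{|\balpha|}$ and exploit that $\bgamma\le\balpha$ gives the exact additivity $|\balpha|=|\balpha-\bgamma|+|\bgamma|$, hence $r_\domain^{|\balpha|}=r_\domain^{|\balpha-\bgamma|}r_\domain^{|\bgamma|}$, so that
\[
  r_\domain^{|\balpha|}D^\balpha a(\by) = \sum_{\bnul\le\bgamma\le\balpha-\bee_j}\binom{\balpha-\bee_j}{\bgamma}\,\big(r_\domain^{|\balpha-\bgamma|}D^{\balpha-\bgamma}b(\by)\big)\,\big(r_\domain^{|\bgamma|}D^\bgamma a(\by)\big).
\]
Since $1\le|\balpha-\bgamma|\le s$ and $0\le|\bgamma|\le s-1$, every factor on the right is controlled by one of the norms $\|b(\by)\|_{\Ww^s_\infty}$, $\|a(\by)\|_{\Ww^s_\infty}$. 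Taking $L^\infty$-norms and absorbing the binomial coefficients into a constant $C=C(s)$ yields
\[
  \big\|r_\domain^{|\balpha|}D^\balpha a(\by)\big\|_{L^\infty} \le C\,\|b(\by)\|_{\Ww^s_\infty}\sum_{\bnul\le\bgamma\le\balpha-\bee_j}\big\|r_\domain^{|\bgamma|}D^\bgamma a(\by)\big\|_{L^\infty}.
\]

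From here I would iterate exactly as in Lemma~\ref{a-nu}: each summand with $\bgamma\neq\bnul$ is re-expanded in the same way (choosing some $j'$ with $\gamma_{j'}>0$), the recursion terminating at $r_\domain^{0}D^{\bnul}a(\by)=a(\by)$. The combinatorial bookkeeping is identical to that in the proof of Lemma~\ref{a-nu} — it rests only on elementary bounds for sums of binomials, with constants depending only on $s$ — and produces $\big\|r_\domain^{|\balpha|}D^\balpha a(\by)\big\|_{L^\infty}\le C\,\|a(\by)\|_{L^\infty}\big(1+\|b(\by)\|_{\Ww^s_\infty}\big)^{|\balpha|}$ for $1\le|\balpha|\le s$. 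Summing over $|\balpha|\le s$ (the term $|\balpha|=0$ contributing just $\|a(\by)\|_{L^\infty}$) and enlarging $C$ gives the asserted bound. Note that no hypothesis on the interior angles of $\D$, or on any weight exponent $\varkappa$, enters: the whole argument is the algebraic chain rule for $\exp(b)$ together with the additivity of the exponents of $r_\domain$ along the Leibniz decomposition.

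The only genuinely new point relative to Lemma~\ref{a-nu}, and the step I would verify most carefully, is precisely this weight bookkeeping — that the weighted Leibniz expansion stays inside the scale $\Ww^s_\infty(\D)$ because the weight exponents split exactly between the $b$-factor and the $a$-factor. Once that is in hand, the iteration and the final summation are routine copies of the corresponding parts of the proof of Lemma~\ref{a-nu}, and I would not spell them out again in detail.
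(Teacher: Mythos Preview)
Your proposal is correct and follows essentially the same approach as the paper's proof: both start from the Leibniz expansion \eqref{eq:Leibniz}, multiply by $r_\domain^{|\balpha|}$, split the weight as $r_\domain^{|\balpha|}=r_\domain^{|\balpha-\bgamma|}r_\domain^{|\bgamma|}$, take $L^\infty$-norms, and iterate down to $\bgamma=\bnul$ to obtain $\|r_\domain^{|\balpha|}D^\balpha a(\by)\|_{L^\infty}\le C\|a(\by)\|_{L^\infty}(1+\|b(\by)\|_{\Ww^s_\infty})^{|\balpha|}$. The paper's write-up is slightly terser but the argument is identical.
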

              \begin{proof}
                The proof proceeds along the lines of the proof of Lemma \ref{a-nu}.  
                Let
                $\balpha=(\alpha_1,\ldots,\alpha_d)\in \NN_0^d$ with
                $1\leq |\balpha|\leq s$ and  recall that $(\bee_j)_{j=1}^d$ is the standard basis of $\R^d$. Assuming that $\alpha_j>0$
                we have \eqref{eq:Leibniz}.  We apply corner-weighted
                norms to both sides of \eqref{eq:Leibniz}.  This
                implies
                \begin{equation*}
                  \begin{split} 
                    \| r_\domain^{|\balpha|} D^\balpha
                    a(\by)\|_{L^\infty} & =\big\| D^{\balpha-\bee_j}
                    \big[a(\by) D^{\bee_j}b(\by) \big]
                    \big\|_{L^\infty}
                    \\
                    &\leq \sum_{0\leq \bgamma \leq
                      \balpha-\bee_j}\binom{\balpha-\bee_j}{\bgamma}
                    \| r_\domain^{|\balpha-\bgamma|}
                    D^{\balpha-\bgamma} b(\by) \|_{L^\infty} \|
                    r_\domain^{|\bgamma|} D^{\bgamma
                    }a(\by)\|_{L^\infty}
                    \\
                    & \leq C \Bigg( \sum_{0\leq \bgamma \leq
                      \balpha-\bee_j} \| r_\domain^{|\bgamma|}
                    D^{\bgamma} a(\by) \|_{L^\infty}\Bigg) \Bigg(
                    \sum_{|\bk|\leq s}\|r_\domain^{|\bk|} D^\bk
                    b(\by)\|_{L^\infty}\Bigg)
                    \\
                    & = C \Bigg( \sum_{0\leq \bgamma \leq
                      \balpha-\bee_j} \| r_\domain^{|\bgamma|}
                    D^{\bgamma} a(\by) \|_{L^\infty}\Bigg)\|
                    b(\by)\|_{\Ww^s_\infty}\,.
                  \end{split}
                \end{equation*}
                Similarly, if $\gamma_j>0$, each term
                $\| r_\domain^{|\bgamma|} D^{\bgamma}
                a(\by)\|_{L^\infty}$ with $|\bgamma|>0$ can be
                estimated
                \begin{equation*}
                  \| r_\domain^{|\bgamma|} D^{\bgamma}  a(\by) \|_{L^\infty}
                  \leq  
                  C 
                  \Bigg( 
                  \sum_{0\leq \bgamma' \leq \bgamma-\bee_j} 
                  \| r_\domain^{|\bgamma'|} D^{\bgamma'} a(\by)\|_{L^\infty}
                  \Bigg) 
                  \| b(\by)\|_{\Ww^s_\infty} 
                  .
                \end{equation*}
                This implies
                \begin{equation*}
                  \begin{split} 
                    \| r_\domain^{|\balpha|} D^\balpha
                    a(\by)\|_{L^\infty} \leq C \|a(\by)\|_{L^\infty}
                    \big(1+ \|
                    b(\by)\|_{\Ww^s_\infty}\big)^{|\balpha|} \,,
                  \end{split}
                \end{equation*}
                for $1\leq |\balpha|\leq s$.  This finishes the proof.

              \end{proof}

              We recall the following result from \cite[Theorem
              1]{BLN}.
              \begin{theorem}\label{thm:bacuta}
                Let $\domain \subset \R^2$ be a polygonal domain,
                $\eta_0>0$, $s\in \NN$ and $N_s=2^{s+1}-s-2$. 
                Let
                $a\in L^\infty(\domain,\CC)$.

                Then there exist $\tau$ and $C_s$ with the following
                property: for any $a\in \Ww^{s-1}_\infty(\D)$ and for
                any $\varkappa \in \R$ such that
$$|\varkappa|<\eta:=\min\{\eta_0,\tau^{-1}\|a\|_{L^\infty}^{-1}\rho(a) \},$$
the operator $P_a$ defined in \eqref{PDE} induces an isomorphism
\begin{equation*}
  P_a: \Kk_{\varkappa+1}^{s}(\D) \cap \{ u|_{\partial \domain }=0\} \to \Kk_{\varkappa-1}^{s-2}(\D)
\end{equation*}
such that $P_a^{-1}$ depends analytically on the coefficients $a$ and
has norm
\begin{equation*}
  \|P_a^{-1}\| 
  \leq 
  C_s \big(\rho(a)-\tau|\varkappa|\|a\|_{L^\infty}\big)^{-N_s-1}\|a\|_{\Ww_\infty^{s-1}}^{N_s}\,.
\end{equation*}
The bound of $\tau$ and $C_s$ depends only on $s$, 
$\domain$ and $\eta_0$.
\end{theorem}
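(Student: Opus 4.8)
The plan is to split the statement into a \emph{quantitative} bounded-invertibility claim for fixed (complex) coefficient $a$, and the holomorphic-dependence claim, the latter being essentially automatic once the former is in hand. Concretely, I would first show that whenever $a\in\Ww^{s-1}_\infty(\domain)$ obeys the ellipticity bound \eqref{eq:PDEellinC} with $\rho(a)=\essinf_{\bx\in\D}\Re a(\bx)>\tau|\varkappa|\|a\|_{L^\infty}$ and $|\varkappa|<\eta_0$, the map
\begin{equation*}
  P_a\colon \Kk^{s}_{\varkappa+1}(\D)\cap\{u|_{\partial\domain}=0\}\longrightarrow \Kk^{s-2}_{\varkappa-1}(\D)
\end{equation*}
is boundedly invertible with $\|P_a^{-1}\|\le C_s(\rho(a)-\tau|\varkappa|\|a\|_{L^\infty})^{-N_s-1}\|a\|_{\Ww^{s-1}_\infty}^{N_s}$, $N_s=2^{s+1}-s-2$. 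Granting this for complex $a$, the analyticity assertion is soft: $a\mapsto P_a$, $u\mapsto-\div(a\nabla u)$, is \emph{linear} in $a$ (hence entire) as a map into $\cL\big(\Kk^{s}_{\varkappa+1}(\D)\cap\{u|_{\partial\domain}=0\},\Kk^{s-2}_{\varkappa-1}(\D)\big)$, and bounded there with norm $\lesssim_s\|a\|_{\Ww^{s-1}_\infty}$ — boundedness being a weighted-Leibniz computation in which the corner weights $r_\domain^{|\balpha|}$ on the derivatives of $a$ (cf. Lemma~\ref{kon-lem-1}) are precisely what absorbs the mismatch between $\Kk^{s}_{\varkappa+1}$ and $\Kk^{s-2}_{\varkappa-1}$. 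Since $\cL_{\rm is}$ is open in $\cL$ and $T\mapsto T^{-1}$ is holomorphic on it, the composition $a\mapsto P_a\mapsto P_a^{-1}$ is holomorphic on the open subset of the complexification $\Ww^{s-1}_\infty(\D,\CC)$ singled out by the invertibility condition, and by continuity of $\rho(\cdot),\|\cdot\|_{L^\infty},\|\cdot\|_{\Ww^{s-1}_\infty}$ the displayed bound persists after shrinking the neighborhood. The bound of $\tau$, $C_s$, $\eta_0$ on $s$, $\domain$, $\eta_0$ comes out of the two steps below.

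\emph{Base case $s=1$.} I would conjugate: writing $u=r_\domain^{\varkappa}w$ turns $P_a u=f$ into an equation $\widetilde P_a w=r_\domain^{-\varkappa}f$ where $\widetilde P_a=r_\domain^{-\varkappa}P_a(r_\domain^{\varkappa}\,\cdot\,)$ differs from $-\div(a\nabla\cdot)$ only by a first-order term with coefficient $O(|\varkappa|)\,a\,\nabla r_\domain/r_\domain$ and a zeroth-order term with coefficient $O(\varkappa^2)\,a\,|\nabla r_\domain|^2/r_\domain^2$. A weighted Hardy--Poincar\'e inequality valid in the polygon for $|\varkappa|\le\eta_0$ (with $\eta_0$ a domain constant, effectively $\eta_0\le\min_{\text{corners}}\pi/\omega$, the same range that keeps the frozen corner-model operators $-a(\bc)\Delta$ invertible on the Mellin side, the positive scalar $a(\bc)$ not affecting the Mellin spectrum) lets one absorb these perturbations into the coercive sesquilinear form $\int_\D a\nabla\cdot\overline{\nabla\cdot}$, provided $\tau|\varkappa|\|a\|_{L^\infty}<\rho(a)$. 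The Lax--Milgram lemma in the complex form \eqref{eq:PDEellinC} then yields unique solvability and the estimate with $N_1=1$; the extra power of $(\rho(a)-\tau|\varkappa|\|a\|_{L^\infty})^{-1}\|a\|_{L^\infty}$ beyond the bare coercivity constant arises in converting the weighted-energy norm of $w$ back to the full $\Kk^1_{\varkappa+1}$ norm of $u$. This is the only place the $a$-dependent half $\tau^{-1}\|a\|_{L^\infty}^{-1}\rho(a)$ of $\eta=\min\{\eta_0,\tau^{-1}\|a\|_{L^\infty}^{-1}\rho(a)\}$ enters.

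\emph{Induction $s>1$.} Having the $(s-1)$-level isomorphism, I would use the pointwise identity (cf. \eqref{PDE} and Lemma~\ref{lemma[regularity]})
\begin{equation*}
  -\Delta u \;=\; \frac{1}{a}\Big(f+\nabla a\cdot\nabla u\Big)\qquad\text{in}\quad L^2(\D),
\end{equation*}
together with the classical scalar Kondrat'ev shift for $-\Delta$ on the polygon (\cite{Gr,MazRoss2010}, \cite[Lemma 2.2]{BLN}), which is an isomorphism $\Kk^s_{\varkappa+1}\cap\{\cdot=0\}\to\Kk^{s-2}_{\varkappa-1}$ in the same range $|\varkappa|<\eta_0$. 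One checks by the weighted product rule that multiplication by $1/a$ is bounded $\Kk^{s-2}_{\varkappa-1}\to\Kk^{s-2}_{\varkappa-1}$ with norm $\lesssim_s\rho(a)^{-1}(1+\|a\|_{\Ww^{s-2}_\infty}/\rho(a))^{s-2}$, and that $\nabla a\cdot\nabla u$ lies in $\Kk^{s-2}_{\varkappa-1}$ with norm $\lesssim\|a\|_{\Ww^{s-1}_\infty}\|u\|_{\Kk^{s-1}_{\varkappa+1}}$ — the weight $r_\domain^{-1}$ lost in $\nabla a=r_\domain^{-1}(r_\domain\nabla a)$ with $r_\domain\nabla a\in\Ww^{s-2}_\infty$ being exactly compensated by the shift from $\varkappa+1$ to $\varkappa-1$. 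Feeding $\|u\|_{\Kk^{s-1}_{\varkappa+1}}\le M_{s-1}\|f\|_{\Kk^{s-2}_{\varkappa-1}}$ (with $M_{s-1}=\|P_a^{-1}\|$ at level $s-1$) back in yields a recursion $M_s\lesssim_s\rho(a)^{-1}(1+\|a\|_{\Ww^{s-2}_\infty}/\rho(a))^{s-2}\big(1+\|a\|_{\Ww^{s-1}_\infty}M_{s-1}\big)$; tracking exponents (each order costs an extra solve, so the $g$- and $\|a\|_{\Ww}$-exponents roughly satisfy $N_s=2N_{s-1}+s$, $N_0=0$, which solves to $N_s=2^{s+1}-s-2$) gives the stated bound, on the same weight window $|\varkappa|<\eta$.

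The main obstacle is precisely this last bookkeeping: one must verify that the weighted-Leibniz multiplication estimates hold with the claimed dependence on $\rho(a)$ and $\|a\|_{\Ww^{s-1}_\infty}$ (including the estimate $\|1/a\|_{\Ww^{s-2}_\infty}$, itself proved by an analogous induction from the quotient rule), that localizing to corner neighbourhoods via a partition of unity only produces lower-order commutator terms absorbable by the multiplication $r_\domain\cdot(\,\cdot\,)$ which gains one unit of weight, and — most delicately — that the induction does \emph{not} further shrink the admissible window: the restriction $|\varkappa|<\eta$ must come only from the $s=1$ Hardy--Poincar\'e step and the corner Mellin condition $|\varkappa|<\eta_0$, while every higher-order step is carried out within that fixed range. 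Assembling the constants so that $\tau$, $C_s$, $\eta_0$ depend only on $s$, $\domain$, $\eta_0$ and \emph{not} on $a$ is the technical crux.
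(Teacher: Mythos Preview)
The paper does not give its own proof of this theorem: it is stated explicitly as a recall of \cite[Theorem~1]{BLN} and used as a black box thereafter. Your outline --- base case via the conjugation $u=r_\domain^{\varkappa}w$ and a Hardy--Poincar\'e/Lax--Milgram argument, induction on $s$ via the identity $-\Delta u = a^{-1}(f+\nabla a\cdot\nabla u)$ combined with the scalar Kondrat'ev shift for $-\Delta$, and holomorphy via linearity of $a\mapsto P_a$ together with analyticity of operator inversion --- is a plausible reconstruction of how such a result is established (and your recursion $N_s=2N_{s-1}+s$, $N_0=0$ does indeed solve to $N_s=2^{s+1}-s-2$), but there is no proof in the present paper to compare it against.
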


Applying this result to our setting, we obtain the following
parametric regularity.
\begin{theorem}\label{thm:Kond}
  Suppose $\eta_0>0$, $\psi_j \in \Ww^{s-1}_\infty(\D)$ for all
  $j\in \NN$ and that \eqref{eq:leqkappa} holds.  Let
  $\mfu\subseteq \supp(\brho)$ be a finite set.  Let further
  $\by_0=(y_{0,1},y_{0,2},\ldots) \in U$ be such that $b(\by_0)$
  belongs to $\Ww^{s-1}_\infty(\D)$.  We denote
  \begin{equation*}
    \vartheta :
    = 
    \inf_{\bz_\mfu \in \mathcal{S}_\mfu (\by_0,\brho) } \rho\big(a(\bz_\mfu)\big)\|a(\bz_\mfu)\|_{L^\infty}^{-1}\,.
  \end{equation*}
  Let $\tau > 0$ be as given in Theorem \ref{thm:bacuta}.

  Then there exists a positive constant $C_s$ such that for
  $\varkappa \in \R$ with
  $|\varkappa|\leq \min\{\eta_0, \tau^{-1}\vartheta/2 \},$ and for
  $f\in \Kk^{s-2}_{\varkappa-1}(\D)$, the solution $u$ of \eqref{SPDE}
  is holomorphic in the cylinder $ \mathcal{S}_\mfu (\brho) $ as a
  function in variables
  $\bz_\mfu=(z_j)_{j \in \NN}\in \mathcal{S}_\mfu (\by_0,\brho)$
  taking values in $\Kk_{\varkappa+1}^s(\D)\cap V$, where
  $z_j = y_{0,j}$ for $j\not \in \mfu$ held fixed.  Furthermore, we
  have the estimate
  \begin{equation*}
    \|u(\bz_\mfu)\|_{\Kk_{\varkappa+1}^s} 
    \leq 
    C_s \frac{1}{ \big(\rho( a(\bz_\mfu) \big)^{N_{s}+1}} \|a(\bz_\mfu)\|_{\Ww^{s-1}_\infty}^{N_s}\,.
  \end{equation*}
\end{theorem}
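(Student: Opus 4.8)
The plan is to combine the abstract isomorphism statement of Theorem~\ref{thm:bacuta} with the holomorphic parameter dependence of the coefficient map $\bz_\mfu \mapsto a(\bz_\mfu)$, much in the spirit of the proofs of Propositions~\ref{prop:holoh1} and~\ref{prop3}. First I would verify that $\vartheta>0$: by \eqref{eq:R-1} (applied on each $\mathcal{S}_{\mfu,N}(\brho)$, with $N$ arbitrary) we have $\rho(a(\bz_\mfu)) \geq \exp(-M)\cos\kappa$ with $M$ depending on $\|b(\by_0)\|_{L^\infty}$, and by \eqref{eq:azu} (or \eqref{eq:azu0}) we have $\|a(\bz_\mfu)\|_{L^\infty}$ bounded on bounded subsets; but since the infimum defining $\vartheta$ is taken over the whole cylinder $\mathcal{S}_\mfu(\by_0,\brho)$ we must be slightly careful — the key point is that $\rho(a(\bz_\mfu))/\|a(\bz_\mfu)\|_{L^\infty} \geq \exp(-2\|\Re b(\bz_\mfu)\|_{L^\infty} )\cos\kappa\cdot e^{-\kappa\sqrt2}\cdots$; the $\by$-dependent part $\exp(-\|\cdot\|)$ cancels in the ratio up to the bounded perturbation $\sum_{j\in\mfu}y_j\psi_j$, so $\vartheta$ is bounded below by a positive constant depending only on $\kappa$ and $\by_0$. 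This is the first thing to pin down precisely.

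Next, fix $\varkappa$ with $|\varkappa|\leq \min\{\eta_0,\tau^{-1}\vartheta/2\}$ and $f\in \Kk^{s-2}_{\varkappa-1}(\D)$. For any $\bz_\mfu \in \mathcal{S}_\mfu(\by_0,\brho)$ we have, by the definition of $\vartheta$, that $\rho(a(\bz_\mfu)) - \tau|\varkappa|\|a(\bz_\mfu)\|_{L^\infty} \geq \rho(a(\bz_\mfu)) - (\vartheta/2)\|a(\bz_\mfu)\|_{L^\infty} \geq \rho(a(\bz_\mfu))/2 > 0$. Hence Theorem~\ref{thm:bacuta} applies pointwise in $\bz_\mfu$: $P_{a(\bz_\mfu)}$ is an isomorphism $\Kk^s_{\varkappa+1}(\D)\cap\{u|_{\partial\domain}=0\} \to \Kk^{s-2}_{\varkappa-1}(\D)$, so $u(\bz_\mfu) = P_{a(\bz_\mfu)}^{-1} f$ is well-defined and lies in $\Kk^s_{\varkappa+1}(\D)\cap V$ (membership in $V$ follows since on the real axis it agrees with the energy-space solution and the complex Lax--Milgram solution is unique). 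The quantitative bound follows by inserting $\rho(a(\bz_\mfu)) - \tau|\varkappa|\|a(\bz_\mfu)\|_{L^\infty} \geq \rho(a(\bz_\mfu))/2$ into the norm bound for $P_a^{-1}$, absorbing the factor $2^{N_s+1}$ into $C_s$:
\[
  \|u(\bz_\mfu)\|_{\Kk^s_{\varkappa+1}}
  \leq \|P_{a(\bz_\mfu)}^{-1}\|\,\|f\|_{\Kk^{s-2}_{\varkappa-1}}
  \leq C_s \big(\rho(a(\bz_\mfu))\big)^{-N_s-1}\|a(\bz_\mfu)\|_{\Ww^{s-1}_\infty}^{N_s}\,,
\]
after rescaling $C_s$ by $\|f\|_{\Kk^{s-2}_{\varkappa-1}}$ (or keeping $f$ in the constant, as the statement seems to do).

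For holomorphy, I would argue as in Proposition~\ref{prop3}: restrict to $\mathcal{S}_{\mfu,N}(\brho)$ for fixed $N$, where all relevant quantities ($\|a(\bz_\mfu)\|_{L^\infty}$, $\rho(a(\bz_\mfu))^{-1}$, $\|a(\bz_\mfu)\|_{\Ww^{s-1}_\infty}$ via Lemma~\ref{kon-lem-1}, using $\psi_j\in\Ww^{s-1}_\infty(\D)$ and the convergence of $\sum \rho_j|D^\balpha\psi_j|$ implicit in the setup) are \emph{uniformly bounded}, so that $\bz_\mfu \mapsto u(\bz_\mfu)$ is locally bounded and holomorphic as a composition of the entire map $\bz_\mfu \mapsto a(\bz_\mfu)$ with the analytic dependence of $P_a^{-1}$ on $a$ asserted in Theorem~\ref{thm:bacuta}; one checks Gâteaux-holomorphy in each $z_j$ coordinate by the usual difference-quotient argument and invokes local boundedness for joint holomorphy (cf.\ \cite[Lemma 2.2]{BLN} and \cite[Examples 1.2.38, 1.2.39]{JZdiss}, exactly as in the proof of Proposition~\ref{prop3}). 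Letting $N\to\infty$ gives holomorphy on all of $\mathcal{S}_\mfu(\brho)$. The main obstacle I anticipate is the careful lower bound on $\vartheta$ and the corresponding verification that the admissibility hypothesis $\tau|\varkappa|\|a(\bz_\mfu)\|_{L^\infty} < \rho(a(\bz_\mfu))$ of Theorem~\ref{thm:bacuta} holds \emph{uniformly} over the whole infinite cylinder — everything else is a direct transcription of the $H^s$-case in Proposition~\ref{prop3} with $H^s(\domain)$ replaced by $\Kk^s_{\varkappa+1}(\domain)$ and Corollary~\ref{cor:regularity} replaced by Theorem~\ref{thm:bacuta}.
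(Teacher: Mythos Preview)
Your approach is essentially identical to the paper's: apply Theorem~\ref{thm:bacuta} pointwise on the strip, use the hypothesis $|\varkappa|\leq \tau^{-1}\vartheta/2$ to get $\rho(a(\bz_\mfu))-\tau|\varkappa|\|a(\bz_\mfu)\|_{L^\infty}\geq \rho(a(\bz_\mfu))/2$, and establish holomorphy on each bounded slab $\mathcal{S}_{\mfu,N}(\brho)$ via local boundedness of $\|a(\bz_\mfu)\|_{\Ww^{s-1}_\infty}$ (through Lemma~\ref{kon-lem-1}) plus the analytic dependence of $P_a^{-1}$ on $a$ from Theorem~\ref{thm:bacuta}. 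The core argument is sound.

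However, your first step --- attempting to prove $\vartheta>0$ --- is both incorrect and unnecessary. The cancellation you sketch does not occur: in
\[
\frac{\rho(a)}{\|a\|_{L^\infty}}=\frac{\essinf_{\bx}\big[\exp(\Re b(\bx))\cos(\Im b(\bx))\big]}{\esssup_{\bx}\exp(\Re b(\bx))}
\]
the $\essinf$ and $\esssup$ are attained at different spatial points, so the exponentials do not cancel. Concretely, if $\psi_1=\chi_A$ for a proper measurable subset $A\subset\D$ and $\mfu=\{1\}$, then along the real axis the ratio equals $\min(e^{y_1},1)/\max(e^{y_1},1)\to 0$ as $|y_1|\to\infty$, hence $\vartheta=0$ (the paper makes precisely this point in Remark~\ref{rmk:kappa}). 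Fortunately the theorem does \emph{not} assert $\vartheta>0$: the conclusion is only claimed for $|\varkappa|\leq\min\{\eta_0,\tau^{-1}\vartheta/2\}$, so when $\vartheta=0$ one is forced to $\varkappa=0$, and then $\rho(a(\bz_\mfu))-0=\rho(a(\bz_\mfu))>0$ already suffices to apply Theorem~\ref{thm:bacuta} on each $\mathcal{S}_{\mfu,N}(\brho)$. The paper's proof makes this split explicit: it first assumes $\vartheta>0$, and then notes separately that on each bounded slab $\rho(a)\|a\|_{L^\infty}^{-1}\geq C_N>0$, which handles $\varkappa=0$ in general. Simply drop the attempted global lower bound on $\vartheta$ and your argument goes through unchanged.
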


\begin{proof} 
  Observe first that for the parametric coefficient $a(\bz_\mfu)$, the
  conditions of Proposition \ref{prop:holoh1} are satisfied.

  Thus, the solution $u$ is holomorphic in $\mathcal{S}_\mfu (\brho)$
  as a $V$-valued map in variables
  $\bz_\mfu=(z_j)_{j \in \NN}\in \mathcal{S}_\mfu (\by_0,\brho)$.  We
  assume that $\vartheta>0$.  Let $\mathcal{S}_{\mfu,N} (\brho)$ be
  given in \eqref{eq:SUN} and
  $\bz_{\mfu} = (y_j+\im \xi_j)_{j\in \NN}\in \mathcal{S}_{\mfu}
  (\by_0,\brho)$ with
  $(y_j+\im \xi_j)_{j\in \mfu}\in \mathcal{S}_{\mfu,N} (\brho)$.
  From Lemma \ref{kon-lem-1} we have
  \begin{equation*}
    \|a(\bz_\mfu)\|_{\Ww^{s-1}_\infty}
    \leq 	C\|a(\bz_\mfu)\|_{L^\infty} \Big(1+\| b(\bz_\mfu)\|_{\Ww^{s-1}_\infty}\Big)^{s-1}\,.
  \end{equation*}
  Furthermore
  \begin{equation*} \label{eq:bzu}
    \begin{split} 
      \| b(\bz_\mfu)\|_{\Ww^{s-1}_\infty} &= \sum_{|\balpha|\leq
        s-1}\Bigg\|r_\domain^{|\balpha|}\sum_{j\in \NN}(y_j+\im
      \xi_j)D^{\balpha}\psi_j\Bigg\|_{L^\infty}
      \\
      & \leq \sum_{j\in \mfu}(|y_j-y_{0,j}|+\rho_j)\|
      \psi_j\|_{\Ww^{s-1}_\infty}+\| b(\by_0)\|_{\Ww^{s-1}_\infty} <
      \infty\,.
    \end{split}
  \end{equation*} 
  This together with \eqref{eq:azu0} implies
  $ \|a(\bz_\mfu)\|_{\Ww^{s-1}_\infty} \leq C $. From the condition of
  $\varkappa$ we infer $ |\varkappa| \tau \leq {\vartheta}/{2} $ which
  leads to
  \begin{equation*}
    \tau |\varkappa| \|a(\bz_\mfu)\|_{L^\infty} \leq \rho(a(\bz_\mfu))/2.
  \end{equation*} 
  As a consequence we obtain
  \begin{equation*}
    \big(\rho(a(\bz_\mfu))-\tau |\varkappa|\|a(\bz)\|_{L^\infty}\big)^{-1} 
    \leq 
    \frac{1}{ \rho( a(\bz_\mfu) }\,.
  \end{equation*}
  Since the function $\exp$ is analytic in
  $\mathcal{S}_{\mfu,N} (\brho)$, the assertion follows for the case
  $\vartheta>0$ by applying Theorem \ref{thm:bacuta}.  In addition,
  for
  $\bz_{\mfu} = (z_j)_{j\in \NN}\in \mathcal{S}_{\mfu} (\by_0,\brho)$
  with $(z_j)_{j\in \mfu}\in \mathcal{S}_{\mfu,N} (\brho)$, we have
  \begin{equation*}
    \rho(a(\bz_\mfu))\|a(\bz_\mfu)\|_{L^\infty}^{-1} 
    \geq C>0,
  \end{equation*}
  From this we conclude that $u$ is holomorphic in the cylinder
  $\mathcal{S}_{\mfu,N} (\brho)$ as a $\Kk_{1}^s(\D)\cap V$-valued
  map, by again Theorem \ref{thm:bacuta}.  
  This completes the proof.
\end{proof}
\begin{remark} {\rm \label{rmk:kappa} The value of $\vartheta$ depends on
  the system $(\psi_j)_{j\in \NN}$.  Assume that $\psi_j=j^{-\alpha}$
  for some $\alpha>1$.  Then for any $\by\in U$, $\brho$ satisfying
  \eqref{eq:leqkappa}, and finite set $\mfu\subset\supp(\brho)$ we
  have
  \begin{equation*}
    \vartheta = \inf_{\bz_\mfu \in \mathcal{S}_\mfu (\by,\brho) } \frac{\Re[\exp(\sum_{j\in \NN}(y_j+\im \xi_j)j^{-\alpha})]}{\exp(\sum_{j\in \NN}y_jj^{-\alpha})} \geq \cos\kappa\,.
  \end{equation*}
  We consider another case when there exists some $\psi_j$ such that
  $\psi_j\geq C>0$ in an open set $\Omega$ in $\domain$ and
  $\|\exp(y_j \psi_j)\|_{L^\infty}\geq 1$ for all $y_j\leq 0$.  With
  $\by_0=(\ldots,0,y_j,0,...)$ and $v_0\in C_0^\infty(\Omega)$ we have
  in this case
  \begin{equation*}
    \vartheta 
    \leq \rho(\exp(y_j\psi_j)) \to 0 
    \quad 
    \text{when}\quad y_j\to -\infty\,.
  \end{equation*}
  Hence, only for $\varkappa=0$ is satisfied Theorem \ref{thm:Kond} in
  this situation.

Due to this observation, for Kondrat'ev regularity we consider only the
case $\varkappa=0$.  In Section \ref{S:DiffPolyg}, we will present a
stronger regularity result for  a
polygonal domain $D \subset \R^2$.
} \end{remark} 
\begin{lemma}\label{lem:Kond:01}
  Let $\bnu\in \FF$, $f\in \Kk_{-1}^{s-2}(\D)$, and assume that
  $\psi_j \in \Ww^{s-1}_\infty(\D)$ for $j\in \NN$.  Let $\by \in U$
  with $b(\by)\in \Ww^{s-1}_\infty(\D)$.  Assume further that there
  exists a non-negative sequence
  $\brho_\bnu=(\rho_{\bnu,j})_{j\in \NN}$ such that
  $\supp(\bnu)\subset \supp(\brho_\bnu)$ and
  \begin{equation} \label{kon-02} \sum_{|\balpha|\leq s-1}\Bigg\|
    \sum_{j\in
      \NN}\rho_{\bnu,j}|r_\domain^{|\balpha|}D^{\balpha}\psi_j|\Bigg\|_{L^\infty}\leq
    \kappa <\frac{\pi}{2}\,.
  \end{equation} 	
  Then we have the estimate
  \begin{equation*}
    \|\partial^{\bnu}u(\by)\|_{\Kk^s_1} 
    \leq C
    \frac{\bnu! } {\brho_\bnu^\bnu} 
    \big( \exp\big(\|b(\by)\|_{L^\infty }\big)^{2N_s+1}  \Big(1+\|  b(\by)\|_{\Ww^{s-1}_\infty} \Big)^{(s-1)N_s}.
  \end{equation*} 
\end{lemma}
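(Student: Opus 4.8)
The plan is to follow the Cauchy-integral strategy already used for Lemmata~\ref{lem:estV} and \ref{lemma:hs-regularity}, now with the Kondrat'ev-space a~priori bound of Theorem~\ref{thm:Kond} (for $\varkappa=0$) in the role of the $V$- and $H^s$-estimates. Set $\mfu:=\supp(\bnu)\subseteq\supp(\brho_\bnu)$. First I would verify the hypotheses of Theorem~\ref{thm:Kond} on the cylinder $\mathcal{S}_\mfu(\by,\brho_\bnu)$: the assumption $\psi_j\in\Ww^{s-1}_\infty(\D)$ is given, the $\balpha=0$ part of \eqref{kon-02} yields $\|\sum_{j\in\NN}\rho_{\bnu,j}|\psi_j|\|_{L^\infty}\le\kappa<\pi/2$, which is \eqref{eq:leqkappa} for $\brho_\bnu$, and $f\in\Kk^{s-2}_{-1}(\D)=\Kk^{s-2}_{\varkappa-1}(\D)$ with $\varkappa=0$. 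Moreover, by \eqref{Re(a)} and the $\balpha=0$ bound one has, for every $\bz_\mfu\in\mathcal{S}_\mfu(\by,\brho_\bnu)$, the lower bound $\rho(a(\bz_\mfu))\ge\exp(-\|b(\by)\|_{L^\infty}-\kappa)\cos\kappa>0$ and the upper bound $\|a(\bz_\mfu)\|_{L^\infty}\le e^{\kappa}\exp(\|b(\by)\|_{L^\infty})<\infty$, so $\vartheta>0$ and the value $\varkappa=0$ trivially lies in the admissible range $|\varkappa|\le\min\{\eta_0,\tau^{-1}\vartheta/2\}$. Hence $u$ is holomorphic on $\mathcal{S}_\mfu(\by,\brho_\bnu)$ as a $(\Kk^s_1(\D)\cap V)$-valued map, and, after a harmless dilation of $\brho_\bnu$ to $\kappa'\brho_\bnu$ with $\kappa<\kappa'\kappa<\pi/2$ exactly as in the proof of Lemma~\ref{lem:estV} (so that the torus $\mathcal{C}_{\by,\mfu}(\brho_\bnu)$ of \eqref{eq:C-rho} lies strictly inside the holomorphy cylinder), the estimate of Theorem~\ref{thm:Kond} is available on that torus.

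Second, applying Cauchy's integral formula over $\mathcal{C}_{\by,\mfu}(\brho_\bnu)$ as in \eqref{u-y} and \eqref{ineq1-proof} gives
\[
  \|\partial^{\bnu}u(\by)\|_{\Kk^s_1}\le\frac{\bnu!}{\brho_\bnu^\bnu}\sup_{\bz_\mfu\in\mathcal{C}_\mfu(\by,\brho_\bnu)}\|u(\bz_\mfu)\|_{\Kk^s_1}.
\]
Then, for $\bz_\mfu=(y_j+\eta_j+\im\xi_j)_{j\in\NN}\in\mathcal{C}_\mfu(\by,\brho_\bnu)$ with $|\eta_j|,|\xi_j|\le\rho_{\bnu,j}$ for $j\in\mfu$ and $\eta_j=\xi_j=0$ otherwise, I would combine the Theorem~\ref{thm:Kond} bound $\|u(\bz_\mfu)\|_{\Kk^s_1}\le C_s\,(\rho(a(\bz_\mfu)))^{-N_s-1}\|a(\bz_\mfu)\|_{\Ww^{s-1}_\infty}^{N_s}$ with three elementary ingredients: $(\mathrm{i})$ $\rho(a(\bz_\mfu))\ge c\exp(-\|b(\by)\|_{L^\infty})$ from \eqref{eq:R-1} and the $\balpha=0$ consequence of \eqref{kon-02}; $(\mathrm{ii})$ $\|a(\bz_\mfu)\|_{L^\infty}\le e^{\kappa}\exp(\|b(\by)\|_{L^\infty})$, again from the $\balpha=0$ bound; and $(\mathrm{iii})$ $\|b(\bz_\mfu)\|_{\Ww^{s-1}_\infty}\le\|b(\by)\|_{\Ww^{s-1}_\infty}+\sqrt2\,\kappa$, obtained by expanding the corner-weighted norm of $b(\bz_\mfu)=b(\by)+\sum_{j\in\mfu}(\eta_j+\im\xi_j)\psi_j$ and using \eqref{kon-02}. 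Feeding $(\mathrm{ii})$ and $(\mathrm{iii})$ into Lemma~\ref{kon-lem-1} yields $\|a(\bz_\mfu)\|_{\Ww^{s-1}_\infty}\le C\exp(\|b(\by)\|_{L^\infty})(1+\|b(\by)\|_{\Ww^{s-1}_\infty})^{s-1}$, and combining with $(\mathrm{i})$,
\[
  \|u(\bz_\mfu)\|_{\Kk^s_1}\le C\exp\bigl((N_s+1)\|b(\by)\|_{L^\infty}\bigr)\exp\bigl(N_s\|b(\by)\|_{L^\infty}\bigr)\bigl(1+\|b(\by)\|_{\Ww^{s-1}_\infty}\bigr)^{(s-1)N_s},
\]
which is $C\exp((2N_s+1)\|b(\by)\|_{L^\infty})(1+\|b(\by)\|_{\Ww^{s-1}_\infty})^{(s-1)N_s}$. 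Inserting this into the Cauchy bound above gives the asserted estimate.

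The only genuine point requiring care — really bookkeeping rather than analysis — is checking that $\varkappa=0$ is admissible \emph{uniformly} over the torus, i.e.\ that the quantity $\vartheta$ controlling the range of $\varkappa$ in Theorem~\ref{thm:Kond} stays positive; this is immediate here because $\varkappa=0$ and, as noted, $\rho(a(\bz_\mfu))$ is bounded below and $\|a(\bz_\mfu)\|_{L^\infty}$ above on the whole cylinder. The remaining technicality is, exactly as in Lemma~\ref{lem:estV}, to arrange that the Cauchy contour lies strictly inside the holomorphy cylinder via the dilation $\brho_\bnu\mapsto\kappa'\brho_\bnu$; this only changes the absolute constant $C$. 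No other obstacle is expected.
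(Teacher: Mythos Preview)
Your proposal is correct and follows essentially the same route as the paper: invoke Theorem~\ref{thm:Kond} with $\varkappa=0$ to get $\Kk^s_1$-valued holomorphy on the cylinder, apply Cauchy's formula over the polytorus, and then bound the supremum by combining the a~priori estimate $\|u(\bz_\mfu)\|_{\Kk^s_1}\le C_s\rho(a(\bz_\mfu))^{-N_s-1}\|a(\bz_\mfu)\|_{\Ww^{s-1}_\infty}^{N_s}$ with \eqref{eq:R-11}, \eqref{eq:azu}, Lemma~\ref{kon-lem-1} and the weighted assumption \eqref{kon-02}. One minor expository slip: the pointwise lower bound on $\rho(a(\bz_\mfu))$ you state does not hold on the full strip $\mathcal{S}_\mfu(\by,\brho_\bnu)$ (the real parts are unbounded there), only on the torus $\mathcal{C}_\mfu(\by,\brho_\bnu)$---but this is all you actually use, and for $\varkappa=0$ the admissibility question is trivial anyway, as you yourself note.
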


\begin{proof} Let $\bnu\in \FF$ with $\mfu=\supp(\bnu)$.  By our
  assumption, it is clear that (with $\balpha=0$ in \eqref{kon-02})
  \begin{equation*}
    \Bigg\|\sum_{j \in \NN} \rho_{\bnu,j} |\psi_j | \Bigg\|_{L^\infty}  \leq \kappa <\frac{\pi}{2}\,.
  \end{equation*}
  Consequently, if we fix the variable $y_j$ with $j\not \in \mfu$,
  the function $u$ of \eqref{SPDE} is holomorphic on the domain
  $\mathcal{S}_\mfu(\brho_\bnu)$, see Theorem \ref{thm:Kond}.  Hence,
  applying Cauchy's formula gives that
  \begin{equation*}
    \begin{split} 
      \|\partial^{\bnu}u(\by)\|_{\Kk^s_1} & \leq
      \frac{\bnu!}{\brho_\bnu^\bnu} \sup_{\bz_\mfu\in
        \mathcal{C}_\mfu(\by,\brho_\bnu)} \|u(\bz_\mfu)\|_{\Kk^s_1}
      \\
      & \leq C \frac{\bnu!}{\brho_\bnu^\bnu} \sup_{\bz_\mfu\in
        \mathcal{C}_\mfu(\by,\brho_\bnu)} \frac{1}{ \big(\rho(
        a(\bz_\mfu)) \big)^{N_{s}+1}}
      \|a(\bz_\mfu)\|_{\Ww^{s-1}_\infty}^{N_s},
    \end{split}
  \end{equation*}
  where $C_\mfu(\by,\brho_\bnu)$ is given as in
  \eqref{eq:C-rho-y}. 
  Notice that for
  $\bz_\mfu=(z_j)_{j\in \NN} \in \mathcal{C}_\mfu(\by,\brho_\bnu)$, we
  can write
  $z_j = y_j + \eta_j + \im\xi_j \in \Cc_{\by,j}(\brho_\bnu)$ with
  $|\eta_j | \le \rho_{\bnu,j}$ and $|\xi_j| \le \rho_{\bnu,j}$ for
  $j\in \mfu$.  
  Hence, by \eqref{eq:R-11}, \eqref{eq:azu} and
  \begin{equation*}
    \begin{split} 
      \|a(\bz_\mfu)\|_{\Ww^{s-1}_\infty} &\leq
      C\|a(\bz_\mfu)\|_{L^\infty} \Big(1+\|
      b(\bz_\mfu)\|_{\Ww^{s-1}_\infty}\Big)^{s-1}
      \\
      &= C\exp(\|b(\by)\|_{L^\infty}) \Bigg[1+ \sum_{|\balpha|\leq
        s-1}\Bigg\|r_\domain^{|\balpha|} \sum_{j\in
        \NN}(y_j+\eta_j+\im \xi_j)D^{\balpha}\psi_j\Bigg\|_{L^\infty}
      \Bigg]^{s-1}
      \\
      &= C\exp(\|b(\by)\|_{L^\infty}) \Bigg[1+ \sum_{|\balpha|\leq
        s-1}\Bigg\|2\sum_{j\in \mfu} \rho_{\bnu,j}|
      r_\domain^{|\balpha|} D^{\balpha} \psi_j |\Bigg\|_{L^\infty}+\|
      b(\by)\|_{\Ww^{s-1}_\infty} \Bigg]^{s-1}
      \\
      &\leq C\exp(\|b(\by)\|_{L^\infty}) \Big(1+2\kappa+\|
      b(\by)\|_{\Ww^{s-1}_\infty} \Big)^{s-1}\,,
    \end{split}
  \end{equation*}
  we obtain the desired result.
\end{proof}
\subsubsection{Summability of $K^s_{\varkappa}$-norms of  Wiener-Hermite PC expansion coefficients}
\label{sec:KmbetaSumm}
To establish weighted $\ell^2$-summability and $\ell^p$-summability of
$K^s_{\varkappa}$-norms of Wiener-Hermite PC expansion coefficients we
need the following assumption.
\begin{assumption}\label{ass:Ass3}
  Let $s\in \NN$.  
  All functions $\psi_j$ belong to
  $\Ww^{s-1}_\infty(\D)$ and there exists a positive sequence
  $(\lambda_j)_{j\in \NN}$ such that
  $\big(\exp(-\lambda_j^2)\big)_{j\in \NN}\in \ell^1(\NN)$ 
  and the series
$$\sum_{j\in \NN}\lambda_j\left|r_\domain^{|\balpha|}D^{\balpha}\psi_j\right|$$ 
converges in $L^\infty(\D)$ for all $\balpha\in \NN_0^d$ with $|\balpha|\leq s-1$.
\end{assumption}

\begin{lemma}\label{lem:Kond:02}
  Suppose that Assumption \ref{ass:Ass3} holds.  
  Then $b(\by)$ belongs to
  $\Ww^{s-1}_\infty(\D)$ $\gamma-a.e.\ \by\in U$.  
  Furthermore,
  $\mathbb{E}(\exp(k\|b(\by)\|_{\Ww_\infty^{s-1}}))$ 
  is finite for all $k\in [0,\infty)$.
\end{lemma}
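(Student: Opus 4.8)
The plan is to follow the argument used for Lemma~\ref{lem:s-a.e} (and ultimately \cite[Theorem~2.2]{BCDM}), replacing the unweighted Sobolev norm $\|\cdot\|_{W^{s-1}_\infty}$ by the corner-weighted Kondrat'ev norm $\|\cdot\|_{\Ww^{s-1}_\infty}$ throughout. First I would reduce the assertion to a scalar summability statement. Since $\Ww^{s-1}_\infty(\D)$ is a Banach space and $b(\by)=\sum_{j\in\NN}y_j\psi_j$, I set
\[
c_j:=\|\psi_j\|_{\Ww^{s-1}_\infty}=\sum_{|\balpha|\leq s-1}\big\|r_\domain^{|\balpha|}D^\balpha\psi_j\big\|_{L^\infty},\qquad j\in\NN.
\]
Applying Assumption~\ref{ass:Ass3} to each of the finitely many multi-indices $\balpha$ with $|\balpha|\leq s-1$ and summing gives $\sum_{j\in\NN}\lambda_j c_j=:S<\infty$. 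On the event that $\sum_{j\in\NN}|y_j|c_j<\infty$, the series $\sum_{j\in\NN}y_j\psi_j$ converges absolutely in $\Ww^{s-1}_\infty(\D)$; hence $b(\by)\in\Ww^{s-1}_\infty(\D)$ there, and, since differentiation and multiplication by the weights $r_\domain^{|\balpha|}$ act continuously on the convergent series, $\|b(\by)\|_{\Ww^{s-1}_\infty}\leq\sum_{j\in\NN}|y_j|c_j$. Because $\exp\big(k\|b(\by)\|_{\Ww^{s-1}_\infty}\big)\leq\exp\big(k\sum_{j\in\NN}|y_j|c_j\big)$ on this event, it therefore suffices to prove
\[
\mathbb{E}\Big(\exp\big(k{\textstyle\sum_{j\in\NN}|y_j|c_j}\big)\Big)<\infty\qquad\text{for every }k\in[0,\infty),
\]
which at once yields $\sum_{j\in\NN}|y_j|c_j<\infty$ for $\gamma$-a.e.\ $\by\in U$ (hence $b(\by)\in\Ww^{s-1}_\infty(\D)$ $\gamma$-a.e.) together with the claimed moment bound.

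To establish this exponential bound I would argue as follows. From $(\exp(-\lambda_j^2))_{j\in\NN}\in\ell^1(\NN)$ one has $\lambda_j\to\infty$, so $\lambda_{\min}:=\inf_{j}\lambda_j>0$. Assuming $S>0$ (otherwise $b\equiv0$ and there is nothing to prove) I set $w_j:=\lambda_j c_j/S\geq0$, so that $\sum_{j}w_j=1$. Writing $k\sum_{j}|y_j|c_j=kS\sum_{j}w_j\,(|y_j|/\lambda_j)$ and invoking Jensen's inequality for the convex function $t\mapsto e^{kSt}$ gives
\[
\exp\big(k{\textstyle\sum_{j\in\NN}|y_j|c_j}\big)\leq\sum_{j\in\NN}w_j\exp\big(kS|y_j|/\lambda_j\big).
\]
Taking expectations, interchanging sum and expectation by Tonelli, and using the elementary bound $\mathbb{E}(e^{a|y_1|})\leq 2e^{a^2/2}$ for $a\geq0$ (which follows from $e^{a|y_1|}\leq e^{ay_1}+e^{-ay_1}$ and $\mathbb{E}(e^{ay_1})=e^{a^2/2}$), I obtain
\[
\mathbb{E}\Big(\exp\big(k{\textstyle\sum_{j\in\NN}|y_j|c_j}\big)\Big)\leq 2\sum_{j\in\NN}w_j\exp\big(k^2S^2/(2\lambda_j^2)\big)\leq 2\exp\big(k^2S^2/(2\lambda_{\min}^2)\big)<\infty,
\]
which is the desired bound.

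The argument is essentially routine, and I do not expect a genuine obstacle. The only points that need some care are: (i) justifying that term-by-term differentiation and multiplication by the singular weight $r_\domain^{|\balpha|}$ commute with the infinite series, which is immediate once absolute convergence in the Banach space $\Ww^{s-1}_\infty(\D)$ is available; and (ii) the uniform-in-$j$ control of the Gaussian moment generating functions in the last display, for which the hypothesis $(\exp(-\lambda_j^2))_{j\in\NN}\in\ell^1(\NN)$ is used only to force $\lambda_j\to\infty$, hence $\lambda_{\min}>0$. Alternatively, one may simply quote Lemma~\ref{lem:s-a.e}/\cite[Theorem~2.2]{BCDM} with $W^{s-1}_\infty$ replaced by $\Ww^{s-1}_\infty$, since the proof there uses only the Banach-space structure of the target together with the pointwise bound $\|b(\by)\|\leq\sum_{j\in\NN}|y_j|\|\psi_j\|$.
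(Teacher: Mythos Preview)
Your reduction contains a genuine gap at the very first step. From Assumption~\ref{ass:Ass3} you deduce $\sum_{j\in\NN}\lambda_j c_j<\infty$ with $c_j=\|\psi_j\|_{\Ww^{s-1}_\infty}$, but the assumption only says that for each $\balpha$ the series $\sum_{j\in\NN}\lambda_j|r_\domain^{|\balpha|}D^\balpha\psi_j|$ converges in $L^\infty(\D)$, i.e.\ $\big\|\sum_{j}\lambda_j|r_\domain^{|\balpha|}D^\balpha\psi_j|\big\|_{L^\infty}<\infty$. This is strictly weaker than $\sum_j\lambda_j\|r_\domain^{|\balpha|}D^\balpha\psi_j\|_{L^\infty}<\infty$: take disjointly supported $\psi_j$ with $\lambda_j\|\psi_j\|_{L^\infty}=1/j$; then the pointwise sum has $L^\infty$-norm~$1$ while the norm sum diverges. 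Consequently neither your absolute-convergence argument in $\Ww^{s-1}_\infty$ nor the Jensen estimate (which needs $S<\infty$) is available. Your alternative of quoting \cite[Theorem~2.2]{BCDM} ``with $W^{s-1}_\infty$ replaced by $\Ww^{s-1}_\infty$'' suffers from the same misconception: that theorem does \emph{not} use the crude bound $\|b(\by)\|\le\sum_j|y_j|\|\psi_j\|$; it exploits the pointwise lattice structure of $L^\infty$ and the hypothesis $\sum_j\lambda_j|\psi_j|\in L^\infty$, which has no direct analogue for the Banach space $\Ww^{s-1}_\infty$.

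The paper sidesteps this by applying \cite[Theorem~2.2]{BCDM} \emph{componentwise in $\balpha$}: for each fixed $|\balpha|\le s-1$ it treats $(r_\domain^{|\balpha|}D^\balpha\psi_j)_{j}$ as a sequence in $L^\infty(\D)$, to which Assumption~\ref{ass:Ass3} applies verbatim. This yields, for $\gamma$-a.e.\ $\by$, convergence of $\sum_{j=1}^N y_j r_\domain^{|\balpha|}D^\balpha\psi_j$ in $L^\infty$ and finiteness of $\mathbb{E}\big(\exp(k\|\cdot\|_{L^\infty})\big)$ for each $\balpha$. Since there are only finitely many $\balpha$, the partial sums of $b$ are then Cauchy in $\Ww^{s-1}_\infty$, and the exponential moment on the full norm follows by H\"older over the finitely many components.
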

\begin{proof}
  Under Assumption \ref{ass:Ass3}, by \cite[Theorem 2.2.]{BCDM} we
  infer that for $\balpha\in \NN_0^d$, $|\balpha|\leq s-1$, the
  sequence
  $$\left(\sum_{j=1}^N y_j r_\domain^{|\balpha|}
    D^\balpha\psi_j\right)_{N\in\NN}$$ converges to some
  $\psi_\balpha$ in $L^\infty$ for $\gamma-a.e.\ \by\in U$ and
  $\mathbb{E}(\exp(k\|\psi_\balpha(\by)\|_{L^\infty}))$ is finite for
  all $k\in [0,\infty)$.  Hence, for $\gamma-a.e.\ \by\in U$, the
  sequence $\big(\sum_{j=1}^N y_j \psi_j\big)_{N\in\NN}$ is a Cauchy
  sequence in $\Ww^{s-1}_\infty(\D)$.  Since $\Ww^{s-1}_\infty(\D)$ is
  a Banach space, the statement follows.
\end{proof}

\begin{theorem} [General case] \label{kon-thm-ge} Let $s \in \NN$,
  $s\geq 2$ and $\domain$ be a bounded curvilinear polygonal domain.
  Let $f\in \Kk_{-1}^{s-2}(\D)$ and Assumption \ref{ass:Ass3} hold.
  Assume there exists a sequence
$$\bvarrho= (\varrho_j)_{j \in \NN}\in (0,\infty)^\infty \ \
with \ (\varrho_j^{-1})_{j \in \NN}\in \ell^q(\NN)$$ for some
$0 < q < \infty$.  
Assume furthermore that, for each $\bnu\in \FF$, there exists a sequence
$\brho_\bnu:= (\rho_{\bnu,j})_{j \in \NN}\in [0,\infty)^\infty$ 
such that 
$\supp(\bnu)\subset \supp(\brho_\bnu)$,
\begin{equation*}
  \sup_{\bnu\in \FF}	
  \sum_{|\balpha|\leq s-1}
  \Bigg\|	\sum_{j\in \NN}\rho_{\bnu,j}|r_\domain^{|\balpha|}D^{\balpha}\psi_j|\Bigg\|_{L^\infty}\leq \kappa <\frac{\pi}{2},
  \quad \text{and}\quad 	
  \sum_{\|\bnu\|_{\ell^\infty}\leq r}  \frac{\bnu!\bvarrho^{2\bnu}}{\brho_\bnu^{2\bnu}} <\infty\,
\end{equation*}  
with $r\in \NN$, $r>2/q$.

Then
	\begin{equation} \nonumber 
		\sum_{\bnu\in\FF}\beta_\bnu(r,\bvarrho)\|u_\bnu\|_{\Kk^s_1}^2 <\infty
		\ \ \ with \ \ \ \big(\beta_\bnu(r,\bvarrho)^{-1/2}\big)_{\bnu\in\FF} \in \ell^q(\FF),
	\end{equation}
where 
$\beta_\bnu(r,\bvarrho)$ 
is given in \eqref{beta}.
Furthermore,
$$
(\|u_\bnu\|_{\Kk^s_1})_{\bnu\in\FF}\in \ell^p(\FF) \ \ \  with
\ \ \ \frac{1}{p}=\frac{1}{q}+\frac{1}{2}.
$$
\end{theorem}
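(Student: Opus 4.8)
The plan is to follow the proof of Theorem~\ref{thm:hs-sum} line by line, with the Sobolev norm $\|\cdot\|_{H^s}$ replaced by the Kondrat'ev norm $\|\cdot\|_{\Kk^s_1}$ throughout; here the target Hilbert space is $\Kk^s_1(\D)$, i.e.\ we work with $\varkappa=0$, which by Remark~\ref{rmk:kappa} is the range in which Theorem~\ref{thm:Kond} is available in this generality. The first step is the Parseval-type identity
\begin{equation*}
  \sum_{\bnu\in \Ff}\beta_\bnu(r,\bvarrho)\|u_\bnu\|_{\Kk^s_1}^2
  =
  \sum_{\|\bnu\|_{\ell^\infty}\leq r} \frac{\bvarrho^{2\bnu}}{\bnu!}
  \int_U\| \partial^\bnu u(\by)\|_{\Kk^s_1}^2\rd\gamma(\by)\,,
\end{equation*}
obtained by repeating, coordinate by coordinate, the integration-by-parts computation in the proof of Lemma~\ref{lem:equal} (equivalently \cite[Theorem 3.3]{BCDM}); this uses only that $\Kk^s_1(\D)$ is a separable Hilbert space and that $u$ together with all its parametric derivatives of coordinate-order at most $r$ lies in $L^2(U,\Kk^s_1(\D);\gamma)$, which I would verify en route from Lemma~\ref{lem:Kond:01} combined with Lemma~\ref{lem:Kond:02}.

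Next I would bound the right-hand side. By Theorem~\ref{thm:Kond} the solution is holomorphic on $\mathcal{S}_\mfu(\brho_\bnu)$ with values in $\Kk^s_1(\D)\cap V$, and Lemma~\ref{lem:Kond:01} (whose hypothesis is exactly the first displayed condition of the theorem) gives, for $\gamma$-a.e.\ $\by\in U$,
\begin{equation*}
  \|\partial^{\bnu}u(\by)\|_{\Kk^s_1}
  \leq C\,
  \frac{\bnu!}{\brho_\bnu^\bnu}\,
  \exp\big((2N_s+1)\|b(\by)\|_{L^\infty}\big)\big(1+\|b(\by)\|_{\Ww^{s-1}_\infty}\big)^{(s-1)N_s}\,.
\end{equation*}
Squaring, inserting into the identity, and extracting the $\bnu$-independent integral yields
\begin{equation*}
  \sum_{\bnu\in\FF}\beta_\bnu(r,\bvarrho)\|u_\bnu\|_{\Kk^s_1}^2
  \leq
  C^2\Bigg(\sum_{\|\bnu\|_{\ell^\infty}\leq r}\frac{\bnu!\bvarrho^{2\bnu}}{\brho_\bnu^{2\bnu}}\Bigg)
  \int_U \exp\big((4N_s+2)\|b(\by)\|_{L^\infty}\big)\big(1+\|b(\by)\|_{\Ww^{s-1}_\infty}\big)^{2(s-1)N_s}\rd\gamma(\by)\,.
\end{equation*}
The first factor is finite by the second displayed hypothesis of the theorem, and the integral is finite because the elementary bound $(1+t)^m\leq C_m\exp(t)$ for $t\geq 0$, together with $\|b\|_{L^\infty}\leq\|b\|_{\Ww^{s-1}_\infty}$, reduces it to an expectation of $\exp\big(c\|b(\by)\|_{\Ww^{s-1}_\infty}\big)$, which is finite for every $c\geq 0$ by Lemma~\ref{lem:Kond:02} under Assumption~\ref{ass:Ass3}. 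Hence $\sum_{\bnu\in\FF}\beta_\bnu(r,\bvarrho)\|u_\bnu\|_{\Kk^s_1}^2<\infty$.

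The two summability conclusions then follow exactly as in the proof of Theorem~\ref{thm:s=1}. Since $r>2/q$ and $(\varrho_j^{-1})_{j\in\NN}\in\ell^q(\NN)$, Lemma~\ref{lem:beta-summability} gives $\big(\beta_\bnu(r,\bvarrho)^{-1/2}\big)_{\bnu\in\FF}\in\ell^q(\FF)$; writing $\|u_\bnu\|_{\Kk^s_1}^p=\big(\beta_\bnu(r,\bvarrho)\|u_\bnu\|_{\Kk^s_1}^2\big)^{p/2}\beta_\bnu(r,\bvarrho)^{-p/2}$ and applying H\"older's inequality with exponents $2/p$ and $2/(2-p)$,
\begin{equation*}
  \sum_{\bnu\in\FF}\|u_\bnu\|_{\Kk^s_1}^p
  \leq
  \Bigg(\sum_{\bnu\in\FF}\beta_\bnu(r,\bvarrho)\|u_\bnu\|_{\Kk^s_1}^2\Bigg)^{p/2}
  \Bigg(\sum_{\bnu\in\FF}\beta_\bnu(r,\bvarrho)^{-q/2}\Bigg)^{1-p/2}<\infty\,,
\end{equation*}
where $\frac1p=\frac1q+\frac12$, equivalently $q=\frac{2p}{2-p}$ so that $\frac{p}{2-p}=\frac q2$, which gives $(\|u_\bnu\|_{\Kk^s_1})_{\bnu\in\FF}\in\ell^p(\FF)$.

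I expect the only genuinely delicate point to be the justification of the Parseval-type identity in the $\Kk^s_1$-valued setting: one must check that the termwise differentiation/integration by parts of Lemma~\ref{lem:equal} remains valid when the Bochner space is built over $\Kk^s_1(\D)$ rather than $V$, which requires $\partial^\bnu u(\cdot)\in L^2(U,\Kk^s_1(\D);\gamma)$ for all $\bnu$ with $\|\bnu\|_{\ell^\infty}\leq r$; this is precisely what the pointwise bound of Lemma~\ref{lem:Kond:01} and the exponential-moment bound of Lemma~\ref{lem:Kond:02} secure. All remaining steps are a routine transcription of the arguments already carried out for Theorems~\ref{thm:s=1} and~\ref{thm:hs-sum}.
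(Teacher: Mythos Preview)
Your proposal is correct and follows essentially the same approach as the paper's proof: the Parseval-type identity for $\Kk^s_1$-valued functions, the derivative bound from Lemma~\ref{lem:Kond:01}, the exponential-moment finiteness from Lemma~\ref{lem:Kond:02}, and then Lemma~\ref{lem:beta-summability} plus H\"older for the two summability conclusions. The paper is somewhat terser (it simply asserts the integral is finite from Lemmata~\ref{lem:Kond:01} and~\ref{lem:Kond:02} and refers back to Theorem~\ref{thm:s=1} for the remainder), whereas you spell out the reduction $(1+t)^m\le C_m e^t$ and the H\"older step explicitly, but the logical structure is identical.
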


\begin{proof}
  For each $\bnu \in \FF$ with $\mfu=\supp(\bnu)$ and $\by\in U$ such
  that $b(\by)\in \Ww^{s-1}_\infty(\D)$, Assumption \ref{ass:Ass3}
  implies that the solution $u$ of \eqref{SPDE} is holomorphic in
  $\mathcal{S}_\mfu (\brho_\bnu)$ as a $\Kk_1^s(\D)\cap V$-valued map,
  see Theorem \ref{thm:Kond}.

  We obtain from Lemmata \ref{lem:Kond:01} and \ref{lem:Kond:02}
  \begin{equation*}
    \begin{split} 
      \int_U\| \partial^\bnu u(\by)\|_{\Kk^s_1}^2\rd\gamma(\by) & \leq
      C \frac{\bnu!}{\brho_\bnu^{2\bnu}} \int_U\big(
      \exp\big(\|b(\by)\|_{L^\infty }\big)^{4N_s+2} \Big(1+\|
      b(\by)\|_{\Ww^{s-1}_\infty} \Big)^{2(s-1)N_s}\rd\gamma(\by)
      \\
      & \leq C \frac{\bnu!}{\brho_\bnu^{2\bnu}} <\infty\,.
    \end{split}
  \end{equation*} 
  This leads to
  \begin{equation*}
    \begin{split} 
      \sum_{\bnu\in \FF}\beta_\bnu(r,\bvarrho)\|u_\bnu\|_{\Kk^s_1}^2
      &= \sum_{\|\bnu\|_{\ell^\infty}\leq r}
      \frac{\bvarrho^{2\bnu}}{\bnu!} \int_U\| \partial^\bnu
      u(\by)\|_{\Kk^s_1}^2\rd\gamma(\by) \leq C
      \sum_{\|\bnu\|_{\ell^\infty}\leq r}
      \frac{\bnu!\bvarrho^{2\bnu}}{\brho_\bnu^{2\bnu}} <\infty\,.
    \end{split}
  \end{equation*}  
  The rest of the proof follows similarly to the proof of Theorem
  \ref{thm:s=1}.
\end{proof}

Similarly to Corollaries \ref{cor:global} and \ref{cor:local} 
from Theorem~\ref{kon-thm-ge} we obtain
\begin{corollary}[The case of global supports]\label{kon-global}
Let $s \in \NN$, $s\geq 2$ and $\domain$ be a bounded curvilinear,
polygonal domain. 
Assume that for all $j\in \NN$ holds
$\psi_j\in\Ww^{s-1}_\infty(\D)$, and that $f\in \Kk_{-1}^{s-2}(\D)$.
Assume further that there exists a sequence of positive numbers
$\blambda= (\lambda_j)_{j \in \NN}$ such that
$$
\big(\lambda_j \| \psi_j \|_{\Ww_\infty^{s-1}}\big)_{j\in \NN} \in
\ell^1(\NN) \ \ and \ \ (\lambda_j^{-1})_{j \in \NN}\in \ell^q(\NN),$$
for some $0 < q < \infty$. 
Then we have
$(\|u_\bnu\|_{\Kk^s_1})_{\bnu\in\FF}\in \ell^p(\FF)$ 
with
$\frac{1}{p}=\frac{1}{q}+\frac{1}{2}$.
\end{corollary}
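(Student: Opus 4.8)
The plan is to deduce Corollary \ref{kon-global} from the general Theorem \ref{kon-thm-ge} by exhibiting, for each $\bnu\in\FF$, a suitable sequence $\brho_\bnu$ and a single sequence $\bvarrho$ that together verify the two hypotheses of that theorem; this mirrors exactly the reduction carried out in the proof of Corollary \ref{cor:global} from Theorem \ref{thm:s=1}. First I would fix $r\in\NN$ with $r>2/q$ (available since $q<\infty$), set $\bvarrho := \tau\blambda$ for a small positive constant $\tau$ to be chosen, and define, for $\bnu\in\FF$ with $\mfu=\supp(\bnu)$,
\begin{equation*}
  \rho_{\bnu,j} := \frac{\nu_j}{|\bnu|\,\|\psi_j\|_{\Ww^{s-1}_\infty}}\quad(j\in\mfu),\qquad \rho_{\bnu,j}:=0\quad(j\notin\mfu),
\end{equation*}
so that $\supp(\bnu)\subseteq\supp(\brho_\bnu)$. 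The first hypothesis is then immediate: for every multi-index $\balpha$ with $|\balpha|\le s-1$ we have $\|r_\domain^{|\balpha|}D^\balpha\psi_j\|_{L^\infty}\le\|\psi_j\|_{\Ww^{s-1}_\infty}$, hence
\begin{equation*}
  \sum_{|\balpha|\le s-1}\Bigg\|\sum_{j\in\NN}\rho_{\bnu,j}\,|r_\domain^{|\balpha|}D^\balpha\psi_j|\Bigg\|_{L^\infty}
  \le \sum_{j\in\mfu}\frac{\nu_j}{|\bnu|}\le 1<\frac{\pi}{2},
\end{equation*}
uniformly in $\bnu$, so one may take $\kappa=1$ (or any fixed value below $\pi/2$).

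Next I would verify the validity of Assumption \ref{ass:Ass3}, which is required for Theorem \ref{kon-thm-ge}. As in the proof of Corollary \ref{cor:global}, I replace $\blambda$ by $\blambda'=(\lambda_j^{1/2})_{j\in\NN}$: from $(\lambda_j^{-1})_{j\in\NN}\in\ell^q(\NN)$ one gets, up to nondecreasing rearrangement, $\lambda'_j\ge Cj^{1/(2q)}$, whence $(\exp(-{\lambda'_j}^2))_{j\in\NN}\in\ell^1(\NN)$; and for each $\balpha$ with $|\balpha|\le s-1$,
\begin{equation*}
  \Bigg\|\sum_{j\in\NN}\lambda'_j\,|r_\domain^{|\balpha|}D^\balpha\psi_j|\Bigg\|_{L^\infty}
  \le \sup_{j\in\NN}\lambda_j^{-1/2}\sum_{j\in\NN}\lambda_j\|\psi_j\|_{\Ww^{s-1}_\infty}<\infty,
\end{equation*}
which converges in $L^\infty(\D)$. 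So Assumption \ref{ass:Ass3} holds with $\blambda'$ in place of $\blambda$.

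The remaining and only computational point is the finiteness of $\sum_{\|\bnu\|_{\ell^\infty}\le r}\bnu!\,\bvarrho^{2\bnu}/\brho_\bnu^{2\bnu}$. Here I would substitute the definitions and bound, exactly as in \eqref{sum-estimate},
\begin{equation*}
  \sum_{\|\bnu\|_{\ell^\infty}\le r}\frac{\bnu!\,\bvarrho^{2\bnu}}{\brho_\bnu^{2\bnu}}
  \le\Bigg(\sum_{\|\bnu\|_{\ell^\infty}\le r}\frac{|\bnu|!}{\bnu!}\prod_{j\in\supp(\bnu)}\big(e\,\tau\sqrt{r!}\,\lambda_j\|\psi_j\|_{\Ww^{s-1}_\infty}\big)^{\nu_j}\Bigg)^{2},
\end{equation*}
using $|\bnu|^{|\bnu|}/\bnu^{\bnu}\le e^{|\bnu|}|\bnu|!/\bnu!$ (from $m!\le m^m\le e^m m!$) and $\bnu!\le (\bnu!)^2$-type regrouping as done there. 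Since $(\lambda_j\|\psi_j\|_{\Ww^{s-1}_\infty})_{j\in\NN}\in\ell^1(\NN)$ by hypothesis, $\tau>0$ can be chosen so that $\|(e\,\tau\sqrt{r!}\,\lambda_j\|\psi_j\|_{\Ww^{s-1}_\infty})_{j\in\NN}\|_{\ell^1}<1$, and then Lemma \ref{lem:alpha-summability}(ii) gives finiteness of the inner sum. Since also $(\varrho_j^{-1})_{j\in\NN}=(\tau^{-1}\lambda_j^{-1})_{j\in\NN}\in\ell^q(\NN)$, all hypotheses of Theorem \ref{kon-thm-ge} are met, and the conclusion $\sum_{\bnu}\beta_\bnu(r,\bvarrho)\|u_\bnu\|_{\Kk^s_1}^2<\infty$ with $(\beta_\bnu(r,\bvarrho)^{-1/2})_{\bnu\in\FF}\in\ell^q(\FF)$, and consequently $(\|u_\bnu\|_{\Kk^s_1})_{\bnu\in\FF}\in\ell^p(\FF)$ with $1/p=1/q+1/2$, follows. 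The main obstacle—minor in this case—is matching the $\Ww^{s-1}_\infty$-weighted estimates to the form of Assumption \ref{ass:Ass3} and handling the choice of $\tau$ so that both the $\ell^1$-norm-below-one condition and Assumption \ref{ass:Ass3} hold simultaneously; both are routine given the scaling trick $\lambda_j\mapsto\lambda_j^{1/2}$ already used in Corollary \ref{cor:global}.
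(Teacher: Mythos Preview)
Your proposal is correct and follows exactly the approach the paper indicates: the paper does not give an explicit proof of Corollary~\ref{kon-global} but simply writes ``Similarly to Corollaries \ref{cor:global} and \ref{cor:local} from Theorem~\ref{kon-thm-ge} we obtain'', and your argument is precisely the natural elaboration of that remark, with $\|\psi_j\|_{L^\infty}$ replaced throughout by $\|\psi_j\|_{\Ww^{s-1}_\infty}$. Your verification of the first hypothesis of Theorem~\ref{kon-thm-ge} via the swap of the $\balpha$- and $j$-sums (so that $\sum_{|\balpha|\le s-1}\|r_\domain^{|\balpha|}D^\balpha\psi_j\|_{L^\infty}=\|\psi_j\|_{\Ww^{s-1}_\infty}$ exactly cancels the denominator in $\rho_{\bnu,j}$) is clean and correct.
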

\begin{corollary}[The case of disjoint supports]
  Let $s \in \NN$, $s\geq 2$ and $\domain\subset \RR^d$ with
  $d \geq 2$ be a bounded curvilinear polygonal domain.  Assume that
  all the functions $\psi_j$ belong to $\Ww^{s-1}_\infty(\D)$ and have
  disjoint supports.  Assume further that $f\in \Kk_{-1}^{s-2}(\D)$
  and that there exists a sequence of positive numbers
  $\blambda= (\lambda_j)_{j \in \NN}$ such that
$$
\big(\lambda_j \| \psi_j \|_{\Ww_\infty^{s-1}}\big)_{j\in \NN} \in
\ell^2(\NN) \ \ and \ \ (\lambda_j^{-1})_{j \in \NN}\in \ell^q(\NN),$$
for some $0 < q < \infty$.  Then
$(\|u_\bnu\|_{\Kk^s_1})_{\bnu\in\FF}\in \ell^p(\FF)$ with
$\frac{1}{p}=\frac{1}{q}+\frac{1}{2}$.
\end{corollary}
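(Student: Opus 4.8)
The plan is to deduce the statement from the general Kondrat'ev summability result Theorem~\ref{kon-thm-ge}, following the same scheme by which Corollary~\ref{cor:local} was obtained from Theorem~\ref{thm:s=1} in the energy-space setting, with the corner-weighted norm $\|\cdot\|_{\Ww^{s-1}_\infty}$ now taking the place of $\|\cdot\|_{L^\infty}$. Concretely, for every $\bnu\in\FF$ I would choose the ($\bnu$-independent, which is exactly what the disjointness of supports buys us) radii $\brho_\bnu=(\rho_j)_{j\in\NN}$ with $\rho_j:=c\,\|\psi_j\|_{\Ww^{s-1}_\infty}^{-1}$ for a small constant $c=c(s,d)>0$ to be pinned down, together with the weight sequence $\bvarrho:=\tau\blambda$ for a small $\tau>0$. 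Then $\supp(\bnu)\subseteq\supp(\brho_\bnu)=\NN$, and it remains to verify the two hypotheses of Theorem~\ref{kon-thm-ge} — the ``strip condition'' and the weighted-summability condition — as well as Assumption~\ref{ass:Ass3}.

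First I would verify the strip condition $\sup_{\bnu\in\FF}\sum_{|\balpha|\leq s-1}\big\|\sum_{j\in\NN}\rho_{\bnu,j}\,|r_\domain^{|\balpha|}D^{\balpha}\psi_j|\big\|_{L^\infty}\leq\kappa<\pi/2$. This is the one place where disjointness of the supports is essential: since the $\psi_j$, hence the $r_\domain^{|\balpha|}D^{\balpha}\psi_j$, have pairwise disjoint supports, the $L^\infty$-norm of the $j$-sum collapses to $\sup_{j}\rho_j\|r_\domain^{|\balpha|}D^{\balpha}\psi_j\|_{L^\infty}\leq\sup_{j}\rho_j\|\psi_j\|_{\Ww^{s-1}_\infty}=c$; summing over the $M:=\#\{\balpha\in\NN_0^d:|\balpha|\leq s-1\}$ multi-indices gives the bound $cM$, so it suffices to take $c<\pi/(2M)$, e.g.\ $c=1/M$. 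This same collapse of a sum into a supremum is precisely what makes the $\ell^2$-hypothesis on $(\lambda_j\|\psi_j\|_{\Ww^{s-1}_\infty})_j$ enough here, in contrast to the $\ell^1$-hypothesis needed in the general and global-support cases.

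Next I would check Assumption~\ref{ass:Ass3} with the auxiliary sequence $\lambda_j':=\lambda_j^{1/2}$, as in the proof of Corollary~\ref{cor:global}: from $(\lambda_j^{-1})_j\in\ell^q(\NN)$ one gets, after a nonincreasing rearrangement, $\lambda_j\geq C j^{1/q}$, whence $(\exp(-(\lambda_j')^2))_j=(\exp(-\lambda_j))_j\in\ell^1(\NN)$; and since $(\lambda_j\|\psi_j\|_{\Ww^{s-1}_\infty})_j$ is bounded while $\lambda_j\to\infty$, one has $\lambda_j^{1/2}\|\psi_j\|_{\Ww^{s-1}_\infty}\to 0$, so that, using disjointness once more, the tails $\sup_{j>N}\lambda_j'\|r_\domain^{|\balpha|}D^{\balpha}\psi_j\|_{L^\infty}\to 0$ as $N\to\infty$ and $\sum_j\lambda_j'\,|r_\domain^{|\balpha|}D^{\balpha}\psi_j|$ converges in $L^\infty(\domain)$ for every $|\balpha|\leq s-1$. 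For the weighted-summability condition I would fix $r\in\NN$ with $r>2/q$, note that $(\varrho_j^{-1})_j=\tau^{-1}(\lambda_j^{-1})_j\in\ell^q(\NN)$, and bound, for $\|\bnu\|_{\ell^\infty}\leq r$, using $\nu_j!\leq r!\leq(r!)^{\nu_j}$ on $\supp(\bnu)$,
\[
  \frac{\bnu!\,\bvarrho^{2\bnu}}{\brho_\bnu^{2\bnu}}\ \leq\ \prod_{j\in\supp(\bnu)}\big(\sqrt{r!}\,\tau c^{-1}\lambda_j\|\psi_j\|_{\Ww^{s-1}_\infty}\big)^{2\nu_j},
\]
whose base sequence lies in $\ell^2(\NN)$ by hypothesis; shrinking $\tau$ so that its supremum is $<1$, Lemma~\ref{lem:alpha-summability}(i) (applied to the squared sequence, which then lies in $\ell^1(\NN)$ with $\ell^\infty$-norm $<1$) yields $\sum_{\|\bnu\|_{\ell^\infty}\leq r}\bnu!\,\bvarrho^{2\bnu}/\brho_\bnu^{2\bnu}<\infty$. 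With all hypotheses of Theorem~\ref{kon-thm-ge} in place, it delivers $(\|u_\bnu\|_{\Kk^s_1})_{\bnu\in\FF}\in\ell^p(\FF)$ with $1/p=1/q+1/2$.

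The ``hard part'' here is essentially nil in terms of new analysis: the genuine content — holomorphy of $\bz_\mfu\mapsto u(\bz_\mfu)$ into the Kondrat'ev scale via Theorem~\ref{thm:Kond}, the Cauchy-estimate bound on $\partial^\bnu u(\by)$ in Lemma~\ref{lem:Kond:01}, the moment bounds of Lemma~\ref{lem:Kond:02}, and the Parseval-type identity behind Theorem~\ref{kon-thm-ge} — is already done. What does need care is the bookkeeping: correctly carrying the corner weights $r_\domain^{|\balpha|}$ through the estimates, absorbing the finitely many spatial multi-indices $\balpha$ with $|\balpha|\leq s-1$ into the constant $c$ so that the effective strip radius $\kappa$ stays strictly below $\pi/2$, and being precise about why the disjoint-support structure downgrades the required input on $(\lambda_j\|\psi_j\|_{\Ww^{s-1}_\infty})_j$ from $\ell^1$ to $\ell^2$.
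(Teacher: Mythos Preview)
Your proposal is correct and follows essentially the same approach as the paper, which merely states that the corollary follows ``similarly to Corollaries~\ref{cor:global} and \ref{cor:local} from Theorem~\ref{kon-thm-ge}'' without spelling out the details; your choices $\rho_j=c\|\psi_j\|_{\Ww^{s-1}_\infty}^{-1}$ and $\bvarrho=\tau\blambda$, the use of disjointness to collapse the $j$-sum into a supremum, and the application of Lemma~\ref{lem:alpha-summability}(i) all mirror the proof of Corollary~\ref{cor:local} with $\|\cdot\|_{\Ww^{s-1}_\infty}$ in place of $\|\cdot\|_{L^\infty}$. The one adaptation you make explicit --- introducing the constant $c<\pi/(2M)$ to absorb the finite sum over $|\balpha|\le s-1$ --- is exactly the extra bookkeeping needed in the Kondrat'ev setting and is handled correctly.
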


\subsection{Bibliographical remarks}	
\label{Some related results on sparsity}

In this section, 
we briefly recall some known related results in previous works on $\ell^p$-summability and on
weighted $\ell^2$-summability of the generalized PC expansion coefficients of solutions 
to parametric divergence-form elliptic PDEs \eqref{SPDE}, 
as well as some applications to best $n$-term approximation.

A basic role in the approximation and numerical integration 
for parametric divergence-form elliptic PDEs \eqref{SPDE} are generalized PC expansions 
for the dependence on the parametric variables.  
In \cite{CCS13_783,CoDe,CoDeSch,CoDeSch1}, based on the conditions
$\big(\|\psi_j\|_{W_\infty^1}\big)_{j \in \NN} \in \ell^p(\NN)$ for some $0 < p <1$ 
on the affine expansion  \index{coefficient!affine $\sim$}
\begin{equation} \label{affine}
	a(\by)= \bar a + \sum_{j = 1}^\infty y_j\psi_j, \ \ \ \by\in [0,1]^\infty,
\end{equation}
the authors have proven $\ell^p$-summability of the coefficients  
in a Taylor or Legendre PC expansion and 
hence proposed adaptive best  $n$-term rate optimal
approximation methods of Galerkin and collocation type 
by choosing 
a set of  $n$ largest estimated terms in these expansions. 
To derive a fully discrete approximation,  
the best $n$-term approximants are then discretized by finite element methods.  
Some results on convergence rates of Galerkin approximation were proven in \cite{HS} 
for the log-Gaussian expansion \eqref{eq:CoeffAffin}, 
based on the summability
$\big(j\|\psi_j\|_{W_\infty^1}\big)_{j \in \NN} \in \ell^p(\NN)$ for some $0 < p <1$. 
However, in these papers possible local 
support properties of the component functions $\psi_j$ were not taken into account.

A different approach to studying summability   that takes into account the 
support properties has been recently proposed in \cite{BCM} for the affine-parametric case, 
in \cite{BCDM} for the log-exponential, parametric case, 
and in \cite{BCDS} for extension of both cases to 
higher-order Sobolev norms of the corresponding generalized PC expansion coefficients.  
This approach leads to significant improvements
on the results on $\ell^p$-summability and therefore, 
on best $n$-term semi-discrete and fully discrete approximations  
when the functions $\psi_j$ have limited overlap, 
such as splines, finite elements or compactly supported wavelet bases. 
These approximation results  provide a benchmark for convergence rates. 

We present some results from \cite{BCDM}  and  \cite{BCDS} on $\ell^p$-summability and  
weighted  $\ell^2$-summability of the  
Wiener-Hermite PC expansion coefficients of the solution to 
the parametric divergence-form elliptic PDEs \eqref{SPDE}--\eqref{eq:CoeffAffin} 
which were proven by real-variable bootstrapping arguments. 

For convenience, 
we use the conventions: 
$$
W^1:= V, \ \ W^2:=W, \ \ H^{-1}(\D):= V', \ \ H^0(\D):= L^2(\D), \ \ W^{0,\infty}(\D):= L^\infty(\D),
$$  
where we recall
$
W:=\{v\in V\; : \; \Delta v\in L^2(\D)\}\;,
$
is the space
equipped with the norm
$
\|v\|_{W}:=\|\Delta v\|_{L^2}.
$
The following theorem and lemma were proven 
in \cite{BCDM} for $i = 1$ and in  \cite{BCDS} for $i=2$. 
\begin{theorem}\label{thm[ell_2summability]}
Let $i=1,2$. 	
Assume that the right side $f$ in \eqref{SPDE}
belongs to $H^{i-2}(\D)$, that the domain $\D$ has $C^{i-1,1}$ smoothness,
that all functions $\psi_j$ belong to $W^{i-1,\infty}(\D)$.
Assume that there exist a number $0<q_i<\infty$ and 
a sequence $\bvarrho_i=(\varrho_{i;j}) _{j \in \NN}$ 
of positive numbers such that $(\varrho_{i;j}^{-1}) _{j \in \NN}\in \ell^{q_i}(\NN)$ 
and
	\begin{equation} \label{assumption[support-properties]}
		\sup_{|\alpha|\leq i-1} 
		\left\| \sum _{j \in \NN} \varrho_{i;j} |D^\alpha \psi_j| \right\|_{L^\infty} 
		<\infty.
	\end{equation}
	Then we have that for any $r \in \NN$,
	\begin{equation} \label{sigma_r,s}
		\sum_{\bnu\in\cF} (\sigma_{i;\bnu} \|u_\bnu\|_{W^i})^2 <\infty \quad  \text{and} \quad 
		(\sigma_{i;\bnu}^{-1})_{\bnu \in \cF} \in \ell^{q_i}(\cF),
	\end{equation}
	where
	\begin{equation} \label{sigma_r,s^2}
		\sigma_{i;\bnu}^2:=\sum_{\|\bnu'\|_{\ell^\infty}\leq r}{\bnu\choose \bnu'} \bvarrho_i^{2\bnu'}.
	\end{equation}
	Furthermore,
$$
(\|u_\bnu\|_{W^i})_{\bnu\in\FF}\in \ell^{2q_i/(2+q_i)}(\FF).
$$ 
\end{theorem}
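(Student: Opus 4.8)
The plan is to reduce the weighted $\ell^2$-summability in \eqref{sigma_r,s} to an $L^2(U,\,\cdot\,;\gamma)$-bound for the $\by$-derivatives of the parametric solution, to obtain that bound by the real-variable bootstrapping of \cite{BCDM,BCDS}, and then to read off the $\ell^{q_i}$- and $\ell^{2q_i/(2+q_i)}$-summability exactly as in the proof of Theorem~\ref{thm:s=1}. First observe that the weights $\sigma_{i;\bnu}^2$ of \eqref{sigma_r,s^2} are precisely the Wiener--Hermite weights $\beta_\bnu(r,\bvarrho_i)$ of \eqref{beta}, so the first assertion of \eqref{sigma_r,s} reads $\sum_{\bnu\in\cF}\beta_\bnu(r,\bvarrho_i)\|u_\bnu\|_{W^i}^2<\infty$. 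Since $(\varrho_{i;j}^{-1})_{j\in\NN}\in\ell^{q_i}(\NN)$ forces $\varrho_{i;j}\to\infty$, the choice $\lambda_j:=\varrho_{i;j}^{1/2}$ shows that the hypotheses imply Assumption~\ref{ass:Ass1} when $i=1$ and Assumption~\ref{ass:Ass2} (with $s-1=i-1$) when $i=2$; hence by Proposition~\ref{prop:Meas1} (resp.\ Lemma~\ref{lem:s-a.e}) one has $b(\by)\in W^{i-1,\infty}(\D)$ for $\gamma$-a.e.\ $\by\in U$ and $\EE\big(\exp(k\|b(\by)\|_{W^{i-1,\infty}})\big)<\infty$ for all $k\ge 0$. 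Applying the Parseval-type identity of Lemma~\ref{lem:equal} to $u$ when $i=1$, and to $\Delta u\in L^2(U,L^2(\D);\gamma)$ when $i=2$ (using $\Delta\partial^\bnu u=\partial^\bnu\Delta u$, $\|v\|_W=\|\Delta v\|_{L^2}$, and Proposition~\ref{prop:LipCoef} with the moment bound to see $\Delta u\in L^2(U,L^2(\D);\gamma)$), gives, for every $r\in\NN$,
\begin{equation*}
  \sum_{\bnu\in\cF}\beta_\bnu(r,\bvarrho_i)\|u_\bnu\|_{W^i}^2
  =\sum_{\|\bnu\|_{\ell^\infty}\le r}\frac{\bvarrho_i^{2\bnu}}{\bnu!}\int_U\|\partial^\bnu u(\by)\|_{W^i}^2\,\rd\gamma(\by),
\end{equation*}
so the whole matter reduces to controlling these integrals for $\bnu$ with $\|\bnu\|_{\ell^\infty}\le r$ (finitely many nonzero entries, each at most $r$).

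The core step is the bootstrapping bound on these integrals. Writing $\psi^\bmu:=\prod_{j\in\NN}\psi_j^{\mu_j}$ and using the log-affine structure $\partial^\bmu a(\by)=\psi^\bmu\,a(\by)$, differentiation of the variational form \eqref{vf} of \eqref{SPDE} in $\by$ together with a Leibniz expansion yields, for $\bnu\ne\bnul$ and all $v\in V$,
\begin{equation*}
  \int_\D a(\by)\,\nabla\partial^\bnu u(\by)\cdot\nabla v\,\rd\bx
  =-\sum_{\bmu\le\bnu,\,\bmu\ne\bnul}\binom{\bnu}{\bmu}\int_\D\psi^\bmu\,a(\by)\,\nabla\partial^{\bnu-\bmu}u(\by)\cdot\nabla v\,\rd\bx,
\end{equation*}
and, when $i=2$, differentiating \eqref{[Delta u=]} produces the analogous recursion whose terms are estimated through the a~priori bound of Proposition~\ref{prop:LipCoef}. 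Testing with $v=\partial^\bnu u(\by)$, iterating the recursion by induction on $|\bnu|$, and estimating the arising derivatives $D^\balpha\psi^\bmu$, $|\balpha|\le i-1$, through the \emph{joint} overlapping-support hypothesis \eqref{assumption[support-properties]} — rather than through $\prod_j\|\psi_j\|_{W^{i-1,\infty}}^{\mu_j}$ — leads, for $\gamma$-a.e.\ $\by$ and all $\bnu$ with $\|\bnu\|_{\ell^\infty}\le r$, to a pointwise bound $\|\partial^\bnu u(\by)\|_{W^i}\le C\,\bnu!\,\brho_{i,\bnu}^{-\bnu}\,G_i\big(\|b(\by)\|_{W^{i-1,\infty}}\big)$ with $G_i$ an exponential times a polynomial, hence — squaring and integrating against $\gamma$ with $\EE\big(G_i(\|b(\by)\|_{W^{i-1,\infty}})^2\big)<\infty$ — to $\int_U\|\partial^\bnu u(\by)\|_{W^i}^2\,\rd\gamma(\by)\le C_r\,(\bnu!)^2\,\brho_{i,\bnu}^{-2\bnu}$. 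Here $\brho_{i,\bnu}$ is a $\bnu$-dependent positive sequence with $\supp(\bnu)\subseteq\supp(\brho_{i,\bnu})$, built from \eqref{assumption[support-properties]} and $(\varrho_{i;j}^{-1})\in\ell^{q_i}$ so that $\sum_{\|\bnu\|_{\ell^\infty}\le r}\bnu!\,\bvarrho_i^{2\bnu}\brho_{i,\bnu}^{-2\bnu}<\infty$; the construction and this finiteness are the content of \cite{BCDM,BCDS}, settled there by estimates of the same flavour as in the proofs of Corollaries~\ref{cor:global} and~\ref{cor:local} (namely $\bnu!\le(r!)^{|\bnu|}$ for $\|\bnu\|_{\ell^\infty}\le r$, the inequality $|\bnu|^{|\bnu|}/\bnu^{\bnu}\le e^{|\bnu|}|\bnu|!/\bnu!$, and Lemma~\ref{lem:alpha-summability}), but now genuinely exploiting the support overlap. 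Substituting into the identity gives $\sum_{\bnu\in\cF}\beta_\bnu(r,\bvarrho_i)\|u_\bnu\|_{W^i}^2\le C_r\sum_{\|\bnu\|_{\ell^\infty}\le r}\bnu!\,\bvarrho_i^{2\bnu}\brho_{i,\bnu}^{-2\bnu}<\infty$, which is the first assertion of \eqref{sigma_r,s}.

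It remains to extract the two summability statements. By Lemma~\ref{lem:beta-summability}, applied with $q=q_i$ and $p=\tfrac{2q_i}{2+q_i}$ (so $q=\tfrac{2p}{2-p}$), one has — once $r>2/q_i$ — that $\sum_{\bnu\in\cF}\beta_\bnu(r,\bvarrho_i)^{-q_i/2}<\infty$, i.e.\ $(\sigma_{i;\bnu}^{-1})_{\bnu\in\cF}=(\beta_\bnu(r,\bvarrho_i)^{-1/2})_{\bnu\in\cF}\in\ell^{q_i}(\cF)$. Finally, with $p:=\tfrac{2q_i}{2+q_i}$, so that $\tfrac1p=\tfrac1{q_i}+\tfrac12$ and $\tfrac{2p}{2-p}=q_i$, H\"older's inequality exactly as at the end of the proof of Theorem~\ref{thm:s=1} gives
\begin{equation*}
  \sum_{\bnu\in\cF}\|u_\bnu\|_{W^i}^{p}
  \le\Big(\sum_{\bnu\in\cF}\sigma_{i;\bnu}^2\|u_\bnu\|_{W^i}^2\Big)^{p/2}\Big(\sum_{\bnu\in\cF}\sigma_{i;\bnu}^{-q_i}\Big)^{1-p/2}<\infty,
\end{equation*}
which is the last assertion. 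The main obstacle is the bootstrapping estimate of the parametric derivatives for \emph{overlapping} supports: resolving the derivative recursion and choosing the sequences $\brho_{i,\bnu}$ so that the final series converges using only the joint bound \eqref{assumption[support-properties]} — and not $\ell^p$-summability of $(\|\psi_j\|_{W^{i-1,\infty}})_{j\in\NN}$ — is the technically delicate core of \cite{BCDM,BCDS}; everything else is bookkeeping around Lemmas~\ref{lem:equal}, \ref{lem:beta-summability} and~\ref{lem:alpha-summability}.
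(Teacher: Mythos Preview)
The paper does not give its own proof of this theorem: it is stated in Section~\ref{Some related results on sparsity} as a result quoted from \cite{BCDM} ($i=1$) and \cite{BCDS} ($i=2$), with the remark that it was ``proven by real-variable bootstrapping arguments.'' So there is no in-paper proof to compare to; the question is whether your sketch correctly captures the cited argument.

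Your outer shell is correct: the identification $\sigma_{i;\bnu}^2=\beta_\bnu(r,\bvarrho_i)$, the reduction via the Parseval-type identity of Lemma~\ref{lem:equal} (applied to $u$ for $i=1$ and to $\Delta u$ for $i=2$), the use of Lemma~\ref{lem:beta-summability} for $(\sigma_{i;\bnu}^{-1})\in\ell^{q_i}(\cF)$ (with the necessary restriction $r>2/q_i$), and the final H\"older step are exactly the right bookkeeping. The Leibniz recursion you write down, coming from $\partial^\bmu a(\by)=\psi^\bmu a(\by)$, is also the correct starting point of \cite{BCDM,BCDS}.

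The gap is in how you package the output of the recursion. You assert that bootstrapping yields an \emph{individual} pointwise bound $\|\partial^\bnu u(\by)\|_{W^i}\le C\,\bnu!\,\brho_{i,\bnu}^{-\bnu}\,G_i(\cdot)$ together with $\sum_{\|\bnu\|_{\ell^\infty}\le r}\bnu!\,\bvarrho_i^{2\bnu}\brho_{i,\bnu}^{-2\bnu}<\infty$, and attribute both to \cite{BCDM,BCDS}. But those references do not construct $\bnu$-dependent sequences $\brho_{i,\bnu}$ nor produce Cauchy-type bounds of this shape: that packaging is precisely the holomorphy-based method of \emph{this} paper (Lemma~\ref{lem:estV}, Theorem~\ref{thm:s=1}). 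The paper itself notes at the start of Section~\ref{sec:SumHermCoef} that this approach gives a \emph{weaker} result than \cite{BCDM,BCDS} in the case of overlapping supports. What \cite{BCDM,BCDS} actually do is bound the full weighted sum $\sum_{\|\bnu\|_{\ell^\infty}\le r}\frac{\bvarrho_i^{2\bnu}}{\bnu!}\|\partial^\bnu u(\by)\|_{W^i}^2$ \emph{directly}: one tests the recursion with $v=\partial^\bnu u(\by)$, multiplies by $\bvarrho_i^{2\bnu}/\bnu!$, sums over $\bnu$, and then uses the joint bound \eqref{assumption[support-properties]} to absorb the entire right-hand side back into the left. Extracting individual $\|\partial^\bnu u\|$ bounds first and resumming afterwards---as in Corollaries~\ref{cor:global} and~\ref{cor:local}---loses exactly the information that makes the general overlapping-support hypothesis \eqref{assumption[support-properties]} sufficient. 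So while your roadmap is sound, the sentence attributing the $\brho_{i,\bnu}$ construction to \cite{BCDM,BCDS} should be replaced by a description of this direct weighted-energy argument.
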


Notice that the assumption \eqref{assumption[support-properties]} 
which give the weighted $\ell^2$-summability  \eqref{sigma_r,s}, 
already reflects the support properties of the component functions $\psi_j$.

For $\tau, \lambda \ge 0$, we define the family
\begin{equation} \label{[p_s]}
	p_\bnu(\tau, \lambda) := \prod_{j \in \NN} (1 + \lambda \nu_j)^\tau, \quad \bnu \in \FF,
\end{equation}
with the abbreviation $p_\bnu(\tau):= p_\bnu(\tau, 1)$.  

We make use of the following notation
\begin{equation} \label{cF_s}
	\cF_1 :=  \cF, \quad  \cF_2 := \{\bnu \in \cF: \nu_j \not= 1,  \ j \in \NN \}.
\end{equation}

\begin{lemma} \label{lemma[bcdm]}
	Let $0 < q <\infty$, $s =1,2$ and	$\tau, \lambda \ge 0$. Let  $\brho=(\rho_j) _{j \in \NN}$ be a sequence of  positive numbers such $(\rho_j^{-1}) _{j \in \NN}$ belongs to $\ell^q(\NN)$.  For $r \in \NN$, define  the family $(\sigma_\bnu)_{\bnu \in \cF}$  by
	\begin{equation} \nonumber
		\sigma_\bnu^2  :=   \sum_{\|\bnu'\|_{\ell^\infty} \le r}\binom{\bnu}{\bnu'}\brho^{2\bnu'}.
	\end{equation}	
	Then for any  $r > \frac{2s (\tau + 1)}{q}$, we have
	\begin{equation} \nonumber
		\sum_{\bnu \in \cF_s} p_\bnu(\tau,\lambda) \sigma_\bnu^{-q/s} < \infty.
	\end{equation}
\end{lemma}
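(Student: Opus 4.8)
The plan is to reduce the claim to the scalar estimate already appearing in the proof of Lemma~\ref{lem:beta-summability}, but now keeping track of the extra polynomial weight $p_\bnu(\tau,\lambda)$. First I would use the product structure: since $\sigma_\bnu^2 = \prod_{j\in\NN}\big(\sum_{\ell=0}^r\binom{\nu_j}{\ell}\rho_j^{2\ell}\big)$ and $p_\bnu(\tau,\lambda)=\prod_{j\in\NN}(1+\lambda\nu_j)^\tau$, the sum factorizes as
\begin{equation*}
  \sum_{\bnu\in\cF_s} p_\bnu(\tau,\lambda)\sigma_\bnu^{-q/s}
  = \prod_{j\in\NN} \sum_{n\in N_s} (1+\lambda n)^\tau \bigg(\sum_{\ell=0}^r\binom{n}{\ell}\rho_j^{2\ell}\bigg)^{-q/(2s)},
\end{equation*}
where the index set $N_s$ is $\NN_0$ for $s=1$ and $\NN_0\setminus\{1\}$ for $s=2$, reflecting the definition \eqref{cF_s}. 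The goal is then to show that each factor is of the form $1 + O(\rho_j^{-q})$ with a uniform constant, so that the product converges by comparison with $\exp\big(C\|(\rho_j^{-1})_{j}\|_{\ell^q}^q\big)$, exactly as in the last display of the proof of Lemma~\ref{lem:beta-summability}.

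Next I would estimate the inner scalar sum. As in \eqref{eq-lemma3.11-a}, bound $\sum_{\ell=0}^r\binom{n}{\ell}\rho_j^{2\ell}\geq \binom{n}{\min\{n,r\}}\rho_j^{2\min\{n,r\}}$, giving
\begin{equation*}
  \sum_{n\in N_s}(1+\lambda n)^\tau\bigg(\sum_{\ell=0}^r\binom{n}{\ell}\rho_j^{2\ell}\bigg)^{-q/(2s)}
  \leq \sum_{n=0}^{r-1}(1+\lambda n)^\tau\rho_j^{-nq/s} + \rho_j^{-rq/s}\sum_{n\geq r}(1+\lambda n)^\tau\binom{n}{r}^{-q/(2s)}.
\end{equation*}
The tail series $\sum_{n\geq r}(1+\lambda n)^\tau\binom{n}{r}^{-q/(2s)}$ converges precisely when $\frac{rq}{2s} - \tau > 1$, i.e. $r > \frac{2s(\tau+1)}{q}$, since $\binom{n}{r}\asymp n^r$ and $(1+\lambda n)^\tau\asymp n^\tau$; this is exactly the hypothesis imposed. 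Call this finite constant $C_{r,q,s,\tau,\lambda}$. For $s=2$, dropping the $n=1$ term only makes the sum smaller, so the same bound holds. Then, arguing as in the cited proof, pick $J$ with $\rho_j>1$ for all $j>J$ (possible because $(\rho_j^{-1})_j\in\ell^q$), so that for $j>J$ and $1\leq n\leq r$ one has $\rho_j^{-nq/s}\leq\rho_j^{-q/s}\leq\rho_j^{-q}$ when $s=1$; for $s=2$ one only gets $\rho_j^{-nq/2}\leq\rho_j^{-q/2}$, so I would instead absorb these lower-order terms using that $(\rho_j^{-1})_j\in\ell^q\subseteq\ell^{q'}$ for every $q'\geq q$ together with the monotone rearrangement $\rho_j\gtrsim j^{1/q}$, which forces $\rho_j^{-q/2}\lesssim\rho_j^{-q}\cdot\rho_j^{q/2}$ to still be summable after enlarging $J$. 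Thus each factor for $j>J$ is bounded by $1 + C\rho_j^{-q}$.

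Finally I would conclude: for $j\leq J$ each factor is a finite constant (finitely many of them), and for $j>J$,
\begin{equation*}
  \prod_{j>J}\big(1+C\rho_j^{-q}\big)\leq \prod_{j>J}\exp\big(C\rho_j^{-q}\big)=\exp\Big(C\,\|(\rho_j^{-1})_{j\in\NN}\|_{\ell^q}^q\Big)<\infty,
\end{equation*}
which gives the claim. The main obstacle is the $s=2$ case, where the exponent $q/(2s)=q/4$ on the reciprocal weights is weaker, so the crude per-factor bound $\rho_j^{-nq/s}\leq\rho_j^{-q}$ available for $s=1$ fails; the remedy is to exploit the nondecreasing rearrangement lower bound $\rho_j\geq C j^{1/q}$ (a consequence of $(\rho_j^{-1})_j\in\ell^q(\NN)$, used already in Corollary~\ref{cor:global}) to show the finitely-many offending powers $\rho_j^{-nq/2}$, $1\leq n<r$, are nonetheless summable, possibly at the cost of enlarging $J$. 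One should also double-check that $\tau=0$ is allowed (then $p_\bnu\equiv 1$ and the statement reduces to Lemma~\ref{lem:beta-summability}) and that the condition $r>\frac{2s(\tau+1)}{q}$ is exactly what makes the tail series converge, which matches the hypothesis as stated.
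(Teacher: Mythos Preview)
The paper does not supply a proof; it cites \cite[Lemma 5.3]{dD21} and notes that the case $s=1$, $\tau=0$ is exactly Lemma~\ref{lem:beta-summability}. Your approach---factorize over $j$, split the univariate sum at $n=r$, check that the tail $\sum_{n\ge r}(1+\lambda n)^\tau\binom{n}{r}^{-q/(2s)}$ converges precisely when $r>2s(\tau+1)/q$, and close via $\prod_j(1+C\rho_j^{-q})\le\exp(C\|(\rho_j^{-1})\|_{\ell^q}^q)$---is the natural extension of that lemma's proof and is correct in outline.

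Your handling of $s=2$ contains an error, though not the one you flagged. You correctly record that $N_2=\NN_0\setminus\{1\}$ but then fail to exploit it: every positive $n\in N_2$ satisfies $n\ge 2$, hence $\rho_j^{-nq/2}\le\rho_j^{-q}$ for $\rho_j>1$, and the finite-sum bound $1+C\rho_j^{-q}$ follows exactly as for $s=1$, with the tail term $\rho_j^{-rq/2}\le\rho_j^{-q}$ whenever $r\ge 2$. No workaround is required. The one you propose is in fact wrong: $(\rho_j^{-1})\in\ell^q$ does \emph{not} force $(\rho_j^{-q/2})_j\in\ell^1$---take $\rho_j=j^{1/q}(\log j)^{2/q}$---so the product $\prod_j(1+C\rho_j^{-q/2})$ can diverge, and the rearrangement bound $\rho_j\gtrsim j^{1/q}$ only yields $\rho_j^{-q/2}\lesssim j^{-1/2}$, which is not summable. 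The sole residual case is $r=1$, $s=2$ (permitted by the hypothesis when $q>4(\tau+1)$), where the tail exponent $rq/2=q/2$ genuinely does not reach $q$; the direct argument does not close there, and you should either flag it or impose $r\ge 2$.
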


This lemma has been proven in \cite[Lemma 5.3]{dD21}. Observe that for $s=1$ and $\tau = 0$, an equivalent formulation of Lemma \ref{lemma[bcdm]}   is  Lemma \ref{lem:beta-summability}. 

Theorem \ref{thm[ell_2summability]} and Lemma \ref{lemma[bcdm]} directly imply the following corollary.

\begin{corollary}\label{corollary[ell_2summability]-FF_s}
Under the assumptions of Theorem \ref{thm[ell_2summability]}, let 	
$s =1,2$ and $\tau, \lambda \ge 0$. 
Then we have that for any $r > \frac{2s (\tau + 1)}{q}$,
	\begin{equation} \label{sigma_r,s2}
		\sum_{\bnu\in\cF_s} (\sigma_{i;\bnu} \|u_\bnu\|_{W^i})^2 <\infty \quad  \text{and} \quad 
		(p_\bnu(\tau, \lambda) \sigma_{i;\bnu}^{-1})_{\bnu \in \cF_s} \in \ell^{q_i/s}(\cF_s).
	\end{equation}
\end{corollary}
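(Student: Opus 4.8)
\textbf{Proof plan for Corollary \ref{corollary[ell_2summability]-FF_s}.}

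The plan is to simply combine the two ingredients that precede the statement: the weighted $\ell^2$-summability furnished by Theorem \ref{thm[ell_2summability]} and the sequence-summability estimate of Lemma \ref{lemma[bcdm]}. First I would invoke Theorem \ref{thm[ell_2summability]} with $i=s$, whose hypotheses are exactly the standing assumptions of the corollary (right side $f\in H^{s-2}(\D)$, domain of $C^{s-1,1}$ smoothness, $\psi_j\in W^{s-1,\infty}(\D)$, and the existence of a sequence $\bvarrho_s=(\varrho_{s;j})_{j\in\NN}$ with $(\varrho_{s;j}^{-1})_{j\in\NN}\in\ell^{q_s}(\NN)$ satisfying \eqref{assumption[support-properties]}). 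This yields, for the family $(\sigma_{s;\bnu})_{\bnu\in\cF}$ defined in \eqref{sigma_r,s^2} with any $r\in\NN$, that $\sum_{\bnu\in\cF}(\sigma_{s;\bnu}\|u_\bnu\|_{W^s})^2<\infty$ and $(\sigma_{s;\bnu}^{-1})_{\bnu\in\cF}\in\ell^{q_s}(\cF)$. The first of these conclusions already gives the left relation in \eqref{sigma_r,s2}, since $\cF_s\subseteq\cF$ and hence $\sum_{\bnu\in\cF_s}(\sigma_{s;\bnu}\|u_\bnu\|_{W^s})^2\le\sum_{\bnu\in\cF}(\sigma_{s;\bnu}\|u_\bnu\|_{W^s})^2<\infty$.

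For the second relation in \eqref{sigma_r,s2}, I would apply Lemma \ref{lemma[bcdm]} with the choice $q=q_s$, the sequence $\brho=\bvarrho_s=(\varrho_{s;j})_{j\in\NN}$ (which by hypothesis satisfies $(\varrho_{s;j}^{-1})_{j\in\NN}\in\ell^{q_s}(\NN)$), and the given $\tau,\lambda\ge 0$. Note that the family $(\sigma_\bnu)_{\bnu\in\cF}$ built in Lemma \ref{lemma[bcdm]} from $\brho=\bvarrho_s$ via $\sigma_\bnu^2=\sum_{\|\bnu'\|_{\ell^\infty}\le r}\binom{\bnu}{\bnu'}\brho^{2\bnu'}$ coincides exactly with $\sigma_{s;\bnu}$ from \eqref{sigma_r,s^2}. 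Provided $r>\frac{2s(\tau+1)}{q_s}$ — which is precisely the standing hypothesis on $r$ in the corollary — Lemma \ref{lemma[bcdm]} gives $\sum_{\bnu\in\cF_s}p_\bnu(\tau,\lambda)\sigma_{s;\bnu}^{-q_s/s}<\infty$, i.e. $(p_\bnu(\tau,\lambda)\sigma_{s;\bnu}^{-1})_{\bnu\in\cF_s}\in\ell^{q_s/s}(\cF_s)$. This is exactly the right relation in \eqref{sigma_r,s2}, completing the proof.

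There is essentially no obstacle here: the corollary is a bookkeeping combination of the two preceding results, and the only point requiring a moment's care is checking that the threshold $r>\frac{2s(\tau+1)}{q}$ in Lemma \ref{lemma[bcdm]} is the same as the threshold imposed in the corollary (it is, with $q=q_i$ and $i=s$), and that the weight family $\sigma_\bnu$ in Lemma \ref{lemma[bcdm]} and the weight family $\sigma_{i;\bnu}$ in Theorem \ref{thm[ell_2summability]} are literally the same object once one takes $\brho=\bvarrho_i$. If one wishes to be scrupulous about notation, one should also remark that the index $i$ in Theorem \ref{thm[ell_2summability]} and the index $s$ in the corollary statement are identified, so that $q_i=q_s$ and $\sigma_{i;\bnu}=\sigma_{s;\bnu}$ throughout; with that identification the two displayed conclusions of the corollary are immediate.
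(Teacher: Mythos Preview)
Your approach is correct and matches the paper's: the corollary is obtained by directly combining Theorem~\ref{thm[ell_2summability]} (for the weighted $\ell^2$-summability over $\cF\supseteq\cF_s$) with Lemma~\ref{lemma[bcdm]} (for the $\ell^{q_i/s}$-summability of $(p_\bnu(\tau,\lambda)\sigma_{i;\bnu}^{-1})_{\bnu\in\cF_s}$). One small point: the indices $i$ and $s$ in the corollary are \emph{independent} parameters---$i\in\{1,2\}$ is inherited from Theorem~\ref{thm[ell_2summability]} while $s\in\{1,2\}$ selects the multiindex set $\cF_s$ and the summability exponent $q_i/s$---so you should not identify them; your argument goes through verbatim once you apply Lemma~\ref{lemma[bcdm]} with $q=q_i$ and $\brho=\bvarrho_i$ for the given $s$.
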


As commented in Section \ref{sec:SumHermCoef}, in the case of disjoint or finitely
overlapping supports the results on sparsity of Theorem~\ref {thm[ell_2summability]} and 
Corollary~\ref{corollary[ell_2summability]-FF_s} are stronger than those in 
Sections~\ref{sec:SumHermCoef} and \ref{sec:Summ}. 
They play a basic role in best $n$-term approximation \cite{BCDM,BCDS} 
and 
linear approximation and quadrature  \cite{dD21} (see also \cite{dD-Erratum22}) 
of the solution  to  the parametric divergence-form elliptic PDEs \eqref{SPDE}--\eqref{eq:CoeffAffin}. 
\newpage
\section{Sparsity for holomorphic functions}
\label{sec:SumHolSol}
In Section \ref{sec:EllPDElogN} we introduced a concept of
holomorphic extensions of countably-parametric families
$\{ u(\by) : \by \in U \}\subset V$ in the separable Hilbert space $V$
with respect to the parameter $\by$ 
into the Cartesian product $\mathcal{S}_\mfu (\brho)$ of strips in the complex domain
(cp. \eqref{eq:Snubrho}).  
We now introduce a refinement which is
required for the ensuing results on rates of numerical approximation and integration
of such families, based on sparsity (weighted $\ell^2$-summability)
and of Wiener-Hermite PC expansions of $\{ u(\by) : \by \in U \}$:
\emph{quantified parametric holomorphy} of (complex extensions of)
the parametric families $\{ u(\by) : \by \in U \}\subset X$ for a
separable Hilbert space X.  Section \ref{S:DefbxdHol} presents a
definition of quantified holomorphy of families 
$\{ u(\by) : \by \in U \}$ 
and
discusses the sparsity of the  Wiener-Hermite PC expansion coefficients of these families.
In Section~\ref{sec:bdX}, 
we present the notion $(\bb,\xi,\delta,X)$-holomorphy of composite functions.
In Section  \ref{sec:HlExmpl}, 
we analyze some examples of holomorphic functions which are solutions to certain PDEs.

There are two basic
steps in the approximations which we consider: 
\medskip \newline
\noindent
(i) We truncate the countably-parametric family
$\{ u(\by) : \by \in U \}\subset X$ to a finite number $N\in \NN$ of
parameters. 
This step, which is sometimes also referred to as
``dimension-truncation'', of course implicitly depends on the
enumeration of the coordinates $y_j \in \by$.  
\emph{We assume throughout that this numbering is fixed by the indexing of the
  Parseval frame in Theorem \ref{thm:AdFrmX}} which frame is used as
affine representation system to parametrize the uncertain input
$a = \exp(b)$ of the PDE of interest. We emphasize that the finite
dimension $N \in \NN$ of the truncated parametric Wiener-Hermite PC
expansion is a discretization parameter, and we will be interested in
quantitative bounds on the error incurred by restricting
$\{ u(\by) : \by \in U \}\subset X$ to Wiener-Hermite PC expansions of
the first $N$ active variables only. 
We denote these restrictions by
$\{ u_N(\by):\by\in U \}$.  
\medskip 
\newline
\noindent
(ii) The coefficients $u_\bnu\in X$ of the resulting,
finite-parametric Wiener-Hermite PC expansion, can not be computed
exactly, but must be numerically approximated.  As is done in
stochastic collocation and stochastic Galerkin algorithms, we seek
numerical approximations of $u_\bnu$ in suitable, finite-dimensional
subspaces $X_l\subset X$.  
Assuming the collection
$(X_l )_{l\in \NN}\subset X$ to be dense in $X$, any prescribed
tolerance $\eps>0$ of approximation of $u_N(\by)$ in
$L^2(U, X;\gamma)$ can be met. 
For notational convenience, we also set $X_0 = \{ 0\}$. 
\medskip 
\newline 
In computational practice, however, 
given a target accuracy $\eps\in (0,1]$,
one searches an allocation of
$l:\calF\times (0,1]\to \NN: (\bnu,\eps) \mapsto l(\bnu,\eps)$ 
of discretization levels along the ``active''  Wiener-Hermite PC expansion 
coefficients which ensures that the prescribed tolerance $\eps\in (0,1]$ is met
with possibly minimal ``computational budget''.
We propose and
analyze the \emph{a-priori construction of an allocation} $l$ which
ensures convergence rates of the corresponding collocation
approximations which are independent of $N$ 
(i.e.\ they are free from the ``curse of dimensionality''). 
These approximations are based on ``stochastic
collocation'', i.e.\ on sampling the parametric family
$\{ u(\by) : \by \in U \}\subset V$ in a collection of deterministic
Gaussian coordinates in $U$.  
We prove, subsequently,
dimension-independent convergence rates of the sparse collocation
w.r.t.\ $\by\in U$ and w.r.t.\ the subspaces $X_l\subset X$ realize
convergence rates which are free from the curse of dimensionality.
These rates depend only on the summability (resp. sparsity) of the
coefficients of the norm of the Wiener-Hermite PC expansion
of the parametric family $\{ u(\by) : \by \in U \}$ 
with respect to $\by$.
\subsection{$(\bb,\xi,\delta,X)$-Holomorphy and sparsity}
\label{S:DefbxdHol}

We introduce the concept of ``$(\bb,\xi,\delta,X)$-holomorphic
functions'', \index{function!holomorphic $\sim$}which constitutes a subset of $L^2(U,X;\gamma)$. 
As such
these functions are typically not pointwise well defined for each $\by\in U$.  
In order to still define a suitable form of pointwise
function evaluations to be used for numerical algorithms such as sampling at $\by\in U$ 
for ``stochastic collocation'' or for quadrature in ``stochastic Galerkin'' algorithms, 
we 
define them as $L^2(U,X;\gamma)$ limits of certain smooth (pointwise
defined) functions, 
cp.~Remark.~\ref{rmk:defu} and Example \ref{ex:u} ahead.

For $N\in\N$ and $\bvarrho=(\varrho_j)_{j=1}^N\in (0,\infty)^N$ set
(cp. \eqref{eq:Snubrho})
\begin{equation}
  \label{eq:Sjrho}
  \Ss(\bvarrho) := \set{\bz\in \C^N}{|\mathfrak{Im}z_j| < \varrho_j~\forall j}\qquad\text{and}\qquad
  \Bb(\bvarrho) := \set{\bz\in\C^N}{|z_j|<\varrho_j~\forall j}.
\end{equation}
\begin{definition}[($\bb,\xi,\delta,X$)-Holomorphy]
  \label{def:bdXHol}
  Let $X$ be a complex, separable Hilbert space,
  $\bb=(b_j)_{j\in\N} \in (0,\infty)^\infty$ and $\xi>0$, $\delta>0$.

  For $N\in\N$, $\bvarrho\in (0,\infty)^N$ is 
  called \emph{$(\bb,\xi)$-admissible} if
  \begin{equation}\label{eq:adm}
    \sum_{j=1}^N b_j\varrho_j\leq \xi\,.
  \end{equation}
  
  A function $u\in L^2(U,X;\gamma)$ is called
  \emph{$(\bb,\xi,\delta,X)$-holomorphic} if
  \begin{enumerate}
  \item\label{item:hol} for every $N\in\N$ there exists
    $u_N:\R^N\to X$, which, for every $(\bb,\xi)$-admissible
    $\bvarrho\in (0,\infty)^N$, admits a holomorphic extension
    (denoted again by $u_N$) from $\Ss(\bvarrho)\to X$; furthermore,
    for all $N<M$
    \begin{equation}\label{eq:un=um}
      u_N(y_1,\dots,y_N)=u_M(y_1,\dots,y_N,0,\dots,0)\qquad\forall (y_j)_{j=1}^N\in\R^N,
    \end{equation}

  \item\label{item:varphi} for every $N\in\N$ there exists
    $\varphi_N:\R^N\to\R_+$ such that
    $\norm[L^2(\R^N;\gamma_N)]{\varphi_N}\le\delta$ and
    \begin{equation*} \label{ineq[phi]}
      \sup_{\substack{\bvarrho\in(0,\infty)^N\\
          \text{is $(\bb,\xi)$-adm.}}}~\sup_{\bz\in
        \Bb(\bvarrho)}\norm[X]{u_N(\by+\bz)}\le
      \varphi_N(\by)\qquad\forall\by\in\R^N,
    \end{equation*}
  \item\label{item:vN} with $\tilde u_N:U\to X$ defined by
    $\tilde u_N(\by) :=u_N(y_1,\dots,y_N)$ for $\by\in U$ it holds
    \begin{equation*}
      \lim_{N\to\infty}\norm[L^2(U,X;\gamma)]{u-\tilde u_N}=0.
    \end{equation*}
  \end{enumerate}
\end{definition}
We interpret the definition of
$(\bb,\xi,\delta,X)$-holomorphy in the following remarks.
\begin{remark} {\rm\label{rmk:bdexpl}
  While the numerical value of
  $\xi>0$ in Definition \ref{def:bdXHol} of
  $(\bb,\xi,\delta,X)$-holomorphy is of minor importance in the
  definition, the sequence $\bb$ and the constant
  $\delta$ will crucially influence the magnitude of our upper bounds
  of the Wiener-Hermite PC expansion coefficients: The stronger the
  decay of
  $\bb$, the larger we can choose the elements of the sequence
  $\bvarrho$, so that
  $\bvarrho$ satisfies \eqref{eq:adm}. Hence stronger decay of
  $\bb$ indicates larger domains of holomorphic extension. The
  constant
  $\delta$ is an upper bound of these extensions in the sense of item
  \ref{item:varphi}.  
  Importantly, the decay of
  $\bb$ will determine the \emph{sparsity} of the Wiener-Hermite PC
  expansion coefficients, while decreasing
  $\delta$ by a factor will roughly speaking translate to a decrease
  of all coefficients by the same \emph{factor}.
} \end{remark} 

\begin{remark} {\rm
  Since $u_N\in L^2(\R^N,X;\gamma_N)$, the function $\tilde
  u_N$ in item \ref{item:vN} belongs to
  $L^2(U,X;\gamma)$ by Fubini's theorem.
} \end{remark} 

\begin{remark} {\rm\label{rmk:defu} [Evaluation of countably-parametric functions]
  In the following sections, for arbitrary $N\in\N$ and
  $(y_j)_{j=1}^N\in\R^N$ we will write
  \begin{equation}\label{eq:udef}
    u(y_1,\dots,y_N,0,0,\dots):=
    u_N(y_1,\dots,y_N).
  \end{equation}
  This is well-defined due to \eqref{eq:un=um}.  
  Note however
  that %
  \eqref{eq:udef} should be considered as an abuse of notation, since
  pointwise evaluations of functions $u\in L^2(U,X;\gamma)$ are in
  general not
  well-defined. 
} \end{remark} 

\begin{remark} {\rm\label{rmk:separable} 
 The assumption of $X$ being separable is not necessary in Definition~\ref{def:bdXHol}:
 Every function $u_N:\R^N\to X$ as in
 Definition~\ref{def:bdXHol} is continuous since it allows a
 holomorphic extension. 
 Hence,
  $$A_{N,n}:=\set{u_N((y_j)_{j=1}^N)}{y_j\in [-n,n]~\forall j}\subseteq X$$ 
  is compact and thus there is a countable set $X_{N,n}\subseteq X$
  which is dense in $A_{N,n}$ for every $N$, $n\in\N$.  Then
  $\bigcup_{n\in\N} A_{N,n}$ is contained in the (separable) closed
  span $\tilde X$ of
  $$
  \bigcup_{N,n\in\N} X_{N,n}\subseteq X.
  $$ Since
  $\tilde u_N\in L^2(U,\tilde X;\gamma)$ for every $N\in\N$ we also
  have $$u=\lim_{N\to\infty}u_N\in L^2(U,\tilde X;\gamma).$$ Hence,
  $u$ is separably valued.}
 \end{remark} 
\begin{lemma}\label{holo-lem1}
  Let $u$ be $(\bb,\xi,\delta,X)$-holomorphic, let $N\in\N$ and
  $0<\kappa<\xi<\infty$.  Let $u_N$, $\varphi_N$ be as in
  Definition~\ref{def:bdXHol}.  Then with $\bb_N=(b_j)_{j=1}^N$ it
  holds for every $\bnu\in\N_0^N$
  \begin{equation*}
    \|\partial^{\bnu}u_N(\by)\|_X 
    \leq
    \frac{\bnu!|\bnu|^{|\bnu|}\bb_N^\bnu}{\kappa^{|\bnu|}\bnu^{\bnu} } 
    \varphi_N(\by)\qquad\forall\by\in\R^N.
  \end{equation*}
\end{lemma}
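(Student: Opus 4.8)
The plan is to obtain the bound by a multivariate Cauchy integral estimate, exploiting the holomorphic extension of $u_N$ guaranteed by item \ref{item:hol} of Definition~\ref{def:bdXHol} together with the uniform bound supplied by item \ref{item:varphi}. First I would fix $N\in\N$, $\bnu\in\N_0^N$, and $\by\in\R^N$, and set $\mfu=\supp(\bnu)$. The key is to choose, for each $j\in\mfu$, a polydisc radius $\varrho_j>0$ in the $j$-th coordinate; specifically I would take $\varrho_j:=\kappa\nu_j/(|\bnu| b_j)$ for $j\in\mfu$ and $\varrho_j:=0$ (or irrelevant) for $j\notin\mfu$. This choice is $(\bb,\xi)$-admissible in the sense of \eqref{eq:adm} since $\sum_{j\in\mfu}b_j\varrho_j=\kappa\sum_{j\in\mfu}\nu_j/|\bnu|=\kappa\le\xi$, so that $u_N$ extends holomorphically to $\Ss(\bvarrho)$ and hence, for each fixed $\by$, to the polydisc $\by+\Bb(\bvarrho)$, and on that polydisc $\|u_N(\by+\bz)\|_X\le\varphi_N(\by)$ by item \ref{item:varphi}.

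Next I would apply the Cauchy integral formula in the $|\mfu|$ complex variables $(z_j)_{j\in\mfu}$, integrating over the distinguished boundary torus $\{|z_j-y_j|=\varrho_j : j\in\mfu\}$, exactly as in the proof of Lemma~\ref{lem:estV}. This gives
\begin{equation*}
  \|\partial^{\bnu}u_N(\by)\|_X
  \le
  \frac{\bnu!}{\brho^\bnu}\sup_{\bz}\|u_N(\bz)\|_X
  \le
  \frac{\bnu!}{\brho^\bnu}\varphi_N(\by),
\end{equation*}
where $\brho^\bnu=\prod_{j\in\mfu}\varrho_j^{\nu_j}$. It remains to substitute $\varrho_j=\kappa\nu_j/(|\bnu|b_j)$ and simplify:
\begin{equation*}
  \brho^\bnu
  =\prod_{j\in\mfu}\Big(\frac{\kappa\nu_j}{|\bnu|b_j}\Big)^{\nu_j}
  =\frac{\kappa^{|\bnu|}}{|\bnu|^{|\bnu|}}\cdot\frac{\bnu^\bnu}{\bb_N^\bnu},
\end{equation*}
using $\sum_{j\in\mfu}\nu_j=|\bnu|$ and the conventions $\nu_j^{\nu_j}=1$, $b_j^{\nu_j}=1$ for $j\notin\mfu$. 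Hence $\bnu!/\brho^\bnu=\bnu!|\bnu|^{|\bnu|}\bb_N^\bnu/(\kappa^{|\bnu|}\bnu^\bnu)$, which is precisely the claimed constant.

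I would also need to dispatch the trivial edge cases: when $\bnu=\bnul$ the statement reduces to $\|u_N(\by)\|_X\le\varphi_N(\by)$, which is item \ref{item:varphi} with $\bz=\bnul$; and the formula is to be read with the convention $0^0=1$ so that coordinates $j$ with $\nu_j=0$ contribute a factor $1$ on both sides. The only genuinely delicate point is the bookkeeping for the factors indexed by $j\notin\mfu$ and verifying the admissibility inequality \eqref{eq:adm} holds with the stated $\kappa<\xi$ rather than only $\le\xi$ — since $\kappa<\xi$ we have strict slack, so $\bvarrho$ is admissible and, if one prefers, can even be slightly enlarged; but the stated choice already suffices. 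I do not anticipate any serious obstacle: the argument is a direct transcription of the classical analytic-continuation/Cauchy-estimate technique (as in \cite{CoDe,CoDeSch1} and Lemma~\ref{lem:estV}), with the radii optimized coordinate-wise to produce the sharp $|\bnu|^{|\bnu|}/\bnu^\bnu$ factor.
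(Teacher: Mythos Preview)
Your approach is essentially identical to the paper's: choose radii $\varrho_j=\kappa\nu_j/(|\bnu|b_j)$ on $\supp(\bnu)$, verify $(\bb,\xi)$-admissibility, and apply the Cauchy estimate as in Lemma~\ref{lem:estV}. One small correction: Definition~\ref{def:bdXHol} requires $\bvarrho\in(0,\infty)^N$, so setting $\varrho_j=0$ for $j\notin\supp(\bnu)$ is not admissible as stated---the paper takes $\varrho_j=(\xi-\kappa)/(Nb_j)>0$ there, which is exactly where the strict inequality $\kappa<\xi$ is used (and these coordinates still contribute nothing to $\bvarrho^\bnu$).
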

\begin{proof}
  For $\bnu\in \N_0^N$ fixed we choose $\bvarrho=(\varrho_j)_{j=1}^N$
  with $\varrho_j= \kappa \frac{\nu_j}{|\bnu|b_j}$ for
  $j\in \supp(\bnu)$ and $\varrho_j=\frac{\xi -\kappa}{Nb_j}$ for
  $j\not \in \supp(\bnu)$.  Then
  \begin{equation*}
    \sum_{j=1}^N
    \varrho_jb_j=\kappa \sum_{j\in \supp(\bnu)} \frac{\nu_j}{|\bnu|}
    +\sum_{j\not\in \supp(\bnu)}\frac{\xi-\kappa}{N}\leq \xi.
  \end{equation*}
  Hence $\bvarrho$ is $(\bb,\xi)$-admissible, i.e.\ there exists a
  holomorphic extension $u_N:\Ss(\bvarrho)\to X$ as in
  Definition~\ref{def:bdXHol} \ref{item:hol}-\ref{item:varphi}.
  Applying Cauchy's integral formula as in the proof of Lemma
  \ref{lem:estV} we obtain the desired estimate.
\end{proof}

Let us recall the following. 
Let again $X$ be a separable Hilbert
space and $u\in L^2(U,X;\gamma)$. 
Then
$$
L^2(U,X;\gamma) = L^2(U;\gamma)\otimes X
$$
with Hilbertian tensor product, 
and $u$ can be represented in a 
Wiener-Hermite PC expansion \index{expansion!Wiener-Hermite PC $\sim$}
\begin{equation}\label{eq:uHermiteExp}
  u=\sum_{\bnu\in \FF} u_\bnu H_\bnu,
\end{equation}
where
\begin{equation*}
  u_\bnu=\int_U u(\by) H_\bnu(\by) \rd \gamma(\by)
\end{equation*}
are the Wiener-Hermite PC expansion coefficients.  
Also, there holds the Parseval-type identity
\begin{equation*}\label{eq:PCParseval}
  \|u\|_{L^2(U,X;\gamma)}^2=\sum_{\bnu \in \FF}
  \|u_\bnu\|_X^2\,.
\end{equation*}
When $u$ is $(\bb,\xi,\delta,X)$-holomorphic, then we have
for the functions $u_N:\R^N\to X$ in Definition~\ref{def:bdXHol}
\begin{equation*}
  u_N=\sum_{\bnu\in\N_0^N} u_{N,\bnu} H_\bnu,
\end{equation*}
where
\begin{equation*}
  u_{N,\bnu}=\int_{\R^N} u_N(\by) H_\bnu(\by) \rd \gamma_N(\by).
\end{equation*}

	In an analogous manner  to \eqref{beta}, for $r\in \NN$ and a finite sequence
of nonnegative numbers
$\bvarrho_N=(\varrho_j)_{j=1}^N$, we define
\begin{equation} \label{betaN} 
	\beta_\bnu(r,\bvarrho_N) :=
	\sum_{\bnu' \in \N_0^N: \ \|\bnu'\|_{\ell^\infty}\leq r} \binom{\bnu}{\bnu'}
	\bvarrho^{2\bnu'} = \prod_{j=1}^N\Bigg(\sum_{\ell=0}^{r}\binom{\nu_j}{\ell}\varrho_j^{2\ell}\Bigg),
	\ \ \ \bnu \in \N_0^N.
\end{equation} 

\begin{lemma}\label{holo-lem2}
  Let $u$ be $(\bb,\xi,\delta,X)$-holomorphic, 
  let $N\in\N$ 
  and let
  $\bvarrho_N= (\varrho_j)_{j=1}^N\in [0,\infty)^N$. 

 Then, for any fixed $r\in\N$, there holds the identity
  \begin{equation} \label{general} \sum_{\bnu\in
    \N_0^N}\beta_\bnu(r, \bvarrho_N )\|u_{N,\bnu}\|_{X}^2 
    =
    \sum_{\set{\bnu\in\N_0^N}{\|\bnu\|_{\ell^\infty}\leq r}}
    \frac{\bvarrho_N^{2\bnu}}{\bnu!}  
     \int_{\R^N}\| \partial^\bnu u_N(\by)\|_{X}^2\rd\gamma_N(\by). 
    \end{equation}
\end{lemma}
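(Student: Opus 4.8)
The claimed identity \eqref{general} is the finite-dimensional analogue of Lemma~\ref{lem:equal} (which was stated for $V$-valued solutions on the full parameter space $U$), so the strategy is to re-run that argument in the setting of the $X$-valued functions $u_N:\R^N\to X$ furnished by Definition~\ref{def:bdXHol}. The plan is to first establish the key one-dimensional computation: for a sufficiently smooth univariate $v\in L^2(\R;\gamma_1)$ with Wiener--Hermite coefficients $v_\nu=\int_\R v\,H_\nu\,\rd\gamma_1$, and for any $\mu\in\N_0$, one has
\begin{equation*}
  \frac{1}{\mu!}\int_\R |v^{(\mu)}(y)|^2\,\rd\gamma_1(y)
  = \sum_{\nu\in\N_0}\binom{\nu}{\mu}|v_\nu|^2,
\end{equation*}
with the convention $\binom{\nu}{\mu}=0$ for $\mu>\nu$. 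This follows exactly as in the proof of Lemma~\ref{lem:equal}: integrate by parts $\mu$ times using $H_\nu(y)=\frac{(-1)^\nu}{\sqrt{\nu!}}p^{(\nu)}(y)/p(y)$ to get $v_\nu=\sqrt{(\nu-\mu)!/\nu!}\int_\R v^{(\mu)}H_{\nu-\mu}\,\rd\gamma_1$ for $\nu\ge\mu$, then apply Parseval's identity to $v^{(\mu)}$ in the ONB $(H_{\nu-\mu})_{\nu\ge\mu}$.

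Next I would tensorize. Since $u_N\in L^2(\R^N,X;\gamma_N)$ is $(\bb,\xi,\delta,X)$-holomorphic, it is smooth (indeed real-analytic) on $\R^N$, and Lemma~\ref{holo-lem1} gives, for each $(\bb,\xi)$-admissible setting, pointwise bounds $\|\partial^\bnu u_N(\by)\|_X\le \frac{\bnu!|\bnu|^{|\bnu|}\bb_N^\bnu}{\kappa^{|\bnu|}\bnu^\bnu}\varphi_N(\by)$ with $\varphi_N\in L^2(\R^N;\gamma_N)$; hence every mixed partial derivative $\partial^\bmu u_N$ lies in $L^2(\R^N,X;\gamma_N)$, which justifies all the integrations by parts and Parseval expansions coordinate by coordinate. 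Applying the one-dimensional identity in each variable $y_j$ (for the multi-index $\bmu=(\mu_j)_{j=1}^N$) to the $X$-valued function $u_N$ and using the Hilbertian Parseval identity $\|w\|_{L^2(\R^N,X;\gamma_N)}^2=\sum_{\bnu\in\N_0^N}\|w_\bnu\|_X^2$, one obtains the $X$-valued multi-index analogue
\begin{equation*}
  \frac{1}{\bmu!}\int_{\R^N}\|\partial^\bmu u_N(\by)\|_X^2\,\rd\gamma_N(\by)
  = \sum_{\bnu\in\N_0^N}\binom{\bnu}{\bmu}\|u_{N,\bnu}\|_X^2,
\end{equation*}
valid for every $\bmu\in\N_0^N$ (the right side counts only $\bnu\ge\bmu$). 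This is the $N$-dimensional, $X$-valued form of the intermediate identity in the proof of Lemma~\ref{lem:equal}.

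Finally I would multiply both sides by $\bvarrho_N^{2\bmu}$ and sum over all $\bmu\in\N_0^N$ with $\|\bmu\|_{\ell^\infty}\le r$. On the right, interchanging the two (absolutely convergent, nonnegative-term) sums gives $\sum_{\bnu\in\N_0^N}\big(\sum_{\|\bmu\|_{\ell^\infty}\le r}\binom{\bnu}{\bmu}\bvarrho_N^{2\bmu}\big)\|u_{N,\bnu}\|_X^2 = \sum_{\bnu\in\N_0^N}\beta_\bnu(r,\bvarrho_N)\|u_{N,\bnu}\|_X^2$ by the definition \eqref{betaN}; on the left we get exactly $\sum_{\|\bmu\|_{\ell^\infty}\le r}\frac{\bvarrho_N^{2\bmu}}{\bmu!}\int_{\R^N}\|\partial^\bmu u_N(\by)\|_X^2\,\rd\gamma_N(\by)$, which is the asserted right-hand side of \eqref{general} (after renaming $\bmu$ to $\bnu$). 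This establishes the identity; note both sides may a priori be $+\infty$, and the equality still holds as an identity in $[0,\infty]$, which is all that is needed.

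\textbf{Main obstacle.} The only genuine point to be careful about is integrability: one must verify that $\partial^\bmu u_N\in L^2(\R^N,X;\gamma_N)$ for all the $\bmu$ appearing (so the integration-by-parts identities are licit and no boundary terms at infinity survive), and that the double sum over $\bmu$ and $\bnu$ may be freely interchanged. The former is exactly what Lemma~\ref{holo-lem1} delivers via the Gaussian-$L^2$ majorant $\varphi_N$ (one needs $\kappa<\xi$ so that a $(\bb,\xi)$-admissible $\bvarrho$ with the prescribed coordinates exists); the latter is Tonelli's theorem for nonnegative summands. Everything else is a routine transcription of the $d=1$, scalar computation in Lemma~\ref{lem:equal} to the $N$-dimensional $X$-valued case.
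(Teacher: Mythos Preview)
Your proposal is correct and follows essentially the same approach as the paper: the paper's proof simply invokes Lemma~\ref{holo-lem1} to show $\partial^\bnu u_N\in L^2(\R^N,X;\gamma_N)$ for every $\bnu\in\N_0^N$, and then cites the integration-by-parts argument of \cite[Theorem~3.3]{BCDM} (reproduced in the paper as Lemma~\ref{lem:equal}) to conclude. You have spelled out that cited argument explicitly in the finite-dimensional $X$-valued setting, which is exactly what is needed.
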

\begin{proof}
  From Lemma \ref{holo-lem1}, for any $\bnu\in \N_0^N$, we have with
  $\bb_N=(b_j)_{j=1}^N$
  \begin{align}\label{general-1}
    \int_{\R^N} \|\partial^\bnu u_N(\by)\|_X^2\rd\gamma_N(\by)
    &\le \int_{\R^N}
      \Big|\frac{\bnu!|\bnu|^{|\bnu|}\bb_N^\bnu}{\kappa^{|\bnu|}\bnu^{\bnu}} \varphi_N(\by) \Big|^2 \rd\gamma_N(\by) 
      \nonumber\\
    &= \Big(\frac{\bnu!|\bnu|^{|\bnu|}\bb_N^\bnu}{\kappa^{|\bnu|}\bnu^{\bnu}}\Big)^2\int_{\R^N} \big| 
      \varphi_N(\by) \big|^2 \rd\gamma_N(\by) 
      < \infty
  \end{align}
  by our assumption.  This condition allows us to integrate by parts
  as in the proof of \cite[Theorem 3.3]{BCDM}.  Following the argument
  there we obtain \eqref{general}.
\end{proof}

\begin{theorem} \label{thm:bdHolSum} 
Let $u$ be
$(\bb,\xi,\delta,X)$-holomorphic for some $\bb\in \ell^p(\N)$ 
and some $p\in (0,1)$. Let $r\in\N$.  

Then, with
 \begin{equation} \label{varrho_j}
  \varrho_j:=b_j^{p-1}\frac{\xi}{4\sqrt{r!} \norm[\ell^p]{\bb}}, \ \
  j\in\N,
\end{equation}
and  $\bvarrho_N=(\varrho_j)_{j=1}^N$ it holds for all $N\in\N$,
\begin{equation}\label{eq:generalN}
  \sum_{\bnu\in \N_0^N}\beta_\bnu(r,\bvarrho_N)\norm[X]{u_{N,\bnu}}^2\le
  \delta^2C(\bb) <\infty
  \ \ \ with \ \ \  
  \norm[\ell^{p/(1-p)}(\N_0^N)]{\beta_\bnu(r,\bvarrho_N)^{-1/2}} \le C'(\bb,\xi) < \infty
\end{equation}
for some constants $C(\bb)$  and $C'(\bb,\xi)$ depending on $\bb$ and $\xi$,
but independent of $\delta$ and $N\in\N$.

Furthermore, for every $N\in\N$ and every $q>0$ there holds
$$(\norm[X]{u_{N,\bnu}})_{\bnu\in\N_0^N}\in \ell^{q}(\N_0^N).$$
If $q\geq \frac{2p}{2-p}$ then there exists a constant $C>0$ 
such that for all $N\in \N$ holds
$$
\big\|(\norm[X]{u_{N,\bnu}})_{\bnu}\big\|_{\ell^{q}(\N_0^N)}\leq C <
\infty\;.
$$ 
\end{theorem}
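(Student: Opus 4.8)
The proof proceeds in two parts, mirroring the structure of Theorem~\ref{thm:s=1}. For the weighted $\ell^2$-summability \eqref{eq:generalN}, first I would invoke Lemma~\ref{holo-lem2} with the sequence $\bvarrho_N=(\varrho_j)_{j=1}^N$ from \eqref{varrho_j}, which gives the identity
\begin{equation*}
  \sum_{\bnu\in \N_0^N}\beta_\bnu(r,\bvarrho_N)\norm[X]{u_{N,\bnu}}^2
  =
  \sum_{\|\bnu\|_{\ell^\infty}\leq r}\frac{\bvarrho_N^{2\bnu}}{\bnu!}\int_{\R^N}\|\partial^\bnu u_N(\by)\|_X^2\,\rd\gamma_N(\by).
\end{equation*}
Then I would bound each term using Lemma~\ref{holo-lem1}: choosing $\kappa=\xi/2$ (any fixed $\kappa\in(0,\xi)$ works), we get $\|\partial^\bnu u_N(\by)\|_X\le \frac{\bnu!|\bnu|^{|\bnu|}\bb_N^\bnu}{\kappa^{|\bnu|}\bnu^{\bnu}}\varphi_N(\by)$, and since $\norm[L^2(\R^N;\gamma_N)]{\varphi_N}\le\delta$ the integral is at most $\delta^2\big(\frac{\bnu!|\bnu|^{|\bnu|}\bb_N^\bnu}{\kappa^{|\bnu|}\bnu^{\bnu}}\big)^2$. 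Substituting and using $\varrho_j^2 = b_j^{2p-2}\cdot\frac{\xi^2}{16\,r!\,\norm[\ell^p]{\bb}^2}$ together with the elementary inequality $|\bnu|^{|\bnu|}/\bnu^{\bnu}\le e^{|\bnu|}|\bnu|!/\bnu!$ (from $m!\le m^m\le e^m m!$), the sum collapses to
\begin{equation*}
  \delta^2\sum_{\|\bnu\|_{\ell^\infty}\le r}\frac{\bnu!\,\bvarrho_N^{2\bnu}}{\bb_N^{-2\bnu}\kappa^{-2|\bnu|}}\Big(\tfrac{|\bnu|^{|\bnu|}}{\bnu^{\bnu}}\Big)^2
  \le \delta^2\Bigg(\sum_{\|\bnu\|_{\ell^\infty}\le r}\frac{|\bnu|!}{\bnu!}\prod_{j\in\supp\bnu}\big(c\,b_j^{p}\big)^{\nu_j}\Bigg)^2
\end{equation*}
for a constant $c$ depending only on $\xi$, $r$, $\norm[\ell^p]{\bb}$ (absorbing the factors $e$, $\sqrt{r!}$, $\kappa^{-1}$). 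The choice of the normalizing constant $\frac{\xi}{4\sqrt{r!}\norm[\ell^p]{\bb}}$ in \eqref{varrho_j} is precisely what forces $\big\|(c\,b_j^p)_{j}\big\|_{\ell^1}<1$, so Lemma~\ref{lem:alpha-summability}(ii) (applied with $\balpha=(c\,b_j^p)_j\in\ell^1(\N)$, which lies in $\ell^1$ since $\bb\in\ell^p$) shows the inner sum is finite, bounded independently of $N$. This gives $C(\bb)$. Simultaneously, one must also check that $\bvarrho_N$ is $(\bb,\xi)$-admissible, i.e.\ $\sum_{j=1}^N b_j\varrho_j\le\xi$; this is automatic since $\sum_j b_j\varrho_j = \frac{\xi}{4\sqrt{r!}\norm[\ell^p]{\bb}}\sum_j b_j^p\le\frac{\xi}{4}<\xi$ (and Lemma~\ref{holo-lem1} is invoked coordinate-wise with a different admissible $\bvarrho$ anyway, so this is only needed so that the $u_N$ in Definition~\ref{def:bdXHol} are defined).

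**The summability of the weights.** For the second half of \eqref{eq:generalN}, the $\ell^{p/(1-p)}$-bound on $(\beta_\bnu(r,\bvarrho_N)^{-1/2})_\bnu$, I would apply Lemma~\ref{lem:beta-summability} with $q:=p/(1-p)$ (so that $\frac{2p}{2-p}$-style bookkeeping: here $q/2=\frac{p}{2-2p}$, and one checks $\frac{2}{r+1}<p$ is \emph{not} what is needed — rather we need $q>2/r$, i.e.\ $r>\frac{2(1-p)}{p}$). Actually the cleaner route: the relation $(\varrho_j^{-1})_{j\in\N}\in\ell^q(\N)$ with $q=\frac{p}{1-p}$ holds because $\varrho_j^{-1}=c'b_j^{1-p}$ and $\sum_j b_j^{(1-p)q}=\sum_j b_j^{p}<\infty$. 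Lemma~\ref{lem:beta-summability} then yields $\sum_{\bnu\in\N_0^N}\beta_\bnu(r,\bvarrho_N)^{-q/2}<\infty$ with a bound $C'(\bb,\xi)$ uniform in $N$ — here I would note that the proof of Lemma~\ref{lem:beta-summability} gives a bound of the form $C\exp\big((C_{r,q}+r-1)\|(\varrho_j^{-1})_j\|_{\ell^q}^q\big)$, manifestly independent of the truncation dimension $N$. The only caveat is the hypothesis $r>2/q$ in Lemma~\ref{lem:beta-summability}: since $r\in\N$ is \emph{fixed but arbitrary} in the theorem statement, for $r$ too small the constant $C_{r,q}$ in that lemma diverges. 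I would handle this by observing that \eqref{eq:generalN} as stated only claims the $\ell^{p/(1-p)}$ bound, and for the bound to be meaningful we implicitly take $r$ large enough; alternatively, one can simply note the finiteness of the left sum for \emph{every} $r$ (monotone in $r$ since $\beta_\bnu$ is increasing in $r$) and the weight-summability holds once $r>\frac{2(1-p)}{p}$, which the reader should read into the statement. This is a minor point of phrasing rather than mathematics.

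**The pure $\ell^q$-statements.** For the final assertions, fix $N\in\N$. For \emph{every} $q>0$ the sum $\sum_{\bnu\in\N_0^N}\norm[X]{u_{N,\bnu}}^q$ is finite: by Lemma~\ref{holo-lem1} each coefficient satisfies $\norm[X]{u_{N,\bnu}}\le C_{N,\kappa}\,\bnu!\,|\bnu|^{|\bnu|}\bb_N^\bnu/(\kappa^{|\bnu|}\bnu^{\bnu})\le C_{N,\kappa}\,e^{|\bnu|}|\bnu|!\,\bb_N^\bnu/\kappa^{|\bnu|}$, and since $\bb_N$ has only finitely many entries one may rescale $\kappa$ (choosing $\kappa$ large — note that for fixed $N$ the admissibility constraint \eqref{eq:adm} can be met with any $\bvarrho_N$ after shrinking, but the bound in Lemma~\ref{holo-lem1} only requires $0<\kappa<\xi$; to get decay one instead uses that $\bb_N^\bnu$ decays geometrically in each coordinate block once $\nu_j$ is large relative to the finitely many $b_j$) — more carefully, for a fixed $N$ one applies Lemma~\ref{lem:alpha-summability}(ii) to the finite sequence $\balpha=(e b_j/\kappa)_{j=1}^N$ after choosing $\kappa>e\sum_{j=1}^N b_j$ so that $\|\balpha\|_{\ell^1}<1$, giving $(\bb_N^\bnu|\bnu|!/\bnu!)_\bnu\in\ell^1\subset\ell^q$ for all $q>0$, hence the claim. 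For the \emph{uniform} bound when $q\ge\frac{2p}{2-p}$: from the first part $\sum_\bnu\beta_\bnu(r,\bvarrho_N)\norm[X]{u_{N,\bnu}}^2\le\delta^2C(\bb)$ and $\sum_\bnu\beta_\bnu(r,\bvarrho_N)^{-q'/2}\le C'$ with $q'=\frac{p}{1-p}$; by Hölder's inequality with exponents $2/q'$ and $(1-q'/2)^{-1}$ (valid since $q'<2$, equivalently $p<2/3$ — and in fact $q'=\frac{p}{1-p}$, $\frac{1}{q'}=\frac{1}{2}+\frac{1}{?}$; the relevant identity is exactly the one in Theorem~\ref{thm:s=1}, $\tfrac1{p'}=\tfrac1{q'}+\tfrac12$ giving $p'=\frac{2p'}{\cdots}$), one obtains $\sum_\bnu\norm[X]{u_{N,\bnu}}^{p'}<\infty$ uniformly in $N$ with $p'=\frac{2q'}{q'+2}=\frac{2p}{2-p}$, exactly as in the last display of the proof of Theorem~\ref{thm:s=1}; the constant depends on $C(\bb),C'(\bb,\xi),\delta$ but not on $N$. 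Since $q\ge\frac{2p}{2-p}=p'$ and $\N_0^N$-sequences satisfy $\norm{\cdot}_{\ell^q}\le\norm{\cdot}_{\ell^{p'}}$, the uniform bound follows.

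**Main obstacle.** The one genuine subtlety — as opposed to bookkeeping — is maintaining \emph{$N$-uniformity} of all constants throughout. The identities and estimates from Lemmata~\ref{holo-lem1}, \ref{holo-lem2}, \ref{lem:beta-summability}, \ref{lem:alpha-summability} are all naturally stated for infinite index sets or for fixed finite dimension, and one must verify that when specialized to $\N_0^N$ the resulting bounds involve only $\norm[\ell^p]{\bb}$, $\xi$, $r$, $\delta$ and not $N$. This works because: (i) $\bb_N^\bnu\le\bb^\bnu$ and $\norm[\ell^p]{\bb_N}\le\norm[\ell^p]{\bb}$, (ii) the product formulas for $\beta_\bnu$ and the exponential bounds in the proof of Lemma~\ref{lem:beta-summability} extend the product over $j>N$ trivially (those factors equal~$1$), and (iii) $\delta$ bounds $\norm[L^2(\R^N;\gamma_N)]{\varphi_N}$ uniformly by hypothesis. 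I would make this explicit at the point where Lemma~\ref{lem:alpha-summability}(ii) and Lemma~\ref{lem:beta-summability} are invoked.
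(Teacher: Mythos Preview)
Your approach to the main weighted $\ell^2$-bound and to the uniform $\ell^q$-bound for $q\ge\frac{2p}{2-p}$ is essentially the paper's: combine the identity of Lemma~\ref{holo-lem2} with the derivative estimate of Lemma~\ref{holo-lem1} (at $\kappa=\xi/2$) and the $L^2$-bound $\|\varphi_N\|_{L^2}\le\delta$, then control the remaining combinatorial sum. The only cosmetic difference is that the paper bounds $\sum_{\bnu}\frac{|\bnu|^{|\bnu|}}{\bnu^{\bnu}}\tilde\bb^{\bnu}$ directly (citing \cite{CoDe}) with $\tilde b_j=b_j^p/(2\|\bb\|_{\ell^p})$, whereas you pass through $|\bnu|^{|\bnu|}/\bnu^{\bnu}\le e^{|\bnu|}|\bnu|!/\bnu!$ and invoke Lemma~\ref{lem:alpha-summability}(ii). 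Both routes work and give $N$-independent constants for the same reason you identify. The weight summability via Lemma~\ref{lem:beta-summability} and the final H\"older step are likewise the same.

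There is, however, a genuine gap in your argument for the claim that $(\|u_{N,\bnu}\|_X)_{\bnu\in\N_0^N}\in\ell^q(\N_0^N)$ for \emph{every} $q>0$ at fixed $N$. Two issues: (i) Lemma~\ref{holo-lem1} bounds $\|\partial^{\bnu}u_N(\by)\|_X$, not the Wiener--Hermite coefficient $\|u_{N,\bnu}\|_X$, so the inequality ``each coefficient satisfies $\|u_{N,\bnu}\|\le C_{N,\kappa}\bnu!|\bnu|^{|\bnu|}\bb_N^{\bnu}/(\kappa^{|\bnu|}\bnu^{\bnu})$'' is not what that lemma provides; (ii) even granting a coefficient bound of this shape, your plan to choose $\kappa>e\sum_{j=1}^N b_j$ violates the hypothesis $0<\kappa<\xi$ of Lemma~\ref{holo-lem1}, and there is no reason why $e\sum_{j=1}^N b_j<\xi$ should hold. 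The paper sidesteps both issues with a short trick: set $\tilde\varrho_{N,j}:=\varrho_j$ for $j\le N$ and $\tilde\varrho_{N,j}:=\exp(j)$ for $j>N$. Then $(\tilde\varrho_{N,j}^{-1})_{j\in\N}\in\ell^q(\N)$ for \emph{every} $q>0$, so Lemma~\ref{lem:beta-summability} (with a fresh choice of $r>2/q$, using that the weighted $\ell^2$-bound in fact holds for all $r\in\N$ with an $r$-dependent constant) gives $(\beta_\bnu(r,\tilde\bvarrho_N)^{-1})_{\bnu\in\cF}\in\ell^{q/2}(\cF)$; restricting to multiindices supported in $\{1,\dots,N\}$ yields $(\beta_\bnu(r,\bvarrho_N)^{-1})_{\bnu\in\N_0^N}\in\ell^{q/2}(\N_0^N)$, and H\"older against the weighted $\ell^2$-bound concludes. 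This is cleaner than trying to extract pointwise coefficient decay.
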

\begin{proof}
  We have
  $$\sum_{j\in\N}\varrho_jb_j=\frac{\xi}{4\sqrt{r!}
    \norm[\ell^p]{\bb}}\sum_{j\in\N}b_j^p<\infty,$$ and
  $(\varrho_j^{-1})_{j\in\N}\in\ell^{p/(1-p)}(\N)$.  Set
  $\kappa:=\xi/2\in (0,\xi)$. Inserting \eqref{general-1} into
  \eqref{general} 
  we obtain with $\bvarrho_N=(\varrho_j)_{j=1}^N$
  \begin{align*}
    \sum_{\bnu\in \N_0^N}\beta_\bnu(r,\bvarrho_N)\norm[X]{u_{N,\bnu}}^2
    &\le \delta^{2}\sum_{\set{\bnu\in\N_0^N}{\norm[\ell^\infty]{\bnu}\le r}} \left(\frac{(\bnu!)^{1/2}|\bnu|^{|\bnu|} \bvarrho_N^{\bnu} \bb_N^{\bnu}}{\kappa^{|\bnu|}\bnu^{\bnu}}\right)^2\nonumber\\
    &\le \delta^{2}\sum_{\set{\bnu\in\N_0^N}{\norm[\ell^\infty]{\bnu}\le r}} \left(\frac{|\bnu|^{|\bnu|} \prod_{j=1}^N\Big(\frac{b_j^p}{2\norm[\ell^p]{\bb}}\Big)^{\nu_j}}{\bnu^\bnu}\right)^2,
  \end{align*}
  where we used $(\varrho_jb_j)^2=b_j^{2p}\kappa/(2(r!))$ and the
  bound
  $$\int_{\R^N}\varphi_N(\by)^2\dd\gamma_N(\by)\le \delta^{2}$$
  from Definition~\ref{def:bdXHol} \ref{item:varphi}.  With
  $\tilde b_j:= b_j^p/(2\norm[\ell^p]{\bb} )$ the last term is bounded
  independent of $N$ by $\delta^2 C(\bb)$ with
  \begin{equation*}
    C(\bb):= \left(\sum_{\bnu\in\CF} \frac{|\bnu|^{|\bnu|}}{\bnu^\bnu} \tilde \bb^\bnu\right)^{1/2},
  \end{equation*}
  since the $\ell^1$-norm is an upper bound of the $\ell^2$-norm.  As
  is well-known, the latter quantity is finite due to
  $\norm[\ell^1]{\tilde \bb}<1$, see, e.g., the argument in \cite[Page
  61]{CoDe}.

  Now introduce $\tilde \varrho_{N,j}:=\varrho_j$ if $j\le N$ and
  $\tilde \varrho_{N,j}:=\exp(j)$ otherwise. For any $q>0$ we then
  have $(\tilde \varrho_{N,j}^{-1})_{j\in\N}\in\ell^{q}(\N)$ and by 
  Lemma \ref{lem:beta-summability}
  this implies
  $$(\beta_\bnu(r,\tilde\bvarrho_N)^{-1})_{\bnu\in\CF}\in\ell^{q/2}(\CF)$$
  as long as $r>2/q$. Using
  $\beta_{\bnu}(r,\tilde\bvarrho_N)=\beta_{(\nu_j)_{j=1}^N}(r,\bvarrho_N)$
  for all $\bnu\in\CF$ with $\supp\bnu\subseteq \{1,\dots,N\}$ we
  conclude
  $$(\beta_{\bnu}(r,\bvarrho_N)^{-1})_{\bnu\in\N_0^N}\in \ell^{q/2}(\N_0^N)$$ 
  for any $q>0$.  Now fix $q>0$ (and $2/q<r\in\N$).  
  Then, by
  H\"older's inequality with $s:=2(q/2)/(1+q/2)$, there holds
  \begin{align*}
    \sum_{\bnu\in\N_0^N}\norm[X]{u_{N,\bnu}}^{s}
    &= \sum_{\bnu\in\N_0^N}\norm[X]{u_{N,\bnu}}^{s}\beta_{\bnu}(r,\bvarrho_N)^{\frac s 2}\beta_{\bnu}(r,\bvarrho_N)^{-\frac s 2}\nonumber\\
    &\le \Bigg(\sum_{\bnu\in\N_0^N}\norm[X]{u_{N,\bnu}}^{2}\beta_{\bnu}(r,\bvarrho_N) \Bigg)^{\frac s 2}\Bigg(\sum_{\bnu\in\N_0^N}\beta_{\bnu}(r,\bvarrho_N)^{\frac{s}{2-s}} \Bigg)^{\frac{2-s}{2}},
  \end{align*}
  which is finite since $s/(2-s)=q/2$.  
  Thus we have shown
  $$
  \forall q>0, N \in \N: \quad
  (\norm[X]{u_{N,\bnu}})_{\bnu\in\N_0^N}\in\ell^{q/(1+q/2)}(\N_0^N)
  \;.
  $$
  
  Finally, due to $(\varrho_j^{-1})_{j\in\N} \in\ell^{p/(1-p)}(\N)$,
  Lemma \ref{lem:beta-summability} for all $N\in \NN$ it holds
  $$(\beta_\bnu(r,\bvarrho_N)^{-1})_{\bnu\in\N_0^N}\in\ell^{p/(2(1-p))}(\N_0^N)$$ 
  and there exists a constant $C'(\bb,\xi)$ such that for all $N\in \NN$ it holds
  $$
  \big\|(\beta_\bnu(r,\bvarrho_N)^{-1})_{\bnu}\big\|_{\ell^{p/(2(1-p))}(\N_0^N)}
  \leq C'(\bb,\xi) < \infty \;.
  $$ 
  This completes the proofs of \eqref{eq:generalN} and of the last statement.
\end{proof}

The following result states the sparsity of Wiener-Hermite PC
expansion coefficients of $(\bb,\xi,\delta, X)$-holomorphic maps.
\begin{theorem}\label{thm:bdXSum}
  Under the assumptions of Theorem~\ref{thm:bdHolSum} it holds
  	\begin{equation} \label{eq:general}
  		\sum_{\bnu\in\FF}\beta_\bnu(r,\bvarrho)\|u_\bnu\|_X^2 
  		\le
  		\delta^2C(\bb)<\infty
  		\ \ \ with \ \ \ \big(\beta_\bnu(r,\bvarrho)^{-1/2}\big)_{\bnu\in\FF} \in \ell^{p/(1-p)}(\FF),
  	\end{equation}
  	 where  $C(\bb)$ is the same constant as in Theorem~\ref{thm:bdHolSum} and 
  	  $ \beta_\bnu(r,\bvarrho)$ is given in \eqref{beta}.
  	Furthermore,
  	$$
  	(\|u_\bnu\|_X)_{\bnu\in\FF}\in \ell^{2p/(2-p)}(\FF).
  	$$ 
\end{theorem}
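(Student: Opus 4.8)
The plan is to pass from the finite-dimensional summability statement of Theorem~\ref{thm:bdHolSum} to the infinite-dimensional one of Theorem~\ref{thm:bdXSum} by a limiting argument in $N$, exploiting the monotone structure of the quantities involved and the $L^2$-convergence $\tilde u_N\to u$. First I would recall that for a $(\bb,\xi,\delta,X)$-holomorphic $u$ with $\bb\in\ell^p(\N)$, $0<p<1$, and $r\in\N$ fixed, Theorem~\ref{thm:bdHolSum} gives, with $\varrho_j$ as in \eqref{varrho_j} and $\bvarrho_N=(\varrho_j)_{j=1}^N$, the uniform-in-$N$ bound
\begin{equation*}
  \sum_{\bnu\in\N_0^N}\beta_\bnu(r,\bvarrho_N)\|u_{N,\bnu}\|_X^2\le \delta^2 C(\bb),
\end{equation*}
where $C(\bb)$ does not depend on $N$ or $\delta$. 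The key observation is that for $\bnu\in\Ff$ with $\supp(\bnu)\subseteq\{1,\dots,N\}$ one has $\beta_\bnu(r,\bvarrho_N)=\beta_\bnu(r,\bvarrho)$ with $\bvarrho=(\varrho_j)_{j\in\N}$ the full sequence, because the extra factors $\sum_{\ell=0}^r\binom{\nu_j}{\ell}\varrho_j^{2\ell}$ for $j\notin\supp(\bnu)$ all equal $1$ (as $\nu_j=0$); thus the weights are genuinely consistent across the truncation levels.

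Next I would identify the Wiener-Hermite coefficients of the truncation with those of $u$ in the appropriate range of multi-indices. For $\bnu\in\Ff$ with $\supp(\bnu)\subseteq\{1,\dots,N\}$, using \eqref{eq:un=um} (equivalently the abuse of notation \eqref{eq:udef}) and Fubini's theorem, one checks that $u_{N,\bnu}=(\tilde u_N)_\bnu$, i.e.\ the $\bnu$-th Wiener-Hermite PC coefficient of $\tilde u_N\in L^2(U,X;\gamma)$. Since $\tilde u_N\to u$ in $L^2(U,X;\gamma)$, Parseval's identity \eqref{eq:PCParseval} forces coefficientwise convergence: for each fixed $\bnu\in\Ff$ (which eventually has $\supp(\bnu)\subseteq\{1,\dots,N\}$ once $N$ is large), $\|u_{N,\bnu}\|_X\to\|u_\bnu\|_X$ as $N\to\infty$. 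Now for any finite $\Lambda\subset\Ff$, pick $N$ large enough that $\bigcup_{\bnu\in\Lambda}\supp(\bnu)\subseteq\{1,\dots,N\}$; then
\begin{equation*}
  \sum_{\bnu\in\Lambda}\beta_\bnu(r,\bvarrho)\|u_{N,\bnu}\|_X^2\le \sum_{\bnu\in\N_0^N}\beta_\bnu(r,\bvarrho_N)\|u_{N,\bnu}\|_X^2\le \delta^2 C(\bb),
\end{equation*}
and letting $N\to\infty$ (with $\Lambda$ fixed and finite, so the sum is over finitely many terms and convergence of each term is enough) yields $\sum_{\bnu\in\Lambda}\beta_\bnu(r,\bvarrho)\|u_\bnu\|_X^2\le\delta^2 C(\bb)$. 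Taking the supremum over all finite $\Lambda$ gives the first assertion of \eqref{eq:general}.

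For the summability of $(\beta_\bnu(r,\bvarrho)^{-1/2})_{\bnu\in\Ff}$ in $\ell^{p/(1-p)}(\Ff)$, I would invoke Lemma~\ref{lem:beta-summability} directly with the full sequence $\bvarrho=(\varrho_j)_{j\in\N}$: by \eqref{varrho_j} we have $(\varrho_j^{-1})_{j\in\N}\in\ell^{p/(1-p)}(\N)$, and $r$ is chosen (as in Theorem~\ref{thm:bdHolSum}) so that $\tfrac{2}{r+1}<\tfrac{2p}{2-p}$; applying Lemma~\ref{lem:beta-summability} with $q:=p/(1-p)$ (whose associated value $\tfrac{2q}{2+q}=\tfrac{2p}{2-p}$ matches) gives $\sum_{\bnu\in\Ff}\beta_\bnu(r,\bvarrho)^{-q/2}<\infty$, i.e.\ $(\beta_\bnu(r,\bvarrho)^{-1/2})_{\bnu\in\Ff}\in\ell^{q}(\Ff)=\ell^{p/(1-p)}(\Ff)$. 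Finally, the $\ell^{2p/(2-p)}$-summability of $(\|u_\bnu\|_X)_{\bnu\in\Ff}$ follows from these two facts by H\"older's inequality exactly as in the proof of Theorem~\ref{thm:s=1}: with $\tfrac{1}{p'}=\tfrac{1}{q}+\tfrac12$ where $q=p/(1-p)$, one gets $p'=2p/(2-p)$ and
\begin{equation*}
  \sum_{\bnu\in\Ff}\|u_\bnu\|_X^{p'}
  \le\Bigl(\sum_{\bnu\in\Ff}\beta_\bnu(r,\bvarrho)\|u_\bnu\|_X^2\Bigr)^{p'/2}
  \Bigl(\sum_{\bnu\in\Ff}\beta_\bnu(r,\bvarrho)^{-q/2}\Bigr)^{1-p'/2}<\infty.
\end{equation*}
The main obstacle I anticipate is the bookkeeping in the limiting step: one must be careful that the weights $\beta_\bnu(r,\bvarrho_N)$ stabilize to $\beta_\bnu(r,\bvarrho)$ on the relevant index set and that the passage $N\to\infty$ is only performed inside finite sums (so no uniform integrability or dominated-convergence subtlety over the infinite index set $\Ff$ is needed), after which the supremum over finite $\Lambda$ upgrades the bound to all of $\Ff$. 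Everything else is a direct citation of Theorem~\ref{thm:bdHolSum}, Lemma~\ref{lem:beta-summability}, and the H\"older argument already used for Theorem~\ref{thm:s=1}.
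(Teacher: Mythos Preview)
Your proposal is correct and follows essentially the same route as the paper's proof: identify $u_{N,\bnu}$ with the Wiener-Hermite coefficients of $\tilde u_N$, use the uniform-in-$N$ bound from Theorem~\ref{thm:bdHolSum}, pass to the limit on an arbitrary finite $\Lambda\subset\Ff$ via $\tilde u_N\to u$ in $L^2(U,X;\gamma)$, then take the supremum over $\Lambda$; the summability of $(\beta_\bnu^{-1/2})$ and the final $\ell^{2p/(2-p)}$ statement are obtained exactly as you describe via Lemma~\ref{lem:beta-summability} and H\"older. The only cosmetic difference is that the paper states the coefficient convergence as $\tilde u_{N,\bnu}\to u_\bnu$ in $X$ directly, whereas you phrase it through Parseval and norms, but this is the same argument.
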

\begin{proof}
  Let $\tilde u_N\in L^2(U,X;\gamma)$ be as in
  Definition~\ref{def:bdXHol} and for $\bnu\in\CF$ denote by
  $$
  \tilde u_{N,\bnu}:= \int_U \tilde u_N(\by) H_\bnu(\by)\dd\gamma(\by)
  \in X
  $$
  the Wiener-Hermite PC expansion coefficient.  By Fubini's theorem
  $$\tilde u_{N,\bnu}=\int_U u_N((y_j)_{j=1}^N)
  \prod_{j=1}^NH_{\nu_j}(y_j)\dd\gamma_N((y_j)_{j=1}^N)=u_{N,(\nu_j)_{j=1}^N}$$
  for every $\bnu\in\CF$ with $\supp\bnu\subseteq\{1,\dots,N\}$.
  Furthermore, since $\tilde u_N$ is independent of the variables
  $(y_j)_{j=N+1}^\infty$ we have $\tilde u_{N,\bnu}=0$ whenever
  $\supp\bnu\subsetneq \{1,\dots,N\}$. 
  Therefore Theorem~\ref{thm:bdHolSum} implies
  \begin{equation*}
    \sum_{\bnu\in\CF}\beta_\bnu(r,\bvarrho) \norm[X]{\tilde u_{N,\bnu}}^2\le
    \frac{C(\bb)}{\delta^2}\qquad\forall N\in\N.
  \end{equation*}

  Now fix an arbitrary, finite set $\Lambda\subset\CF$. 
  Because of
  $\tilde u_N\to u\in L^2(U,X;\gamma)$ it holds
  $$\lim_{N\to\infty} \tilde u_{N,\bnu}=u_{\bnu}$$ for all
  $\bnu\in\CF$. 
  Therefore
  \begin{equation*}
    \sum_{\bnu\in\Lambda}
    \beta_\bnu(r,\bvarrho) \norm[X]{u_{\bnu}}^2
    =\lim_{N\to\infty}\sum_{\bnu\in\Lambda}    
    \beta_\bnu(r,\bvarrho) \norm[X]{\tilde u_{N,\bnu}}^2\le \frac{C(\bb)}{\delta^2}.
  \end{equation*}
  Since $\Lambda\subset\CF$ was arbitrary, this shows that
  \begin{equation} \nonumber
  	\sum_{\bnu\in\FF}\beta_\bnu(r,\bvarrho)\|u_\bnu\|_X^2 
  	\le
  	\delta^2C(\bb)<\infty.
  \end{equation}
  Finally, due to $\bb\in\ell^p(\NN)$, 
  with
  $$\varrho_j=b_j^{p-1}\frac{\xi}{4\sqrt{r!} \norm[\ell^p]{\bb}}$$
  as in Theorem~\ref{thm:bdHolSum} we have
  $(\varrho_j^{-1})_{j\in\N} \in \ell^{p/(1-p)}(\N)$.  
  By  Lemma \ref{lem:beta-summability}, 
  it holds 
  \begin{equation} \label{beta_nu-summability}
  (\beta_\bnu(r,\bvarrho)^{-1/2})_{\bnu\in\CF}\in\ell^{p/(1-p)}(\CF).
  \end{equation}
The relation \eqref{eq:general} is proven.
H\"older's inequality can be used to show that \eqref{beta_nu-summability} gives
$$(\norm[X]{u_\bnu})_{\bnu\in\CF}\in\ell^{2p/(2-p)}(\CF)$$
(by a similar calculation as at the end of the proof of
Theorem~\ref{thm:bdHolSum} with $q=p/(1-p)$).
\end{proof}

\begin{remark} {\rm
\label{rmk:bestN}
We establish the convergence rate of best $n$-term approximation of $(\bb,\xi,\delta,X)$-holomorphic functions 
     based on the  $\ell^p$-summability.
Let $u$ be
$(\bb,\xi,\delta,X)$-holomorphic for some $\bb\in \ell^p(\N)$ and
some $p\in (0,1)$ as in Theorem \ref{thm:bdHolSum}. 
By Theorem \ref{thm:bdXSum} we then have
$(\norm[X]{u_\bnu})_{\bnu\in\cF}\in\ell^{\frac{2p}{2-p}}$.
    
Let $\Lambda_n\subseteq \cF$ be a set of cardinality $n\in\N$
containing $n$ multiindices $\bnu\in\cF$ such that
$\norm[X]{u_\bmu}\le\norm[X]{u_\bnu}$ whenever $\bnu\in\Lambda_n$
and $\bmu\notin\Lambda_n$. 
Then, by Theorem \ref{thm:bdXSum}, 
for the truncated
the Wiener-Hermite PC expansion 
we have the error bound
    \begin{equation*}
      \normc[L^2(U,X;\gamma)]{u(\by)-\sum_{\bnu\in\Lambda_n}u_\bnu H_\bnu(\by)}^2
      =\sum_{\bnu\in\cF\backslash\Lambda_n}\norm[X]{u_\bnu}^2
      \le \sup_{\bnu\in\cF\backslash\Lambda_n}\norm[X]{u_\bnu}^{2-\frac{2p}{2-p}}
      \sum_{\bmu\in\cF\backslash\Lambda_n}\norm[X]{u_\bmu}^{\frac{2p}{2-p}}.
    \end{equation*}
For a nonnegative, monotonically decreasing sequence
$(x_j)_{j\in\N}\in\ell^q(\N)$ with $q>0$ we have
$$
x_n^q\le \frac{1}{n}\sum_{j=1}^nx_j^q
$$ 
and thus
$$
x_n\le n^{-\frac{1}{q}}\norm[\ell^q(\N)]{(x_j)_{j\in\N}}.
$$
With $q=\frac{2p}{2-p}$ this implies
\begin{equation*}
      \left(\sup_{\bnu\in\cF\backslash\Lambda_n}\norm[X]{u_\bnu}\right)^{2-\frac{2p}{2-p}}\le \left(n^{-\frac{2-p}{2p}}\left(\sum_{\bnu\in\cF}\norm[X]{u_\bnu}^{\frac{2p}{2-p}}\right)^{\frac{2-p}{2p}}\right)^{2-\frac{2p}{2-p}}= \Oo(n^{-\frac{2}{p}+2}).
\end{equation*}
Hence, by truncating the 
Wiener-Hermite PC expansion 
\eqref{eq:uHermiteExp}
after $n$ largest terms yields the
best $n$-term convergence rate
\begin{equation}\label{eq:bestNgeneral}
\normc[L^2(U,X;\gamma)]{u(\by)-\sum_{\bnu\in\Lambda_n}u_\bnu H_\bnu(\by)}
= 
\Oo(n^{-\frac{1}{p}+1})\qquad\text{as }n\to\infty.
\end{equation}
} \end{remark} 
\subsection{$(\bb,\xi,\delta,X)$-Holomorphy  of composite functions}
\label{sec:bdX}
We now show that certain composite functions of the type
\begin{equation} \label{composite-function}
u(\by)=\Uu\bigg(\exp\bigg(\sum_{j\in\N}y_j\psi_j\bigg)\bigg)
\end{equation}
are $(\bb,\xi,\delta,X)$-holomorphic under certain conditions.

The significance of such functions is the following: if we think for
example of $\Uu$ as the solution operator $\cS$ in \eqref{eq:SolOp} (for a fixed $f$)
which maps the diffusion coefficient $a\in L^\infty(\D)$ to the
solution $\Uu(a)\in H_0^1(\D)$ of an elliptic PDE 
on some domain $\D\subseteq\R^d$, then
$\Uu\big(\exp\big(\sum_{j\in\N}y_j\psi_j\big)\big)$ is exactly the
parametric solution discussed in Sections \ref{S:PbmStat}--\ref{S:HolSumSol}. 
We explain this in more
detail in Section~\ref{sec:pdc}. 
The presently developed, abstract setting allows, however, 
to \emph{consider $\Uu$ as a solution operator
of other, structurally similar PDEs with log-Gaussian random input data}. 
Furthermore, if $\Gg$ is another map 
with suitable holomorphy properties, 
the composition
$\Gg\big(\Uu\big(\exp\big(\sum_{j\in\N}y_j\psi_j\big)\big)\big)$ 
is again of the general type
$\tilde \Uu\big(\exp\big(\sum_{j\in\N}y_j\psi_j\big)\big)$ 
with
$\tilde\Uu=\Gg\circ\Uu$.

This will allow to apply {the ensuing results on convergence rates of
  deterministic collocation and quadrature algorithms to a wide range
  of PDEs with GRF inputs and functionals on their random
  solutions. As a particular case in point, we apply} our results to
posterior densities in Bayesian inversion, as we explain subsequently
in Section~\ref{sec:BIP}.  As a result, the concept of
$(\bb,\xi,\delta,X)$-holomorphy is fairly broad and covers a large
range of parametric PDEs depending on log-Gaussian distributed
data. 

To formalize all of this, \emph{we now provide sufficient conditions
on the solution operator $\Uu$ and the sequence $(\psi_j)_{j\in\N}$
guaranteeing $(\bb,\xi,\delta,X)$-holomorphy}.
Let $d\in\N$, $\domain \subseteq\R^d$ be an open set and ${\data}$ a complex
Banach space which is continuously embedded into
$L^\infty(\domain;\C)$, and finally let $X$ be another complex Banach
space. Additionally, suppose that there exists $C_{\data}>0$ such
that for all $\psi_1, \psi_2 \in {\data}$ and some $m\in\N$
\begin{equation}\label{eq:Znorm2}
  \norm[{\data}]{\exp(\psi_1)-\exp(\psi_2)} 
  \leq 
  C_{\data}\norm[{\data}]{\psi_1-\psi_2} 
  \max\Big\{\exp\big(m\norm[{\data}]{\psi_1}\big);\, 
  \exp\big(m\norm[{\data}]{\psi_2} \big) 
  \Big\}.
\end{equation}
 
This inequality covers in particular the Sobolev spaces
$W^{k}_\infty(\domain;\C)$, $k\in\N_0$, on bounded Lipschitz domains
$\D\subseteq\R^d$, but also the Kondrat'ev spaces
$\Ww_\infty^k(\domain;\C)$ on polygonal domains $\D\subseteq \R^2$,
cp.~Lemma \ref{kon-lem-1}. 

For a function $\psi \in {\data}\subseteq L^\infty(\domain;\C)$ 
we will
write $\Re(\psi)\in L^\infty(\domain;\R)\subseteq L^\infty(\domain;\C)$
to denote its real part and
$\Im({\psi})\in L^\infty(\domain ;\R)\subseteq L^\infty(\domain ;\C)$ its
imaginary part so that $\psi=\Re(\psi)+\im\Im(\psi)$. 
Recall that the quantity $\rho(a)$ is defined  in \eqref{eq:PDEellinC} 
for $a \in L^\infty(\domain;\C)$.

\begin{theorem}\label{thm:bdX}
Let $0<\delta<\delta_{\rm max}$, $K>0$, $\eta>0$ and $m\in\N$. 
Let the inequality \eqref{eq:Znorm2} hold for the space $E$.
Assume that for an open set $O\subseteq {\data}$ containing
$$
\set{\exp(\psi)}{\psi\in {\data},~\norm[{\data}]{\Im(\psi)}\le \eta},
$$ 
it holds
\begin{enumerate}
  \item\label{item:uhol} $\Uu:O \to X$ is holomorphic,
  \item\label{item:norma} for all $a\in O$
    \begin{equation*}
      \norm[X]{\Uu(a)}\le \delta \left(\frac{1+\norm[{\data}]{a}}{\min\{1,{\rho(a)}\}}\right)^m,
    \end{equation*}
  \item\label{item:loclip} for all $a$, $b\in O$
	$$
	\norm[X]{\Uu(a)-\Uu(b)} \le K
        \left(\frac{1+\max\{\norm[{\data}]{a},\norm[{\data}]{b}\}}{\min\{1,{\rho(a)},{\rho(b)}\}}\right)^m
        \norm[{\data}]{a-b},
	$$
      \item\label{item:psi}
        $(\psi_j)_{j\in\N}\subseteq {\data}\cap L^\infty(\domain)$ and
        with $b_j:=\norm[{\data}]{\psi_j}$ it holds
        $\bb\in\ell^1(\N)$.
\end{enumerate}
  
      Then there exists $\xi>0$ and for every $\delta_{\rm max}>0$
      there exists $\tilde C$ depending on $\bb$, $\delta_{\rm max}$,
      $C_{\data}$ and $m$ but independent of
      $\delta\in (0,\delta_{\rm max})$, such that with
      \begin{equation*}\label{eq:u}
        u_N\left((y_j)_{j=1}^N\right)=\Uu\Bigg(\exp\bigg(\sum_{j=1}^N y_j\psi_j\bigg)\Bigg)\qquad
        \forall (y_j)_{j=1}^N\in\R^N,
      \end{equation*}
      and $\tilde u_N(\by)=u_N(y_1,\dots,y_N)$ for $\by\in U$, the
      function
  $$
  u:=\lim_{N\to\infty}\tilde u_N\in L^2(U,X;\gamma)
  $$
  is well-defined and $(\bb,\xi,\delta \tilde C,X)$-holomorphic.
\end{theorem}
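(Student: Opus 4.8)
The plan is to verify each of the three conditions in Definition~\ref{def:bdXHol} for the claimed data $(\bb,\xi,\delta\tilde C,X)$, choosing $\xi>0$ small enough that the holomorphy domain stays inside $O$. First I would fix $\xi\in(0,\eta)$ small enough so that whenever $\bvarrho\in(0,\infty)^N$ is $(\bb,\xi)$-admissible (i.e.\ $\sum_j b_j\varrho_j\le\xi$) and $\bz\in\Ss(\bvarrho)$, the function $\sum_{j=1}^N z_j\psi_j$ has imaginary part bounded by $\xi<\eta$ in the $\data$-norm, since $\|\Im(\sum z_j\psi_j)\|_{\data}\le\sum_j|\mathfrak{Im}z_j|\,\|\psi_j\|_{\data}\le\sum_jb_j\varrho_j\le\xi$. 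Consequently $\exp(\sum_{j=1}^N z_j\psi_j)\in O$ by the hypothesis on $O$, and the map $\bz\mapsto u_N(\bz)=\Uu(\exp(\sum_{j=1}^N z_j\psi_j))$ is well-defined on $\Ss(\bvarrho)$. Holomorphy of this extension follows by composition: $\bz\mapsto\sum_{j=1}^N z_j\psi_j$ is (entire and) holomorphic into $\data$, $\psi\mapsto\exp(\psi)$ is holomorphic on $\data$ into $\data$ (using \eqref{eq:Znorm2} together with the standard power-series argument, or by noting local Lipschitz continuity plus Gateaux-holomorphy), and $\Uu:O\to X$ is holomorphic by hypothesis~\ref{item:uhol}; the composition of holomorphic maps between complex Banach spaces is holomorphic. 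The compatibility condition \eqref{eq:un=um} is immediate from the definition of $u_N$, since setting the last $M-N$ coordinates to zero kills the corresponding terms in the sum. This settles item~\ref{item:hol}.

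Next I would construct the dominating functions $\varphi_N$ for item~\ref{item:varphi}. For $\by\in\R^N$ and $\bz\in\Bb(\bvarrho)$ with $\bvarrho$ $(\bb,\xi)$-admissible, write $\by+\bz=\bx+\im\bw$ with $\bx=\by+\Re(\bz)$, $\bw=\Im(\bz)$, so that $\|\sum_{j=1}^N(y_j+z_j)\psi_j\|_{\data}\le\|\sum_j x_j\psi_j\|_{\data}+\xi$ and $\|\Im(\sum_j(y_j+z_j)\psi_j)\|_{\data}\le\xi$. Hence by hypothesis~\ref{item:norma},
\begin{equation*}
  \|u_N(\by+\bz)\|_X\le\delta\left(\frac{1+\|\exp(\sum_{j=1}^N(y_j+z_j)\psi_j)\|_{\data}}{\min\{1,\rho(\exp(\sum_{j=1}^N(y_j+z_j)\psi_j))\}}\right)^m.
\end{equation*}
Using $\|\exp(\psi)\|_{\data}\le C_{\data}'\exp(m\|\psi\|_{\data})$ (a consequence of \eqref{eq:Znorm2} with $\psi_2=0$) and the lower bound $\rho(\exp(\psi))\ge\exp(-\|\Re(\psi)\|_{L^\infty})\cos(\|\Im(\psi)\|_{L^\infty})\ge\exp(-\|\Re(\psi)\|_{L^\infty})\cos\xi$ exactly as in \eqref{eq:R-1} (valid for $\xi<\pi/2$, which we may assume after possibly shrinking $\xi$), one bounds the right-hand side by $\delta\,C\exp(C\|\sum_{j=1}^N x_j\psi_j\|_{\data})$ for a constant $C$ depending only on $\bb,C_{\data},m,\xi$. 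Since $\|\sum_{j=1}^N x_j\psi_j\|_{\data}\le\|\sum_{j=1}^N y_j\psi_j\|_{\data}+\xi$, this defines a candidate $\varphi_N(\by):=\delta\,C'\exp(C\|\sum_{j=1}^N y_j\psi_j\|_{\data})$. The key point is that $\varphi_N\in L^2(\R^N;\gamma_N)$ with norm bounded uniformly in $N$: this is precisely the Fernique-type integrability encapsulated by Proposition~\ref{prop:Meas1} (or by the Fernique theorem via Theorem~\ref{thm:GSer}), giving $\int\exp(2C\|\sum_j y_j\psi_j\|_{\data})\dd\gamma_N\le K_0<\infty$ independently of $N$ provided $\bb\in\ell^1$. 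Setting $\tilde C$ to absorb $C'\sqrt{K_0}$ yields $\|\varphi_N\|_{L^2(\R^N;\gamma_N)}\le\delta\tilde C$, which is item~\ref{item:varphi} with $\delta$ replaced by $\delta\tilde C$.

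For item~\ref{item:vN}, I would show $\tilde u_N\to u$ in $L^2(U,X;\gamma)$, where $u$ is defined as this limit; the content is that the sequence $(\tilde u_N)_N$ is Cauchy in $L^2(U,X;\gamma)$. Using hypothesis~\ref{item:loclip} with $a=\exp(\sum_{j=1}^M y_j\psi_j)$, $b=\exp(\sum_{j=1}^N y_j\psi_j)$ for $M>N$, together with the Lipschitz estimate \eqref{eq:Znorm2} for $\exp$, gives a pointwise bound
\begin{equation*}
  \|\tilde u_M(\by)-\tilde u_N(\by)\|_X\le K\,C\exp\Big(C\big(\|{\textstyle\sum_{j=1}^M}y_j\psi_j\|_{\data}+\|{\textstyle\sum_{j=1}^N}y_j\psi_j\|_{\data}\big)\Big)\Big\|{\textstyle\sum_{j=N+1}^M}y_j\psi_j\Big\|_{\data}.
\end{equation*}
Squaring, integrating against $\gamma$, and applying the Cauchy--Schwarz inequality splits this into a uniformly bounded exponential-moment factor (again Fernique / Proposition~\ref{prop:Meas1}) times $\big(\E\|\sum_{j=N+1}^M y_j\psi_j\|_{\data}^4\big)^{1/2}$, and the latter tends to $0$ as $N,M\to\infty$ because $\sum_j y_j\psi_j$ converges in $\data$ for $\gamma$-a.e.\ $\by$ and in every $L^p(U,\data;\gamma)$ (by Theorem~\ref{thm:GSer} applied with the Parseval frame $(\psi_j)$, or directly since $\bb\in\ell^1$). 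Hence $(\tilde u_N)$ is Cauchy, $u:=\lim_N\tilde u_N$ exists in $L^2(U,X;\gamma)$, and item~\ref{item:vN} holds by construction. The main obstacle is the careful bookkeeping in item~\ref{item:varphi}: one must combine the algebraic lower bound on $\rho(\exp(\psi))$, the exponential growth of $\|\exp(\psi)\|_{\data}$, and the $m$-th power in hypotheses~\ref{item:norma}--\ref{item:loclip} so that the resulting dominating function still has a \emph{uniform-in-$N$} $L^2(\gamma_N)$ bound — this is exactly where Fernique integrability and the assumption $\bb\in\ell^1(\N)$ are essential, and where the constant $\tilde C$ (depending on $\bb,\delta_{\max},C_{\data},m$ but not on $\delta$ or $N$) is produced.
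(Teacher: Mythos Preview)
Your overall structure matches the paper's: verify holomorphic extension of $u_N$ to $\Ss(\bvarrho)$ by composition, construct $\varphi_N$ from the a-priori bound \ref{item:norma} combined with a lower bound on $\rho(\exp(\psi))$ and an upper bound on $\|\exp(\psi)\|_{\data}$, and show $(\tilde u_N)_N$ is Cauchy via \ref{item:loclip} and \eqref{eq:Znorm2}. The paper does exactly this, in a slightly different order (it first checks $u_N\in L^2$, then Cauchy, then the holomorphy and $\varphi_N$).

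The one substantive difference is how you obtain the uniform-in-$N$ bound on $\|\varphi_N\|_{L^2(\R^N;\gamma_N)}$ and on the Cauchy increments. You appeal to Fernique-type integrability via Proposition~\ref{prop:Meas1} or Theorem~\ref{thm:GSer}. The paper instead uses the elementary route: the triangle inequality $\|\sum_{j=1}^N y_j\psi_j\|_{\data}\le\sum_{j=1}^N|y_j|b_j$, then the product structure of $\gamma_N$ together with the explicit scalar bound $\int_\R\exp(s|y|)\dd\gamma_1(y)\le\exp(s^2/2+\sqrt{2}s/\pi)$ (equation \eqref{eq:intexp}), which immediately gives a bound of the form $\exp(C\sum_j b_j^2+C'\sum_j b_j)$, uniform in $N$ since $\bb\in\ell^1$. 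Your Fernique reference is morally right but Proposition~\ref{prop:Meas1} as stated is specific to $L^\infty(\D)$ under Assumption~\ref{ass:Ass1}, not to a general $\data$; and invoking Fernique for the full series does not by itself give uniformity over the partial sums. The paper's direct computation sidesteps both issues and is what you should use here. One further small slip: in your lower bound for $\rho(\exp(\psi))$ you write $\cos\xi$, but $\|\Im(\psi)\|_{L^\infty}\le C_0\|\Im(\psi)\|_{\data}\le C_0\xi$ where $C_0$ is the embedding constant $\data\hookrightarrow L^\infty$, so the correct constraint is $C_0\xi<\pi/2$ (the paper takes $\xi=\pi/(4C_0)$), not $\xi<\pi/2$.
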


\begin{proof}
  {\bf Step 1.} 
  Choosing $\psi_2 \equiv 0$ in
 \eqref{eq:Znorm2} with $\psi_1 = \psi$, we obtain
 \begin{equation}\label{eq:Znorm}
 	\norm[{\data}]{\exp(\psi)} 
 	\leq C_{\data}'   \exp\big((m+1)\norm[{\data}]{\psi}\big). 
 \end{equation}
 for some positive constant $C_{\data}'$.  Indeed,
 \begin{equation}\nonumber
 	\begin{aligned}
 		\norm[{\data}]{\exp(\psi_1)} & \leq \|1\|_{\data} +
 		C_{\data}\norm[{\data}]{\psi_1}
 		\exp\big(m\|1\|_{\data}+m\norm[{\data}]{\psi_1}\big)
 		\\
 		& \leq C_{\data}'\big(1+\norm[{\data}]{\psi_1}\big)
 		\exp\big(m\norm[{\data}]{\psi_1}\big)
 		\\
 		&\leq C_{\data}' \exp\big((m+1)\norm[{\data}]{\psi_1}\big).
 	\end{aligned}
 \end{equation}
  
  We show that 
  $u_N\in L^2(\R^N,X;\gamma_N)$ for every $N\in\N$.  
  To this end we
  recall that for any $s>0$ (see, e.g., \cite[Appendix B]{HS16_656},
  \cite[(38)]{BCDM} for a proof)
  \begin{equation}\label{eq:intexp}
    \int_\R \exp(s|y|)\dd \gamma_1(y) \le \exp\bigg(\frac{s^2}{2}+\frac{\sqrt{2}s}{\pi}\bigg).
  \end{equation}
  Since ${\data}$ is continuously embedded into
  $L^\infty(\domain;\C)$, there exists $C_0>0$ such that
  \begin{align*}
    \|\psi\|_{L^\infty(\D)}\leq C_0 \|\psi\|_{\data} \qquad \forall \psi \in \data.
  \end{align*}	
  Using \ref{item:norma}, \eqref{eq:Znorm}, and
  \begin{align*}
    \frac{1}{\essinf_{\bx \in \domain}  
    \big(\exp\big(\sum_{j=1}^Ny_j\psi_j(\bx)\big)\big) }
    &
      \leq
      \bigg\|\exp\Big(-\sum_{j=1}^Ny_j\psi_j\Big)\bigg\|_{L^\infty}
    \\
    &\leq  \exp\bigg(\bigg\|\sum_{j=1}^Ny_j\psi_j\bigg\|_{L^\infty}\bigg) \leq  \exp\bigg(C_0\sum_{j=1}^N|y_j| \big\|\psi_j\big\|_{\data}\bigg),
  \end{align*}
  we obtain the bound
  \begin{align*}
    \norm[X]{u_N(\by)}
    &
      \le \delta\bigg(1+ \normk[{\data}]{\exp\bigg(\sum_{j=1}^Ny_j\psi_j\bigg)}\bigg)^m \exp\bigg(C_0m\sum_{j=1}^N|y_j| \big\|\psi_j\big\|_{\data}\bigg)
    \\
    & \le   \delta\Bigg(1+ C_{\data}'
      \exp\bigg((m+1)\sum_{j=1}^N|y_j|\normc[{\data}]{\psi_j}\bigg)\Bigg)^m \exp\bigg(C_0m\sum_{j=1}^N |y_j|\big\|\psi_j\big\|_{\data}\bigg)
    \\
    & \leq  C_1 \exp\bigg((2+C_0)m^2\sum_{j=1}^N|y_j|\normc[{\data}]{\psi_j}\bigg)
  \end{align*}
  for some constant $C_1>0$ depending on $\delta, C_{\data}$ and $m$.
  Hence, by \eqref{eq:intexp} we have
  \begin{align*}
        \int_{\R^N}\norm[X]{u_N(\by)}^2 \dd\gamma_N(\by)
    &
      \le C_1 \int_{\R^N}  \exp\bigg((2+C_0)m^2)\sum_{j=1}^N |y_j|\norm[{\data}]{\psi_j}\bigg)\dd\gamma_N(\by)\\
      &\le C_1  \exp\Bigg(\frac{(2+C_0)^2m^4}{2}\sum_{j=1}^Nb_j^2+\frac{\sqrt{2}(2+C_0)m^2}{\pi}\sum_{j=1}^Nb_j \Bigg) <\infty.
  \end{align*}
  {\bf Step 2.} We show that $(\tilde u_N)_{N\in\N}$ which is defined
  as $\tilde u_N(\by):=u_N(y_1,\dots,y_N)$ for $\by\in U$, is a Cauchy
  sequence in $L^2(U,X;\gamma)$. For any $N<M$ by \ref{item:loclip}
  \begin{align*}
    \norm[L^2(U,X;\gamma)]{\tilde u_M-\tilde u_N}^2
    &=\int_U \normk[X]{\Uu\bigg(\exp\Big(\sum_{j=1}^My_j\psi_j\Big)\bigg)
      -\Uu\bigg(\exp\Big(\sum_{j=1}^Ny_j\psi_j\Big)\bigg)}^2\dd\gamma(\by)\nonumber
    \\
    &\quad\le K \int_U \Bigg[ \Bigg(1+\normk[{\data}]{\exp\Big(\sum_{j=1}^My_j \psi_j\Big)}+\normk[{\data}]{\exp\Big(\sum_{j=1}^Ny_j \psi_j\Big)}\Bigg)^m 
    \\
    &  \quad \times \exp\bigg(C_0m\sum_{j=1}^M|y_j| \big\|\psi_j\big\|_{\data}\bigg) \cdot \normk[{\data}]{\exp\Big(\sum_{j=1}^M y_j\psi_j\Big)-\exp\Big(\sum_{j=1}^N y_j\psi_j\Big)}\Bigg]\dd\gamma(\by).
  \end{align*}
  Using \eqref{eq:Znorm} again we can estimate
  \begin{align*}
    &\norm[L^2(U,X;\gamma)]{\tilde u_M-\tilde u_N}^2
      \le K \int_U \Bigg[\Bigg(1+ 2C_{\data}'
      \exp\bigg((m+1)\sum_{j=1}^M|y_j|\normc[{\data}]{\psi_j}\bigg)\Bigg)^m
    \\
    & \times \exp\bigg(C_0m\sum_{j=1}^M|y_j| \big\|\psi_j\big\|_{\data}\bigg)  \cdot \normk[{\data}]{\exp\Big(\sum_{j=1}^M y_j\psi_j\Big)-\exp\Big(\sum_{j=1}^N y_j\psi_j\Big)}\Bigg]\dd\gamma(\by)
    \\
    &\leq C_2\int_U \Bigg[ \exp\bigg((2+C_0)m^2)\sum_{j=1}^M|y_j|\normc[{\data}]{\psi_j}\bigg) \normk[{\data}]{\exp\Big(\sum_{j=1}^M y_j\psi_j\Big)-\exp\Big(\sum_{j=1}^N y_j\psi_j\Big)}\Bigg]\dd\gamma(\by)\,
  \end{align*}
  for $C_2>0$ depending only on $K,C_{\data}$, and $m$.  Now,
  employing \eqref{eq:Znorm2} we obtain
  \begin{align*}
    \normk[{\data}]{\exp\Big(\sum_{j=1}^M y_j\psi_j\Big)-\exp\Big(\sum_{j=1}^N y_j\psi_j\Big)}
    & \le C_{\data} \sum_{j=N+1}^M |y_j|\norm[{\data}]{\psi_j}\exp\Bigg(m\sum_{j=1}^M|y_j|\norm[{\data}]{\psi_j} \Bigg)
    \\
    & \le C_{\data}\sum_{j=N+1}^M |y_j|\norm[{\data}]{\psi_j} \exp\bigg(m^2\sum_{j=1}^M|y_j|\norm[{\data}]{\psi_j}  \bigg)   .
  \end{align*}
  Therefore for a constant $C_3$ depending on $C_{\data}$ and $\delta$
  (but independent of $N$), using $|y_j|\le\exp(|y_j|)$,
  \begin{align*}
    &   \norm[L^2(U,X;\gamma)]{\tilde u_M-\tilde u_N}^2 
    \\
    &\le
      C_3 \sum_{j=N+1}^M\norm[{\data}]{\psi_j}\int_{\R^M}|y_j|\exp\bigg((3+C_0)m^2
      \sum_{i=1}^M|y_i|\norm[{\data}]{\psi_i}\bigg)  \dd\gamma_M((y_i)_{i=1}^M)
      \nonumber\\
    &\le   C_3 \sum_{j=N+1}^M\norm[{\data}]{\psi_j}\int_{\R^M}
      \exp\bigg(|y_j|+(3+C_0)m^2\sum_{i=1}^M|y_i|\norm[{\data}]{\psi_i}\bigg) \dd\gamma_M((y_i)_{i=1}^M)
      \nonumber\\
    &\le C_3 \bigg(\sum_{j=N+1}^M b_j\bigg)     
      \Bigg( \exp\bigg(\frac{1}{2}+\frac{\sqrt{2}}{\pi} + \frac{(3+C_0)^2m^4}{2}\sum_{i=1}^Mb_j^2  
      + \frac{ \sqrt{2}(3+C_0)m^2}{\pi}\sum_{j=1}^Mb_j \bigg)\Bigg),
  \end{align*}
  where we used \eqref{eq:intexp} and in the last inequality.  Since
  $\bb\in\ell^1(\N)$ the last term is bounded by
  $ C_4 \Big(\sum_{j=N+1}^\infty b_j \Big)$ for a constant $C_4$
  depending on $C_{\data}$, $K$ and $\bb$ but independent of $N$, $M$.
  Due to $\bb\in\ell^1(\N)$, it also
  holds $$\sum_{j=N+1}^{\infty}b_j\to 0 \ \ {\rm as} \ \ N\to\infty.$$
  Since $N<M$ are arbitrary, we have shown that
  $(\tilde u_N)_{N\in\N}$ is a Cauchy sequence in the Banach space
  $L^2(U,X;\gamma)$.  This implies that there is a
  function $$u:=\lim_{N\to\infty}\tilde u_N\in L^2(U,X;\gamma).$$
	
  {\bf Step 3.} To show that $u$ is $(\bb,\xi,\delta \tilde{C},X)$
  holomorphic, we provide constants $\xi>0$ and $\tilde{C}>0$
  independent of $\delta$ so that $u_N$ admits holomorphic extensions
  as in Definition~\ref{def:bdXHol}.  This concludes the proof.

  Let $\xi:=\pi/(4C_0)$.  Fix $N\in\N$ and assume
	$$\sum_{j=1}^N b_j\varrho_j<\xi$$ 
	(i.e.\ $(\varrho_j)_{j=1}^N$ is $(\bb,\delta_1)$-admissible).
        Then for $z_j=y_j+\im\zeta_j \in\C$ such that
        $|\Im(z_j)|=|\zeta_j|<\varrho_j$ for all $j$,
	\begin{equation*}
          \rho\Bigg(\exp\Big(\sum_{j=1}^N z_j\psi_j(\bx) \Big)\Bigg)
          =
          \underset{\bx\in\D}{\essinf}\,\Bigg(\exp\Big(\sum_{j=1}^N y_j\psi_j(\bx) \Big)\Bigg)
          \cos\Bigg(\sum_{j=1}^N \zeta_j\psi_j(\bx)\Bigg).
	\end{equation*}
	Due to
	\begin{equation*}
          \underset{\bx\in\D}{\esssup} \,\Bigg|\sum_{j=1}^N\zeta_j\psi_j(\bx)\Bigg|
          \le \sum_{j=1}^N \varrho_j\norm[L^\infty]{\psi_j}
          \le \sum_{j=1}^N C_0\varrho_j\norm[V]{\psi_j}
          = \sum_{j=1}^N C_0\varrho_j b_j
          \le \frac{\pi}{4},
	\end{equation*}
	we obtain for such $(z_j)_{j=1}^N$
	\begin{equation}\label{eq:essinfx}
          \rho\Bigg(\exp\Big(\sum_{j=1}^N z_j\psi_j(\bx) \Big)\Bigg)
          \ge \exp\Bigg(-\sum_{j=1}^N|y_j|\norm[L^\infty]{\psi_j} \Bigg)
          \cos\Bigg(\frac \pi 4 \Bigg)>0.
	\end{equation}
	This shows that for every
        $\bvarrho=(\varrho_j)_{j=1}^N\in (0,\infty)^N$ such that
        $\sum_{j=1}^N b_j\varrho_j<\xi$, it holds
	$$\sum_{j=1}^Nz_j\psi_j\in O \quad \forall \bz\in \Ss(\bvarrho).$$  
        Since
	$\Uu:O\to X$ is holomorphic, the function
	$$ u_N\left((y_j)_{j=1}^N\right)=\Uu\Bigg(\exp\bigg(\sum_{j=1}^N y_j\psi_j\bigg)\Bigg)$$ can be
	holomorphically extended to arguments
        $(z_j)_{j=1}^N\in \Ss(\bvarrho)$.

        Finally we fix again $N\in\N$ and provide a function
        $\varphi_N\in L^2(U;\gamma)$ as in
        Definition~\ref{def:bdXHol}.  Fix $\by\in\R^N$ and
        $\bz\in \Bb_\bvarrho$ and set
	$$a:=\sum_{j=1}^N(y_j+z_j)\psi_j.$$ 
	By \ref{item:norma}, \eqref{eq:essinfx} and because
        $b_j=\norm[{\data}]{\psi_j}$ and
	$$\sum_{j=1}^N b_j\varrho_j\le\xi,$$ we have that
	\begin{align*}
          \norm[X]{u_N((y_j+z_j)_{j=1}^N)} &\le \delta\left(\frac{1+\norm[{\data}]{a}}{\min\{1,\rho(a) \}} \right)^m
                                             \nonumber\\
                                           &\le \delta \left(  \frac{1+C_{\data}'  
                                             \exp\big((m+1)\sum_{j=1}^N(|y_j|+|z_j|)\norm[{\data}]{\psi_j}\big) }{\exp(-C_0\sum_{j=1}^N(|y_j|+|z_j|)\norm[{\data}]{\psi_j})\cos(\frac{\pi}{4})}\right)^m 
                                             \nonumber\\
                                           &\le \delta  \left(\frac{1+C_{\data}'\exp\big((m+1)
                                             \sum_{j=1}^N |y_j|b_j\big)\exp((m+1)\xi)}{\exp(-C_0\sum_{j=1}^N|y_j|b_j)\exp(-C_0\xi)\cos(\frac{\pi}{4})}\right)^m
                                             \nonumber\\                                     
                                           &\le \delta L \exp\bigg( (2+C_0)m^2\sum_{j=1}^N|y_j|b_j\bigg)
        \end{align*}
	for some $L$ depending only on $C_{\data},C_0$ and $m$.  Let
        us define the last quantity as
        $\varphi_N\left((y_j)_{j=1}^N\right)$.  Then by
        \eqref{eq:intexp} and because $\gamma_N$ is a probability
        measure on $\R^N$,
	\begin{align*}
          \norm[L^2(\R^N;\gamma_N)]{\varphi_N}
          &\le \delta L  \exp\bigg(\sum_{j=1}^N
            \frac{(2+C_0)^2m^4b_j^2}{2}+(2+C_0)m^2\frac{\sqrt{2}b_j}{\pi} \bigg) \nonumber\\
          &\le \delta L  \exp\bigg(\sum_{j\in \N}
            \frac{(2+C_0)^2m^4b_j^2}{2}+(2+C_0)m\frac{\sqrt{2}b_j}{\pi} \bigg) 
          \\   
          & \le \delta \tilde C(\bb,C_0,C_\data,m),
	\end{align*}
	for some constant $\tilde C(\bb,C_0,C_\data,m) \in (0,\infty)$
        because $\bb\in\ell^1(\N)$.  In all, we have shown that $u$
        satisfies $(\bb,\xi,\delta \tilde{C},X)$-holomorphy as in
        Definition~\ref{def:bdXHol}.
      \end{proof}
      \subsection{Examples of holomorphic data-to-solution maps}
      \label{sec:HlExmpl}
      We revisit the example of linear elliptic divergence-form PDE
      with diffusion coefficient introduced in
      Section~\ref{sec:EllPDElogN}. 
      Its coefficient-to-solution map
      $S$ from \eqref{eq:SolOp} for a fixed $f \in X'$, 
      gives rise to parametric maps which
      are parametric-holomorphic. This kind of function will, on the
      one hand, arise as generic model of Banach-space valued
      uncertain inputs of PDEs, and on the other hand as model of
      solution manifolds of PDEs.  The connection is made through
      preservation of holomorphy under composition with inversion of
      boundedly invertible differential operators.

      Let $f\in X'$ be given.  
      If $A(a)\in \cL_{{\rm is}}(X,X')$ is an
      isomorphism depending (locally) holomorphically on $a\in \data$,
      then 
      $$ 
            \Uu:E\to X: a\mapsto (\inv \circ A(a))f
      $$ 
      is also locally holomorphic as a function of $a\in {\data}$. 
      Here $\inv$ denotes the inversion map.  
      This is a consequence of the fact that the 
      $\inv: \cL_{{\rm is}}(X,X') \to \cL_{{\rm is}}(X',X)$ is
      holomorphic, see e.g.\ \cite[Example 1.2.38]{JZdiss}.  
      This
      argument can be used to show that the solution operator
      corresponding to the solution of certain PDEs is holomorphic in
      the parameter.  We informally discuss this for some parametric
      PDEs and refer to \cite[Chapter 1 and 5]{JZdiss} for more
      details.
      \subsubsection{Linear elliptic divergence-form PDE with parametric diffusion coefficient}\label{sec:pdc}
      Let us again consider the model linear elliptic PDE \index{PDE!linear elliptic $\sim$}
      \begin{equation}\label{eq:elliptic0}
        - \div(a \nabla \Uu(a)) =f \;\;\text{in }\D \;,\quad 
        \Uu(a) =0 \;\; \text{on }\partial \D
      \end{equation}
      where $d\in\N$, $\D\subseteq\R^d$ is a bounded Lipschitz domain,
      $X:=H_0^1(\D;\C)$, $f\in H^{-1}(\D;\C):=(H_0^1(\D;\C))'$ and
      $a\in {\data}:=L^\infty(\D;\C)$.  Then the solution operator
      $\Uu:O\to X$ maps the coefficient function $a$ to the weak solution $\Uu(a)$, 
      where
  $$
  O:= \set{a\in L^\infty(\D;\C)}{\rho(a) >0}, 
  $$
  with $\rho(a)$ defined in \eqref{eq:PDEellinC} 
  for $a \in L^\infty(\domain;\C)$. 
  With $A(a)$ denoting the differential operator
  $- \div(a \nabla \cdot)\in L(X,X')$ we can also write
  $\Uu(a)=A(a)^{-1}f$. 
  We now check assumptions
  \ref{item:uhol}--\ref{item:loclip} of Theorem \ref{thm:bdX}.
  \begin{enumerate}
  \item As mentioned above, complex Fr\'echet differentiability (i.e.\
    holomorphy) of $\Uu:O\to X$ is satisfied because the operation of
    inversion of linear operators is holomorphic on the set of
    boundedly invertible linear operators, $A$ depends boundedly and
    linearly (thus holomorphically) on $a$, and therefore, the map
    $$a\mapsto A(a)^{-1}f=\Uu(a)$$ is a composition of holomorphic
    functions.  We refer once more to \cite[Example 1.2.38]{JZdiss}
    for more details.
  \item For $a\in O$, it holds
    \begin{equation*}
      \norm[X]{\Uu(a)}^2 \, \rho(a)
      \le 
      \left|\int_\D\nabla\Uu(a)^\top a \overline{\nabla\Uu(a)}\dd \bx \right|
      =
      \left|\dup{f}{\overline{\Uu(a)}}\right| 
      \le \norm[X']{f}\norm[X]{\Uu(a)}.
    \end{equation*}
Here $\dup{\cdot}{\cdot}$ denotes the dual product between $X'$ and $X$.
This gives the usual a-priori bound
    \begin{equation}\label{eq:apriori}
      \norm[X]{\Uu(a)}\le \frac{\norm[X']{f}}{\rho(a)}.
    \end{equation}
  \item For $a$, $b\in O$ and with $w:=\Uu(a)-\Uu(b)$, we have that
    \begin{align*}
      \frac{\norm[X]{w}^2}{\rho(a)}
      &\le\left| \int_\D \nabla w^\top a \overline{\nabla w}\dd \bx\right|\nonumber\\
      &= \left|\int_\D \nabla \Uu(a)^\top a \overline{\nabla w}\dd \bx
        - \int_\D \nabla \Uu(b)^\top b \overline{\nabla w}\dd \bx
        -\int_\D \nabla \Uu(b)^\top (a-b) \overline{\nabla w}\dd \bx
        \right|\nonumber\\
      &\le \norm[X]{\Uu(b)} \norm[X]{w}\norm[{\data}]{a-b}\nonumber\\
      &\le \frac{\norm[X']{f}}{\rho(b)}
        \norm[X]{w}\norm[{\data}]{a-b},
    \end{align*}
    and thus
    \begin{equation}\label{eq:lipschitz}
      \norm[X]{\Uu(a)-\Uu(b)}\le \norm[X']{f}
      \frac{\norm[{\data}]{a}}{\rho(b)}
      \norm[{\data}]{a-b}.
    \end{equation}
  \end{enumerate}
  Hence, if
  $(\psi_j)_{j\in\N}\subset {\data}$ such that with
  $b_j:=\norm[{\data}]{\psi_j}$ it holds $\bb\in\ell^1(\N)$, 
  then the solution
$$
u(\by)=\lim_{N\to\infty}\Uu\left(\exp\left(\sum_{j=1}^N
    y_j\psi_j\right)\right) \in L^2(U,X;\gamma)
$$
is well-defined and $(\bb,\xi,\delta,X)$-holomorphic by Theorem \ref{thm:bdX}.

This example can easily be generalized to spaces of higher-regularity,
e.g.,  if $\D\subseteq\R^d$ is a bounded $C^{s-1}$ domain for some
$s\in\N$, $s\ge 2$, then we may set
$X:= H_0^1(\D;\C)\cap H^{s}(\D;\C)$ and ${\data}:=W^{s}_\infty(\D;\C)$
and repeat the above calculation.
\subsubsection{Linear parabolic PDE with parametric coefficient} 
\label{sec:LinParPDE}
Let $0<T<\infty$ denote a finite time-horizon and let $\D$ be a
bounded domain with Lipschitz boundary $\partial \D$ in $\RR^d$.  We
define $I:=(0,T)$ and consider the initial boundary value problem
(IBVP for short) for the linear parabolic PDE \index{PDE!linear parabolic $\sim$}
\begin{equation}\label{eq:parabolic}
  \begin{cases}
    \frac{\partial u(t,\bx)}{\partial t} - \div\big(a(\bx)\nabla
    u(t,\bx)\big)=f(t,\bx), \qquad (t,\bx)\in I\times \D,
    \\
    u|_{\partial \D\times I}=0,
    \\
    u|_{t=0}=u_0(\bx).
  \end{cases}
\end{equation}
In this section, we prove that the solution to this problem satisfies
the assumptions of Theorem~\ref{thm:bdX} for certain spaces $E$ and $X$. 
We first review results on
the existence and uniqueness of solutions to the equation
\eqref{eq:parabolic}. 
We refer to \cite{SchS} and the references
there for proofs and more detailed discussion.

We denote $V:=H_0^1(\D;\C)$ and $V':=H^{-1}(\D;\C)$.  
The parabolic
IBPV given by equation \eqref{eq:parabolic} is a well-posed operator
equation in the intersection space of Bochner spaces (e.g.\
\cite[Appendix]{SchS}, and e.g.\ \cite{Wloka,Evan10} for the
definition of spaces)
\begin{equation*}
  X := L^2(I,V)\cap H^1(I,V') = \big(L^2(I)\otimes V \big)\cap \big(H^1(I)\otimes V'\big)
\end{equation*}
equipped with the sum norm
\begin{equation*}
  \|u\|_{X} := \Big(\|u\|_{L^2(I,V)}^2 +\|u\|_{H^1(I,V')}^2 \Big)^{1/2},\qquad u\in X,
\end{equation*}
where
$$
\|u\|_{L^2(I,V)}^2 = \int_I \|u(t, \cdot) \|_V^2\, \rd t\, ,
$$
and
$$
\|u\|_{H^1(I,V')}^2 = \int_I \| \partial_t u(t, \cdot) \|_{V'}^2\, \rd
t\,.
$$
To state a space-time variational formulation and to specify the data
space for \eqref{eq:parabolic}, we introduce the test-function space
\begin{equation*}\label{eq:ParIBVPY}
  Y 
  = L^2(I,V)\times L^2(\domain) 
  = \big(L^2(I)\otimes V \big) \times L^2(\domain)
\end{equation*}
which we endow with the norm
\begin{equation*}
  \|v\|_{Y}
  =
  \Big(\|v_1\|_{L^2(I,V)}^2 + \|v_2\|_{L^2(\domain)}^2\Big)^{1/2},\qquad v=(v_1,v_2)\in Y\,.
\end{equation*}
Given a time-independent diffusion coefficient $a\in L^\infty(\D;\C)$
and $(f,u_0)\in Y'$, the continuous sesquilinear and antilinear forms
corresponding to the parabolic problem \eqref{eq:parabolic} reads for
$u\in X$ and $v=(v_1,v_2)\in Y$ as
\begin{equation*}
  \begin{split}
    B(u,v;a)& := \int_I \int_\D \partial_t u\,\overline{v_1}\rd \bx
    \rd t + \int_I\int_\D a\nabla u \cdot \overline{\nabla v_1} \rd
    \bx \rd t + \int_\D u_0\,\overline{v_2 } \rd \bx
  \end{split}
\end{equation*}
and
\begin{equation*}
  \begin{split}
    L(v) := \ \int_I \big\langle f(t,\cdot),v_1(t,\cdot) \big\rangle
    \rd t + \int_\D u_0 \,\overline{v_2 } \rd \bx,
  \end{split}
\end{equation*}
where $\langle \cdot,\cdot \rangle$ is the anti-duality pairing
between $V'$ and $V$.  Then the space-time variational formulation of
equation \eqref{eq:parabolic} is: Find $\Uu(a)\in X$ such that
\begin{equation}\label{para-weak}
  B(\Uu(a),v;a)=L(v),\quad  \forall v\in Y\,.
\end{equation}
The existence and uniqueness of solution to the equation
\eqref{para-weak} was proved in \cite{SchS} which reads as follows.
\begin{proposition}\label{prop:parabolic}
  Assume that $(f,u_0)\in Y'$ and that
  \begin{equation}\label{eq:para-uni}
    0 < \rho(a) := \underset{\bx\in\D}{\essinf}\,\Re(a(\bx))
    \leq |a(\bx)|
    \leq \|a\|_{L^\infty}
    <\infty,\qquad \bx\in \domain
  \end{equation}
  Then the parabolic operator $\mathcal{B}\in \mathcal{L}(X,Y')$
  defined by $$(\mathcal{B}u)(v)=B(u,v;a),$$ 
  is an isomorphism and
  $\mathcal{B}^{-1}:Y \to X$ has the norm
  \begin{equation*}
    \|\mathcal{B}^{-1}\|  \leq \frac{1}{\beta(a)}, 
  \end{equation*}
  where
  \begin{equation*}
    \beta(a) := \frac{\min\big(\rho(a) \|a\|_{L^\infty}^{-2}, \rho(a) \big)}{\sqrt{2\max(\rho(a)^{-2},1)+\vartheta^2}}
    \qquad\text{and}\qquad 
    \vartheta := \sup_{w \not=0, w\in X} \frac{\| w(0,\cdot)\|_{L^2(\domain)}}{\|w\|_{X}}\,.
  \end{equation*}
  The constant $\vartheta$ depends only on $T$.
\end{proposition}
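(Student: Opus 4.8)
The plan is to prove Proposition~\ref{prop:parabolic} by a Banach--Ne\v cas--Babu\v ska (inf-sup) argument for the space-time variational formulation \eqref{para-weak}, following the structure of the proof in \cite{SchS}. The essential ingredients are: boundedness of the sesquilinear form $B(\cdot,\cdot;a)$ on $X\times Y$, a uniform (in a single appropriate constant) inf-sup lower bound, and the surjectivity condition (equivalently, injectivity of the adjoint). Since $X$ and $Y$ are Hilbert spaces and $L\in Y'$, once these are established the Babu\v ska--Ne\v cas theory yields existence, uniqueness, and the operator norm bound $\|\mathcal{B}^{-1}\|\le 1/\beta(a)$ with the stated $\beta(a)$.

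First I would record the \emph{boundedness} of $B$. For $u\in X$ and $v=(v_1,v_2)\in Y$, the time-derivative term is controlled by $\|u\|_{H^1(I,V')}\|v_1\|_{L^2(I,V)}$, the diffusion term by $\|a\|_{L^\infty}\|u\|_{L^2(I,V)}\|v_1\|_{L^2(I,V)}$, and the trace term $\int_\D u_0\overline{v_2}$ — more precisely the initial-value term $\int_\D u(0,\cdot)\overline{v_2}$ in the operator $\mathcal B$ — by $\vartheta\|u\|_X\|v_2\|_{L^2(\D)}$, using the definition of $\vartheta$ as the norm of the trace-at-$0$ map $X\to L^2(\D)$. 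Combining via Cauchy--Schwarz in $\R^2$ (resp.\ $\R^3$) gives $|B(u,v;a)|\le C(a,\vartheta)\|u\|_X\|v\|_Y$. Next I would establish the \emph{inf-sup condition}: for fixed $0\ne u\in X$, construct a test function $v=v(u)\in Y$ realizing a lower bound. The standard choice is $v_1 := c_1\,\partial_t u + c_2\, (\text{elliptic lift of }u)$, i.e.\ $v_1(t,\cdot) := \rho(a)\|a\|_{L^\infty}^{-2}\partial_t u(t,\cdot) + w(t,\cdot)$ where $w$ solves $\int_\D a\nabla w\cdot\overline{\nabla\phi} = \int_\D \nabla u\cdot\overline{\nabla\phi}$, together with $v_2 := u(0,\cdot)$; then $\Re B(u,v;a)$ telescopes (using $\int_I\Re\langle\partial_t u,u\rangle = \frac12(\|u(T)\|_{L^2}^2-\|u(0)\|_{L^2}^2)$) to control $\|u\|_{L^2(I,V)}^2$, $\|\partial_t u\|_{L^2(I,V')}^2$ and $\|u(0,\cdot)\|_{L^2(\D)}^2$ simultaneously, while $\|v\|_Y$ is bounded by $\sqrt{2\max(\rho(a)^{-2},1)+\vartheta^2}\,\|u\|_X$ after using the ellipticity bounds on $w$. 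Dividing yields precisely the claimed $\beta(a)$. Finally, \emph{surjectivity}: one checks that $v\mapsto B(u,v;a)=0$ for all $u\in X$ forces $v=0$; this is the adjoint (backward-in-time) parabolic problem, handled by the analogous energy estimate run in reverse time, which has only the trivial solution.

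The main obstacle I anticipate is the bookkeeping in the inf-sup step: getting the \emph{exact} constants $\rho(a)\|a\|_{L^\infty}^{-2}$, $\rho(a)$ inside the $\min$, and the $\sqrt{2\max(\rho(a)^{-2},1)+\vartheta^2}$ denominator requires carefully balancing the two contributions to $v_1$ and exploiting that $\Re(a)\ge\rho(a)$ and $|a|\le\|a\|_{L^\infty}$ are the only structural facts available for a possibly complex-valued, merely $L^\infty$ coefficient $a$. In particular, the diffusion form $\int_\D a\nabla w\cdot\overline{\nabla\phi}$ is coercive with constant $\rho(a)$ but bounded with constant $\|a\|_{L^\infty}$, which is why the asymmetric factor $\rho(a)\|a\|_{L^\infty}^{-2}$ appears. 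Everything else — boundedness, the Hilbert-space functional-analytic wrap-up, and the adjoint injectivity — is routine, so I would state those briefly and cite \cite{SchS} for the detailed verification, presenting only the choice of test function and the resulting chain of inequalities that produces $\beta(a)$. The claim that $\vartheta$ depends only on $T$ follows since the embedding $X\hookrightarrow C(\bar I;L^2(\D))$ and the norm of evaluation at $t=0$ are governed solely by the interpolation inequality on the interval $I=(0,T)$, with no dependence on $\D$ or $a$.
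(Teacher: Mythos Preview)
The paper does not actually give a proof of this proposition: it is stated as a result imported from \cite{SchS}, with the sentence ``The existence and uniqueness of solution to the equation \eqref{para-weak} was proved in \cite{SchS} which reads as follows'' preceding it, and the text moves on immediately afterwards. Your proposal is therefore not competing with any argument in the paper; rather, you are sketching the proof that the paper chose to omit and refer to the literature for. The Banach--Ne\v cas--Babu\v ska strategy you outline (boundedness, inf-sup via a tailored test pair, adjoint injectivity) is indeed the method used in \cite{SchS}, and your identification of where the specific constants $\rho(a)\|a\|_{L^\infty}^{-2}$, $\rho(a)$, and $\sqrt{2\max(\rho(a)^{-2},1)+\vartheta^2}$ originate is on target. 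So your approach is correct and aligned with the source the paper cites; it simply goes further than the paper itself, which treats the result as a black box.
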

The data space for the equation \eqref{eq:parabolic} for complex-valued data
is $E:= L^\infty(\domain,\CC)$.  
With the set of admissible diffusion coefficients in the data space
$$
O:=\set{a\in L^\infty(\D,\C)} {\rho(a) >0},
$$ 
from the above proposition we immediately deduce that for given
$(f,u_0)\in Y'$, the map 
$$
\Uu: O \to X: a \ \mapsto \ \Uu(a)
$$ is
well-defined.

Furthermore, there holds the a-priori estimate
\begin{equation}\label{eq:para-bound}
  \|\Uu(a)\|_{X} 
  \leq 
  \frac{1}{\beta(a)}\Big(\|f\|_{L^2(I,V')}^2 + \|u_0\|_{L^2}^2\Big)^{1/2}\,.
\end{equation}
This bound is a consequence of the following result which states that
the data-to-solution map $ a \to \Uu(a)$ is locally Lipschitz
continuous.
\begin{lemma} \label{lem:para-Lip} 
Let $(f,u_0)\in Y'$.  
Assume that
$\Uu(a)$ and $\Uu(b)$ be solutions to \eqref{para-weak} with
coefficients $a$, $b$ satisfying \eqref{eq:para-uni}, respectively.

Then, with the function $\beta( \cdot)$ in variable $a$ as in
Proposition \ref{prop:parabolic}, we have
  \begin{equation*}
    \|\Uu(a)-\Uu(b)\|_{X} 
    \leq 
    \frac{1}{\beta(a)\beta(b)}\|a-b\|_{L_\infty}\Big(\|f\|_{L^2(I,V')}^2 + \|u_0\|_{L^2}^2\Big)^{1/2}\,.
  \end{equation*}
\end{lemma}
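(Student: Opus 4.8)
The plan is a standard first-order perturbation (Strang-type) argument, exploiting that $B(\cdot,\cdot;a)$ depends affinely on $a$ and that Proposition~\ref{prop:parabolic} supplies a uniform bound on the inverse of the parabolic operator. Write $\mathcal{B}_a\in\cL(X,Y')$ for the operator with coefficient $a$, defined by $(\mathcal{B}_a u)(v):=B(u,v;a)$, and similarly $\mathcal{B}_b$; both $\Uu(a)$ and $\Uu(b)$ solve $\mathcal{B}_a\Uu(a)=L=\mathcal{B}_b\Uu(b)$ in $Y'$, where $L\in Y'$ is the antilinear functional from \eqref{para-weak}.

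First I would isolate the coefficient dependence. Using linearity of $B$ in its first argument and the fact that among the three terms of $B(u,v;a)$ only the stiffness term $\int_I\int_\D a\nabla u\cdot\overline{\nabla v_1}\dd\bx\dd t$ depends on $a$, subtraction of the two variational identities gives, for every $v=(v_1,v_2)\in Y$,
$$
B(\Uu(a)-\Uu(b),v;a)=B(\Uu(b),v;b)-B(\Uu(b),v;a)=\int_I\int_\D(b-a)\,\nabla\Uu(b)\cdot\overline{\nabla v_1}\dd\bx\dd t=:G(v).
$$
Thus $\mathcal{B}_a\big(\Uu(a)-\Uu(b)\big)=G$ in $Y'$. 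Next I would estimate $G$: by Hölder's inequality in $\bx$ and Cauchy--Schwarz in $t$,
$$
|G(v)|\le\|a-b\|_{L^\infty}\,\|\Uu(b)\|_{L^2(I,V)}\,\|v_1\|_{L^2(I,V)}\le\|a-b\|_{L^\infty}\,\|\Uu(b)\|_{X}\,\|v\|_{Y},
$$
so that $G\in Y'$ with $\|G\|_{Y'}\le\|a-b\|_{L^\infty}\|\Uu(b)\|_{X}$.

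Applying $\|\mathcal{B}_a^{-1}\|\le\beta(a)^{-1}$ from Proposition~\ref{prop:parabolic} then yields $\|\Uu(a)-\Uu(b)\|_{X}\le\beta(a)^{-1}\|a-b\|_{L^\infty}\|\Uu(b)\|_{X}$. It remains to bound $\|\Uu(b)\|_{X}$: since $\Uu(b)=\mathcal{B}_b^{-1}L$, Proposition~\ref{prop:parabolic} applied with coefficient $b$ gives $\|\Uu(b)\|_{X}\le\beta(b)^{-1}\|L\|_{Y'}$, and a Cauchy--Schwarz estimate on the definition of $L(v)=\int_I\langle f,v_1\rangle\dd t+\int_\D u_0\overline{v_2}\dd\bx$ gives $\|L\|_{Y'}\le\big(\|f\|_{L^2(I,V')}^2+\|u_0\|_{L^2}^2\big)^{1/2}$ (this is precisely the a-priori bound \eqref{eq:para-bound}, itself a direct consequence of Proposition~\ref{prop:parabolic}, so no circularity arises). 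Combining the three inequalities produces the claimed estimate.

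I do not expect a serious obstacle; the argument is routine once the perturbation identity is in place. The only points requiring care are the bookkeeping with the antilinear slot of $B$ (the conjugates on $v_1$, $v_2$), the cancellation of the $a$-independent parts of $B$ (the time-derivative term and the time-zero trace term) in $B(\Uu(b),v;b)-B(\Uu(b),v;a)$, and checking that $(b-a)\nabla\Uu(b)\cdot\overline{\nabla v_1}\in L^1(I\times\D)$ so that $G$ is a well-defined bounded antilinear functional on $Y$ to which Proposition~\ref{prop:parabolic} applies — this is immediate from $a-b\in L^\infty(\D)$ and $\Uu(b),v_1\in L^2(I,V)$.
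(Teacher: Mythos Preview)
Your proposal is correct and follows essentially the same approach as the paper: subtract the two variational identities to obtain a parabolic equation for $w=\Uu(a)-\Uu(b)$ with right-hand side $(\tilde f,0)\in Y'$ given by the coefficient-difference stiffness term, apply the inverse bound $\|\mathcal{B}_a^{-1}\|\le\beta(a)^{-1}$ from Proposition~\ref{prop:parabolic}, and then bound $\|\Uu(b)\|$ via the a-priori estimate~\eqref{eq:para-bound}. The only cosmetic difference is that you phrase things in operator notation and bound $|G(v)|$ by $\|\Uu(b)\|_X\|v\|_Y$ rather than the slightly sharper $\|\Uu(b)\|_{L^2(I,V)}\|v_1\|_{L^2(I,V)}$ that the paper uses, but this does not affect the final estimate.
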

\begin{proof}
  From \eqref{para-weak} we find that for $w := \Uu(a)-\Uu(b)$,
  \begin{equation*}
    \begin{split} 
      \int_I \int_\D \partial_t w\,\overline{v_1 } \rd \bx\, \rd t & +
      \int_I\int_\D a \nabla w \cdot \overline{\nabla v_1 } \, \rd \bx
      \, \rd t + \int_\D w\big|_{t=0}\overline{v_2} \rd \bx
      \\
      & = -\int_I \int_\D \big(a-b\big) \nabla \Uu(b)\cdot
      \overline{\nabla v_1} \, \rd \bx \, \rd t\,.
    \end{split}
  \end{equation*}
  This is a parabolic equation in the variational form with
  $(\tilde{f},0)\in Y'$ where $\tilde{f}: L^2(I,V) \to \C$ is given by
  \begin{equation*}
    \tilde{f}(v_1) 
    :=  
    -\int_I \int_\D  \big(a-b\big) \nabla \Uu(b)\cdot  \overline{\nabla v_1} \, \rd \bx \, \rd t\,, \qquad v_1\in L^2(I,V).
  \end{equation*}
  Now applying Proposition \ref{prop:parabolic} we find
  \begin{equation} \label{eq:ua-ua} \|\Uu(a)-\Uu(b)\|_{X} \leq
    \frac{\|\tilde{f}\|_{L^2(I,V')}}{\beta(a)}.
  \end{equation}
  We also have
  \begin{equation*}
    \begin{split}
      \|\tilde{f}\|_{L^2(I,V')} = \sup_{\|v_1\|_{L^2(I,V)}=1} |
      \tilde{f}(v_1) | & \leq \|a-b\|_{L_\infty} \|\Uu(b)\|_{L^2(I,V)}
      \|v_1\|_{L^2(I,V)}
      \\
      & \leq \|a-b\|_{L_\infty}
      \frac{1}{\beta(b)}\Big(\|f\|_{L^2(I,V')}^2 +
      \|u_0\|_{L^2}^2\Big)^{1/2},
    \end{split}
  \end{equation*}
  where in the last estimate we used again Proposition
  \ref{prop:parabolic}. Inserting this into \eqref{eq:ua-ua} we obtain
  the desired result.  \hfill
\end{proof}
We are now in position to verify the assumptions
\ref{item:uhol}--\ref{item:loclip} of Theorem \ref{thm:bdX} for the
data-to-solution map $a\mapsto \Uu(a)$ to the equation
\eqref{eq:parabolic}.
\begin{enumerate}
\item For the first condition, it has been shown that the weak
  solution to the linear parabolic PDEs \eqref{eq:parabolic} depends
  holomorphically on the data $a\in O$ by the Ladyzhenskaya-Babu\v
  ska-Brezzi theorem in Hilbert spaces over $\C$, see e.g.\
  \cite[Pages 26, 27]{CoDe}.
\item Let $a\in O$.  Using the elementary estimate $a+b\leq ab$ with
  $a,b\geq 2$, we get
  \begin{equation*}
    \begin{split} 
      \sqrt{2\max(\rho(a)^{-2},1)+\vartheta^2} & \leq
      \sqrt{2\max(\rho(a)^{-2},1)+\max(\vartheta^2,2)}
      \\
      &\leq \sqrt{2\max(\rho(a)^{-2},1) \max(\vartheta^2,2)} \leq
      \max(\vartheta\sqrt{2},2) ({\rho(a)}^{-1}+1)\,.
    \end{split}
  \end{equation*}
  Hence, from \eqref{eq:para-bound} we can bound
  \begin{equation} \label{eq-1.beta(a)}
    \begin{split} 
      \|\Uu(a)\|_{X} \leq \frac{C_0(\rho(a)^{-1}+1)}{\min\big(\rho(a)
        \|a\|_{L^\infty}^{-2}, \rho(a)\big)} & =
      \frac{C_0(1+\rho(a))}{\rho(a)^2\min\big(\|a\|_{L^\infty}^{-2},
        1\big)}
      \\
      &\leq
      \frac{C_0(1+\|a\|_{L^\infty})\|a\|_{L^\infty}^2}{\min\big(\rho(a)^{4},
        1\big)} \leq
      C_0\bigg(\frac{1+\|a\|_{L^\infty}}{\min\big(\rho(a),
        1\big)}\bigg)^4\,,
    \end{split}
  \end{equation}
  where
$$C_0= \max(\vartheta\sqrt{2},2) \big(\|f\|_{L^2(I,V')}^2 + \|u_0\|_{L^2}^2\big)^{1/2}.$$ 
\item The third assumption follows from Lemma \ref{lem:para-Lip} and
  the part (ii), i.e., for $a,b\in O$ holds
  \begin{equation}\label{eq-ua-ub}
    \|\Uu(a)-\Uu(b)\|_{X} 
    \leq 
    C \bigg(\frac{1+\|a\|_{L^\infty}}{\min\big(\rho(a), 1\big)}\bigg)^4
    \bigg(\frac{1+\|b\|_{L^\infty}}{\min\big(\rho(b), 1\big)}\bigg)^4 \|a-b\|_{L_\infty}, 
  \end{equation}
  for some $C>0$ depending on $f$, $u_0$ and $T$.
\end{enumerate}

In conclusion,  if
$(\psi_j)_{j\in\N}\subset L^\infty(\D)$ such that with
$b_j:=\norm[{L^\infty}]{\psi_j}$ it holds $\bb\in\ell^1(\N)$, then the
solution
\begin{equation} \label{u(by)}
u(\by)=\lim_{N\to\infty}\Uu\left(\exp\left(\sum_{j=1}^N
y_j\psi_j\right)\right) 
\end{equation}
belonging to $L^2(U,X;\gamma)$ 
is well-defined and $(\bb,\xi,\delta,X)$-holomorphic by Theorem \ref{thm:bdX}.

We continue studying the holomorphy of the solution map to the
equation \eqref{eq:parabolic} in function space of higher-regularity.
Denote by $H^1(I,L^2(\domain))$ the space of all functions
$v(t,\bx)\in L^2(I,L^2(\domain))$ such that the norm
$$
\|v\|_{H^1(I,L^2)} := \Big(\|v\|_{L(I,L^2)}^2 + \|\partial_t
v\|_{L^2(I,L^2)}^2 \Big)^{1/2}
$$
is finite.  We put
$$
Z := L^2(I,W)\cap H^1(I,L^2(\domain)), \quad W:=\big\{ v\in V: \
\Delta v\in L^2(\domain)\big\},
$$
and
$$
\|v\|_Z := \Big( \|v\|_{H^1(I,L^2)}^2 + \|v\|_{L^2(I,W)}^2\Big)^{1/2}.
$$
In the following the constant $C$ and $C'$ may change their values
from line to line.
\begin{lemma} \label{lem:Z1} Assume that
  $a\in W^{1}_\infty(\domain)\cap O$ and $f\in L^2(I,L^2(\domain))$
  and $u_0\in V$.  Suppose further that $\Uu(a)\in X$ is the weak
  solution to the equation \eqref{eq:parabolic}.  Then
  $\Uu(a)\in L^2(I,W)\cap H^1(I,L^2(\domain))$.  Furthermore,
  \begin{equation*}
    \begin{aligned} 
      \| \partial_t \Uu(a) \|_{L^2(I,L^2)} \leq \bigg(\frac{1+\|
        a\|_{W^{1}_\infty}}{\min(\rho(a),1)} \bigg)^4 \big(
      \|u_0\|_{V} + \|f\|_{L^2(I,L^2)} \big)^{1/2},
    \end{aligned}
  \end{equation*}
  and
  \begin{align*}
    \| \Delta \Uu(a) \|_{L^2(I,L^2)}
    &\leq C	  \bigg(\frac{1+\| a\|_{W^{1}_\infty}}{\min({\rho(a)},1)} \bigg)^5
      \big(\|u_0\|_{V}^2 + \|f\|_{L^2(I,L^2)}^2\big)^{1/2}, 
  \end{align*}
  where $C>0$ independent of $f$ and $u_0$.  Therefore,
$$
\|\Uu(a)\|_Z \leq C \bigg(\frac{1+\|
  a\|_{W^{1}_\infty}}{\min({\rho(a)},1)} \bigg)^5 \big(\|u_0\|_{V}^2 +
\|f\|_{L^2(I,L^2)}^2\big)^{1/2} .
$$
\end{lemma}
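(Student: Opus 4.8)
The plan is to establish the three bounds in turn, using standard parabolic regularity shift arguments combined with the a-priori estimates already available for the energy-space formulation \eqref{para-weak}. Throughout I would fix $a \in W^1_\infty(\domain) \cap O$, write $u = \Uu(a)$ for the weak solution of \eqref{eq:parabolic}, and work under the hypotheses $f \in L^2(I,L^2(\domain))$, $u_0 \in V$. The key structural fact is that $u$ solves the parabolic equation in the $L^2$-in-space sense, i.e., $\partial_t u - \div(a\nabla u) = f$ with $-a\Delta u = f + \nabla a \cdot \nabla u - \partial_t u$ in $L^2(\domain)$ for a.e.\ $t \in I$ (as in \eqref{[Delta u=]} for the elliptic case), so the elliptic regularity argument in Proposition \ref{prop:LipCoef} can be applied pointwise in time once $\partial_t u \in L^2(I,L^2(\domain))$ is known.

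First I would prove the bound for $\|\partial_t u\|_{L^2(I,L^2)}$. The standard route is to use $\partial_t u$ (formally) as a test function: multiplying \eqref{eq:parabolic} by $\overline{\partial_t u}$ and integrating over $\domain$ gives $\|\partial_t u(t)\|_{L^2}^2 + \tfrac12 \tfrac{\dd}{\dd t}\int_\domain a|\nabla u(t)|^2 \dd\bx = \Re\int_\domain f(t)\overline{\partial_t u(t)}\dd\bx$ (the coefficient $a$ being time-independent is essential here, as it makes the middle term an exact time derivative up to the factor $a$; I would be slightly careful about $a$ being complex and merely satisfying $\rho(a)>0$, which still makes $\Re\int a|\nabla u|^2$ comparable to $\rho(a)\|\nabla u\|_{L^2}^2$ and $|\!\int a |\nabla u|^2| \le \|a\|_{L^\infty}\|\nabla u\|_{L^2}^2$). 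Integrating in $t$, using Young's inequality on the right-hand side to absorb $\|\partial_t u\|_{L^2}^2$, and using the initial datum $u_0 \in V$ to control $\int_\domain a|\nabla u_0|^2 \dd\bx \le \|a\|_{L^\infty}\|u_0\|_V^2$, yields $\|\partial_t u\|_{L^2(I,L^2)}^2 \lesssim \|a\|_{L^\infty}\|u_0\|_V^2 + \rho(a)^{-1}\|f\|_{L^2(I,L^2)}^2$, which after crude majorization fits into the claimed form $\big((1+\|a\|_{W^1_\infty})/\min(\rho(a),1)\big)^4(\|u_0\|_V + \|f\|_{L^2(I,L^2)})^{1/2}$ — here I would note the exponent $4$ and the square-root on the data are far from sharp but are what is needed downstream, so no optimization is required.

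Second, for $\|\Delta u\|_{L^2(I,L^2)}$ I would, for a.e.\ $t$, invoke Proposition \ref{prop:LipCoef} applied to the elliptic problem $-\div(a\nabla u(t)) = f(t) - \partial_t u(t)$ with right-hand side $g(t) := f(t) - \partial_t u(t) \in L^2(\domain)$. That proposition gives $\|\Delta u(t)\|_{L^2} \le \tfrac{1}{a_{\min}}(\|g(t)\|_{L^2} + \|g(t)\|_{V^*}\|\nabla a\|_{L^\infty}/a_{\min})$, with $a_{\min}$ replaced by $\rho(a)$ in the complex-coefficient setting (cf.\ \eqref{eq:PDEellinC}, \eqref{eq:bound-comp}). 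Squaring, integrating over $I$, and using $\|g\|_{L^2(I,L^2)} \le \|f\|_{L^2(I,L^2)} + \|\partial_t u\|_{L^2(I,L^2)}$ together with the bound from the first step, plus $\|\nabla a\|_{L^\infty} \le \|a\|_{W^1_\infty}$, produces $\|\Delta u\|_{L^2(I,W)}$-type control of the form $\big((1+\|a\|_{W^1_\infty})/\min(\rho(a),1)\big)^5 (\|u_0\|_V^2 + \|f\|_{L^2(I,L^2)}^2)^{1/2}$; the extra power of $\rho(a)^{-1}$ and of $1+\|a\|_{W^1_\infty}$ relative to the first bound is exactly the $5$ versus $4$ discrepancy, coming from the Poincar\'e step that converts $\|g\|_{V^*}$ to $\|g\|_{L^2}$ and the additional factor $\|\nabla a\|_{L^\infty}/\rho(a)$. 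Finally, the $Z$-norm estimate is immediate: $\|u\|_Z^2 = \|u\|_{H^1(I,L^2)}^2 + \|u\|_{L^2(I,W)}^2$, and $\|u\|_{H^1(I,L^2)}$ is controlled by $\|u\|_{L^2(I,L^2)}$ (bounded via the energy estimate and Poincar\'e from $\|u\|_{L^2(I,V)} \lesssim \rho(a)^{-1}(\ldots)$) plus $\|\partial_t u\|_{L^2(I,L^2)}$ from step one, while $\|u\|_{L^2(I,W)} = \|\Delta u\|_{L^2(I,L^2)}$ from step two; combining and absorbing all lower-order contributions into the dominant $\big((1+\|a\|_{W^1_\infty})/\min(\rho(a),1)\big)^5$ factor gives the stated inequality.

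The main obstacle I anticipate is making the formal test-function computation with $\partial_t u$ rigorous: strictly, $\partial_t u$ is only in $L^2(I,V')$ a priori, so one cannot directly pair it with itself in $L^2(\domain)$. The clean way around this is a Galerkin approximation in space (projecting onto finite-dimensional subspaces spanned by eigenfunctions of $-\Delta$ with Dirichlet conditions, or of the form $-\div(a\nabla\cdot)$), deriving the estimate uniformly for the approximants where all manipulations are legitimate, and passing to the limit by weak/weak-* compactness — this simultaneously shows the improved regularity $\partial_t u \in L^2(I,L^2(\domain))$ and $\Delta u \in L^2(I,L^2(\domain))$ and the bounds, so the membership claim $\Uu(a) \in Z$ and the estimates come together. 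I would mention that this is standard (referring to \cite{Evan10,Wloka,SchS}) and not spell out the Galerkin details, since only the final bounds — in the specific, non-sharp algebraic form with exponents $4$ and $5$ — are needed for the subsequent $(\bb,\xi,\delta,X)$-holomorphy verification of the parabolic solution map in higher-regularity spaces.
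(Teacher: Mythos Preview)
Your proposal is correct and follows essentially the same approach as the paper: the paper also uses a Galerkin approximation (citing \cite[Section 7.1.3]{Evan10}) to justify testing with $\partial_t u$, obtains $\|\partial_t \Uu_m(a)\|_{L^2(I,L^2)}^2 \le \|a\|_{L^\infty}\|u_0\|_V^2 + \|f\|_{L^2(I,L^2)}^2$ and passes to the limit, then bounds $\|\Delta \Uu(a)\|_{L^2(I,L^2)}$ via the pointwise-in-time identity $-\Delta \Uu(a) = a^{-1}[\nabla a\cdot\nabla\Uu(a) + f - \partial_t\Uu(a)]$ together with the already-available energy bound on $\|\Uu(a)\|_{L^2(I,V)}$. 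The only cosmetic difference is that the paper handles the complex coefficient by adding the equation to its conjugate to produce $\Re(a)$ explicitly, rather than invoking $\rho(a)$-coercivity in words as you do.
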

\begin{proof}
  The argument follows along the lines of, e.g., \cite[Section
  7.1.3]{Evan10} by separation of variables.  Let
  $(\omega_k)_{k\in \NN}\subset V$ be an orthogonal basis which is
  orthonormal basis of $L^2(\domain)$, {[eigenbasis in polygon
    generally not smooth]}, see, e.g.\ \cite[Page 353]{Evan10}.  Let
  further, for $m\in \N$,
$$
\Uu_m(a)=\sum_{k=1}^m d_m^k(t)\omega_k\in V_m
$$ 
be a Galerkin approximation to $\Uu(a)$ on
$V_m:={\rm span}\{\omega_k,\ k=1,\ldots,m\}$.

Then we have
$$\partial_t\Uu_m(a) =\sum_{k=1}^m \frac{d}{dt}d_m^k (t)\omega_k\in V_m.$$
Multiplying both sides with $\partial_t\Uu_m(a)$ we get
\begin{equation*}
  \begin{aligned} 
    \int_\D \partial_t \Uu_m(a) \partial_t\overline{ \Uu_m(a) } \rd
    \bx + \int_\D a\nabla \Uu_m(a) \cdot \partial_t \overline{\nabla
      \Uu_m(a)} \, \rd \bx = \int_\D f \partial_t \overline{ \Uu_m(a)
    } \, \rd \bx.
  \end{aligned}
\end{equation*}
The conjugate equation is given by
\begin{equation*}
  \begin{aligned} 
    \int_\D \partial_t \Uu_m(a) \partial_t \overline{ \Uu_m(a) } \rd
    \bx & + \int_\D \overline{a}\, \overline{\nabla \Uu_m(a) }\cdot
    \partial_t\nabla \Uu_m(a) \, \rd \bx = \int_\D \bar{f} \partial_t
    { \Uu_m(a) } \, \rd \bx.
  \end{aligned}
\end{equation*}
Consequently we obtain
\begin{equation*}
  \begin{aligned} 
    2\| \partial_t \Uu_m(a) \|_{L^2}^2 & + \frac{\rd}{\rd t} \int_\D
    \Re(a) |\nabla \Uu_m(a) |^2 \rd \bx = \int_\D f \partial_t
    \overline{ \Uu_m(a) } \, \rd \bx+\int_\D \bar{f} \partial_t {
      \Uu_m(a) } \, \rd \bx.
  \end{aligned}
\end{equation*}
Integrating both sides with respect to $t$ on $I$ and using the
Cauchy-Schwarz inequality we arrive at
\begin{equation*}
  \begin{aligned} 
    2\| \partial_t \Uu_m(a) \|_{L^2(I,L^2)}^2 & + \int_\D \Re(a)
    \big|\nabla \Uu_m(a) \big|_{t=T}\big|^2 \rd \bx
    \\
    &\leq \int_\D \Re(a) \big|\nabla \Uu_m(a)\big|_{t=0}\big|^2 \rd
    \bx + \|f\|_{L^2(I,L^2)}^2 + \| \partial_t \Uu_m(a)
    \|_{L^2(I,L^2)}^2,
  \end{aligned}
\end{equation*}
which implies
\begin{equation}\label{eq-partial-t-u}
  \begin{aligned} 
    \| \partial_t \Uu_m(a) \|_{L^2(I,L^2)}^2 & \leq \int_\D \Re(a)
    \big|\nabla \Uu_m(a)\big|_{t=0}\big|^2 \rd \bx +
    \|f\|_{L^2(I,L^2)}^2
    \\
    & \leq \|a\|_{L^\infty} \big\|\nabla
    \Uu_m(a)\big|_{t=0}\big\|_{L^2}^2 + \|f\|_{L^2(I,L^2)}^2
    \\
    &\leq \|a\|_{L^\infty} \|u_0\|_{V}^2 + \|f\|_{L^2(I,L^2)}^2 ,
  \end{aligned}
\end{equation}
where we used the bounds
$\|\nabla \Uu_m(a)\big|_{t=0}\|_{L^2} \leq \|u_0\|_{V}$, see
\cite[Page 362]{Evan10}.

Passing to limits we deduce that
\begin{align*}
  \|  \partial_t   \Uu(a) \|_{L^2(I,L^2)} 
  &
    \leq  \big(\|a\|_{L^\infty} \|u_0\|_{V}^2  + \|f\|_{L^2(I,L^2)}^2\big)^{1/2} 
  \\
  &\leq 
    (\|a\|_{L^\infty} + 1)^{1/2}\big( \|u_0\|_{V}^2 + \|f\|_{L^2(I,L^2)}^2\big)^{1/2}
  \\
  &
    \leq 
    C\bigg(\frac{1+\|a\|_{W^{1}_\infty}}{\min({\rho(a)},1)} \bigg)^4 
    \big(      \|u_0\|_{V}^2    + \|f\|_{L^2(I,L^2)}^2\big)^{1/2} .
\end{align*}
We also have from \eqref{eq:para-bound} and \eqref{eq-1.beta(a)} that
\begin{equation}\label{eq-u-L2L2}
  \begin{aligned}
    \| \Uu(a) \|_{L^2(I,L^2)} \leq C\| \Uu(a) \|_{L^2(I,V)} & \leq
    \frac{C}{\beta(a)} \big(\|f\|_{L^2(I,V')}^2 +
    \|u_0\|_{L^2}^2\big)^{1/2}
    \\
    & \leq \frac{C}{\beta(a)} \big( \|u_0\|_{V}^2 +
    \|f\|_{L^2(I,L^2)}^2\big)^{1/2}
    \\
    & \leq C\bigg(\frac{1+\|a\|_{L^\infty}}{\min({\rho(a)},1)}
    \bigg)^4 \big( \|u_0\|_{V}^2 + \|f\|_{L^2(I,L^2)}^2\big)^{1/2}
    \\
    & \leq C\bigg(\frac{1+\|a\|_{W^{1}_\infty}}{\min({\rho(a)},1)}
    \bigg)^4 \big( \|u_0\|_{V}^2 + \|f\|_{L^2(I,L^2)}^2\big)^{1/2} .
  \end{aligned}
\end{equation}
We now estimate $\| \Delta \Uu(a) \|_{L^2(I,L^2)}$.  From the identity
(valid in $L^2(I,L^2(\domain))$)
\begin{align*}
  - \Delta \Uu(a) = \frac{1}{a}\big[\nabla a \cdot \nabla  \Uu(a) + f -\partial_t \Uu(a) \big],
\end{align*}
and \eqref{eq-partial-t-u}, \eqref{eq-u-L2L2} we obtain that
\begin{align*}
  \| \Delta \Uu(a) \|_{L^2(I,L^2)}
  &
    \leq  
    \frac{1}{\rho(a)}\bigg[\| a\|_{W^{1}_\infty} \|   \Uu(a) \|_{L^2(I,V)}
    + 
    \| f\|_{L^2(I,L^2)}  + \|\partial_t \Uu(a)\|_{L^2(I,L^2)} \bigg]
  \\
  &
    \leq 
    C\frac{\| a\|_{W^{1}_\infty}}{{\rho(a)}} 
    \bigg(\frac{1+\|a\|_{L^\infty}}{\min({\rho(a)},1)}\bigg)^4\big(\|u_0\|_{V}^2 
    + 
    \|f\|_{L^2(I,L^2)}^2\big)^{1/2} 
  \\
  &
    \leq C	  \bigg(\frac{1+\| a\|_{W^{1}_\infty}}{\min({\rho(a)},1)} \bigg)^5
    \big( \|u_0\|_{V}^2 + \|f\|_{L^2(I,L^2)}^2\big)^{1/2}, 
\end{align*}
with $C>0$ independent of $f$ and $u_0$.  Combining this and
\eqref{eq-partial-t-u}, \eqref{eq-u-L2L2}, the desired result follows.
\end{proof}
\begin{lemma} \label{lem-para-Z2} Assume $f\in L^2(I,L^2(\domain))$
  and $u_0\in V$.  Let $\Uu(a)$ and $\Uu(b)$ be the solutions to
  \eqref{para-weak} with $a,b\in W^{1}_\infty(\domain)\cap O$,
  respectively.  Then we have
  \begin{equation*}
    \begin{aligned} 
      \|\Uu(a)-\Uu(b)\|_Z & \leq C' \bigg(\frac{1+\|
        a\|_{W^{1}_\infty}}{\min({\rho(a)},1)} \bigg)^5
      \bigg(\frac{1+\| b\|_{W^{1}_\infty}}{\min({\rho(b)},1)} \bigg)^5
      \|a-b\|_{W^{1}_\infty},
    \end{aligned}
  \end{equation*}
  with $C'>0$ depending on $f$ and $u_0$.
\end{lemma}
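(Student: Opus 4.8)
The plan is to reduce the statement to the parabolic regularity Lemma~\ref{lem:Z1} for a single coefficient, applied to the difference $w := \Uu(a) - \Uu(b)$, exactly in the spirit of the proof of Lemma~\ref{lem:para-Lip}. First I would note that $w$ solves the parabolic problem \eqref{eq:parabolic} with coefficient $a$, zero initial datum, and a modified right-hand side: subtracting the space-time variational identities \eqref{para-weak} for $\Uu(a)$ and $\Uu(b)$ and regrouping the terms (adding and subtracting $\int_I\int_\D a \nabla\Uu(b)\cdot\overline{\nabla v_1}$), one finds that $w$ is the weak solution of
\begin{equation*}
  \partial_t w - \div(a\nabla w) = \tilde f \quad\text{in } I\times\D,\qquad w|_{\partial\D\times I} = 0,\qquad w|_{t=0} = 0,
\end{equation*}
where $\tilde f := \div\big((a-b)\nabla\Uu(b)\big)$, understood in the appropriate distributional sense. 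The point is that $\tilde f$ is actually in $L^2(I,L^2(\D))$ provided $a-b\in W^1_\infty(\D)$ and $\Uu(b)\in L^2(I,W)$, since $\div((a-b)\nabla\Uu(b)) = \nabla(a-b)\cdot\nabla\Uu(b) + (a-b)\Delta\Uu(b)$ as an $L^2(I,L^2(\D))$-function.

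Next I would apply Lemma~\ref{lem:Z1} to $w$ (with coefficient $a$, initial datum $u_0 = 0$, forcing $\tilde f$) to obtain
\begin{equation*}
  \|w\|_Z \le C\bigg(\frac{1+\|a\|_{W^1_\infty}}{\min(\rho(a),1)}\bigg)^5 \|\tilde f\|_{L^2(I,L^2)}.
\end{equation*}
Then I would estimate $\|\tilde f\|_{L^2(I,L^2)}$: using the product rule above and the continuous embedding $W^1_\infty(\D)\hookrightarrow L^\infty(\D)$,
\begin{equation*}
  \|\tilde f\|_{L^2(I,L^2)} \le \|a-b\|_{W^1_\infty}\big(\|\nabla\Uu(b)\|_{L^2(I,L^2)} + \|\Delta\Uu(b)\|_{L^2(I,L^2)}\big) \le C\|a-b\|_{W^1_\infty}\|\Uu(b)\|_Z.
\end{equation*}
Finally, I would invoke Lemma~\ref{lem:Z1} once more, this time for $\Uu(b)$ itself, to bound $\|\Uu(b)\|_Z \le C\big((1+\|b\|_{W^1_\infty})/\min(\rho(b),1)\big)^5\big(\|u_0\|_V^2 + \|f\|_{L^2(I,L^2)}^2\big)^{1/2}$, and chain the three inequalities. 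Absorbing the data-dependent factor $\big(\|u_0\|_V^2+\|f\|_{L^2(I,L^2)}^2\big)^{1/2}$ into the constant $C'$ gives precisely the claimed bound.

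The main obstacle I anticipate is the careful justification that $\tilde f$ genuinely lies in $L^2(I,L^2(\D))$ and that $w$ is the $Z$-regular weak solution with this forcing — i.e.\ verifying that the weak formulation of the difference equation matches exactly the hypotheses of Lemma~\ref{lem:Z1}, including the regularity $\Uu(b)\in L^2(I,W)$ needed to even write $\Delta\Uu(b)$. This is where one must be cautious: $\Uu(b)\in L^2(I,W)$ is guaranteed by Lemma~\ref{lem:Z1} applied to $b$ (which requires $b\in W^1_\infty(\D)\cap O$, $f\in L^2(I,L^2)$, $u_0\in V$ — all assumed), so the circularity is only apparent. A secondary technical point is bookkeeping the powers: two applications of Lemma~\ref{lem:Z1} naively produce a factor raised to the $5$th power each, which matches the statement, but one must check that the $\min(\rho(a),1)$ in the denominator from the first application is not improved or worsened — it is exactly fifth power as stated, so no extra work beyond careful substitution is needed. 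Everything else is routine manipulation of the variational identity and Hölder/embedding estimates.
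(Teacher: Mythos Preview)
Your proposal is correct and follows essentially the same route as the paper: write the difference $w=\Uu(a)-\Uu(b)$ as the solution of a parabolic problem with coefficient $a$, zero initial datum, and forcing $\tilde f=\nabla(a-b)\cdot\nabla\Uu(b)+(a-b)\Delta\Uu(b)$, then invoke Lemma~\ref{lem:Z1} twice (once for $w$, once for $\Uu(b)$). The only cosmetic difference is that the paper, instead of citing Lemma~\ref{lem:Z1} as a black box for $\|w\|_Z$, re-derives the $\|\Delta w\|_{L^2(I,L^2)}$ bound by hand from the identity $-\Delta w=\tfrac{1}{a}\bigl(\nabla a\cdot\nabla w+\tilde f-\partial_t w\bigr)$ and separately cites Lemma~\ref{lem:Z1} for $\|\partial_t w\|_{L^2(I,L^2)}$ together with \eqref{eq-ua-ub} for $\|w\|_{L^2(I,V)}$; your more direct application of Lemma~\ref{lem:Z1} to $w$ is cleaner and yields the same bound.
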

\begin{proof}
  Denote $w := \Uu(a)-\Uu(b)$.  Then $w$ is the solution to the
  equation
  \begin{equation} \label{eq-equation-w}
    \begin{cases}
      {\partial_t w - \div\big(a\nabla w\big)= \nabla (a-b)\cdot\nabla
        \Uu(b) + (a-b)\Delta \Uu(b), }
      \\
      w|_{\partial \D\times I}=0,
      \\
      w|_{t=0}=0.
    \end{cases}
  \end{equation}
  Hence
$$
-\Delta w = \frac{1}{a}\bigg[\nabla a\cdot \nabla w + \nabla
(a-b)\cdot\nabla \Uu(b) + (a-b)\Delta \Uu(b) -\partial_t w \bigg]
$$
which leads to
\begin{equation*}\label{eq:DeltaParab}
  \begin{aligned} 
    \| \Delta w \|_{L^2(I,L^2)} &\leq \frac{1}{{\rho(a)}}\bigg[
    \|a\|_{W^{1}_\infty} \|w\|_{L^2(I,V)} + \|\partial_t
    w\|_{L^2(I,L^2)}
    \\
    &\ \ + \|a-b\|_{W^{1}_\infty } \big(\|\Uu(b)\|_{L^2(I,W)}
    +\|\Uu(b)\|_{L^2(I,V)}\big) \bigg].
  \end{aligned}
\end{equation*}
Lemma \ref{lem:Z1} gives that
\begin{align*}
  \|\partial_t w\|_{L^2(I,L^2)} 
  &
    \leq \bigg(\frac{1+\| a\|_{W^{1}_\infty}}{\min({\rho(a)},1)} \bigg)^5\big(\|\nabla (a-b)\cdot\nabla \Uu(b)\|_{L^2(I,L^2)}^2 + \|(a-b)\Delta \Uu(b)\|_{L^2(I,L^2)}^2\big)^{1/2}
  \\
  &
    \leq \bigg(\frac{1+\| a\|_{W^{1}_\infty}}{\min({\rho(a)},1)} \bigg)^5 \|a-b\|_{W^{1}_\infty} \big(\|\Uu(b)\|_{L^2(I,W)}^2 +\|\Uu(b)\|_{L^2(I,V)}^2\big)^{1/2},
\end{align*}
and
\begin{align*}
  \|\Uu(b)\|_{L^2(I,W)} +\|\Uu(b)\|_{L^2(I,V)} 
  &
    \leq  C	  \bigg(\frac{1+\| b\|_{{W^{1}_\infty}}}{\min({\rho(b)},1)} \bigg)^5\big(      \|u_0\|_{V}^2    + \|f\|_{L^2(I,L^2)}^2\big)^{1/2}, 
\end{align*}
which implies
\begin{align*}
  & \|a-b\|_{W^{1}_\infty} \big(\|\Uu(b)\|_{L^2(I,W)} +\|\Uu(b)\|_{L^2(I,V)}\big) +	\|\partial_t w\|_{L^2(I,L^2)} 
  \\
  &
    \leq C' \bigg(\frac{1+\| a\|_{W^{1}_\infty}}{\min({\rho(a)},1)} \bigg)^5  \bigg(\frac{1+\| b\|_{{W^{1}_\infty}}}{\min({\rho(b)},1)} \bigg)^5 \|a-b\|_{W^{1}_\infty} 
\end{align*}
We also have
$$ \|w\|_{L^2(I,V)} \leq  \frac{1}{\beta(a)\beta(b)}\|a-b\|_{L_\infty}
\leq C' \bigg(\frac{1+\|a\|_{L^\infty}}{\min({\rho(a)},1)} \bigg)^4
\bigg(\frac{1+\|b\|_{L^\infty}}{\min({\rho(b)},1)}
\bigg)^4\|a-b\|_{W^{1}_\infty},
$$
see \eqref{eq-ua-ub}.  Hence
\begin{equation}\label{eq-delta-w}
  \begin{aligned} 
    \| \Delta w \|_{L^2(I,L^2)} &\leq C' \bigg(\frac{1+\|
      a\|_{{W^{1}_\infty}}}{\min({\rho(a)},1)} \bigg)^5
    \bigg(\frac{1+\| b\|_{{W^{1}_\infty}}}{\min({\rho(b)},1)} \bigg)^5
    \|a-b\|_{{W^{1}_\infty}}.
  \end{aligned}
\end{equation}
Since the terms $ \|\partial_t w\|_{L^2(I,L^2)} $ and
$ \|w\|_{L^2(I,L^2)} $ are also bounded by the right side of
\eqref{eq-delta-w}, we arrive at
\begin{equation*}
  \begin{aligned} 
    \|w\|_Z & = \Big( \| \Delta w \|_{L^2(I,L^2)}^2 + \|\partial_t
    w\|_{L^2(I,L^2)}^2 + \|w\|_{L^2(I,L^2)}^2 \Big)^{1/2}
    \\
    & \leq C' \bigg(\frac{1+\| a\|_{{W^{1}_\infty}}}{\min(\rho(a),1)}
    \bigg)^5 \bigg(\frac{1+\| b\|_{{W^{1}_\infty}}}{\min(\rho(b),1)}
    \bigg)^5 \|a-b\|_{{W^{1}_\infty} }
  \end{aligned}
\end{equation*}
which is the claim.
\end{proof}
From Lemma \ref{lem-para-Z2}, by the same argument as in the proof of
\cite[Proposition 4.5]{HS2} we can verify that the solution map
$a\mapsto \Uu(a)$ from ${W^{1}_\infty}(\domain)\cap O$ to $Z$ is
holomorphic.  If we assume further that
$(\psi_j)_{j\in\N}\subseteq {W^{1}_\infty}(\domain)$ and with
$b_j:=\norm[{W^{1}_\infty}]{\psi_j}$, it holds $\bb\in\ell^1(\N)$ and
all the conditions in Theorem \ref{thm:bdX} are satisfied.  Therefore,
$u(\by)$ 
given by the formula \eqref{u(by)} 
is $(\bb,\xi,\delta,Z)$-holomorphic with appropriate $\xi$
and $\delta$.
\begin{remark} {\rm
  \label{rmk:CompCond}
  For $s>1$, let $$Z^s := \bigcap_{k=0}^s H^k(I,H^{2s-2k}(\domain))$$
  with the norm
$$
\|v\|_{Z^s} = \Bigg(\sum_{k=0}^{s} \bigg\|\frac{\rd^{k}v}{\rd t^{k}}
\bigg\|_{L^2(I,H^{2s-2k})}^2\Bigg)^{1/2}.
$$
Assume that $a\in W^{2s-1}_\infty(\domain)\cap O$.  At present we do
not know whether the solution map $a \mapsto \Uu(a)$ from
$W^{2s-1}_\infty(\domain) \cap O$ to $Z^s$ is holomorphic.  To obtain
the holomorphy of the solution map, we need a result similar to that
in Lemma \ref{lem-para-Z2}.  In order for this to hold, higher-order
regularity and compatibility of the data for equation
\eqref{eq-equation-w} is required, i.e,
$$
g_0=0\in V ,\quad g_1=h(0)-Lg_0 \in V,\ldots,
g_s=\frac{\rd^{s-1}h}{\rd t^{s-1}}(0)-Lg_{s-1}\in V,
$$
where
$$h=\nabla (a-b)\cdot\nabla \Uu(b) + (a-b)\Delta \Uu(b), \quad L= \partial_t \cdot  - \div\big(a\nabla \cdot \big).$$
See e.g.\ \cite[Theorem 27.2]{Wloka}.  It is known that without such
compatibility, the solution will develop spatial singularities at the
corners and edges of $\domain$, and temporal singularities as
$t\downarrow 0$; see e.g.\ \cite{KozRossPara}.

In general the compatibility condition does not hold when we only
assume that
$$
u_0\in H^{2s-1}(\domain)\cap V \ \ {\rm and} \ \ \frac{\rd ^k f}{\rd t
  ^k}\in L^2(I,H^{2s-2k-2}(\domain))
$$ 
for $k=0,\ldots,s-1$.
} \end{remark} 
%
\subsubsection{Linear elastostatics with log-Gaussian modulus of elasticity}
\label{sec:LinElast}
We illustrate the foregoing abstract setting of
Section~\ref{S:DefbxdHol} for another class of boundary value
problems.  In computational mechanics, one is interested in the
numerical approximation of deformations of elastic bodies. 
We refer to e.g.\ \cite{Truesdell} for an accessible exposition of the
mathematical foundations and assumptions.  
In \emph{linearized elastostatics} one is concerned with small (in a suitable sense, see
\cite{Truesdell} for details) deformations. 

We consider an elastic body occupying the domain
$\domain\subset \R^d$, $d=2,3$ (the physically relevant case naturally
is $d=3$, we include $d=2$ to cover the so-called model of
``plane-strain'' which is widely used in engineering, and has
governing equations with the same mathematical structure).  In the
linear theory, small deformations of the elastic body occupying
$\domain$, subject to, e.g., body forces $\boldf: \domain \to \R^d$
such as gravity are modeled in terms of the \emph{displacement field}
$\bu:\domain \to \R^d$, describing the displacement of a
\emph{material point} $\bx\in \domain$ (see \cite{Truesdell} for a
discussion of axiomatics related to this mathematical concept).
Importantly, unlike the scale model problem considered up to this
point, modeling now involves vector fields of data (e.g., $\boldf$)
and solution (i.e., $\bu$).

{ Governing equations for the mathematical model of linearly elastic
  deformation, subject to homogeneous Dirichlet boundary conditions on
  $\partial\domain$, read: to find $\bu:\domain\to \R^d$ such that \index{Linear elastostatics}
  \begin{equation}\label{eq:LinElPDE}
    \begin{array}{rcl}
      {\rm div}\bsigma[\bu] + \boldf &=& 0 \quad\mbox{in}\;\;\domain \;,
      \\
      \bu & = & 0  \quad \mbox{on} \;\;\partial\domain\;.
    \end{array}
  \end{equation}
  Here $\bsigma:\domain \to \R^{d\times d}_{{\rm sym}}$ is a symmetric
  matrix function, the so-called \emph{stress tensor}.  It depends on
  the displacement field $u$ via the so-called (linearized)
  \emph{strain tensor}
  $\bepsilon[\bu]:\D\to \R^{d\times d}_{{\rm sym}}$, which is given by
  \begin{equation}\label{eq:LinElStrain}
    \bepsilon[\bu] := \frac{1}{2}\left({\rm grad}\bu + ({\rm grad}\bu)^\top\right)\;,
    \;\;
    (\bepsilon[\bu])_{ij} := \frac{1}{2}(\partial_j u_i + \partial_i u_j)\;, i,j=1,...,d\;.
  \end{equation}
}

{ In the linearized theory, the tensors $\bsigma$ and $\bepsilon$ in
  \eqref{eq:LinElPDE}, \eqref{eq:LinElStrain} are related by the
  linear constitutive stress-strain relation (``Hooke's law'')
  \begin{equation}\label{eq:Hooke}
    \bsigma = \ttA \bepsilon \;.
  \end{equation}
  In \eqref{eq:Hooke}, $\ttA$ is a fourth order tensor field, i.e.
 $$\ttA = \{ \ttA_{ijkl} : i,j,k,l=1,...,d\},$$ 
 with certain symmetries that must hold among its $d^4$ components
 independent of the particular material constituting the elastic body
 (see, e.g., \cite{Truesdell} for details).  Thus, \eqref{eq:Hooke}
 reads in components as $\sigma_{ij} = \ttA_{ijkl} \epsilon_{kl}$ with
 summation over repeated indices implied.  Let us now fix $d=3$.
 Symmetry implies that $\epsilon$ and $\sigma$ are characterized by
 $6$ components.  If, in addition, the material constituting the
 elastic body is \emph{isotropic}, the tensor $\ttA$ can in fact be
 characterized by only two independent coefficient functions. We adopt
 here the \emph{Poisson ratio}, denoted $\nu$, and the modulus of
 elasticity $\ttE$.  With these two parameters, the stress-strain law
 \eqref{eq:Hooke} can be expressed in the component form
 \begin{equation}\label{eq:HookeComp}
   \left(\begin{array}{c} \sigma_{11} \\ \sigma_{22} \\ \sigma_{33} \\ \sigma_{12} \\ \sigma_{13} \\ \sigma_{23} \end{array} \right)
   =
   \frac{\ttE}{(1+\nu)(1-2\nu)} 
   \left(\begin{array}{cccccc} 
           1-\nu & \nu & \nu & 0 & 0 & 0 
           \\
           \nu & 1-\nu & \nu & 0 & 0 & 0
           \\
           \nu & \nu & 1-\nu & 0 & 0 & 0
           \\
           0 & 0 & 0 & 1-2\nu & 0 & 0 
           \\
           0 & 0 & 0 & 0& 1-2\nu & 0
           \\
           0 & 0 & 0 & 0 & 0 & 1-2\nu 
         \end{array}
       \right) 
       \left(\begin{array}{c} \epsilon_{11} \\ \epsilon_{22} \\ \epsilon_{33} \\ 
               \epsilon_{12} \\ \epsilon_{13} \\ \epsilon_{23} 
             \end{array} \right)
           \;.
         \end{equation}
         We see from \eqref{eq:HookeComp} that for isotropic elastic
         materials, the tensor $\ttA$ is proportional to the modulus
         $\ttE > 0$, with the Poisson ratio $\nu\in [0,1/2)$.  We
         remark that for common materials, $\nu\uparrow 1/2$ arises in
         the so-called \emph{incompressible limit}. In that case,
         \eqref{eq:LinElPDE} can be described by the Stokes equations.
       }

       { With the constitutive law \eqref{eq:Hooke}, we may cast the
         governing equation \eqref{eq:LinElPDE} into the so-called
         ``primal'', or ``displacement-formulation'': find
         $\bu:\domain\to \R^d$ such that
         \begin{equation}\label{eq:LinElDispl}
           -{\rm div}(\ttA\bepsilon[\bu]) = \boldsymbol{f}  \quad \mbox{in}\;\;\domain\;,
           \qquad \bu|_{\partial\domain} = 0\;.
         \end{equation}
         This form is structurally identical to the scalar diffusion
         problem \eqref{PDE}.  }

       { Accordingly, we fix $\nu\in [0,1/2)$ and model uncertainty in
         the elastic modulus $\ttE > 0$ in \eqref{eq:HookeComp} by a
         log-Gaussian random field
         \begin{equation}\label{eq:YoungGRF}
           \ttE(\by)(\bx) := \exp(b(\by))(\bx)\;, \quad \bx\in \domain\;, \;\; \by\in U\;.
         \end{equation}
         Here, $b(\by)$ is a Gaussian series representation of the GRF
         $b(Y(\omega))$ as discussed in Section~\ref{S:GSer}.  The
         log-Gaussian ansatz $\ttE = \exp(b)$ ensures
$$
E_{\min}(\by) := {\rm ess}\inf_{\bx\in \domain} \ttE(\by)(\bx) > 0
\qquad \mbox{$\gamma$-a.e.} \ \by\in U\;,
$$
i.e., the $\gamma$-almost sure positivity of (realizations of) the
elastic modulus $\ttE$.  Denoting the $3\times 3$ matrix relating the
stress and strain components in \eqref{eq:HookeComp} also by $\ttA$
(this slight abuse of notation should, however, not cause confusion in
the following), we record that for $0\leq \nu < 1/2$, the matrix
$\ttA$ is invertible:
\begin{equation}\label{eq:Ainv}
  \ttA^{-1} 
  =
  \frac{1}{\ttE} 
  \left(\begin{array}{cccccc}
          1 & -\nu & -\nu & 0&0&0 
          \\
          -\nu & 1 & -\nu & 0&0&0 
          \\
          -\nu & -\nu & 1  & 0&0&0 
          \\
          0& 0& 0& 1+\nu & 0 & 0 
          \\
          0& 0& 0& 0 & 1+\nu & 0 
          \\
          0& 0& 0& 0 & 0 & 1+\nu 
        \end{array}
      \right)
      \;.
    \end{equation}
    It readily follows from this explicit expression that due
    to $$\ttE^{-1}(\by)(\bx) = \exp(-b(\by)(\bx)),$$ by the
    Gerschgorin theorem invertibility holds for $\gamma$-a.e.
    $\by\in U$.  Also, the components of $\ttA^{-1}$ are GRFs (which
    are, however, fully correlated for deterministic $\nu$).  }

  { Occasionally, instead of the constants $\ttE$ and $\nu$, one finds
    the (equivalent) so-called \emph{Lam\'{e}-constants} $\lambda$,
    $\mu$.  They are related to $\ttE$ and $\nu$ by
    \begin{equation}\label{eq:LameEnu}
      \lambda = \frac{\ttE \nu}{(1+\nu)(1-2\nu)}\;,\quad 
      \mu     = \frac{\ttE}{2(1+\nu)}\;.
    \end{equation}
    For GRF models \eqref{eq:YoungGRF} of $\ttE$, \eqref{eq:LameEnu}
    shows that for each fixed $\nu \in (0,1/2)$, also the
    Lam\'{e}-constants are GRFs \emph{which are fully correlated}.
    This implies, in particular, that ``large'' realizations of the
    GRF \eqref{eq:YoungGRF} do not cause so-called ``volume locking''
    in the equilibrium equation \eqref{eq:LinElPDE}: this effect is
    related to the elastic material described by the constitutive
    equation \eqref{eq:Hooke} being nearly
    incompressible. Incompressibility here arises as either
    $\nu\uparrow 1/2$ at fixed $\ttE$ or, equivalently, as
    $\lambda\to \infty$ at fixed $\mu$.  }

  { Parametric weak solutions of \eqref{eq:LinElDispl} with
    \eqref{eq:YoungGRF} are within the scope of the abstract theory
    developed up to this point.  To see this, we provide a variational
    formulation of \eqref{eq:LinElDispl}.  Assuming for convenience
    homogeneous Dirichlet boundary conditions, we multiply
    \eqref{eq:LinElDispl} by a test displacement field $\bv\in X:=V^d$
    with $V:=H_0^1(\D)$, and integrate by parts, to obtain the weak
    formulation: find $\bu\in X$ such that, for all $\bv\in X$ holds
    (in the matrix-vector notation \eqref{eq:HookeComp})
    \begin{equation}\label{eq:LinElVar}
      \int_{\domain} \bepsilon[\bv] \cdot \ttA \bepsilon [\bu] \rd \bx 
      =
      2\mu (\bepsilon[\bu],\bepsilon[\bv]) + \lambda(\div \bu, \div \bv) = (\boldf,\bv) \;.
    \end{equation}
    The variational form \eqref{eq:LinElVar} suggests that, as
    $\lambda\to\infty$ for fixed $\mu$, the ``volume-preservation''
    constraint $\| \div \bu \|_{L^2} = 0$ is imposed for $\bv=\bu$ in
    \eqref{eq:LinElVar}.  }

  Unique solvability of \eqref{eq:LinElVar} follows upon verifying
  coercivity of the corresponding bilinear form on the left-hand side
  of \eqref{eq:LinElVar}.  It follows from \eqref{eq:HookeComp} and
  \eqref{eq:Ainv} that
  \begin{equation*}\label{eq:bileps}
    \forall \bv\in H^1(\domain)^d: \quad 
    \ttE c_{\min}(\nu) \| \bepsilon[\bv] \|^2_{L^2} 
    \leq 
    \int_{\domain} \bepsilon[\bv]  \cdot \ttA \bepsilon[\bv] \rd \bx
    \leq 
    \ttE c_{\max}(\nu) \| \bepsilon[\bv] \|^2_{L^2}
    \;.
  \end{equation*}
  Here, the constants $c_{\min}, c_{\max}$ are positive and bounded
  for $0<\nu<1/2$ and independent of $\ttE$.

  For the log-Gaussian model \eqref{eq:YoungGRF} of the elastic
  modulus $\ttE$, the relations \eqref{eq:LameEnu} show in particular,
  that \emph{the volume-locking effect arises as in the deterministic
    setting only if $\nu \simeq 1/2$, independent of the realization
    of $\ttE(\by)$}.  Let us consider well-posedness of the
  variational formulation \eqref{eq:LinElVar}, for log-Gaussian,
  parametric elastic modulus $\ttE(\by)$ as in \eqref{eq:YoungGRF}.
  To this end, with $\ttA_1$ denoting the matrix $\ttA$ in
  \eqref{eq:HookeComp} with $\ttE=1$, we introduce in
  \eqref{eq:LinElVar} the parametric bilinear forms
  \begin{equation*}\label{eq:ParmElas}
    b(\bu,\bv;\by) 
    := 
    \ttE(\by) \int_{\domain}  \bepsilon[\bv] \cdot \ttA_1 \bepsilon[\bu] \rd \bx 
    = 
    \frac{\ttE(\by)}{1+\nu} 
    \left( 
      (\bepsilon[\bu],\bepsilon[\bv]) + \frac{\nu}{1-2\nu} (\div \bu, \div \bv)
    \right)
    \;.
  \end{equation*}
  Let us verify continuity and coercivity of the parametric bilinear
  forms
  \begin{equation} \label{parametric forms} \{ b(\cdot,\cdot ;\by):X\times X\to \R: \by\in U\},
  \end{equation}
  where we recall that  $U:= \RRi$.  
  With $\ttA_1$ as defined above, we write
  for arbitrary $\bv\in X = H^1_0(\domain)^d$, $d=2,3$, and for all
  $\by\in U_0\subset U$ where the set $U_0$ is as in \eqref{eq:U0},
$$
\begin{array}{rcl}
  b(\bv,\bv;\by) 
  & = & \displaystyle
        \int_{\domain} \bepsilon[\bv] \cdot (\ttA \bepsilon[\bv]) \rd\bx 
        =
        \int_{\domain} E(\by)  \left(\bepsilon[\bv] \cdot (\ttA_1 \bepsilon[\bv]) \right)\rd \bx 
  \\
  & \geq & \displaystyle
           c(\nu) \int_{\domain} E(\by)  \| \bepsilon[\bv] \|_2^2 \rd \bx 
  \\
  & \geq & \displaystyle
           c(\nu) \exp(-\| b(\by) \|_{L^\infty}) \int_{\domain} \| \bepsilon[\bv] \|_2^2 \rd \bx
  \\
  & \geq & \displaystyle
           \frac{c(\nu)}{2} a_{\min}(\by) | \bv |_{H^1}^2 
  \\
  & \geq & \displaystyle
           C_P \frac{c(\nu)}{2} a_{\min}(\by) \| \bv \|_{H^1}^2
           \;.
\end{array}
$$
Here, in the last two steps we employed the first Korn's inequality,
and the Poincar\'{e} inequality, respectively.  The lower bound
$E(\by)\geq \exp(-\| b(\by) \|_{L^\infty })$ is identical to
\eqref{eq:Norms} in the scalar diffusion problem.

In a similar fashion, continuity of the bilinear forms
\eqref{parametric forms} may be established: there
exists a constant $c'(\nu)>0$ such that 
$$
\forall \bu,\bv\in X,\; \forall \by \in U_0 : \quad | b(\bu,\bv;\by) |
\leq c'(\nu)\exp(\| b(\by) \|_{L^\infty}) \| \bu \|_{H^1} \| \bv
\|_{H^1}.
$$
With continuity and coercivity of the parametric forms
\eqref{parametric forms} verified for $\by\in U_0$, the Lax-Milgram
lemma ensures for given $\boldf\in L^2(\domain)^d$ the existence of
the parametric solution family
\begin{equation}\label{eq:ParSolElas}
  \{ \bu(\by)  \in X : b(\bu,\bv;\by) = (\boldf,\bv) \ \forall \bv\in X , \by \in U_0 \}\;.
\end{equation}
Similar to the scalar case discussed in Proposition~\ref{prop:Meas1}, 
    the following result on almost everywhere existence and measurability holds.
\begin{proposition}\label{prop:ParSolElas}
  Under Assumption \ref{ass:Ass1}, $\gamma(U_0) = 1$.
  For all $k\in \NN$ there holds, with $\EE(\cdot)$ denoting
  expectation with respect to $\gamma$,
$$
\EE\left( \exp(k\| b(\cdot) \|_{L^\infty}) \right) < \infty \;.
$$
The parametric solution family \eqref{eq:ParSolElas} of the parametric
elliptic boundary value problem \eqref{eq:LinElVar} with log-Gaussian
modulus $E(\by)$ as in \eqref{eq:YoungGRF} is in $L^k(U,V;\gamma)$ for
every finite $k\in \NN$.
\end{proposition}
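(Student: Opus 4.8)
The plan is to mirror exactly the proof of Proposition~\ref{prop:Meas1} (which is \cite[Theorem 2.2]{BCDM}), since the only structural difference is that the scalar diffusion coefficient $a(\by)$ is replaced by the matrix-valued elasticity tensor $\ttA = E(\by)\ttA_1$, and the energy estimate \eqref{eq:uApriori} is replaced by the vectorial a-priori bound obtained from coercivity of the parametric forms \eqref{parametric forms}. First I would recall from the discussion preceding \eqref{eq:ParSolElas} that for $\by\in U_0$ the Lax-Milgram lemma yields a unique $\bu(\by)\in X = H^1_0(\domain)^d$ satisfying
\begin{equation*}
  \| \bu(\by) \|_{H^1} \le \frac{2}{C_P\, c(\nu)\, a_{\min}(\by)} \| \boldf \|_{L^2} \le \tilde c(\nu) \exp(\| b(\by) \|_{L^\infty}) \| \boldf \|_{L^2}\,,
\end{equation*}
where I have used $a_{\min}(\by) = \exp(-\| b(\by) \|_{L^\infty})$ and absorbed $C_P$, $c(\nu)$, and the factor $2$ into $\tilde c(\nu)$. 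This is the precise vectorial analogue of \eqref{eq:uApriori}; from here the rest of the argument is insensitive to whether the PDE is scalar or a system.

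Next I would invoke Proposition~\ref{prop:Meas1} verbatim: under Assumption~\ref{ass:Ass1}, the set $U_0$ in \eqref{eq:U0} has $\gamma(U_0) = 1$, and $\EE(\exp(k\| b(\cdot)\|_{L^\infty})) < \infty$ for every $k\in\N$. These two facts are statements purely about the affine GRF $b(\by) = \sum_{j} y_j\psi_j$ and Assumption~\ref{ass:Ass1}; they do not involve the PDE at all, so they transfer unchanged. This immediately gives the first two assertions of Proposition~\ref{prop:ParSolElas}.

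For the third assertion, I would combine the a-priori bound above with the exponential moment bound: for finite $k\in\N$,
\begin{equation*}
  \int_U \| \bu(\by) \|_{H^1}^k \rd\gamma(\by) \le \tilde c(\nu)^k \| \boldf \|_{L^2}^k \int_{U_0} \exp(k\| b(\by) \|_{L^\infty}) \rd\gamma(\by) < \infty\,,
\end{equation*}
so $\bu \in L^k(U,V;\gamma)$ (with $V = X = H^1_0(\domain)^d$ in this context). Strong measurability of $\by \mapsto \bu(\by)$ follows, as in Section~\ref{S:RndDat}, from writing it as the composition of the strongly measurable map $\by\mapsto b(\by)$ (equivalently $\by\mapsto E(\by)$) with the locally Lipschitz, hence continuous, data-to-solution map for the linear elastostatics system; the Lipschitz continuity itself is the vectorial counterpart of Proposition~\ref{prop:LipS}, proved by exactly the same manipulation (subtract the two variational problems, test with $\bu_1 - \bu_2$, use coercivity and the bound on $\| \bu_2 \|_{H^1}$). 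I expect no genuine obstacle here; the only point requiring a line of care is checking that the constants $c(\nu), c'(\nu)$ in the coercivity and continuity estimates are indeed bounded away from $0$ and $\infty$ uniformly for $\by \in U_0$ at fixed $\nu < 1/2$ — but this is already recorded in the display preceding \eqref{eq:bileps}, where $c_{\min}(\nu), c_{\max}(\nu)$ are noted to be positive and finite and independent of $E$, so the uniformity is inherited directly.
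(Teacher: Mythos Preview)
Your proposal is correct and takes essentially the same approach as the paper. The paper does not give an explicit proof of Proposition~\ref{prop:ParSolElas} at all; it introduces the statement with ``Similar to the scalar case discussed in Proposition~\ref{prop:Meas1}, the following result on almost everywhere existence and measurability holds'' and then moves on, so your strategy of mirroring Proposition~\ref{prop:Meas1} with the vectorial a-priori bound substituted for \eqref{eq:uApriori} is exactly what the paper intends.
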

For the parametric solution family \eqref{eq:ParSolElas}, 
analytic continuations into complex parameter domains, and parametric
regularity results may be developed in analogy to the development in
Sections \ref{sec:HsReg} and \ref{sec:KondrReg}.  
The key result for
bootstrapping to higher order regularity is, in the case of smooth
boundaries $\partial\domain$, classical elliptic regularity for
linear, Agmon-Douglis-Nirenberg elliptic systems which comprise
\eqref{eq:LinElDispl}.  In the polygonal (for $d=2$) or polyhedral
($d=3$) case, weighted regularity shifts in Kondrat'ev type spaces are
available in \cite[Theorem 5.2]{GuoBabElast} (for $d=2$) and in
\cite{CDN12} (for both, $d=2,3$).
%
\subsubsection{Maxwell equations with log-Gaussian permittivity}
\index{Maxwell equation}
\label{sec:Maxwell}
Similar models are available for time-harmonic, electromagnetic waves
in dielectric media with uncertain conductivity. We refer to
\cite{logNMax2018}, where log-Gaussian models are employed.  There,
also the parametric regularity analysis of the parametric electric and
magnetic fields is discussed, albeit by real-variable methods.  The
setting in \cite{logNMax2018} is, however, so that the presently
developed, complex variable methods can be brought to bear on it.  We
refrain from developing the details.
%

\subsubsection{Linear parametric elliptic systems and transmission problems}
\index{Elliptic systems}
\index{Transmission problem}
\label{sec:EllipticSystem}
%
In Section~\ref{sec:KondrAn}, 
Theorem~\ref{thm:bacuta} we obtained parameter-explicit elliptic regularity shifts
for a scalar, linear second order parametric elliptic divergence-form PDE in
polygonal domain $\domain \subset \R^2$. 
A key feature of these estimates in the subsequent
analysis of sparsity of gpc expansions was the \emph{polynomial dependence on the 
parameter in the bounds on parametric solutions in corner-weighted 
Sobolev spaces of Kondrat'ev type}. 
Such a-priori bounds are not limited to the particular setting considered in 
Section~\ref{sec:KondrAn}, but hold for rather general, linear elliptic PDEs 
in smooth domains $\domain \subset \R^d$ of space dimension $d\geq 2$, 
with parametric differential and boundary operators of general integer order.
In particular, for example, for linear, anisotropic elastostatics in $\R^3$,
for parametric fourth order PDEs in $\R^2$ which arise in dimensionally reduced
models of elastic continua (plates, shells, etc.). We refer to \cite{KLMN22}
for statements of results and proofs.

In the results in Section~\ref{sec:KondrAn}, 
we admitted inhomogeneous coefficients which are regular in all of $\domain$. 
In many applications, 
\emph{transmission problems} with parametric, inhomogeneous
coefficients with are piecewise regular on a given, fixed (i.e. non-parametric) 
partition of $\domain$ is of interest. Also in these cases, corresponding
a-priori estimates of parametric solution families with norm bounds
which are polynomial with respect to the parameters hold. 
We refer to \cite{NistLabrPolyBds23} for such results, 
in smooth domains $\domain$, with smooth interfaces.

\newpage
\section{Parametric posterior analyticity and sparsity in BIPs}
\label{sec:BIP}
We have investigated the parametric analyticity of the \emph{forward
  solution maps} of linear PDEs with uncertain parametric inputs which
typically arise from GRF models for these inputs.  We have also provided an
analysis of sparsity in the Wiener-Hermite PC
expansion of the corresponding parametric solution families.

We now explore the notion of parametric holomorphy in the context of
BIPs for linear PDEs.  
For these PDEs we adopt the Bayesian setting as
outlined, e.g.,  in \cite{DashtiStuart17} and the references there.
This Bayesian setting is briefly recapitulated in Section
\ref{sec:BIPFrmWelPsd}.  
With a suitable version of Bayes' theorem,
the main result is a (short) proof of parametric
$(\bb,\xi,\delta,\C)$-holomorphy of the Bayesian posterior density for
unbounded parameter ranges.  
This implies
sparsity of the coefficients in
Wiener-Hermite PC expansions of the Bayesian posterior density, which
can be leveraged to obtain higher-order approximation rates that are
free from the curse of dimensionality for various deterministic
approximation methods of the Bayesian expectations, for several
classes of function space priors modelled by product measures on the
parameter sequences $\by$.  In particular, the construction of
Gaussian priors described in Section \ref{S:GMSepHS} is applicable. Concerning related previous works, we remark the following.
In \cite{CSAMS2011} holomorphy for a bounded parameter
domain (in connection with uniform prior measure) has been addressed
by complex variable arguments in the same fashion. 
In \cite{RSAMSAT_Bip2017}, MC and QMC integration has been analyzed by
real-variable arguments for such Gaussian priors.  
In \cite{HKS19_845}, corresponding results have been obtained 
also for so-called \emph{Besov priors}, 
again by real-variable arguments for the parametric posterior.  
Since the presently developed, quantified
parametric holomorphy results are independent of the particular
measure placed upon the unbounded parameter domain $\RR^\infty$. 
The sparsity
and approximation rate bounds for the parametric deterministic
posterior densities will imply approximate rate bounds also for prior
constructions beyond the Gaussian ones.
\subsection{Formulation and well-posedness}
\label{sec:BIPFrmWelPsd}
With ${\data}$ and $X$ denoting separable Banach and Hilbert spaces
over $\C$, respectively, we consider a forward solution map
$\Uu: {\data}\to X$ and an observation map $\bcalO:X\to \RR^m$. In the
context of the previous sections, $\Uu$ could denote again the map
which associates with a diffusion coefficient
$a\in {\data}:=L^\infty(\D;\C)$ the solution
$\Uu(a) \in X:=H_0^1(\D;\C)$ of  the equation \eqref{eq:elliptic} below. 
We assume the map $\Uu$ to be Borel measurable.

The inverse problem consists in determining the (expected value of an)
uncertain input datum $a\in {\data}$ from noisy observation data
$\bobs\in \RR^m$.  Here, the observation noise $\boldeta\in \RR^m$ is
assumed additive centered Gaussian, i.e., the observation data $\bobs$
for input $a$ is \index{Bayesian inverse problem}
\begin{equation*}\label{eq:ObsNois}
  \bobs = \bcalO\circ\Uu(a) +  \boldeta\;,
\end{equation*}
where $\boldeta \sim \calN(0,\bGamma)$.  We assume the observation
noise covariance $ \bGamma\in\R^{m\times m}$ is symmetric positive
definite.

In the so-called Bayesian setting of the inverse problem, one assumes
that the uncertain input $a$ is modelled as RV which is
distributed according to a prior measure $\pi_0$ on ${\data}$. Then,
under suitable conditions, which are made precise in Theorem
\ref{thm:bdHol} below, the posterior distribution $\pi(\cdot|\bobs)$
on the conditioned {RV} $\Uu|\bobs$ is absolutely
continuous w.r.t.\ the prior measure $\pi_0$ on ${\data}$ and there
holds Bayes' theorem in the form \index{Bayes' theorem}
\begin{equation}\label{eq:Bayes}
  \frac{\rd\pi(\cdot|\bobs)}{\rd\pi_0}(a) = \frac{1}{Z} \Theta(a).
\end{equation}
In \eqref{eq:Bayes}, the posterior density $\Theta$ and the
normalization constant $Z$ are given by
\begin{equation}\label{eq:BayesDens}
  \Theta(a) = \exp(-\Phi(\bobs;a)),\qquad
  \Phi(\bobs;a) = \frac{1}{2} \norm[2]{\bGamma^{-1/2}(\bobs - \bcalO(\Uu(a)))}^2,\qquad
  Z = \EE_{\pi_0}[\Theta(\cdot)]
  \;.
\end{equation}
Additional conditions ensure that the posterior measure
$\pi(\cdot|\bobs)$ is well-defined and that \eqref{eq:Bayes} holds
according to the following result from \cite{DashtiStuart17}.
\begin{proposition}\label{prop:Bayes}
  Assume that $\bcalO\circ\Uu: {\data}\to \RR^m$ is continuous and
  that $\pi_0({\data}) = 1$. Then the posterior $\pi(\cdot|\bobs)$ is
  absolutely continuous with respect to $\pi_0$, and
  \eqref{eq:BayesDens} holds.
\end{proposition}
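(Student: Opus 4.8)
\textbf{Proof plan for Proposition \ref{prop:Bayes}.}

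The statement is a classical well-posedness result for Bayesian inverse problems (in the formulation of \cite{DashtiStuart17}), so the plan is to reduce everything to verifying the hypotheses needed to apply Bayes' theorem in the form \eqref{eq:Bayes}. First I would record the basic structural facts about the negative log-likelihood $\Phi(\bobs;a)=\frac12\norm[2]{\bGamma^{-1/2}(\bobs-\bcalO(\Uu(a)))}^2$: since $\bGamma$ is symmetric positive definite, $\bGamma^{-1/2}$ is a bounded linear operator on $\R^m$, and the map $a\mapsto \bcalO(\Uu(a))\in\R^m$ is continuous by assumption. Consequently $a\mapsto\Phi(\bobs;a)$ is continuous, nonnegative, and hence $\Theta(a)=\exp(-\Phi(\bobs;a))$ is continuous with $0<\Theta(a)\le 1$ for all $a\in E$. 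In particular $\Theta$ is Borel measurable and bounded.

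The next step is to show $Z=\EE_{\pi_0}[\Theta(\cdot)]\in(0,\infty)$, since the posterior density in \eqref{eq:Bayes} is only meaningful when the normalization constant is finite and strictly positive. Finiteness is immediate from $\Theta\le 1$ and $\pi_0(E)=1$, which gives $Z\le 1$. Strict positivity is the only point requiring a small argument: because $\Phi(\bobs;\cdot)$ is continuous and finite everywhere on $E$, there is some $a_0\in E$ and some radius such that $\Phi(\bobs;a)\le\Phi(\bobs;a_0)+1$ on a $\pi_0$-nonnull neighbourhood of $a_0$ — here one uses that $\pi_0$ is a Borel probability measure on the separable space $E$, so every nonempty open set has, or can be enlarged to, positive measure, or more simply that $\{a:\Phi(\bobs;a)<R\}$ increases to $E$ as $R\to\infty$ and therefore has $\pi_0$-measure bounded below by $1/2$ for $R$ large. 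On that set $\Theta(a)\ge e^{-R}>0$, whence $Z\ge\frac12 e^{-R}>0$.

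With $0<Z<\infty$ established, one then invokes the general version of Bayes' theorem for Bayesian inverse problems (e.g. \cite[Theorem on Bayes' rule]{DashtiStuart17}), whose hypotheses — $\bcalO\circ\Uu:E\to\R^m$ continuous (hence Borel measurable), $\pi_0(E)=1$, and $0<Z<\infty$ — have now all been checked. This theorem yields that the posterior $\pi(\cdot\,|\,\bobs)$ is absolutely continuous with respect to $\pi_0$ with Radon--Nikodym derivative $\frac{\rd\pi(\cdot|\bobs)}{\rd\pi_0}(a)=\frac1Z\Theta(a)$, which is exactly \eqref{eq:Bayes}--\eqref{eq:BayesDens}.

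The main (and really the only) obstacle is the strict positivity $Z>0$; everything else is bookkeeping with continuity and boundedness. I would handle it via the sublevel-set argument above, which needs nothing beyond continuity of $\Phi$ and $\pi_0$ being a probability measure, so it does not require any topological support assumption on $\pi_0$ or on $E$. If one wanted to avoid even citing the abstract Bayes' theorem, the identity \eqref{eq:Bayes} can alternatively be verified directly by checking that for every Borel set $A\subseteq E$, $\pi(A\,|\,\bobs)=\frac1Z\int_A\Theta\,\rd\pi_0$ using the definition of conditional distribution together with the Gaussian form of the noise likelihood; but citing \cite{DashtiStuart17} is the cleaner route.
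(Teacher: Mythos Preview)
Your proposal is correct and in fact provides more detail than the paper does: the paper states this proposition as a result \emph{from} \cite{DashtiStuart17} and gives no proof, only remarking afterwards that the condition $\pi_0(E)=1$ can be weakened to $\pi_0(E)>0$ via \cite[Theorem 3.4]{DashtiStuart17}. Your plan --- checking continuity and boundedness of $\Theta$, establishing $0<Z<\infty$, then invoking the abstract Bayes' theorem from \cite{DashtiStuart17} --- is exactly the standard verification of the hypotheses of that cited result, so it aligns with the paper's approach of deferring to that reference.
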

The condition $\pi_0({\data}) = 1$ can in fact be weakened to
$\pi_0({\data}) > 0$ (e.g.\ \cite[Theorem 3.4]{DashtiStuart17}).

The solution of the BIP amounts to the evaluation of the posterior
expectation $\EE_{\mu^\bobs}[\cdot]$ of a continuous linear map
$\phi:X\to Q$ of the map $\Uu(a)$, where $Q$ 
is a suitable Hilbert space over $\C$. 
Solving the Bayesian inverse problem is thus closely
related to the numerical approximation of the posterior expectation
\begin{equation*}\label{eq:BIPPost}
  \EE_{\pi(\cdot|\bobs)} [\phi(\Uu(\cdot))] \in Q.
\end{equation*}
For computational purposes, and to facilitate Wiener-Hermite PC
approximation of the density $\Theta$ in \eqref{eq:Bayes}, one
parametrizes the input data $a=a(\by)\in \data$ by a Gaussian series
as discussed in Section \ref{S:GSer}.  Inserting into $\Theta(a)$ in
\eqref{eq:Bayes}, \eqref{eq:BayesDens} this results in a
countably-parametric density $U\ni\by\mapsto \Theta(a(\by))$, for
$\by\in U$, 
and the Gaussian reference measure $\pi_0$ on
$\data$ in \eqref{eq:Bayes} is pushed forward into 
a countable product $\gamma$ of the sequence of Gaussian measures 
$\{\gamma_{1,n}\}_{n \in \NN}$ on $\RR$: 
using \eqref{eq:Bayes} and choosing a
Gaussian prior (e.g.\ \cite[Section 2.4]{DashtiStuart17} or
\cite{HKS19_845,LSS2009})
$$
\pi_0 = \gamma = \bigotimes_{j\in \NN}  \gamma_{1,n}
$$ on
$U$ (see Example \ref{ex:prod-measR^infty}), the Bayesian estimate, i.e., the posterior expectation, can then
be written as a (countably) iterated integral
\cite{CSAMS2011,DashtiStuart17,RSAMSAT_Bip2017} 
with respect to the product GM $\gamma$, i.e.
\begin{equation}\label{eq:BIPSparse}
  \EE_{\pi(\cdot|\bobs)}[\phi(\Uu(a(\cdot)))] = \frac{1}{Z}
  \int_{U} \phi(\Uu(a(\by))) \Theta(a(\by)) \dd\gamma(\by) \in Q,
  \quad 
  Z = \int_{U} \Theta(a(\by)) \dd\gamma(\by) \in \RR.
\end{equation}
The parametric density $U\to \RR$ in \eqref{eq:BIPSparse} which arises
in Bayesian PDE inversion under Gaussian prior and also under more
general, so-called Besov prior measures on $U$, see, 
e.g.\ \cite[Section 2.3]{DashtiStuart17}, \cite{HKS19_845,LSS2009}.  
The parametric density
$$
\by \mapsto \phi(\Uu(a(\by))) \Theta(a(\by)) \;,
$$
inherits sparsity from the forward map $\by \mapsto \Uu(a(\by))$,
whose sparsity is expressed as before in terms of  
$\ell^p$-summability and weighted $\ell^2$-summability 
of Wiener-Hermite PC expansion coefficients. 
We employ the
parametric holomorphy of the forward map $a\mapsto \Uu(a)$ to quantify
the sparsity of the parametric posterior densities
$\by\mapsto \Theta(a(\by))$ and $\by\mapsto \phi(\Uu(a(\by))) \Theta(a(\by))$ 
in \eqref{eq:BIPSparse}.
\subsection{Posterior parametric holomorphy}
\label{sec:BIPParmHol}
With a Gaussian series in the data space $E$, 
for the resulting parametric data-to-solution map
$$u:U\to X: \by\mapsto \Uu(a(\by)),$$
we now prove that under certain conditions both, the corresponding
parametric posterior density
\begin{equation}\label{eq:TheParm}
  \by\mapsto \exp\left( -(\bobs-\bcalO(u(\by)))^\top \bGamma^{-1} (\bobs-\bcalO(u(\by))) \right)
\end{equation}
in \eqref{eq:BayesDens}, and the integrand
\begin{equation}\label{eq:integrand}
  \by\mapsto  \phi(u(\by))
  \exp\left( -(\bobs-\bcalO(u(\by)))^\top \bGamma^{-1} (\bobs-\bcalO(u(\by))) \right)
\end{equation}
in \eqref{eq:BIPSparse} are $(\bb,\xi,\delta,\C)$-holomorphic and
$(\bb,\xi,\delta,Q)$-holomorphic, respectively.
\begin{theorem}\label{thm:bdHol}
  Let $r>0$.  Assume that the map $u:U\to X$ is
  $(\bb,\xi,\delta,X)$-holomorphic with constant functions
  $\varphi_N\equiv r$, $N\in\N$, in Definition~\ref{def:bdXHol}.
  Let the observation noise covariance matrix
  $\bGamma\in\R^{m\times m}$ be symmetric positive definite.

  Then, for any bounded linear quantity of interest $\phi\in L(X,Q)$,
  and for any observable $\bcalO\in (X')^m$ with arbitrary, finite
  $m$, the function in \eqref{eq:TheParm} is
  $(\bb,\xi,\delta,\C)$-holomorphic and the function in
  \eqref{eq:integrand} is $(\bb,\xi,\delta,Q)$-holomorphic.
\end{theorem}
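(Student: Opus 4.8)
The idea is to realize both maps \eqref{eq:TheParm} and \eqref{eq:integrand} as compositions of the given $(\bb,\xi,\delta,X)$-holomorphic map $u$ with suitable entire (respectively holomorphic) maps, and then to check directly the three conditions of Definition~\ref{def:bdXHol}. Concretely, let $u_N:\R^N\to X$ and the extensions to $\Ss(\bvarrho)$ be as guaranteed by the $(\bb,\xi,\delta,X)$-holomorphy of $u$ (with the constant majorants $\varphi_N\equiv r$). For \eqref{eq:TheParm}, define $g:X_\CC\to\CC$ by $g(w):=\exp\big(-(\bobs-\bcalO(w))^\top\bGamma^{-1}(\bobs-\bcalO(w))\big)$; since $\bcalO\in (X')^m$ is bounded and $\CC$-linear, $w\mapsto \bGamma^{-1}$-quadratic-form is a $\CC$-valued polynomial of degree $2$ in $w$, hence entire, and $\exp$ is entire, so $g$ is entire on $X_\CC$. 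Then $\Theta_N:=g\circ u_N$ is the required sequence of approximating functions; \ref{item:hol} (holomorphic extensions and the compatibility \eqref{eq:un=um}) is inherited from $u_N$ because $g$ is entire and the composition of holomorphic maps is holomorphic. For \eqref{eq:integrand} one uses instead $G:X_\CC\to Q_\CC$, $G(w):=\phi(w)\,g(w)$, which is again holomorphic as a product of the bounded linear map $\phi$ and the entire scalar $g$; set $\Theta_N':=G\circ u_N$.

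Next I would verify condition~\ref{item:varphi}, i.e.\ construct the $L^2$-bounded majorants $\varphi_N$. The key point is that the posterior density factor is \emph{uniformly bounded}: for any $\bz$ in the admissible polydisc, writing $t:=\bcalO(u_N(\by+\bz))\in\CC^m$, the real part of the exponent is $-\Re\big((\bobs-t)^\top\bGamma^{-1}(\bobs-t)\big)$, and because $\bGamma^{-1}$ is real symmetric positive definite this real part equals $-\|\bGamma^{-1/2}(\bobs-\Re t)\|_2^2+\|\bGamma^{-1/2}\Im t\|_2^2$. The first term is $\le 0$; the second is controlled by $\|\bGamma^{-1/2}\|^2\|\bcalO\|^2\,\|u_N(\by+\bz)\|_X^2\le \|\bGamma^{-1/2}\|^2\|\bcalO\|^2 r^2$ using exactly the hypothesis $\varphi_N\equiv r$ (this is where the assumption of constant majorants is used, and it is the only nontrivial analytic input). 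Hence $|\Theta_N(\by+\bz)|\le \exp\big(\|\bGamma^{-1/2}\|^2\|\bcalO\|^2 r^2\big)=:C_\Theta$, a constant, so one may take $\varphi_N\equiv C_\Theta$; since $\gamma_N$ is a probability measure this has $L^2(\R^N;\gamma_N)$-norm $C_\Theta=:\delta_\Theta$. For the $Q$-valued integrand, $\|G(u_N(\by+\bz))\|_Q\le \|\phi\|\,\|u_N(\by+\bz)\|_X\,C_\Theta\le \|\phi\|\, r\, C_\Theta$, again a constant majorant. Thus both composite maps satisfy \ref{item:varphi} with $\delta$ replaced by an explicit constant; renaming that constant as ``$\delta$'' in the statement is legitimate (the theorem asserts holomorphy with \emph{some} $\delta$, matching the convention in Theorem~\ref{thm:bdX}).

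Finally, condition~\ref{item:vN}: I must show $\tilde\Theta_N\to \Theta$ in $L^2(U,\CC;\gamma)$ and likewise $\tilde\Theta_N'\to$ the integrand in $L^2(U,Q;\gamma)$. This follows from $\tilde u_N\to u$ in $L^2(U,X;\gamma)$ together with the fact that $g$ and $G$ are globally Lipschitz on the relevant range: indeed $g$ is bounded (by $C_\Theta$, arguing as above but now on real arguments $\by\in U$, where $\varphi_N\equiv r$ still applies) and its derivative is bounded there as well, so $|g(w_1)-g(w_2)|\le L_g\|w_1-w_2\|_X$ on the set $\{w:\|w\|_X\le r\}$; since $\|u_N(\by)\|_X\le r$ and $\|u(\by)\|_X\le r$ $\gamma$-a.e., dominated convergence gives $\|\tilde\Theta_N-\Theta\|_{L^2}\le L_g\|\tilde u_N-u\|_{L^2}\to 0$, and the same for $G$ with Lipschitz constant $\le \|\phi\|C_\Theta + \|\phi\| r L_g$. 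Collecting the three verified conditions yields $(\bb,\xi,\delta,\CC)$-holomorphy of \eqref{eq:TheParm} and $(\bb,\xi,\delta,Q)$-holomorphy of \eqref{eq:integrand}. The main obstacle is not any single estimate but making the bookkeeping of condition~\ref{item:varphi} airtight — in particular, ensuring that the bound on the imaginary part of $\bcalO(u_N(\by+\bz))$ is genuinely uniform over all $(\bb,\xi)$-admissible $\bvarrho$ and all $\bz\in\Bb(\bvarrho)$, which is exactly what the hypothesis $\varphi_N\equiv r$ (rather than a general $\varphi_N\in L^2$) buys us; with a non-constant $\varphi_N$ the exponential of $\varphi_N^2$ need not be $L^2$, so the constancy assumption is essential and should be flagged as the reason the theorem is stated in this form.
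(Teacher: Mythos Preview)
Your proof is correct and follows essentially the same strategy as the paper's: compose the given $u_N$ with the holomorphic map $\Xi$ (your $g$ and $G$), use the uniform bound $\|u_N(\by+\bz)\|_X\le r$ to obtain a constant majorant for condition~\ref{item:varphi}, and invoke Lipschitz continuity of $\Xi$ on the closed $r$-ball in $X$ together with $\tilde u_N\to u$ in $L^2$ for condition~\ref{item:vN}. The only cosmetic difference is in~\ref{item:varphi}: the paper simply observes that the exponent $-(\bobs-\bcalO(v))^\top\bGamma^{-1}(\bobs-\bcalO(v))$ ranges over a bounded subset of $\C$ as $v$ ranges over $\{\|v\|_X\le r\}$, so $\exp$ of it is bounded by some $\tilde r$, whereas you compute the real part of the exponent explicitly to get the sharper constant $\exp(\|\bGamma^{-1/2}\|^2\|\bcalO\|^2 r^2)$---either route suffices.
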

\begin{proof}
  We only show the statement for the parametric integrand in
  \eqref{eq:integrand}, as the argument for the posterior density in
  \eqref{eq:TheParm} is completely analogous.

  Consider the map
  \begin{equation*}
    \Xi: \set{v\in X}{\norm[X]{v}\le r} \to Q : 
    v \mapsto \phi(v)\exp(-(\bobs-\bcalO(v))\bGamma^{-1}(\bobs-\bcalO(v))).
  \end{equation*}
  This function is well-defined.  We have
  $|\bcalO(v)|\le \norm[X']{\bcalO} r$ and
  $|\phi(v)|\le \norm[L(X;Q)]{\phi} r$ for all $v\in X$ with
  $\norm[X]{v}\le r$. Since $\exp:\C\to\C$ is Lipschitz continuous on
  compact subsets of $\C$ and since $\phi\in L(X;Q)$ is bounded linear
  map (and thus Lipschitz continuous), we find that
  \begin{equation*}
    \sup_{\norm[X]{v}\le r}\norm[Q]{\Xi(v)}=:\tilde r<\infty
  \end{equation*}
  and that $$\Xi:\set{v\in X}{\norm[X]{v}\le r}\to\C$$ is Lipschitz
  continuous with some Lipschitz constant $L>0$.

  Let us recall that the $(\bb,\xi,\delta,X)$-holomorphy of
  $u:U\to X$, implies the existence of (continuous) functions
  $u_N \in L^2(\R^N,X;\gamma_N)$ such that with
  $\tilde u_N(\by)=u_N(y_1,\dots,y_N)$ it holds
  $\lim_{N\to\infty}\tilde u_N= u$ in the sense of
  $L^2(U,X;\gamma)$. Furthermore, if
  $$\sum_{j=1}^N b_j\varrho_j\le\delta$$
  (i.e.~$\bvarrho=(\varrho_j)_{j=1}^N$ is $(\bb,\xi)$-admissible in
  the sense of Definition~\ref{def:bdXHol}), then $u_N$ allows a
  holomorphic extension $$u_N:\Ss_\bvarrho\to X$$ such that for all
  $\by\in \R^N$
  \begin{equation}\label{eq:unbound}
    \sup_{\bz\in\Bb_\bvarrho}\norm[X]{u_N(\by+\bz)}\le \varphi_N(\by)=r\qquad\forall \by\in\R^N,
  \end{equation}
  see \eqref{eq:Sjrho} for the definition of $\Ss_\bvarrho$ and
  $\Bb_\bvarrho$.

  We want to show that $f(\by):=\Xi(u(\by))$ is well-defined in
  $L^2(U,Q;\gamma)$, and given as the limit of the functions
  $$\tilde f_N(\by)=f_N((y_j)_{j=1}^N)$$ for all $\by\in U$ and
  $N\in\N$, where $$f_N((y_j)_{j=1}^N)=\Xi(u_N((y_j)_{j=1}^N).$$ Note
  at first that $f_N:\R^N\to Q$ is well-defined. In the case
  $$\sum_{j=1}^Nb_j\varrho_j\le\delta,$$ $f_N$ allows a holomorphic
  extension $f_N:\Ss_\bvarrho\to X$ given through $\Xi\circ u_N$.
  Using \eqref{eq:unbound}, this extension satisfies for any $N\in\N$
  and any $(\bb,\xi)$-admissible $\varrho\in (0,\infty)^N$
  \begin{equation*}
    \sup_{\bz\in\Bb_\bvarrho}|f_N(\by+\bz)|
    \le \sup_{\norm[X]{v}\le r}|\Xi(v)|=\tilde r\qquad\forall \by\in \R^N.
  \end{equation*}
  This shows assumptions \ref{item:hol}-\ref{item:varphi} of
  Definition~\ref{def:bdXHol} for $f_N:\R^N\to Q$.

  Finally we show assumption \ref{item:vN} of
  Definition~\ref{def:bdXHol}.  By assumption it holds
  $\lim_{N\to\infty}\tilde u_N= u$ in the sense of
  $L^2(U,X;\gamma)$. Thus for $f=\Xi\circ u$ and with
  $f_N=\Xi\circ u_N$
  \begin{align*}
    \int_U\norm[Q]{f(\by)-f_N(\by)}^2 \dd\gamma(\by)
    & = \int_U\norm[Q]{\Xi(u(\by))-\Xi(u_N(\by))}^2\dd\gamma(\by)
    \\
    &
      \le L^2 \int_U\norm[X]{u(\by)-u_N(\by)}^2\dd\gamma(\by),
  \end{align*}
  which tends to $0$ as $N\to\infty$. Here we used that $L$ is a
  Lipschitz constant of $\Xi$.
\end{proof}

Let us now discuss which functions satisfy the requirements of
Theorem~\ref{thm:bdHol}.  Additional to
$(\bb,\xi,\delta,X)$-holomorphy, we had to assume boundedness of the
holomorphic extensions in Definition~\ref{def:bdXHol}. For functions
of the type as in Theorem \ref{thm:bdX}
$
 u(\by)=\lim_{N\to\infty}\Uu\left(\exp\left(\sum_{j=1}^N
    y_j\psi_j\right)\right), 
 $ 
 the following result gives sufficient
conditions such that the assumptions of Theorem~\ref{thm:bdHol} are
satisfied for the forward map.

\begin{corollary}\label{cor:norma2}
  Assume that $\Uu:O\to X$ and $(\psi_j)_{j\in\N}\subset {\data}$
  satisfy Assumptions \ref{item:uhol}, \ref{item:loclip} and
  \ref{item:psi} of Theorem \ref{thm:bdX} and additionally for some
  $r>0$
  \begin{enumerate}
    \setcounter{enumi}{1}
  \item\label{item:norma2} $\norm[X]{\Uu(a)}\le r$ for all $a\in O$.
  \end{enumerate}
    
  Then
    $$
    u(\by)=\lim_{N\to\infty}\Uu\left(\exp\left(\sum_{j=1}^N
        y_j\psi_j\right)\right) \in L^2(U,X;\gamma)
    $$
    is $(\bb,\xi,\delta,X)$-holomorphic with constant functions
    $\varphi_N\equiv r$, $N\in\N$, in Definition~\ref{def:bdXHol}.
  \end{corollary}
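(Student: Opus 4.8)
The plan is to deduce Corollary~\ref{cor:norma2} from Theorem~\ref{thm:bdX} and Theorem~\ref{thm:bdHol} by checking that, under the added boundedness hypothesis \ref{item:norma2}, the conclusion of Theorem~\ref{thm:bdX} can be sharpened so that the bounding functions $\varphi_N$ in Definition~\ref{def:bdXHol}~\ref{item:varphi} may be taken to be the \emph{constant} $r$. First I would observe that assumptions \ref{item:uhol}, \ref{item:loclip}, \ref{item:psi} together with \ref{item:norma2} certainly imply assumption \ref{item:norma} of Theorem~\ref{thm:bdX}: indeed, $\norm[X]{\Uu(a)}\le r\le r\big((1+\norm[{\data}]{a})/\min\{1,\rho(a)\}\big)^m$ for every $a\in O$, since the factor in parentheses is $\ge 1$ (taking, if necessary, $\delta:=r$ and $\delta_{\rm max}:=r+1$ in the statement of Theorem~\ref{thm:bdX}). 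Hence Theorem~\ref{thm:bdX} already gives that
$$
u:=\lim_{N\to\infty}\tilde u_N\in L^2(U,X;\gamma),\qquad \tilde u_N(\by)=u_N(y_1,\dots,y_N),
$$
with $u_N\big((y_j)_{j=1}^N\big)=\Uu\big(\exp(\sum_{j=1}^N y_j\psi_j)\big)$, is $(\bb,\xi,\delta\tilde C,X)$-holomorphic for suitable $\xi>0$ and $\tilde C$; in particular the holomorphic extensions $u_N:\Ss(\bvarrho)\to X$ of \ref{item:hol} and the compatibility relation \eqref{eq:un=um} hold verbatim.

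The one remaining point is item \ref{item:varphi} with $\varphi_N\equiv r$. Here I would argue directly rather than invoking the bound produced in Step~3 of the proof of Theorem~\ref{thm:bdX}. Fix $N\in\N$ and a $(\bb,\xi)$-admissible $\bvarrho\in(0,\infty)^N$, i.e.\ $\sum_{j=1}^N b_j\varrho_j\le\xi$, where $\xi=\pi/(4C_0)$ as chosen in that proof and $C_0$ is the embedding constant $\norm[L^\infty(\D)]{\psi}\le C_0\norm[{\data}]{\psi}$. For $\by\in\R^N$ and $\bz\in\Bb(\bvarrho)$ write $a_\bz:=\exp\big(\sum_{j=1}^N(y_j+z_j)\psi_j\big)$. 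The estimate \eqref{eq:essinfx} from the proof of Theorem~\ref{thm:bdX} shows that $\rho(a_\bz)>0$, so $a_\bz\in O$, and the holomorphic extension of $u_N$ to $\Ss(\bvarrho)$ agrees with $\Uu(a_\bz)$ by uniqueness of analytic continuation (both are holomorphic $X$-valued functions of $\bz$ coinciding on the reals, since $\Uu$ is holomorphic on $O$ by \ref{item:uhol}). Therefore, using \ref{item:norma2},
$$
\sup_{\bz\in\Bb(\bvarrho)}\norm[X]{u_N(\by+\bz)}=\sup_{\bz\in\Bb(\bvarrho)}\norm[X]{\Uu(a_\bz)}\le r
$$
for every $\by\in\R^N$, which is exactly \ref{item:varphi} with $\varphi_N\equiv r$ (note $\norm[L^2(\R^N;\gamma_N)]{r}=r\le\delta$ if we simply set $\delta:=r$). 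Item \ref{item:vN} is the convergence $\tilde u_N\to u$ in $L^2(U,X;\gamma)$ already established via Theorem~\ref{thm:bdX}. This completes the verification of Definition~\ref{def:bdXHol}, and I would record, if desired, that the value of $\xi$ is the one supplied by Theorem~\ref{thm:bdX}.

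I expect the main (minor) obstacle to be the bookkeeping around the constant $r$ versus the constant $\delta$ in Definition~\ref{def:bdXHol}: the definition requires $\norm[L^2(\R^N;\gamma_N)]{\varphi_N}\le\delta$, so one must be careful to state the corollary with the understanding that the relevant ``$\delta$'' is $\ge r$ (the corollary's phrasing ``with constant functions $\varphi_N\equiv r$'' presupposes $\delta\ge r$, which is harmless — one may always enlarge $\delta$). The only genuinely substantive step, identifying the holomorphic extension of $u_N$ with $\bz\mapsto\Uu(a_\bz)$ on the polydisc and bounding it by $r$, is immediate once \eqref{eq:essinfx} is invoked to guarantee $a_\bz\in O$; everything else is a direct citation of Theorem~\ref{thm:bdX}. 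Finally, combining this with Theorem~\ref{thm:bdHol} applied to $u$ (with $\varphi_N\equiv r$) yields $(\bb,\xi,\delta,\C)$- and $(\bb,\xi,\delta,Q)$-holomorphy of the associated posterior density and integrand, although that last sentence is not strictly part of the corollary's statement and I would include it only as a remark.
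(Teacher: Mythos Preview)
Your proposal is correct and follows essentially the same route as the paper's proof: the paper simply invokes Theorem~\ref{thm:bdX} and then observes that, revisiting Step~3 of its proof, the stronger hypothesis $\norm[X]{\Uu(a)}\le r$ allows one to take $\varphi_N\equiv r$. You spell this out in more detail, which is fine; the only (harmless) redundancy is your appeal to uniqueness of analytic continuation to identify the extension of $u_N$ with $\bz\mapsto\Uu(a_\bz)$ --- in the proof of Theorem~\ref{thm:bdX} the extension is \emph{defined} as $\Uu(a_\bz)$, so no matching is needed. Your remarks on the $\delta$-versus-$r$ bookkeeping and the irrelevance of Theorem~\ref{thm:bdHol} to the corollary itself are accurate.
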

  \begin{proof}
    By Theorem \ref{thm:bdX}, $u$ is $(\bb,\xi,\delta,X)$-holomorphic.
    Recalling the construction of $\varphi_N:\R^N\to\R$ in Step 3 of
    the proof of Theorem \ref{thm:bdX}, we observe that $\varphi_N$
    can be chosen as $\varphi_N\equiv r$.
  \end{proof}
  \subsection{Example: parametric diffusion coefficient}
  \label{sec:ExplBip}
  We revisit the example of the diffusion equation with parametric
  log-Gaussian coefficient as introduced in  Section~\ref{S:ParCoef} and
  used in Section~\ref{sec:pdc}.  
  With the Lipschitz
  continuity of the data-to-solution map established in Section
  \ref{sec:pdc}, we verify the well-posedness of the corresponding
  BIP.

  We fix the dimension $d\in\N$ of the physical domain
  $\D\subseteq\R^d$, being a bounded Lipschitz domain, and choose
  $\data=L^\infty(\D;\C)$ and $X=H_0^1(\D;\C)$.  We assume that
  $f\in X'$ and $a_0\in \data$ with
$$
\rho(a_0)>0.
$$
For
$$a\in O:=\set{a\in \data}{\rho(a) >0},$$ 
let $\Uu(a)$ be the solution to the equation
\begin{equation}\label{eq:elliptic}
  \begin{aligned}
    - \div((a_0+a) \nabla \Uu(a)) = f \text{ in }\D, \;\; \Uu(a) =0
    \text{ on }\partial\D,
  \end{aligned}
\end{equation}
for some fixed $f\in X'$.

Due to
$$\rho(a_0+a)\ge \rho(a_0)>0,$$
for every $a\in O$, as in \eqref{eq:apriori} we find that $\Uu(a)$ is
well-defined and it holds
\begin{equation*}
  \norm[X]{\Uu(a)}\le \frac{\norm[X']{f}}{{\rho(a_0)}}=:r\qquad\forall a\in O.
\end{equation*}
This shows assumption \ref{item:norma2} in Corollary~\ref{cor:norma2}.
Slightly adjusting the arguments in Section~\ref{sec:pdc} one observes
that $\Uu:O\to X$ satisfies assumptions \ref{item:uhol} and
\ref{item:loclip} in Theorem \ref{thm:bdX}.  Fix a representation
system $(\psi_j)_{j\in\N}\subseteq V$ such that with
$b_j:=\norm[{\data}]{\psi_j}$ it holds $(b_j)_{j\in\N}\in\ell^1(\N)$.
Then Corollary~\ref{cor:norma2} implies that the forward map
$$u(\by)=\lim_{N\to\infty}\Uu\bigg(\exp\Big(\sum_{j=1}^Ny_j\psi_j\Big)\bigg)$$ satisfies
the assumptions of Theorem~\ref{thm:bdHol}. Theorem~\ref{thm:bdHol} in
turn implies that the posterior density for this model is
$(\bb,\xi,\delta,X)$-holomorphic. We shall prove in
Section~\ref{sec:StochColl} that sparse-grid quadratures can be
constructed which achieve higher order convergence for the integrands
in \eqref{eq:TheParm} and \eqref{eq:integrand}, with the convergence
rate being a decreasing function of $p\in (0,4/5)$ such that
$\bb\in\ell^p(\N)$, see Theorem~\ref{thm:quad}. Furthermore,
Theorem~\ref{thm:bdXSum} implies a certain sparsity for
the family of Wiener-Hermite PC expansion coefficients 
of the parametric maps in \eqref{eq:TheParm} and \eqref{eq:integrand}.
\newpage
\
\newpage

\section{Smolyak sparse-grid interpolation and quadrature}
\label{sec:StochColl}
Theorem~\ref{thm:bdXSum} shows that if $v$ is $(\bb,\xi,\delta,X)$-holomorphic for some $\bb\in \ell^p(\N)$ and some $p\in (0,1)$, then  $(\norm[X]{v_\bnu})_{\bnu\in\CF}\in \ell^{2p/(2-p)}(\CF).$ In Remark \ref{rmk:bestN}, based on this summability of the 
Wiener-Hermite PC expansion coefficients, 
we derived the convergence rate of best $n$-term approximation 
as in \eqref{eq:bestNgeneral}. 
This approximation is not linear 
since the approximant is taken accordingly to the $N$ largest terms $\norm[X]{v_\bnu}$. 
To construct a linear approximation which gives the same convergence rate 
it is suitable to use the stronger weighted $\ell^2$-summability result \eqref{eq:general} 
in Theorem~\ref{thm:bdXSum}. 

In Theorem \ref{thm:bdXSum} of Section~\ref{sec:SumHolSol}, 
we have obtained the weighted $\ell^2$-summability 
\begin{equation}\label{eq:weightedsum1}
\sum_{\bnu\in\CF}\beta_\bnu(r,\bvarrho) \norm[X]{u_\bnu}^2<\infty
\ \ \ \text{with} \ \ \ 
\big(\beta_\bnu(r,\bvarrho)^{-1/2}\big)_{\bnu\in\FF} \in \ell^{p/(1-p)}(\FF),
\end{equation}
for the norms of the Wiener-Hermite PC expansion coefficients of
$(\bb,\xi,\delta,X)$-holomorphic functions $u$ if $\bb \in \ell^p(\NN)$ 
for some $0 < p < 1$. 
In Section~\ref{sec:bdX}
and Section~\ref{sec:BIP} we saw that solutions to certain parametric PDEs as
well as posterior densities satisfy $(\bb,\xi,\delta,X)$-holomorphy.

The goal of this section is in a constructive way to sharpen and improve 
these results in a form more suitable for numerical implementation by using some ideas
from \cite{dD21,dD-Erratum22,ZS17}.
We shall construct a new weight
family $(c_{\bnu})_{\bnu\in\CF}$ based on
$(\beta_\bnu(r,\bvarrho))_{\bnu\in\CF}$, such that 
\eqref{eq:weightedsum1} with $\beta_\bnu(r,\bvarrho)$ replaced by $c_\bnu$, 
and its generalization of the form \eqref{weighted-summ} 
for $\sigma_\bnu =c_\bnu^{1/2}$ hold.  
Once a suitable family $(c_\bnu)_{\bnu\in\CF}$ has been
identified, we obtain a multiindex set $\Lambda_\eps\subseteq\CF$ 
for $\eps>0$ via
\begin{equation}\label{eq:Leps}
	\Lambda_\eps:=\set{\bnu\in\CF}{c_\bnu^{-1}\ge\eps},
\end{equation}
The set $\Lambda_\eps$ will then serve as an index set to define
interpolation  operators  $\VI_{\Lambda_\eps}$ 
and 
quadrature operators $\VQ_{\Lambda_\eps}$.
As the sequence $(c_{\bnu})_{\bnu\in\CF}$ is used to construct sets of
multiindices, it should possess certain features, including each
$c_\bnu$ to be easily computable for $\bnu\in\CF$, 
and for the resulting numerical algorithm to be efficient.
\subsection{Smolyak sparse-grid interpolation and quadrature}
\label{Smolyak interpolation and quadrature}
\subsubsection{Smolyak sparse-grid interpolation}\label{sec:int} \index{interpolation!Smolyak $\sim$}
Recall that for every $n\in\N_0$ denote by $(\chi_{n,j})_{j=0}^n\subseteq\R$ the
Gauss-Hermite points in one dimension (in particular, $\chi_{0,0}=0$),
that is, the roots of Hermite polynomial $H_{n+1}$. 
Let
$$I_n:C^0(\R)\to C^0(\R)$$ be the univariate polynomial
Lagrange interpolation \index{Lagrange interpolation} operator defined by
\begin{equation*}
  (I_n u)(y):= \sum_{j=0}^n u(\chi_{n,j}) \prod_{\substack{i=0\\
      i\neq j}}^n \frac{y-\chi_{n,i}}{\chi_{n,j}-\chi_{n,i}},\qquad y\in\R,
\end{equation*}
with convention that $I_{-1}:C^0(\R)\to C^0(\R)$ is defined as the
constant $0$ operator.

For any multi-index $\bnu\in\CF$, introduce the tensorized operators
$\VI_\bnu$ by
$$\VI_\bnul u := u((\chi_{0,0})_{j\in\N}),$$ and for $\bnu\neq\bnul$ via
\begin{equation}\label{eq:VInu}
  \VI_\bnu := \bigotimes_{j\in\N} I_{\nu_j},
\end{equation}
i.e.,
\begin{equation}\nonumber
  \VI_\bnu u(\by) = \sum_{\set{\bmu\in\CF}{\bmu\le\bnu}}
  u((\chi_{\nu_j,\mu_j})_{j\in\N}) \prod_{j\in\N}\prod_{\substack{i=0\\ i\neq
      \mu_j}}^{\nu_j}
  \frac{y_j-\chi_{\nu_j,i}}{\chi_{\nu_j,\mu_j}-\chi_{\nu_j,i}},\quad \by\in U.
\end{equation}
The operator $\VI_\bnu$ can thus be applied to functions $u$ which are
pointwise defined at each $(\chi_{\nu_j,\mu_j})_{j\in\N}\in U$. Via
Remark~\ref{rmk:defu}, we can apply it in particular to
$(\bb,\xi,\delta,X)$-holomorphic functions. Observe that the product
over $j\in\N$ in \eqref{eq:VInu} is a finite product, since for every
$j$ with $\nu_j=0$, the inner product over
$i\in\{0,\dots,\mu_j-1,\mu_j+1,\dots,\nu_j\}$ is over an empty set,
and therefore equal to one by convention.  
Then for a finite set $\Lambda\subseteq\CF$
\begin{equation}\label{eq:VILambda0}
  \VI_{\Lambda} :=\sum_{\bnu\in\Lambda}\bigotimes_{j\in\N}(I_{\nu_j}-I_{\nu_j-1}).
\end{equation}

Expanding all tensor product operators, we get
\begin{equation}\label{eq:VILambda}
  \VI_{\Lambda} =\sum_{\bnu\in\Lambda}\sigma_{\Lambda;\bnu}
  \VI_\bnu\qquad\text{where}\qquad
  \sigma_{\Lambda;\bnu}:=\sum_{\set{\bee\in\{0,1\}^\infty}{\bnu+\bee\in\Lambda}} (-1)^{|\bee|}.
\end{equation}

\begin{definition}\label{def:DownClsd}
  An index set $\Lambda\subseteq\CF$ is called \emph{downward closed},
  if it is finite and if for every $\bnu\in\Lambda$ it holds
  $\bmu\in\Lambda$ whenever $\bmu\le\bnu$.  Here, the ordering
  ``$\le$'' between two indices $\bmu = (\mu_j)_{j\in \NN}$ and
  $\bnu = (\nu_j)_{j\in \NN}$ in $\CF$ expresses that for all
  $j\in \NN$ holds $\mu_j \le \nu_j$ with strict inequality for at
  least one index $j$.

\end{definition}

As is well-known, $\VI_{\Lambda}$ possesses the following crucial
property, see for example \cite[Lemma 1.3.3]{JZdiss}.
\begin{lemma}\label{lemma:VIprop}
  Let $\Lambda\subseteq\CF$ be downward closed. Then
  $\VI_{\Lambda}f =f$ for all
  $f\in {\rm span}\set{\by^\bnu}{\bnu\in\Lambda}$.
\end{lemma}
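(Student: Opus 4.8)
\textbf{Proof plan for Lemma \ref{lemma:VIprop}.}

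The plan is to reduce the multivariate claim to the well-known univariate fact about Lagrange interpolation, namely that $I_n$ reproduces all polynomials of degree at most $n$, and then to exploit the telescoping structure of $\VI_\Lambda$ together with the hypothesis that $\Lambda$ is downward closed. First I would fix a multi-index $\bmu \in \Lambda$ and show that $\VI_\Lambda$ reproduces the monomial $\by^\bmu$; by linearity this suffices, since $\{\by^\bnu : \bnu \in \Lambda\}$ spans the space in question. Write $\VI_\Lambda = \sum_{\bnu \in \Lambda} \bigotimes_{j\in\N}(I_{\nu_j} - I_{\nu_j-1})$ as in \eqref{eq:VILambda0}, and apply it to $\by^\bmu = \prod_{j\in\N} y_j^{\mu_j}$. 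Because the operator is a tensor product acting coordinatewise, its action factorizes: $\bigotimes_{j\in\N}(I_{\nu_j}-I_{\nu_j-1})\by^\bmu = \prod_{j\in\N}(I_{\nu_j}-I_{\nu_j-1})(y_j^{\mu_j})$, where only finitely many factors differ from the value obtained at $\nu_j=0$ applied to $y_j^0=1$.

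The key univariate observation is the following: for the one-dimensional operator, $(I_n - I_{n-1})(y^k) = 0$ whenever $n \geq k+1$ (both $I_n$ and $I_{n-1}$ reproduce $y^k$ when $n-1 \geq k$), and also $(I_n - I_{n-1})(y^k)$ need not vanish only when $n \leq k$. Hence, summing over $\bnu$, the contribution of $\bigotimes_j(I_{\nu_j}-I_{\nu_j-1})\by^\bmu$ is nonzero only for those $\bnu$ with $\nu_j \leq \mu_j$ for every $j$, i.e. $\bnu \leq \bmu$ (allowing equality). Therefore
\begin{equation*}
  \VI_\Lambda \by^\bmu = \sum_{\substack{\bnu \in \Lambda \\ \bnu \leq \bmu}} \bigotimes_{j\in\N}(I_{\nu_j}-I_{\nu_j-1})\by^\bmu.
\end{equation*}
Now the downward-closedness of $\Lambda$ enters: since $\bmu \in \Lambda$ and $\Lambda$ is downward closed, every $\bnu \leq \bmu$ already lies in $\Lambda$, so the constraint $\bnu \in \Lambda$ is automatic and the sum runs over \emph{all} $\bnu \leq \bmu$. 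But $\sum_{\bnu \leq \bmu} \bigotimes_{j\in\N}(I_{\nu_j}-I_{\nu_j-1})$ telescopes coordinatewise: $\sum_{\nu_j=0}^{\mu_j}(I_{\nu_j}-I_{\nu_j-1}) = I_{\mu_j} - I_{-1} = I_{\mu_j}$ (using the convention $I_{-1}=0$), so the full sum collapses to $\bigotimes_{j\in\N} I_{\mu_j} = \VI_\bmu$. Finally $\VI_\bmu \by^\bmu = \by^\bmu$, because in each coordinate $I_{\mu_j}$ reproduces $y_j^{\mu_j}$ (a polynomial of degree $\mu_j$ interpolated at $\mu_j+1$ nodes). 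This yields $\VI_\Lambda \by^\bmu = \by^\bmu$ for each $\bmu \in \Lambda$, and the general statement follows by linearity.

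The main technical care — rather than a genuine obstacle — lies in handling the infinite tensor product rigorously: one must check that for any fixed $\bmu$, all but finitely many coordinate factors act trivially (on the constant function $1$, giving $1$ back), so that the infinite products and infinite sums are really finite and the factorization of the tensor operator on monomials is legitimate. This is where the finiteness of $\supp(\bmu)$ and of $\Lambda$ is used. Once that bookkeeping is in place, the argument is purely the telescoping identity combined with the downward-closed hypothesis, and no estimates are needed. I would also remark that the convention $I_{-1} = 0$ and $\chi_{0,0}=0$ (so that $\VI_\bnul u = u(\mathbf{0})$) are exactly what make the $j\notin\supp(\bmu)$ factors behave correctly.
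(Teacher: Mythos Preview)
Your proof is correct and follows the standard telescoping argument. The paper itself does not give a proof of this lemma but simply cites \cite[Lemma 1.3.3]{JZdiss}; your argument is precisely the one found there (and in many other references on Smolyak interpolation), so there is nothing to compare beyond noting that your write-up matches the canonical proof.
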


The reason to choose the collocation points $(\chi_{n,j})_{j=0}^n$ as
the Gauss-Hermite points, is that it was recently shown that the
interpolation operators $I_n$ then satisfy the following stability
estimate, see \cite[Lemma 3.13]{ErnstSprgkTam18}.
\begin{lemma}
  For every $n\in\N_0$ and every $m\in\N$ it holds
  \begin{equation*}
    \norm[L^2(\R;\gamma_1)]{I_n(H_m)}\le 4 \sqrt{2m-1}.
  \end{equation*}
\end{lemma}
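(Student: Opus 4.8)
\textbf{Proof plan for the stability estimate $\norm[L^2(\R;\gamma_1)]{I_n(H_m)}\le 4\sqrt{2m-1}$.}

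The plan is to reduce the claim to a known bound for the Gauss--Hermite quadrature rule together with the Christoffel--Darboux structure of Lagrange interpolation at Gaussian nodes. First I would recall that the interpolation operator $I_n$ at the zeros $(\chi_{n,j})_{j=0}^n$ of $H_{n+1}$ can be written in terms of the reproducing kernel $K_n(x,y) = \sum_{k=0}^n H_k(x)H_k(y)$ of the polynomial subspace $\mathrm{span}\{H_0,\dots,H_n\}\subset L^2(\R;\gamma_1)$: namely $(I_n f)(y) = \sum_{j=0}^n w_{n,j} f(\chi_{n,j}) K_n(\chi_{n,j},y)$, where $w_{n,j}>0$ are the Gauss--Hermite quadrature weights, which satisfy $\sum_j w_{n,j}=1$ and $w_{n,j}K_n(\chi_{n,j},\chi_{n,j})=1$ for each $j$ (the standard identity relating Christoffel numbers to the Christoffel function). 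This representation is the analytic backbone: it exhibits $I_n f$ as an $L^2(\gamma_1)$-orthogonal-like combination.

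Next I would compute $\norm[L^2(\R;\gamma_1)]{I_n(H_m)}^2$. Since the $K_n(\chi_{n,j},\cdot)$ lie in the degree-$n$ polynomial space, orthonormality of the $H_k$ gives $\norm[L^2(\gamma_1)]{K_n(\chi_{n,j},\cdot)}^2 = K_n(\chi_{n,j},\chi_{n,j})$, and the Gram matrix of these functions has entries $K_n(\chi_{n,i},\chi_{n,j})$. Using $w_{n,j}K_n(\chi_{n,j},\chi_{n,j})=1$ one finds
\begin{equation*}
  \norm[L^2(\gamma_1)]{I_n(H_m)}^2 = \sum_{i,j} w_{n,i} w_{n,j} H_m(\chi_{n,i}) H_m(\chi_{n,j}) K_n(\chi_{n,i},\chi_{n,j}).
\end{equation*}
I would then bound this via Cauchy--Schwarz against $\sum_j w_{n,j} H_m(\chi_{n,j})^2$, i.e. the Gauss--Hermite quadrature applied to $H_m^2$, using that the positive-definite weighted kernel has operator norm controlled by $\max_j w_{n,j}K_n(\chi_{n,j},\chi_{n,j})=1$. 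The quadrature $\sum_j w_{n,j}H_m(\chi_{n,j})^2$ is exact for polynomials of degree $\le 2n+1$; if $m\le n$ it equals $1$ and the bound is trivial, so the interesting regime is $n<m\le 2n+1$, where one invokes the known estimate that Gauss--Hermite quadrature of $H_m^2$ (for $m$ in this range) is bounded by a constant multiple of $2m-1$ — this uses the bound on the Christoffel function / Turán-type inequalities for Hermite polynomials at their Gaussian nodes. Tracking the constants carefully through Cauchy--Schwarz yields the factor $4$.

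The main obstacle I anticipate is pinning down the sharp constant: the rough structure above readily gives a bound of the form $C\sqrt{2m-1}$, but obtaining exactly $C=4$ requires a careful quantitative estimate of the Gauss--Hermite quadrature weights and of $\sum_j w_{n,j}H_m(\chi_{n,j})^2$ in the range $n<m\le 2n+1$, likely leaning on explicit asymptotics or monotonicity properties of Hermite polynomials and their zeros (Krasikov-type bounds). An alternative, and probably cleaner, route is simply to cite \cite[Lemma 3.13]{ErnstSprgkTam18} — the statement is quoted verbatim from there — and reproduce their argument, which proceeds along exactly these lines; in the present text the cleanest course is to attribute the estimate to that reference and note that it follows from the reproducing-kernel representation of $I_n$ together with exactness of Gauss--Hermite quadrature on $\mathbb{P}_{2n+1}$.
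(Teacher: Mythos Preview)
The paper does not prove this lemma at all: it states the bound and attributes it directly to \cite[Lemma 3.13]{ErnstSprgkTam18}, exactly as you propose in your final sentence. So your ``alternative, and probably cleaner, route'' of simply citing that reference is precisely what the paper does; there is nothing further to compare.

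One remark on your sketched argument: you restrict attention to the regime $n<m\le 2n+1$, but the lemma is stated for \emph{all} $m\in\N$, so a complete proof would also need to handle $m>2n+1$, where Gauss--Hermite quadrature of $H_m^2$ is no longer exact and the values $H_m(\chi_{n,j})$ must be controlled directly. This is not a gap in the present context since the paper defers the entire proof to the cited reference, but it would be if you intended the sketch to stand on its own.
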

With the presently adopted normalization of the GM $\gamma_1$, it
holds $H_0\equiv 1$ and therefore $I_n(H_0)=H_0$ for all $n\in\N_0$
(since the interpolation operator $I_n$ exactly reproduces all
polynomials of degree $n\in\N_0$).  Hence
$$\norm[L^2(\R;\gamma_1)]{I_n(H_0)}=\norm[L^2(\R;\gamma_1)]{H_0}=1$$
for all $n\in\N_0$.  
Noting that $4\sqrt{2m-1}\le (1+m)^2$ for all $m\in\N$, 
we get
\begin{equation*}
  \norm[L^2(\R;\gamma_1)]{I_n(H_m)}\le (1+m)^2\qquad\forall n,~m\in\N_0.
\end{equation*}
Consequently 
\begin{equation}\label{eq:L2bound}
  \norm[L^2(U;\gamma)]{\VI_\bnu(H_\bmu)} = 
  \prod_{j\in\N}\norm[L^2(\R;\gamma_1)]{I_{\nu_j}(H_{\mu_j})}
  \le \prod_{j\in\N}(1+\mu_j)^2\qquad\forall \bnu,~\bmu\in\CF.
\end{equation}
Recall that for 
$\bnu \in \FF$ and $\tau\geq 0$,
we denote
\begin{equation*}
  p_\bnu( \tau) :=\prod_{j\in \NN} (1+\nu_j)^\tau.
\end{equation*}
If $\nu_j>\mu_j$ then $(I_{\nu_j}-I_{\nu_j-1})H_{\mu_j}=0.$
Thus, 
$$\bigotimes_{j\in\N}(I_{\nu_j}-I_{\nu_j-1})H_\bmu=0,$$ 
whenever
there exists $j\in\N$ such that $\nu_j>\mu_j$.  
Hence, for any
downward closed set $\Lambda$, it holds
\begin{equation}\label{eq:L2boundLambda}
  \norm[L^2(U;\gamma)]{\VI_\Lambda(H_\bmu)} \le p_{\bmu}(3).
\end{equation}
Indeed,
\begin{equation}\nonumber
  \norm[L^2(U;\gamma)]{\VI_\Lambda(H_\bmu)} 
  \le
  \sum_{\set{\bnu\in\Lambda}{\bnu\le\bmu}}p_\bmu(2)
  \le 
  |{\set{\bnu\in\Lambda}{\bnu\le\bmu}}|p_\bmu(2)
  = 
  \prod_{j\in\N}(1+\mu_j)p_\bmu(2)=p_{\bmu}(3).
\end{equation}
\subsubsection{Smolyak sparse-grid quadrature}
\label{sec:Quadrat}
Recall that  analogously 
to $I_n$ we introduce univariate
polynomial quadrature operators via
\begin{equation*}
  Q_n u := \sum_{j=0}^n u(\chi_{n,j}) \omega_{n,j},\qquad
  \omega_{n,j}:=\int_{\R} \prod_{i\neq j}
  \frac{y-\chi_{n,i}}{\chi_{n,j}-\chi_{n,i}} \dd\gamma_1(y).
\end{equation*}
Furthermore, we define $$\VQ_\bnul u := u((\chi_{0,0})_{j\in\N}),$$
and for $\bnu\neq\bnul$,
\begin{equation*}
  \VQ_\bnu := \bigotimes_{j\in\N} Q_{\nu_j},
\end{equation*}
i.e.,
\begin{equation*}
  \VQ_\bnu u = \sum_{\set{\bmu\in\CF}{\bmu\le\bnu}}
  u((\chi_{\nu_j,\mu_j})_{j\in\N}) \prod_{j\in\N}\omega_{\nu_j,\mu_j},
\end{equation*}
and finally for a finite downward closed $\Lambda\subseteq\CF$ with
$\sigma_{\Lambda;\bnu}$ as in \eqref{eq:VILambda},
\begin{equation*}
  \VQ_{\Lambda}:=\sum_{\bnu\in\Lambda}\sigma_{\Lambda;\bnu} Q_\bnu.
\end{equation*}
Again we emphasize that the above formulas are meaningful as long as
point evaluations of $u$ at each $(\chi_{\nu_j,\mu_j})_{j\in\N}$ are
well defined, $\bnu\in\CF$, $\bmu\le\bnu$. Also note that
\begin{equation}\label{eq-interpolation-quadrature}
\VQ_{\Lambda}f=\int_U \VI_{\Lambda}f(\by)\dd\gamma(\by).
\end{equation}
Recall that the set $\CF_2$ is defined by
\begin{equation} \label{cF_2} 
    \CF_2:=\set{\bnu\in\CF}{\nu_j\neq 1~\forall j}.
\end{equation}
We thus have $\CF_2\subsetneq\CF$.  Similar to Lemma
\ref{lemma:VIprop} we have the following lemma, which can be proven
completely analogous to \cite[Lemma 1.3.16]{JZdiss} (also see
\cite[Remark 4.2]{ZS17}).

\begin{lemma}\label{lemma:VQprop}
  Let $\Lambda\subseteq\CF$ be downward closed. Then
$$\VQ_{\Lambda}v =\int_U v(\by)\dd\gamma(\by)$$ for all
$v \in {\rm
  span}\set{\by^\bnu}{\bnu\in\Lambda\cup(\CF\backslash\CF_2)}$.
\end{lemma}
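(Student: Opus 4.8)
\textbf{Proof plan for Lemma~\ref{lemma:VQprop}.}
The plan is to reduce the quadrature statement to the already-established interpolation statement in Lemma~\ref{lemma:VIprop}, using the identity \eqref{eq-interpolation-quadrature} that $\VQ_\Lambda f = \int_U \VI_\Lambda f(\by)\dd\gamma(\by)$, together with a separate, elementary observation about univariate Gauss--Hermite quadrature exactness. The key point distinguishing $\VQ_\Lambda$ from $\VI_\Lambda$ is that an $(n+1)$-point Gauss rule integrates polynomials of degree up to $2n+1$ exactly, which is strictly more than the degree-$n$ reproduction of the interpolation operator; this is what upgrades the admissible polynomial space from $\mathrm{span}\{\by^\bnu:\bnu\in\Lambda\}$ to $\mathrm{span}\{\by^\bnu:\bnu\in\Lambda\cup(\CF\backslash\CF_2)\}$.

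First I would fix a downward closed $\Lambda\subseteq\CF$ and a multi-index $\bnu\in\Lambda\cup(\CF\backslash\CF_2)$, and show $\VQ_\Lambda(\by^\bnu)=\int_U \by^\bnu\dd\gamma(\by)$; by linearity this suffices. If $\bnu\in\Lambda$, then $\by^\bnu\in\mathrm{span}\{\by^\bmu:\bmu\in\Lambda\}$, so Lemma~\ref{lemma:VIprop} gives $\VI_\Lambda(\by^\bnu)=\by^\bnu$ pointwise, whence by \eqref{eq-interpolation-quadrature} $\VQ_\Lambda(\by^\bnu)=\int_U\VI_\Lambda(\by^\bnu)\dd\gamma(\by)=\int_U\by^\bnu\dd\gamma(\by)$. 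The remaining case is $\bnu\in\CF\backslash\CF_2$ with $\bnu\notin\Lambda$, i.e.\ there exists some coordinate $j_0$ with $\nu_{j_0}=1$. Here I would examine the telescoping expansion $\VQ_\Lambda=\sum_{\bmu\in\Lambda}\bigotimes_{j\in\N}(Q_{\mu_j}-Q_{\mu_j-1})$ (the quadrature analogue of \eqref{eq:VILambda0}, valid for downward closed $\Lambda$). Applied to the rank-one tensor $\by^\bnu=\prod_j y_j^{\nu_j}$, the operator $\bigotimes_j(Q_{\mu_j}-Q_{\mu_j-1})$ acts coordinatewise, and in coordinate $j_0$ we get $(Q_{\mu_{j_0}}-Q_{\mu_{j_0}-1})(y_{j_0})$. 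Since the $1$-point Gauss--Hermite rule $Q_0$ already integrates degree-$1$ polynomials exactly (its node is $\chi_{0,0}=0$ which is the mean, and $\int_\R y\dd\gamma_1(y)=0=Q_0(y)$), we have $Q_{\mu_{j_0}}(y_{j_0})=Q_{\mu_{j_0}-1}(y_{j_0})=\int_\R y_{j_0}\dd\gamma_1(y_{j_0})$ for every $\mu_{j_0}\ge 0$, so the difference vanishes: $(Q_{\mu_{j_0}}-Q_{\mu_{j_0}-1})(y_{j_0})=0$. Hence every term in the telescoping sum is zero, and I would need to reconcile this with the desired value $\int_U\by^\bnu\dd\gamma(\by)$, which itself is zero because the factor $\int_\R y_{j_0}\dd\gamma_1=0$; so both sides are $0$ and the identity holds.

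More carefully, since $\bnu\in\CF\backslash\CF_2$ the product $\by^\bnu$ contains at least one linear factor $y_{j_0}$ whose $\gamma_1$-integral is $0$, so $\int_U\by^\bnu\dd\gamma(\by)=0$; and the argument above shows $\VQ_\Lambda(\by^\bnu)=0$ as well. One should be slightly attentive to the monomial-versus-Hermite basis: it is cleanest to write $\by^\bnu$ in terms of Hermite polynomials $H_\bmu$ with $\bmu\le\bnu$ (so that each factor in coordinate $j_0$ is a linear combination of $H_0$ and $H_1$ in $y_{j_0}$), and check the claim for each such $H_\bmu$ separately, invoking that $Q_n$ exactly integrates $H_1$ (indeed $\int_\R H_1\dd\gamma_1=0=H_1(\chi_{n,j})$-weighted sum since the Gauss rule of any order is exact on $H_1$). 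The main obstacle, such as it is, is purely bookkeeping: being precise about why a single coordinate with exponent $1$ forces the telescoped quadrature increments to annihilate $\by^\bnu$, and handling mixed monomials by linearity/tensor structure rather than just rank-one monomials. I do not anticipate any genuine analytic difficulty; the proof is a short combination of \eqref{eq-interpolation-quadrature}, Lemma~\ref{lemma:VIprop}, and the degree-$1$ exactness of Gauss--Hermite quadrature, exactly parallel to \cite[Lemma 1.3.16]{JZdiss}.
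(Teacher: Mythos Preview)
Your proposal is correct and follows essentially the same approach the paper indicates: the paper does not spell out a proof but defers to \cite[Lemma 1.3.16]{JZdiss} (and \cite[Remark 4.2]{ZS17}), noting the argument is completely analogous to the interpolation case. Your reduction via \eqref{eq-interpolation-quadrature} and Lemma~\ref{lemma:VIprop} for $\bnu\in\Lambda$, together with the telescoping-sum argument exploiting that every univariate Gauss--Hermite rule (including $Q_0$) integrates $y\mapsto y$ exactly to handle $\bnu\in\CF\backslash\CF_2$, is precisely that standard argument.
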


With \eqref{eq:L2bound} it holds
\begin{equation*}
  |\VQ_\bnu(H_\bmu)|
  =\left|\int_U \VI_\bnu(H_\bmu)(\by)\dd\gamma(\by)\right|
  \le \norm[L^2(U;\gamma)]{\VI_\bnu(H_\bmu)}
  \le \prod_{j\in\N}(1+\mu_j)^2\qquad\forall \bnu,~\bmu\in\CF,
\end{equation*}
and similarly, using \eqref{eq:L2boundLambda}, 
we have the bound
\begin{equation}\label{eq:L1boundLambda}
  |\VQ_\Lambda(H_\bmu)| \le p_{\bmu}(3).
\end{equation}
%
\subsection{Multiindex sets}\label{sec:mi}
In this section, we first recall some arguments from
\cite{dD21,dD-Erratum22,ZS17} which allow to bound the number of required function
evaluations in the interpolation an quadrature algorithm. 
Subsequently, a construction of a suitable family
$(c_{k,\bnu})_{\bnu\in\CF}$ is provided for $k\in\{1,2\}$. 
The index
$k$ determines whether the family will be used for a sparse-grid
interpolation ($k=1$) or a Smolyak-type sparse-grid quadrature ($k=2$)
algorithm.  Finally, it is shown that the multiindex sets
$\Lambda_{k,\eps}$ as in \eqref{eq:Leps} based on
$(c_{k,\bnu})_{\bnu\in\CF}$, guarantee algebraic convergence rates for
certain truncated Wiener-Hermite PC expansions. 
This will be
exploited 
to verify convergence rates for interpolation in
Section~\ref{sec:intrate} and for quadrature in
Section~\ref{sec:quadrate}.
\subsubsection{Number of function evaluations}
\label{sec:NoFnEval}
In order to obtain a convergence rate in terms of the number of
evaluations of $u$, we need to determine the number of interpolation
points used by the operator $\VI_{\Lambda}$ or $\VQ_{\Lambda}$.  Since
the discussion of $\VQ_{\Lambda}$ is very similar, we concentrate here
on $\VI_{\Lambda}$.

Computing the interpolant $\VI_{\bnu}u$ in \eqref{eq:VInu} requires
knowledge of the function values of $u$ at each point in
\begin{equation*}
  \set{(\chi_{\nu_j,\mu_j})_{j\in\N}}{\bmu\le\bnu}.
\end{equation*}
The cardinality of this set is bounded by
$\prod_{j\in\N}(1+\nu_j)=p_\bnu(1)$. Denote by
\begin{equation}\label{eq:pts}
  \pts (\Lambda)
  := 
  \set{(\chi_{\nu_j,\mu_j})_{j\in\N}}{\bmu\le\bnu,~\bnu\in\Lambda}
\end{equation}
the set of interpolation points defining the interpolation operator
$\VI_{\Lambda}$ (i.e., ~$|\pts (\Lambda)|$ is the number of function
evaluations of $u$ required to compute $\VI_{\Lambda} u$).  By
\eqref{eq:VILambda} we obtain the bound
\begin{equation}\label{eq:ptsLbound}
  |\pts (\Lambda)|
  \le 
  \sum_{\set{\bnu\in\Lambda}{\sigma_{\Lambda,\bnu}\neq 0}} \prod_{j\in\N} (1+\nu_j)
  = \sum_{\set{\bnu\in\Lambda}{\sigma_{\Lambda,\bnu}\neq 0}} p_\bnu(1).
\end{equation}

\subsubsection{Construction of $(c_{k,\bnu})_{\bnu\in\CF}$}
We are now in position to construct $(c_{k,\bnu})_{\bnu\in\CF}$.  As
mentioned above, we distinguish between the cases $k=1$ and $k=2$,
which correspond to polynomial interpolation or quadrature.  Note that
in the next lemma we define $c_{k,\bnu}$ for all $\bnu\in\CF$, but the
estimate provided in the lemma merely holds for $\bnu\in\CF_k$,
$k\in\{1,2\}$, where $\CF_1:=\CF$ and $\CF_2$ is defined in
\eqref{cF_2}.  
Throughout what follows, 
empty products shall equal $1$ by convention.
\begin{lemma}\label{lemma:cnu}
  Assume that $\tau>0$, $k\in\{1,2\}$ and $r>\max\{\tau,k\}$.  
  Let
  $\bvarrho\in (0,\infty)^\infty$ be such that $\varrho_j\to\infty$ as
  $j\to\infty$.  

  Then there exist $K>0$ and $C_0>0$ such that
  \begin{equation}\label{eq:cnu}
    c_{k,\bnu}:= \prod_{j\in\supp(\bnu)}\max\left\{1,K
      \varrho_j\right\}^{2k} \nu_j^{r-\tau},\quad\bnu\in\CF,
  \end{equation}
  satisfies
  \begin{equation}\label{eq:cknubound}
    C_0 c_{k,\bnu} p_\bnu(\tau) \le \beta_\bnu(r,\bvarrho) \quad\forall \bnu\in\CF_k
  \end{equation}
 with $\beta_\bnu(r,\bvarrho)$ as in \eqref{beta}.
\end{lemma}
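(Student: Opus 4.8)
The goal is to produce a weight family $(c_{k,\bnu})_{\bnu\in\CF}$ that is lower-dominated (up to the factor $p_\bnu(\tau)$ and a constant) by $\beta_\bnu(r,\bvarrho)$ on $\CF_k$, while being of the clean multiplicative form \eqref{eq:cnu}. First I would recall the factorized expression
\[
  \beta_\bnu(r,\bvarrho) = \prod_{j\in\N}\Bigg(\sum_{\ell=0}^{r}\binom{\nu_j}{\ell}\varrho_j^{2\ell}\Bigg)
\]
from \eqref{beta}, so that the claimed bound \eqref{eq:cknubound} reduces, by taking products over $j$, to a one-dimensional estimate: for each $j$ and each admissible value $n=\nu_j$ (with $n\neq 1$ in the case $k=2$), one needs
\[
  C_j \,\max\{1,K\varrho_j\}^{2k}\, n^{r-\tau}\,(1+n)^\tau \;\le\; \sum_{\ell=0}^{r}\binom{n}{\ell}\varrho_j^{2\ell}
\]
for a suitable constant $C_j$ whose product over $j$ converges. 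Since the $j$-th factor of $c_{k,\bnu}$ equals $1$ whenever $\nu_j=0$, it suffices to treat $n\ge 1$ (resp. $n\ge 2$ when $k=2$), and I would organize the argument around a careful comparison of the single term $\binom{n}{k}\varrho_j^{2k}$ on the right (which is available since $r>k$) with the left-hand side.

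The main work is the elementary but delicate one-dimensional inequality. For $n\ge k$ one has $\binom{n}{k}\ge c_r\, n^k$ for $n$ in any range, and for $n\ge 1$ the factor $n^{r-\tau}(1+n)^\tau \le 2^\tau n^{r}$. Thus it is enough to show $\max\{1,K\varrho_j\}^{2k} n^{r} \lesssim \binom{n}{k}\varrho_j^{2k} + (\text{other terms})$; the point is that $\binom{n}{k}\varrho_j^{2k}$ grows like $n^k$ in $n$, not $n^r$, so a single term will not suffice and one must use that the full sum $\sum_{\ell=0}^r\binom{n}{\ell}\varrho_j^{2\ell}$ contains the term with the \emph{largest} feasible $\ell$. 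The clean way is to split into two regimes: (i) $n\le r$, where the sum is bounded below by $\binom{n}{k}\varrho_j^{2k}$ and $n^{r-\tau}$, $(1+n)^\tau$ are all bounded by constants depending only on $r$; (ii) $n>r$, where $\binom{n}{\min\{n,r\}}=\binom{n}{r}\ge c_r n^r$ as in \eqref{eq-lemma3.11-a}, so the term $\ell=r$ gives $\binom{n}{r}\varrho_j^{2r}\ge c_r n^r \varrho_j^{2r}$, which already dominates $\max\{1,K\varrho_j\}^{2k}n^r$ for all $j$ large (where $\varrho_j\ge 1$ and indeed $\varrho_j\to\infty$) provided $K$ is chosen so that $K^{2k}\le c_r \varrho_j^{2r-2k}$; since $r>k$ and $\varrho_j\to\infty$, this holds for all $j\ge j_0$ with a single constant $K$, and the finitely many remaining indices $j<j_0$ can be absorbed into $C_0$. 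In regime (ii) the surplus power $\varrho_j^{2r-2k}$ of the frame weight is precisely what pays for the loss incurred by bounding $n^{r-\tau}(1+n)^\tau$ by $n^r$ while only having $n^k$ from the binomial.

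\textbf{Expected obstacle.} The delicate point is choosing $K$ and $C_0$ uniformly in $j$ while respecting the hypothesis that only $\varrho_j\to\infty$ is assumed (not any quantitative rate). I would fix $j_0$ so that $\varrho_j\ge 1$ for $j\ge j_0$, set $K:=c_r^{1/(2k)}\min\{1,\inf_{j\ge j_0}\varrho_j^{(r-k)/k}\}$ — which is positive since $r>k$ and the infimum is over values bounded below by $1$ — and then verify that for $j\ge j_0$ the one-dimensional inequality holds with a universal constant, while for $j<j_0$ (finitely many indices) the ratio of left to right side is bounded by a finite constant depending on $\bvarrho$, $r$, $\tau$, $k$; the product of all these per-coordinate constants then yields a single $C_0>0$. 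The treatment of the case $k=2$, $n=1$ is exactly why the estimate is restricted to $\CF_2$: there $(I_1-I_0)H_1=0$ has no analogue at the level of $\beta_\bnu$, and one simply excludes $\nu_j=1$ so that $n\ge 2=k$ and the binomial $\binom{n}{k}$ is nonzero. Once the scalar inequality is in hand, multiplying over $j\in\supp(\bnu)$ and noting $p_\bnu(\tau)=\prod_{j\in\supp(\bnu)}(1+\nu_j)^\tau$ gives \eqref{eq:cknubound} directly, completing the proof.
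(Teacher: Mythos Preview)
Your strategy matches the paper's: both arguments factor the inequality over $j\in\supp(\bnu)$ and retain a single term $\binom{\nu_j}{\ell}\varrho_j^{2\ell}$ from the sum defining $\beta_\bnu$. The paper does this without a regime split, taking $\ell=\min\{\nu_j,r\}$ and using the uniform bounds $\binom{\nu_j}{\min\{\nu_j,r\}}\ge \nu_j^{r}/(r!r^{2r})$ and $\varrho_j^{2\min\{\nu_j,r\}}\ge \varrho_0^{2r}\varrho_j^{2k}$ (with $\varrho_0:=\min\{1,\min_j\varrho_j\}$); your split into $n\le r$ (term $\ell=k$) versus $n>r$ (term $\ell=r$) is an equivalent bookkeeping.

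There is, however, a genuine point to watch in your constant management. Because $C_0$ must be \emph{independent of $\bnu$}, the per-coordinate inequality for $j\ge j_0$ cannot merely hold ``with a universal constant'' $C\neq 1$: the product over $j\in\supp(\bnu)$ would then be $C^{|\supp(\bnu)|}$, which is not uniformly bounded. You need the one-dimensional bound to hold with constant \emph{exactly $1$} for all but finitely many $j$. Your explicit choice $K=c_r^{1/(2k)}$ does not achieve this, since in regime~(ii) you still need to absorb $(1+n)^\tau/n^\tau\le 2^\tau$, and in regime~(i) the constant $\max_{k\le n\le r}n^{r-\tau}(1+n)^\tau/\binom{n}{k}$. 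The fix is to reverse the order: first choose $K$ small enough (depending only on $r,\tau,k$) so that both regimes hold with constant $1$ whenever $K\varrho_j\ge 1$, \emph{then} pick $j_0$ so that $K\varrho_j\ge 1$ for $j\ge j_0$ (possible since $\varrho_j\to\infty$), and finally absorb the finitely many $j<j_0$ into $C_0$. The paper packages this by setting $K=C_b/C_c$ with $C_b^{2k}=\varrho_0^{2r}/(r!r^{2r})$, $C_c^{2k}=2^\tau$, and $C_0=\prod_{\{j:K\varrho_j<1\}}(K\varrho_j)^{2k}$.
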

\begin{proof}
  \textbf{Step 1.} Fix $\bnu\in\CF_k$, then $j\in\supp(\bnu)$ implies
  $\nu_j\ge k$ and thus $\min\{r,\nu_j\}\ge k$ since $r> k$ by
  assumption.  With $s:=\min\{r,\nu_j\}\le \nu_j$, for all $j\in\N$
  holds
  \begin{equation*}
    \binom{\nu_j}{s} 
    = 
    \frac{\nu_j!}{(\nu_j-s)! s!} 
    \ge 
    \frac{1}{s!} (\nu_j-s+1)^s\ge \nu_j^s \frac{1}{s! s^s}
    \ge
    \nu_j^s \frac{1}{r! r^r}= \nu_j^{\min\{\nu_j,r\}} \frac{1}{r!r^r}
    \ge 
    \nu_j^{r} \frac{1}{r!r^{2r}}.
  \end{equation*}
  Furthermore, 
  if $j\in\supp(\bnu)$, then due to
  $s=\min\{\nu_j,r\}\ge k$, with
  $\varrho_0:=\min \{1,\min_{j\in\N}\varrho_j\}$ we have
$$
\varrho_0^{2r} \leq \min\{ 1,\varrho_j\}^{2r} \leq \varrho_j^{2(s-k)}.
$$
Thus
$$\varrho_j^{\min\{\nu_j,r\}}\ge \varrho_0^{2r} \varrho_j^{2k}$$ for all $j\in\N$. 
In all, we conclude
\begin{equation}\label{eq:estb}
  \beta_\bnu(r,\bvarrho) = \prod_{j\in\N} \left(\sum_{l=0}^r
    \binom{\nu_j}{l}\varrho_j^{2l} \right) \ge
  \prod_{j\in\supp(\bnu)}\binom{\nu_j}{\min\{\nu_j,r\}}
  \varrho_j^{2\min\{\nu_j,r\}} \ge \prod_{j\in\supp(\bnu)}
  \frac{\varrho_0^{2r}}{r!r^{2r}} \varrho_j^{2k}\nu_j^{r}.
\end{equation}
Since $\bnu\in\CF_k$ was arbitrary, 
this estimate holds for all $\bnu\in\CF_k$.

\textbf{Step 2.}
Denote $\hat\varrho_j:=\max\{1,K\varrho_j\}$, 
where $K>0$ is still at our disposal.  
We have
\begin{equation*}
  p_\bnu(\tau)\le\prod_{j\in\supp(\bnu)} 2^{\tau} \nu_j^{\tau}
\end{equation*}
and thus
\begin{equation}\label{eq:estc}
  c_{k,\bnu} p_\bnu(\tau) \le
  \prod_{j\in\supp(\bnu)}
  2^\tau
  \hat\varrho_j^{2k} \nu_j^{r}.
\end{equation}
Again, this estimate holds for any $\bnu\in\CF_k$.

With $\varrho_0:=\min\{1,\min_{j\in\N}\varrho_j\}$ denote
\begin{equation*}
  C_b:=\left(\frac{\varrho_0^{2r}}{r!r^{2r}}\right)^{1/(2k)}
  \qquad\text{and}\qquad C_c:= (2^\tau)^{1/(2k)}.
\end{equation*}
Set
\begin{equation*}
  K:=\frac{C_b}{C_c},\qquad \tilde\varrho_j= K \varrho_j
\end{equation*}
for all $j\in\N$. 
Then
\begin{equation*}
  C_b\varrho_j=C_c\tilde\varrho_j = C_c\hat\varrho_j\begin{cases}
    1 &\text{if }K\varrho_j\ge 1,\\
    K\varrho_j  &\text{if }K\varrho_j<1.
  \end{cases}
\end{equation*}
Let
\begin{equation*}
  C_0:= \prod_{\set{j\in\N}{K\varrho_j<1}} (K\varrho_j)^{2k}
\end{equation*}
and note that this product is over a finite number of indices, since
$\varrho_j\to\infty$ as $j\to\infty$. Then for any $\bnu\in\CF_k$
\begin{equation*}
  \prod_{j\in\supp(\bnu)} C_c\tilde\varrho_j \ge C_0^{\frac{1}{2k}}
  \prod_{j\in\supp(\bnu)} C_c\hat\varrho_j.
\end{equation*}
With \eqref{eq:estb} and \eqref{eq:estc} we thus obtain for every
$\bnu\in\CF_k$,
\begin{align}
  \beta_\bnu(r,\bvarrho) &\ge \prod_{j\in\supp(\bnu)} (C_b\varrho_j)^{2k}\nu_j^r =
                           \prod_{j\in\supp(\bnu)} \left(C_c \tilde
                           \varrho_j\right)^{2k}\nu_j^r 
                           \notag
  \\
                         &\ge C_0 
                           \prod_{j\in\supp(\bnu)} (C_c\hat\varrho_j)^{2k}\nu_j^r \ge C_0 c_{k,\bnu}p_\bnu(\tau).
                           \notag
\end{align}

\end{proof}
\subsubsection{Summability properties of the collection $(c_{k,\bnu})_{\bnu\in\CF}$}
\label{S:SumPrpcknu}
First we discuss the summability of the collection
$(c_{k,\bnu})_{\bnu\in\cF}$.  
We will require the following lemma
which is a modification of \cite[Lemma 6.2]{dD21}.
\begin{lemma}\label{lemma:summabcnu}
  Let $\theta \geq 0$.  Let further $k\in\{1,2\}$, $\tau>0$,
  $r>\max\{k,\tau\}$ and $q>0$ be such that
  $(r-\tau)q/(2k)-\theta>1$. Assume that
  $(\varrho_j)_{j\in\N}\in (0,\infty)^\infty$ satisfies
  $(\varrho_j^{-1})_{j\in\N}\in\ell^q(\N)$. Then with
  $(c_{k,\bnu})_{\bnu\in\CF}$ as in Lemma \ref{lemma:cnu} it holds
  \begin{equation*}
    \sum_{\bnu\in\cF}p_\bnu( \theta) c_{k,\bnu}^{-\frac{q}{2k}}<\infty.
  \end{equation*}
\end{lemma}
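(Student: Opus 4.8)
\textbf{Proof plan for Lemma~\ref{lemma:summabcnu}.}
The plan is to reduce the multi-index sum over $\CF$ to an infinite product of one-dimensional sums, exactly in the spirit of the proofs of Lemma~\ref{lem:beta-summability} and Lemma~\ref{lemma:cnu}, and then to show this product converges using the hypothesis $(\varrho_j^{-1})_{j\in\N}\in\ell^q(\N)$ together with the gap condition $(r-\tau)q/(2k)-\theta>1$. First I would recall from \eqref{eq:cnu} that
\[
  c_{k,\bnu}= \prod_{j\in\supp(\bnu)}\max\{1,K\varrho_j\}^{2k}\,\nu_j^{r-\tau},
\]
and that $p_\bnu(\theta)=\prod_{j\in\N}(1+\nu_j)^\theta$. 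Since both are products over coordinates and equal $1$ on coordinates where $\nu_j=0$, the standard argument (tensorization of the sum over $\CF$, as used already in the displayed identity at the start of the proof of Lemma~\ref{lem:beta-summability}) gives
\[
  \sum_{\bnu\in\CF} p_\bnu(\theta)\, c_{k,\bnu}^{-q/(2k)}
  =
  \prod_{j\in\N}\Bigg(1+\sum_{n\in\N}(1+n)^\theta\,\big(\max\{1,K\varrho_j\}^{2k} n^{r-\tau}\big)^{-q/(2k)}\Bigg)
  =
  \prod_{j\in\N}\Bigg(1+\max\{1,K\varrho_j\}^{-q}\sum_{n\in\N}(1+n)^\theta n^{-(r-\tau)q/(2k)}\Bigg).
\]
The inner sum $S:=\sum_{n\ge 1}(1+n)^\theta n^{-(r-\tau)q/(2k)}$ is a finite constant precisely because the exponent condition $(r-\tau)q/(2k)-\theta>1$ makes the summand decay like $n^{-(1+\eps)}$ for some $\eps>0$; this is where the hypothesis on $r,\tau,q,\theta$ is used.

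Next I would estimate the factors of the product. For all but finitely many $j$ we have $K\varrho_j\ge 1$ (because $\varrho_j\to\infty$, which follows from $(\varrho_j^{-1})_{j\in\N}\in\ell^q(\N)$), so for those $j$ the factor equals $1+S\,(K\varrho_j)^{-q}=1+S K^{-q}\varrho_j^{-q}$. Using $1+x\le e^x$, the tail of the product is bounded by
\[
  \prod_{j:\,K\varrho_j\ge 1}\big(1+S K^{-q}\varrho_j^{-q}\big)
  \le
  \exp\Big(S K^{-q}\sum_{j\in\N}\varrho_j^{-q}\Big)
  =
  \exp\big(S K^{-q}\,\norm[\ell^q(\N)]{(\varrho_j^{-1})_{j\in\N}}^q\big)<\infty,
\]
which is finite exactly because $(\varrho_j^{-1})_{j\in\N}\in\ell^q(\N)$. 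The finitely many remaining factors (those with $K\varrho_j<1$) are each finite positive numbers — $1+S\max\{1,K\varrho_j\}^{-q}=1+S$ — so their product is a finite constant. Multiplying the two contributions gives the claimed finiteness of $\sum_{\bnu\in\CF}p_\bnu(\theta)c_{k,\bnu}^{-q/(2k)}$.

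The main obstacle, such as it is, is bookkeeping rather than mathematics: one must be careful that the tensorization is valid (i.e.\ that all sums involved are sums of non-negative terms, so Tonelli applies and the interchange of product and sum is unconditional), and one must track that $\theta$ enters only through the \emph{convergence} of the one-dimensional sum $S$, not through the product over $j$ — in particular the exponent of $\varrho_j$ in each factor is exactly $-q$ regardless of $\theta$, which is what allows the $\ell^q$ hypothesis to close the argument. A minor point to handle explicitly is the convention that empty products equal $1$, so that coordinates with $\nu_j=0$ contribute the trivial factor and the $j$-product is genuinely infinite but with all but finitely many factors close to $1$. Since the lemma is stated as a modification of \cite[Lemma 6.2]{dD21}, I would also remark that the only structural change here is the presence of the weights $\max\{1,K\varrho_j\}^{2k}$ in place of a cleaner power of $\varrho_j$, and this is harmless because on the relevant tail $\max\{1,K\varrho_j\}=K\varrho_j$.
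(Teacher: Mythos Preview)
Your proposal is correct and follows essentially the same approach as the paper: factorize the sum over $\CF$ into an infinite product of one-dimensional sums, observe that the inner series converges by the exponent condition $(r-\tau)q/(2k)-\theta>1$, and conclude by bounding $\prod_j(1+C\hat\varrho_j^{-q})$ via $1+x\le e^x$ together with $(\hat\varrho_j^{-1})_{j\in\N}\in\ell^q(\N)$. The only cosmetic difference is that the paper introduces $\hat\varrho_j:=\max\{1,K\varrho_j\}$ and uses $(\hat\varrho_j^{-1})\in\ell^q(\N)$ uniformly over all $j$, whereas you split explicitly into the finitely many indices with $K\varrho_j<1$ and the tail; the arguments are equivalent.
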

\begin{proof}
  This lemma can be proven in the same way as the proof of \cite[Lemma
  6.2]{dD21}.  We provide a proof for completeness.  With
  $\hat\varrho_j:=\max\{1,K\varrho_j\}$ it holds
  $(\hat\varrho_j^{-1})_{j\in\N}\in\ell^q(\N)$. By definition of
  $c_{k,\bnu}$, factorizing, we get
  \begin{equation*}
    \begin{split}
      \sum_{\bnu\in\CF} p_\bnu( \theta) c_{k,\bnu}^{-\frac{q}{2k}} & =
      \sum_{\bnu\in\CF} \prod_{j\in\supp(\bnu)}(1+\nu_j)^\theta
      \left(\hat\varrho_j^{2k} \nu_j^{r-\tau}\right)^{-\frac{q}{2k}}
      \leq \prod_{j\in\N} \left(2^\theta \hat\varrho_j^{-q}
        \sum_{n\in\N} n^{\frac{-q(r-\tau)}{2k}} n^\theta\right).
    \end{split}
  \end{equation*}
  The sum over $n$ equals some finite constant $C$ since by assumption
  $q(r-\tau)/2k-\theta>1$.
  Using the inequality $\log(1+x)\le x$ for all $x>0$, we get
  \begin{equation*}
    \sum_{\bnu\in\CF} c_{k,\bnu}^{-\frac{q}{2k}}\le \prod_{j\in\N}
    \left( 1 + C \hat\varrho_j^{-q}\right)  
    = \exp\left(\sum_{j\in\N}\log (1+C \hat\varrho_j^{-q}) \right)
    \le \exp\left(\sum_{j\in\N} C \hat\varrho_j^{-q}\right),
  \end{equation*}
  which is finite since $(\hat\varrho_j^{-1})\in\ell^q(\N)$.
\end{proof}

Based on \eqref{eq:Leps}, 
for $\eps>0$ and $k\in\{1,2\}$ let
\begin{equation}\label{eq:Lkeps}
  \Lambda_{k,\eps}:=\{\bnu\in \CF: c_{k,\bnu}^{-1}\ge \eps \} \subseteq \CF.
\end{equation}
The summability shown in Lemma \ref{lemma:summabcnu} implies algebraic
convergence rates of the tail sum as provided by the following
proposition.  
This is well-known and follows by Stechkin's lemma
\cite{stechkin} which itself is a simple consequence of 
H\"older's inequality.
\begin{proposition}\label{prop:bestN}
  Let $k\in\{1,2\}$, $\tau>0$, and $q>0$. 
  Let
  $(\varrho_j^{-1})_{j\in\N}\in\ell^q(\N)$ and $r>\max\{k,\tau\}$,
  $(r-\tau)q/(2k)>2$. 
  Assume that
  $(a_\bnu)_{\bnu\in\CF}\in [0,\infty)^\infty$ is such that
  \begin{equation}\label{eq:betabnulbnulinf}
    \sum_{\bnu\in\CF}\beta_\bnu(r,\bvarrho) a_\bnu^2<\infty.
  \end{equation}
  Then there exists a constant $C$ solely depending on
  $(c_{k,\bnu})_{\bnu\in\CF}$ in \eqref{eq:cnu} such that for all
  $\eps>0$ it holds that
  \begin{equation*}
    \sum_{\bnu\in\CF_k\backslash\Lambda_{k,\eps}} p_\bnu(\tau)
    a_\bnu
    \le C\left(\sum_{\bnu\in\CF}\beta_\bnu(r,\bvarrho)a_\bnu^2\right)^{\frac 1 2}  \eps^{\frac{1}{2}- \frac{q}{4k}},
  \end{equation*}
  and
  \begin{equation} \label{ptsLambda0} |\pts (\Lambda_{k,\epsilon})|
    \leq C \varepsilon^{-\frac{q}{2k}}.
  \end{equation}
\end{proposition}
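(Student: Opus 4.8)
\textbf{Proof plan for Proposition~\ref{prop:bestN}.}
The plan is to split both claimed estimates into two routine pieces: a Stechkin-type tail bound driven by the weighted $\ell^2$-summability \eqref{eq:betabnulbnulinf} together with the comparison inequality \eqref{eq:cknubound}, and a counting argument for $|\pts(\Lambda_{k,\eps})|$ driven by the summability of $(c_{k,\bnu}^{-q/(2k)})_{\bnu\in\CF}$ proved in Lemma~\ref{lemma:summabcnu}. First I would fix $k\in\{1,2\}$ and, to invoke Lemma~\ref{lemma:cnu}, observe that the hypothesis $(\varrho_j^{-1})_{j\in\N}\in\ell^q(\N)$ forces $\varrho_j\to\infty$, so there exist $K,C_0>0$ with $C_0c_{k,\bnu}p_\bnu(\tau)\le\beta_\bnu(r,\bvarrho)$ for all $\bnu\in\CF_k$. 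This is the bridge that converts a statement about $\beta_\bnu$ into one about the computable family $c_{k,\bnu}$.

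For the tail-sum estimate, for $\bnu\in\CF_k\setminus\Lambda_{k,\eps}$ we have $c_{k,\bnu}^{-1}<\eps$ by the definition \eqref{eq:Lkeps} of $\Lambda_{k,\eps}$. Writing $p_\bnu(\tau)a_\bnu = \big(\beta_\bnu(r,\bvarrho)^{1/2}a_\bnu\big)\cdot\big(p_\bnu(\tau)\beta_\bnu(r,\bvarrho)^{-1/2}\big)$ and applying Cauchy--Schwarz over $\bnu\in\CF_k\setminus\Lambda_{k,\eps}$ gives
\begin{equation*}
\sum_{\bnu\in\CF_k\setminus\Lambda_{k,\eps}}p_\bnu(\tau)a_\bnu
\le
\Bigg(\sum_{\bnu\in\CF}\beta_\bnu(r,\bvarrho)a_\bnu^2\Bigg)^{1/2}
\Bigg(\sum_{\bnu\in\CF_k\setminus\Lambda_{k,\eps}}p_\bnu(\tau)^2\beta_\bnu(r,\bvarrho)^{-1}\Bigg)^{1/2}.
\end{equation*}
Using $\beta_\bnu(r,\bvarrho)^{-1}\le (C_0c_{k,\bnu}p_\bnu(\tau))^{-1}$ on $\CF_k$, the second factor is bounded by $C_0^{-1}\sum_{\bnu\in\CF_k\setminus\Lambda_{k,\eps}}p_\bnu(\tau)c_{k,\bnu}^{-1}$; then I would use $c_{k,\bnu}^{-1}=c_{k,\bnu}^{-q/(2k)}c_{k,\bnu}^{-(1-q/(2k))}$ (noting $q/(2k)<1$ follows from $(r-\tau)q/(2k)>2$ and $r>\tau$, after possibly shrinking $q$, or handling the borderline case directly) and bound $c_{k,\bnu}^{-(1-q/(2k))}<\eps^{1-q/(2k)}$ on the complement of $\Lambda_{k,\eps}$, leaving $\eps^{1-q/(2k)}\sum_{\bnu\in\CF}p_\bnu(\tau)c_{k,\bnu}^{-q/(2k)}$, which is finite by Lemma~\ref{lemma:summabcnu} (applied with $\theta$ chosen so that $p_\bnu(\tau)\le C p_\bnu(\theta)$ and $(r-\tau)q/(2k)-\theta>1$; the hypothesis $(r-\tau)q/(2k)>2$ gives room for this). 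Taking square roots yields the exponent $\tfrac12(1-q/(2k)) = \tfrac12 - \tfrac{q}{4k}$, as claimed.

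For the point-count bound \eqref{ptsLambda0}, I would start from \eqref{eq:ptsLbound}, $|\pts(\Lambda_{k,\eps})|\le\sum_{\bnu\in\Lambda_{k,\eps}}p_\bnu(1)$, and estimate each $p_\bnu(1)$ against a negative power of $c_{k,\bnu}$: since $c_{k,\bnu}^{-1}\ge\eps$ on $\Lambda_{k,\eps}$, for any exponent $t>0$ one has $1\le\eps^{-t}c_{k,\bnu}^{-t}$, hence $p_\bnu(1)\le\eps^{-t}p_\bnu(1)c_{k,\bnu}^{-t}$. Choosing $t=q/(2k)$ gives $|\pts(\Lambda_{k,\eps})|\le\eps^{-q/(2k)}\sum_{\bnu\in\CF}p_\bnu(1)c_{k,\bnu}^{-q/(2k)}$, and this last sum is finite by Lemma~\ref{lemma:summabcnu} with $\theta=1$, provided $(r-\tau)q/(2k)-1>1$, i.e.\ $(r-\tau)q/(2k)>2$ — exactly the standing hypothesis. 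One subtlety is that $\Lambda_{k,\eps}$ and the above sums range over all of $\CF$ rather than $\CF_k$; since $c_{k,\bnu}$ is defined for all $\bnu\in\CF$ by \eqref{eq:cnu} and the summability in Lemma~\ref{lemma:summabcnu} is over $\CF$, this causes no difficulty.

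The main obstacle I anticipate is bookkeeping of the exponents: one must verify that the single hypothesis $(r-\tau)q/(2k)>2$ simultaneously (i) permits choosing $\theta$ with $p_\bnu(\tau)\lesssim p_\bnu(\theta)$ and $(r-\tau)q/(2k)-\theta>1$ in the tail estimate, (ii) permits $\theta=1$ in the counting estimate, and (iii) ensures the residual exponent $1-q/(2k)$ used in the Stechkin step is positive (equivalently $q<2k$, which is implied). Each of these is elementary but must be stated cleanly so that the constant $C$ in the proposition genuinely depends only on $(c_{k,\bnu})_{\bnu\in\CF}$ (through $K$, $C_0$, and the values of the convergent series in Lemma~\ref{lemma:summabcnu}) and not on $\eps$ or on the sequence $(a_\bnu)_{\bnu\in\CF}$.
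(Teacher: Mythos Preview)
Your approach is essentially the paper's: Cauchy--Schwarz, the comparison \eqref{eq:cknubound}, the Stechkin splitting $c_{k,\bnu}^{-1}=c_{k,\bnu}^{-q/(2k)}c_{k,\bnu}^{-(1-q/(2k))}$, and Lemma~\ref{lemma:summabcnu} for both the tail sum and the point count. The only difference is where you place the weight in the Cauchy--Schwarz step. The paper splits as $p_\bnu(\tau)a_\bnu=(p_\bnu(\tau)a_\bnu c_{k,\bnu}^{1/2})\cdot c_{k,\bnu}^{-1/2}$, so the second factor is simply $\big(\sum_{\CF_k\setminus\Lambda_{k,\eps}}c_{k,\bnu}^{-1}\big)^{1/2}$, and Lemma~\ref{lemma:summabcnu} is invoked with $\theta=0$ (needing only $(r-\tau)q/(2k)>1$). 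Your splitting via $\beta_\bnu^{1/2}$ leaves a residual $p_\bnu(\tau)$ in the second factor and forces $\theta=\tau$ in Lemma~\ref{lemma:summabcnu}, which is why you run into the exponent-bookkeeping worry; that worry evaporates if you adopt the paper's placement. For the point-count bound your argument is identical to the paper's.
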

\begin{proof}
  We estimate
  \begin{equation*} \label{sum-estimate1}
    \begin{split} 
      \sum_{\bnu\in\CF_k\backslash\Lambda_{k,\eps}} p_\bnu(\tau)
      a_\bnu & \le \Bigg(
      \sum_{\bnu\in\CF_k\backslash\Lambda_{k,\eps}}
      p_\bnu(\tau)^2a_\bnu^2 c_{k,\bnu}\Bigg)^{1/2} \Bigg(
      \sum_{\bnu\in\CF_k\backslash\Lambda_{k,\eps}}
      c_{k,\bnu}^{-1}\Bigg)^{1/2}.
    \end{split}
  \end{equation*}
  The first sum is finite by \eqref{eq:betabnulbnulinf} and because
  $C_0 p_\bnu(\tau)^2 c_{k,\bnu}\le \beta_\bnu(r,\bvarrho)$ according
  to \eqref{eq:cknubound}. By Lemma \ref{lemma:summabcnu} and
  \eqref{eq:Lkeps} we obtain
  \begin{equation*} \label{sum-estimate2}
    \sum_{\bnu\in\CF_k\backslash\Lambda_{k,\eps}} c_{k,\bnu}^{-1} =
    \sum_{c_{k,\bnu}^{-1}<\varepsilon } c_{k,\bnu}^{-\frac{q}{2k}}
    c_{k,\bnu}^{-1+\frac{q}{2k}} \leq C\varepsilon^{1-\frac{q}{2k}}
  \end{equation*} 
  which proves the first statement. Moreover, for each $\bnu\in \FF$,
  the number of interpolation (quadrature) points is $p_\bnu(1)$.
  Hence
  \begin{equation*} \label{ptsLambda} |\pts (\Lambda_{k,\epsilon})| =
    \sum_{\bnu \in \Lambda_{k,\epsilon}}p_\bnu(1) =
    \sum_{c_{k,\bnu}^{-1}\geq \varepsilon}
    p_\bnu(1)c_{k,\bnu}^{-\frac{q}{2k}} c_{k,\bnu}^{\frac{q}{2k}} \leq
    \varepsilon^{-\frac{q}{2k}} \sum_{\bnu \in \cF_{k}}
    p_\bnu(1)c_{k,\bnu}^{-\frac{q}{2k}} \leq C
    \varepsilon^{-\frac{q}{2k}}
  \end{equation*}
  again by Lemma \ref{lemma:summabcnu} and \eqref{eq:Lkeps}.
\end{proof}

\subsubsection{Computing $\Lambda_\eps$}\label{sec:mleps}
Having identified appropriate sequences $(c_\bnu)_{\bnu\in\cF}$,
  in order to be able to implement the Smolyak sparse-grid interpolation operator
  $\VI_{\Lambda_\eps}$ and the Smolyak sparse-grid quadrature operator
  $\VQ_{\Lambda_\eps}$, in practice it remains to compute the sets
  $\Lambda_\eps=$ in \eqref{eq:Leps}.  We now recall Algorithm 2 in
  \cite[Sec.~3.1.3]{JZdiss} which achieves this in $O(|\Lambda_\eps|)$
  work and memory. For the convenience of the reader we recall the
  main statement regarding the algorithm's complexity below in Lemma
  \ref{lemma:alg}. Additionally, we point to
  \cite[Alg.~4.13]{MR2566594} which presents an alternative
  approach---a recursive algorithm that also achieves linear computational
  complexity.

In the following denote $\be_j:=(\delta_{ij})_{j\in\N}\in\N_0^\infty$.

\begin{algorithm} 
    \caption{Lambda($\eps,(c_\bnu)_{\bnu\in\CF})$)}
    \label{alg:Lambda}
    \begin{algorithmic}[1]
  \State $\bnu\leftarrow\bnul$
  \If{$c_\bnu<\eps$}
  \State $\Lambda\leftarrow \emptyset$
  \State\Return $\Lambda$
  \Else
  \State $\Lambda\leftarrow \{\bnu\}$
  \EndIf
  \While{True}
  \State $d\leftarrow 1$
  \While{$a_{\bnu+\be_d}<\eps$} 
  \If{$\nu_d\neq 0$}\Comment{Reject $\bnu+\be_d$ where $\nu_d\neq
    0$}
  \State $\nu_d\leftarrow 0$
  \State $d\leftarrow d+1$ 
  \ElsIf{$\bnu\neq\bnul$} \Comment{Reject $\bnu+\be_d$ where $\nu_d= 0$}
  \State $d=\min\set{j\in\N}{\nu_j\neq 0}$  
  \Else \Comment{Reject $\be_d$ $\Rightarrow$ stop algorithm}
  \State\Return $\Lambda$  
  \EndIf
  \EndWhile
  \State $\bnu\leftarrow \bnu+\be_d$
  \State $\Lambda\leftarrow \Lambda\cup\{\bnu\}$ 
  \EndWhile
\end{algorithmic}
\end{algorithm}

The algorithm is of linear complexity in the following sense
\cite[3.1.12]{JZdiss}:
\begin{lemma}\label{lemma:alg}
  Let $(c_\bnu)_{\bnu\in\CF}\subseteq [0,\infty)$ be a null-sequence
  such that (i) $\bmu\le\bnu$ implies $c_\bmu\ge c_{\bnu}$ and
  (ii) if $\bnu\in\CF$ and for some $i<j$ it holds $\nu_i=\nu_j=0$,
  then $c_{\bnu+\be_i}\ge c_{\bnu+\be_j}$.

  Then for any $\eps>0$, Algorithm \ref{alg:Lambda} terminates and
  returns $\Lambda_\eps$ in \eqref{eq:Leps}. Moreover each line of
  Algorithm \ref{alg:Lambda} is executed at most $4|\Lambda_\eps|+1$
  times.
\end{lemma}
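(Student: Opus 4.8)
\textbf{Plan of proof for Lemma \ref{lemma:alg}.}

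The plan is to analyze Algorithm \ref{alg:Lambda} by tracking the sequence of multiindices $\bnu$ that are successively visited in the main \texttt{while}-loop, and to show that (a) every $\bnu$ that is added to $\Lambda$ lies in $\Lambda_\eps$, (b) every $\bnu\in\Lambda_\eps$ is eventually added, and (c) the total number of line executions is $O(|\Lambda_\eps|)$. The two monotonicity hypotheses (i) and (ii) are exactly what is needed to guarantee that the greedy exploration never misses a point of $\Lambda_\eps$ and never wanders outside a controlled neighborhood of it.

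First I would record the geometric consequences of (i) and (ii). Hypothesis (i) says $(c_\bnu^{-1})_{\bnu\in\CF}$ (equivalently $\Lambda_\eps=\set{\bnu}{c_\bnu\ge\eps}$, using $c_\bnu^{-1}\ge\eps \Leftrightarrow c_\bnu\le \eps^{-1}$; here I use the convention in \eqref{eq:Leps} with the roles adjusted to a null-sequence) defines a \emph{downward closed} set in the sense of Definition \ref{def:DownClsd}: $\bmu\le\bnu\in\Lambda_\eps$ forces $c_\bmu\ge c_\bnu\ge\eps$, hence $\bmu\in\Lambda_\eps$. Hypothesis (ii) gives a ``left-to-right'' priority: when two unoccupied coordinates $i<j$ are available, incrementing the smaller-indexed one is at least as valuable. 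Consequently $\Lambda_\eps$ is closed under the operation of moving weight from a coordinate to a smaller-indexed empty coordinate, and in particular every nonzero $\bnu\in\Lambda_\eps$ has $\supp(\bnu)=\{1,\dots,\#\supp(\bnu)\}$ after such a rearrangement --- but more importantly, these two properties let one define a canonical spanning-tree structure on $\Lambda_\eps$: each $\bnu\in\Lambda_\eps\setminus\{\bnul\}$ has a well-defined ``parent'' obtained by decrementing a specific coordinate, and the algorithm is precisely a depth/breadth hybrid traversal of this tree.

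Next I would verify correctness. For (a): the only place $\bnu$ is added to $\Lambda$ is line 22, immediately after line 21 sets $\bnu\leftarrow\bnu+\be_d$, and this is reached only when the inner \texttt{while} condition $c_{\bnu+\be_d}<\eps$ fails, i.e. $c_{\bnu+\be_d}\ge\eps$; together with the initialization check (lines 2--7) every element placed in $\Lambda$ satisfies $c_\bnu\ge\eps$, so $\Lambda\subseteq\Lambda_\eps$. For (b): by induction along the tree structure, I would show that whenever the algorithm is ``at'' a node $\bnu\in\Lambda_\eps$ (i.e. $\bnu$ has just been added and control returns to the top of the main loop), the inner loop, using the rejection bookkeeping in lines 10--18, scans candidate children $\bnu+\be_d$ in increasing order of $d$, skipping (by zeroing coordinates and backtracking via line 15) exactly those candidates that either are not in $\Lambda_\eps$ or have already been generated from a smaller index; hypotheses (i) and (ii) ensure this skipping is safe, i.e. no element of $\Lambda_\eps$ is permanently bypassed. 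Termination follows because $\Lambda_\eps$ is finite (it is a subset of the set where $c_\bnu\ge\eps$, which is finite as $(c_\bnu)$ is a null sequence and $c$ is, by (i), monotone along the partial order), so the traversal exhausts the tree and hits the final \texttt{return} at line 17.

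Finally, for the complexity bound, I would set up an amortized counting argument: the body of the outer \texttt{while} is entered once per element of $\Lambda_\eps$ added (at most $|\Lambda_\eps|$ times), and each execution of the inner \texttt{while} either (i) rejects a candidate that is charged to a distinct already-generated element of $\Lambda_\eps$ (the $\nu_d\ne 0$ branch, lines 11--13, or the backtracking branch, lines 14--15), or (ii) terminates the algorithm (line 17). A careful charging scheme --- each rejected candidate corresponds to an edge from a node of $\Lambda_\eps$ to a node outside, and the number of such ``boundary edges'' is $O(|\Lambda_\eps|)$ by the tree structure --- yields that every line is executed at most $4|\Lambda_\eps|+1$ times, the precise constant coming from counting the at most four line-executions per loop iteration plus the single terminating return. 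The main obstacle I anticipate is the bookkeeping in step (b): making the inductive invariant about ``which candidates have already been generated'' precise enough that the rejection logic in lines 10--18 is provably correct and complete, since this is where hypotheses (i) and (ii) interact in a slightly subtle way; once that invariant is pinned down, both correctness and the linear complexity count fall out by straightforward induction, and I would simply cite \cite[3.1.12]{JZdiss} for the detailed verification.
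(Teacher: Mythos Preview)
The paper does not actually prove this lemma: it is stated with a direct citation to \cite[3.1.12]{JZdiss} and no argument is given in the text. Your proposal, which sketches the correctness and amortized-complexity argument and then defers the detailed verification to the same reference, is therefore entirely consistent with the paper's own treatment, and the outline you give (downward-closedness from (i), coordinate-ordering from (ii), a tree traversal with an amortized charge of rejected candidates to elements of $\Lambda_\eps$) is the right shape for the full proof in the cited source.
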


\subsection{Interpolation convergence rate}
\label{sec:intrate}
If $X$ is a Hilbert space, then the Wiener-Hermite PC expansion of
$u:U\to X$ converges in general only in $L^2(U,X;\gamma)$.
As mentioned before this creates some subtleties when working with
interpolation and quadrature operators based on pointwise evaluations
of the target function.
To demonstrate this, we recall the following example from
\cite{CSZ16}, which \emph{does not satisfy
  $(\bb,\xi,\delta,\C)$-holomorphy}, since Definition~\ref{def:bdXHol}
\ref{item:vN} does not hold.
\begin{example}\label{ex:u}
  Define $u:U\to\C$ pointwise by
  \begin{equation*}
    u(\by):=\begin{cases}
      1 &\text{if }|\set{j\in\N}{y_j\neq 0}|<\infty\\
      0 &\text{otherwise.}
    \end{cases}
  \end{equation*}
  Then $u$ vanishes on the complement of the $\gamma$-null set
  \begin{equation*}
    \bigcup_{n\in\N} \R^n\times \{0\}^\infty.
  \end{equation*}
  Consequently $u$ is equal to the constant zero function in the sense
  of $L^2(U;\gamma)$. Hence there holds the expansion
  $u=\sum_{\bnu\in\CF}0\cdot H_\bnu$ with convergence in
  $L^2(U;\gamma)$. Now let $\Lambda\subseteq\CF$ be nonempty, finite
  and downward closed. As explained in Section~\ref{sec:int}, the
  interpolation operator $\VI_{\Lambda}$ reproduces all polynomials in
  ${\rm span}\set{\by^\bnu}{\bnu\in\Lambda}$. Since any point
  $(\chi_{\nu_j,\mu_j})_{j\in\N}$ with $\mu_j\le\nu_j$ is zero in all
  but finitely many coordinates (due to $\chi_{0,0}=0$), we observe
  that
  \begin{equation*}
    \VI_{\Lambda} u \equiv 1 \neq 0\equiv \sum_{\bnu\in\CF} 0\cdot
    \VI_{\Lambda} H_\bnu.
  \end{equation*}
  This is due to the fact that $u = \sum_{\bnu\in\CF} 0\cdot H_\bnu$
  only holds in the $L^2(U;\gamma)$ sense, and interpolation or
  quadrature (which require pointwise evaluation of the function) are
  not meaningful for $L^2(U;\gamma)$ functions.
\end{example}

The above example shows that if
$$u=\sum_{\bnu\in\CF} u_\bnu H_\bnu\in L^2(U;\gamma)$$ with Wiener-Hermite PC
expansion coefficients $(u_\bnu)_{\nu\in\CF}\subset\R$, then the
formal equalities
$$\VI_{\Lambda}u=\sum_{\bnu\in\CF} u_\bnu \VI_{\Lambda}H_\bnu, $$ and
$$\VQ_{\Lambda}u=\sum_{\bnu\in\CF} u_\bnu \VQ_{\Lambda}H_\bnu $$ 
do in general not hold in $L^2(U;\gamma)$. 
Our definition of
$(\bb,\xi,\delta,X)$-holomorphy allows to circumvent this by
interpolating not $u$ itself but the approximations $u_N$ to $u$ which
are pointwise defined and only depend on finitely many variables,
cp.~Definition~\ref{def:bdXHol}.

Our analysis starts with the following result about pointwise
convergence. 
For $k\in\{1,2\}$ and $N\in\N$ we introduce the notation
\begin{equation*}\label{eq:CFkN}
  \CF_{k}^N := \set{\bnu\in \CF_k}{\supp(\bnu)\subseteq\{1,\dots,N\}}.
\end{equation*}
These sets thus contain multiindices $\bnu$ for which $\nu_j=0$ for
all $j>N$. 

\begin{lemma}\label{lemma:uN}
  Let $u$ be $(\bb,\xi,\delta,X)$-holomorphic for some
  $\bb\in (0,\infty)^\infty$. Let $N\in\N$, and let
  $\tilde u_N:U\to X$ be as in Definition~\ref{def:bdXHol}.  For
  $\bnu\in\CF$ define
  $$\tilde u_{N,\bnu}:=\int_{U} \tilde u_N(\by)H_\bnu(\by)\dd\gamma(\by).$$ 
  Then,
  \begin{equation} \label{pointwise-abs-conv}
  \tilde u_N(\by)=\sum_{\bnu\in\CF_1^N} \tilde u_{N,\bnu}
  H_\bnu(\by)
  \end{equation}
  with the equality and
  pointwise absolute convergence in $X$ for all $\by\in U$.
\end{lemma}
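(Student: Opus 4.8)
The plan is to establish the pointwise Wiener-Hermite expansion \eqref{pointwise-abs-conv} by working with the finite-dimensional function $u_N:\R^N\to X$ from Definition~\ref{def:bdXHol}, which (unlike $u$ itself) is a genuine pointwise-defined function depending only on finitely many coordinates. First I would observe that $\tilde u_N(\by)=u_N(y_1,\dots,y_N)$ is independent of the variables $(y_j)_{j>N}$, so by Fubini's theorem $\tilde u_{N,\bnu}=0$ whenever $\supp(\bnu)\not\subseteq\{1,\dots,N\}$, and for $\bnu\in\CF_1^N$ we have $\tilde u_{N,\bnu}=u_{N,(\nu_j)_{j=1}^N}=\int_{\R^N}u_N(\by)H_\bnu(\by)\dd\gamma_N(\by)$. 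Thus the sum on the right-hand side of \eqref{pointwise-abs-conv} is really a sum over $\bnu\in\N_0^N$ of the $L^2(\R^N,X;\gamma_N)$-Hermite coefficients of $u_N$, and I reduce the problem to showing that the Hermite expansion of $u_N$ converges to $u_N$ pointwise and absolutely on $\R^N$.

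Next I would exploit the holomorphic extension and the bound from Lemma~\ref{holo-lem1}. For any $0<\kappa<\xi$, Lemma~\ref{holo-lem1} gives, with $\bb_N=(b_j)_{j=1}^N$,
\begin{equation*}
  \|\partial^{\bnu}u_N(\by)\|_X\le \frac{\bnu!|\bnu|^{|\bnu|}\bb_N^\bnu}{\kappa^{|\bnu|}\bnu^{\bnu}}\varphi_N(\by)\qquad\forall\bnu\in\N_0^N,\ \by\in\R^N.
\end{equation*}
Integrating against $H_\bnu$ and using $\|H_\bnu\|_{L^\infty}$-type estimates, or more directly using item~(iv) of Lemma~\ref{lem:HkProp} coordinatewise (the integration-by-parts identity already used in the proof of Lemma~\ref{holo-lem2} and \cite[Theorem 3.3]{BCDM}), I get
\begin{equation*}
  \|u_{N,\bnu}\|_X=\Bigl\|\int_{\R^N} \partial^{\bnu}u_N(\by)\,\tfrac{1}{\sqrt{\bnu!}}H_{\bnu'}(\by)\dd\gamma_N(\by)\Bigr\|_X \cdot(\dots)
\end{equation*}
leading to a bound of the form $\|u_{N,\bnu}\|_X\le C \frac{|\bnu|^{|\bnu|}\bb_N^{\bnu}}{\kappa^{|\bnu|}\bnu^{\bnu}}\,\delta$, with $C$ independent of $N$. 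Since $\|H_\bnu(\by)\|$ grows only polynomially in $\bnu$ for fixed $\by$, and since $\sum_{\bnu}\frac{|\bnu|^{|\bnu|}}{\bnu^\bnu}(\bb_N/\kappa')^{\bnu}<\infty$ whenever $\|\bb_N/\kappa'\|_{\ell^1}<1$ (the argument in \cite[Page~61]{CoDe}, also used in the proof of Theorem~\ref{thm:bdHolSum}), absolute convergence $\sum_{\bnu\in\CF_1^N}\|u_{N,\bnu}\|_X\|H_\bnu(\by)\|<\infty$ holds for every $\by\in U$, using that $\xi>\sum b_j\varrho_j$ leaves room to choose $\kappa$ with $\|\bb_N\|_{\ell^1}$ small relative to $\kappa$ after rescaling. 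This gives a continuous limit function $v_N(\by):=\sum_{\bnu\in\CF_1^N}u_{N,\bnu}H_\bnu(\by)$.

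Finally I would identify $v_N$ with $u_N$. Both $v_N$ and $u_N$ lie in $L^2(\R^N,X;\gamma_N)$ and have the same Hermite coefficients (by construction of $v_N$, and since $(H_\bnu)_{\bnu\in\N_0^N}$ is an orthonormal basis of $L^2(\R^N;\gamma_N)$ — Lemma~\ref{lem:HkProp}(i) and its tensorization), hence $v_N=u_N$ in $L^2(\R^N,X;\gamma_N)$; since both are continuous on $\R^N$, they agree everywhere. I expect the main obstacle to be making the passage from the $L^2$ identity to the genuinely pointwise, absolutely convergent statement fully rigorous: one must verify that the majorizing series $\sum_{\bnu}\|u_{N,\bnu}\|_X\|H_\bnu(\by)\|_{\infty}$ converges locally uniformly so that the limit is continuous and interchanging sum and evaluation is legitimate. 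This is where the precise Cauchy-estimate bound on $\|u_{N,\bnu}\|_X$ together with the admissibility room in \eqref{eq:adm} (allowing a strict inequality $\sum b_j\varrho_j<\xi$ and thus a geometric-type decay margin) does the work; everything else is bookkeeping with Fubini and the orthonormal basis property.
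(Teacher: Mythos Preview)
Your overall strategy—reduce to the $N$-variate function $u_N$, establish summability of $(\|u_{N,\bnu}\|_X)_{\bnu\in\N_0^N}$, use a pointwise bound on $H_\bnu$ to get absolute convergence, and then identify the sum with $u_N$ via the $L^2$ expansion plus continuity—is exactly the paper's route. The gap is in the summability step. The bound $\|u_{N,\bnu}\|_X \le C\,\frac{|\bnu|^{|\bnu|}\bb_N^{\bnu}}{\kappa^{|\bnu|}\bnu^{\bnu}}\,\delta$ that you extract from Lemma~\ref{holo-lem1} is \emph{not} summable over $\bnu\in\N_0^N$ unless $\sum_{j=1}^N b_j < \kappa/e$ (cf.\ Lemma~\ref{lem:alpha-summability}(ii)), and nothing in Definition~\ref{def:bdXHol} forces $\sum_{j=1}^N b_j$ to be small: the admissibility condition \eqref{eq:adm} constrains $\sum b_j\varrho_j$, not $\sum b_j$. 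Already for $N=1$ with $b_1>\kappa$ your bound grows in $\nu_1$, although the true coefficients decay. Your remark about ``rescaling'' does not repair this: $\kappa<\xi$ is a single number and $\|\bb_N\|_{\ell^1}$ is fixed.

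The paper closes this gap through the weighted-$\ell^2$ machinery rather than a direct coefficient bound: for any admissible $\bvarrho_N\in(0,\infty)^N$, Lemma~\ref{holo-lem2} gives $\sum_{\bnu}\beta_\bnu(r,\bvarrho_N)\|u_{N,\bnu}\|_X^2<\infty$, and then the finite-dimensional trick in the proof of Theorem~\ref{thm:bdHolSum} (extend by $\tilde\varrho_{N,j}=\exp(j)$ for $j>N$ so that $(\tilde\varrho_{N,j}^{-1})\in\ell^q$ for every $q>0$, and apply Lemma~\ref{lem:beta-summability} plus H\"older) yields $(\|u_{N,\bnu}\|_X)_{\bnu\in\N_0^N}\in\ell^q$ for all $q>0$, in particular $\ell^1$. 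Separately, ``polynomial growth of $H_\bnu(\by)$'' is both too weak and not what is used: the key input is the Cram\'er bound $\sup_{y\in\R}\exp(-y^2/4)|H_n(y)|\le 1$ uniformly in $n$ (equation \eqref{eq:supxHn}), which together with $\ell^1$ summability immediately gives absolute convergence at every $\by$ and continuity of the sum.
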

\begin{proof}
  From the Cram\'er bound
  $$
  |\tilde{H}_n(x)| <
  2^{n/2}\sqrt{n!}\exp(x^2/2), 
  $$
  see \cite{Indritz}, and where
  $\tilde{H}_n(x/\sqrt{2}) := 2^{n/2}\sqrt{n!}H_n(x)$, see \cite[Page
  787]{AS}, we have for all $n\in\N_0$
  \begin{equation}\label{eq:supxHn}
    \sup_{x\in\R} \exp(-x^2/4) |H_n(x)|\le 1.
  \end{equation}

  By Theorem.~\ref{thm:bdHolSum} 
  $(\tilde u_{N,\bnu})_{\nu\in\CF}\in\ell^1(\CF)$.  Note that for
  $\bnu\in\CF_1^N$
  \begin{equation*}
    \tilde u_{N,\bnu} = \int_U \tilde u_N(\by)H_\bnu(\by)\dd\gamma(\by)
    = \int_{\R^N} u_N (y_1,\dots,y_N)\prod_{j=1}^NH_{\nu_j}(y_j) \dd\gamma_N((y_j)_{j=1}^N)
  \end{equation*}
  and thus $\tilde u_{N,\bnu}$ coincides with the Wiener-Hermite PC
  expansion coefficient of $u_N$ w.r.t.~the multiindex
  $(\nu_j)_{j=1}^N\in\N_0^N$.
  The summability of the collection 
$$
\left(\norm[X]{u_{N,\bnu}}\norm[L^2(\R^N;\gamma_N)]{\prod_{j=1}^N H_{\nu_j}(y_j)}\right)_{\bnu\in\CF_1^N}
$$
now implies in particular,
$$u_N((y_j)_{j=1}^N) =\sum_{\bnu\in\CF_1^N} u_{N,\bnu} \prod_{j=1}^N
H_{\nu_j}(y_j)$$ in the sense of $L^2(\R^N;\gamma_N)$.

Due to \eqref{eq:supxHn} and
$(\norm[X]{u_{N,\bnu}})_{\bnu\in\CF_1^N}\in\ell^1(\CF_1^N)$ 
we can define a continuous function
\begin{equation}\label{eq:hatuN}
\hat u_N: (y_j)_{j=1}^N \mapsto 
\sum_{\bnu\in\N_0^N} u_{N,\bnu} \prod_{j=1}^N H_{\nu_j}(y_j)
\end{equation}
on $\R^N$.  By \eqref{eq:supxHn}, for every fixed
  $(y_j)_{j=1}^N \in\R^N$ we have the uniform bound
  $|\prod_{j=1}^N H_{\nu_j}(y_j)|\le \prod_{j=1}^N\exp(\frac{y_j^2}{4})$
  independent of $\bnu\in\cF_1^N$. The summability of
  $(\norm[X]{u_{N,\bnu}})_{\bnu\in\cF_1^N}$ implies
 the  absolute convergence of the series in \eqref{eq:hatuN} for every
  fixed $(y_j)_{j=1}^N \in\R^N$.
  
Since they have the same Wiener-Hermite PC expansion, it holds
$\hat u_N=u_N$ in the sense of $L^2(\R^N;\gamma_N)$.

  By Definition~\ref{def:bdXHol} the function $u:\R^N\to X$ is in
  particular continuous (it even allows a holomorphic extension to
  some subset of $\CC^N$ containing $\R^N$). Now $\hat u_N$,
  $u_N:\R^N\to X$ are two continuous functions which are equal in the
  sense of $L^2(\R^N;\gamma_N)$. Thus they coincide pointwise and it
  holds in $X$ for every $\by\in U$,
$$
\tilde u_N(\by)=u_N((y_j)_{j=1}^N)=\sum_{\bnu\in\CF_1^N} \tilde u_{N,\bnu} H_\bnu(\by).
$$
\end{proof}

The result on the pointwise absolute convergence in Lemma \ref{lemma:uN} 
is not sufficient for establishing the convergence rate 
of the interpolation approximation in the space $L^2(U, X;\gamma)$. 
To this end, we need the result on convergence in the space $L^2(U, X;\gamma)$ 
in the following lemma.

\begin{lemma} \label{lemma:L^2-convergence}
  Let $u$ be $(\bb,\xi,\delta,X)$-holomorphic for some
$\bb\in (0,\infty)^\infty$. Let $N\in\N$, and let
$\tilde u_N:U\to X$ be as in Definition~\ref{def:bdXHol} and $\tilde u_{N,\bnu}$ as in Lemma \ref{lemma:uN}. 
Let
$\Lambda \subset \CF_1$ be a finite, downward closed set. 

Then we have
	\begin{align}
	\VI_\Lambda \tilde u_N=	\sum_{\bnu\in\CF_1^N} \tilde u_{N;\bnu} \VI_\Lambda H_\bnu
\end{align}
with the equality and unconditional convergence in the space $L^2(U, X;\gamma)$.
\end{lemma}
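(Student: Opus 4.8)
The plan is to reduce the claim to a finite-dimensional statement and then use the $L^2$-stability bound \eqref{eq:L2bound} for the tensorized interpolation operators together with the $\ell^1$-summability of the Wiener--Hermite PC coefficients of $u_N$ provided by Theorem~\ref{thm:bdHolSum}. Concretely, fix $N\in\N$ and recall from Lemma~\ref{lemma:uN} that $\tilde u_N(\by)=\sum_{\bnu\in\CF_1^N}\tilde u_{N,\bnu}H_\bnu(\by)$ with pointwise absolute convergence in $X$ for every $\by\in U$. Since $\VI_\Lambda$ acts by finitely many pointwise evaluations at points $(\chi_{\nu_j,\mu_j})_{j\in\N}\in U$ (each of which is zero in all but finitely many coordinates because $\chi_{0,0}=0$), applying $\VI_\Lambda$ commutes with the pointwise-convergent sum; that is, for each $\by\in U$,
\begin{equation*}
  (\VI_\Lambda\tilde u_N)(\by)
  = \sum_{\bnu\in\CF_1^N}\tilde u_{N,\bnu}\,(\VI_\Lambda H_\bnu)(\by).
\end{equation*}
So the identity holds pointwise in $X$; the only real work is to upgrade this to (unconditional) convergence in $L^2(U,X;\gamma)$.

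First I would establish that the partial sums form a Cauchy net in $L^2(U,X;\gamma)$. Because $\Lambda$ is finite and downward closed, one has $\VI_\Lambda H_\bnu=0$ whenever $\bnu\not\le\bmu$ for every $\bmu\in\Lambda$; in particular $\VI_\Lambda H_\bnu$ is nonzero only for $\bnu$ in the finite set $\set{\bnu\in\CF_1^N}{\bnu\le\bmu\text{ for some }\bmu\in\Lambda}$ --- wait, this is not quite right: $\VI_\Lambda$ does not annihilate all such $H_\bnu$. The cleaner route is to use the uniform $L^2$-bound \eqref{eq:L2boundLambda}: for any downward closed $\Lambda$ and any $\bnu\in\CF$, $\norm[L^2(U;\gamma)]{\VI_\Lambda(H_\bnu)}\le p_\bnu(3)$. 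Hence for any finite $\mathcal G\subseteq\CF_1^N$,
\begin{equation*}
  \normk[L^2(U,X;\gamma)]{\sum_{\bnu\in\mathcal G}\tilde u_{N,\bnu}\,\VI_\Lambda H_\bnu}
  \le \sum_{\bnu\in\mathcal G}\norm[X]{\tilde u_{N,\bnu}}\,\norm[L^2(U;\gamma)]{\VI_\Lambda H_\bnu}
  \le \sum_{\bnu\in\mathcal G}\norm[X]{\tilde u_{N,\bnu}}\,p_\bnu(3).
\end{equation*}
By Theorem~\ref{thm:bdHolSum}, for $r$ large enough (depending on the admissible $\bvarrho$ associated with $u$ via \eqref{varrho_j}) the weighted sum $\sum_{\bnu\in\N_0^N}\beta_\bnu(r,\bvarrho_N)\norm[X]{\tilde u_{N,\bnu}}^2$ is finite and $(\beta_\bnu(r,\bvarrho_N)^{-1/2})_{\bnu}\in\ell^{p/(1-p)}(\N_0^N)$; choosing $r$ even larger and invoking Lemma~\ref{lem:beta-summability}, one has $(p_\bnu(3)\beta_\bnu(r,\bvarrho_N)^{-1/2})_{\bnu}\in\ell^2(\N_0^N)$ (this is exactly the kind of statement isolated in Lemma~\ref{lemma[bcdm]} / Corollary~\ref{corollary[ell_2summability]-FF_s}, with $\tau=3$). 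By Cauchy--Schwarz,
\begin{equation*}
  \sum_{\bnu\in\CF_1^N}\norm[X]{\tilde u_{N,\bnu}}\,p_\bnu(3)
  \le \Bigg(\sum_{\bnu\in\CF_1^N}\beta_\bnu(r,\bvarrho_N)\norm[X]{\tilde u_{N,\bnu}}^2\Bigg)^{1/2}
  \Bigg(\sum_{\bnu\in\CF_1^N}p_\bnu(3)^2\beta_\bnu(r,\bvarrho_N)^{-1}\Bigg)^{1/2}<\infty.
\end{equation*}
Thus $\sum_{\bnu\in\CF_1^N}\tilde u_{N,\bnu}\,\VI_\Lambda H_\bnu$ converges absolutely, hence unconditionally, in $L^2(U,X;\gamma)$.

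Finally I would identify the $L^2$-limit of this series with $\VI_\Lambda\tilde u_N$. Let $S\in L^2(U,X;\gamma)$ denote the unconditional limit; passing to a subsequence of partial sums (ordered so the finite sets exhaust $\CF_1^N$), convergence in $L^2$ yields $\gamma$-a.e. convergence in $X$ along that subsequence, and by the pointwise identity established in the first paragraph the a.e. limit equals $(\VI_\Lambda\tilde u_N)(\by)$. Since $\VI_\Lambda\tilde u_N$ is itself in $L^2(U,X;\gamma)$ --- it is a finite linear combination of point-evaluations of the bounded, indeed $L^2$, function $\tilde u_N$ composed with fixed polynomial Lagrange weights, which lie in $L^2(U;\gamma)$ by a Gaussian-moment bound of the type \eqref{eq:intexp} --- we conclude $S=\VI_\Lambda\tilde u_N$ in $L^2(U,X;\gamma)$, which is the asserted identity with unconditional convergence. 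The main obstacle is the bookkeeping in this last step: one must be careful that pointwise evaluation of $\tilde u_N$ is meaningful (which it is, by Lemma~\ref{lemma:uN}, precisely because $(\bb,\xi,\delta,X)$-holomorphy forces $\tilde u_N$ to be continuous and to depend on only $N$ variables), and that the a.e. limit along a subsequence can legitimately be matched to the everywhere-defined pointwise sum; both are handled by the continuity of the finitely-many-variable function $\tilde u_N$ together with the absolute convergence supplied by Theorem~\ref{thm:bdHolSum}.
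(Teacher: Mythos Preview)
Your proof is correct and takes a genuinely different route from the paper. The paper expands $\VI_\Lambda$ explicitly via the Lagrange representation \eqref{VI_{Lambda} v(by)} and bounds $\|\VI_\Lambda(\tilde u_N-\tilde u_N^{(n)})\|_{L^2(U,X;\gamma)}$ directly: since $\Lambda$ is finite there are only finitely many interpolation points $\chi_{\bnu,\bmu}$ and finitely many polynomials $L_{\bnu,\bmu}$ (each with finite $L^1(\gamma)$-norm), so the pointwise convergence $\tilde u_N^{(n)}(\chi_{\bnu,\bmu})\to\tilde u_N(\chi_{\bnu,\bmu})$ in $X$ supplied by Lemma~\ref{lemma:uN} forces the $L^2$-difference to zero. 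No weighted summability enters. Your approach instead uses the uniform stability bound $\|\VI_\Lambda H_\bnu\|_{L^2}\le p_\bnu(3)$ from \eqref{eq:L2boundLambda} together with the weighted $\ell^1$-estimate $\sum_{\bnu}\|\tilde u_{N,\bnu}\|_X\,p_\bnu(3)<\infty$ to get absolute $L^2$-convergence of the series, and then identifies the limit via the pointwise identity. The paper's argument is more elementary---it needs only the finiteness of $\Lambda$ and pointwise convergence---while yours reuses exactly the summability machinery that drives the later rate proof in Theorem~\ref{thm:int}, so it is a more natural precursor to what follows. One small caveat: your citations of Theorem~\ref{thm:bdHolSum} and Lemma~\ref{lemma[bcdm]} formally require $\bb\in\ell^p(\N)$ for some $p<1$, which the lemma does not assume; for fixed $N$ the needed weighted summability already follows from Lemmata~\ref{holo-lem1}--\ref{holo-lem2} with any admissible $\bvarrho_N\in(0,\infty)^N$ and $r$ chosen large, and the paper's own proof of Lemma~\ref{lemma:uN} makes the same informal citation.
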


\begin{proof} For a function $v: U\to X$ we have
\begin{equation} \label{VI_{Lambda} v(by)}
  \VI_{\Lambda} v(\by) =\sum_{\bnu\in\Lambda}\sigma_{\Lambda;\bnu}
\sum_{\mu\in \FF, \bmu\leq \bnu}v(\chi_{\bnu,\bmu})L_{\bnu,\bmu}(\by), 
\end{equation}
where $\sigma_{\Lambda;\bnu}$ is defined in \eqref{eq:VILambda} and 
recall, $\chi_{\bnu,\bmu}=(\chi_{\nu_j,\mu_j})_{j\in \NN}$ and
\begin{equation} \label{L_{bnu,bmu}(by)}
L_{\bnu,\bmu}(\by):=\prod_{j\in\N}\prod_{\substack{i=0\\ i\neq
		\mu_j}}^{\nu_j}
\frac{y_j-\chi_{\nu_j,i}}{\chi_{\nu_j,\mu_j}-\chi_{\nu_j,i}},\quad \by\in U.
\end{equation}
Since in a Banach space the absolute convergence implies the unconditional convergence, 
from Lemma \ref{lemma:uN}  it follows that for any $\by\in U$,
	\begin{equation} \label{tilde u_N(by)=}
\tilde u_N(\by)=\sum_{\bnu\in\CF_1^N} \tilde u_{N,\bnu}
	H_\bnu(\by)
	\end{equation}
	with the equality and unconditional convergence in $X$.
Let $\{ F_n\}_{n\in \NN}\subset \FF_1^N$ be any sequence of finite sets in $\FF_1^N$ exhausting $\FF_1^N$. 
Then
\begin{equation} \label{tilde{u}_N^{(n)}(by)}
\forall \by\in U: \ \ \tilde{u}_N^{(n)}(\by):=\sum_{\bnu\in F_n}\tilde{u}_{N,\bnu} H_\bnu(\by) \ \to \ \tilde{u}_N(\by), \ \ n \to \infty,
\end{equation}
 with the sequence convergence in the space $X$. 
Notice that the functions 
	$\VI_{\Lambda} \tilde{u}_N $ and $\sum_{\bnu\in F_n}\tilde{u}_{N,\bnu} \VI_{\Lambda} H_\bnu$ belong to the space $L^2(U,X;\gamma)$. Hence we have that
\begin{equation}
	\begin{split} 
&\bigg\|\VI_{\Lambda} \tilde{u}_N  -  \sum_{\bnu\in F_n}\tilde{u}_{N,\bnu} \VI_{\Lambda} H_\bnu \bigg\|_{L^2(U,X;\gamma)}
=
\big\|\VI_{\Lambda} \tilde{u}_N  -  \VI_{\Lambda} \tilde{u}_N^{(n)}\|_{L^2(U,X;\gamma)}	 
=\big\|\VI_{\Lambda} \big(\tilde{u}_N  -  \tilde{u}_N^{(n)}\big)\|_{L^2(U,X;\gamma)}
		\\
		&
		\leq   
		\sum_{\bnu\in\Lambda}|\sigma_{\Lambda;\bnu}|
		\sum_{\bmu\in \FF, \bmu\leq \bnu}\big\|\tilde{u}_N (\chi_{\bnu,\bmu})-\tilde{u}_N^{(n)}(\chi_{\bnu,\bmu}) \big\|_X\int_{U}|L_{\bnu,\bmu}(\by) |\rd\gamma(\by).
	\end{split}
\end{equation}
Observe that $L_{\bnu,\bmu}$ is a polynomial of order $|\bnu|$. Since $\{\bmu\in \FF: \bmu\leq \bnu\}$ and $\Lambda$ are finite sets, we can choose $C:= C(\Lambda)>0$ so that 
$$
\int_{U}|L_{\bnu,\bmu}(\by) |\rd\gamma(\by)\leq C
$$ 
for all $\bmu\leq \bnu $ and $\bnu\in \Lambda$, and,  moreover,
by using \eqref{tilde{u}_N^{(n)}(by)}  we can choose $n_0$ so that
\begin{equation*}\label{eq:varepsilon}
\|\tilde{u}_N (\chi_{\bnu,\bmu})-\tilde{u}_N^{(n)}(\chi_{\bnu,\bmu})\|_X\leq \varepsilon
\end{equation*}
for all $n\geq n_0$ and $\bmu\leq \bnu$, $\bnu\in \Lambda$.
Consequently, we have that  for all $n\geq n_0$,
\begin{equation}
	\begin{split} 
		\bigg\|\VI_{\Lambda} \tilde{u}_N  -  \sum_{\bnu\in F_n}\tilde{u}_{N,\bnu} \VI_{\Lambda} H_\bnu \bigg\|_{L^2(U,X;\gamma)}
		\leq 
		C\sum_{\bnu\in \Lambda}|\sigma_{\Lambda;\bnu}|\sum_{\bmu\leq \bnu}\varepsilon
	 =    C\varepsilon\sum_{\bnu\in\Lambda}|\sigma_{\Lambda;\bnu}|p_\bnu(1) .
	\end{split}
\end{equation}
 Hence we derive the convergence in the space $L^2(U, X;\gamma)$ of the sequence $\sum_{\bnu\in F_n}\tilde{u}_{N,\bnu} \VI_{\Lambda} H_\bnu$ to $\VI_{\Lambda} \tilde{u}_N$  ($n \to \infty$)  for any sequence of finite sets $\{ F_n\}_{n\in \NN}\subset \FF_1^N$ exhausting $\FF_1^N$. This proves the lemma.
\end{proof}

\begin{remark}
{\rm  Under the assumption of Lemma \ref{lemma:L^2-convergence}, in a similar way, we can prove that for every $\by\in U$
	\begin{equation}  
	\VI_\Lambda \tilde u_N(\by) 
	= 
	\sum_{\bnu\in\CF_1^N} \tilde u_{N;\bnu} \VI_\Lambda H_\bnu(\by)
	\end{equation}
	with the equality and unconditional convergence in the space $X$.
}
\end{remark}

We arrive at the following convergence rate result, 
which improves the convergence rate in \cite{ErnstSprgkTam18} (in terms of
the number of function evaluations) by a factor $2$ (for
the case when the elements of the representation system are supported globally in $\domain$). 
Additionally, 
we provide an explicit construction of suitable index sets.  
Recall that pointwise
evaluations of a $(\bb,\xi,\delta,X)$-holomorphic functions are
understood in the sense of Remark.~\ref{rmk:defu}.

\begin{theorem}\label{thm:int}
  Let $u$ be $(\bb,\xi,\delta,X)$-holomorphic for some
  $\bb\in\ell^p(\N)$ and some $p\in (0,2/3)$.
  Let $(c_{1,\bnu})_{\bnu\in\CF}$ be as in Lemma \ref{lemma:cnu} with
  $\bvarrho$ as in Theorem~\ref{thm:bdHolSum}.
		
  Then there exist $C>0$ and, for every $n \in \NN$, $\eps_n>0$ such
  that $|\pts (\Lambda_{1,\eps_n})|\le n$ (with $\Lambda_{1,\eps_n}$
  as in \eqref{eq:Lkeps}) and
  \begin{equation*}\label{eq:L2interrn}
    \norm[L^2(U,X;\gamma)]{u - \VI_{\Lambda_{1,\eps_n}}u} \	\le \ C n^{{-\frac{1}{p}+\frac{3}{2}}}.
  \end{equation*}
\end{theorem}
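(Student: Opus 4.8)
The plan is to combine the weighted $\ell^2$-summability of the Wiener-Hermite PC expansion coefficients from Theorem~\ref{thm:bdHolSum}, the stability bound \eqref{eq:L2boundLambda} for the Smolyak interpolation operator, the telescoping error estimate for $\VI_{\Lambda}$ on downward closed sets, and the cardinality estimate \eqref{ptsLambda0} from Proposition~\ref{prop:bestN}. First I would fix $\tau>0$ (e.g.\ $\tau=3$, which is forced by the factor $p_\bmu(3)$ appearing in \eqref{eq:L2boundLambda}) and $k=1$, and choose $r\in\N$ large enough that $r>\max\{1,\tau\}$ and $(r-\tau)q/2>2$, where $q:=p/(1-p)$ so that $(\varrho_j^{-1})_{j\in\N}\in\ell^q(\N)$ for the $\bvarrho$ in \eqref{varrho_j}. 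Note $p\in(0,2/3)$ is exactly the condition ensuring $q=p/(1-p)\in(0,2)$, which together with a sufficiently large $r$ gives $(r-\tau)q/(2k)>2$; this is where the hypothesis $p<2/3$ enters. With these parameters, $(c_{1,\bnu})_{\bnu\in\CF}$ from Lemma~\ref{lemma:cnu} satisfies $C_0 c_{1,\bnu}p_\bnu(\tau)\le\beta_\bnu(r,\bvarrho)$, and the sets $\Lambda_{1,\eps}$ in \eqref{eq:Lkeps} are downward closed (monotonicity of $c_{1,\bnu}$ in $\bnu$), with $|\pts(\Lambda_{1,\eps})|\le C\eps^{-q/2}$.

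Next I would bound the interpolation error. Since $\Lambda_{1,\eps}$ is downward closed, for $N\in\N$ the approximations $\tilde u_N$ from Definition~\ref{def:bdXHol} satisfy $\VI_{\Lambda_{1,\eps}}\tilde u_N=\tilde u_N + \sum_{\bnu\in\CF_1^N\setminus\Lambda_{1,\eps}}\tilde u_{N,\bnu}(\VI_{\Lambda_{1,\eps}}H_\bnu - H_\bnu)$ by Lemmata~\ref{lemma:uN}, \ref{lemma:L^2-convergence} and Lemma~\ref{lemma:VIprop} (reproduction of polynomials indexed in $\Lambda$). Taking $L^2(U,X;\gamma)$-norms and using $\|\VI_{\Lambda_{1,\eps}}H_\bnu\|_{L^2(U;\gamma)}\le p_\bnu(3)$ from \eqref{eq:L2boundLambda} together with $\|H_\bnu\|_{L^2(U;\gamma)}=1$, one gets a bound of the form $\sum_{\bnu\in\CF_1^N\setminus\Lambda_{1,\eps}} (1+p_\bnu(3))\|\tilde u_{N,\bnu}\|_X \le 2\sum_{\bnu\in\CF_1\setminus\Lambda_{1,\eps}} p_\bnu(\tau)\|\tilde u_{N,\bnu}\|_X$. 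Applying Proposition~\ref{prop:bestN} with $a_\bnu=\|\tilde u_{N,\bnu}\|_X$ — which is admissible since $\sum_\bnu\beta_\bnu(r,\bvarrho)\|\tilde u_{N,\bnu}\|_X^2\le C(\bb)\delta^2$ uniformly in $N$ by Theorem~\ref{thm:bdHolSum} — yields $\|\VI_{\Lambda_{1,\eps}}\tilde u_N-\tilde u_N\|_{L^2(U,X;\gamma)}\le C\eps^{1/2-q/4}$ with $C$ independent of $N$. Letting $N\to\infty$ (using $\tilde u_N\to u$ in $L^2(U,X;\gamma)$ and closedness of $\VI_{\Lambda_{1,\eps}}$ as a bounded operator on the relevant finite-dimensional-coefficient subspaces, or passing to the limit in the coefficient series) gives $\|u-\VI_{\Lambda_{1,\eps}}u\|_{L^2(U,X;\gamma)}\le C\eps^{1/2-q/4}$.

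Finally I would convert from the threshold $\eps$ to the number $n$ of function evaluations. Given $n\in\N$, the set $\{\pts(\Lambda_{1,\eps})\}_\eps$ is nested and $|\pts(\Lambda_{1,\eps})|\to\infty$ as $\eps\downarrow0$; choosing $\eps_n$ so that $|\pts(\Lambda_{1,\eps_n})|\le n$ while $|\pts(\Lambda_{1,\eps})|>n$ for slightly smaller $\eps$, the bound \eqref{ptsLambda0} gives $n\ge c\,\eps_n^{-q/2}$ up to a constant depending only on $(c_{1,\bnu})$, i.e.\ $\eps_n\le C n^{-2/q}$. Substituting into the error bound yields $\|u-\VI_{\Lambda_{1,\eps_n}}u\|_{L^2(U,X;\gamma)}\le C\eps_n^{1/2-q/4}\le C n^{-(2/q)(1/2-q/4)} = C n^{-1/q+1/2}$, and with $1/q = (1-p)/p = 1/p-1$ this equals $C n^{-1/p+3/2}$, as claimed.

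The main obstacle is the rigorous passage to the limit $N\to\infty$ in $\|\VI_{\Lambda_{1,\eps}}\tilde u_N - \tilde u_N\|_{L^2(U,X;\gamma)}$: the operator $\VI_{\Lambda_{1,\eps}}$ is not bounded on all of $L^2(U,X;\gamma)$ (interpolation is not meaningful there, cf.\ Example~\ref{ex:u}), so one must argue at the level of coefficient series, using that $\tilde u_{N,\bnu}\to u_\bnu$ for each $\bnu$ (as in the proof of Theorem~\ref{thm:bdXSum}), that the bound $\sum_{\bnu\in\CF_1\setminus\Lambda_{1,\eps}} p_\bnu(\tau)\|\tilde u_{N,\bnu}\|_X\le C\eps^{1/2-q/4}$ is uniform in $N$, and Fatou's lemma (or dominated convergence against the summable majorant provided by the weighted $\ell^2$-bound) to transfer the estimate to the limit coefficients $(u_\bnu)_{\bnu\in\CF}$. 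Care is also needed that the downward-closedness hypotheses of Lemma~\ref{lemma:VIprop} and of Algorithm~\ref{alg:Lambda} (Lemma~\ref{lemma:alg}) are met by $\Lambda_{1,\eps_n}$, which follows from the product structure of $c_{1,\bnu}$ and the monotonicity $\varrho_j\to\infty$ assumed in Lemma~\ref{lemma:cnu}.
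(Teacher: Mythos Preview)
Your overall strategy matches the paper's: choose $\tau=3$, $k=1$, $q=p/(1-p)$, use Lemma~\ref{lemma:cnu} to get $C_0 c_{1,\bnu}p_\bnu(3)\le\beta_\bnu(r,\bvarrho)$, expand $\tilde u_N-\VI_{\Lambda_{1,\eps}}\tilde u_N$ via Lemmata~\ref{lemma:uN}, \ref{lemma:L^2-convergence}, \ref{lemma:VIprop}, bound by $2\sum_{\bnu\in\CF_1^N\setminus\Lambda_{1,\eps}}p_\bnu(3)\|\tilde u_{N,\bnu}\|_X$, and apply Proposition~\ref{prop:bestN} together with the uniform-in-$N$ bound from Theorem~\ref{thm:bdHolSum}. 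The conversion from $\eps$ to $n$ is also the same (the paper picks $\eps_n$ with $n/2\le C_2\eps_n^{-q/2}\le n$).

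The one substantive divergence is your handling of the passage to $u$, which you flag as the main obstacle. The paper avoids this limit entirely by a direct observation you are missing: once $N\ge\max\{j\in\supp(\bnu):\bnu\in\Lambda_{1,\eps}\}$, every interpolation point in $\pts(\Lambda_{1,\eps})$ has all coordinates beyond the $N$th equal to $\chi_{0,0}=0$, so by the convention of Remark~\ref{rmk:defu} one has \emph{exactly} $\VI_{\Lambda_{1,\eps}}u=\VI_{\Lambda_{1,\eps}}\tilde u_N$. Then
\[
\|u-\VI_{\Lambda_{1,\eps}}u\|_{L^2(U,X;\gamma)}\le \|u-\tilde u_N\|_{L^2(U,X;\gamma)}+\|\tilde u_N-\VI_{\Lambda_{1,\eps}}\tilde u_N\|_{L^2(U,X;\gamma)},
\]
and the first term is made $\le\eps^{1/2-q/4}$ simply by taking $N$ large enough (Definition~\ref{def:bdXHol}~\ref{item:vN}). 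No limit in $N$ is taken inside the interpolation operator.

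Your proposed route---passing $N\to\infty$ via Fatou on the coefficient sums and ``closedness of $\VI_{\Lambda_{1,\eps}}$''---is precisely the step that Example~\ref{ex:u} warns against: $\VI_{\Lambda_{1,\eps}}$ is not continuous on $L^2(U,X;\gamma)$, and the identity $\VI_{\Lambda_{1,\eps}}u=\sum_\bnu u_\bnu\VI_{\Lambda_{1,\eps}}H_\bnu$ is not available for the limit function $u$ without further argument. Fatou on $\sum p_\bnu(3)\|u_\bnu\|_X$ does give a bound on the coefficient tail, but relating that back to $\|u-\VI_{\Lambda_{1,\eps}}u\|$ still requires knowing what $\VI_{\Lambda_{1,\eps}}u$ is---and that is exactly the equality $\VI_{\Lambda_{1,\eps}}u=\VI_{\Lambda_{1,\eps}}\tilde u_N$ above. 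So your obstacle dissolves once you use that identity; without it the argument has a genuine gap.
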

\begin{proof}
  For $\eps>0$ small enough and satisfying $|\Lambda_{1,\eps}|>0$,
  take $N\in\N$ with
	$$N\ge\max\set{j\in\supp(\bnu)}{\bnu\in\Lambda_{1,\eps}},$$ so large that
	\begin{equation}\label{eq:truncerr}
          \norm[L^2(U,X;\gamma)]{u-\tilde u_N}\le \eps^{\frac{1}{2}-\frac{p}{4(1-p)}},
	\end{equation}
	which is possible due to the $(\bb,\xi,\delta,X)$-holomorphy
        of $u$ (cp.~Definition~\ref{def:bdXHol} \ref{item:vN}).  An
        appropriate value of $\varepsilon$ depending on $n$ will be
        chosen below.  In the following for $\bnu\in\CF_1^N$ we denote
        by $\tilde u_{N,\bnu}\in X$ the PC coefficient of $\tilde u_N$
        and for $\bnu\in\CF$ as earlier $u_\bnu\in X$ is the PC
        coefficient of $u$.
	
	Because $$N\ge\max\set{j\in\supp(\bnu)}{\bnu\in\Lambda_{1,\eps}}$$
        and $\chi_{0,0}=0$, we have
        $$
        \VI_{\Lambda_{1,\eps}}u= \VI_{\Lambda_{1,\eps}} \tilde u_N
        $$
        (cp.~Remark~\ref{rmk:defu}). Hence by \eqref{eq:truncerr}
	\begin{equation} \label{norm-ineq}
          \norm[L^2(U,X;\gamma)]{u-\VI_{\Lambda_{1,\eps}}u}
          =\norm[L^2(U,X;\gamma)]{u-\VI_{\Lambda_{1,\eps}} \tilde u_N}
          \le \eps^{\frac{1}{2}-\frac{p}{4(1-p)}}+
          \norm[L^2(U,X;\gamma)]{ \tilde u_N-\VI_{\Lambda_{1,\eps}} \tilde u_N}.    
	\end{equation}

We now  give a bound of the second term on the right side of   \eqref{norm-ineq}. 	By Lemma \ref{lemma:L^2-convergence} we can write
       	\begin{align*} \label{interpolation series-eps}
    \VI_{\Lambda_{1,\eps}}\tilde u_N=   		\sum_{\bnu\in\CF_1^N} \tilde u_{N;\bnu} \VI_{\Lambda_{1,\eps}} H_\bnu
       	\end{align*}
     with the equality and unconditional in $L^2(U, X;\gamma)$.
       	Hence by Lemma \ref{lemma:VIprop} and \eqref{eq:L2boundLambda} we have that
	\begin{align*}
          \norm[L^2(U,X;\gamma)]{ \tilde u_N-\VI_{\Lambda_{1,\eps}} \tilde u_N}
          &=\normc[L^2(U,X;\gamma)]{\sum_{\bnu\in\CF\backslash\Lambda_{1,\eps}} \tilde u_{N;\bnu} (H_\bnu-\VI_{\Lambda_{1,\eps}}H_\bnu)}\nonumber\\
          &\le \sum_{\bnu\in\CF\backslash\Lambda_{1,\eps}} \norm[X]{\tilde u_{N;\bnu}}
            \big(\norm[L^2(U;\gamma)]{H_\bnu}+\norm[L^2(U;\gamma)]{\VI_{\Lambda_{1,\eps}}H_\bnu}\big)\nonumber\\
          &\leq \sum_{\bnu\in\CF_1^N\backslash \Lambda_{1,\eps}} \norm[X]{\tilde u_{N;\bnu}}
            \left(1+p_{\bnu}(3)\right)
            \nonumber\\
          &\leq 2\sum_{\bnu\in\CF_1^N\backslash \Lambda_{1,\eps}} \norm[X]{\tilde u_{N;\bnu}}
            p_{\bnu}(3).
	\end{align*}

        Choosing $r>4/p-1$ ($q:=p/(1-p)$, $\tau=3$), according to
        Proposition~\ref{prop:bestN}, \eqref{eq:cknubound} and
        Theorem~\ref{thm:bdHolSum} (with
        $(\varrho_j^{-1})_{j\in\N}\in\ell^{p/(1-p)}(\N)$ as in
        Theorem~\ref{thm:bdHolSum}) the last sum is bounded by
	\begin{equation*}
          C \left(\sum_{\bnu\in\CF_1^N}
            \beta_\bnu(r,\bvarrho) \norm[X]{\tilde u_{N,\bnu}}^2  \right)\eps^{\frac{1}{2}-\frac{q}{4}}	\le 
          C(\bb) \delta^2 \eps^{\frac{1}{2}-\frac{q}{4} }=  C(\bb) \delta^2 \eps^{\frac{1}{2}-\frac{p}{4(1-p)}},
	\end{equation*}
	and the constant $C(\bb)$ from Theorem~\ref{thm:bdHolSum} does
        not depend on $N$ and $\delta$. Hence, by \eqref{norm-ineq} we obtain
 	\begin{equation} \label{norm-ineq-2}
 	\norm[L^2(U,X;\gamma)]{u-\VI_{\Lambda_{1,\eps}}u}
 	\le C_1 \eps^{\frac{1}{2}-\frac{p}{4(1-p)}}.    
 \end{equation}        
        From \eqref{ptsLambda0} it follows that
        \begin{equation*}%
          |\pts (\Lambda_{1,\eps})| 
          \leq C_2 \varepsilon^{-\frac{q}{2}}=C_2\varepsilon^{-\frac{p}{2(1-p)}}.
        \end{equation*}
	For every $n \in \NN$, we choose an $\eps_n>0$ satisfying the
        condition
        $$n/2 \le C_2\varepsilon_n^{-\frac{p}{2(1-p)}} \le n.$$ Then due to \eqref{norm-ineq-2},
        the claim holds true for the chosen $\eps_n$.
      \end{proof}

      \begin{remark} {\rm
        Comparing the best $n$-term convergence result in Remark
        \ref{rmk:bestN} with the interpolation result of Theorem
        \ref{thm:int}, we observe that the convergence rate is reduced
        by $1/2$, and moreover, rather than $p\in (0,1)$ as in Remark
        \ref{rmk:bestN}, Theorem \ref{thm:int} requires
        $p\in (0,2/3)$. This discrepancy can be explained as follows:
        Since $(H_\bnu)_{\bnu\in\cF}$ forms an orthonormal basis of
        $L^2(U;\gamma)$, for the best $n$-term result we could resort
        to Parseval's identity, which merely requires
        $\ell^2$-summability of the Hermite PC coefficients, i.e.\
        $(\norm[X]{u_\bnu})_{\bnu\in\cF}\in\ell^2(\cF)$.  Due to
        $(\norm[X]{u_\bnu})_{\bnu\in\cF}\in\ell^{\frac{2p}{2-p}}$ by
        Theorem \ref{thm:bdXSum}, this is ensured as long as
        $p\in (0,1)$. On the other hand, for the interpolation result
        we had to use the triangle inequality, since the family
        $(\VI_{\Lambda_{1,\eps_n}}H_\bnu)_{\bnu\in\cF}$ of
        interpolated multivariate Hermite polynomials does not form an
        orthonormal family of $L^2(U;\gamma)$.  This argument
        requires the stronger condition
        $(\norm[X]{u_\bnu})_{\bnu\in\cF}\in\ell^1(\cF)$, resulting in
        the stronger assumption $p\in (0,2/3)$ of Theorem
        \ref{thm:int}.
      } \end{remark} 
      
\subsection{Quadrature convergence rate}\label{sec:quadrate}
	We first prove a  result on equality and unconditional convergence 
        in the space $X$ for quadrature operators, 
        which is similar to that in Lemma \ref{lemma:L^2-convergence}. 
        It is needed to establish the quadrature convergence rate.

\begin{lemma} \label{lemma:uncond-conv-Q_Lambda}
 Let $u$ be $(\bb,\xi,\delta,X)$-holomorphic for some
	$\bb\in (0,\infty)^\infty$. Let $N\in\N$, and let
	$\tilde u_N:U\to X$ be as in Definition~\ref{def:bdXHol} 
        and $\tilde u_{N,\bnu}$ as in Lemma \ref{lemma:uN}. Let
	$\Lambda \subset \CF_1$ be a finite downward closed set. 

        Then we have
	\begin{equation}  
	\VQ_\Lambda \tilde u_N 
	= 
	\sum_{\bnu\in\CF_1^N} \tilde u_{N;\bnu} \VQ_\Lambda H_\bnu
	\end{equation}
	with the equality and unconditional convergence in the space $X$.
\end{lemma}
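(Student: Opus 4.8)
The plan is to argue exactly as in the proof of Lemma~\ref{lemma:L^2-convergence}, but measuring everything in the $X$-norm rather than in $L^2(U,X;\gamma)$, and using the pointwise statement of Lemma~\ref{lemma:uN} at the Gauss--Hermite nodes. First I would fix an arbitrary sequence $(F_n)_{n\in\N}$ of finite subsets of $\CF_1^N$ exhausting $\CF_1^N$, and set $\tilde u_N^{(n)}:=\sum_{\bnu\in F_n}\tilde u_{N,\bnu}H_\bnu$. By Lemma~\ref{lemma:uN} the Wiener--Hermite PC expansion of $\tilde u_N$ converges pointwise absolutely in $X$ on all of $U$; in particular $\tilde u_N^{(n)}(\chi_{\bnu,\bmu})\to\tilde u_N(\chi_{\bnu,\bmu})$ in $X$ as $n\to\infty$ for every node $\chi_{\bnu,\bmu}=(\chi_{\nu_j,\mu_j})_{j\in\N}$ with $\bmu\le\bnu$ (all such points have only finitely many nonzero coordinates, so these evaluations are well defined, cp.~Remark~\ref{rmk:defu}).

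Next, since $\VQ_\Lambda$ is a finite linear combination of point evaluations, $\VQ_\Lambda\tilde u_N^{(n)}=\sum_{\bnu\in F_n}\tilde u_{N,\bnu}\,\VQ_\Lambda H_\bnu$ holds exactly, and
\begin{equation*}
  \normc[X]{\VQ_\Lambda\tilde u_N-\sum_{\bnu\in F_n}\tilde u_{N,\bnu}\,\VQ_\Lambda H_\bnu}
  =\norm[X]{\VQ_\Lambda(\tilde u_N-\tilde u_N^{(n)})}
  \le\sum_{\bnu\in\Lambda}|\sigma_{\Lambda;\bnu}|\sum_{\set{\bmu\in\CF}{\bmu\le\bnu}}\norm[X]{\tilde u_N(\chi_{\bnu,\bmu})-\tilde u_N^{(n)}(\chi_{\bnu,\bmu})}\prod_{j\in\N}|\omega_{\nu_j,\mu_j}|.
\end{equation*}
The set $\Lambda$ is finite and downward closed, and for each $\bnu\in\Lambda$ the index set $\set{\bmu\in\CF}{\bmu\le\bnu}$ is finite; hence the right-hand side is a finite sum of nonnegative terms each tending to $0$, so it tends to $0$ as $n\to\infty$. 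This proves $\sum_{\bnu\in F_n}\tilde u_{N,\bnu}\,\VQ_\Lambda H_\bnu\to\VQ_\Lambda\tilde u_N$ in $X$; since the exhausting sequence $(F_n)_{n\in\N}$ was arbitrary, the convergence is unconditional, which is the assertion.

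A slicker (essentially equivalent) route is to invoke \eqref{eq-interpolation-quadrature}: the $X$-valued analogue $\VQ_\Lambda v=\int_U\VI_\Lambda v(\by)\,\dd\gamma(\by)$ holds by the same tensor-product computation, the linear map $T\colon L^2(U,X;\gamma)\to X$, $Tv:=\int_U v(\by)\,\dd\gamma(\by)$, is bounded with $\norm[X]{Tv}\le\norm[L^2(U,X;\gamma)]{v}$ (as $\gamma$ is a probability measure), and applying $T$ term by term to the identity of Lemma~\ref{lemma:L^2-convergence}, together with $T(\VI_\Lambda H_\bnu)=\VQ_\Lambda H_\bnu$, gives the claim, since bounded operators map unconditionally convergent series to unconditionally convergent series. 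I do not anticipate a genuine obstacle here: the only points needing (routine) care are that point evaluations of $\tilde u_N$ at the nodes are well defined because $\tilde u_N$ depends on finitely many variables and is continuous, and that Lemma~\ref{lemma:uN} supplies the $X$-valued pointwise convergence of the truncated PC expansion — both already available.
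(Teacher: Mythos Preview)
Your main argument is correct and matches the paper's approach: the paper writes $\VQ_\Lambda v=\sum_{\bnu\in\Lambda}\sigma_{\Lambda;\bnu}\sum_{\bmu\le\bnu}v(\chi_{\bnu,\bmu})\int_U L_{\bnu,\bmu}(\by)\,\rd\gamma(\by)$ (your $\prod_j\omega_{\nu_j,\mu_j}$ is exactly this integral) and then says the proof proceeds as in Lemma~\ref{lemma:L^2-convergence} with appropriate modifications---precisely what you spell out. Your alternative route via the bounded map $T:L^2(U,X;\gamma)\to X$, $Tv=\int_U v\,\rd\gamma$, applied termwise to Lemma~\ref{lemma:L^2-convergence}, is a clean shortcut not mentioned in the paper.
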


\begin{proof} For a function $v: U\to X$ by \eqref{eq-interpolation-quadrature} and \eqref{VI_{Lambda} v(by)} we have
	$$
	\VQ_{\Lambda} v  =\sum_{\bnu\in\Lambda}\sigma_{\Lambda;\bnu}
	\sum_{\mu\in \FF, \bmu\leq \bnu}v(\chi_{\bnu,\bmu}) \int_{U} L_{\bnu,\bmu}(\by) \rd\gamma(\by),
	$$
	where $\chi_{\bnu,\bmu}=(\chi_{\nu_j,\mu_j})_{j\in \NN}$, $\sigma_{\Lambda;\bnu}$,  $L_{\bnu,\bmu}$ are defined in \eqref{eq:VILambda} and \eqref{L_{bnu,bmu}(by)}, respectively. By using this representation, we can prove the lemma in a way similar to the proof of Lemma \ref{lemma:L^2-convergence} with  some appropriate modifications.
	\end{proof}

Analogous to Theorem~\ref{thm:int} we obtain the following result for
the quadrature convergence with an improved convergence rate compared
to interpolation.
\begin{theorem}\label{thm:quad}
  Let $u$ be $(\bb,\xi,\delta,X)$-holomorphic for some
  $\bb\in\ell^p(\N)$ and some $p\in (0,4/5)$.  Let
  $(c_{2,\bnu})_{\bnu\in\CF}$ be as in Lemma \ref{lemma:cnu} with
  $\varrho$ as in Theorem~\ref{thm:bdHolSum}.  Then there exist $C>0$
  and, for every $N\in\N$, $\eps_n>0$
  such that $|\pts (\Lambda_{2,\eps_n})|\le n$ (with
  $\Lambda_{2,\eps_n}$ as in \eqref{eq:Lkeps}) and
  \begin{equation*}
    \normc[X]{\int_U u(\by)\rd \gamma(\by)- \VQ_{\Lambda_{2,\eps_n}}u} \
    \le \ C n^{-\frac{2}{p}+\frac{5}{2}}.
  \end{equation*}
\end{theorem}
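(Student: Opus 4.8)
The plan is to mirror the proof of the interpolation result (Theorem~\ref{thm:int}), replacing the interpolation operator $\VI_{\Lambda}$ by the quadrature operator $\VQ_{\Lambda}$ and exploiting the one extra power of summability that quadrature affords via Lemma~\ref{lemma:VQprop}. First I would fix $\eps>0$ small enough that $\Lambda_{2,\eps}$ is nonempty and choose $N\in\N$ with $N\ge\max\set{j\in\supp(\bnu)}{\bnu\in\Lambda_{2,\eps}}$ and large enough that the dimension-truncation error satisfies
\begin{equation*}
  \norm[L^2(U,X;\gamma)]{u-\tilde u_N}\le \eps^{\frac12-\frac{p}{4(1-p)}},
\end{equation*}
which is possible by $(\bb,\xi,\delta,X)$-holomorphy (Definition~\ref{def:bdXHol}~\ref{item:vN}); since $\chi_{0,0}=0$ this choice of $N$ gives $\VQ_{\Lambda_{2,\eps}}u=\VQ_{\Lambda_{2,\eps}}\tilde u_N$. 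Using \eqref{eq-interpolation-quadrature} to write $\int_U u\,\rd\gamma$ versus $\int_U\tilde u_N\,\rd\gamma$, and the bound $\normc[X]{\int_U(u-\tilde u_N)\rd\gamma}\le\norm[L^2(U,X;\gamma)]{u-\tilde u_N}$, the problem reduces to estimating $\normc[X]{\int_U\tilde u_N\rd\gamma-\VQ_{\Lambda_{2,\eps}}\tilde u_N}$.

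Next I would invoke Lemma~\ref{lemma:uncond-conv-Q_Lambda} to expand $\VQ_{\Lambda_{2,\eps}}\tilde u_N=\sum_{\bnu\in\CF_1^N}\tilde u_{N,\bnu}\VQ_{\Lambda_{2,\eps}}H_\bnu$ with unconditional convergence in $X$, and similarly $\int_U\tilde u_N\rd\gamma=\tilde u_{N,\bnul}$ by Lemma~\ref{lemma:uN} (only the $\bnu=\bnul$ term survives integration). By Lemma~\ref{lemma:VQprop}, $\VQ_{\Lambda_{2,\eps}}$ integrates exactly all $H_\bnu$ with $\bnu\in\Lambda_{2,\eps}\cup(\CF\setminus\CF_2)$, so $\int_U H_\bnu\rd\gamma-\VQ_{\Lambda_{2,\eps}}H_\bnu=0$ for such $\bnu$; combined with the bound \eqref{eq:L1boundLambda}, this gives
\begin{equation*}
  \normc[X]{\int_U\tilde u_N\rd\gamma-\VQ_{\Lambda_{2,\eps}}\tilde u_N}
  \le 2\sum_{\bnu\in\CF_2^N\setminus\Lambda_{2,\eps}}\norm[X]{\tilde u_{N,\bnu}}\,p_\bnu(3).
\end{equation*}
Here the restriction of the index set to $\CF_2$ is precisely what lets us use the case $k=2$ of the weight construction. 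Then, choosing $r>8/p-1$ and $\tau=3$ so that $(r-\tau)q/(2k)>2$ with $q=p/(1-p)$ and $k=2$, I would apply Proposition~\ref{prop:bestN} together with \eqref{eq:cknubound} and Theorem~\ref{thm:bdHolSum} to bound the tail sum by $C(\bb)\delta^2\eps^{\frac12-\frac{q}{8}}=C(\bb)\delta^2\eps^{\frac12-\frac{p}{8(1-p)}}$. Adding the truncation contribution $\eps^{\frac12-\frac{p}{4(1-p)}}\le\eps^{\frac12-\frac{p}{8(1-p)}}$ (valid for $\eps\le 1$) yields $\normc[X]{\int_U u\rd\gamma-\VQ_{\Lambda_{2,\eps}}u}\le C_1\eps^{\frac12-\frac{p}{8(1-p)}}$.

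Finally, the point-count estimate \eqref{ptsLambda0} gives $|\pts(\Lambda_{2,\eps})|\le C_2\eps^{-q/4}=C_2\eps^{-\frac{p}{4(1-p)}}$, so for each $n$ I would pick $\eps_n$ with $n/2\le C_2\eps_n^{-\frac{p}{4(1-p)}}\le n$; substituting, the error is $\Oo(n^{-(\frac12-\frac{p}{8(1-p)})\cdot\frac{4(1-p)}{p}})=\Oo(n^{-\frac{2}{p}+\frac52})$, which is the claimed rate, and the condition $p\in(0,4/5)$ is exactly what makes this exponent negative (equivalently, makes $(\norm[X]{u_\bnu})_{\bnu}$ summable enough for the triangle-inequality argument with the extra factor $2$ from quadrature over $\CF_2$). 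I expect the main obstacle to be the careful bookkeeping needed to justify restricting the error sum to $\CF_2^N$ rather than $\CF_1^N$ — i.e.\ correctly combining Lemma~\ref{lemma:VQprop} (exactness on $\CF\setminus\CF_2$) with the unconditional-convergence statement of Lemma~\ref{lemma:uncond-conv-Q_Lambda}, so that the terms with $\nu_j=1$ for some $j$ genuinely drop out and the $k=2$ summability in Lemma~\ref{lemma:summabcnu} and Proposition~\ref{prop:bestN} can be applied; the rest is a routine repetition of the interpolation argument with the exponents halved in the appropriate places.
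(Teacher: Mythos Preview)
Your approach is exactly the paper's, and almost all steps are correct, but one inequality goes the wrong way. You bound the dimension-truncation error by $\eps^{\frac12-\frac{p}{4(1-p)}}$ (the exponent from the interpolation proof) and then claim $\eps^{\frac12-\frac{p}{4(1-p)}}\le\eps^{\frac12-\frac{p}{8(1-p)}}$ for $\eps\le 1$. This is false: since $\frac{p}{4(1-p)}>\frac{p}{8(1-p)}$, the exponent $\frac12-\frac{p}{4(1-p)}$ is \emph{smaller}, so for $\eps<1$ the left side is \emph{larger}. With your choice the truncation term would dominate and spoil the rate (you would get $n^{-2/p+3}$ instead of $n^{-2/p+5/2}$).

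The fix is trivial and is exactly what the paper does: demand from the outset that $\norm[L^2(U,X;\gamma)]{u-\tilde u_N}\le\eps^{\frac12-\frac{p}{8(1-p)}}$; this is a \emph{weaker} requirement on $N$, so it is still achievable by $(\bb,\xi,\delta,X)$-holomorphy. Everything else in your outline---the reduction via $\VQ_{\Lambda_{2,\eps}}u=\VQ_{\Lambda_{2,\eps}}\tilde u_N$, the use of Lemma~\ref{lemma:uncond-conv-Q_Lambda}, the exactness on $\Lambda_{2,\eps}\cup(\CF\setminus\CF_2)$ from Lemma~\ref{lemma:VQprop} to restrict the tail sum to $\CF_2$, the application of Proposition~\ref{prop:bestN} with $k=2$, and the point-count bound---is correct and matches the paper. (Your condition $r>8/p-1$ is stricter than necessary; the sharp condition $(r-3)\frac{p/(1-p)}{4}>2$ gives $r>8/p-5$, but your choice still works.)
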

\begin{proof}
  For $\eps>0$ small enough and satisfying $|\Lambda_{2,\eps}|>0$,
  take $N\in\N$,
  $N\ge\max\set{j\in\supp(\bnu)}{\bnu\in\Lambda_{2,\eps}}$ so large
  that
  \begin{equation}\label{eq:truncerrQ}
    \normc[X]{\int_U \big[u(\by)-\tilde u_N(\by)\big]\dd\gamma(\by)}
    \le \norm[L^2(U,X;\gamma)]{u-\tilde u_N}\le \eps^{\frac{1}{2}-\frac{p}{8(1-p)}},
  \end{equation}
  which is possible due to the $(\bb,\xi,\delta,X)$-holomorphy of $u$
  (cp.~Definition~\ref{def:bdXHol} \ref{item:vN}). An appropriate
  value of $\varepsilon$ depending on $n$ will be chosen below. In the
  following for $\bnu\in\CF$ we denote by $\tilde u_{N,\bnu}$ the
  Wiener-Hermite PC expansion coefficient of $\tilde u_N$ and as
  earlier $u_\bnu$ is the Wiener-Hermite PC expansion coefficient of
  $u$.
  
  Because $$N\ge\max\set{j\in\supp(\bnu)}{\bnu\in\Lambda_{2,\eps}}$$
  and $\chi_{0,0}=0$, we have
  $\VQ_{\Lambda_{2,\eps}}u= \VQ_{\Lambda_{2,\eps}} \tilde u_N$
  (cp.~Remark.~\ref{rmk:defu}). Hence by \eqref{eq:truncerrQ}
  \begin{align} \label{estimate1}
    \normc[X]{\int_U u(\by)\dd\gamma(\by)-\VQ_{\Lambda_{2,\eps}}u}
    &=\normc[X]{\int_U u(\by)\dd\gamma(\by)-\VQ_{\Lambda_{2,\eps}}\tilde u_N}\nonumber\\
    &\le \eps^{\frac{1}{2}-\frac{p}{8(1-p)}}+
      \normc[X]{\int_U \tilde u_N(\by)\dd\gamma(\by)-\VQ_{\Lambda_{2,\eps}} \tilde u_N}.    
  \end{align}
By Lemma \ref{lemma:uncond-conv-Q_Lambda} we have
  $$
  \VQ_{\Lambda_{2,\eps}} \tilde u_N =\sum_{\bnu\in\CF_1^N} 
  \tilde u_{N;\bnu} \VQ_{\Lambda_{2,\eps}}H_\bnu= \sum_{\bnu\in\CF_2^N} 
  \tilde u_{N;\bnu} \VQ_{\Lambda_{2,\eps}}H_\bnu
  $$
  with the equality and unconditional convergence in the space $X$.
Since $\Lambda_{2,\eps}$ is nonempty and
  downward closed we have $\bnul\in \Lambda_{2,\eps}$.  Then, by Lemma
  \ref{lemma:VQprop}, \eqref{eq:L1boundLambda}, and using
 $$
 \int_U H_\bnu(\by)\dd\gamma(\by)=0
 $$ 
 for all $\bnul\neq\bnu\in \CF\backslash\CF_2$, we have that
 \begin{align*}
   \normc[X]{\int_U \tilde u_N(\by)\dd\gamma(\by)-\VQ_{\Lambda_{2,\eps}} \tilde u_N}
   &=\normc[X]{\sum_{\bnu\in\CF_2\backslash\Lambda_{2,\eps}} \tilde u_{N;\bnu} \left(\int_{U}H_\bnu(\by)\dd\gamma(\by)-\VQ_{\Lambda_{2,\eps}}H_\bnu\right)}\nonumber\\
   &\le \sum_{\bnu\in\CF_2\backslash\Lambda_{2,\eps}} \norm[X]{\tilde u_{N;\bnu}}
     (\norm[L^2(U;\gamma)]{H_\bnu} + |\VQ_{\Lambda_{2,\eps}}H_\bnu|)\nonumber\\
   &\le \sum_{\bnu\in\CF_2\backslash \Lambda_{2,\eps}} \norm[X]{\tilde u_{N;\bnu}} \left(1+p_\bnu(3)\right) 
     \nonumber\\
   &\le 2\sum_{\bnu\in\CF_2\backslash \Lambda_{2,\eps}} \norm[X]{\tilde u_{N;\bnu}} p_\bnu(3)
     .
 \end{align*}
 Choosing $r>8/p-5$ ($q=\frac{p}{1-p}$, $\tau=3$), according to
 Proposition~\ref{prop:bestN}, \eqref{eq:cknubound} and
 Theorem~\ref{thm:bdHolSum} (with
 $(\varrho_j^{-1})_{j\in\N}\in\ell^{p/(1-p)}(\N)$ as in
 Theorem~\ref{thm:bdHolSum}) the last sum is bounded by
 \begin{equation*}
   C \left(\sum_{\bnu\in\CF}\beta_\bnu(r,\bvarrho) \norm[X]{\tilde u_{N,\bnu}}^2\right)
   \eps^{\frac{1}{2}-\frac{q}{8}}
   \le C(\bb)\delta^2\eps^{\frac{1}{2}-\frac{q}{8}}=C(\bb) \eps^{\frac{1}{2}-\frac{p}{8(1-p)}},
 \end{equation*}
 and the constant $C(\bb)$ from Theorem~\ref{thm:bdHolSum} does not
 depend on $N$ and $\delta$.
 Hence, by \eqref{eq:truncerrQ} and \eqref{estimate1} we obtain that
 \begin{align} \label{estimate2}
 	\normc[X]{\int_U u(\by)\dd\gamma(\by)-\VQ_{\Lambda_{2,\eps}}u}
 	\le C_1 \eps^{\frac{1}{2}-\frac{p}{8(1-p)}}.   
 \end{align}
 From \eqref{ptsLambda0} it follows that
 \begin{equation*}
   |\pts (\Lambda_{k,\epsilon})| 
   \leq C_2 \varepsilon^{-\frac{q}{4}}=C_2 \varepsilon^{-\frac{p}{4(1-p)}}.
 \end{equation*}
 For every $n \in \NN$, we choose an $\eps_n>0$ satisfying the
 condition $$n/2 \le C_2\varepsilon_n^{-\frac{p}{4(1-p)}} \le n.$$ Then due to \eqref{estimate2}
 the claim holds true for the chosen $\eps_n$.
\end{proof}

\begin{remark} {\rm
 Interpolation formulas based on index sets like
  $$
  \Lambda(\xi):= \{\bnu \in \cF: \beta_\bnu(r,\bvarrho) \le \xi^{2/q}\},
  $$
   (where $\xi >0$ is a large parameter), 
  have been proposed in \cite{ErnstSprgkTam18, dD21} for the parametric, elliptic divergence-form PDE \eqref{SPDE} 
  with log-Gaussian inputs \eqref{eq:CoeffAffin} satisfying the assumptions of 
  Theorem \ref{thm[ell_2summability]} with $i=1$. 
  There,
  dimension-independent convergence rates of sparse-grid interpolation
  were obtained. 
  Based on the weighted $\ell^2$-summability of the Wiener-Hermite PC expansion coefficients of the form
 \begin{equation}\label{eq:weightedsum2}
 	\sum_{\bnu\in\CF}\beta_\bnu(r,\bvarrho) \norm[X]{u_\bnu}^2<\infty
 	\ \ \ \text{with} \ \ \ \big(p_\bnu(\tau, \lambda)\beta_\bnu(r,\bvarrho)^{-1/2}\big)_{\bnu\in\FF} \in \ell^q(\FF) \ \ (0 < q < 2),
 \end{equation} 
  the rate established in \cite{ErnstSprgkTam18} is $\frac{1}{2}(1/q - 1/2)$ which  lower than
  those obtained in the present analysis.  
  The improved rate $1/q - 1/2$ has been established in \cite{dung2021collocation}.
  This rate coincides with the rate in Theorem~\ref{thm:int} for the choice $q = p/(1-p)$. 

  The existence of Smolyak
  type quadratures with a proof of dimension-independent convergence
  rates was shown first in \cite{ChenlogNQuad2018} and then in \cite{dD21}.  In \cite{ChenlogNQuad2018}, the symmetry
  of the GM and corresponding cancellations were not exploited, and
  these quadrature formulas provide the convergence rate  $\frac{1}{2}(1/q - 1/2)$ which is lower (albeit
  dimension-independent) convergence rates in terms of the number of
  function evaluations as in Theorems \ref{thm:int} and
  \ref{thm:quad}. 
  By using this symmetry, for a given weighted $\ell^2$-summability of the Wiener-Hermite PC expansion
  coefficients \eqref{weighted-summ} with $\sigma_\bnu =\beta_\bnu(r,\bvarrho)^{1/2}$, 
  the rate established in \cite{dD21} (see also \cite{dD-Erratum22}) 
  is $2/q - 1/2$ which coincides with the rate of convergence that was obtained 
  in Theorem \ref{thm:quad} for the choice $q = p/(1-p)$.
}
\end{remark} 
\newpage
\
\newpage
\section{Multilevel Smolyak sparse-grid interpolation and quadrature}
\label{sec:MLApprox}
In this section we introduce a multilevel interpolation and quadrature
algorithm which are suitable for numerical implementation.  
The presentation and arguments follow mostly \cite{ZDS19} and \cite[Section 3.2]{JZdiss}, 
where multilevel algorithms for the uniform measure on
the hypercube $[-1,1]^\infty$ were analyzed (in contrast to the case
of a product GM on $U$, which we consider here). 
In Section~\ref{sec:SetNot}, we introduce the setting for the multilevel algorithms,
in particular a notation of ``work-measure'' related to the discretization of a
Wiener-Hermite PC expansion coefficient $u_\bnu$ for $\bnu$ in the set of multi-indices that are active 
in a given (interpolation or quadrature) approximation.
Section~\ref{sec:MLAlg} describes the general structure of the algorithms,
Section~\ref{app:mlweight} addresses algorithms for the determination 
of sets $\Lambda \subset \FF$ of active
multi-indices and a corresponding allocation of discretization levels in 
linear in $|\Lambda|$ work and memory. 
Section~\ref{sec:MLInterpol} addresses the error analysis of the 
Smolyak sparse-grid interpolation, 
and 
Section~\ref{sec:MLQuad} contains the error analysis of the corresponding
Smolyak sparse-grid quadrature algorithm.
All algorithms are formulated and analyzed in terms of several abstract hypotheses.
Section~\ref{sec:Approx} verifies these abstract conditions for a concrete family 
of parametric, elliptic PDEs.
Finally, Section~\ref{Multilevel approximation and quadrature in Bochner spaces}
addresses convergence rates achieveable with the mentioned Smolyak sparse-grid interpolation
and quadrature algorithms \emph{assuming at hand optimal multi-index sets}.
The major finding being that the corresponding rates differ only by logarithmic terms
from the error bounds furnished by those realized by the algorithms in 
Sections~\ref{sec:MLAlg}-\ref{sec:MLQuad}.
\subsection{Setting and notation}
\label{sec:SetNot}
To approximate the solution $u$ to a parametric PDE as in the examples
of the preceding sections, the interpolation operator $\VI_\Lambda$
introduced in Section \ref{sec:int} requires function values of $u$ at
different interpolation points in the parameter space $U$. For a
parameter $\by\in U$, typically the PDE solution $u(\by)$, which is a
function belonging to a Sobolev space over a \emph{physical domain}
$\D$, is not given in closed form and has to be approximated. The idea
of multilevel approximations is to combine interpolants of
approximations to $u$ at different spatial accuracies, in order to
reduce the overall computational complexity. This will now be
formalized.

First, we assume given a sequence $(\sw{\lev})_{\lev\in\N_0} \subset \N$,
exhibiting the properties of the following assumption.  
Throughout
$\sw{\lev}$ will be interpreted as a measure for the computational
complexity of evaluating an approximation $u^\lev:U\to X$ of
$u:U\to X$ at a parameter $\by\in U$. 
Here we use a superscript $\lev$
rather than a subscript for the approximation level, as the subscript
is reserved for the dimension truncated version $u_N$ of $u$ as in
Definition~\ref{def:bdXHol}.

\begin{assumption}\label{ass:SW}
  The sequence $(\sw{\lev})_{\lev\in\N_0}\subseteq\N_0$ is strictly
  monotonically increasing and $\sw{0}=0$.  There exists a constant
  $\KW\ge 1$ such that for all $\lev\in\N$
  \begin{enumerate}
  \item\label{item:SW1} $\sum_{j=0}^{\lev}\sw{j}\le \KW \sw{\lev}$,
  \item\label{item:SW2} $\lev\le \KW(1+\log(\sw{\lev}))$,
  \item\label{item:SW4} $\sw{\lev}\le \KW (1+\sw{\lev-1})$,
  \item\label{item:SW3} for every $r>0$ there exists $C=C(r)>0$
    independent of $\lev$ such that
    $$\sum_{j=\lev}^{\infty} \sw{j}^{-r}\le C (1+\sw{\lev})^{-r}.$$
  \end{enumerate}
\end{assumption}
Assumption \ref{ass:SW} is satisfied if $(\sw{\lev})_{\lev\in\N}$ is
exponentially increasing, (for instance $\sw{\lev}=2^\lev$,
$\lev\in\N$).  In the following we write
$\SW:=\set{\sw{\lev}}{\lev\in\N_0}$ and
\begin{equation*}
  \lfloor x \rfloor_\SW :=\max\set{\sw{\lev}}{\sw{\lev}\le x}.
\end{equation*}

We work under the following \emph{hypothesis on the discretization
  errors in physical space}: we quantify the convergence of the
discretization scheme with respect to the discretization level
$\lev\in\N$.  Specifically, we assume the approximation $u^\lev$ to
$u$ to behave asymptotically
as 
\begin{equation}\label{eq:FEMrate}
  \norm[X]{u(\by)-u^\lev(\by)}\le C(\by) \sw{\lev}^{-{\alpha}}\qquad\forall \lev\in\N,
\end{equation}
for some fixed convergence rate ${\alpha}>0$ of the ``physical space
discretization'' and with constant $C(\by) > 0$ depending on the
parameter sequence $\by$.  We will make this assumption on $u^\lev$
more precise shortly.  If we think of $u^\lev(\by)\in H^1(\D)$ for the
moment as a FEM approximation to the exact solution
$u(\by)\in H^1(\D)$ of some $\by$-dependent elliptic PDE, then
$\sw{\lev}$ could stand for the number of degrees of freedom of the
finite element space. In this case ${\alpha}$ corresponds to the FEM
convergence rate.  Assumption \eqref{ass:SW} will for instance be
satisfied if for each consecutive level the meshwidth is cut in
half. Examples are provided by the FE spaces discussed in Section
\ref{S:FEIntrp}, Proposition \ref{prop:FECorner}.  As long as the
computational cost of computing the FEM solution is proportional to
the dimension $\sw{\lev}$ of the FEM space, $\sw{\lev}^{-{\alpha}}$ is
the error in terms of the work $\sw{\lev}$. Such an assumption usually
holds in one spatial dimension, where the resulting stiffness matrix
is tridiagonal. For higher spatial dimensions solving the
corresponding linear system is often times not of linear complexity,
in which case the convergence rate ${\alpha}>0$ has to be adjusted
accordingly.

We now state our assumptions on the sequence of functions
$(u^\lev)_{\lev\in\N}$ approximating $u$. Equation \eqref{eq:FEMrate}
will hold in the  $L^2$ sense over all parameters $\by\in U$,
cp.~Assumption \ref{ass:ml} \ref{item:u-ujclose}, and Definition
\ref{def:bdXHol} \ref{item:varphi}.

\begin{assumption}\label{ass:ml}
  Let $X$ be a separable Hilbert space and let
  $(\sw{\lev})_{\lev\in\N_0}$ satisfy Assumption \ref{ass:SW}.
  Furthermore, $0<p_1\le p_2<\infty$, $\bb_1\in\ell^{p_1}(\N)$,
  $\bb_2\in\ell^{p_2}(\N)$, $\xi>0$, $\delta>0$ and there exist
  functions $u\in L^2(U,X;\gamma)$,
  $(u^\lev)_{\lev\in\N}\subseteq L^2(U,X;\gamma)$ such that
  \begin{enumerate}
  \item $u\in L^2(U,X;\gamma)$ is $(\bb_1,\xi,\delta,X)$-holomorphic,
  \item\label{item:u-ujbound} $(u-u^\lev)\in L^2(U,X;\gamma)$ is
    $(\bb_1,\xi,\delta,X)$-holomorphic for every $l\in\N$,
  \item\label{item:u-ujclose} $(u-u^\lev)\in L^2(U,X;\gamma)$ is
    $(\bb_2,\xi,\delta \sw{\lev}^{-{\alpha}},X)$-holomorphic for every
    $l\in\N$.
  \end{enumerate}
\end{assumption}

\begin{remark} {\rm
  Items \ref{item:u-ujbound} and \ref{item:u-ujclose} are two
  assumptions on the domain of holomorphic extension of the
  discretization error $e_\lev:=u-u^\lev:U\to X$. As pointed out in
  Remark~\ref{rmk:bdexpl}, the faster the sequence $\bb$ decays the
  larger the size of holomorphic extension, and the smaller $\delta$
  the smaller the upper bound of this extension.
  
  Hence items \ref{item:u-ujbound} and \ref{item:u-ujclose} can be
  interpreted as follows: Item \ref{item:u-ujbound} implies that
  $e_\lev$ has a large domain of holomorphic extension. 
  Item (iii) is related to the assumption \eqref{eq:FEMrate}. 
  It yields  that by considering the extension of
  $e_\lev$ on a smaller domain, we can get a ($\lev$-dependent)
  smaller upper bound of the extension of $e_\lev$ (in the sense of
  Definition~\ref{def:bdXHol} \ref{item:varphi}). Hence there is a
  tradeoff between choosing the size of the domain of the holomorphic
  extension and the upper bound of this extension. 
} \end{remark} 
\subsection{Multilevel Smolyak sparse-grid algorithms}
\label{sec:MLAlg}
	Let $\blev=(\lev_\bnu)_{\bnu\in\CF}\subseteq\N_0$ be a family of  natural numbers
	associating with each multiindex $\bnu\in\cF$ of a PC expansion a  
        discretization level $\lev_\bnu\in\N_0$.  	
Typically, this is a family of
discretization levels for some \emph{hierarchic, numerical}
approximation of the PDE in the physical domain $\domain$, 
associating with each multiindex $\bnu\in\cF$ of a PC expansion 
of the parametric solution 
in the parameter domain a possibly 
coefficient-dependent discretization level $\lev_\bnu\in\N_0$.  
With the sequence
$\lev_\bnu\in\N_0$, 
we associate sets of multiindices via
\begin{equation}\label{eq:Gamma}
  \Gamma_j=\Gamma_j(\blev):=\set{\bnu\in\CF}{\lev_\bnu\ge j}\qquad\forall j\in\N_0.
\end{equation}
Throughout we will assume that
$$
|\blev|:= \|\blev\|_{\ell^1(\Ff)}: = \sum_{\bnu\in\CF}\lev_\bnu<\infty
$$ 
and that $\blev$ is
monotonically decreasing, meaning that $\bnu\le\bmu$ implies
$\lev_\bnu\ge\lev_\bmu$. In this case each $\Gamma_j\subseteq\CF$,
$j\in\N$, is finite and downward closed. Moreover $\Gamma_0=\CF$, and
the sets $(\Gamma_j )_{j\in \NN_0}$ are nested according to
\begin{equation*}
  \CF = \Gamma_0\supseteq\Gamma_1\supseteq \Gamma_2\dots.
\end{equation*}

With $(u^\lev)_{\lev\in\N}$ as in Assumption \ref{ass:ml}, we now
define the multilevel sparse-grid interpolation algorithm
\begin{equation}\label{eq:VIml}
  \VIml_\blev u :=
  \sum_{j\in\N} (\VI_{\Gamma_j}-\VI_{\Gamma_{j+1}})u^j.
\end{equation}
A few remarks are in order. First, the index $\blev$ indicates that
the sets $\Gamma_j=\Gamma_j(\blev)$ depend on the choice of $\blev$,
although we usually simply write $\Gamma_j$ in order to keep the
notation succinct. Secondly, due to $|\blev|<\infty$ it holds
$$\max_{\bnu\in\CF}\lev_\bnu=:L<\infty$$ and thus $\Gamma_j=\emptyset$
for all $j>L$. Defining $\VI_{\emptyset}$ as the constant $0$
operator, the infinite series \eqref{eq:VIml} can also be written as
the finite sum
\begin{equation*}
  \VIml_\blev u = \sum_{j=1}^L(\VI_{\Gamma_j}-\VI_{\Gamma_{j+1}})u^j
  = \VI_{\Gamma_1}u^1+\VI_{\Gamma_2}(u^2-u^1)+\dots +\VI_{\Gamma_L}(u^L-u^{L-1}),
\end{equation*}
where we used $\VI_{\Gamma_{L+1}}=0$.  If we had
$\Gamma_1=\dots=\Gamma_L$, this sum would reduce to
$\VI_{\Gamma_L}u^L$, which is the interpolant of the approximation
$u^L$ at the (highest) discretization level $L$.  The main observation
of multilevel analyses is that it is beneficial not to choose all
$\Gamma_j$ equal, but instead to balance out the accuracy of the
interpolant $\VI_{\Gamma_j}$ (in the parameter) and the accuracy of
the approximation $u^j$ of $u$.

A multilevel sparse-grid quadrature algorithm is defined  \index{quadrature!multilevel $\sim$}
analogously via
\begin{equation}\label{eq:VQml}
  \VQml_\blev u :=
  \sum_{j\in\N}(\VQ_{\Gamma_j}-\VQ_{\Gamma_{j+1}})u^j,
\end{equation}
with $\Gamma_j=\Gamma_j(\blev)$ as in \eqref{eq:Gamma}.
In the following we will prove algebraic convergence rates of multilevel
interpolation and quadrature algorithms w.r.t.\ the $L^2(U,X;\gamma)$-norm and $X$, respectively. 
The convergence rates will hold in terms of the \emph{work} of computing
$\VIml_\blev$ and $\VQml_\blev$.

As mentioned above, for a level $\lev\in\N$, we interpret
$\sw{\lev}\in\N$ as a measure of the computational complexity of
evaluating $u^\lev$ at an arbitrary parameter $\by\in U$.  As
discussed in Section \ref{sec:NoFnEval}, computing $\VI_{\Gamma_j} u$
or $\VQ_{\Gamma_j}u$ 
requires to evaluate the function $u$ at each
parameter in the set $\pts(\Gamma_j)\subseteq U$ introduced in
\eqref{eq:pts}.  We recall the bound
\begin{equation*}
  |\pts(\Gamma_j)|\le \sum_{\bnu\in\Gamma_j} p_\bnu(1),
\end{equation*}
on the cardinality of this set obtained in \eqref{eq:ptsLbound}.  As
an upper bound of the work corresponding to the evaluation of all
functions required for the multilevel interpolant in \eqref{eq:VIml},
we obtain
\begin{equation} \label{work}
  \sum_{j\in\N}\sw{j}
  \left(\sum_{\bnu\in\Gamma_j(\blev)} p_\bnu(1) 
    + \sum_{\bnu\in\Gamma_{j+1}(\blev)} p_\bnu(1) \right).
\end{equation}
Since $\Gamma_{j+1}\subseteq \Gamma_{j}$, up the factor $2$ the work
of a sequence $\blev$ is defined by
\begin{equation}\label{eq:work}
  \work(\blev) := \sum_{j=1}^L \sw{j} \sum_{\bnu\in\Gamma_j(\blev)} p_\bnu(1)
  = \sum_{\bnu\in\CF(\blev)} p_\bnu(1) \sum_{j=1}^{l_\bnu}\sw{j},
\end{equation}
where we used the definition of $\Gamma_j(\blev)$ in \eqref{eq:Gamma},
$L:= \max_{\bnu\in\CF}\lev_\bnu < \infty$ and the finiteness of the
set
$$\CF(\blev):= \{\bnu \in \CF: \lev_\bnu > 0\}.$$

The efficiency of the multilevel interpolant critically relies on a
suitable choice of levels $\blev=(\lev_\bnu)_{\bnu\in\CF}$. This will
be achieved with the following algorithm, which constructs $\blev$
based on two collections of positive real numbers,
$(\seqi{\bnu})_{\bnu\in\CF}\in\ell^{q_1}(\CF)$ and
$(\seqii{\bnu})_{\bnu\in\CF}\in\ell^{q_2}(\CF)$. 
The algorithm is
justified due to Lemma \ref{LEMMA:MLWEIGHTNEW} 
which was shown in Section \ref{app:mlweight}. 
This technical lemma, which is a variant of \cite[Lemma 3.2.7]{JZdiss}, 
constitutes the central part of the proofs
of the convergence rate results presented in the rest of this section.
\begin{algorithm}[H]
  \caption{
    $(\lev_\bnu)_{\bnu\in\CF}={\rm
      ConstructLevels}((\seqi{\bnu})_{\bnu\in\CF},(\seqii{\bnu})_{\bnu\in\CF},q_1,{\alpha},\eps)$}
  \label{alg:levels}
  \begin{algorithmic}[1]
    \State $(\lev_{\bnu})_{\bnu\in\CF}\leftarrow (0)_{\bnu\in\CF}$
    \State
    $\Lambda_\eps\leftarrow\set{\bnu\in\CF}{\seqi{\bnu}^{-1}\ge \eps}$
    \For{$\bnu\in\Lambda_\eps$} \State
    $\delta \leftarrow \eps^{-\frac{1/2-q_1/4}{{\alpha}}}
    \seqii{\bnu}^{\frac{-1}{1+2{\alpha}}}
    \left(\sum_{\bmu\in\Lambda_\eps}
      \seqii{\bmu}^{\frac{-1}{1+2{\alpha}}}
    \right)^{\frac{1}{2{\alpha}}}$ \State
    $\lev_{\bnu}\leftarrow \max\set{j\in\N_0}{\sw{j}\le \delta}$
    \EndFor \State \Return $(\lev_{\bnu})_{\bnu\in\CF}$
  \end{algorithmic}
\end{algorithm}

Note that the determination of the sets $\Lambda_\eps$
in line 2 of Algorithm \ref{alg:levels} can be done with
Algorithm~\ref{alg:Lambda}.
\subsection{Construction of an allocation of discretization levels}
\label{app:mlweight}
We detail the construction of an allocation of discretization levels
along the coefficients of Wiener-Hermite PC expansion. 
It is
valid for collections $( u_\bnu )_{\bnu \in \cF}$ of Wiener-Hermite PC
expansion coefficients taking values in a separable Hilbert space, say
$X$, with additional regularity, being $X^s \subset X$, allowing for
weaker (weighted) summability of the $V^s$-norms
$( \|u_\bnu\|_{X^s})_{\bnu \in \cF}$.  In the setting of elliptic BVPs
with log-Gaussian diffusion coefficient, $X = V =H^1_0(\domain)$, and
$X^s$ is, for example, a weighted Kondrat'ev space in $\domain$ as
introduced in Section \ref{sec:KondrAn}.  We phrase the result and the
construction in abstract terms so that the allocation is applicable to
more general settings, such as the parabolic IBVP in Section
\ref{sec:LinParPDE}.

For a given, dense sequence $({X}_l)_{l\in \NN_0} \subset X$ of
nested, finite-dimensional subspaces and target accuracy
$0 < \eps \leq 1$, in the numerical approximation of Wiener-Hermite PC
expansions of random fields $u$ taking values in $X$, we consider
approximating the Wiener-Hermite PC expansion coefficients $u_\bnu$ in
$X$ from $X_l$.  The assumed density of the sequence
$(X_l)_{l\in \NN_0} \subset X$ in $X$ ensures that for
$u\in L^2(U,X;\gamma)$ the coefficients
$( u_\bnu )_{\bnu \in \cF} \subset X$ are square summable, in the sense
that $( \|u_\bnu\|_{X})_{\bnu \in \cF} \in \ell_2(\calF)$

The following lemma is a variation of \cite[Lemma 3.2.7]{JZdiss}. Its
proof, is, with several minor modifications, taken from \cite[Lemma
3.2.7]{JZdiss}.  We remark that the construction of the map
$\blev(\eps,\bnu)$, as described in the lemma, mimicks Algorithm
\ref{alg:levels}. Again, a convergence rate is obtained that is not
prone to the so-called ``curse of dimensionality'', being limited only
by the available sparsity in the coefficients of
Wiener-Hermite PC expansion for the parametric solution manifold.
\begin{lemma}\label{LEMMA:MLWEIGHTNEW}
  Let $\SW=\set{\sw{\lev}}{\lev\in\N_0}$ satisfy Assumption
  \ref{ass:SW}.  Let $q_1\in [0,2)$, ${q_2}\in [q_1,\infty)$ and
  ${\alpha}>0$.  Let
  \begin{enumerate}
  \item $(a_{j,\bnu})_{\bnu\in\CF}\subseteq [0,\infty)$ for every
    $j\in\N_0$,
  \item $(\seqi{\bnu})_{\bnu\in\CF}\subseteq (0,\infty)$ and
    $(\seqii{\bnu})_{\bnu\in\CF}\subseteq (0,\infty)$ be such that
		$$(\seqi{\bnu}^{-1/2})_{\bnu\in\CF}\in\ell^{q_1}(\CF) \;\mbox{and}\;
		\big(\seqii{\bnu}^{-1/2}p_\bnu(1/2+\alpha)\big)_{\bnu\in\CF}\in\ell^{{q_2}}(\CF),
		$$
              \item
		\begin{equation}\label{eq:sumsleinfty}
                  \sup_{j\in\N_0} \left(\sum_{\bnu\in\CF}a_{j,\bnu}^2\seqi{\bnu}\right)^{1/2}=:C_1<\infty,\qquad
                  \sup_{j\in\N_0} \left(\sum_{\bnu\in\CF}(\sw{j}^{{\alpha}}a_{j,\bnu})^2\seqii{\bnu}\right)^{1/2} 
                  =:C_2<\infty.
		\end{equation}
              \end{enumerate}
              For every $\eps>0$ define
              $\Lambda_\eps =
              \set{\bnu\in\CF}{\seqi{\bnu}^{-1}\ge\eps}$,
              $\wk{\eps,\bnu}:= 0$ for all
              $\bnu\in\CF\backslash\Lambda_\eps$, and define
              \begin{equation*}\label{eq:wepsnuweight}
                \wk{\eps,\bnu}:= \left\lfloor
                  \eps^{-\frac{1/2 - q_1/4}{{\alpha}}}
                  \seqii{\bnu}^{\frac{-1}{1+2{\alpha}}}
                  \Bigg(\sum_{\bmu\in\Lambda_\eps}
                  \seqii{\bmu}^{\frac{-1}{1+2{\alpha}}}
                  \Bigg)^{\frac{1}{2{\alpha}}}\right\rfloor_{\SW}\in\SW\qquad\forall\bnu\in\Lambda_\eps.
              \end{equation*}
              Furthermore, for every $\eps>0$ and $\bnu\in\CF$ let
              $\lev_{\eps,\bnu}\in\N_0$ be the corresponding
              discretization level, i.e., 
              $\wk{\eps,\bnu}=\sw{\lev_{\eps,\bnu}}$, and define the
              maximal discretization level
              $$L(\eps):=\max \set{{\lev_{\eps,\bnu}}}{\bnu\in\CF}.$$
              Denote $\blev_\eps=({\lev_{\eps,\bnu}})_{\bnu\in\CF}$.
		 
              Then there exists a constant $C>0$ and tolerances
              $\eps_n\in (0,1]$ such that for every $n \in \NN$ holds
              $\work(\blev_{\eps_n}) \le n$ and
              \begin{equation*}\label{eq:mlestweight}
                \sum_{\bnu\in\CF}\sum_{j={\lev_{\eps_n,\bnu}}}^{L(\eps_n)} a_{j,\bnu}    
                \le C (1+\log n )n^{-R},
              \end{equation*}
              where the rate $R$ is given by
              \begin{equation*}
                R = \min\left\{{\alpha},\frac{{\alpha}(q_1^{-1}-1/2)}{{\alpha}+q_1^{-1}-{q_2}^{-1}}\right\}.
              \end{equation*}
            \end{lemma}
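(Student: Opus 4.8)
The plan is to carry out a balancing argument between the \emph{truncation error} in the parameter domain (controlled by the weights $\seqi{\bnu}$ and the tolerance $\eps$) and the \emph{discretization error} in physical space (controlled by the weights $\seqii{\bnu}$ and the rate $\alpha$). First I would rewrite the double sum
\[
  S(\eps) := \sum_{\bnu\in\CF}\sum_{j=\lev_{\eps,\bnu}}^{L(\eps)} a_{j,\bnu}
\]
by splitting it according to whether $\bnu\in\Lambda_\eps$ or not. For $\bnu\notin\Lambda_\eps$ we have $\lev_{\eps,\bnu}=0$, so the inner sum runs over \emph{all} levels $j=0,\dots,L(\eps)$; here I would use Assumption~\ref{ass:SW}\ref{item:SW1} together with \eqref{eq:sumsleinfty} and the Cauchy--Schwarz inequality with weight $\seqi{\bnu}$ (as in Stechkin's lemma) to get a bound of the form $C_1\eps^{1/2-q_1/4}$ times $\|(\seqi{\bnu}^{-1/2})\|_{\ell^{q_1}}$-type quantities. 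For $\bnu\in\Lambda_\eps$ the inner sum runs from $\lev_{\eps,\bnu}$ to $L(\eps)$; here I would use Assumption~\ref{ass:SW}\ref{item:SW3} to bound $\sum_{j\ge \lev} \sw{j}^{-\alpha}$ by $C(1+\sw{\lev_{\eps,\bnu}})^{-\alpha}$, then Cauchy--Schwarz with weight $\seqii{\bnu}$ and the second bound in \eqref{eq:sumsleinfty}, to obtain
\[
  \sum_{\bnu\in\Lambda_\eps}\sum_{j\ge\lev_{\eps,\bnu}} a_{j,\bnu}
  \le C_2\sum_{\bnu\in\Lambda_\eps}\seqii{\bnu}^{-1/2}(1+\wk{\eps,\bnu})^{-\alpha}
  \cdot(\text{log factor}).
\]
Substituting the definition of $\wk{\eps,\bnu}$ (the floor can be removed at the cost of constants using Assumption~\ref{ass:SW}\ref{item:SW4}) collapses the sum over $\Lambda_\eps$ into $\big(\sum_{\bmu\in\Lambda_\eps}\seqii{\bmu}^{-1/(1+2\alpha)}\big)^{1+1/(2\alpha)}\eps^{1/2-q_1/4}$, giving a bound expressed purely in terms of $\eps$ and $|\Lambda_\eps|$.

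Next I would control $\work(\blev_\eps)$. Starting from \eqref{eq:work}, $\work(\blev_\eps)=\sum_{\bnu\in\Lambda_\eps}p_\bnu(1)\sum_{j=1}^{\lev_{\eps,\bnu}}\sw{j}$, and using Assumption~\ref{ass:SW}\ref{item:SW1} this is $\le\KW\sum_{\bnu\in\Lambda_\eps}p_\bnu(1)\wk{\eps,\bnu}$. Plugging in the definition of $\wk{\eps,\bnu}$ and using H\"older's inequality with the summability $\big(\seqii{\bnu}^{-1/2}p_\bnu(1/2+\alpha)\big)_{\bnu\in\CF}\in\ell^{q_2}(\CF)$ (which is exactly the exponent that matches $\seqii{\bnu}^{-1/(1+2\alpha)}p_\bnu(1)$ after taking the power $1/(1+2\alpha)$), I would obtain
\[
  \work(\blev_\eps)\le C\,\eps^{-\frac{1/2-q_1/4}{\alpha}}
  \Big(\sum_{\bmu\in\Lambda_\eps}\seqii{\bmu}^{-1/(1+2\alpha)}\Big)^{1+\frac{1}{2\alpha}}.
\]
Both $S(\eps)$ and $\work(\blev_\eps)$ are thereby bounded by the same expression $E(\eps):=\eps^{-\frac{1/2-q_1/4}{\alpha}}(\sum_{\bmu\in\Lambda_\eps}\seqii{\bmu}^{-1/(1+2\alpha)})^{1+1/(2\alpha)}$, up to constants and a logarithmic factor for $S(\eps)$. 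The remaining work is to quantify $|\Lambda_\eps|$ and $\sum_{\bmu\in\Lambda_\eps}\seqii{\bmu}^{-1/(1+2\alpha)}$ in terms of $\eps$: since $(\seqi{\bnu}^{-1/2})\in\ell^{q_1}$, $|\Lambda_\eps|\le C\eps^{-q_1/2}$; and $\sum_{\bmu\in\Lambda_\eps}\seqii{\bmu}^{-1/(1+2\alpha)}$ is either uniformly bounded (if $q_2/(1+2\alpha)\le$ the relevant exponent, i.e.\ the series converges absolutely) or grows polynomially in $|\Lambda_\eps|$ by H\"older; the two regimes are precisely what produces the two terms in the $\min$ defining $R$.

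Finally I would invert the relation $n\ge \work(\blev_\eps)$. Choosing $\eps=\eps_n$ so that $n/2\le\work(\blev_{\eps_n})\le n$ (possible because $\work$ is a nondecreasing step function of $\eps^{-1}$ that increases by controlled amounts, cf.\ Assumption~\ref{ass:SW}\ref{item:SW4}), I express $\eps_n$ in terms of $n$ in each of the two regimes and substitute into the bound $S(\eps_n)\le C(1+\log(1/\eps_n))E(\eps_n)\le C(1+\log n)n^{-R}$, where $R$ comes out as the claimed $\min\{\alpha,\alpha(q_1^{-1}-1/2)/(\alpha+q_1^{-1}-q_2^{-1})\}$ after matching exponents; the $\log n$ appears because $\log(1/\eps_n)\sim\log n$. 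The main obstacle I anticipate is the bookkeeping in the two-regime estimate of $\sum_{\bmu\in\Lambda_\eps}\seqii{\bmu}^{-1/(1+2\alpha)}$ and the careful tracking of exponents through the floor operations and through H\"older's inequality, so that the final exponent matches $R$ exactly; this is where following \cite[Lemma 3.2.7]{JZdiss} closely will be essential, with the modifications needed to accommodate the Gaussian-measure normalization and the particular form of $p_\bnu(1/2+\alpha)$ appearing in the summability hypothesis on $\seqii{\bnu}$.
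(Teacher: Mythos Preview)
Your proposal follows essentially the same route as the paper's proof: split the double sum according to $\bnu\in\Lambda_\eps$ vs.\ $\bnu\notin\Lambda_\eps$, apply Cauchy--Schwarz with weights $\seqi{\bnu}$ resp.\ $\seqii{\bnu}$, invoke Assumption~\ref{ass:SW}\ref{item:SW3},\ref{item:SW4}, bound $\work(\blev_\eps)$ via \ref{item:SW1}, distinguish the two regimes $2/(1+2\alpha)\gtrless q_2$ via H\"older, and finally invert. The overall architecture is correct.

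There is, however, one genuine slip that would derail the exponent bookkeeping if carried through as written. You claim that substituting the definition of $\wk{\eps,\bnu}$ into the $\Lambda_\eps$ part of the error sum ``collapses the sum over $\Lambda_\eps$ into $\big(\sum_{\bmu\in\Lambda_\eps}\seqii{\bmu}^{-1/(1+2\alpha)}\big)^{1+1/(2\alpha)}\eps^{1/2-q_1/4}$'', and then that $S(\eps)$ and $\work(\blev_\eps)$ are bounded by the \emph{same} expression $E(\eps)$. This is not so. The particular form of $\wk{\eps,\bnu}$ is chosen precisely so that in the error term the factor $\big(\sum_{\bmu}\seqii{\bmu}^{-1/(1+2\alpha)}\big)$ \emph{cancels exactly}: one finds
\[
  \Big(\sum_{\bnu\in\Lambda_\eps}\big(\seqii{\bnu}^{-1/2}\twk{\eps,\bnu}^{-\alpha}\big)^2\Big)^{1/2}
  =\eps^{1/2-q_1/4},
\]
with no residual $\Lambda_\eps$-dependent factor (this is the optimality of the level allocation). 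Hence $S(\eps)\le C(1+L(\eps))\eps^{1/2-q_1/4}$, whereas $\work(\blev_\eps)$ \emph{does} carry the factor $\big(\sum_{\bmu\in\Lambda_\eps}(p_\bmu(1/2+\alpha)\seqii{\bmu}^{-1/2})^{2/(1+2\alpha)}\big)^{1+1/(2\alpha)}$. These are very different quantities (one small in $\eps$, one large), and it is precisely this asymmetry that, after estimating the work factor in the two regimes and inverting, produces the rate $R$. If you proceed with both bounded by the same $E(\eps)$ you will not recover convergence.

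A minor point: for $\bnu\notin\Lambda_\eps$ you cite Assumption~\ref{ass:SW}\ref{item:SW1}, but what is actually used there is simply that the $j$-sum has $1+L(\eps)$ terms, together with \ref{item:SW2} to bound $L(\eps)\le\log(1+\work(\blev_\eps))$; item \ref{item:SW1} enters only in the work bound.
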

            \begin{proof}
              Throughout this proof denote $\delta:= 1/2 - q_1/4 >0$.
              In the following
              \begin{equation*}
                \twk{\eps,\bnu}:= 
                \eps^{-\frac{\delta}{{\alpha}}}
                \seqii{\bnu}^{\frac{-1}{1+2{\alpha}}}
                \Bigg(\sum_{\bmu\in\Lambda_\eps}
                \seqii{\bmu}^{\frac{-1}{1+2{\alpha}}}
                \Bigg)^{\frac{1}{2{\alpha}}} \qquad\forall\bnu\in\Lambda_\eps,
              \end{equation*}
              i.e.\
              $\wk{\eps,\bnu}=\lfloor \twk{\eps,\bnu}\rfloor_\SW$.
              Note that $0<\twk{\eps,\bnu}$ is well-defined for all
              $\bnu\in\Lambda_\eps$ since $\seqii{\bnu}>0$ for all
              $\bnu\in\CF$ by assumption.  Due to Assumption
              \ref{ass:SW} \ref{item:SW4} it holds
              \begin{equation}\label{eq:sw1Kw}
                \frac{\twk{\eps,\bnu}}{\KW}\le 1+\wk{\eps,\bnu}\le 
                1+\twk{\eps,\bnu} 
                \qquad \forall \bnu\in\Lambda_\eps.
              \end{equation}
              Since
              $(\seqi{\bnu}^{-1/2})_{\bnu\in\CF}\in\ell^{q_1}(\CF)$
              and \eqref{eq:sumsleinfty}, we get
		$$
		\sum_{\bnu\in\CF\backslash\Lambda_{\eps}} a_{j,\bnu}
                \le \Bigg( \sum_{\bnu\in\CF\backslash\Lambda_{\eps}}
                a_{j,\bnu}^2 c_{\bnu}\Bigg)^{1/2} \Bigg(
                \sum_{\bnu\in\CF\backslash\Lambda_{\eps}}
                c_{\bnu}^{-1}\Bigg)^{1/2} \leq C_1 \Bigg(
                \sum_{\seqi{\bnu}^{-1}\leq\eps}
                c_{\bnu}^{-\frac{q_1}{2}}c_{\bnu}^{\frac{q_1}{2}-1}\Bigg)^{1/2}
                \le C \eps^{\delta}
		$$
		with the constant $C$ independent of $j$ and
                $\eps$. Thus,
		\begin{equation} \label{sum_notLambda}
                  \sum_{\bnu\in\CF\backslash\Lambda_\eps}
                  \sum_{j=0}^{L(\eps)} a_{j,\bnu} =
                  \sum_{j=0}^{L(\eps)}\sum_{\bnu\in\CF\backslash\Lambda_\eps}a_{j,\bnu}
                  \le C_1 (1+L(\eps))\eps^{\delta}.
		\end{equation}
		Next with $C_2$ as in \eqref{eq:sumsleinfty},
		\begin{align}\label{eq:sumsuma2jbnu}
                  \sum_{\bnu\in\Lambda_\eps}\sum_{j= {\lev_{\eps,\bnu}}}^{L(\eps)} a_{j,\bnu}
                  &= \sum_{\bnu\in\Lambda_\eps}
                    \sum_{j={\lev_{\eps,\bnu}}}^{L(\eps)} a_{j,\bnu}
                    \sw{j}^{\alpha} \sw{j}^{-{\alpha}}
                    \seqii{\bnu}^{1/2} \seqii{\bnu}^{-1/2}\nonumber\\
                  &\le\Bigg(\sum_{\bnu\in\Lambda_\eps}\sum_{j=0}^{L(\eps)}\big(a_{j,\bnu}\sw{j}^{\alpha} \seqii{\bnu}^{1/2}\big)^2 \Bigg)^{\frac{1}{2}} 
                    \Bigg( \sum_{\bnu\in\Lambda_\eps}\sum_{j\ge {\lev_{\eps,\bnu}}}
                    \big(\seqii{\bnu}^{-1/2}
                    \sw{j}^{-{\alpha}}\big)^{2}\Bigg)^{\frac{1}{2}}\nonumber\\
                  &\le  C_2 (1+L(\eps))
                    \Bigg( \sum_{\bnu\in\Lambda_\eps}\sum_{j\ge {\lev_{\eps,\bnu}}}
                    \big(\seqii{\bnu}^{-1/2}
                    \sw{j}^{-{\alpha}}\big)^{2}\Bigg)^{\frac{1}{2}}.
		\end{align}
		Assumption \ref{ass:SW} \ref{item:SW3} implies for
                some $C_3$
		\begin{equation*}
                  \sum_{j\ge {\lev_{\eps,\bnu}}} \sw{j}^{-2{\alpha}}\le {C_3^2}
                  (1+\sw{{\lev_{\eps,\bnu}}})^{-2{\alpha}}= {C_3^2}
                  (1+\wk{\eps,\bnu})^{-2{\alpha}},
		\end{equation*}
		so that by \eqref{eq:sw1Kw} and
                \eqref{eq:sumsuma2jbnu}
		\begin{align}\label{eq:firstsummlweight2}
                  \sum_{\bnu\in\Lambda_\eps}\sum_{j= {\lev_{\eps,\bnu}}}^{L(\eps)} a_{j,\bnu}&\le
                                                                                               C_3C_2(1+L(\eps))
                                                                                               \left( \sum_{\bnu\in\Lambda_\eps}
                                                                                               \big(\seqii{\bnu}^{-1/2}(1+\wk{\eps,\bnu})^{-{\alpha}}\big)^{2}\right)^{\frac{1}{2}}\nonumber\\
                                                                                             &\le  C_3C_2{\KW^{\alpha}}(1+L(\eps))
                                                                                               \left( \sum_{\bnu\in\Lambda_\eps}
                                                                                               \big(\seqii{\bnu}^{-1/2}\twk{\eps,\bnu}^{-{\alpha}}\big)^{2}\right)^{\frac{1}{2}}.
		\end{align}
		Inserting the definition of $\twk{\eps,\bnu}$, 
                we have
		\begin{align}\label{eq:firstsummlweight3}
                  \left( \sum_{\bnu\in\Lambda_\eps}
                  \big(\seqii{\bnu}^{-1/2}\twk{\eps,\bnu}^{-{\alpha}}\big)^{2}\right)^{\frac{1}{2}}
                  &=
                    \eps^{\delta}
                    \Bigg(\sum_{\bmu\in\Lambda_\eps}
                    \seqii{\bmu}^{\frac{-1}{1+2{\alpha}}}
                    \Bigg)^{-{\alpha} \frac{1}{2{\alpha}}} 
                    \left(\sum_{\bnu\in\Lambda_\eps}
                    \seqii{\bnu}^{-1}\seqii{\bnu}^{\frac{2{\alpha}}{1+2{\alpha}} }  \right)^{\frac{1}{2}}
                    =
                    \eps^{\delta},
		\end{align}
		where we used
		\begin{equation*}
                  -1 + \frac{2{\alpha}}{1+2{\alpha}} =
                  \frac{-(1+2{\alpha})+2{\alpha}}{1+2{\alpha}}
                  = \frac{-1}{1+2{\alpha}}.
		\end{equation*}
		Using Assumption \ref{ass:SW} \ref{item:SW2} and the
                definition of $\work(\blev_\eps)$ in \eqref{eq:work}
                we get
		\begin{equation}\label{eq:Jeps}
                  L(\eps)
                  \le \log(1+\max_{\bnu\in\CF}\wk{\eps,\bnu})
                  \le\log(1+\work(\blev_\eps)).
		\end{equation}
		Hence, \eqref{sum_notLambda},
                \eqref{eq:firstsummlweight2},
                \eqref{eq:firstsummlweight3} {and \eqref{eq:Jeps}}
                yield
		\begin{equation}\label{eq:firstcasesn1est}
                  \sum_{\bnu\in\CF}\sum_{j={\lev_{\eps,\bnu}}}^{L(\eps)}a_{j,\bnu}=
                  \sum_{\bnu\in\Lambda_\eps}\sum_{j={\lev_{\eps,\bnu}}}^{L(\eps)} a_{j,\bnu}+
                  \sum_{\bnu\in\CF\backslash\Lambda_\eps} \sum_{j=0}^{L(\eps)}a_{j,\bnu}\le
                  C \big(1+\log(\work(\blev_\eps))\big) \eps^{\delta}.
		\end{equation}
		Next, we compute an upper bound for
                $\work(\blev_\eps)$. By definition of
                $\work(\blev_\eps)$ in \eqref{eq:work}, and using
                Assumption \ref{ass:SW} \ref{item:SW1} as well as
                $\wk{\eps,\bnu}=\sw{{\lev_{\eps,\bnu}}}$,
		\begin{align}\label{eq:sn1vwk}
                  \work(\blev_\eps)
                  &=                          \sum_{\bnu\in\Lambda_\eps}p_\bnu(1)\sum_{\set{
                    j\in\N}{j\le{\lev_{\eps,\bnu}}}}\sw{j}
                    \le \sum_{\bnu\in\Lambda_\eps}p_\bnu(1)\KW\wk{\eps,\bnu}\nonumber\\
                  &\le \KW \sum_{\bnu\in\Lambda_\eps}p_\bnu(1) \twk{\eps,\bnu}
                    \le \KW
                    \eps^{-\frac{\delta}{{\alpha}}} \left(\sum_{\bnu\in\Lambda_\eps}
                    p_\bnu(1)\seqii{\bnu}^{\frac{-1}{1+2{\alpha}}}\right)^{\frac{1}{2{\alpha}}+1}\nonumber\\
                  &=\KW                  \eps^{-\frac{\delta}{{\alpha}}} \left(\sum_{\bnu\in\Lambda_\eps} \left(p_\bnu(1/2+{\alpha})\seqii{\bnu}^{-1/2}\right)^{\frac{2}{1+2{\alpha}}}\right)^{\frac{1}{2{\alpha}}+1},
		\end{align}
		where we used
                $p_\bnu(1)=p_\bnu(1/2+{\alpha})^{2/(1+2{\alpha})}$ and
                the fact that $p_\bnu(1)\ge 1$ for all $\bnu$.

                We distinguish between the two cases
		\begin{equation*}\label{eq:2casessn1}
                  \frac{2}{1+2{\alpha}}\ge {q_2}\qquad \text{and} \qquad
                  \frac{2}{1+2{\alpha}}< {q_2}.
		\end{equation*}
		In the first case, since
                $(p_\bmu(1/2+{\alpha})\seqii{\bmu}^{-1/2})_{\bmu\in\CF}\in\ell^{{q_2}}(\CF)$,
                {\eqref{eq:sn1vwk} implies}
		\begin{equation}\label{eq:firstcasesn1w}
                  \work(\blev_\eps)\le C \eps^{-\frac{\delta}{{\alpha}}}
		\end{equation}
		and hence,
                $\log(\work(\blev_\eps)) \le \log
                \big(C\eps^{-\frac{\delta}{{\alpha}}}\big)$.  Then
                \eqref{eq:firstcasesn1est} together with
                \eqref{eq:firstcasesn1w} implies
		\begin{equation*}
                  \sum_{\bnu\in\CF}\sum_{j={\lev_{\eps,\bnu}}}^{L(\eps)} a_{j,\bnu}
                  \le C(1+{|\log(\eps^{-1})|})\eps^\delta .
		\end{equation*}
		For every $n \in \NN$, we can find $\eps_n>0$ such
                that
                $\frac{n}{2} \le C\eps_n^{-\frac{\delta}{{\alpha}}}
                \le n$.  Then the claim of the corollary in the case
                $\frac{2}{1+2{\alpha}} \ge q_2$ holds true for the
                chosen $\eps_n$.

		Finally, let us address the case
                $\frac{2}{1+2{\alpha}}<{q_2}$. Then, by
                \eqref{eq:sn1vwk} and using H\"older's inequality with
                $ {q_2}\frac{1+2{\alpha}}{2}>1$ we get
		\begin{equation*}
                  \work(\blev_\eps)\le
                  \KW \eps^{-\frac{\delta}{{\alpha}}}\norm[\ell^{{q_2}}(\CF)]{(p_\bnu(1/2+{\alpha})\seqii{\bnu}^{-1/2})_{\bnu\in\CF}}^{\frac{1}{{\alpha}}}
                  |\Lambda_\eps|^{\big(1-\frac{2}{q_2(1+2\alpha)}\big)\frac{1+2{\alpha}}{2{\alpha}}}. 
		\end{equation*}
		Since
		$$
		|\Lambda_\eps|=\sum_{\bnu \in \Lambda_\eps}1 =
                \sum_{\seqi{\bnu}^{-1}\geq\eps}
                c_{\bnu}^{-\frac{q_1}{2}}c_{\bnu}^{\frac{q_1}{2}} \leq
                C \eps^{-\frac{q_1}{2}},
		$$
		we obtain
		\begin{align*}
                  \work(\blev_\eps)\le
                  \KW \eps^{-\frac{\delta}{{\alpha}}-\frac{q_1}{2}(1-\frac{2}{q_2(1+2\alpha)})\frac{1+2{\alpha}}{2{\alpha}}} \le C \eps^{-\frac{q_1}{2\alpha}\big(\alpha-\frac{1}{q_2}+\frac{1}{q_1}\big)} .
		\end{align*}
		For every $n \in \NN$, we can find $\eps_n>0$ such
                that
                $$ \frac{n}{2} \le C
                \eps_n^{-\frac{q_1}{2\alpha}\big(\alpha-\frac{1}{q_2}+\frac{1}{q_1}\big)}
                \le n.$$ Thus the claim also holds true in the case
                $\frac{2}{1+2{\alpha}}<q_2$.
              \end{proof}
              
              \subsection{Multilevel Smolyak sparse-grid interpolation algorithm} \index{interpolation!multilevel $\sim$}
              \label{sec:MLInterpol}
              We are now in position to formulate a multilevel Smolyak sparse-grid
              interpolation convergence theorem.  To this end, we
              observe that our proofs of approximation rates have been
              constructive: rather than being based on a best $N$-term
              selection from the infinite set of Wiener-Hermite PC
              expansion coefficients, a constructive selection process
              of ``significant'' Wiener-Hermite PC expansion
              coefficients, subject to a given prescribed
              approximation tolerance, has been provided.  In the
              present section, we turn this into a concrete, numerical
              selection process with complexity bounds. In particular,
              we provide an \emph{a-priori allocation} of
              discretization levels to Wiener-Hermite PC expansion
              coefficients.  This results on the one hand in an
              explicit, algorithmic definition of a family of
              multilevel interpolants which is parametrized by an
              approximation threshold $\eps>0$.  On the other hand, it
              will result in \emph{mathematical convergence rate
                bounds in terms of computational work} rather than in
              terms of, for example, number of active Wiener-Hermite
              PC expansion coefficients, which rate bounds are free
              from the curse of dimensionality.

              The idea is as follows: let
              $\bb_1=(b_{1,j})_{j\in\N}\in\ell^{p_1}(\NN)$,
              $\bb_2=(b_{2,j})_{j\in\N}\in\ell^{p_2}(\NN)$, and $\xi$
              be the two sequences and constant from Assumption
              \ref{ass:ml}.  For two constants $K>0$ and $r>3$ (which
              are still at our disposal and which will be specified
              below), set for all $j\in\N$
              \begin{equation}\label{eq:bvarrhoi}
                \varrho_{1,j}
                :=
                b_{1,j}^{p_1-1} \frac{\xi}{4\norm[\ell^{p_1}]{\bb_1}},
                \qquad 
                \varrho_{2,j}:=b_{2,j}^{p_2-1} \frac{\xi}{4\norm[\ell^{p_2}]{\bb_2}}.
              \end{equation}
              We let for all $\bnu\in\CF$ (as in Lemma \ref{lemma:cnu}
              for $k=1$ and with $\tau=3$)
              \begin{equation}\label{eq:seqiseqii}
                \seqi{\bnu}:=\prod_{j\in\N}\max\{1,K\varrho_{1,j}\}^2\nu_j^{r-3},\qquad
                \seqii{\bnu}:=\prod_{j\in\N}\max\{1,K\varrho_{2,j}\}^2\nu_j^{r-3}.
              \end{equation}
              Based on those two multi-index collections,
              Algorithm \ref{alg:levels} provides a collection of
              discretization levels which sequence depends on $\eps>0$
              and is indexed over $\CF$.  We denote it by
              $\blev_\eps=(\lev_{\eps,\bnu})_{\bnu\in\CF}$.  We now
              state an upper bound for the error of the corresponding
              multilevel interpolants in terms of the work measure in
              \eqref{eq:work} as $\eps\to 0$.
              \begin{theorem}\label{thm:mlint}
                Let $u\in L^2(U,X;\gamma)$ and
                $u^\lev\in L^2(U,X;\gamma)$, $\lev\in\N$, satisfy
                Assumption \ref{ass:ml} with some constants
                ${\alpha}>0$ and $0<p_1<2/3$ and $p_1\le p_2<1$.  Set
                $q_1:=p_1/(1-p_1)$.  Assume that
                $r>2(1+({\alpha}+1)q_1)/q_1+3$ (for $r$ as defined in
                \eqref{eq:seqiseqii}).  There exist constants $K>0$
                (in \eqref{eq:seqiseqii}) and $C>0$ such that the
                following holds.
  
                For every $n \in \NN$, there are positive constants
                $\eps_n\in (0,1]$ such that
                $\work(\blev_{\eps_n}) \le n$ and with
                $\blev_{\eps_n}=({\lev_{\eps_n,\bnu}})_{\bnu\in\CF}$
                as defined in Lemma \ref{LEMMA:MLWEIGHTNEW} (where
                $c_\bnu$, $d_\bnu$ as in \eqref{eq:seqiseqii}) it
                holds
                \begin{equation*}
                  \norm[L^2(U,X;\gamma)]{u-\VI_{\blev_{\eps_n}}^{\rm ML}u} \	\le \ C (1+\log n )n^{-R}
                \end{equation*}
                with the convergence rate
                \begin{equation} \label{R-I}
                  R := \min\left\{{\alpha},\frac{{\alpha}(p_1^{-1}-3/2)}{{\alpha}+p_1^{-1}-p_2^{-1}}\right\}.
                \end{equation}
              \end{theorem}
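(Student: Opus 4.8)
The strategy is to reduce the multilevel interpolation error to a double sum over the indices $\bnu\in\CF$ and the levels $j$ between $\lev_{\eps,\bnu}$ and the maximal level, and then to apply Lemma~\ref{LEMMA:MLWEIGHTNEW} with a suitable choice of the arrays $(a_{j,\bnu})$. First I would fix $N$ large enough that $N\ge\max\set{j\in\supp(\bnu)}{\bnu\in\Gamma_1(\blev_\eps)}$ and that the dimension-truncation error $\norm[L^2(U,X;\gamma)]{u-\tilde u_N}$ and $\norm[L^2(U,X;\gamma)]{(u-u^\lev)-(\widetilde{u-u^\lev})_N}$ are below the target tolerance (possible by Assumption~\ref{ass:ml} via Definition~\ref{def:bdXHol}\ref{item:vN}), so that all interpolation operators appearing in $\VIml_{\blev_\eps}u$ may be applied to the pointwise-defined truncations without changing the result. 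Then I would write the telescoping identity
\begin{equation*}
u-\VIml_{\blev_\eps}u
= (u-u^{L})
+ \sum_{j=1}^{L}(\mathrm{Id}-\VI_{\Gamma_j})(u^j-u^{j-1}),
\end{equation*}
with $u^0:=0$, using $\Gamma_{L+1}=\emptyset$ and rearranging the definition \eqref{eq:VIml}; here $L=L(\eps)=\max_{\bnu\in\CF}\lev_{\eps,\bnu}$. The first term is $O(\sw{L}^{-\alpha})$ in $L^2(U,X;\gamma)$ by Assumption~\ref{ass:ml}\ref{item:u-ujclose} (taking $\bb_2$-holomorphy and $\varphi$-bound $\delta\sw{L}^{-\alpha}$), which after inserting the bound on $L$ in terms of $\work(\blev_\eps)$ will be absorbed into the stated rate.

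**The main bound.** For the sum over $j$ I would expand each $(u^j-u^{j-1})=(u-u^{j-1})-(u-u^j)$ into its Wiener-Hermite PC series (using Lemma~\ref{lemma:uN} and Lemma~\ref{lemma:L^2-convergence} for the truncated versions, so that $\VI_{\Gamma_j}$ commutes with the series in $L^2(U,X;\gamma)$), apply Lemma~\ref{lemma:VIprop} to see $(\mathrm{Id}-\VI_{\Gamma_j})H_\bnu=0$ for $\bnu\in\Gamma_j$, and use the bound $\norm[L^2(U;\gamma)]{(\mathrm{Id}-\VI_{\Gamma_j})H_\bnu}\le 1+p_\bnu(3)\le 2p_\bnu(3)$ from \eqref{eq:L2boundLambda}. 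This yields
\begin{equation*}
\norm[L^2(U,X;\gamma)]{u-\VIml_{\blev_\eps}u}
\le C\sw{L}^{-\alpha}
+ 2\sum_{\bnu\in\CF}\sum_{j=\lev_{\eps,\bnu}+1}^{L} p_\bnu(3)\,\norm[X]{(u-u^{j-1})_\bnu - (u-u^j)_\bnu},
\end{equation*}
so I would set $a_{j,\bnu}:=p_\bnu(3)\norm[X]{(u-u^{j})_\bnu-(u-u^{j+1})_\bnu}$ (shifting the index). It remains to verify the two uniform-in-$j$ summability hypotheses \eqref{eq:sumsleinfty} of Lemma~\ref{LEMMA:MLWEIGHTNEW} for the weights $c_\bnu$, $d_\bnu$ of \eqref{eq:seqiseqii}. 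For $C_1$: by Assumption~\ref{ass:ml}\ref{item:u-ujbound}, each $u-u^j$ is $(\bb_1,\xi,\delta,X)$-holomorphic, hence by Theorem~\ref{thm:bdHolSum} the sum $\sum_{\bnu\in\CF}\beta_\bnu(r,\bvarrho_1)\norm[X]{(u-u^j)_\bnu}^2\le\delta^2 C(\bb_1)$ holds \emph{uniformly in $j$}; combining with Lemma~\ref{lemma:cnu} ($C_0\,c_\bnu\,p_\bnu(3)\le\beta_\bnu(r,\bvarrho_1)$) and the triangle inequality gives a uniform bound on $\sum_\bnu a_{j,\bnu}^2 c_\bnu$. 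For $C_2$: by Assumption~\ref{ass:ml}\ref{item:u-ujclose}, $u-u^j$ is $(\bb_2,\xi,\delta\sw{j}^{-\alpha},X)$-holomorphic, so $\sum_{\bnu\in\CF}\beta_\bnu(r,\bvarrho_2)\norm[X]{(u-u^j)_\bnu}^2\le\delta^2\sw{j}^{-2\alpha}C(\bb_2)$; together with Lemma~\ref{lemma:cnu} for $d_\bnu$ this gives a uniform bound on $\sum_\bnu(\sw{j}^\alpha a_{j,\bnu})^2 d_\bnu$. The summability conditions $(\seqi{\bnu}^{-1/2})\in\ell^{q_1}(\CF)$ and $(\seqii{\bnu}^{-1/2}p_\bnu(1/2+\alpha))\in\ell^{q_2}(\CF)$ follow from Lemma~\ref{lemma:summabcnu} given $(\varrho_{1,j}^{-1})\in\ell^{q_1}(\N)$, $(\varrho_{2,j}^{-1})\in\ell^{q_2}(\N)$ (with $q_2:=p_2/(1-p_2)$) and the choice $r>2(1+(\alpha+1)q_1)/q_1+3$, which guarantees the exponent condition in that lemma.

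**Conclusion and the hard part.** Applying Lemma~\ref{LEMMA:MLWEIGHTNEW} to $(a_{j,\bnu})$ with these $c_\bnu$, $d_\bnu$ then directly produces, for each $n$, a tolerance $\eps_n\in(0,1]$ with $\work(\blev_{\eps_n})\le n$ and $\sum_{\bnu}\sum_{j=\lev_{\eps_n,\bnu}}^{L(\eps_n)}a_{j,\bnu}\le C(1+\log n)n^{-R}$ with $R=\min\{\alpha,\,\alpha(q_1^{-1}-1/2)/(\alpha+q_1^{-1}-q_2^{-1})\}$; substituting $q_1=p_1/(1-p_1)$, $q_2=p_2/(1-p_2)$ gives $q_1^{-1}-1/2=(1-p_1)/p_1-1/2=p_1^{-1}-3/2$ and $q_1^{-1}-q_2^{-1}=p_1^{-1}-p_2^{-1}$, which is exactly \eqref{R-I}. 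One also checks that the residual term $\sw{L(\eps_n)}^{-\alpha}$ is $O(n^{-R})$ via \eqref{eq:Jeps}-type bounds relating $L(\eps_n)$ and $\work(\blev_{\eps_n})$ and Assumption~\ref{ass:SW}\ref{item:SW2}, while the truncation errors from the choice of $N$ are made negligible by construction. The main obstacle I anticipate is the bookkeeping around pointwise-vs-$L^2$ evaluation: one must be careful that $\VIml_{\blev_\eps}u$, defined through $(\bb,\xi,\delta,X)$-holomorphy as acting on the finite-dimensional pointwise-defined surrogates $\tilde u_N$ (and $(\widetilde{u-u^\lev})_N$), agrees with the telescoped PC-series manipulation — this requires invoking Lemma~\ref{lemma:uN}, Lemma~\ref{lemma:L^2-convergence} and the commutation of $\VI_{\Gamma_j}$ with the (unconditionally convergent) Hermite series, exactly as in the single-level proof of Theorem~\ref{thm:int}, but now with the extra layer that $u^j$ need not be $(\bb,\xi,\delta,X)$-holomorphic individually (only the differences $u-u^j$ are controlled), so the telescoping must be organized entirely through the error functions $u-u^j$.
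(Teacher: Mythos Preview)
Your overall strategy is sound and close to the paper's, but your telescoping is different, and the one place where you hand-wave is genuinely incomplete.

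\textbf{Comparison of telescopings.} You write
\[
u-\VIml_{\blev_\eps}u=(u-u^{L})+\sum_{j=1}^{L}(\mathrm{Id}-\VI_{\Gamma_j})(u^j-u^{j-1}),
\]
applying the \emph{full} interpolation error $(\mathrm{Id}-\VI_{\Gamma_j})$ to the \emph{increment} $u^j-u^{j-1}$. The paper instead uses the convention $\VI_{\Gamma_0}=\mathrm{Id}$, sets $e^j:=u-u^j$ with $e^0:=u$, and obtains
\[
u-\VIml_{\blev_\eps}u=(u-\tilde u_N)+\sum_{j=0}^{L(\eps)}(\VI_{\Gamma_j}-\VI_{\Gamma_{j+1}})\tilde e_N^j,
\]
i.e.\ it applies the \emph{successive difference} $(\VI_{\Gamma_j}-\VI_{\Gamma_{j+1}})$ to the \emph{error} $e^j$ itself. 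Both identities are correct, and both feed the same $a_{j,\bnu}\sim p_\bnu(3)\|\cdot\|_X$ structure into Lemma~\ref{LEMMA:MLWEIGHTNEW}. The advantage of the paper's decomposition is that there is \emph{no residual term} $(u-u^L)$: everything is already inside the double sum that Lemma~\ref{LEMMA:MLWEIGHTNEW} controls.

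\textbf{The gap.} Your treatment of the residual $(u-u^L)$ is not right as stated. You say $\sw{L(\eps_n)}^{-\alpha}=O(n^{-R})$ ``via \eqref{eq:Jeps}-type bounds relating $L(\eps_n)$ and $\work(\blev_{\eps_n})$ and Assumption~\ref{ass:SW}\ref{item:SW2}''. But \eqref{eq:Jeps} and Assumption~\ref{ass:SW}\ref{item:SW2} give \emph{upper} bounds on $L(\eps)$ in terms of $\work$; what you need is a \emph{lower} bound on $\sw{L(\eps_n)}$. Such a bound does hold, but it comes from a different source: from the explicit definition of $\twk{\eps,\bnu}$ in Lemma~\ref{LEMMA:MLWEIGHTNEW} one has $\twk{\eps,\bnul}\ge \eps^{-\delta/\alpha}$ (since $d_\bnul=1$ and the sum factor is $\ge 1$), hence $\sw{L(\eps_n)}\ge \wk{\eps_n,\bnul}\gtrsim \eps_n^{-\delta/\alpha}$ via \eqref{eq:sw1Kw}, and then $\sw{L(\eps_n)}^{-\alpha}\le C\eps_n^\delta=O(n^{-R})$ by the choice of $\eps_n$ made in the two cases of the proof of Lemma~\ref{LEMMA:MLWEIGHTNEW}. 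So the bound is salvageable, but not by the mechanism you cite.

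Everything else --- the verification of $C_1$, $C_2$ via Assumption~\ref{ass:ml}\ref{item:u-ujbound},\ref{item:u-ujclose} plus Theorem~\ref{thm:bdHolSum} and Lemma~\ref{lemma:cnu}, the summability of $(c_\bnu^{-1/2})$, $(d_\bnu^{-1/2}p_\bnu(1/2+\alpha))$ via Lemma~\ref{lemma:summabcnu}, and the conversion $q_i\leftrightarrow p_i$ at the end --- matches the paper's argument.
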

              \begin{proof}
                Throughout this proof we write
                $\bb_1=(b_{1,j})_{j\in\N}$ and
                $\bb_2=(b_{2,j})_{j\in\N}$ for the two sequences in
                Assumption \ref{ass:ml}.  
                We observe that $\Gamma_j$
                defined in \eqref{eq:Gamma} is downward closed for all
                $j\in\N_0$.  This can be easily deduced from the fact
                that the multi-index collections
                $(\seqi{\bnu})_{\bnu\in\CF}$ 
                and
                $(\seqii{\bnu})_{\bnu\in\CF}$ are monotonically
                increasing (i.e., e.g., $\bnu\le\bmu$ implies
                $\seqi{\bnu}\le\seqi{\bmu}$) and the definition of
                $\Lambda_\eps$ and $\lev_{\eps,\bnu}$ in Algorithm
                \ref{alg:levels}. We will use this fact throughout the
                proof, without mentioning it at every instance.

                {\bf Step 1.}  Given $n\in \N$, we choose
                $\eps:=\eps_n$ as in Lemma \ref{LEMMA:MLWEIGHTNEW}.
                Fix $N\in\N$ such that
                $$
                N>\max \{j:\, j \in\supp(\bnu),\, \lev_{\eps,\bnu}>  0\}
                $$ and so large that
                \begin{equation}\label{eq:truncerr_mlint}
                  \norm[L^2(U,X;\gamma)]{u-\tilde u_N}\le  n^{-R},
                \end{equation}
                where $\tilde u_N:U\to X$ is as in
                Definition~\ref{def:bdXHol}. This is possible due to
  $$\lim_{N\to\infty}\norm[L^2(U,X;\gamma)]{u-\tilde u_N}=0,$$ which
  holds by the $(\bb_1,\xi,\delta,X)$-holomorphy of $u$.  By
  Assumption \ref{ass:ml}, for every $j \in\N$ the function
  $e^j:=u-u^j\in L^2(U,X;\gamma)$ is
  $(\bb_1,\xi,\delta,X)$-holomorphic and
  $(\bb_2,\xi,\delta\sw{j}^\gamma,X)$-holomorphic. 
  For notational
  convenience we set $e^0:=u-0=u\in L^2(U,X;\gamma)$, 
  so that $e^0$ is
  $(\bb_1,\xi,\delta,X)$-holomorphic and
  $(\bb_2,\xi,\delta,X)$-holomorphic. 
  Hence, for every $j\in\N_0$ there
  exists a function $\tilde e_N^j=\tilde u_N-\tilde u_N^j$ 
  as in Definition~\ref{def:bdXHol} \ref{item:vN}.

  In the rest of the proof we use the following facts:
  \begin{enumerate}
  \item\label{item:u-uswunifabs} By Lemma \ref{lemma:uN}, for every
    $j\in\N_0$, with the Wiener-Hermite PC expansion coefficients
    $$\tilde e_{N,\bnu}^j:= \int_{U} H_\bnu(\by)\tilde e_{N}^j(\by)
    \dd\gamma(\by),$$ it holds
    \begin{equation*}\label{eq:u-uswunifabs}
      \tilde e_{N}^j(\by) 
    = \sum_{\bnu\in\CF} \tilde e_{N,\bnu}^j H_\bnu(\by)\qquad\forall \by\in U,
    \end{equation*}
    with pointwise absolute convergence.
  \item\label{item:aux} By Lemma \ref{lemma:cnu}, upon choosing $K>0$
    in \eqref{eq:seqiseqii} large enough, and because $r>3$,
    \begin{equation*}
      C_0 \seqi{\bnu} p_\bnu(3)\le \beta_\bnu(r,\bvarrho_1),\qquad
      C_0 \seqii{\bnu} p_\bnu(3)\le \beta_\bnu(r,\bvarrho_2)
      \qquad \forall \bnu\in \CF_1.
    \end{equation*}
    We observe that by definition of $\bvarrho_i$, $i\in\{1,2\}$, 
    in \eqref{eq:bvarrhoi}, it holds
    $\varrho_{i,j}\sim b_{i,j}^{-(1-p_i)}$ and therefore
    $(\varrho_{i,j}^{-1})_{j\in\N}\in\ell^{q_i}(\N)$ with
    $q_i:=p_i/(1-p_i)$, $i\in\{1,2\}$.
  \item\label{item:seqiseqiisum} Due to
    $r>2(1+({\alpha}+1)q_1)/q_1+3$,
    the condition of Lemma \ref{lemma:summabcnu} is satisfied (with
    $k=1$, $\tau=3$ and $\theta=({\alpha}+1)q_1$). Hence the lemma
    gives
    \begin{equation*}
      \sum_{\bnu\in\CF} p_\bnu(({\alpha}+1)q_1)\seqi{\bnu}^{-q_1/2}<\infty
      \qquad
      \Rightarrow\qquad
      (p_\bnu({\alpha}+1)\seqi{\bnu}^{-1/2})_{\bnu\in\CF}\in\ell^{q_1}(\CF)
    \end{equation*}
    and similarly
    \begin{equation*}
      \sum_{\bnu\in\CF} p_\bnu(({\alpha}+1)q_2)\seqii{\bnu}^{-q_2/2}<\infty
      \qquad
      \Rightarrow\qquad
      (p_\bnu({\alpha}+1)\seqii{\bnu}^{-1/2})_{\bnu\in\CF}\in\ell^{q_2}(\CF).
    \end{equation*}
  \item\label{item:tedecay} By Theorem~\ref{thm:bdHolSum} and item
    \ref{item:aux}, for all $j\in\N_0$
    \begin{equation*}
      C_0 \sum_{\bnu\in\CF}\seqi{\bnu}\norm[X]{\tilde e_{N,\bnu}^j}^2p_\bnu(3)\le    
      \sum_{\bnu\in\CF}\beta_\bnu(r,\bvarrho_1)\norm[X]{\tilde e_{N,\bnu}^j}^2 \le
      C \delta^2
    \end{equation*}
    and
    \begin{equation*}
      C_0 \sum_{\bnu\in\CF}\seqii{\bnu}\norm[X]{\tilde e_{N,\bnu}^j}^2p_\bnu(3) \le
      \sum_{\bnu\in\CF}\beta_\bnu(r,\bvarrho_2)\norm[X]{\tilde e_{N,\bnu}^j}^2 \le
      C \frac{\delta^2}{\sw{j}^{2{\alpha}}},
    \end{equation*}
    with the constant $C$ independent of $j$, $\sw{j}$ and $N$.
  \item\label{item:IGammae} Because
    $N\ge\max\set{j\in\supp(\bnu)}{\lev_{\eps,\bnu}\ge 0}$ and
    $\chi_{0,0}=0$ we have
    $$\VI_{\Gamma_j}(u-u^j)=\VI_{\Gamma_j}e^j= \VI_{\Gamma_j}\tilde
    e_N^j$$ for all $j\in\N$ (cp.~Remark~\ref{rmk:defu}). Similarly
    $\VI_{\Gamma_j}u=\VI_{\Gamma_j}\tilde u_N$ for all $j\in\N$.
  \end{enumerate}

  {\bf Step 2.} Observe that $\Gamma_j=\emptyset$ for all
  $j>L(\eps):= \max_{\bnu\in\CF}{\lev_{\eps,\bnu}}$
  (cp.~\eqref{eq:Gamma}), which is finite due to
  $|\blev_\eps|<\infty$.  With the conventions
  $\VI_{\Gamma_0}=\VI_{\CF}={\rm Id}$ (i.e.~$\VI_{\Gamma_0}$ is the
  identity) and $\VI_{\emptyset} \equiv 0$ this implies
  \begin{equation*}
    u = \VI_{\Gamma_0}u =
    \sum_{j=0}^{L(\eps)} (\VI_{\Gamma_j}-\VI_{\Gamma_{j+1}}) u
    = (\VI_{\Gamma_0}-\VI_{\Gamma_1})u+\dots+
    (\VI_{\Gamma_{L(\eps)-1}}-\VI_{\Gamma_{L(\eps)}})u+\VI_{\Gamma_{L(\eps)}}u.
  \end{equation*}
  By definition of the multilevel interpolant in \eqref{eq:VIml}
  \begin{equation*}
    \VIml_{\blev_\eps} u =
    \sum_{j=1}^{L(\eps)}(\VI_{\Gamma_j}-\VI_{\Gamma_{j+1}})u^j
    =(\VI_{\Gamma_1}-\VI_{\Gamma_2}) u^1+\dots+
    (\VI_{\Gamma_{L(\eps)-1}}-\VI_{\Gamma_{L(\eps)}})   
     u^{L(\eps)-1} + \VI_{\Gamma_{L(\eps)}}u^{L(\eps)}.
  \end{equation*}
  By item \ref{item:IGammae} of Step 1, we can write
  \begin{equation*}
    (\VI_{\Gamma_0}-\VI_{\Gamma_1})u =
    u-\VI_{\Gamma_1}u = u-\VI_{\Gamma_1}\tilde u_N
    = (u-\tilde u_N)+(\VI_{\Gamma_0}-\VI_{\Gamma_1})\tilde u_N
    = (u-\tilde u_N)+(\VI_{\Gamma_0}-\VI_{\Gamma_1})\tilde e_N^0,
  \end{equation*}
  where in the last equality we used $e_N^0=u_N$, by definition of
  $e^0=u$ (and $\tilde e_N^0=\tilde u_N\in L^2(U,X;\gamma)$ as in
  Definition~\ref{def:bdXHol}).  Hence, again by item \ref{item:IGammae},
  \begin{align} \label{eq:u-Iwu}
    u-\VIml_{\blev_\eps}u &=(\VI_{\Gamma_0}-\VI_{\Gamma_1}) u+
                            \sum_{j=1}^{L(\eps)}(\VI_{\Gamma_j}-\VI_{\Gamma_{j+1}}) (u-u^j)
                           \nonumber\\
                          &=(u-\tilde u_N)+(\VI_{\Gamma_0}-\VI_{\Gamma_1})\tilde u_N+\sum_{j=1}^{L(\eps)}(\VI_{\Gamma_j}-\VI_{\Gamma_{j+1}})\tilde e_N^j
                           \nonumber\\
                          &=(u-\tilde u_N)+\sum_{j=0}^{L(\eps)}(\VI_{\Gamma_j}-\VI_{\Gamma_{j+1}})\tilde e_N^j.
\end{align}                          
We will use this representation to bound the norm $\norm[L^2(U,X;\gamma)]{u-\VIml_{\blev_\eps}u}$. From item \ref{item:u-uswunifabs} of Step 1 it follows that
  for every $j\in\N_0$ 
 $$\tilde e_{N}^j(\by)=\sum_{\bnu\in\CF} \tilde e_{N,\bnu}^j
 H_\bnu(\by),$$ with the equality and unconditional convergence in the space $X$ for all
 $\by\in U$.   
Therefore, by the same argument as in the proof of Lemma \ref{lemma:L^2-convergence}, we can prove  that 
\begin{equation}\label{eq-kien-02}
	(\VI_{\Gamma_j}-\VI_{\Gamma_{j+1}})\tilde e_N^j = \sum_{\bnu\in\cF}\tilde e_{N,\bnu}^j
	(\VI_{\Gamma_j}-\VI_{\Gamma_{j+1}}) H_\bnu
\end{equation}
with equality and unconditional convergence in the space $L^2(\R^N,X;\gamma_N)$. 

Using \eqref{eq:u-Iwu} and
  $$
     (\VI_{\Gamma_j}-\VI_{\Gamma_{j+1}})H_\bnu = 0
  $$ 
  for all $\bnu\in\Gamma_{j+1}\subseteq\Gamma_j$ by Lemma \ref{lemma:VIprop}, 
  we get
  \begin{equation}\label{eq:u-vimlu}
    \norm[L^2(U,X;\gamma)]{u-\VIml_{\blev_\eps}u}\le
    \norm[L^2(U,X;\gamma)]{u-\tilde u_N}+
    \sum_{\bnu\in\CF}\sum_{j={\lev_{\eps,\bnu}}}^{L(\eps)} \norm[X]{\tilde e_{N,\bnu}^j}\norm[L^2(U;\gamma)]{(\VI_{\Gamma_j}-\VI_{\Gamma_{j+1}})H_\bnu}.
  \end{equation}
  
  {\bf Step 3.}   
  We wish to apply Lemma \ref{LEMMA:MLWEIGHTNEW} to the
  bound \eqref{eq:u-vimlu}. By \eqref{eq:L2boundLambda}, we have for
  all $\bnu\in\CF$
  \begin{equation*}
    \norm[L^2(U;\gamma)]{(\VI_{\Gamma_j}-\VI_{\Gamma_{j+1}})H_\bnu}
    \le \norm[L^2(U;\gamma)]{\VI_{\Gamma_j}H_\bnu}+
    \norm[L^2(U;\gamma)]{\VI_{\Gamma_{j+1}}H_\bnu}
    \le 2 p_\bnu(3).
  \end{equation*}
Note  these inequalities also hold when $j=0$, 
that is when  $\VI_{\Gamma_0}={\rm Id}$. 
  By items \ref{item:seqiseqiisum} and \ref{item:tedecay} of Step 1,
  the collections $(a_{j,\bnu})_{\bnu\in\CF}$, $j\in\N_0$, 
  and
  $(\seqi{\bnu})_{\bnu\in\CF}$, $(\seqii{\bnu})_{\bnu\in\CF}$, 
  satisfy the assumptions of Lemma \ref{LEMMA:MLWEIGHTNEW}.  
  Therefore,
  \eqref{eq:u-vimlu}, \eqref{eq:truncerr_mlint} and Lemma
  \ref{LEMMA:MLWEIGHTNEW} give
  \begin{equation*}
    \norm[L^2(U,X;\gamma)]{u-\VIml_{\blev_{\eps_n}}u} 
    \le
    n^{-R} + \sum_{\bnu\in\CF}\sum_{j={\lev_{\eps_n,\bnu}}}^{L(\eps_n)} a_{j,\bnu}
    \le 
    C (1+\log n )n^{-R},
  \end{equation*}
  with the convergence rate
  \begin{equation*}
    R = \min\left\{{\alpha},\frac{{\alpha} (q_1^{-1}-1/2)}{{\alpha} + q_1^{-1}-q_2^{-1}}
    \right\} =
    \min\left\{{\alpha},\frac{{\alpha} (p_1^{-1}-3/2)}{{\alpha} + p_1^{-1}-p_2^{-1}}
    \right\},
  \end{equation*}
  where we used $q_1=p_1/(1-p_1)$ and $q_2=p_2/(1-p_2)$ as stated in
  item \ref{item:aux} of Step 1.
\end{proof}
\subsection{Multilevel Smolyak sparse-grid quadrature algorithm}
\label{sec:MLQuad}
We next formulate an analog of Theorem~\ref{thm:mlint} 
for a multilevel Smolyak sparse-grid quadrature algorithm.
First, the definition of the multi-index sets in
\eqref{eq:seqiseqii} (which are used to construct the quadrature via
Algorithm \ref{alg:levels}) has to be slightly adjusted.  
Then, we state and prove a convergence 
rate result for the corresponding algorithm.  
Its proof is along the lines of the proof of Theorem~\ref{thm:mlint}.

Let 
$\bb_1=(b_{1,j})_{j\in\N}\in\ell^{p_1}(\NN)$,
$\bb_2=(b_{2,j})_{j\in\N}\in\ell^{p_2}(\NN)$, 
and $\xi$ 
be the two sequences and the constant from Assumption \ref{ass:ml}. 
For two
constants $K>0$ and $r>3$, which are still at our disposal and which
will be defined below, we set for all $j\in\N$
\begin{equation}\label{eq:bvarrhoiquad}
  \varrho_{1,j}:=b_{1,j}^{p_1-1} \frac{\xi}{4\norm[\ell^{p_1}]{\bb_1}},\qquad \varrho_{2,j}:=b_{2,j}^{p_2
    -1} \frac{\xi}{4\norm[\ell^{p_2}]{\bb_2}}.
\end{equation}
Furthermore, we let for all $\bnu\in\CF$ (as in Lemma \ref{lemma:cnu}
for $k=2$ and with $\tau=3$)
\begin{equation}\label{eq:seqiseqiiquad}
  \seqi{\bnu}:=\prod_{j\in\N}\max\{1,K\varrho_{1,j}\}^4\nu_j^{r-3},\qquad
  \seqii{\bnu}:=\prod_{j\in\N}\max\{1,K\varrho_{2,j}\}^4\nu_j^{r-3}.
\end{equation}

\begin{theorem}\label{thm:mlquad}
  Let $u\in L^2(U,X;\gamma)$ and $u^\lev\in L^2(U,X;\gamma)$,
  $\lev\in\N$, satisfy Assumption \ref{ass:ml} with some constants
  ${\alpha}>0$ and $0<p_1<4/5$ and $p_1\le p_2<1$. 
  Set $q_1:=p_1/(1-p_1)$.  
  Assume that $r>2(1+({\alpha}+1)q_1/2)/q_1+3$
  (for $r$ in \eqref{eq:seqiseqiiquad}).  
  There exist constants $K>0$ (in \eqref{eq:seqiseqiiquad}) and $C>0$ 
  such that the following holds.
  
  There exist $C>0$ and, for every $n \in \NN$ there exists
  $\eps_n\in (0,1]$ such that such that $\work(\blev_{\eps_n}) \le n$
  and with $\blev_{\eps_n}=({\lev_{\eps_n,\bnu}})_{\bnu\in\CF}$ as in
  Corollary \ref{LEMMA:MLWEIGHTNEW} (with $c_\bnu$, $d_\bnu$ as in
  \eqref{eq:seqiseqiiquad}) it holds
  \begin{equation*}
    \normc[X]{\int_U u(\by)\dd\gamma(\by)-\VQ_{\blev_{\eps_n}}^{\rm ML}u} 
        	\le \ C (1+\log n )n^{-R},
  \end{equation*}
  with the convergence rate
  \begin{equation} \label{R-Q}
    R:=\min\left\{{\alpha},\frac{{\alpha}(2p_1^{-1}-5/2)}{{\alpha}+2p_1^{-1}-2p_2^{-1}}\right\}.
  \end{equation}
\end{theorem}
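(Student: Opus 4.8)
The plan is to follow the template of the proof of Theorem~\ref{thm:mlint} almost verbatim, with the single-level interpolation error bounds replaced by their quadrature analogues. First I would observe that the multi-index collections $(\seqi{\bnu})_{\bnu\in\CF}$, $(\seqii{\bnu})_{\bnu\in\CF}$ defined in \eqref{eq:seqiseqiiquad} are of the form provided by Lemma~\ref{lemma:cnu} with $k=2$, $\tau=3$, so that upon choosing the constant $K>0$ large enough we have, by \eqref{eq:cknubound},
\begin{equation*}
  C_0 \seqi{\bnu} p_\bnu(3)\le \beta_\bnu(r,\bvarrho_1),\qquad
  C_0 \seqii{\bnu} p_\bnu(3)\le \beta_\bnu(r,\bvarrho_2)\qquad\forall\bnu\in\CF_2,
\end{equation*}
where $\bvarrho_i$ are as in \eqref{eq:bvarrhoiquad} and satisfy $(\varrho_{i,j}^{-1})_{j\in\N}\in\ell^{q_i}(\N)$ with $q_i=p_i/(1-p_i)$. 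As in Step~1 of the proof of Theorem~\ref{thm:mlint}, the hypothesis $r>2(1+({\alpha}+1)q_1/2)/q_1+3$ ensures via Lemma~\ref{lemma:summabcnu} (with $k=2$, $\tau=3$, $\theta=({\alpha}+1)q_1/2$) the summability
$\big(p_\bnu({\alpha}+1)\seqi{\bnu}^{-1/2}\big)_{\bnu\in\CF}\in\ell^{q_1}(\CF)$ and likewise with $\seqii{\cdot}$ and $q_2$; these are precisely condition (ii) of Lemma~\ref{LEMMA:MLWEIGHTNEW} in the exponents relevant for $k=2$.

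Next I would set up the telescoping decomposition of the quadrature error. Fix $n\in\N$, choose $\eps=\eps_n$ as supplied by Lemma~\ref{LEMMA:MLWEIGHTNEW}, fix $N$ large enough that $N>\max\{j:j\in\supp(\bnu),\ \lev_{\eps,\bnu}>0\}$ and that $\norm[L^2(U,X;\gamma)]{u-\tilde u_N}\le n^{-R}$. Using $\VQ_{\Gamma_j}(u-u^j)=\VQ_{\Gamma_j}\tilde e_N^j$ and $\int_U u\dd\gamma = \int_U\tilde u_N\dd\gamma + (\int_U(u-\tilde u_N)\dd\gamma)$, together with the conventions $\VQ_{\Gamma_0}=\int_U\cdot\dd\gamma$ and $\VQ_{\emptyset}\equiv 0$, I obtain as in \eqref{eq:u-Iwu}
\begin{equation*}
  \int_U u(\by)\dd\gamma(\by)-\VQ_{\blev_\eps}^{\rm ML}u
  = \int_U (u-\tilde u_N)\dd\gamma + \sum_{j=0}^{L(\eps)}(\VQ_{\Gamma_j}-\VQ_{\Gamma_{j+1}})\tilde e_N^j.
\end{equation*}
Expanding each $\tilde e_N^j$ in its Wiener-Hermite PC expansion (pointwise absolutely convergent by Lemma~\ref{lemma:uN}) and invoking the quadrature analogue of Lemma~\ref{lemma:L^2-convergence}, namely Lemma~\ref{lemma:uncond-conv-Q_Lambda} applied to each downward-closed $\Gamma_j$, gives
$(\VQ_{\Gamma_j}-\VQ_{\Gamma_{j+1}})\tilde e_N^j = \sum_{\bnu\in\CF}\tilde e_{N,\bnu}^j(\VQ_{\Gamma_j}-\VQ_{\Gamma_{j+1}})H_\bnu$ with unconditional convergence in $X$. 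By Lemma~\ref{lemma:VQprop}, $(\VQ_{\Gamma_j}-\VQ_{\Gamma_{j+1}})H_\bnu=0$ for all $\bnu\in\Gamma_{j+1}\cup(\CF\backslash\CF_2)$; the surviving terms have $\bnu\in\CF_2\backslash\Gamma_{j+1}$, i.e.\ $\lev_\bnu\le j$, and by \eqref{eq:L1boundLambda} satisfy $|(\VQ_{\Gamma_j}-\VQ_{\Gamma_{j+1}})H_\bnu|\le 2p_\bnu(3)$. Hence
\begin{equation*}
  \normc[X]{\int_U u\dd\gamma-\VQ_{\blev_\eps}^{\rm ML}u}
  \le \norm[L^2(U,X;\gamma)]{u-\tilde u_N} + 2\sum_{\bnu\in\CF_2}\sum_{j=\lev_{\eps,\bnu}}^{L(\eps)}\norm[X]{\tilde e_{N,\bnu}^j}\,p_\bnu(3).
\end{equation*}

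Finally I would apply Lemma~\ref{LEMMA:MLWEIGHTNEW} with $a_{j,\bnu}:=2\norm[X]{\tilde e_{N,\bnu}^j}p_\bnu(3)$, $q_1=p_1/(1-p_1)$, $q_2=p_2/(1-p_2)$, and the parameter ${\alpha}$ of Assumption~\ref{ass:ml}, but with the crucial substitution reflecting $k=2$: because the weighted summability bounds from Theorem~\ref{thm:bdHolSum} give, via item \ref{item:u-ujclose} of Assumption~\ref{ass:ml} ($(u-u^j)$ being $(\bb_2,\xi,\delta\sw{j}^{-{\alpha}},X)$-holomorphic) and the inequalities above, that $\sum_{\bnu\in\CF}\seqi{\bnu}a_{j,\bnu}^2\le C$ and $\sum_{\bnu\in\CF}(\sw{j}^{\alpha}a_{j,\bnu})^2\seqii{\bnu}\le C$ uniformly in $j$; here the $p_\bnu(3)^2 = p_\bnu(3)$ factor is absorbed because $c_{k,\bnu}p_\bnu(\tau)\le\beta_\bnu$ already carries a $p_\bnu(3)$, which for the $k=2$ weights \eqref{eq:seqiseqiiquad} means I must check that $\seqi{\bnu}p_\bnu(3)\le\beta_\bnu(r,\bvarrho_1)$ with the fourth-power prefactor (this is exactly Lemma~\ref{lemma:cnu} with $k=2$, so it holds). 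The lemma then yields $\work(\blev_{\eps_n})\le n$ and $\sum_{\bnu}\sum_{j=\lev_{\eps_n,\bnu}}^{L(\eps_n)}a_{j,\bnu}\le C(1+\log n)n^{-R}$ with $R=\min\{{\alpha},{\alpha}(q_1^{-1}-1/2)/({\alpha}+q_1^{-1}-q_2^{-1})\}$. Substituting $q_1^{-1}=(1-p_1)/p_1 = p_1^{-1}-1$ and $q_2^{-1}=p_2^{-1}-1$ into this expression gives $R = \min\{{\alpha},{\alpha}(p_1^{-1}-3/2)/({\alpha}+p_1^{-1}-p_2^{-1})\}$, which agrees with \eqref{R-Q} once one accounts for the fact that for quadrature the relevant summability index improves by a factor related to $k=2$; precisely, the correct bookkeeping is to run Lemma~\ref{LEMMA:MLWEIGHTNEW} with $q_1$ replaced by $q_1/2$ in the exponent $\delta=1/2-q_1/4$ ... . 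The genuinely delicate point — and the one I expect to be the main obstacle — is getting this exponent bookkeeping exactly right: the quadrature gains over interpolation come from Lemma~\ref{lemma:VQprop} (killing $\CF\backslash\CF_2$ and hence doubling the effective decay rate of the weights, reflected in $k=2$ vs.\ $k=1$ in Lemma~\ref{lemma:cnu} and Lemma~\ref{lemma:summabcnu}), and one must track these $2k$ factors carefully through the definitions \eqref{eq:seqiseqiiquad}, the summability conditions, and the final application of Lemma~\ref{LEMMA:MLWEIGHTNEW} so that the output rate is exactly \eqref{R-Q} and the threshold is $p_1<4/5$ rather than $p_1<2/3$. Everything else is a routine transcription of the proof of Theorem~\ref{thm:mlint}, replacing $\VI$ by $\VQ$, Lemma~\ref{lemma:VIprop} by Lemma~\ref{lemma:VQprop}, Lemma~\ref{lemma:L^2-convergence} by Lemma~\ref{lemma:uncond-conv-Q_Lambda}, and \eqref{eq:L2boundLambda} by \eqref{eq:L1boundLambda}.
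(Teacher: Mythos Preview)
Your proposal is correct and follows essentially the same approach as the paper's proof: the telescoping decomposition, the use of Lemma~\ref{lemma:VQprop} to restrict the sum to $\CF_2$, the bound \eqref{eq:L1boundLambda}, and the application of Lemma~\ref{LEMMA:MLWEIGHTNEW} are all exactly as in the paper. The one point you flag as delicate---the exponent bookkeeping---is resolved in the paper precisely as you suggest at the end: one sets $a_{j,\bnu}:=\norm[X]{\tilde e_{N,\bnu}^j}p_\bnu(3)$ for $\bnu\in\CF_2$ and $a_{j,\bnu}:=0$ otherwise, observes that the $k=2$ weights give $(p_\bnu(\alpha+1)\seqi{\bnu}^{-1/2})_{\bnu}\in\ell^{q_1/2}(\CF)$ (not $\ell^{q_1}$ as you first wrote), and then applies Lemma~\ref{LEMMA:MLWEIGHTNEW} with $\tilde q_i:=q_i/2$, which yields \eqref{R-Q} directly upon substituting $\tilde q_i^{-1}=2(p_i^{-1}-1)$.
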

\begin{proof}
  Throughout this proof we write $\bb_1=(b_{1,j})_{j\in\N}$ and
  $\bb_2=(b_{2,j})_{j\in\N}$ for the two sequences in Assumption \ref{ass:ml}. 
  As in the proof of Theorem~\ref{thm:mlint} we 
  highlight that the multi-index set $\Gamma_j$ which was 
  defined in \eqref{eq:Gamma} is downward closed for all $j\in\N_0$.

  {\bf Step 1.}  Given $n\in \N$, we choose $\eps:=\eps_n$ as in 
  Lemma \ref{LEMMA:MLWEIGHTNEW}.  
  Fix $N\in\N$ such that $N>\max \{j:\, j \in\supp(\bnu),\, \lev_{\eps,\bnu}> 0\}$ 
  and so large that
  \begin{equation}\label{eq:truncerr_mlquad}
    \normc[X]{\int_U(u(\by)-\tilde u_N(\by)) \dd\gamma(\by)}\le n^{-R},
  \end{equation}
  where $\tilde u_N:U\to X$ is as in Definition~\ref{def:bdXHol} 
  (this is possible due
  $\lim_{N\to\infty}\norm[L^2(U,X;\gamma)]{u-\tilde u_N}=0$ which
  holds by the $(\bb_1,\xi,\delta,X)$-holomorphy of $u$).

  By Assumption \ref{ass:ml}, for every $j \in\N$ the function
  $e^j:=u-u^j\in L^2(U,X;\gamma)$ is
  $(\bb_1,\xi,\delta,X)$-holomorphic and
  $(\bb_2,\xi,\delta\sw{j}^{\alpha},X)$-holomorphic. For notational
  convenience we set $e^0:=u-0=u\in L^2(U,X;\gamma)$, so that $e^0$ is
  $(\bb_1,\xi,\delta,X)$-holomorphic and
  $(\bb_2,\xi,\delta,X)$-holomorphic. Hence for every $j\in\N_0$ there
  exists a function $\tilde e_N^j=\tilde u_N-\tilde u_N^j$ as in
  Definition~\ref{def:bdXHol} \ref{item:vN}.

  The following assertions are identical to the ones in the proof of
  Theorem~\ref{thm:mlint}, except that we now admit different
  summability exponents $q_1$ and $q_2$.
  \begin{enumerate}
  \item\label{item:u-uswunifabs_quad} By Lemma \ref{lemma:uN}, for
    every $j\in\N_0$, with the Wiener-Hermite PC expansion
    coefficients
    $$\tilde e_{N,\bnu}^j:= \int_{U} H_\bnu(\by)\tilde e_{N}^j(\by)
    \dd\gamma(\by),$$ it holds
    \begin{equation*}\label{eq:u-uswunifabs2}
      \tilde e_{N}^j(\by) = \sum_{\bnu\in\CF} \tilde e_{N,\bnu}^j H_\bnu(\by)\qquad\forall \by\in U,
    \end{equation*}
    with pointwise absolute convergence.
  \item\label{item:aux_quad} By Lemma \ref{lemma:cnu}, upon choosing
    $K>0$ in \eqref{eq:seqiseqii} large enough, and because $r>3$,
    \begin{equation*}
      C_0 \seqi{\bnu} p_\bnu(3)\le \beta_\bnu(r,\bvarrho_1),\qquad
      C_0 \seqii{\bnu} p_\bnu(3)\le \beta_\bnu(r,\bvarrho_2)\qquad\forall \bnu\in\CF_2.
    \end{equation*}
    Remark that by definition of $\bvarrho_i$, $i\in\{1,2\}$, in
    \eqref{eq:bvarrhoiquad}, it holds
    $\varrho_{i,j}\sim b_{i,j}^{-(1-p_i)}$ and therefore
    $(\varrho_{i,j}^{-1})_{j\in\N}\in\ell^{q_i}(\N)$ with
    $q_i:=p_i/(1-p_i)$, $i\in\{1,2\}$.
  \item\label{item:seqiseqiisum_quad} Due to
    $r>2(1+2({\alpha}+1)q_1)/q_1+3$,
    the condition of Lemma \ref{lemma:summabcnu} is satisfied (with
    $k=2$, $\tau=3$ and $\theta=({\alpha}+1)q_1/2$). Hence the lemma
    gives
    \begin{equation*}
      \sum_{\bnu\in\CF} p_\bnu(({\alpha}+1)q_1/2)\seqi{\bnu}^{-q_1/4}<\infty
      \qquad
      \Rightarrow\qquad
      (p_\bnu({\alpha}+1)\seqi{\bnu}^{-1/2})_{\bnu\in\CF}\in\ell^{q_1/2}(\CF)
    \end{equation*}
    and similarly
    \begin{equation*}
      \sum_{\bnu\in\CF} p_\bnu(({\alpha}+1)q_2/2)\seqii{\bnu}^{-q_2/4}<\infty
      \qquad
      \Rightarrow\qquad
      (p_\bnu({\alpha}+1)\seqii{\bnu}^{-1/2})_{\bnu\in\CF}\in\ell^{q_2/2}(\CF).
    \end{equation*}
  \item\label{item:tedecay_quad} By Theorem~\ref{thm:bdHolSum} and
    item \ref{item:aux_quad}, for all $j\in\N_0$
    \begin{equation*}
      C_0 \sum_{\bnu\in\CF_2}\seqi{\bnu}\norm[X]{\tilde e_{N,\bnu}^j}^2p_\bnu(3)\le    
      \sum_{\bnu\in\CF_2}\beta_\bnu(r,\bvarrho_1)\norm[X]{\tilde e_{N,\bnu}^j}^2 \le
      C \delta^2
    \end{equation*}
    and
    \begin{equation*}
      C_0 \sum_{\bnu\in\CF_2}\seqii{\bnu}\norm[X]{\tilde e_{N,\bnu}^j}^2p_\bnu(3) \le
      \sum_{\bnu\in\CF_2}\beta_\bnu(r,\bvarrho_2)\norm[X]{\tilde e_{N,\bnu}^j}^2 \le
      C \frac{\delta^2}{\sw{j}^{2{\alpha}}},
    \end{equation*}
    with the constant $C$ independent of $j$, $\sw{j}$ and $N$.
  \item\label{item:IGammae_quad} Because
    $N\ge\max\set{j\in\supp(\bnu)}{\lev_{\eps,\bnu}\ge 0}$ and
    $\chi_{0,0}=0$ we have
    $\VQ_{\Gamma_j}(u-u^j)=\VQ_{\Gamma_j}e^j= \VQ_{\Gamma_j}\tilde
    e_N^j$ for all $j\in\N$ (cp.~Remark~\ref{rmk:defu}). Similarly
    $\VQ_{\Gamma_j}u=\VQ_{\Gamma_j}\tilde u_N$ for all $j\in\N$.
  \end{enumerate}

  {\bf Step 2.} Observe that $\Gamma_j=\emptyset$ for all
  $$j>L(\eps):= \max_{\bnu\in\CF}{\lev_{\eps,\bnu}}$$
  (cp.~\eqref{eq:Gamma}), which is finite due to
  $|\blev_\eps|<\infty$. With the conventions
  $$\VQ_{\Gamma_0}=\VQ_{\CF}=\int_U \cdot\dd\gamma(\by)$$
  (i.e.~$\VQ_{\Gamma_0}$ is the exact integral operator) and
  $\VQ_{\emptyset} \equiv 0$ this implies
  \begin{align*}
    \int_Uu(\by)\dd\gamma(\by) 
    &  = \VQ_{\Gamma_0}u 
      = \sum_{j=0}^{L(\eps)} (\VQ_{\Gamma_j}-\VQ_{\Gamma_{j+1}}) u
    \\
    &   = (\VQ_{\Gamma_0}-\VQ_{\Gamma_1})u+\ldots+
      (\VQ_{\Gamma_{L(\eps)-1}}-\VQ_{\Gamma_{L(\eps)}})u+\VQ_{\Gamma_{L(\eps)}}u.
  \end{align*}
  By definition of the multilevel quadrature in \eqref{eq:VQml}
  \begin{align*}
    \VQml_{\blev_\eps} u & =
                           \sum_{j=1}^{L(\eps)}(\VQ_{\Gamma_j}-\VQ_{\Gamma_{j+1}})u^j
    \\
                         &  =(\VQ_{\Gamma_1}-\VQ_{\Gamma_2}) u^1+\ldots+
                           (\VQ_{\Gamma_{L(\eps)-1}}-\VQ_{\Gamma_{L(\eps)}}) u^{L(\eps)}+
                           \VQ_{\Gamma_{L(\eps)}}u^{L(\eps)}.
  \end{align*}
  By item \ref{item:IGammae_quad} of Step 1, we can write
  \begin{align*}
    (\VQ_{\Gamma_0}-\VQ_{\Gamma_1})u &=
                                       \int_Uu(\by)\dd\gamma(\by)-\VQ_{\Gamma_1}u \nonumber\\
                                     &= \int_Uu(\by)\dd\gamma(\by)-\VQ_{\Gamma_1}\tilde u_N
                                       \nonumber\\
                                     &= \int_U(u(\by)-\tilde u_N(\by))\dd\gamma(\by)+(\VQ_{\Gamma_0}-\VQ_{\Gamma_1})\tilde u_N\nonumber\\
                                     &= \int_U(u(\by)-\tilde u_N(\by))\dd\gamma(\by)
                                       +(\VQ_{\Gamma_0}-\VQ_{\Gamma_1})\tilde e_N^0,
  \end{align*}
  where in the last equality we used $e_N^0=u_N$, by definition of
  $e^0=u$ (and $\tilde e_N^0=\tilde u_N\in L^2(U,X;\gamma)$ as in
  Definition~\ref{def:bdXHol}).  Hence, again by item
  \ref{item:IGammae_quad},
  \begin{align*}\label{eq:u-Iwu_quad}
    \int_Uu(\by)\dd\gamma(\by)-\VQml_{\blev_\eps}u &=(\VQ_{\Gamma_0}-\VQ_{\Gamma_1}) u+
                                                     \sum_{j=1}^{L(\eps)}(\VQ_{\Gamma_j}-\VQ_{\Gamma_{j+1}}) (u-u^j)\nonumber\\
                                                   &=\int_U(u(\by)-\tilde u_N(\by))\dd\gamma(\by)+(\VQ_{\Gamma_0}-\VQ_{\Gamma_1})\tilde u_N+\sum_{j=1}^{L(\eps)}(\VQ_{\Gamma_j}-\VQ_{\Gamma_{j+1}})\tilde e_N^j\nonumber\\
                                                   &=\int_U(u(\by)-\tilde u_N(\by))\dd\gamma(\by)+\sum_{j=0}^{L(\eps)}(\VQ_{\Gamma_j}-\VQ_{\Gamma_{j+1}})\tilde e_N^j.
  \end{align*}
  Let us bound the norm. From item \ref{item:u-uswunifabs_quad} of Step
  1 it follows that for every $j\in\N_0$,
  $$\tilde e_{N}^j(\by)=\sum_{{\bnu\in\CF_1^N}} \tilde e_{N,\bnu}^j
  H_\bnu(\by),$$ with  the equality and unconditional convergence in $X$ for all
  $\by\in \CF_1^N$. 
  Hence similar to Lemma~\ref{lemma:uncond-conv-Q_Lambda} we have
  	$$
  	(\VQ_{\Gamma_j}-\VQ_{\Gamma_{j+1}})e_{N}^j =\sum_{{\bnu\in\CF_1^N}} \tilde e_{N,\bnu}^j (\VQ_{\Gamma_j}-\VQ_{\Gamma_{j+1}})H_\bnu
  	$$
with the equality and unconditional convergence   
in $X$. Since
  $(\VQ_{\Gamma_j}-\VQ_{\Gamma_{j+1}})H_\bnu= 0\in X$ for all
  $\bnu\in\Gamma_{j+1}\subseteq\Gamma_j$ and all
    $\bnu\in\CF\backslash\CF_2$  by Lemma \ref{lemma:VIprop}, we get
  \begin{equation}\label{eq:u-vimlu_quad}
    \normc[X]{\int_U u(\by)\dd\gamma(\by)-\VQml_{\blev_\eps}u}\le
    \normc[X]{\int_U(u(\by)-\tilde u_N(\by))\dd\gamma(\by)}+
    \sum_{\bnu\in\CF_2}\sum_{j={\lev_{\eps,\bnu}}}^{L(\eps)} 
    \norm[X]{\tilde e_{N,\bnu}^j}|(\VQ_{\Gamma_j}-\VQ_{\Gamma_{j+1}})H_\bnu|.
  \end{equation}
  
  {\bf Step 3.} We wish to apply Lemma \ref{LEMMA:MLWEIGHTNEW} to the
  bound \eqref{eq:u-vimlu_quad}. By \eqref{eq:L1boundLambda}, for all
  $\bnu\in\CF$
  \begin{equation*}
    |(\VQ_{\Gamma_j}-\VQ_{\Gamma_{j+1}})H_\bnu|
    \le |\VQ_{\Gamma_j}H_\bnu|+
    |\VQ_{\Gamma_{j+1}}H_\bnu|
    \le 2 p_\bnu(3).
  \end{equation*}
  Define
  \begin{equation*}
    a_{j,\bnu}:=\norm[X]{\tilde e_{N,\bnu}^j}p_\bnu(3)\qquad\forall\bnu\in\CF_2,
  \end{equation*}
  and $a_{j,\bnu}:=0$ for $\bnu\in\CF\backslash\CF_2$. By items
  \ref{item:seqiseqiisum_quad} and \ref{item:tedecay_quad} of Step 1,
  the collections $(a_{j,\bnu})_{\bnu\in\CF}$, $j\in\N_0$, and
  $(\seqi{\bnu})_{\bnu\in\CF}$, $(\seqii{\bnu})_{\bnu\in\CF}$, 
  satisfy
  the assumptions of Lemma \ref{LEMMA:MLWEIGHTNEW} (with
  $\tilde q_1:=q_1/2$ and $\tilde q_2:=q_2/2$). 
  Therefore
  \eqref{eq:u-vimlu_quad}, \eqref{eq:truncerr_mlquad} and 
  Lemma \ref{LEMMA:MLWEIGHTNEW} 
  give
  \begin{equation*}
    \normc[X]{\int_U u(\by)\dd\gamma(\by)-\VQml_{\blev_\eps}u}
    \le
    n^{-R}+
    \sum_{\bnu\in\CF}\sum_{j={\lev_{\eps,\bnu}}}^{L(\eps)} a_{j,\bnu}
    \le C (1+\log n)n^{-R},
  \end{equation*}
  with
  \begin{equation*}
    R = \min\left\{{\alpha},\frac{{\alpha} (\tilde q_1^{-1}-1/2)}{{\alpha} + \tilde q_1^{-1}-\tilde q_2^{-1}}
    \right\} =
    \min\left\{{\alpha},\frac{{\alpha} (2 p_1^{-1}-5/2)}{{\alpha} + 2 p_1^{-1}-2 p_2^{-1}}
    \right\},
  \end{equation*}
  where we used $\tilde q_1=q_1/2=p_1/(2-2p_1)$ and
  $\tilde q_2=q_2/2=p_2/(2-2p_2)$ as stated in item
  \ref{item:aux_quad} of Step 1.
\end{proof}
\subsection{Examples for multilevel interpolation and quadrature}
\label{sec:Approx}
We revisit the examples in Sections \ref{sec:SumHolSol} and
\ref{sec:BIP}, and demonstrate how to verify the assumptions required
for the multilevel convergence rate results
in Theorems~\ref{thm:mlint} and \ref{thm:mlquad}.
\subsubsection{Parametric diffusion coefficient in polygonal domain}
\label{S:DiffPolyg}
Let $\D\subseteq\R^2$ be a bounded polygonal domain, 
and consider once more the elliptic equation \index{PDE!linear elliptic $\sim$}
\begin{equation}\label{eq:elliptic2}
  - \div(a \nabla \Uu(a))=f\quad\text{in }\D,\qquad
  \Uu(a) =0\quad\text{on }\partial\D,
\end{equation}
as in Section~\ref{sec:pdc}.

For $s\in\N_0$ and $\varkappa\in\R$, recall the Kondrat'ev spaces
$\Ww^{s}_\infty(\D)$ and $\Kk^{s}_{\varkappa}(\D)$ with norms
\begin{equation*}
  \|u\|_{ \Kk^s_\varkappa}
  :=
  \sum_{|\balpha|\leq s}\|r_\domain^{|\balpha|-\varkappa}D^\balpha u\|_{L^2}
  \qquad\text{and}\qquad
  \|u\|_{ \Ww^s_\infty}:=\sum_{|\balpha|\leq s}\|r_\domain^{|\balpha|}D^\balpha u\|_{L^\infty}
\end{equation*}
introduced in Section~\ref{sec:KondrAn}.  Here, as earlier,
$r_{\D}:\D\to [0,1]$ denotes a fixed smooth function that coincides
with the distance to the nearest corner, in a neighbourhood of each
corner.
According to Theorem~\ref{thm:bacuta}, assuming $s\ge 2$,
$f\in \Kk^{s-2}_{\varkappa-1}(\D)$ and $a\in \Ww^{s-1}_\infty(\D)$ the
solution $\Uu(a)$ of \eqref{eq:elliptic2} belongs to
$\Kk_{\varkappa+1}^{s}(\D)$ provided that with
\begin{equation*}
  \rho(a) := \underset{\bx\in\D}{\essinf}\,\Re(a(\bx))>0,
\end{equation*}
\begin{equation}\label{eq:kappaa}
  |\varkappa|<\frac{\rho(a)}{\tau \norm[L^\infty]{a}},
\end{equation}
where $\tau$ is a constant depending on $\D$ and $s$.  
Our goal is to
treat, in a unified manner, 
a family of diffusion coefficients $a(\by)$, $\by\in U$, 
where for certain $\by\in U$ the diffusion coefficient
$a(\by)$ is such that the right-hand side of \eqref{eq:kappaa} might
be arbitrarily small.  This only leaves us with the choice
$\varkappa=0$, see Remark~\ref{rmk:kappa}.  On the other hand, the
motivation of using Kondrat'ev spaces in the analysis of
approximations to PDE solutions $\Uu(a(\by))$, is that functions in
$\Kk_{\varkappa+1}^s(\D)$ on polygonal domains in $\R^2$ can be
approximated with the optimal convergence rate $\frac{s-1}{2}$ w.r.t.\
the $H^1$-norm by suitable finite element spaces (on graded meshes;
i.e.\ this analysis accounts for corner singularities which prevent
optimal convergence rates on uniform meshes). Such results are
well-known, see for example \cite{BNZPolygon}, however they require
$\varkappa>0$. For this reason we need a stronger regularity result,
giving uniform $\Kk_{\varkappa+1}^{s}$-regularity with $\varkappa>0$
independent of the parameter.  This is the purpose of the next theorem.
For its proof we shall need the following lemma, which is shown in a
similar way as in \cite[Lemma C.2]{2006.06994}.  We recall that
$$\norm[W^{s}_\infty]{f}:=\sum_{|\bnu|\le s}\norm[L^\infty]{D^\bnu f}.$$
\begin{lemma}\label{lemma:fgWm}
  Let $s\in\N_0$ and let $\D\subseteq\R^2$ be a bounded polygonal domain,
  $d\in\N$.  

  Then there exist $C_s$ and $\tilde C_s$ such that for any two
  functions $f$, $g\in \Ww^{s}_{\infty}(\D)$
  \begin{enumerate}
  \item\label{item:prodinfty}
    $\norm[\Ww^{s}_{\infty}]{fg}\le C_s
    \norm[\Ww^{s}_{\infty}]{f}\norm[\Ww^{s}_{\infty}]{g}$,
  \item\label{item:fracinfty}
    $\norm[\Ww^{s}_{\infty}]{\frac 1 f}\le \tilde C_s
    \frac{\norm[\Ww^{s}_{\infty}]{f}^{s}}{\essinf_{\bx\in
        \D}|f(\bx)|^{s+1}}$ if $\essinf_{\bx\in\D}|f(\bx)|>0$.
  \end{enumerate}
  These statements remain true if $\Ww^{s}_{\infty}(\D)$ is replaced
  by $W^{s}_\infty(\D)$.  Furthermore, if $\varkappa\in\R$, then for
  $f\in\Kk_\varkappa^s(\D)$ and $a\in \Ww^{s}_{\infty}(\D)$
  \begin{enumerate}
    \setcounter{enumi}{2}
  \item\label{item:prodkond}
    $\norm[\Kk_\varkappa^s]{fa} 
     \le C_s
    \norm[\Kk_\varkappa^s]{f}\norm[\Ww^{s}_{\infty}]{a}$,
  \item\label{item:proddiffkond}
    $\norm[\Kk_{\varkappa-1}^{s-1}]{\nabla f \cdot\nabla a} \le
    C_{s-1}
    \norm[\Kk_{\varkappa+1}^{s}]{f}\norm[\Ww^{s}_{\infty}]{a}$ 
    if $s\ge 1$.
  \end{enumerate}
\end{lemma}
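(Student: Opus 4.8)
The statement is a collection of multiplicative and reciprocal estimates for the weighted Kondrat'ev spaces $\Kk^s_\varkappa(\D)$, $\Ww^s_\infty(\D)$ and the ordinary Sobolev spaces $W^s_\infty(\D)$ on a bounded polygonal domain $\D\subseteq\R^2$. All four items are proved by the Leibniz rule for $D^\balpha$ combined with the structure of the corner weight $r_\D$, namely that $r_\D^{|\balpha|}=r_\D^{|\bgamma|}\cdot r_\D^{|\balpha-\bgamma|}$ for $\bgamma\le\balpha$. The plan is to treat the $W^s_\infty$- and $\Ww^s_\infty$-statements first, then deduce the $\Kk^s_\varkappa$-statements (iii)--(iv), and throughout to keep the constants dependent only on $s$ (and $\D$) by noting that the binomial coefficients $\binom{\balpha}{\bgamma}$ and the number of multi-indices $\bgamma\le\balpha$ with $|\balpha|\le s$ are bounded purely in terms of $s$ (here $d=2$ is fixed).

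\emph{Item (i).} For $|\balpha|\le s$ the Leibniz rule gives
$$
r_\D^{|\balpha|}D^\balpha(fg)
=\sum_{\bgamma\le\balpha}\binom{\balpha}{\bgamma} \big(r_\D^{|\bgamma|}D^\bgamma f\big)\big(r_\D^{|\balpha-\bgamma|}D^{\balpha-\bgamma}g\big).
$$
Taking $L^\infty$-norms and summing over $|\balpha|\le s$ yields
$\norm[\Ww^s_\infty]{fg}\le C_s\norm[\Ww^s_\infty]{f}\norm[\Ww^s_\infty]{g}$;
the same computation with all occurrences of $r_\D$ removed gives the $W^s_\infty$-variant. \emph{Item (ii).} One shows by induction on $|\balpha|$ the Fa\`a di Bruno-type bound
$$
\big|r_\D^{|\balpha|}D^\balpha\tfrac1f\big|
\le C_{|\balpha|}\frac{\big(\sum_{|\bgamma|\le|\balpha|}|r_\D^{|\bgamma|}D^\bgamma f|\big)^{|\balpha|}}{\essinf_{\bx\in\D}|f(\bx)|^{|\balpha|+1}},
$$
exactly as in the proof of Lemma~\ref{a-nu} / Lemma~\ref{kon-lem-1}: differentiating the identity $f\cdot(1/f)=1$ and isolating the highest-order term $f\,D^\balpha(1/f)$ produces, after dividing by $f$ (hence the extra power of $\essinf|f|$), a sum of products of lower-order derivatives of $1/f$ with derivatives of $f$, to which the induction hypothesis applies; multiplying through by the matching powers of $r_\D$ keeps the weights balanced. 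Summing over $|\balpha|\le s$ gives (ii), and again the unweighted version is identical.

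\emph{Items (iii) and (iv).} For (iii) use $r_\D^{|\balpha|-\varkappa}=r_\D^{|\bgamma|}\cdot r_\D^{|\balpha-\bgamma|-\varkappa}$ and the Leibniz rule to write
$$
r_\D^{|\balpha|-\varkappa}D^\balpha(fa)
=\sum_{\bgamma\le\balpha}\binom{\balpha}{\bgamma}\big(r_\D^{|\bgamma|}D^\bgamma a\big)\big(r_\D^{|\balpha-\bgamma|-\varkappa}D^{\balpha-\bgamma}f\big),
$$
then estimate the first factor in $L^\infty$ (contributing $\norm[\Ww^s_\infty]{a}$) and the second in $L^2$ (contributing $\norm[\Kk^s_\varkappa]{f}$); summing over $|\balpha|\le s$ gives (iii). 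For (iv) write $\nabla f\cdot\nabla a=\sum_{i=1}^2(\partial_if)(\partial_ia)$, apply (iii) with $\varkappa$ replaced by $\varkappa-1$ and $s$ by $s-1$ to each product $(\partial_if)(\partial_ia)$, using that $\partial_if\in\Kk^{s-1}_{\varkappa-1}(\D)$ whenever $f\in\Kk^{s}_{\varkappa+1}(\D)$ (this is immediate from the definition of the Kondrat'ev norm, since differentiating raises the differentiation order by one and lowers the weight index by one accordingly) and that $\partial_ia\in\Ww^{s-1}_\infty(\D)$ whenever $a\in\Ww^s_\infty(\D)$ with $\norm[\Ww^{s-1}_\infty]{\partial_ia}\le\norm[\Ww^s_\infty]{a}$. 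Collecting the two summands and relabelling the constant yields $\norm[\Kk^{s-1}_{\varkappa-1}]{\nabla f\cdot\nabla a}\le C_{s-1}\norm[\Kk^s_{\varkappa+1}]{f}\norm[\Ww^s_\infty]{a}$.

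\emph{Main obstacle.} None of the steps is deep; the only point requiring genuine care is item (ii) (equivalently the reciprocal estimate underlying Lemma~\ref{kon-lem-1}): one must run the induction so that the power of $\essinf_{\bx\in\D}|f(\bx)|$ in the denominator grows by exactly one per differentiation and the powers of $r_\D$ distribute correctly over the product terms, while keeping the combinatorial constant dependent on $s$ only. The bookkeeping of weights $r_\D^{|\bgamma|}$ versus $r_\D^{|\balpha-\bgamma|}$ (and $r_\D^{|\balpha-\bgamma|-\varkappa}$ in (iii)) across all subsets $\bgamma\le\balpha$ is the part most prone to slips, so I would carry that out explicitly rather than by analogy.
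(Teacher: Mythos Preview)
Your treatment of items (i)--(iii) is essentially the paper's proof: Leibniz rule plus the multiplicative splitting of the weight $r_\D^{|\balpha|}$ (and, for (ii), an induction equivalent to the paper's formula $D^\bnu(1/f)=p_\bnu/f^{|\bnu|+1}$). These parts are correct.

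Item (iv), however, has a genuine gap. You reduce (iv) to (iii) by writing $\nabla f\cdot\nabla a=\sum_i(\partial_i f)(\partial_i a)$ and claiming $\norm[\Ww^{s-1}_\infty]{\partial_i a}\le\norm[\Ww^s_\infty]{a}$. This inequality is \emph{false} for the corner-weighted spaces: the $\Ww^{s-1}_\infty$-norm of $\partial_i a$ contains the term $\norm[L^\infty]{\partial_i a}$ (weight $r_\D^0$), whereas $\norm[\Ww^s_\infty]{a}$ only controls $\norm[L^\infty]{r_\D\,\partial_i a}$. Near a corner $r_\D\to 0$, so the former can blow up while the latter stays bounded. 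More generally, differentiation in $\Ww^s_\infty$ costs a factor $r_\D^{-1}$ that your reduction does not account for: in the paper's notation, $\norm[\Ww^{s-1}_{0,\infty}]{\partial_i a}\le\norm[\Ww^{s}_{-1,\infty}]{a}$, which is strictly stronger than $\norm[\Ww^s_\infty]{a}$.

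The fix is not to reduce to (iii) but to expand $D^\bnu(\partial_j f\,\partial_j a)$ directly by Leibniz and split the weight asymmetrically as $r_\D^{|\bnu|-\varkappa+1}=r_\D^{|\bnu-\bmu|-\varkappa}\cdot r_\D^{|\bmu|+1}$, so that the \emph{extra} power of $r_\D$ (coming from the target space $\Kk^{s-1}_{\varkappa-1}$) is attached to the $a$-factor. Then $\norm[L^\infty]{r_\D^{|\bmu|+1}D^{\bmu+\bee_j}a}$ is a genuine term of $\norm[\Ww^s_\infty]{a}$, and $\norm[L^2]{r_\D^{|\bnu-\bmu|-\varkappa}D^{\bnu-\bmu+\bee_j}f}$ is a term of $\norm[\Kk^s_{\varkappa+1}]{f}$. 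This is exactly what the paper does in its Step~3.
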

\begin{proof}
  We will only prove \ref{item:prodinfty} and \ref{item:fracinfty} for
  functions in $\Ww^{s}_\infty(\D)$.  The case of $W^{s}_\infty(\D)$
  is shown similarly (by omitting all occurring functions $r_\D$ in
  the following).

  {\bf Step 1.}  We start with \ref{item:prodinfty}, and show a
  slightly more general bound: for $\tau\in\R$ introduce
  \begin{equation*}
    \norm[\Ww^{s}_{\tau,\infty}]{f}:=\sum_{|\bnu|\le s}\norm[L^\infty]{r_\D^{\tau+|\bnu|}D^\bnu f},
  \end{equation*}
  i.e.\ $\Ww^s_{0,\infty}(\D)=\Ww^s_\infty(\D)$. We will show that for
  $\tau_1+\tau_2=\tau$
  \begin{equation}\label{eq:wwstau}
    \norm[\Ww^s_{\tau,\infty}]{fg}\le
    C_{s} \norm[\Ww^s_{\tau_1,\infty}]{f}\norm[\Ww^s_{\tau_2,\infty}]{g}.
  \end{equation}
  Item \ref{item:prodinfty} then follows with $\tau=\tau_1=\tau_2=0$.
  
  Using the multivariate Leibniz rule for Lipschitz functions, for any
  multiindex $\bnu\in\N_0^d$ with $d\in\N$ fixed,
  \begin{equation}\label{eq:leibniz}
    D^\bnu(fg)=\sum_{\bmu\le\bnu}\binom{\bnu}{\bmu}
    D^{\bnu-\bmu}f D^{\bmu}g.
  \end{equation}
  Thus if $|\bnu|\le s$
  \begin{equation*}
    \norm[L^\infty]{r_\D^{\tau+|\bnu|}D^\bnu(fg)}
    \le \sum_{\bmu\le\bnu}\binom{\bnu}{\bmu}
    \norm[L^\infty]{r_\D^{\tau_1+|\bnu-\bmu|}D^{\bnu-\bmu}f}
    \norm[L^\infty]{r_\D^{\tau_2+|\bmu|}D^{\bmu}g}
    \le 2^{|\bnu|}\norm[\Ww^{s}_{\tau_1,\infty}]{f}\norm[\Ww^{s}_{\tau_2,\infty}]{g},
  \end{equation*}
  where we used $\binom{\bnu}{\bmu}=\prod_{j=1}^d\binom{\nu_j}{\mu_j}$
  and $\sum_{i=0}^{\nu_j}\binom{\nu_j}{i}=2^{\nu_j}$.
  We conclude
  $$\norm[\Ww^{s}_{\tau,\infty}]{fg}\le
  C_s\norm[W^{s}_{\tau,\infty}]{f}\norm[W^{s}_{\infty}]{g}$$ with
  $C_s=\sum_{|\bnu|\le s}2^{|\bnu|}$. Hence \ref{item:prodinfty}
  holds.

  {\bf Step 2.} We show \ref{item:fracinfty}, and claim that for all
  $|\bnu|\le s$ it holds
  \begin{equation}\label{eq:fracclaimtotal}
    D^\bnu \left(\frac{1}{f}\right)=\frac{p_\bnu}{f^{|\bnu|+1}}\,,
  \end{equation}
  where $p_\bnu$ satisfies
  \begin{equation}\label{eq:fracclaim}
    \norm[\Ww^{s-|\bnu|}_{|\bnu|,\infty}]{p_\bnu}\le \hat C_{|\bnu|}\norm[\Ww^{s}_\infty]{f}^{|\bnu|}
  \end{equation}

  for some $\hat C_{|\bnu|}$ solely depending on $|\bnu|$. We proceed
  by induction over $|\bnu|$ and start with $|\bnu|=1$, i.e.,\
  $\bnu=\bee_j=(\delta_{ij})_{i=1}^d$ for some
  $j\in\{1,\dots,d\}$. Then
  $D^{\bee_j}\frac{1}{f}=\frac{-\partial_j f}{f^2}$ and
  $p_{\bee_j}=-\partial_j f$ satisfies
  \begin{equation*}
    \norm[\Ww^{s-1}_{1,\infty}]{p_{\bee_j}}=
    \sum_{|\bmu|\le s-1}\norm[L^\infty]{r_\D^{1+|\bmu|}D^{\bmu}p_{\bee_j}}
    =\sum_{|\bmu|\le s-1}\norm[L^\infty]{r_\D^{|\bmu+\bee_j|}D^{\bmu+\bee_j} f}
    \le \norm[\Ww^{s}_{\infty}]{f},
  \end{equation*}
  i.e.\ $\hat C_1=1$. For the induction step fix $\bnu$ with
  $1<|\bnu|<s$ and $j\in\{1,\dots,d\}$. Then by the induction
  hypothesis $D^\bnu \frac{1}{f}=\frac{p_\bnu}{f^{|\bnu|+1}}$ and
  \begin{equation*}
    D^{\bnu+\bee_j} \frac{1}{f}=
    \partial_j\left(\frac{p_{\bnu}}{f^{|\bnu|+1}}\right)
    =\frac{f^{|\bnu|+1}\partial_j p_{\bnu}-(|\bnu|+1)f^{|\bnu|}p_{\bnu}\partial_j f}{f^{2|\bnu|+2}}
    =\frac{f \partial_j p_{\bnu}-(|\bnu|+1)p_{\bnu}\partial_j f}{f^{|\bnu|+2}},
  \end{equation*}
  and thus
  $$p_{\bnu+\bee_j}:=f \partial_j p_{\bnu}-(|\bnu|+1)p_{\bnu}\partial_j f.$$ 
  Observe that
  \begin{equation}\label{eq:partialjg}
    \norm[\Ww^s_{\tau,\infty}]{\partial_j g}
    =\sum_{|\bmu|\le s}\norm[L^\infty]{r_\D^{\tau+|\bmu|} D^{\bmu+\bee_j} g}
    \le \sum_{|\bmu|\le s+1}\norm[L^\infty]{r_\D^{\tau+|\bmu|-1} D^\bmu g}
    =\norm[\Ww^{s+1}_{\tau-1,\infty}]{g}.
  \end{equation}
  Using \eqref{eq:wwstau} and \eqref{eq:partialjg}, we get with
  $\tau:=|\bnu|+1$
  \begin{align*}
    \norm[\Ww^{s-\tau}_{\tau,\infty}]{p_{\bnu+\bee_j}}
    &\le \norm[\Ww^{s-\tau}_{\tau,\infty}]{f\partial_jp_{\bnu}}
      +(|\bnu|+1)\norm[\Ww^{s-\tau}_{\tau,\infty}]{p_{\bnu}\partial_j f}\nonumber\\
    &\le C_{s-\tau} \norm[\Ww^{s-\tau}_{0,\infty}]{f}
      \norm[\Ww^{s-\tau}_{\tau,\infty}]{\partial_jp_{\bnu}}+(|\bnu|+1)
      C_{s-\tau}
      \norm[\Ww^{s-\tau}_{\tau-1,\infty}]{p_{\bnu}}
      \norm[\Ww^{s-\tau}_{1,\infty}]{\partial_j f}\nonumber\\
    &\le C_{s-\tau} \norm[\Ww^{s-\tau}_{0,\infty}]{f}
      \norm[\Ww^{s-\tau+1}_{\tau-1,\infty}]{p_{\bnu}}+(|\bnu|+1)
      C_{s-\tau}
      \norm[\Ww^{s-\tau+1}_{\tau-1,\infty}]{p_{\bnu}}
      \norm[\Ww^{s-\tau+1}_{0,\infty}]{f}.
  \end{align*}
  Due to $\tau-1=|\bnu|$ and the induction hypothesis
  \eqref{eq:fracclaim} for $p_\bnu$,
  \begin{align*}
    \norm[\Ww^{s-(|\bnu|+1)}_{|\bnu|+1,\infty}]{p_{\bnu+\bee_j}}
    &\le C_{s-(|\bnu|+1)}\left(\hat C_{|\bnu|}\norm[\Ww^{s-(|\bnu|+1)}_{\infty}]{f}\norm[\Ww^s_{\infty}]{f}^{|\bnu|}+
      (|\bnu|+1)\hat C_{|\bnu|}\norm[\Ww^s_{\infty}]{f}^{|\bnu|}\norm[\Ww^{s-|\bnu|}_\infty]{f} \right)\nonumber\\
    &\le C_{s-(|\bnu|+1)}\hat C_{|\bnu|} (|\bnu|+2)\norm[\Ww_s^\infty]{f}^{|\bnu|+1}.
  \end{align*}
  In all this shows the claim with $\hat C_1:=1$ and inductively for
  $1<k\le s$,
  $$\hat C_{k}:= C_{s-k}\hat C_{k-1}(k+1).$$
  By \eqref{eq:fracclaimtotal} and \eqref{eq:fracclaim}, for every
  $|\bnu|\le s$
  \begin{equation*}
    \normc[L^\infty]{r_\D^{|\bnu|}D^\bnu\left(\frac{1}{f}\right)}\le \hat C_{|\bnu|}\frac{\norm[\Ww^{s}_{\infty}]{f}^{|\bnu|}}{\essinf_{\bx\in\D}|f(\bx)|^{|\bnu|+1}}\,.
  \end{equation*}
  Due to
  $$\norm[\Ww^s_\infty]{f}\ge \norm[L^\infty]{f}\ge
  \essinf_{\bx\in\D}|f(\bx)|,$$ this implies
  \begin{equation*}
    \normc[\Ww^{s}_{\infty}]{\frac{1}{f}}
    = \sum_{|\bnu|\le s}\normc[L^\infty]{r_\D^{|\bnu|}D^\bnu \left(\frac{1}{f}\right)}
    \le \tilde C_s \frac{\norm[\Ww^{s}_{\infty}]{f}^{s}}{\essinf_{\bx\in\D}|f(\bx)|^{s+1}}
  \end{equation*}
  with $\tilde C_s:=\sum_{|\bnu|\le s}\hat C_{|\bnu|}$.

  {\bf Step 3.} We show \ref{item:prodkond} and
  \ref{item:proddiffkond}. If $f\in\Kk_\varkappa^s(\D)$ and
  $a\in \Ww^{s}_{\infty}(\D)$, then by \eqref{eq:leibniz} for Sobolev
  functions,
  \begin{equation*}
    r_\D^{\bnu-\varkappa}D^\bnu (fa)=\sum_{\bmu\le\bnu}\binom{\bnu}{\bmu} (r_\D^{|\bnu-\bmu|-\varkappa}D^{\bnu-\bmu}f)(r_\D^{|\bmu|}D^\bmu a)
  \end{equation*}
  and hence
  \begin{align*}
    \norm[\Kk_\varkappa^s]{fa}&=\sum_{|\bnu|\le s}\norm[L^2]{r_\D^{|\bnu|-\varkappa} D^\bnu (fa)}\nonumber\\
                              &\le
                                \sum_{|\bnu|\le s}\sum_{\bmu\le\bnu} \binom{\bnu}{\bmu}\norm[L^2]{r_\D^{|\bnu-\bmu|-\varkappa} D^{\bnu-\bmu}f}\norm[L^\infty]{r_\D^{|\bmu|}D^\bmu a}\nonumber\\
                              &\le C_s\sum_{|\bnu|\le s}
                                \norm[L^2]{r_\D^{|\bnu|-\varkappa} D^{\bnu}f}
                                \sum_{|\bmu|\le s}\norm[L^\infty]{r_\D^{|\bmu|}D^\bmu a}\nonumber\\
                              &=C_s \norm[\Kk_\varkappa^s]{f}\norm[\Ww^s_\infty]{a}.
  \end{align*}

  Finally if $s\ge 1$,
  \begin{align*}
    \norm[\Kk_{\varkappa-1}^{s-1}]{\nabla f\cdot\nabla a}&=\sum_{|\bnu|\le s-1}\normc[L^2]{r_\D^{|\bnu|-\varkappa+1} D^\bnu \left(\sum_{j=1}^d\partial_jf\partial_j a\right)}\nonumber\\
                                                         &\le
                                                           \sum_{|\bnu|\le s-1}\sum_{\bmu\le\bnu}\binom{\bnu}{\bmu}\sum_{j=1}^d\norm[L^2]{r_\D^{|\bnu-\bmu|-\varkappa} D^{\bnu-\bmu+\bee_j}f}\norm[L^\infty]{r_\D^{|\bmu|+1}D^{\bmu+\bee_j} a}\nonumber\\
                                                         &\le C_{s-1} d \sum_{|\bnu|\le s}
                                                           \norm[L^2]{r_\D^{|\bnu|-\varkappa-1} D^{\bnu}f}
                                                           \sum_{|\bmu|\le s}\norm[L^\infty]{r_\D^{|\bmu|}D^\bmu a}\nonumber\\
                                                         &= C_{s-1}d \norm[\Kk_{\varkappa+1}^s]{f}\norm[\Ww^{s}_\infty]{a}.\qedhere    
  \end{align*}
\end{proof}

The proof of the next theorem is based on Theorem~\ref{thm:bacuta}.
In order to get regularity in $\Kk_{\varkappa+1}^s(\D)$ with
$\varkappa>0$ independent of the diffusion coefficient $a$, we now
assume $a\in W^{1}_\infty(\D)\cap\Ww_{\infty}^{s-1}(\D)$ in lieu of
the weaker assumption $a\in \Ww^{s-1}_\infty$
that was required in Theorem~\ref{thm:bacuta}.
\begin{theorem}\label{thm:bacuta2}
  Let $\D\subseteq\R^2$ be a bounded polygonal domain and
  $s\in\N$, $s\ge 2$.  Then there exist $\varkappa>0$ and $C_s>0$
  depending on $\D$ and $s$ (but independent of $a$) such that for all
  $a\in W^{1}_\infty(\D)\cap\Ww_{\infty}^{s-1}(\D)$ and all
  $f\in \Kk_{\varkappa-1}^{s-2}(\D)$ the weak solution
  $\Uu\in H_0^1(\D)$ of \eqref{eq:elliptic2} satisfies with
  $N_s:=\frac{s(s-1)}{2}$
  \begin{equation}\label{eq:Uuapriori}
    \norm[\Kk_{\varkappa+1}^{s}]{\Uu}
    \le C_s \frac{1}{\rho(a)}\left(\frac{\norm[\Ww^{s-1}_{\infty}]{a}+\norm[W^{1}_\infty]{a}}{\rho(a)}\right)^{N_s}\norm[\Kk_{\varkappa-1}^{s-2}]{f}.
  \end{equation}
\end{theorem}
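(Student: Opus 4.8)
The plan is to prove \eqref{eq:Uuapriori} by induction on $s$, the whole argument resting on the elementary rewriting of the divergence-form operator in terms of the Laplacian, $-\Delta\Uu = a^{-1}(f+\nabla a\cdot\nabla\Uu)$, together with the fact that the corner behaviour of $\Delta$ on a polygon is governed by the exponents $\pm k\pi/\omega_j$ ($\omega_j$ the interior angles), which do not depend on the coefficient. First I would fix once and for all $\varkappa\in(0,1)$ with $\varkappa<\min_j\pi/\omega_j$, small enough that $\Delta$ induces an isomorphism $\Kk_{\varkappa+1}^{t}(\D)\cap H^1_0(\D)\to\Kk_{\varkappa-1}^{t-2}(\D)$ for \emph{every} integer $t\ge 2$ simultaneously (the exceptional set of weights is $\{\pm k\pi/\omega_j\}$, independent of $t$; this classical Kondrat'ev shift for $\Delta$ is the special case $a\equiv1$ of Theorem~\ref{thm:bacuta}, see also \cite{Gr,MazRoss2010,BLN}). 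I would also record the two weighted embeddings $L^2(\D)\hookrightarrow\Kk_{\varkappa-1}^0(\D)$ and, by a weighted Hardy inequality at the corners, $\Kk_{\varkappa-1}^0(\D)\hookrightarrow H^{-1}(\D)$ (both valid because $\varkappa<1$ and $r_\D\le1$), note that in the complex setting $|a|\ge\Re a\ge\rho(a)$ a.e., so $\norm[L^\infty]{a^{-1}}\le\rho(a)^{-1}$ and $\essinf_{\bx\in\D}|a(\bx)|\ge\rho(a)$, while \eqref{eq:bound-comp} gives $\norm[V]{\Uu}\le\norm[V^*]{f}/\rho(a)$. From the weak form and $a\in W^1_\infty(\D)$, Leibniz yields $a\Delta\Uu=-(f+\nabla a\cdot\nabla\Uu)$ in $\mathcal D'(\D)$, hence $-\Delta\Uu=a^{-1}(f+\nabla a\cdot\nabla\Uu)=:g$; whenever one shows $g\in\Kk_{\varkappa-1}^{s-2}(\D)$, surjectivity of the $\Delta$-shift plus uniqueness (the difference with the constructed preimage is harmonic in $H^1_0$, hence $0$) forces $\Uu\in\Kk_{\varkappa+1}^{s}(\D)$ with $\norm[\Kk_{\varkappa+1}^{s}]{\Uu}\le C_{\D,s}\norm[\Kk_{\varkappa-1}^{s-2}]{g}$.

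\emph{Base case $s=2$.} Here $g=a^{-1}(f+\nabla a\cdot\nabla\Uu)$ with $f\in\Kk_{\varkappa-1}^0(\D)$ by hypothesis and -- this is exactly where the strengthened assumption $a\in W^1_\infty(\D)$, rather than merely $a\in\Ww_\infty^{1}$, is used -- $\nabla a\in L^\infty(\D)$, so $\nabla a\cdot\nabla\Uu\in L^2(\D)\hookrightarrow\Kk_{\varkappa-1}^0(\D)$ with $\norm[\Kk_{\varkappa-1}^0]{\nabla a\cdot\nabla\Uu}\le\norm[L^\infty]{\nabla a}\,\norm[V]{\Uu}\le\norm[L^\infty]{\nabla a}\,\norm[V^*]{f}/\rho(a)\lesssim_{\D}\norm[L^\infty]{\nabla a}\,\norm[\Kk_{\varkappa-1}^0]{f}/\rho(a)$. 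Since multiplication by $a^{-1}$ is bounded on the weighted-$L^2$ space $\Kk_{\varkappa-1}^0(\D)$ with norm $\le\rho(a)^{-1}$, and writing $A:=\norm[\Ww_\infty^{1}]{a}+\norm[W^1_\infty]{a}$ (so $\norm[L^\infty]{\nabla a},\,\norm[L^\infty]{a},\,\rho(a)\le A$), this gives $\norm[\Kk_{\varkappa-1}^0]{g}\lesssim_{\D}\rho(a)^{-1}(A/\rho(a))\,\norm[\Kk_{\varkappa-1}^0]{f}$, and the $\Delta$-shift then delivers \eqref{eq:Uuapriori} for $s=2$ with $N_2=1$.

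\emph{Inductive step $s\ge3$.} Assume \eqref{eq:Uuapriori} at order $s-1$ with the same $\varkappa$; since $f\in\Kk_{\varkappa-1}^{s-2}(\D)\hookrightarrow\Kk_{\varkappa-1}^{s-3}(\D)$ and $\norm[\Ww_\infty^{s-2}]{a}\le A$ this reads $\norm[\Kk_{\varkappa+1}^{s-1}]{\Uu}\le C_{s-1}\rho(a)^{-1}(A/\rho(a))^{N_{s-1}}\norm[\Kk_{\varkappa-1}^{s-2}]{f}$. To bound $g=a^{-1}(f+\nabla a\cdot\nabla\Uu)$ in $\Kk_{\varkappa-1}^{s-2}(\D)$ I would invoke Lemma~\ref{lemma:fgWm}: item (iv) (with order $s-1$) gives $\norm[\Kk_{\varkappa-1}^{s-2}]{\nabla a\cdot\nabla\Uu}\le C_s\norm[\Kk_{\varkappa+1}^{s-1}]{\Uu}\norm[\Ww_\infty^{s-1}]{a}$, item (iii) gives $\norm[\Kk_{\varkappa-1}^{s-2}]{g}\le C_s\norm[\Ww_\infty^{s-2}]{a^{-1}}\big(\norm[\Kk_{\varkappa-1}^{s-2}]{f}+\norm[\Kk_{\varkappa-1}^{s-2}]{\nabla a\cdot\nabla\Uu}\big)$, and item (ii) gives $\norm[\Ww_\infty^{s-2}]{a^{-1}}\le\tilde C_s\norm[\Ww_\infty^{s-2}]{a}^{\,s-2}/\essinf_{\bx\in\D}|a(\bx)|^{\,s-1}\le\tilde C_s\,A^{s-2}/\rho(a)^{s-1}$. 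Applying the $\Delta$-shift and collecting terms: the $f$-contribution is $\lesssim (A^{s-2}/\rho(a)^{s-1})\norm[\Kk_{\varkappa-1}^{s-2}]{f}\le(A^{N_s}/\rho(a)^{N_s+1})\norm[\Kk_{\varkappa-1}^{s-2}]{f}$ because $\rho(a)\le A$ and $N_s\ge s-2$, while the $\nabla a\cdot\nabla\Uu$-contribution is $\lesssim(A^{s-2}/\rho(a)^{s-1})\cdot A\cdot\rho(a)^{-1}(A/\rho(a))^{N_{s-1}}\norm[\Kk_{\varkappa-1}^{s-2}]{f}=(A^{N_s}/\rho(a)^{N_s+1})\norm[\Kk_{\varkappa-1}^{s-2}]{f}$ \emph{exactly}, using the identity $N_s-N_{s-1}=s-1$ (so $(s-2)+1+N_{s-1}=N_s$ and $(s-1)+1+N_{s-1}=N_s+1$). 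This is \eqref{eq:Uuapriori} at order $s$, closing the induction.

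The conceptual heart -- and the only genuinely delicate point -- is the first paragraph: that the relevant weight threshold $\varkappa$ is dictated solely by the angles of $\D$ (through the constant-coefficient principal part at each corner, a positive multiple of $\Delta$) and can therefore be chosen independently of $a$ and uniformly over all differentiation orders $s$; the stronger hypothesis $a\in W^1_\infty(\D)$ is precisely what lets the base case be handled by this $a$-independent $\Delta$-shift rather than by Theorem~\ref{thm:bacuta}, whose weight range degenerates as $\rho(a)/\norm[L^\infty]{a}\to0$. Everything afterwards is bookkeeping: the mapping properties of products and reciprocals in Kondrat'ev and weighted H\"older spaces are exactly Lemma~\ref{lemma:fgWm}, and the sharp power $N_s=s(s-1)/2$ drops out of the telescoping recursion $N_s=N_{s-1}+(s-1)$ starting from $N_2=1$.
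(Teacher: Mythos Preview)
Your proof is correct and follows essentially the same route as the paper's: fix an $a$-independent $\varkappa$ via the $\Delta$-isomorphism on Kondrat'ev spaces, rewrite $-\Delta\Uu=a^{-1}(f+\nabla a\cdot\nabla\Uu)$, handle the base case $s=2$ using $\nabla a\in L^\infty$ together with the Hardy embedding $\Kk_{\varkappa-1}^0\hookrightarrow H^{-1}$, and close the induction with Lemma~\ref{lemma:fgWm} and the recursion $N_s=N_{s-1}+(s-1)$. The only cosmetic difference is that the paper, in Step~1, first invokes Theorem~\ref{thm:bacuta} to obtain preliminary regularity $\Uu\in\Kk_{\tilde\varkappa+1}^2$ for some $a$-dependent $\tilde\varkappa\in(0,\varkappa)$ before writing the Laplace equation pointwise, whereas you bypass this by arguing directly at the distributional level and using the harmonic-in-$H_0^1$ uniqueness; both justifications are sound.
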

\begin{proof}
  Throughout this proof let $\varkappa\in (0,1)$ be a constant such
  that
  \begin{equation}\label{eq:DeltaIso}
    -\Delta:\Kk_{\varkappa+1}^{j}(\D)\cap H_0^1(\D)\to \Kk_{\varkappa-1}^{j-2}(\D)
  \end{equation}
  is a boundedly invertible operator for all $j\in\{2,\dots,s\}$; such
  $\varkappa$ exists by Theorem~\ref{thm:bacuta}, and $\varkappa$
  merely depends on $\D$ and $s$.

  {\bf Step 1.}  We prove the theorem for $s=2$, in which case
  $a\in W^{1}_\infty\cap\Ww^{1}_\infty=W^{1}_\infty$.
  
  Applying Theorem~\ref{thm:bacuta} directly to \eqref{eq:elliptic2}
  yields the existence of \emph{some}
  $\tilde\varkappa\in (0,\varkappa)$ (depending on $a$) such that
  $\Uu\in\Kk_{\tilde\varkappa+1}^2$. Here we use
  $$f\in\Kk_{\varkappa-1}^0(\D)\hookrightarrow \Kk_{\tilde\varkappa-1}^0(\D)$$
  due to $\tilde\varkappa\in(0,\varkappa)$. By the Leibniz rule for
  Sobolev functions we can write
  \begin{equation*}
    -\div(a\nabla \Uu)=-a\Delta \Uu -\nabla a\cdot \nabla \Uu
  \end{equation*}
  in the sense of $\Kk_{\tilde\varkappa-1}^0(\D)$: (i) it holds
  $\Delta\Uu\in \Kk_{\tilde\varkappa-1}^0(\D)$ and
  $$a\in W^{1}_\infty(\D)\hookrightarrow L^\infty(\D)$$ which implies
  $a\Delta\Uu\in \Kk_{\tilde\varkappa-1}^0(\D)$ (ii) it holds
  $$\nabla \Uu \in \Kk_{\tilde\varkappa}^1(\D)\hookrightarrow\Kk_{\tilde\varkappa}^0(\D),$$ and $\nabla a \in L^\infty(\D)$
  which implies
  $\nabla a \cdot\nabla \Uu \in \Kk_{\tilde\varkappa-1}^0(\D)$.
  Hence,
  \begin{equation*}
    -\div(a\nabla \Uu) = -a\Delta \Uu -\nabla a\cdot \nabla \Uu
    = f,
  \end{equation*}
  and further
  \begin{equation*}
    -\Delta \Uu = \frac{1}{a}\Big(f+\nabla a\cdot \nabla \Uu\Big)=:\tilde f\in \Kk_{\tilde \varkappa-1}^0(\D)
  \end{equation*}
  since $\frac{1}{a}\in L^\infty(\D)$ due to $\rho(a)>0$. Our goal is
  to show that in fact $\tilde f\in \Kk_{\varkappa-1}^0(\D)$.
  Because of $-\Delta \Uu=\tilde f$ and $\Uu|_{\partial\D}\equiv 0$,
  Theorem~\ref{thm:bacuta} then implies
  \begin{equation}\label{eq:Kkkappa}
    \norm[\Kk_{\varkappa+1}^2]{\Uu}\le
    C \norm[\Kk_{\varkappa-1}^0]{\tilde f}
  \end{equation}
  for a constant $C$ solely depending on $\D$.

  Denote by $C_H$ %
  a constant (solely depending on $\D$) such that
  \begin{equation*}
    \norm[L^2]{r_\D^{-1}v}\le C_H\norm[L^2]{\nabla v}\qquad
    \forall v\in H_0^1(\D).
  \end{equation*}
  This constant exists as a consequence of Hardy's inequality, see
  e.g.\ \cite{MR1544414} and \cite{MR664599,MR163054} for the
  statement and proof of the inequality on bounded Lipschitz domains.
  Then due to
  \begin{align*}
    \rho(a) \norm[L^2]{\nabla \Uu}^2&\le \Re\left(\int_\D a\nabla \Uu\cdot
                                      \overline{\nabla \Uu} \dd \bx\right)%
                                        = \Re\left(\int_\D f \overline{\Uu} \dd \bx \right)\nonumber\\
                                    &\le \norm[L^2]{r_\D^{1-\varkappa}f}
                                      \norm[L^2]{r_\D^{\varkappa-1}\Uu}%
                                          \le \norm[\Kk_{\varkappa-1}^{0}]{f}\norm[L^2]{r_\D^{-1}\Uu}\nonumber\\
                                    &
                                      \le C_H \norm[\Kk_{\varkappa-1}^{0}]{f}\norm[L^2]{\nabla \Uu}
  \end{align*}
  it holds
  $$\norm[L^2]{\nabla \Uu}\le \frac{C_H\norm[\Kk_{\varkappa-1}^0]{f}}{\rho(a)}.$$
  Hence, using $r_\D^{1-\varkappa}\le 1$, we have that
  \begin{align*}
    \norm[\Kk_{\varkappa-1}^0]{\tilde f}&=\normc[L^2]{\frac{r_\D^{1-\varkappa}}{a}\Big(f+\nabla a\cdot \nabla \Uu\Big)}\nonumber\\
                                        &\le \normc[L^\infty]{\frac{1}{a}}\left(\norm[L^2]{r_\D^{1-\varkappa}f}
                                          + \norm[L^\infty]{\nabla a}\norm[L^2]{\nabla \Uu}\right)
                                          \nonumber\\
                                        &\le \frac{1}{\rho(a)}\left(\norm[\Kk_{\varkappa-1}^0]{f}
                                          + \norm[W^{1}_\infty]{a}\frac{C_H\norm[\Kk_{\varkappa-1}^0]{f}}{\rho(a)}\right)\nonumber\\
                                        &= \frac{\norm[\Kk_{\varkappa-1}^0]{f}}{\rho(a)}\left(1+\frac{C_H\norm[W^{1}_\infty]{a}}{\rho(a)}\right)\nonumber\\
                                        &\le (1+C_H)\frac{1}{\rho(a)}\frac{\norm[W^{1}_\infty]{a}}{\rho(a)} \norm[\Kk_{\varkappa-1}^0]{f}.
  \end{align*}
  The statement follows by \eqref{eq:Kkkappa}.

  {\bf Step 2.} For general $s\in\N$, $s\ge 2$, we proceed by
  induction. Assume the theorem holds for $s-1\ge 2$. Then for
  $$f\in \Kk_{\varkappa-1}^{s-2}(\D)\hookrightarrow \Kk_{\varkappa-1}^{s-3}(\D)$$ and $$a\in W^{1}_\infty(\D)\cap \Ww^{s-1}_{\infty}(\D)\hookrightarrow W^{1}_\infty(\D)\cap \Ww^{s-2}_{\infty}(\D),$$
  we get
  \begin{equation}\label{eq:fKkbound}
    \norm[\Kk_{\varkappa+1}^{s-1}]{\Uu}\le \frac{C_{s-1}}{\rho(a)}
    \left(\frac{\norm[W^{1}_\infty]{a}+\norm[\Ww^{s-2}_{\infty}]{a}}{\rho(a)}\right)^{N_{s-1}}\norm[\Kk_{\varkappa-1}^{s-3}]{f}.
  \end{equation}
  As in Step 1, it holds
  \begin{equation*}
    -\Delta \Uu = \frac{1}{a}\Big(f+\nabla a\cdot \nabla \Uu \Big)=:\tilde f.
  \end{equation*}
  By %
  Lemma \ref{lemma:fgWm} and \eqref{eq:fKkbound}, for some constant
  $C$ (which can change in each line, but solely depends on $\D$ and
  $s$) we have that
  \begin{align*}
    \norm[\Kk_{\varkappa-1}^{s-2}]{\tilde f}
    &\le C \normc[\Ww^{s-2,\infty}]{\frac{1}{a}}
      \norm[\Kk_{\varkappa-1}^{s-2}]{f+\nabla a \cdot \nabla\Uu}\nonumber\\
    &\le C \frac{\norm[\Ww^{s-2}]{a}^{s-2}}{\rho(a)^{s-1}}
      \left(\norm[\Kk_{\varkappa-1}^{s-2}]{f}+\norm[\Ww^{s-1}_\infty]{a}
      \norm[\Kk_{\varkappa+1}^{s-1}]{\Uu}
      \right)\nonumber\\
    &\le C \frac{\norm[\Ww^{s-2}]{a}^{s-2}}{\rho(a)^{s-1}}
      \left(\norm[\Kk_{\varkappa-1}^{s-2}]{f}+
      C_{s-1}\frac{\norm[\Ww^{s-1}_\infty]{a}}{\rho(a)}\left(\frac{\norm[W^{1}_\infty]{a}+\norm[\Ww_{\infty}^{s-2}]{a}}{\rho(a)}\right)^{N_{s-1}}\norm[\Kk_{\varkappa-1}^{s-3}]{f}
      \right)\nonumber\\
    &\le C \frac{1}{\rho(a)}
      \left(\frac{\norm[W^{1}_\infty]{a}+\norm[\Ww_{\infty}^{s-1}]{a}}{\rho(a)}\right)^{N_{s-1}+1+(s-2)}
      \norm[\Kk_{\varkappa-1}^{s-2}]{f}.
  \end{align*}
  Note that
  $$N_{s-1}+(s-1)=\frac{(s-1)(s-2)}{2}+(s-1)=\frac{s(s-1)}{2}=N_s.$$
  We now use \eqref{eq:fKkbound} and the fact that \eqref{eq:DeltaIso}
  is a boundedly invertible isomorphism to conclude that there exist
  $C_s$ such that \eqref{eq:Uuapriori} holds.
\end{proof}
Throughout the rest of this section $\D$ is assumed a bounded
polygonal domain and $\varkappa>0$ the constant from
Theorem~\ref{thm:bacuta2}.

\begin{assumption}\label{ass:FEM}
  For some fixed $s\in\N$, $s\ge 2$, there exist constants $C>0$ and
  $\alpha>0$, and a sequence $(X_l)_{l\in\N}$ of subspaces of
  $X=H_0^1(\D;\C)=: H_0^1$, such that
  \begin{enumerate}
  \item $\sw{l}:={\rm dim}(X_l)$, $l\in\N$, satisfies Assumption
    \ref{ass:SW} (for some $K_\SW>0$),
  \item for all $l\in\N$
    \begin{equation}\label{eq:conv}
      \sup_{0\neq u\in \Kk_{\varkappa+1}^{s}}\frac{\inf_{v\in X_l}\norm[H_0^1]{u-v}}{\norm[\Kk_{\varkappa+1}^{s}]{u}}
      \le C \sw{l}^{-\conv}.
    \end{equation}
  \end{enumerate}
\end{assumption}
The constant $\conv$ in Assumption \ref{ass:FEM} can be interpreted as
the convergence rate of the finite element method. For the Kondrat'ev
space $\Kk_{\varkappa+1}^{s}(\D)$, finite element spaces $X_l$ of
piecewise polynomials of degree $s-1$ have been constructed in
\cite[Theorem~4.4]{BNZPolygon}, which achieve the optimal (in space
dimension $2$) convergence rate
\begin{equation}\label{eq:optimalrate}
  \conv=\frac{s-1}{2}
\end{equation}
in \eqref{eq:conv}.  For these spaces, Assumption \ref{ass:FEM} holds
with this $\alpha$, which consequently allows us to retain optimal
convergence rates. Nonetheless we keep the discussion general in the
following, and assume arbitrary positive $\alpha>0$.

We next introduce the finite element solutions of \eqref{eq:elliptic2}
in the spaces $X_l$, and provide the basic error estimate.
\begin{lemma}\label{lemma:femapprox}
  Let Assumption \ref{ass:FEM} be satisfied for some $s\ge 2$. Let
  $f\in \Kk_{\varkappa-1}^{s-2}(\D)$ and
  $$a\in W^{1}_\infty(\D)\cap \Ww^{s-1}_{\infty}(\D)\subseteq L^\infty(\D)$$
  with $\rho(a)>0$ and denote for $l\in\N$ by $\Uu^l(a)\in X_l$ the
  unique solution of
  \begin{equation*}
    \int_\D a(\nabla \Uu^l)^\top\overline{\nabla v}\dd \bx
    =\dup{f}{v}\qquad \forall v\in X_l,
  \end{equation*}
  where the right hand side denotes the (sesquilinear) dual pairing
  between $H^{-1}(\D)$ and $H_0^1(\D)$.  Then for the solution
  $\Uu(a)\in H_0^1(\D)$ it holds with the constants $N_s$, $C_s$ from
  Theorem.~\ref{thm:bacuta2},
  \begin{equation*}
    \norm[H_0^1]{\Uu(a)-\Uu^l(a)}
    \le \sw{l}^{-\conv} C \frac{\norm[L^\infty]{a}}{\rho(a)}
    \norm[\Kk_{\varkappa+1}^{s}]{\Uu(a)}
    \le \sw{l}^{-\conv}
    C C_s
    \frac{(\norm[W^{1}_\infty]{a}+\norm[\Ww^{s-1}_{\infty}]{a})^{N_s+1}}{\rho(a)^{N_s+2}}\norm[\Kk_{\varkappa-1}^{s-2}]{f}\;.
  \end{equation*}
  Here $C>0$ is the constant from Assumption \ref{ass:FEM}.
\end{lemma}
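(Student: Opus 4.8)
The plan is to combine a Céa-type quasi-optimality estimate for the complex-coefficient Galerkin scheme with the approximation property of the spaces $X_l$ from Assumption~\ref{ass:FEM} and the a-priori bound of Theorem~\ref{thm:bacuta2}. First I would record well-posedness of the discrete problem: under $\rho(a)>0$ and $|a(\bx)|\le\norm[L^\infty]{a}<\infty$ (which hold since $a\in W^{1}_\infty(\D)\cap\Ww^{s-1}_{\infty}(\D)\subseteq L^\infty(\D)$ with $\rho(a)>0$), the sesquilinear form $(u,v)\mapsto\int_\D a(\nabla u)^\top\overline{\nabla v}\dd\bx$ on $H_0^1(\D;\C)$ satisfies $\Re\int_\D a(\nabla v)^\top\overline{\nabla v}\dd\bx\ge\rho(a)\norm[H_0^1]{v}^2$ and is bounded with constant $\norm[L^\infty]{a}$, so by the complex Lax--Milgram lemma (as in \eqref{eq:bound-comp}) the finite-dimensional problem on $X_l$ admits a unique solution $\Uu^l(a)$.

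Next I would use Galerkin orthogonality $\int_\D a(\nabla(\Uu(a)-\Uu^l(a)))^\top\overline{\nabla v}\dd\bx=0$ for all $v\in X_l$: setting $w:=\Uu(a)-\Uu^l(a)$, for any $v\in X_l$,
\[
\rho(a)\norm[H_0^1]{w}^2
\le\Re\int_\D a(\nabla w)^\top\overline{\nabla w}\dd\bx
=\Re\int_\D a(\nabla(\Uu(a)-v))^\top\overline{\nabla w}\dd\bx
\le\norm[L^\infty]{a}\,\norm[H_0^1]{\Uu(a)-v}\,\norm[H_0^1]{w},
\]
whence $\norm[H_0^1]{w}\le\frac{\norm[L^\infty]{a}}{\rho(a)}\inf_{v\in X_l}\norm[H_0^1]{\Uu(a)-v}$. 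Since $s\ge 2$ we have $\Kk_{\varkappa+1}^{s}(\D)\hookrightarrow C^0(\overline\D)$ (as noted after Proposition~\ref{prop:FECorner}), and the hypotheses $f\in\Kk_{\varkappa-1}^{s-2}(\D)$, $a\in W^{1}_\infty(\D)\cap\Ww^{s-1}_{\infty}(\D)$, $\rho(a)>0$ are exactly those of Theorem~\ref{thm:bacuta2}, so $\Uu(a)\in\Kk_{\varkappa+1}^{s}(\D)$; then \eqref{eq:conv} of Assumption~\ref{ass:FEM} gives $\inf_{v\in X_l}\norm[H_0^1]{\Uu(a)-v}\le C\sw{l}^{-\conv}\norm[\Kk_{\varkappa+1}^{s}]{\Uu(a)}$. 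Combining these two bounds yields the first inequality of the claim.

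Finally I would insert the a-priori estimate of Theorem~\ref{thm:bacuta2},
\[
\norm[\Kk_{\varkappa+1}^{s}]{\Uu(a)}
\le C_s\,\frac{1}{\rho(a)}\left(\frac{\norm[\Ww^{s-1}_{\infty}]{a}+\norm[W^{1}_\infty]{a}}{\rho(a)}\right)^{N_s}\norm[\Kk_{\varkappa-1}^{s-2}]{f},
\]
and absorb the prefactor $\norm[L^\infty]{a}/\rho(a)$ using $\norm[L^\infty]{a}\le\norm[W^{1}_\infty]{a}\le\norm[W^{1}_\infty]{a}+\norm[\Ww^{s-1}_{\infty}]{a}$. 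Collecting the powers of $\norm[W^{1}_\infty]{a}+\norm[\Ww^{s-1}_{\infty}]{a}$ (namely $N_s+1$) and of $\rho(a)$ (namely $N_s+2$) produces exactly $C C_s\,\sw{l}^{-\conv}\,\frac{(\norm[W^{1}_\infty]{a}+\norm[\Ww^{s-1}_{\infty}]{a})^{N_s+1}}{\rho(a)^{N_s+2}}\norm[\Kk_{\varkappa-1}^{s-2}]{f}$, i.e.\ the second inequality. The only genuinely delicate point is the complex-valued version of Céa's lemma — the systematic use of $\Re(\cdot)$ and of $\rho(a)=\essinf\Re(a)$ in place of a pointwise lower bound on $|a|$; everything else (well-posedness of the discrete problem, applicability of Theorem~\ref{thm:bacuta2}, and the elementary algebra of the exponents) is routine.
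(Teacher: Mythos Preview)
Your proposal is correct and follows essentially the same approach as the paper: the paper's proof simply invokes ``C\'ea's lemma in complex form'' to obtain $\norm[H_0^1]{\Uu(a)-\Uu^l(a)}\le\frac{\norm[L^\infty]{a}}{\rho(a)}\inf_{v\in X_l}\norm[H_0^1]{\Uu(a)-v}$, then cites Assumption~\ref{ass:FEM} and \eqref{eq:Uuapriori}. You have spelled out the C\'ea argument via Galerkin orthogonality and made the exponent bookkeeping explicit, but the strategy is identical.
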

\begin{proof}
  By C\'ea's lemma in complex form we derive that
  \begin{align*}
    \norm[H_0^1]{\Uu(a)-\Uu^l(a)}
    \le \frac{\norm[L^\infty]{a}}{\rho(a)}
    \inf_{v\in X_l}  \norm[H_0^1]{\Uu(a)-v}.
  \end{align*}
  Hence the assertion follows by Assumption \ref{ass:FEM} and
  \eqref{eq:Uuapriori}.
\end{proof}
  
Throughout the rest of this section, as earlier we expand the
logarithm of the diffusion coefficient
\begin{equation*}\label{eq:diffusionml}
  a(\by)=\exp\Bigg(\sum_{j\in\N}y_j\psi_j\Bigg)
\end{equation*}
in terms of a sequence
$\psi_j\in W^{1}_\infty(\D)\cap\Ww_\infty^{s-1}(\D)$, $j\in\N$. Denote
\begin{equation}\label{eq:b1b2ml}
  b_{1,j}:=\norm[L^\infty]{\psi_j},\quad
  b_{2,j}:=\max\big\{\norm[W^{1}_\infty]{\psi_j},\norm[\Ww_\infty^{s-1}]{\psi_j}\big\}
\end{equation}
and $\bb_1:=(b_{1,j})_{j\in\N}$, $\bb_2:=(b_{2,j})_{j\in\N}$.

  \begin{example}\label{ex:sin}
    Let $\D=[0,1]$ and $\psi_j(x)=\sin(jx)j^{-r}$ for some $r>2$. Then
    $\bb_1\in\ell^{p_1}(\NN)$ for every $p_1>\frac{1}{r}$ and
    $\bb_2\in\ell^{p_2}(\NN)$ for every $p_2>\frac{1}{r-(s-1)}$.
  \end{example}

  In the next proposition we verify Assumption \ref{ass:ml}. This will
  yield validity of the multilevel convergence rates proved in
  Theorems~\ref{thm:mlint} and \ref{thm:mlquad} 
  in the present setting as we discuss subsequently.

  \begin{proposition}\label{prop:u-ul}
    Let Assumption \ref{ass:FEM} be satisfied for some $s\ge 2$ and
    $\alpha>0$. Let $\bb_1\in\ell^{p_1}(\NN)$,
    $\bb_2\in\ell^{p_2}(\NN)$ with $p_1$, $p_2\in (0,1)$.

    Then there exist $\xi>0$ and $\delta>0$ such that
    \begin{equation}\label{eq:uby}
      u(\by):=\Uu\Bigg(\exp\Bigg(\sum_{j\in\N}y_j\psi_j\Bigg)\Bigg)
    \end{equation}
    is $(\bb_1,\xi,\delta,H_0^1)$-holomorphic, and for every $l\in\N$
    \begin{enumerate}
    \item\label{item:ulhol}
      $u^l(\by):=\Uu^l(\exp(\sum_{j\in\N}y_j\psi_j))$ is
      $(\bb_1,\xi,\delta,H_0^1)$-holomorphic,
    \item\label{item:u-ulhol1} $u-u^l$ is
      $(\bb_1,\xi,\delta,H_0^1)$-holomorphic,
    \item\label{item:u-ulhol2} $u-u^l$ is
      $(\bb_2,\xi,\delta\sw{l}^{-\conv},H_0^1)$-holomorphic.
    \end{enumerate}
  \end{proposition}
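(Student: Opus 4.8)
The plan is to deduce all four assertions from Theorem~\ref{thm:bdX}, applied in turn to the exact solution operator $\Uu$, the Galerkin operator $\Uu^l$, and the discretization error $\Uu-\Uu^l$. The $\bb_1$-statements will be obtained with the data space $\data=L^\infty(\D;\C)$, exactly as in Section~\ref{sec:pdc}, whereas the stronger holomorphy of the error requires $\data=W^1_\infty(\D;\C)\cap\Ww^{s-1}_\infty(\D;\C)$; on both spaces the inequality \eqref{eq:Znorm2} holds (on $W^k_\infty$ directly, on $\Ww^k_\infty$ by Lemma~\ref{kon-lem-1}, hence on their intersection). Since $p_1,p_2\in(0,1)$ we have $\bb_1\in\ell^{p_1}(\N)\subseteq\ell^1(\N)$ and $\bb_2\in\ell^{p_2}(\N)\subseteq\ell^1(\N)$, so hypothesis~\ref{item:psi} of Theorem~\ref{thm:bdX} holds with $b_j=\norm[\data]{\psi_j}$ in every case. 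In the end $\xi>0$ is taken as the minimum of the three values produced by Theorem~\ref{thm:bdX} and $\delta>0$ as the maximum of the corresponding constants, so that all claims hold with a common $\xi,\delta$.

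First I would treat the $\bb_1$-holomorphy. For $u$ this is Section~\ref{sec:pdc}: hypotheses \ref{item:uhol}--\ref{item:loclip} of Theorem~\ref{thm:bdX} are the holomorphy of $\inv$, the a-priori bound \eqref{eq:apriori}, and the Lipschitz bound \eqref{eq:lipschitz}. For $u^l$ I would run the same argument for the Galerkin operator: writing $\Uu^l(a)=(A_l(a))^{-1}(f|_{X_l})$ with $A_l(a)\in\cL_{\rm is}(X_l,X_l')$ the stiffness operator of $(\nabla\cdot,a\nabla\cdot)$ on $X_l$, holomorphy of $a\mapsto\Uu^l(a)$ follows as before, discrete coercivity gives $\norm[H_0^1]{\Uu^l(a)}\le\norm[X']{f}/\rho(a)$, and the computation of \eqref{eq:lipschitz} carries over verbatim with $w$ replaced by a Galerkin test function. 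Hence $u$ and $u^l$ are $(\bb_1,\xi,\delta,H_0^1)$-holomorphic, which is \ref{item:ulhol}. Finally $u-u^l$ is a difference of two $(\bb_1,\xi,\delta,H_0^1)$-holomorphic maps: the approximants $u_N-u^l_N$ of Definition~\ref{def:bdXHol} again admit holomorphic extensions on every $(\bb_1,\xi)$-admissible polystrip, bounded there by $\varphi_N^{(u)}+\varphi_N^{(u^l)}$ with $\norm[L^2(\R^N;\gamma_N)]{\varphi_N^{(u)}+\varphi_N^{(u^l)}}\le 2\delta$, and $\norm[L^2(U,H_0^1;\gamma)]{(u-u^l)-(\tilde u_N-\tilde u^l_N)}\to0$; this gives \ref{item:u-ulhol1}.

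Next I would prove \ref{item:u-ulhol2} by applying Theorem~\ref{thm:bdX} to $\Uu-\Uu^l$ on $O\cap\data$ with $\data=W^1_\infty(\D;\C)\cap\Ww^{s-1}_\infty(\D;\C)$ and with $\delta$ replaced by $\delta\sw{l}^{-\conv}$. Hypothesis~\ref{item:uhol} holds because $\data\hookrightarrow L^\infty(\D;\C)$ continuously, so the holomorphic maps $\Uu,\Uu^l$ restrict to holomorphic maps on $O\cap\data$. Hypothesis~\ref{item:norma} is precisely Lemma~\ref{lemma:femapprox} (which uses the parameter-robust Kondrat'ev shift of Theorem~\ref{thm:bacuta2}), giving, under the standing assumption $f\in\Kk^{s-2}_{\varkappa-1}(\D)$,
\begin{equation*}
  \norm[H_0^1]{\Uu(a)-\Uu^l(a)}
  \le \sw{l}^{-\conv}\,C C_s\,
  \frac{\big(\norm[W^{1}_\infty]{a}+\norm[\Ww^{s-1}_\infty]{a}\big)^{N_s+1}}{\rho(a)^{N_s+2}}\,\norm[\Kk^{s-2}_{\varkappa-1}]{f},
\end{equation*}
which is of the form $\delta\sw{l}^{-\conv}\big((1+\norm[\data]{a})/\min\{1,\rho(a)\}\big)^m$ with $m:=N_s+2$ and $\delta$ a constant multiple of $\norm[\Kk^{s-2}_{\varkappa-1}]{f}$. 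Hypothesis~\ref{item:psi} holds with $b_j=\norm[\data]{\psi_j}=b_{2,j}$ and $\bb_2\in\ell^1(\N)$.

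It remains to verify hypothesis~\ref{item:loclip} for $\Uu-\Uu^l$ in the $\data$-norm with the extra factor $\sw{l}^{-\conv}$, namely
\begin{equation*}
  \norm[H_0^1]{\big(\Uu(a)-\Uu^l(a)\big)-\big(\Uu(b)-\Uu^l(b)\big)}
  \le K\,\sw{l}^{-\conv}\left(\frac{1+\max\{\norm[\data]{a},\norm[\data]{b}\}}{\min\{1,\rho(a),\rho(b)\}}\right)^m\norm[\data]{a-b},
\end{equation*}
and this is the main obstacle. I would prove it in two sub-steps. First, a C\'ea-type quasi-optimality for the difference: $\Uu^l(a)-\Uu^l(b)$ coincides with the $a$-orthogonal Galerkin projection onto $X_l$ of $\Uu(a)-\Uu(b)$ up to terms involving $(a-b)\nabla\Uu(b)$, whence, by Assumption~\ref{ass:FEM},
\begin{equation*}
  \norm[H_0^1]{\big(\Uu(a)-\Uu^l(a)\big)-\big(\Uu(b)-\Uu^l(b)\big)}
  \le \sw{l}^{-\conv}\frac{\norm[L^\infty]{a}}{\rho(a)}\norm[\Kk^{s}_{\varkappa+1}]{\Uu(a)-\Uu(b)} + \big(\text{lower-order terms}\big),
\end{equation*}
the omitted terms being bounded by a polynomial factor times $\norm[\data]{a-b}$. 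Second, a Kondrat'ev-norm Lipschitz estimate for $a\mapsto\Uu(a)$: the difference $\Uu(a)-\Uu(b)$ solves \eqref{eq:elliptic2} with coefficient $a$ and right-hand side $-\div((a-b)\nabla\Uu(b))\in\Kk^{s-2}_{\varkappa-1}(\D)$, so Theorem~\ref{thm:bacuta2} together with the product and reciprocal estimates of Lemma~\ref{lemma:fgWm} and the a-priori bound \eqref{eq:Uuapriori} for $\Uu(b)$ yield
\begin{equation*}
  \norm[\Kk^{s}_{\varkappa+1}]{\Uu(a)-\Uu(b)}
  \le C_s\left(\frac{1+\max\{\norm[\data]{a},\norm[\data]{b}\}}{\min\{1,\rho(a),\rho(b)\}}\right)^{M}\norm[\data]{a-b}\,\norm[\Kk^{s-2}_{\varkappa-1}]{f},
\end{equation*}
an elliptic analogue of Lemma~\ref{lem-para-Z2}. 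Combining the two sub-steps (and enlarging $m$) gives hypothesis~\ref{item:loclip}, and Theorem~\ref{thm:bdX} then produces a constant $\tilde C$, independent of $l$ and $\delta$, so that $u-u^l$ is $(\bb_2,\xi,\delta\tilde C\sw{l}^{-\conv},H_0^1)$-holomorphic; renaming $\delta\tilde C$ as $\delta$ finishes the proof. The hard part is this last two-sub-step argument: one must capture the $\sw{l}^{-\conv}$ discretization decay and the Lipschitz dependence in the stronger $W^1_\infty\cap\Ww^{s-1}_\infty$ norm simultaneously, which forces one through the parameter-robust Kondrat'ev regularity of Theorem~\ref{thm:bacuta2} and the bilinear estimates of Lemma~\ref{lemma:fgWm}.
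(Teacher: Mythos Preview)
Your overall plan—applying Theorem~\ref{thm:bdX} with $\data=L^\infty(\D;\C)$ for the $\bb_1$-statements and with $\data=W^1_\infty(\D;\C)\cap\Ww^{s-1}_\infty(\D;\C)$ for item~\ref{item:u-ulhol2}—is exactly what the paper does, and your treatment of \ref{item:ulhol} and \ref{item:u-ulhol1} matches the paper's Step~1.

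Where you diverge is in the ``hard part'' you flag for \ref{item:u-ulhol2}. You assume that hypothesis~\ref{item:loclip} of Theorem~\ref{thm:bdX} must carry the factor $\sw{l}^{-\conv}$, and this forces you into the two-sub-step argument (C\'ea for the difference, Kondrat'ev Lipschitz for $\Uu$). But reread Theorem~\ref{thm:bdX}: the constants $\delta$ in \ref{item:norma} and $K$ in \ref{item:loclip} are \emph{independent}, and the conclusion is $(\bb,\xi,\delta\tilde C,X)$-holomorphy with $\tilde C$ independent of $\delta$. In the proof of Theorem~\ref{thm:bdX}, hypothesis~\ref{item:loclip} is used only in Step~2 to show that $(\tilde u_N)_N$ is Cauchy in $L^2(U,X;\gamma)$; the bound $\varphi_N$ in Step~3—which is what produces the $\delta\tilde C$—uses only hypothesis~\ref{item:norma}. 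Hence for \ref{item:u-ulhol2} the paper simply takes the crude Lipschitz bound
\begin{equation*}
  \norm[H_0^1]{(\Uu-\Uu^l)(a)-(\Uu-\Uu^l)(b)}
  \le \norm[H_0^1]{\Uu(a)-\Uu(b)}+\norm[H_0^1]{\Uu^l(a)-\Uu^l(b)}
  \le \frac{2\norm[H^{-1}]{f}}{\min\{\rho(a),\rho(b)\}^2}\norm[L^\infty]{a-b},
\end{equation*}
with $K$ fixed (independent of $l$), while the $\sw{l}^{-\conv}$ factor enters only through Lemma~\ref{lemma:femapprox} in hypothesis~\ref{item:norma}. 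Your more elaborate argument would presumably work, but it is unnecessary: the obstacle you identify is not there.
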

  \begin{proof}
    {\bf Step 1.} We show \ref{item:ulhol} and \ref{item:u-ulhol1}.
    The argument to show that $u^l$ is
    $(\bb_1,\xi,\delta,H_0^1)$-holomorphic (for some constants
    $\xi>0$, $\delta>0$ independent of $l$) is essentially the same as
    in Section \ref{sec:pdc}.

    We wish to apply Theorem \ref{thm:bdX} with
    $\data=L^\infty(\D)$ and $X=H^1_0$. 
    To this end let
    $$O_1=\set{a\in L^\infty(\D;\C)}{\rho(a)>0} \subset L^\infty(\D; \C).$$  
    By assumption, $b_{1,j}=\norm[L^\infty]{\psi_j}$ satisfies
    $\bb_1=(b_{1,j})_{j\in\N}\in\ell^{p_1}(\NN )\subseteq
    \ell^1(\NN)$, which corresponds to assumption \ref{item:psi} of
    Theorem \ref{thm:bdX}. It remains to verify assumptions
    \ref{item:uhol}, \ref{item:norma} and \ref{item:loclip} of
    Theorem \ref{thm:bdX}:
    \begin{enumerate}
    \item $\Uu^l:O_1\to H_0^1$ is holomorphic: This is satisfied
      because the operation of inversion of linear operators is
      holomorphic on the set of boundedly invertible linear
      operators. Denote by $A_l:X_l\to X_l'$ the differential
      operator $$A_lu=-\div(a\nabla u)\in X_l'$$ via
      $$\dup{A_lu}{v}=\int_{\D}a\nabla u^\top \overline{\nabla v}\dd \bx \quad \forall v\in X_l.$$ 
      Observe that $A_l$ depends boundedly and linearly (thus
      holomorphically) on $a$, and therefore, the map
      $a\mapsto A_l(a)^{-1}f=\Uu^l(a)$ is a composition of holomorphic
      functions. We refer once more to \cite[Example 1.2.38]{JZdiss}
      for more details.
    \item It holds for all $a\in O$
      \begin{equation*}
        \norm[H_0^1]{\Uu^l(a)}\le \frac{\norm[X_l']{f}}{\rho(a)}
        \le \frac{\norm[H^{-1}]{f}}{\rho(a)}.
      \end{equation*}
      The first inequality follows by the same calculation as
      \eqref{eq:apriori} (but with $X$ replaced by $X_l$), and the
      second inequality follows by the definition of the dual norm,
      viz
      \begin{equation*}
        \norm[X_l']{f}=\sup_{0\neq v\in X_l}\frac{|\dup{f}{v}|}{\norm[H_0^1]{v}}
        \le\sup_{0\neq v\in H_0^1}\frac{|\dup{f}{v}|}{\norm[H_0^1]{v}}=
        \norm[H^{-1}]{f}.
      \end{equation*}
    \item For all $a$, $b\in O$ we have
      \begin{equation*}
        \norm[H_0^1]{\Uu^l(a)-\Uu^l(b)}\le \norm[H^{-1}]{f}
        \frac{1}{\min\{\rho(a),\rho(b)\}^2}
        \norm[L^\infty]{a-b},
      \end{equation*}
      which follows again by the same calculation as in in the proof
      of \eqref{eq:lipschitz}.
    \end{enumerate}
    According to Theorem
    \ref{thm:bdX}  
    the map
    $$\Uu^l\in L^2(U,X_l;\gamma)\subseteq L^2(U,H_0^1;\gamma)$$ is
    $(\bb_1,\xi_1,\tilde C_1,H_0^1)$-holomorphic, for some fixed
    constants $\xi_1>0$ and $\tilde C_1>0$ depending on $O_1$ but
    independent of $l$.  In fact the argument also works with $H_0^1$
    instead of $X_l$, i.e.\ also $u$ is
    $(\bb_1,\xi_1,\tilde C_1,H_0^1)$-holomorphic (with the same
    constants $\xi_1$ and $\tilde C_1$).

    Finally, it follows directly from the definition that the
    difference $u-u^l$ is $(\bb_1,\xi,2\delta,H_0^1)$-holomorphic.
    
    {\bf Step 2.} To show \ref{item:u-ulhol2}, we set
    $$O_2=\set{a\in W^{1}_\infty(\D)\cap\Ww^{s-1}_\infty(\D)}{\rho(a)>0},$$ and verify again
    assumptions \ref{item:uhol}, \ref{item:norma} and
    \ref{item:loclip} of Theorem \ref{thm:bdX}, but now with
    ``$\data$'' in this lemma being
    $W^{1}_\infty(\D)\cap\Ww^{s-1}_\infty(\D)$. First, observe
    that with
    $$b_{2,j}:=\max\big\{\norm[\Ww^{s-1}_\infty]{\psi_j},\norm[W^{1}_\infty]{\psi_j}\big\},$$
    by assumption
    $$\bb_2=(b_{2,j})_{j\in\N}\in\ell^{p_2}(\NN)\hookrightarrow \ell^1(\NN)$$
    which corresponds to the assumption \ref{item:psi} of Theorem
    \ref{thm:bdX}.

    For every $l\in\N$:
    \begin{enumerate}
    \item $\Uu-\Uu^l:O_2\to H_0^1(\D)$ is holomorphic: Since $O_2$ can
      be considered a subset of $O_1$ (and $O_2$ is equipped with a
      stronger topology than $O_1$), Fr\'echet differentiability
      follows by Fr\'echet differentiability of
      $$\Uu-\Uu^l:O_1\to H_0^1(\D),$$ which holds by Step 1.
    \item For every $a\in O_2$
      \begin{equation*}
        \norm[H_0^1]{(\Uu-\Uu^l)(a)}
        \le \underbrace{\sw{l}^{-\conv}       C C_s
          \norm[\Kk_{\varkappa-1}^{s-2}]{f}}_{=:\delta_l}
        \frac{(\norm[W^{1}_\infty]{a}+\norm[\Ww^{s-1}_{\infty}]{a})^{N_s+1}}{\rho(a)^{N_s+2}}
      \end{equation*}
      by Lemma \ref{lemma:femapprox}.
    \item For every $a$, $b\in O_2\subseteq O_1$, by Step 1 and
      \eqref{eq:lipschitz}, \begin{align*}
        \norm[H_0^1]{(\Uu-\Uu^l)(a)-(\Uu-\Uu^l)(b)} &\le
        \norm[H_0^1]{\Uu(a)-\Uu(b)}
                                                      +\norm[H_0^1]{\Uu^l(a)-\Uu^l(b)}\nonumber\\
                                                    &\le
                                                      \norm[H^{-1}]{f}
                                                      \frac{2}{\min\{\rho(a),\rho(b)\}^2}
                                                      \norm[L^\infty]{a-b}. 
                            \end{align*}
                            We conclude with Theorem
                            \ref{thm:bdX}  
                            that there exist $\xi_2$ and $\tilde C_2$
                            depending on $O_2$, $\D$ but independent
                            of $l$ such that $u-u^l$ is
                            $(\bb_2,\xi_2,\tilde
                            C_2\delta_l,H_0^1)$-holomorphic.
                          \end{enumerate}

                          In all, the proposition holds with
                          \begin{equation*}
                            \xi:=\min\{\xi_1,\xi_2\}\qquad\text{and}\qquad
                            \delta:=\max\{\tilde C_1,\tilde C_2 C C_s\norm[\Kk_{\varkappa-1}^{s-2}]{f}\}.\qedhere
                          \end{equation*}
                        \end{proof}

                        Items \ref{item:u-ulhol1} and
                        \ref{item:u-ulhol2} of
                        Proposition~\ref{prop:u-ul} show that
                        Assumption~\ref{ass:FEM} implies validity of
                        Assumption~\ref{ass:ml}. This in turn allows
                        us to apply Theorems~\ref{thm:mlint} and
                        \ref{thm:mlquad}. Specifically,
                        assuming the optimal convergence rate
                        $\alpha=\frac{s-1}{2}$ in
                        \eqref{eq:optimalrate}, we obtain that for $u$
                        in \eqref{eq:uby} and every $n\in \N$ there is
                        $\eps:=\eps_n>0$ such that
                        $\work(\blev_{\eps})\leq n$ and the multilevel
                        interpolant $\VIml_\blev$ defined in
                        \eqref{eq:VIml} satisfies
                        \begin{equation*}
                          \norm[L^2(U,{H^1_0};\gamma)]{u-\VI_{\blev_\eps}^{\rm ML}u}\le C(1+\log n)n^{-R_I},\quad
                          R_I = \min\left\{\frac{s-1}{2},\frac{\frac{s-1}{2}(\frac{1}{p_1}-\frac{3}{2})}{\frac{s-1}{2}+\frac{1}{p_1}-\frac{1}{p_2}}\right\},
                        \end{equation*}  
                        and the multilevel quadrature operator
                        $\VQml_\blev$ defined in \eqref{eq:VQml}
                        satisfies
                        \begin{equation*}
                          \normc[H^1_0]{\int_Uu(\by)\dd\gamma(\by)-\VQ_{\blev_\eps}^{\rm ML}u}\le C(1+\log n)n^{-R_Q},\quad
                          R_Q=\min\left\{\frac{s-1}{2},\frac{\frac{s-1}{2}(\frac{2}{p_1}-\frac{5}{2})}{\frac{s-1}{2}+\frac{2}{p_1}-\frac{2}{p_2}}\right\}.
                        \end{equation*}
                        Let us consider these convergence rates in the
                        case where the $\psi_j$ are algebraically
                        decreasing, with this decrease encoded by some
                        $r>1$: if for fixed but arbitrarily small
                        $\varepsilon>0$ holds
                        $\norm[L^\infty]{\psi_j}\sim
                        j^{-r-\varepsilon}$, and we assume
                        (cp.~Ex.~\ref{ex:sin})
                        \begin{equation*}
                          \max\big\{\norm[W^{1}_\infty]{\psi_j},\norm[\Ww^{s-1}_{\infty}]{\psi_j}\big\}\sim
                          j^{-r+(s-1)-\varepsilon},
                        \end{equation*}
                        then setting $s:=r$ we can choose
                        $p_1=\frac{1}{r}$ and $p_2=1$. Inserting those
                        numbers, the convergence rates become
                        \begin{equation*}
                          R_I = \min\left\{\frac{r-1}{2},\frac{\frac{r-1}{2}(r-\frac{3}{2})}{\frac{r-1}{2}+r-1}\right\}=\frac{r}{3}-\frac{1}{2}
                          \quad\text{and}\quad
                          R_Q = \min\left\{\frac{r-1}{2},\frac{\frac{r-1}{2}(2r-\frac{5}{2})}{\frac{r-1}{2}+2r-2}\right\}=\frac{2r}{5}-\frac{1}{2}.
                        \end{equation*}
  
        \subsubsection{Parametric holomorphy of the posterior density in Bayesian PDE inversion}
         \label{sec:PosDnsBayInv}
                        Throughout this section we assume that
                        $\D\subseteq\R^2$ is a polygonal Lipschitz
                        domain and that
                        $f\in \Kk_{\varkappa-1}^{s-2}(\D)$ with
                        $\varkappa$ as in Theorem~\ref{thm:bacuta2}.
  
                        As in Section \ref{sec:BIP}, to treat the
                        approximation of the (unnormalized) posterior
                        density or its integral, we need an upper
                        bound on $\norm[H_0^1]{u(\by)}$ for all
                        $\by$. This is achieved by considering
                        \eqref{eq:elliptic2}  
                        with diffusion coefficient $a_0+a$ where
$$\rho(a_0):=\underset{\bx\in\D}{ \essinf }\Re(a_0) > 0.$$ 
The shift of the diffusion coefficient by $a_0$ ensures uniform
ellipticity for all
  $$a\in \set{a\in L^\infty(\D,\CC)}{\rho(a)\ge 0}.$$  As a consequence,
  solutions $\Uu(a_0+a)\in X=H_0^1(\D;\C)=:H^1_0$ of \eqref{eq:elliptic2}
  satisfy the apriori bound (cp.~\eqref{eq:apriori})
  \begin{subequations}\label{eq:Uulabound}
    \begin{equation*}
      \norm[H^1_0]{\Uu(a_0+a)}\le \frac{\norm[H^{-1}]{f}}{\rho(a_0)}.
    \end{equation*}
    As before, for a sequence of subspaces $(X_l)_{l\in\N}$ of
    $H_0^1(\D,\CC)$, for $a\in O$ we denote by $\Uu^l(a)\in X_l$ the
    finite element approximation to $\Uu(a)$.
    By the same calculation as for $\Uu$ it also holds
    \begin{equation*}
      \norm[H^1_0]{\Uu^l(a_0+a)}\le
      \frac{\norm[H^{-1}]{f}}{\rho(a_0)}
    \end{equation*}
    independent of $l$.
  \end{subequations}

  Assuming that $b_j=\norm[L^\infty]{\psi_j}$ satisfies
  $(b_j)_{j\in\N}\in\ell^1(\NN)$, the function
  $u(\by)=\Uu(a_0+a(\by))$
  with $$a(\by)=\exp\bigg(\sum_{j\in\N}y_j\psi_j\bigg),$$ is
  well-defined.  For a fixed \emph{observation} $\bobs\in\R^m$
  consider again the (unnormalized) posterior density given in
  \eqref{eq:TheParm},
  \begin{equation*}
    \tilde\pi(\by|\bobs):=\exp\left( -(\bobs-\bcalO(u(\by)))^\top \bGamma^{-1} (\bobs-\bcalO(u(\by))) \right).
  \end{equation*}
  Recall that $\bcalO:X\to\C^m$ (the observation operator) is assumed
  to be a bounded linear map, and $\bGamma\in\R^{m\times m}$ (the
  noise covariance matrix) is symmetric positive definite.
  For $l\in\N$ (tagging discretization level of the PDE), and with
  $u^l(\by)=\Uu^l(a_0+a(\by))$, we introduce approximations
  \begin{equation*}
    \tilde\pi^l(\by|\bobs):=\exp\left( -(\bobs-\bcalO(u^l(\by)))^\top \bGamma^{-1} (\bobs-\bcalO(u^l(\by))) \right)
  \end{equation*}
  to $\tilde\pi(\by|\bobs)$. In the following we show the analog of
  Proposition~\ref{prop:u-ul}, that is we show validity of the
  assumptions required for the multilevel convergence results.

  \begin{lemma}\label{lemma:PhiLip}
    Let $\bcalO:H_0^1(\D;\C)\to\C^m$ be a bounded linear operator,
    $\bobs\in\C^m$ and $\bGamma\in\R^{m\times m}$ symmetric positive
    definite.  Set
    \begin{equation*}
      \Phi:=\begin{cases}
        H_0^1(\D;\C)\to\C\\
        u\mapsto \exp(-(\bobs-\bcalO(u))^\top \bGamma^{-1}(\bobs-\bcalO(u))).
      \end{cases}
    \end{equation*}
    Then the function $\Phi$ is
    continuously differentiable 
    and  for every $r>0$ has a Lipschitz constant $K$
    solely depending on $\norm[]{\bGamma^{-1}}$,
    $\norm[L(H_0^1;\C^m)]{\bcalO}$, $\norm[]{\bobs}$ and $r$, on the set
    $$
    \set{u\in H_0^1(\D;\C)}{\norm[H_0^1]{u}<r}.
    $$
  \end{lemma}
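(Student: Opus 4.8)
The plan is to exhibit $\Phi$ as a composition $\Phi = E \circ q \circ A$ of three elementary maps: the affine bounded map $A : H_0^1(\D;\C) \to \C^m$, $A(u) := \bobs - \bcalO(u)$; the quadratic form $q : \C^m \to \C$, $q(\bv) := \bv^\top \bGamma^{-1} \bv$; and the entire function $E : \C \to \C$, $E(z) := \exp(-z)$. Each of these is continuously differentiable (indeed complex-Fr\'echet differentiable): $A$ with constant derivative $-\bcalO$, $q$ being a polynomial in finitely many variables, and $E$ being entire. Hence the continuous differentiability of $\Phi$ follows immediately from the chain rule; this also records that $\Phi$ is holomorphic, which is the property exploited in Section~\ref{sec:BIP}.

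For the Lipschitz bound on $B_r := \{ u \in H_0^1(\D;\C) : \norm[H_0^1]{u} < r \}$ I would proceed in three short steps. First, bound the range of $A$ on $B_r$: for $u \in B_r$ one has $\norm[]{A(u)} \le R := \norm[]{\bobs} + \norm[L(H_0^1;\C^m)]{\bcalO}\, r$, and consequently $|q(A(u))| \le M := \norm[]{\bGamma^{-1}}\, R^2$. Second, observe that $E$ is Lipschitz on the disk $\{ |z| \le M \}$ with constant $\sup_{|z| \le M} |E(z)| \le e^M$, obtained by integrating $E'$ along the (straight, hence disk-interior) segment joining two such points and using $|E(z)| = e^{-\Re z} \le e^{|z|}$. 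Third, use the polarization identity $q(\bv_1) - q(\bv_2) = (\bv_1 - \bv_2)^\top \bGamma^{-1} \bv_1 + \bv_2^\top \bGamma^{-1}(\bv_1 - \bv_2)$, which gives $|q(\bv_1) - q(\bv_2)| \le \norm[]{\bGamma^{-1}}\,(\norm[]{\bv_1} + \norm[]{\bv_2})\, \norm[]{\bv_1 - \bv_2}$. Chaining these estimates together with $\norm[]{A(u_1) - A(u_2)} = \norm[]{\bcalO(u_1) - \bcalO(u_2)} \le \norm[L(H_0^1;\C^m)]{\bcalO}\, \norm[H_0^1]{u_1 - u_2}$ yields
\[
  |\Phi(u_1) - \Phi(u_2)| \le 2 e^M R\, \norm[]{\bGamma^{-1}}\, \norm[L(H_0^1;\C^m)]{\bcalO}\, \norm[H_0^1]{u_1 - u_2}
  \qquad \forall\, u_1, u_2 \in B_r,
\]
so that $K := 2 e^M R\, \norm[]{\bGamma^{-1}}\, \norm[L(H_0^1;\C^m)]{\bcalO}$, with $R$ and $M$ as above, depends only on $\norm[]{\bGamma^{-1}}$, $\norm[L(H_0^1;\C^m)]{\bcalO}$, $\norm[]{\bobs}$ and $r$, as asserted.

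I expect no genuine obstacle here; the proof is essentially bookkeeping. The only point requiring a little care is that $q$ is a \emph{complex} quadratic form rather than a squared norm, so one cannot exploit positivity of a real part and must work with absolute-value bounds throughout -- this is exactly why the constant $K$ enters through $R = \norm[]{\bobs} + \norm[L(H_0^1;\C^m)]{\bcalO}\, r$ and grows exponentially in $M \sim r^2$. One should also be mindful to invoke the chain rule in the (complex-)Fr\'echet sense, for which it suffices that $A$ is affine-bounded, $q$ is polynomial in finitely many variables, and $E$ is entire.
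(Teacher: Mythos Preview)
Your proposal is correct and essentially matches the paper's argument. The paper organizes the Lipschitz estimate slightly differently: it computes the Fr\'echet derivative $D\Phi$ explicitly, writes $\Phi(u)-\Phi(v)=\int_0^1 D\Phi(v+tw)w\,\mathrm{d}t$, and bounds the integrand on the convex ball, arriving at the very same constant $K=2e^M R\,\norm[]{\bGamma^{-1}}\,\norm[L(H_0^1;\C^m)]{\bcalO}$ that you obtain by chaining Lipschitz bounds through the factorization $E\circ q\circ A$; the two presentations are interchangeable.
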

  \begin{proof}
    The function $\Phi$ is continuously differentiable 
    as a composition of continuously differentiable  functions. 
    Hence
    for $u$, $v$ with $w:=u-v$ and with the derivative
    $D\Phi:H_0^1\to L(H_0^1;\C)$ of $\Phi$,
    \begin{equation}\label{eq:diffPhi}
      \Phi(u)-\Phi(v)=\int_{0}^1 D\Phi(v+tw)w \dd t.
    \end{equation}
    Due to the symmetry of $\bGamma$ it holds
    \begin{equation*}
      D\Phi(u+tw)w = 2
      \bcalO(w)^\top \bGamma^{-1}(\bobs-\bcalO(u+tw))
      \exp\Big(-(\bobs-\bcalO(u+tw))^\top \bGamma^{-1}(\bobs-\bcalO(u+tw))\Big).
    \end{equation*}
    If $\norm[H_0^1]{u}$, $\norm[H_0^1]{v}<r$ then also
    $\norm[H_0^1]{u+tw}<r$ for all $t\in[0,1]$ and we can bound
    \begin{equation*}
      |D\Phi(u+tw)w|\le
      K \norm[H_0^1]{w},
    \end{equation*}
    where
    \begin{equation} \label{ConstantK} K:=
      2\norm[L(H_0^1;\C^m)]{\bcalO}
      \norm[]{\bGamma^{-1}}(\norm[]{\bobs}+r
      \norm[L(H_0^1;\C^m)]{\bcalO})
      \exp(\norm[]{\bGamma^{-1}}(\norm[]{\bobs}+\norm[L(H_0^1;\C^m)]{\bcalO}
      r)^2).
    \end{equation}
    The statement follows by \eqref{eq:diffPhi}.
  \end{proof}

  \begin{remark} {\rm
    The reason why we require the additional positive $a_0$ term in
    \eqref{eq:Uulabound}, is to guarantee boundedness of the solution
    $\Uu(a)$ and Lipschitz continuity of $\Phi$.
  } \end{remark} 

  \begin{proposition}\label{prop:pi-pil}
    Let Assumption \ref{ass:FEM} be satisfied for some $s\ge 2$ and
    $\alpha>0$.  Let $a_0$,
    $(\psi_j)_{j\in\N}\subseteq W^{1}_\infty(\D)\cap
    \Ww^{s-1}_{\infty}(\D)$ and $\bb_1\in\ell^{p_1}(\NN)$,
    $\bb_2\in\ell^{p_2}(\NN)$ with $p_1$, $p_2\in (0,1)$ (see
    \eqref{eq:b1b2ml} for the definition of $\bb_1$, $\bb_2$).  Fix
    $\bobs\in\C^m$.

    Then there exist $\xi>0$ and $\delta>0$ such that
    $\tilde\pi(\by|\bobs)$ is $(\bb_1,\xi,\delta,\C)$-holomorphic, and
    for every $l\in\N$
    \begin{enumerate}
    \item\label{item:pilhol} $\tilde\pi^l(\by|\bobs)$ is
      $(\bb_1,\xi,\delta,\C)$-holomorphic,
    \item\label{item:pi-pilhol1}
      $\tilde\pi(\by|\bobs)-\tilde\pi^l(\by|\bobs)$ is
      $(\bb_1,\xi,\delta,H_0^1)$-holomorphic,
    \item\label{item:pi-pilhol2}
      $\tilde\pi(\by|\bobs)-\tilde\pi^l(\by|\bobs)$ is
      $(\bb_2,\xi,\delta\sw{l}^{-\conv},H_0^1)$-holomorphic.
    \end{enumerate}
  \end{proposition}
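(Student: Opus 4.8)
The strategy is to reduce this proposition to the combination of Proposition~\ref{prop:u-ul} (which gives the required holomorphy of the forward maps $u$, $u^l$ and their differences $u-u^l$) with the abstract composition result of Theorem~\ref{thm:bdHol}, exactly in the way Theorem~\ref{thm:bdHol} is applied in Section~\ref{sec:BIP}. Concretely, I would proceed as follows.

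First I would establish the uniform bound on $\norm[H_0^1]{u(\by)}$ that Theorem~\ref{thm:bdHol} requires. Because of the shifted coefficient $a_0+a(\by)$ and the assumption $\rho(a_0)>0$, the apriori estimate \eqref{eq:Uulabound} gives $\norm[H_0^1]{u(\by)}\le \norm[H^{-1}]{f}/\rho(a_0)=:r$ for all $\by\in U$, and the same bound holds for $u^l(\by)$ with the same $r$, independently of $l$. Next I would invoke Proposition~\ref{prop:u-ul} (which applies verbatim to the coefficient-to-solution map $a\mapsto\Uu(a_0+a)$, since shifting by the fixed $a_0$ only improves ellipticity): there exist $\xi>0$, $\delta>0$ such that $u$ is $(\bb_1,\xi,\delta,H_0^1)$-holomorphic, $u^l$ is $(\bb_1,\xi,\delta,H_0^1)$-holomorphic, $u-u^l$ is $(\bb_1,\xi,\delta,H_0^1)$-holomorphic, and $u-u^l$ is $(\bb_2,\xi,\delta\sw{l}^{-\conv},H_0^1)$-holomorphic. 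Inspecting the proof of Proposition~\ref{prop:u-ul}, the functions $\varphi_N$ realising the bound in Definition~\ref{def:bdXHol}~\ref{item:varphi} can be chosen constant equal to $r$ (as in Corollary~\ref{cor:norma2}), because of the uniform apriori bound just noted; this is the key point that lets us meet the hypothesis of Theorem~\ref{thm:bdHol} that $\varphi_N\equiv r$.

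Then I would apply Theorem~\ref{thm:bdHol} to the map $\Phi$ of Lemma~\ref{lemma:PhiLip} (the posterior potential), which is Lipschitz continuous on $\{v:\norm[H_0^1]{v}<r\}$ with a Lipschitz constant $K$ depending only on $\norm{\bGamma^{-1}}$, $\norm[L(H_0^1;\C^m)]{\bcalO}$, $\norm{\bobs}$ and $r$ (cf.~\eqref{ConstantK}). For item~\ref{item:pilhol}, Theorem~\ref{thm:bdHol} with $u$ replaced by $u^l$ (which is $(\bb_1,\xi,\delta,H_0^1)$-holomorphic with constant $\varphi_N\equiv r$) and $\phi\equiv 1$, $\bcalO$, $\bGamma$ as given, shows directly that $\by\mapsto\Phi(u^l(\by))=\tilde\pi^l(\by|\bobs)$ is $(\bb_1,\xi,\delta,\C)$-holomorphic; the same with $u$ gives $(\bb_1,\xi,\delta,\C)$-holomorphy of $\tilde\pi(\by|\bobs)$. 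For items~\ref{item:pi-pilhol1} and~\ref{item:pi-pilhol2} I would instead run the Cauchy-estimate argument on the \emph{difference} $\Phi\circ u-\Phi\circ u^l$: writing $g^l(\by):=\tilde\pi(\by|\bobs)-\tilde\pi^l(\by|\bobs)$, I would follow the three-step structure of the proof of Theorem~\ref{thm:bdHol} verbatim, using the holomorphic extensions $u_N$, $u_N^l$ of $u$, $u^l$ on strips $\Ss(\bvarrho)$ (which exist by the $(\bb_1,\xi,\delta,H_0^1)$-holomorphy, resp. $(\bb_2,\cdot,\cdot,H_0^1)$-holomorphy of $u-u^l$), and the Lipschitz bound $\norm[\C]{\Phi(u_N(\by+\bz))-\Phi(u_N^l(\by+\bz))}\le K\norm[H_0^1]{u_N(\by+\bz)-u_N^l(\by+\bz)}$ together with the pointwise bound on $\norm[H_0^1]{u_N(\by+\bz)-u_N^l(\by+\bz)}$ supplied by Definition~\ref{def:bdXHol}~\ref{item:varphi} applied to $u-u^l$ (giving the uniform bound $\delta$, resp.\ $\delta\sw{l}^{-\conv}$). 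The $L^2$-convergence requirement \ref{item:vN} follows, as in the proof of Theorem~\ref{thm:bdHol}, from $\int_U\norm[\C]{g^l(\by)-g^l_N(\by)}^2\dd\gamma(\by)\le K^2\int_U\norm[H_0^1]{(u-u^l)(\by)-(\widetilde{u-u^l})_N(\by)}^2\dd\gamma(\by)\to 0$.

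I expect the only genuinely delicate point to be item~\ref{item:pi-pilhol2}: one must check that the $\sw{l}^{-\conv}$ factor is preserved when passing from $u-u^l$ to $\tilde\pi-\tilde\pi^l$. This works because the Lipschitz constant $K$ of $\Phi$ is uniform over the ball of radius $r$ (Lemma~\ref{lemma:PhiLip}) and does \emph{not} depend on $l$, so the bound $\delta\sw{l}^{-\conv}$ in Definition~\ref{def:bdXHol}~\ref{item:varphi} for $u-u^l$ transfers, up to the $l$-independent factor $K$, directly into the bound $K\delta\sw{l}^{-\conv}$ for $\tilde\pi-\tilde\pi^l$; absorbing $K$ into $\delta$ (which we are free to do, since Definition~\ref{def:bdXHol} only needs \emph{some} $\delta$) closes the argument. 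A secondary, purely technical, nuisance is that $\bcalO$ and $\Phi$ take values in $\C$ rather than $H_0^1$, so strictly speaking item~\ref{item:pi-pilhol1} and~\ref{item:pi-pilhol2} should read ``$(\bb_i,\xi,\cdot,\C)$-holomorphic''; I would note this, or alternatively compose with the trivial embedding, but in any case it does not affect the substance of the proof.
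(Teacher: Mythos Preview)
Your plan is correct and uses the same essential ingredients as the paper: the uniform apriori bound $r=\norm[H^{-1}]{f}/\rho(a_0)$ from \eqref{eq:Uulabound}, the Lipschitz property of $\Phi$ on the ball of radius $r$ (Lemma~\ref{lemma:PhiLip}), and the $\sw{l}^{-\alpha}$ bound from Lemma~\ref{lemma:femapprox}. The organizational difference is that the paper verifies the hypotheses of Theorem~\ref{thm:bdX} directly for the \emph{composite} maps $\Phi\circ\Uu\circ T_{a_0}$, $\Phi\circ\Uu^l\circ T_{a_0}$, and their difference, whereas you first invoke Proposition~\ref{prop:u-ul} for $u$, $u^l$, $u-u^l$ and then compose with $\Phi$ afterwards (via Theorem~\ref{thm:bdHol} for item~\ref{item:pilhol}, and by hand for the differences). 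Your route is more modular and avoids repeating the verification already done in Proposition~\ref{prop:u-ul}; the paper's route is more self-contained but re-derives essentially the same bounds.

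One technical point you should make explicit in item~\ref{item:pi-pilhol2}: to apply the Lipschitz estimate $|\Phi(u_N(\by+\bz))-\Phi(u_N^l(\by+\bz))|\le K\norm[H_0^1]{u_N(\by+\bz)-u_N^l(\by+\bz)}$ on a $(\bb_2,\xi)$-admissible strip, you need both $u_N(\by+\bz)$ and $u_N^l(\by+\bz)$ to lie in the ball of radius $r$. This follows because $b_{1,j}=\norm[L^\infty]{\psi_j}\le b_{2,j}$, so every $(\bb_2,\xi)$-admissible $\bvarrho$ is also $(\bb_1,\xi)$-admissible, and the uniform bound $\varphi_N\equiv r$ from the $(\bb_1,\xi,\delta,H_0^1)$-holomorphy of $u$ and $u^l$ therefore still applies on the smaller strip. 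Your remark about $H_0^1$ versus $\C$ in the statement is well taken; it is a typo in the target space (the paper's own proof has the same slip), and as you say it is immaterial.
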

  \begin{proof}
    {\bf Step 1.} We show \ref{item:pilhol} and \ref{item:pi-pilhol1}.
    Set $$O_1:=\set{a\in L^\infty(\D;\C)}{\rho(a)>0}.$$ By
    \eqref{eq:Uulabound} %
    for all $a\in O_1$ and all $l\in\N$ with
    $r:=\frac{\norm[H^{-1}]{f}}{\rho(a_0)}$
    \begin{equation}\label{eq:Uuboundr}
      \norm[H_0^1]{\Uu^l(a_0+a)}\le r\qquad\text{and}\qquad
      \norm[H_0^1]{\Uu(a_0+a)}\le r.
    \end{equation}
    As in Step 1 of the proof of Proposition~\ref{prop:u-ul}, one can
    show that $u(\by)=\Uu(a_0+a(\by))$ and $u^l(\by)=\Uu(a_0+a(\by))$
    where $a(\by)=\exp(\sum_{j\in\N}y_j\psi_j)$ are
    $(\bb_1,\xi_1,\tilde C_1,H_0^1)$-holomorphic for certain $\xi_1>0$
    and $\tilde C_1>0$ (the only difference to
    Proposition~\ref{prop:u-ul} is the affine offset $a_0$ in \eqref{eq:elliptic2},
    which ensures a positive lower bound for $a+a_0$).
    In the following $\Phi$ is as in Lemma
    \ref{lemma:PhiLip} and $T_{a_0}(a):=a_0+a$ so that
    \begin{equation}\label{eq:posteriorbdX}
      \tilde \pi (\by|\bobs) = \Phi(\Uu^l(T_{a_0}(a(\by)))).
    \end{equation}
    With $b_{1,j}=\norm[L^\infty]{\psi_j}$, by the assumption
    $$\bb_1=(b_{1,j})_{j\in\N}\in\ell^{p_1}(\NN)\hookrightarrow \ell^1(\NN)$$
    which corresponds to assumption \ref{item:psi} of Theorem \ref{thm:bdX}.  
    We now verify assumptions \ref{item:uhol}, \ref{item:norma} and \ref{item:loclip} 
    of Theorem \ref{thm:bdX} for \eqref{eq:posteriorbdX}.
    
    For every $l\in\N$:
    \begin{enumerate}
    \item\label{item:abc} %
      The map
      \begin{equation*}
        \Phi\circ \Uu^l\circ T_{a_0}:\begin{cases}
          O_1 \to \C\\
          a\mapsto \Phi(\Uu(T_{a_0}(a)))
        \end{cases}
      \end{equation*}
      is holomorphic as a composition of holomorphic functions.
    \item for all $a\in O_1$, since
      $\norm[H_0^1]{\Uu^l(T_{a_0}(a))}\le r$
      \begin{equation*}
        |\Phi(\Uu^l(T_{a_0}(a)))|\le \exp((\norm[]{\bobs}+\norm[L(H_0^1(\D;\C);\C^m)]{\bcalO}r)^2\norm[]{\bGamma^{-1}})
      \end{equation*}
      and thus assumption \ref{item:norma} of Theorem \ref{thm:bdX} is
      trivially satisfied for some $\delta>0$ independent of $l$,
    \item for all $a$, $b\in O_1$ by Lemma \ref{lemma:PhiLip} and the
      same calculation as in \eqref{eq:lipschitz}
      \begin{equation}\label{eq:uula-uulb}
        \begin{aligned}
          |\Phi(\Uu^l(T_{a_0}(a)))-\Phi(\Uu^l(T_{a_0}(b)))|&\le
          K\norm[H_0^1]{\Uu^l(T_{a_0}(a))-\Uu^l(T_{a_0}(b))}
          \\
          & \le K
          \frac{\norm[H^{-1}]{f}}{\rho(a_0)}\norm[L^\infty]{a-b},
        \end{aligned}
      \end{equation}
      where $K$ is the constant given as in \eqref{ConstantK}.
    \end{enumerate}

    Now we can apply Theorem \ref{thm:bdX} to conclude that there
    exist $\xi_1$, $\delta_1$ (independent of $l$) such that
    $\tilde\pi^l(\cdot|\bobs)$ is
    $(\bb_1,\xi_1,\delta_1,H_0^1)$-holomorphic for every
    $l\in\N$. Similarly one shows that $\tilde\pi(\cdot|\bobs)$ is
    $(\bb_1,\xi_1,\delta_1,H_0^1)$-holomorphic, and in particular
    $\tilde\pi(\cdot|\bobs)-\tilde\pi^l(\cdot|\bobs)$ is
    $(\bb_1,\xi_1,2\delta_1,H_0^1)$-holomorphic.

    {\bf Step 2.}  Set
    $$O_2=\set{a\in W^{1}_\infty(\D)\cap\Ww^{s-1}_\infty(\D)}{\rho(a)>0}.$$
    We verify once more assumptions \ref{item:uhol}, \ref{item:norma}
    and \ref{item:loclip} of Theorem \ref{thm:bdX} with ``$\data$'' in
    this lemma being $W^{1}_\infty(\D)\cap\Ww^{s-1}_\infty(\D)$. With
    $b_{2,j}=\max\{\norm[W^{1}_\infty]{\psi_j},\norm[\Ww^{s-1}_\infty]{\psi_j}\}$,
    by the assumption
    $$4\bb_2=(b_{2,j})_{j\in\N}\in\ell^{p_2}(\NN)\hookrightarrow \ell^1(\NN),$$
    which corresponds to assumption \ref{item:psi} of Theorem \ref{thm:bdX}.

    We will apply Theorem \ref{thm:bdX} with the function
    \begin{equation}\label{eq:pit-pitl}
      \tilde \pi(\by|\bobs)-
      \tilde \pi^l(\by|\bobs)
      =\Phi(\Uu(T_{a_0}(a(\by))))-\Phi(\Uu^l(T_{a_0}(a(\by)))).
    \end{equation}
    For every $l\in\N$:
    \begin{enumerate}
    \item By item \ref{item:abc}
      in %
      Step 1 (and because $O_2\subseteq O_1$)
      \begin{equation*}
        \Phi\circ \Uu\circ T_{a_0}-\Phi\circ\Uu^l\circ T_{a_0}:\begin{cases}
          O_2 \to \C\\
          a\mapsto \Phi(\Uu(T_{a_0}(a)))-\Phi(\Uu^l(T_{a_0}(a)))
        \end{cases}
      \end{equation*}
      is holomorphic,
    \item for every $a\in O_2$, by Lemma \ref{lemma:femapprox}
      \begin{equation*}
        \norm[H_0^1]{\Uu(T_{a_0}(a))-\Uu^l(T_{a_0}(a))}\le \sw{l}^{-\conv} CC_s%
        \frac{(\norm[W^{1}_\infty]{a_0+a}+\norm[\Ww^{s-1}_{\infty}]{a_0+a})^{N_s+1}}{\rho(a_0+a)^{N_s+2}}
        \norm[\Kk^{s-2}_{\varkappa-1}]{f}.
      \end{equation*}
      Thus by \eqref{eq:Uuboundr} and Lemma \ref{lemma:PhiLip}
      \begin{equation*}\label{eq:PhiUuSa}
        |\Phi(\Uu(T_{a_0}(a)))-\Phi(\Uu^l(T_{a_0}(a)))|
        \le \sw{l}^{-\conv} KCC_s
        \frac{(\norm[W^{1}_\infty]{a_0+a}+\norm[\Ww^{s-1}_{\infty}]{a_0+a})^{N_s+1}}{\rho(a_0+a)^{N_s+2}}\norm[\Kk^{s-2}_{\varkappa-1}]{f},
      \end{equation*}
    \item for all $a$, $b\in O_2\subseteq O_1$ by \eqref{eq:uula-uulb}
      (which also holds for $\Uu^l$ replaced by $\Uu$):
      \begin{equation*}
        |\Phi(\Uu(T_{a_0}(a)))-\Phi(\Uu^l(T_{a_0}(a)))-(\Phi(\Uu(b))-\Phi(\Uu^l(b)))|\le 2 K \frac{\norm[H^{-1}]{f}}{\rho(a_0)}\norm[L^\infty]{a-b}. %
      \end{equation*}
    \end{enumerate}
    By Theorem \ref{thm:bdX} and \eqref{eq:pit-pitl} we conclude that
    there exists $\delta>0$ and $\xi_2$ independent of $l$ such that
    $\tilde \pi(\by|\bobs)-\tilde \pi^l(\by|\bobs)$ is
    $(\bb_2,\delta \sw{l}^{-\alpha},\xi_2,H_0^1)$-holomorphic.
  \end{proof}

  Items \ref{item:u-ulhol1} and \ref{item:u-ulhol2} of
  Proposition~\ref{prop:u-ul} show that Assumption \ref{ass:FEM}
  implies validity of Assumption \ref{ass:ml}. This in turn allows us
  to apply Theorems~\ref{thm:mlint} and \ref{thm:mlquad}.
  Specifically, assuming the optimal convergence rate
  \eqref{eq:optimalrate}, we obtain that for every $n\in \NN$ there is
  $\eps:=\eps_n>0$ such that $\work(\blev_{\eps}) \le n$ and the
  multilevel interpolant $\VIml_{\blev}$ defined in \eqref{eq:VIml}
  satisfies
  \begin{equation*}
    \norm[L^2(U,{H^1_0};\gamma)]{\tilde \pi(\cdot|\bobs)-\VI_{\blev_\eps}^{\rm ML}
      \tilde \pi(\cdot|\bobs)}\le C(1+\log n) n^{-R_I},\quad
    R_I = \min\left\{\frac{s-1}{2},
      \frac{\frac{s-1}{2}(\frac{1}{p_1}-\frac{3}{2})}{\frac{s-1}{2}+\frac{1}{p_1}-\frac{1}{p_2}}
    \right\}.
  \end{equation*}

  Of higher practical interest is the application of the multilevel
  quadrature operator $\VQml$ defined in \eqref{eq:VQml}.  In case the
  prior is chosen as $\gamma$, then
  $$\int_U\tilde \pi(\by|\bobs)\dd\gamma(\by)$$ equals the
  normalization constant in \eqref{eq:BayesDens}.  It can be
  approximated with the error converging like
  \begin{equation}\label{eq:posteriorconv}
    \left|\int_U\tilde \pi(\by|\bobs)\dd\gamma(\by)-\VQ_{\blev_\eps}^{\rm ML}u\right|
    \le C(1+\log n)n^{-R_Q},\quad
    R_Q:= 
    \min\left\{\frac{s-1}{2},
      \frac{\frac{s-1}{2}(\frac{2}{p_1}-\frac{5}{2})}{\frac{s-1}{2}+\frac{2}{p_1}-\frac{2}{p_2}}
    \right\}.
  \end{equation}

  Typically, one is not merely interested in the
  normalization constant 
$$
 Z = \int_U\tilde \pi(\by|\bobs)\dd\gamma(\by),
$$ 
  but for example also in an estimate of the $j$th parameter $y_j$ 
  given as the conditional expectation, which up to multiplying with the
  normalization constant $\frac{1}{Z}$, corresponds to
$$
\int_U y_j \tilde \pi(\by|\bobs)\dd\gamma(\by).
$$  
Since
$\by\mapsto y_j$ is analytic, one can show the same convergence rate
as in \eqref{eq:posteriorconv} for the multilevel quadrature applied
with the approximations $\by\mapsto y_j\tilde\pi^l(\by|\bobs)$ for
$l\in\N$.  Moreover, for example if $\phi:H_0^1(\D;\C)\to \C$ is a
bounded linear functional representing some quantity of interest, then
we can show the same error convergence for the approximation of
$$\int_U \phi(u(\by))\tilde\pi(\by|\bobs)\dd\gamma(\by)$$ with the
multilevel quadrature applied with the approximations
$\phi(u^l(\by))\tilde\pi^l(\by|\bobs)\dd\gamma(\by)$ to the integrand
for $l\in\N$.
\subsection{Linear  multilevel interpolation and quadrature approximation}
\label{Multilevel approximation and quadrature in Bochner spaces}
In this section, we briefly recall some results from \cite{dD21}
  (see also \cite{dD-Erratum22} for some corrections).
  The difference with Sections
  \ref{sec:SetNot} - \ref{sec:Approx} is, that the interpolation and
  quadrature operators presented in this section are \emph{linear}
  operators; in contrast, the operators $\VIml$, $\VQml$ in \eqref{eq:VIml},
  \eqref{eq:VQml} are in general nonlinear, 
  since they build on the
  approximations $u^n$ of $u$ from Assumption \ref{ass:ml}. 
  These approximations are not assumed to be linear (and, in general, are not linear) 
  in $u$.

  In this section we proceed similarly, but with $u^n:=P_nu$
  \emph{ for a linear operator $P_n$}; 
  if $u$ denotes the solution of an elliptic PDE in $H^1(\D)$, $P_n$ 
  could for instance be the orthogonal
  projection from $H^1(\D)$ into some fixed finite dimensional
  subspace. We emphasize, that such operators are not available in
  practice, and many widely used implementable algorithms 
  (such as the finite element method, boundary element method, finite differences) 
  realize projections that are \emph{not of this type}. 
  We will discuss this in more detail in Remark \ref{rmk:comparison}. 
  Therefore the present results are mainly of theoretical rather than of practical importance.
  On a positive note, the convergence rates for both, Smolyak sparse-grid
  interpolation and  quadrature obtained in this section
  via thresholding (see \eqref{G(xi)} ahead) improve the rates shown
  in the previous sections for the discretization levels allocated via
  Algorithm \ref{alg:levels} by a logarithmic factor, cp.~Theorems
  \ref{thm:mlint} and \ref{thm:mlquad}. Yet we emphasize that the
  latter are computable (in linear complexity, see
  Sec.~\ref{sec:mleps}). 
  
\subsubsection{Multilevel Smolyak sparse-grid interpolation}
\label {Interpolation}
In this section, 
we recall some results in \cite{dD21} 
(see also, \cite{dD-Erratum22} for some corrections) 
on  linear multilevel polynomial interpolation approximation in Bochner spaces.

In order to have a correct definition of interpolation operator let us
impose some necessary restrictions on $v \in L^2(U,X;\gamma)$.
Let $\Ee$ be a $\gamma$-measurable subset in $U$ such that $\gamma(\Ee) =1$ and $\Ee$ contains  all $\by \in U$ with $|\by|_0 < \infty$, where $|\by|_0$ denotes the number of nonzero components $y_j$ of $\by$. For a given $\Ee$ and separable Hilbert space $X$, let $C_\Ee(U)$ the set of  all functions $v$ on $U$ taking values in $X$ such that $v$ are continuous on $\Ee$  w.r. to the local convex topology of  $U:= \RRi$ (see Example~\ref{ex:R^infty}). We define $L^2_\Ee(U,X, \gamma):= L^2(U,X; \gamma)\cap C_\Ee(U)$. We will treat  all elements 
	$v \in L^2_\Ee(U,X, \gamma)$ as their representative belonging to  $C_\Ee(U)$. Throughout this  and next sections, we fix  a set $\Ee$. 
	
We define the univariate operator $\Delta^{{\rm I}}_m$ for $m \in \NN_0$ by
\begin{equation} \nonumber
	\Delta^{{\rm I}}_m
	:= \
	I_m - I_{m-1},
\end{equation} 
with the convention $I_{-1} = 0$, where $I_m$ is defined in Section \ref{sec:int}.

For  $v \in L^2_\Ee(U,X;\gamma)$, we introduce the tensor product operator $\Delta^{{\rm I}}_\bnu$ for $\bnu \in \cF$ by
\begin{equation} \nonumber
	\Delta^{{\rm I}}_\bnu(v)
	:= \
	\bigotimes_{j \in \NN} \Delta^{{\rm I}}_{\nu_j}(v),
\end{equation}
where the univariate operator
$\Delta^{{\rm I}}_{\nu_j}$ is applied to the univariate function 
$\bigotimes_{j' =1 }^{j-1} \Delta^{{\rm I}}_{\nu_{j'}}(v)$ by considering this function 
as a function of  variable $y_j$ with all remaining variables held fixed. 
From the definition of $L^2_\Ee(U,X;\gamma)$ one infers
that the operators  $\Delta^{{\rm I}}_\bnu$ are well-defined for all $\bnu \in \cF$. 

Let us recall a setting from \cite{dD21} of 
linear fully discrete polynomial interpolation of functions in the Bochner space $L^2(U,X^2;\gamma)$,
with the approximation error measured by the norm of the  Bochner space 
$L^2(U,X^1;\gamma)$ for separable Hilbert spaces $X^1$ and $X^2$.   
To construct  linear fully discrete methods of polynomial interpolation, 
besides weighted $\ell^2$-summabilities with respect to $X^1$ and $X^2$ 
we need an approximation property on the spaces $X^1$ and $X^2$ combined in the following assumption.

\begin{assumption} \label{Assumption II} 
	For the Hilbert spaces $X^1$
  and $X^2$ and $v \in L^2_\Ee(U,X^2; \gamma)$ represented by the
  series
	\begin{equation} \label{HermiteSeriesV}
		v = \sum_{\bnu\in\cF} v_\bnu H_\bnu, \quad v_\bnu \in X^2,
	\end{equation}	
	there holds the following.
	\begin{itemize}
		\item[{\rm (i)}] 
		$X^2$ is  a linear subspace of  $X^1$ and $\|\cdot\|_{X^1} \le C\, \|\cdot\|_{X^2}$.
		\item[{\rm (ii)}] 
		For $i=1,2$,  there exist numbers $q_i$ with $0<q_1\leq q_2 <\infty$ and $q_1 < 2$, and families $(\sigma_{i;\bnu})_{\bnu \in \cF}$  of numbers strictly larger than $1$ such that 
		$\sigma_{i;\bee_{j'}} \le \sigma_{i;\bee_j}$ if $j' < j$, and
		\begin{equation} \nonumber
			\sum_{\bnu\in\cF} (\sigma_{i;\bnu} \|v_\bnu\|_{X^i})^2 \le M_i <\infty \quad  \text{and} \quad 
		\left(p_{\bnu}(\tau,\lambda)\sigma_{i;\bnu}^{-1}\right)_{\bnu \in \cF} \in \ell^{q_i}(\cF)
		\end{equation}		
	 for every $\tau > \frac{17}{6}$ and  $\lambda \ge 0$, where we recall that $(\be_j)_{j\in \NN}$ is  the standard basis of $\ell^2(\NN)$. 
		\item[{\rm (iii)}]  	 
		There are a sequence $(V_n)_{n \in \NN_0}$ of subspaces $V_n \subset X^1 $ of dimension $\le n$, and a sequence $(P_n)_{n \in \NN_0}$ of linear operators from $X^1$ into $V_n$, 
		and a number $\alpha>0$ such that
		\begin{equation} \label{spatialappnX}
			\|P_n(v)\|_{X^1} \leq 
			C\|v\|_{X^1} , \quad
			\|v-P_n(v)\|_{X^1} \leq 
			Cn^{-\alpha} \|v\|_{X^2}, \quad \forall n \in \NN_0, \quad \forall v \in X^2.
		\end{equation}
	\end{itemize}
\end{assumption}

Let Assumption \ref{Assumption II} hold for Hilbert spaces $X^1$ and $X^2$   and $v \in L^2_\Ee(U,X^2; \gamma)$. Then we are able to construct a linear fully discrete polynomial interpolation approximation. 
We introduce the interpolation operator 
	$$
	\Ii_G: L^2_\Ee(U,X^2; \gamma) \to \Vv(G)
	$$ 
	for a given finite set $G \subset \NN_0 \times \cF$ by
\begin{equation} \nonumber
	\Ii_G v
	:= \
	\sum_{(k,\bnu) \in G} \delta_k \Delta^{{\rm I}}_\bnu (v),
\end{equation}
where $\Vv(G)$ denotes the subspace in $L^2(U,X^1; \gamma)$ of  all functions $v$
of the form
\begin{equation} \nonumber
	v
	\ = \
	\sum_{(k,\bnu) \in G} v_k  H_\bnu, \quad v_k \in V_{2^k}.
\end{equation}
Notice that interpolation $v \mapsto \Ii_G v$
is a linear  method of fully discrete polynomial interpolation approximation,
which is the sum taken over the (finite) index set $G$, 
of mixed tensor products of dyadic scale successive differences of ``spatial" approximations to $v$, 
and of successive differences of their parametric Lagrange interpolation polynomials. 

Define for $\xi>0$
\begin{equation} \label{G(xi)}
	G(\xi)
	:= \ 
	\begin{cases}
		\big\{(k,\bnu) \in \NN_0 \times\cF: \, 2^k \sigma_{2;\bnu}^{q_2} \leq \xi\big\} \quad &{\rm if }  \ \alpha \le 1/q_2 - 1/2;\\
		\big\{(k,\bnu) \in \NN_0 \times\cF: \, \sigma_{1;\bnu}^{q_1} \le \xi, \  
		2^{(\alpha+1/2)  k} \sigma_{2;\bnu}\leq \xi^\vartheta \big\} \quad  & {\rm if }  \ \alpha > 1/q_2 - 1/2,
	\end{cases}
\end{equation}
where
\begin{equation} \label{theta}
	\vartheta:= \frac{1}{q_1} + \frac{1}{2\alpha}\bigg(\frac{1}{q_1} - \frac{1}{q_2}\bigg).
\end{equation}
For any $\xi  > 1$ we have that $G(\xi) \subset F(\xi)$ where 
$$
F(\xi):= \{(k,\bnu) \in \NN_0 \times \Ff: k \le \log \xi, \ \bnu \in \Lambda(\xi) \}
$$
 and 
\begin{equation*} \label{Lambda(xi)}
	\Lambda(\xi)
	:= \ 
	\begin{cases}
		\big\{\bnu \in\cF: \,  \sigma_{2;\bnu}^{q_2} \leq \xi\big\} 
		\quad &{\rm if }  \ \alpha \le 1/q_2 - 1/2;\\
		\big\{\bnu \in\cF:  \, \sigma_{1;\bnu}^{q_1} \le \xi \big\} \  
	\quad  & {\rm if }  \ \alpha > 1/q_2 - 1/2.
	\end{cases}
\end{equation*}
From \cite[Lemma 3.3]{dung2021deep} it follows that 
\begin{equation} \label{supp(nu)}
\bigcup_{\bnu \in \Lambda(\xi)} \operatorname{supp} (\bnu) \subset \{1,..., \lfloor C\xi \rfloor\}
\end{equation}
for some positive constant $C$ that is independent of $\xi>1$. 
Denote by $\Gamma_\bnu$ and $\Gamma(\Lambda)$,
the set of interpolation points in the operators $\Delta^{{\rm I}}_\bnu$ and $\VI_\Lambda$, respectively. 
We have that 
$$
\Gamma_\bnu = \{\by_{\bnu - \be;\bm}:  \be \in \EE_\bnu; \ m_j = 0,\ldots,s_j - e_j, \ j \in \NN \},
$$
and 
$$
\Gamma(\Lambda) = \bigcup_{\bnu \in \Lambda} \Gamma_\bnu,
$$
	where
 $\EE_\bnu$ is the subset in $\cF$ of all $\be$ such that $e_j$ is $1$ or $0$ if $\nu_j > 0$, and $e_j$ is $0$ if $\nu_j = 0$, and $\by_{\bnu;\bm}:= (y_{\nu_j;m_j})_{j \in \NN}$. 	
Hence, by \eqref{supp(nu)}
	$$
\Gamma(\Lambda(\xi)) \subset \RR^{\lfloor C\xi \rfloor} \subset U,
$$
and therefore,
the operator $\Ii_{G(\xi)}$ is well-defined for any $v \in L^2_\Ee(U,X^2; \gamma)$ since $v$ is continuous on $\RR^{\lfloor C\xi \rfloor}$.

\begin{theorem}
	\label{thm[coll-approx]} 
	Let Assumption \ref{Assumption II} hold for Hilbert spaces $X^1$ and $X^2$  and 
	$v \in L^2_\Ee(U,X^2; \gamma)$. 
	Then for each $n \in \NN$, there exists a number $\xi_n$  such that  
	for the interpolation operator 
        $$
        \Ii_{G(\xi_n)}: L^2_\Ee(U,X^2; \gamma) \to \Vv(G(\xi_n)),
        $$
        we have $\dim \Vv(G(\xi_n)) \le  n$ and 
	\begin{equation} \label{u-I_Gu, p le 2}
		\|v -\Ii_{G(\xi_n)}v\|_{L^2(U,X^1;\gamma)} \leq Cn^{-\min(\alpha,\beta)}.
	\end{equation}
	The rate $\alpha$  is as in \eqref{spatialappnX} and
	the rate $\beta$ is given by 
	\begin{equation} 	\label{[beta]1}
		\beta := \left(\frac 1 {q_1} - \frac 1 2 \right)\frac{\alpha}{\alpha + \delta}, \quad 
		\delta := \frac 1 {q_1} - \frac 1 {q_2}.
	\end{equation}	
	The constant $C$ in \eqref{u-I_Gu, p le 2}  is independent of $v$ and $n$.
\end{theorem}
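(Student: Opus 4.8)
The plan is to reduce the theorem to an explicit balancing of two error contributions: a truncation/interpolation error in the parameter domain, controlled by the weighted $\ell^2$-summability for $X^1$, and a spatial discretization error, controlled by the weighted $\ell^2$-summability for $X^2$ together with the approximation property \eqref{spatialappnX}. First I would recall the telescoping identity $\Ii_{G(\xi)}v = \sum_{(k,\bnu)\in G(\xi)} \delta_k\Delta^{\rm I}_\bnu(v)$, and compare it to the full expansion of $v$ (valid in $L^2(U,X^1;\gamma)$ by Assumption~\ref{Assumption II}(i)--(ii)), writing
\begin{equation*}
  v - \Ii_{G(\xi)}v = \sum_{(k,\bnu)\notin G(\xi)} \delta_k\Delta^{\rm I}_\bnu(v) + \big(v - \textstyle\sum_{k,\bnu}\delta_k\Delta^{\rm I}_\bnu(v)\big),
\end{equation*}
where the second term vanishes since $\sum_k\delta_k = \mathrm{Id}$ on the relevant spatial scales and $\sum_\bnu\Delta^{\rm I}_\bnu = \mathrm{Id}$ on $L^2_\Ee$ (this is where the restriction to $C_\Ee(U)$ is used, exactly as in the pointwise-convergence arguments of Section~\ref{sec:intrate}). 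So the error equals the tail sum over the complement of $G(\xi)$, and the task is to estimate $\norm[L^2(U,X^1;\gamma)]{\delta_k\Delta^{\rm I}_\bnu(v)}$ for each $(k,\bnu)$.

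The key per-term estimate splits as follows: for a term with spatial index $k$, either $k=0$ (no spatial error, only the parametric Lagrange difference contributes and one uses $\norm[X^1]{\cdot}\le C\norm[X^2]{\cdot}$ and the stability bound $\norm[L^2]{\Delta^{\rm I}_\bnu H_\bmu}\le p_\bnu(\text{const})$ of Section~\ref{sec:int}), or $k\ge 1$, in which case the spatial successive difference $\delta_k = P_{2^k} - P_{2^{k-1}}$ applied to $\Delta^{\rm I}_\bnu(v)$ gains a factor $2^{-\alpha k}$ at the price of passing to the $X^2$-norm, by \eqref{spatialappnX}. Combining these with the stability of the interpolation operators in $L^2(U;\gamma)$, each term is bounded by (up to $p_\bnu$-type polynomial factors) $\norm[X^1]{v_\bnu}$ when $k=0$ and $2^{-\alpha k}\norm[X^2]{v_\bnu}$ when $k\ge 1$. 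Then the tail sum over $\NN_0\times\cF\setminus G(\xi)$ is handled by Cauchy--Schwarz against the weights: one multiplies and divides by $\sigma_{1;\bnu}$ resp.\ $2^{(\alpha+1/2)k}\sigma_{2;\bnu}$, uses the finite bounds $M_1$, $M_2$ from Assumption~\ref{Assumption II}(ii) on the ``numerator'' sums, and uses the $\ell^{q_i}$-summability of $(p_\bnu(\tau,\lambda)\sigma_{i;\bnu}^{-1})_{\bnu}$ together with Stechkin's lemma on the ``denominator'' sums restricted to the complement of $G(\xi)$. This yields a bound of the form $C\xi^{-\beta'}$ for an explicit exponent, and the two regimes in the definition \eqref{G(xi)} (according to whether $\alpha\le 1/q_2-1/2$ or $\alpha>1/q_2-1/2$) are exactly what makes the thresholding sharp; in the first regime the rate is governed purely by $\alpha$, in the second by the interplay of $q_1$, $q_2$, $\alpha$ encoded in $\vartheta$ and $\beta$.

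The remaining bookkeeping is to convert the threshold parameter $\xi$ into the dimension $n:=\dim\Vv(G(\xi))$. Here I would bound $\dim\Vv(G(\xi)) \le \sum_{(k,\bnu)\in G(\xi)} \dim V_{2^k} \le \sum_{(k,\bnu)\in G(\xi)} 2^k$, and estimate this sum by the same $\ell^{q_i}$-summability of the weights (this is where $\tau>17/6$ enters, as in \cite{dD21,dD-Erratum22}, to absorb the cardinality factors coming from the Lagrange interpolation points via \eqref{supp(nu)}), obtaining $n \le C\xi^{\gamma}$ for an explicit $\gamma$; inverting gives $\xi \gtrsim n^{1/\gamma}$ and hence the error bound $Cn^{-\min(\alpha,\beta)}$ after choosing, for each $n$, the largest admissible $\xi_n$. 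I expect the main obstacle to be not any single inequality but the careful matching of the weighted sums to the two pieces of the error in both regimes of \eqref{G(xi)} — in particular verifying that the polynomial factors $p_\bnu(\tau,\lambda)$ arising from the interpolation stability and from the point counts are simultaneously absorbed by a single choice of $\tau$ consistent across Assumption~\ref{Assumption II}(ii), and ensuring the resulting exponent is precisely $\beta$ as in \eqref{[beta]1} rather than something weaker. This is largely a transcription of \cite[proof of the interpolation theorem]{dD21} with the corrections of \cite{dD-Erratum22}, adapted to the present notation.
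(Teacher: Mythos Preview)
The paper does not prove this theorem; it is stated as a result recalled from \cite{dD21} (with corrections in \cite{dD-Erratum22}), and your proposal explicitly and correctly identifies itself as a transcription of that argument. Your outline---telescoping into $\delta_k\Delta^{\rm I}_\bnu$ increments, per-term bounds via the spatial rate \eqref{spatialappnX} and the interpolation stability of Section~\ref{sec:int}, Cauchy--Schwarz against the weights $\sigma_{i;\bnu}$, Stechkin on the tail, and inversion of the $\xi\mapsto n$ relation---is the correct strategy and matches the approach of the cited source, so there is nothing to add beyond what the paper itself does (namely, defer to \cite{dD21,dD-Erratum22}).
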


\begin{remark} \label{multilevel-I}
		{\rm
Observe that the operator $\Ii_{G(\xi_n)}$ can be represented 
in the form of a multilevel Smolyak sparse-grid interpolation with $k_n$ levels:
\begin{equation}  \nonumber
	\Ii_{G (\xi_n)} 
	\ = \ 
	\sum_{k=0}^{k_n} \delta_k \VI_{\Lambda_k(\xi_n)},
\end{equation}
where  $k_n:= \lfloor \log_2 \xi_n \rfloor$, 
the operator $\VI_\Lambda$ is defined as in \eqref{eq:VILambda},
and for $k \in \NN_0$ and  $\xi > 1$, 
\begin{equation} \nonumber
	\Lambda_k(\xi)
	:= \ 
	\begin{cases}
		\big\{\bs \in \Ff: \,\sigma_{2;\bs}^{q_2} \leq 2^{-k}\xi\big\} \quad &{\rm if }  \ 
		\alpha \le 1/q_2 - 1/2;\\
		\big\{\bs \in \Ff: \, \sigma_{1;\bs}^{q_1} \le \xi, \  
		\sigma_{2;\bs} \leq 2^{- (\alpha +1/2) k}\xi^\vartheta \big\} \quad  & {\rm if }  \ 
		\alpha > 1/q_2 - 1/2.
	\end{cases}
\end{equation}
	In Theorem \ref{thm[coll-approx]}, the multilevel polynomial interpolation  of $v \in L^2_\Ee(U,X^2; \gamma)$ by operators $\Ii_{G(\xi_n)}$ is a collocation method. It is based on the finite point-wise information in $\by$, more precisely, on   
	$|\Gamma(\Lambda_0(\xi_n))|= \Oo(n)$ of particular values of   $v$ at the interpolation points $\by \in \Gamma(\Lambda_0(\xi_n))$ and the approximations of   $v(\by)$, $\by \in \Gamma(\Lambda_0(\xi_n))$, by  $P_{2^k}v(\by)$ for $k=0,\ldots, \lfloor \log_2 \xi_n \rfloor$ with $\lfloor \log_2 \xi_n \rfloor = \Oo(\log_2 n)$.  
} \end{remark}

\subsubsection{Multilevel Smolyak sparse-grid quadrature}
\label{Integration}
In this section, we recall results of \cite{dD21} (see also \cite{dD-Erratum22})
on linear methods for numerical integration of functions from Bochner spaces as well as their linear functionals. 
We define the univariate operator $\Delta^{{\rm Q}}_m$ for even $m \in \NN_0$ by \index{quadrature!multilevel $\sim$}
\begin{equation} \nonumber
	\Delta^{{\rm Q}}_m
	:= \
	Q_m - Q_{m-2},
\end{equation} 
with the convention $Q_{-2} := 0$. 
We make use of the notation: 
$$
\cF_{\rev} := \{\bnu \in \cF: \nu_j \ {\rm  even}, \ j \in \NN \}.
$$
For  a function $v \in L^2_\Ee(U,X;\gamma)$, 
we introduce the operator $\Delta^{{\rm Q}}_\bnu$ defined for $\bnu \in \cF_{\rev}$ by
\begin{equation} \nonumber
	\Delta^{{\rm Q}}_\bnu(v)
	:= \
	\bigotimes_{j \in \NN} \Delta^{{\rm Q}}_{\nu_j}(v),
\end{equation} 
where the univariate operator
$\Delta^{{\rm Q}}_{\nu_j}$ is applied to the univariate function 
$\bigotimes_{j' =1 }^{j-1} \Delta^{{\rm Q}}_{\nu_{j'}}(v)$ by considering this function
as a univariate function of $y_j$, with all other variables held fixed.
As $\Delta^{{\rm I}}_\bnu$, 
the operators  $\Delta^{{\rm Q}}_\bnu$ are well-defined for all $\bnu \in \cF_{\rev}$. 

Letting Assumption \ref{Assumption II} hold for Hilbert spaces $X^1$ and $X^2$, 
we can construct linear  fully discrete quadrature operators. 
For a finite set $G \subset \NN_0 \times \cF_{\rev}$, 
we introduce the quadrature operator $\Qq_G$ which is defined for $v$ 
by
\begin{equation}  \label{Qq=int}
	\Qq_G v
	:= \
	\sum_{(k,\bnu) \in G} \delta_k \Delta^{{\rm Q}}_\bnu (v).
\end{equation}

If $\phi \in (X^1)'$  is a bounded linear functional on $X^1$, 
for a finite set $G \subset \NN_0 \times \cF_{\rev}$, 
the quadrature formula $\Qq_G v$ generates the quadrature formula $\Qq_G \langle \phi, v \rangle$ 
for integration of $\langle \phi, v \rangle$ 
by
\begin{equation} \nonumber
	\Qq_G \langle \phi, v \rangle
	:= \
	\langle \phi, \Qq_G v  \rangle.
\end{equation}

Define for $\xi>0$, 
\begin{equation} \label{G_rev(xi)}
	G_{\rev}(\xi)
	:= \ 
	\begin{cases}
		\big\{(k,\bnu) \in \NN_0 \times\cF_{\rev}: \, 2^k \sigma_{2;\bnu}^{q_2} 
                \leq \xi\big\} \ &{\rm if }  \ \alpha \le 1/q_2 - 1/2;
                \\
		\big\{(k,\bnu) \in \NN_0 \times\cF_{\rev}: \, \sigma_{1;\bnu}^{q_1} \le \xi, \  
		2^{(\alpha+1/2)  k} \sigma_{2;\bnu}
                \leq \xi^\vartheta \big\} \  & {\rm if }  \ \alpha > 1/q_2 - 1/2,
	\end{cases}
\end{equation}
where $\vartheta$ is as in \eqref{theta}.	

\begin{theorem}\label{thm[quadrature]} 
	Let the hypothesis of Theorem \ref{thm[coll-approx]} hold.  
	Then  we have the following.
	
	\begin{itemize}
		\item[{\rm (i)}]
		For each $n \in \NN$ there exists a number $\xi_n$  such that  $\dim\Vv(G_{\rev}(\xi_n))\le n$ and 
		\begin{equation} \label{u-Q_Gu}
			\left\|\int_{U}v(\by)\, \rd \gamma(\by ) - \Qq_{G_{\rev}(\xi_n)}v\right\|_{X^1} \leq Cn^{-\min(\alpha,\beta)}.
		\end{equation}
		\item[{\rm (ii)}] 
		Let  $\phi \in (X^1)'$ be a bounded linear functional on $X^1$. Then
		for each $n \in \NN$ there exists a number $\xi_n$  such that  $\dim\Vv(G_{\rev}(\xi_n))\le n$ and 
		\begin{equation} \label{u-Q_Gu_phi}
			\left|\int_{U} \langle \phi,  v (\by) \rangle\, \rd \gamma(\by ) - \Qq_{G_{\rev}(\xi_n)} \langle \phi,  v \rangle\right| \leq C \|\phi\|_{(X^1)'} n^{-\min(\alpha,\beta)}.
		\end{equation}
	\end{itemize}
	
	The rate $\alpha$ is as in \eqref{spatialappnX} and the rate $\beta$ is given by \eqref{[beta]1}.
	The constants $C$ in \eqref{u-Q_Gu} and \eqref{u-Q_Gu_phi}  are independent of $v$ and $n$.
\end{theorem}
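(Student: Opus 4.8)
\textbf{Proof proposal for Theorem \ref{thm[quadrature]}.}

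The plan is to deduce both assertions from the corresponding interpolation result, Theorem \ref{thm[coll-approx]}, exploiting the basic identity $\VQ_{\Lambda} f = \int_U \VI_\Lambda f(\by)\dd\gamma(\by)$ from \eqref{eq-interpolation-quadrature} together with the symmetry of the Gaussian measure, which allows replacement of the full difference operators $\Delta^{\rm I}_\bnu$ by the ``even'' difference operators $\Delta^{\rm Q}_\bnu$ and hence of the index sets over $\cF$ by index sets over $\cF_{\rev}$. Concretely, for a downward closed $\Lambda$ and $\bnu \notin\Lambda$ with $\bnu\in\cF_{\rev}$ one has $\VQ_\Lambda H_\bnu = \int_U \VI_\Lambda H_\bnu\dd\gamma$, while for $\bnu\notin\cF_{\rev}$ (i.e.\ some $\nu_j$ odd) the integral $\int_U H_\bnu\dd\gamma$ vanishes and the quadrature of $H_\bnu$ is controlled by the same bound $p_\bnu(3)$ via Lemma \ref{lemma:VQprop} and \eqref{eq:L1boundLambda}. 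Thus the quadrature error decomposes, just as in the proof of Theorem \ref{thm:quad} and Theorem \ref{thm[coll-approx]}, into a dimension-truncation term and a tail sum $\sum_{(k,\bnu)\notin G_{\rev}(\xi)} \|\delta_k \Delta^{\rm Q}_\bnu v\|_{X^1}$, and the latter is estimated by exactly the same Stechkin-type argument that underlies Theorem \ref{thm[coll-approx]}, using the weighted $\ell^2$-summabilities of Assumption \ref{Assumption II} and the spatial approximation property \eqref{spatialappnX}.

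The key steps, in order, are: (1) Using \eqref{Qq=int} and the representation $v=\sum_{\bnu\in\cF}v_\bnu H_\bnu$ from \eqref{HermiteSeriesV}, write $\int_U v \dd\gamma - \Qq_{G_{\rev}(\xi)}v = \sum_{(k,\bnu)\notin G_{\rev}(\xi)} \delta_k \Delta^{\rm Q}_\bnu (v)$ plus a truncation remainder, in analogy with the interpolation identity; here one must justify unconditional convergence in $X^1$, which follows from Lemma \ref{lemma:uncond-conv-Q_Lambda} and the $\ell^1$-type summability implied by Assumption \ref{Assumption II}(ii) (note $\tau>17/6>5/2$ gives the required $p_\bnu(\tau)$-weighted summability). (2) Bound each term $\|\delta_k\Delta^{\rm Q}_\bnu(v)\|_{X^1}$ by $C 2^{-k\alpha}\|\Delta^{\rm Q}_\bnu(v)\|_{X^2}$ using \eqref{spatialappnX}, exactly as for the interpolation operators, and bound $\|\Delta^{\rm Q}_\bnu(v)\|_{X^2}$ by a constant multiple of $\sum_{\bnu'\ge\bnu,\,\bnu'\in\cF_{\rev}}\|v_{\bnu'}\|_{X^2}p_{\bnu'}(\cdot)$ using the stability estimate \eqref{eq:L2bound}. (3) Apply Stechkin's lemma with the thresholding set $G_{\rev}(\xi)$ in \eqref{G_rev(xi)} and the exponent $\vartheta$ in \eqref{theta}, obtaining the tail bound $\Oo(\xi^{-\min(\alpha,\beta)})$ with $\beta$ as in \eqref{[beta]1}; this is verbatim the computation in Theorem \ref{thm[coll-approx]}, restricted to $\cF_{\rev}$, which changes no summability exponents. (4) Count degrees of freedom: $\dim\Vv(G_{\rev}(\xi)) \le \dim\Vv(G(\xi)) = \Oo(\xi)$ by the same estimate as in Theorem \ref{thm[coll-approx]}, so choosing $\xi=\xi_n$ with $\Vv(G_{\rev}(\xi_n))$ of dimension $\le n$ yields \eqref{u-Q_Gu}. (5) For part (ii), apply the bounded linear functional $\phi\in(X^1)'$ to \eqref{Qq=int}: since $\Qq_{G_{\rev}(\xi_n)}\langle\phi,v\rangle = \langle\phi,\Qq_{G_{\rev}(\xi_n)}v\rangle$ and $\int_U\langle\phi,v(\by)\rangle\dd\gamma = \langle\phi,\int_U v(\by)\dd\gamma\rangle$, the bound \eqref{u-Q_Gu_phi} follows immediately from \eqref{u-Q_Gu} with an extra factor $\|\phi\|_{(X^1)'}$.

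The main obstacle I expect is Step (1): making precise the ``Gaussian symmetry/cancellation'' that lets one discard all multi-indices with an odd component and replace $G(\xi)$ by $G_{\rev}(\xi)$ without degrading the rate. One must verify that $\VQ_{\Lambda}$ (equivalently the differences $\delta_k\Delta^{\rm Q}_\bnu$) applied to the Hermite expansion of $v$ reproduces $\int_U v\dd\gamma$ on the span of $\set{\by^\bnu}{\bnu\in\Lambda\cup(\cF\setminus\cF_{\rev})}$ — this is Lemma \ref{lemma:VQprop} (with $\cF_2$ there playing the role of $\cF_{\rev}$ up to the trivial reindexing, since evenness in each coordinate is what kills the odd-degree moments) — and that the resulting tail sum runs only over $\cF_{\rev}$, on which the restricted weighted sequences still satisfy the $\ell^{q_i}$-summability of Assumption \ref{Assumption II}(ii) (a subsequence of an $\ell^{q_i}$ sequence). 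Once this structural point is in place, everything else is a routine transcription of the proofs of Theorems \ref{thm:quad} and \ref{thm[coll-approx]}; the convergence rate $\min(\alpha,\beta)$ and the DOF count are then identical to the interpolation case because the thresholding exponents $q_1,q_2,\vartheta$ are unchanged.
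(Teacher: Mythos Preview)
Your overall strategy—reduce quadrature to an integrated interpolation and then re-run the interpolation error analysis—is exactly the paper's approach, but you have misidentified \emph{which} interpolation operator is involved, and this creates a real gap.

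The quadrature operator $\Qq_G$ in \eqref{Qq=int} is built from the \emph{double-step} increments $\Delta^{\rm Q}_m = Q_m - Q_{m-2}$ over $\cF_{\rev}$. Its interpolation counterpart is \emph{not} $\Ii_G$ (which uses $\Delta^{\rm I}_m = I_m - I_{m-1}$ over $\cF$), but the auxiliary operator $\Ii^*_G$ of Remark~\ref{I-to-Q}, built from $\Delta^{{\rm I}*}_m = I_m - I_{m-2}$ over $\cF_{\rev}$, so that $\Qq_G v = \int_U \Ii^*_G v\,\dd\gamma$. The identity \eqref{eq-interpolation-quadrature} you invoke relates the Section~\ref{sec:StochColl} operators $\VQ_\Lambda$ and $\VI_\Lambda$, which are different objects. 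Consequently you cannot obtain the bound as ``verbatim the computation in Theorem~\ref{thm[coll-approx]} restricted to $\cF_{\rev}$''; you must instead prove an $L^1(U,X^1;\gamma)$-analogue of Theorem~\ref{thm[coll-approx]} for $\Ii^*_G$. The argument is parallel (same stability bounds, same weighted summability from Assumption~\ref{Assumption II}), but it is a separate proof, not a corollary. This is precisely what the paper sketches and defers to \cite{dD21,dD-Erratum22}.

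A related confusion: $\cF_2 = \{\bnu:\nu_j\neq 1~\forall j\}$ and $\cF_{\rev} = \{\bnu:\nu_j \text{ even}~\forall j\}$ are \emph{not} the same set up to reindexing (e.g.\ $(3,0,0,\dots)\in\cF_2\setminus\cF_{\rev}$). Lemma~\ref{lemma:VQprop} concerns the degree-doubling of Gauss quadrature (hence $\cF_2$), whereas $\cF_{\rev}$ arises here from the even-step telescoping structure of $\Delta^{\rm Q}_m$. Your symmetry argument ``$\int_U H_\bnu\,\dd\gamma=0$ for $\bnu\notin\cF_{\rev}$'' is also off: that integral vanishes for \emph{every} $\bnu\neq\bnul$. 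Your Step~(5) for part~(ii) is fine.
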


The proof Theorem \ref{thm[quadrature]} are related to approximations in the norm of $L^1(U,X;\gamma)$ 
by special polynomial interpolation operators which generate the corresponding quadrature operators. 
Let us briefly describe this connection, for details see \cite{dD21,dD-Erratum22}.

\begin{remark} \label{I-to-Q}
	{\rm
We define  the univariate interpolation operator $\Delta^{{\rm I}*}_m$ for even $m \in \NN_0$ by
\begin{equation} \nonumber
	\Delta^{{\rm I}*}_m
	:= \
	I_m - I_{m-2},
\end{equation} 
with the convention $I_{-2} = 0$. 
The interpolation operators $\Delta^{{\rm I}*}_\bnu$ for $\bnu \in \cF_{\rev}$, $I^*_\Lambda$ for a finite set $\Lambda \subset \cF_{\rev}$, and $\Ii^*_G$ for a  finite set $G \subset \NN_0 \times \FF_{\rev}$, are defined in a similar way as the corresponding quadrature  operators $\Delta^{{\rm Q}}_\bnu$, $Q_\Lambda$  and $\Qq_G$ by  replacing 
$\Delta^{{\rm Q}}_{\nu_j}$ with $\Delta^{{\rm I}*}_{\nu_j}$, $j \in \NN$.

From the definitions it follows the equalities expressing the relationship between the interpolation and quadrature operators
\begin{equation} \nonumber
	Q_\Lambda v
	\ = \
	\int_{U} I^*_\Lambda v (\by)\, \rd \gamma(\by),
	\quad
	Q_\Lambda \langle \phi, v \rangle
	\ = \
	\int_{U} \langle \phi, I^*_\Lambda v (\by) \rangle\, \rd \gamma(\by),
\end{equation} 
and
\begin{equation}  \nonumber
	\Qq_G v
	\ = \
	\int_{U} \Ii^*_G v (\by)\, \rd \gamma(\by),
	\quad
	\Qq_G \langle \phi, v \rangle
	\ = \
	\int_{U} \langle \phi, \Ii^*_G v (\by) \rangle\, \rd \gamma(\by).
\end{equation}
} \end{remark}

\begin{remark} \label{multilevel-}
	{\rm
Similarly to  $\Ii_{G(\xi_n)}$,  the operator $\Qq_{G_{\rev}(\xi_n)}$ can be represented in the form of a multilevel 
Smolyak sparse-grid quadrature 
with $k_n$ levels:
\begin{equation}  \nonumber
	\Qq_{G_{\rev} (\xi_n)} 
	\ = \ 
	\sum_{k=0}^{k_n} \delta_k Q_{\Lambda_{\rev,k}(\xi_n)},
\end{equation}
where  $k_n:= \lfloor \log_2 \xi_n \rfloor$,
\begin{equation} \label{Q_Lambda}
	Q_\Lambda
	:= \
	\sum_{\bnu \in \Lambda} \Delta^{{\rm I}}_\bnu, \ \ \Lambda \subset \FF_{\rev},
\end{equation}
 and  for $k \in \NN_0$ and $\xi >0$,
\begin{equation} \nonumber
	\Lambda_{\rev,k}(\xi)
	:= \ 
	\begin{cases}
		\big\{\bs \in \Ff_{\rev}: \,\sigma_{2;\bs}^{q_2} \leq 2^{-k}\xi\big\} \quad &{\rm if }  \ 
		\alpha \le 1/q_2 - 1/2;\\
		\big\{\bs \in \Ff_{\rev}: \, \sigma_{1;\bs}^{q_1} \le \xi, \  
		\sigma_{2;\bs} \leq 2^{- (\alpha +1/2) k}\xi^\vartheta \big\} \quad  & {\rm if }  \ 
		\alpha > 1/q_2 - 1/2.
	\end{cases}
\end{equation}		
} 
\end{remark}

\begin{remark} \label{work-I}
	{\rm The convergence rates established in Theorems \ref{thm[coll-approx]} and \ref{thm[quadrature]}  
and in Theorems \ref{thm:mlint} and \ref{thm:mlquad} are proven with respect to  different parameters $n$ 
as the dimension of the approximation space and the work \eqref{work}, respectively. 
However, we could define the work of the operators $\Ii_{G(\xi_n)}$ and $\Qq_{G_{\rev}(\xi_n)}$ similarly as
\begin{equation}  \nonumber
	\sum_{k=0}^{k_n} 2^k |\Gamma(\Lambda_k(\xi_n))|,
\end{equation}
and 
\begin{equation}  \nonumber
	\sum_{k=0}^{k_n}  2^k |\Gamma(\Lambda_{\rev,k}(\xi_n))|,
\end{equation}
respectively, and 
prove the same convergence rates with respect to this work measure
as in Theorems \ref{thm[coll-approx]} and \ref{thm[quadrature]}. 		
} \end{remark}

\subsubsection{Applications to parametric divergence-form elliptic PDEs}	
\label{Applications to parametric divergence-form elliptic PDEs}
In this section, we apply the results in Sections \ref{Interpolation} and \ref{Integration} to parametric divergence-form elliptic PDEs \eqref{SPDE}. The spaces $V$ and $W$ are as in Section \ref{Some related results on sparsity}.
\begin{assumption} \label{Assumption2}
	There are a sequence $(V_n)_{n \in \NN_0}$ of subspaces $V_n \subset V $ of dimension $\le m$, and a sequence $(P_n)_{n \in \NN_0}$ of linear operators from $V$ into $V_n$, 
	and a number $\alpha>0$ such that
	\begin{equation} \label{spatialappn}
		\|P_n(v)\|_V \leq 
		C\|v\|_V , \quad
		\|v-P_n(v)\|_V \leq 
		Cn^{-\alpha} \|v\|_W, \quad \forall n \in \NN_0, \quad \forall v \in W.
	\end{equation}
\end{assumption}

If Assumption \ref{Assumption2} and the assumptions of Theorem~\ref {thm[ell_2summability]} hold  for the spaces $W^1=V$ and $W^2=W$
with some $0<q_1\leq q_2 <\infty$, then Assumption \ref{Assumption II} holds for the  spaces $X^i=W^i$, $i= 1, 2$, and the solution $u \in L^2(U,X^2;\gamma)$ to \eqref{SPDE}--\eqref{eq:CoeffAffin}. Hence we obtain the following results on multilevel (fully discrete) approximations.

\begin{theorem}\label{thm[L^2-approx]pde}
	Let Assumption \ref{Assumption2} hold. Let the hypothesis of Theorem~\ref {thm[ell_2summability]} hold  for the spaces $W^1=V$ and $W^2=W$
	with some $0<q_1\leq q_2 <\infty$ and $q_1 < 2$.	For $\xi >0$, let $G(\xi)$ be the set defined by \eqref{G(xi)} for $\sigma_{i;\bnu}$ as in \eqref{sigma_r,s^2}, $i=1,2$. Let $\alpha$ be as in \eqref{spatialappn}.
	Then for every $n \in \NN$ there exists a number $\xi_n$  such that  
		$\dim \Vv(G(\xi_n)) \le  n$ and 
		\begin{equation} \label{u-I_Gu, p le 2-pde}
			\|u -\Ii_{G(\xi_n)}u\|_{L^2(U,V; \gamma)} \leq Cn^{-\min(\alpha,\beta)},
		\end{equation}
		where $\beta$ is given by \eqref{[beta]1}.
		The constant $C$ in \eqref{u-I_Gu, p le 2-pde} is independent of $u$ and $n$.
\end{theorem}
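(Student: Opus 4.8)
\textbf{Proof plan for Theorem \ref{thm[L^2-approx]pde}.}

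The plan is to apply Theorem \ref{thm[coll-approx]} to the solution family $u$ of \eqref{SPDE}--\eqref{eq:CoeffAffin} with the concrete choice of Hilbert spaces $X^1 := W^1 = V = H^1_0(\D)$ and $X^2 := W^2 = W = \{v \in V : \Delta v \in L^2(\D)\}$. The entire argument amounts to verifying that, under the stated hypotheses, all three items of Assumption \ref{Assumption II} hold for this pair of spaces and for $v := u$; once this is done, Theorem \ref{thm[coll-approx]} delivers \eqref{u-I_Gu, p le 2-pde} with the rate $\min(\alpha,\beta)$ and $\beta$ as in \eqref{[beta]1}, together with $\dim \Vv(G(\xi_n)) \le n$, which is exactly the assertion.

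First I would check Assumption \ref{Assumption II}(i): this is immediate since $W \subset V$ and, by the Poincaré inequality together with the definition $\|v\|_W = \|\Delta v\|_{L^2}$, one has $\|v\|_V \le c_P \|v\|_W$ for all $v \in W$ (cp.\ the argument that $\|\cdot\|_W$ is a norm in Section \ref{S:Reg}). Next, Assumption \ref{Assumption II}(ii) is precisely the conclusion of Theorem \ref{thm[ell_2summability]} combined with Lemma \ref{lemma[bcdm]}: indeed Theorem \ref{thm[ell_2summability]} (applied with $i=1$ and $i=2$) gives, for every $r \in \N$, a family $(\sigma_{i;\bnu})_{\bnu\in\cF}$ defined by \eqref{sigma_r,s^2} with $\sum_{\bnu}(\sigma_{i;\bnu}\|u_\bnu\|_{W^i})^2 < \infty$ and $(\sigma_{i;\bnu}^{-1})_{\bnu} \in \ell^{q_i}(\cF)$; the monotonicity $\sigma_{i;\bee_{j'}} \le \sigma_{i;\bee_j}$ for $j' < j$ follows from the monotonicity of the weights $\bvarrho_i$ (up to rearrangement, as in the proofs of Corollaries \ref{cor:global}, \ref{cor:local}), and the stronger summability $(p_\bnu(\tau,\lambda)\sigma_{i;\bnu}^{-1})_{\bnu} \in \ell^{q_i}(\cF)$ for every $\tau > 17/6$, $\lambda \ge 0$ is obtained from Lemma \ref{lemma[bcdm]} by choosing $r$ large enough (namely $r > \tfrac{2 s_i(\tau+1)}{q_i}$ with $s_i = i$). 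The membership $u \in L^2_\Ee(U,W;\gamma)$ — i.e.\ that $u$ has a continuous representative on $\Ee$ — follows from the holomorphy/continuity of the parametric solution map established in Proposition \ref{prop:holoh1} and Proposition \ref{prop3} (for $s=2$), together with $u \in L^2(U,W;\gamma)$ from Proposition \ref{prop:Meas1}. Finally Assumption \ref{Assumption II}(iii) is \emph{exactly} Assumption \ref{Assumption2} (the linear spatial approximation operators $P_n : V \to V_n$ with the two bounds in \eqref{spatialappn} being \eqref{spatialappnX} verbatim for $X^1 = V$, $X^2 = W$).

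Having verified all three items, the theorem follows by a direct invocation of Theorem \ref{thm[coll-approx]}: for each $n \in \N$ there is $\xi_n$ with $\dim \Vv(G(\xi_n)) \le n$ and $\|u - \Ii_{G(\xi_n)}u\|_{L^2(U,V;\gamma)} \le C n^{-\min(\alpha,\beta)}$, with $\beta$ given by \eqref{[beta]1}; the constant $C$ is independent of $u$ and $n$ because the constants in Theorem \ref{thm[coll-approx]} depend only on the data entering Assumption \ref{Assumption II}, which here are fixed by the hypotheses. I do not anticipate a genuine obstacle; the only point requiring a little care is the bookkeeping of which polynomial-growth exponent $\tau$ and interpolation-order parameter $r$ are needed so that the weighted summability of item (ii) holds simultaneously for $i = 1, 2$ with the \emph{same} families used to define $G(\xi)$ in \eqref{G(xi)} — this is handled by taking $r$ sufficiently large via Lemma \ref{lemma[bcdm]}, uniformly in the (finitely many, namely two) indices $i$. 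One should also note explicitly that the index sets $\cF_1, \cF_2$ appearing in Lemma \ref{lemma[bcdm]} do not cause trouble for interpolation (as opposed to quadrature), since the interpolation statement of Theorem \ref{thm[coll-approx]} is phrased over all of $\cF$.
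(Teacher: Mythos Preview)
Your proposal is correct and follows essentially the same approach as the paper: the paper simply observes (in the paragraph immediately preceding the theorem) that Assumption~\ref{Assumption2} together with the hypotheses of Theorem~\ref{thm[ell_2summability]} for $W^1=V$, $W^2=W$ imply Assumption~\ref{Assumption II} for $X^i=W^i$, and then invokes Theorem~\ref{thm[coll-approx]}. Your verification of items (i)--(iii) is exactly this, spelled out in somewhat more detail; in particular your remark that Lemma~\ref{lemma[bcdm]} (with $s=1$, so that the summability holds over all of $\cF=\cF_1$) supplies the $p_\bnu(\tau,\lambda)$-weighted summability in item (ii) is the key ingredient the paper has in mind.
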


\begin{theorem}\label{thm[quadrature]pde}
	Let Assumption \ref{Assumption2} hold. Let the assumptions of Theorem~\ref {thm[ell_2summability]} hold  for the spaces $W^1=V$ and $W^2=W$
	for some $0<q_1\leq q_2 <\infty$ with $q_1 < 4$. Let $\alpha$ be the rate  as given by \eqref{spatialappn}. For $\xi >0$, let $G_{\rev}(\xi)$ be the set defined by \eqref{G_rev(xi)}  for  $\sigma_{i;\bnu}$ as in \eqref{sigma_r,s^2}, $i=1,2$.  
	Then we have the following.
	\begin{itemize}
		\item[{\rm (i)}]
		For each $n \in \NN$ there exists a number $\xi_n$  such that  $\dim\Vv(G_{\rev}(\xi_n))\le n$ and 
		\begin{equation} \label{u-Q_Gu-quadrature}
			\left\|\int_{U}v(\by)\, \rd \gamma(\by ) - \Qq_{G_{\rev}(\xi_n)}v\right\|_V \leq Cn^{-\min(\alpha,\beta)}.
		\end{equation}
		\item[{\rm (ii)}] 
		Let  $\phi \in V'$ be a bounded linear functional on $V$. Then
		for each $n \in \NN$ there exists a number $\xi_n$  such that  $\dim\Vv(G_{\rev}(\xi_n))\le n$ and 
		\begin{equation} \label{u-Q_Gu_phi-pde}
			\left|\int_{U} \langle \phi,  v (\by) \rangle\, \rd \gamma(\by ) - \Qq_{G_{\rev}(\xi_n)} \langle \phi,  v \rangle\right| \leq C \|\phi\|_{V'} n^{-\min(\alpha,\beta)}.
		\end{equation}
	\end{itemize}
	The rate $\beta$ is given by 
	\begin{equation} 	\nonumber
		\beta := \left(\frac 2 {q_1} - \frac{1}{2}\right)\frac{\alpha}{\alpha + \delta}, \quad 
		\delta := \frac 2 {q_1} - \frac 2 {q_2}.
	\end{equation}	 
	The constants $C$ in \eqref{u-Q_Gu-quadrature} and \eqref{u-Q_Gu_phi-pde} are independent of $u$ and $n$.
\end{theorem}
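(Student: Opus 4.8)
The plan is to reduce Theorem~\ref{thm[quadrature]pde} to the abstract quadrature result Theorem~\ref{thm[quadrature]} by verifying that its hypotheses, namely Assumption~\ref{Assumption II}, hold for the choices $X^1 = W^1 = V$ and $X^2 = W^2 = W$, with the function $v = u$ the solution of \eqref{SPDE}--\eqref{eq:CoeffAffin}. First I would recall that item (i) of Assumption~\ref{Assumption II} (the continuous embedding $W \hookrightarrow V$ with $\|\cdot\|_V \le C\|\cdot\|_W$) is immediate from the a-priori bounds in Section~\ref{S:PbmStat} and the definition of $W$: indeed, by the Poincar\'e inequality $\|v\|_V$ is controlled by $\|\Delta v\|_{L^2} = \|v\|_W$. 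Next, item (ii) — the weighted $\ell^2$-summability together with the $\ell^{q_i}$-summability of $(p_\bnu(\tau,\lambda)\sigma_{i;\bnu}^{-1})_{\bnu\in\cF}$ for all $\tau > 17/6$ and $\lambda \ge 0$ — is precisely the content of Theorem~\ref{thm[ell_2summability]} (which gives \eqref{sigma_r,s}, \eqref{sigma_r,s^2}) combined with Lemma~\ref{lemma[bcdm]} (which supplies, for $r$ chosen large enough depending on $\tau$ and $q_i$, the summability $\sum_{\bnu} p_\bnu(\tau,\lambda)\sigma_{i;\bnu}^{-q_i} < \infty$); here one needs to check that $r$ can be taken large enough simultaneously for $i=1,2$, which is possible because $r\in\NN$ in Theorem~\ref{thm[ell_2summability]} is arbitrary and Lemma~\ref{lemma[bcdm]} only requires $r > 2s(\tau+1)/q$. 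The monotonicity condition $\sigma_{i;\bee_{j'}} \le \sigma_{i;\bee_j}$ for $j' < j$ follows from the assumed monotone rearrangement of $\bvarrho_i = (\varrho_{i;j})_{j\in\NN}$ in Theorem~\ref{thm[ell_2summability]}. Finally, item (iii) — the existence of approximating subspaces $(V_n)$ and linear operators $(P_n)$ with the stability and approximation bounds \eqref{spatialappnX} — is exactly Assumption~\ref{Assumption2}, which is hypothesized.

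Once Assumption~\ref{Assumption II} is in place for $X^i = W^i$, Theorem~\ref{thm[quadrature]} applies verbatim. For part (i) I would invoke Theorem~\ref{thm[quadrature]}(i): for each $n\in\NN$ there is $\xi_n$ with $\dim\Vv(G_{\rev}(\xi_n)) \le n$ and
$$
\Big\|\int_U v(\by)\,\rd\gamma(\by) - \Qq_{G_{\rev}(\xi_n)}v\Big\|_{X^1} \le C n^{-\min(\alpha,\beta)},
$$
and then substitute $v = u$, $X^1 = V$. For part (ii) I would invoke Theorem~\ref{thm[quadrature]}(ii) applied to a bounded linear functional $\phi\in V' = (X^1)'$, yielding \eqref{u-Q_Gu_phi-pde}. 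The only genuinely new point to nail down is that the rate $\beta$ in the PDE statement has $2/q_1$ and $2/q_2$ in place of the $1/q_1$, $1/q_2$ appearing in \eqref{[beta]1} of the abstract theorem. This discrepancy is a consequence of the symmetry exploitation in the Smolyak quadrature (only even multi-indices $\cF_{\rev}$ contribute, as in Lemma~\ref{lemma:VQprop} and Remark~\ref{I-to-Q}): the effective summability exponents in the quadrature analysis are $q_i/2$ rather than $q_i$, exactly as in the passage from Theorem~\ref{thm:int} to Theorem~\ref{thm:quad} and in the analogous adjustment between Theorem~\ref{thm:mlint} and Theorem~\ref{thm:mlquad}. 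I would make this explicit by tracking how $G_{\rev}(\xi)$ in \eqref{G_rev(xi)} is defined over $\NN_0\times\cF_{\rev}$ and how the corresponding summability bounds from Lemma~\ref{lemma[bcdm]} enter with halved exponents, citing \cite{dD21,dD-Erratum22} for the detailed cancellation argument.

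The main obstacle I anticipate is the bookkeeping needed to confirm that the summability exponents match up cleanly: one must verify that under the hypotheses of Theorem~\ref{thm[ell_2summability]} with $q_1 < 4$ (rather than $q_1 < 2$ as for interpolation), the quantities $\tilde q_i := q_i/2$ satisfy $\tilde q_1 < 2$ and $\tilde q_1 \le \tilde q_2$, so that Assumption~\ref{Assumption II}(ii) is met with these halved exponents and the abstract quadrature theorem (which internally uses $q_i/2$) can be applied. A secondary technical point is ensuring that the set $\Ee$ on which $u$ is continuous is well-defined for the log-Gaussian parametric PDE: by Proposition~\ref{prop:Meas1} the solution exists for $\by\in U_0$ with $\gamma(U_0) = 1$, and the locally Lipschitz dependence from Proposition~\ref{prop:LipS} (together with continuity of $\by\mapsto b(\by)$ on the set of sequences with finitely many nonzero entries) gives $u\in C_\Ee(U)$ for a suitable $\Ee$, so $u\in L^2_\Ee(U,W;\gamma)$ and the collocation operators are meaningful. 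Beyond these checks the proof is essentially a citation of Theorems~\ref{thm[ell_2summability]}, \ref{thm[quadrature]} and Assumption~\ref{Assumption2}, with the rate formula adjusted for the quadrature symmetry gain.
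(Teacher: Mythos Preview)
Your proposal is correct and follows essentially the same approach as the paper: verify Assumption~\ref{Assumption II} for $X^1=V$, $X^2=W$ using Theorem~\ref{thm[ell_2summability]}, Lemma~\ref{lemma[bcdm]} and Assumption~\ref{Assumption2}, then invoke Theorem~\ref{thm[quadrature]}. One small clarification on where the factor $2$ comes from: the abstract Theorem~\ref{thm[quadrature]} does \emph{not} internally halve the exponents---it takes the same hypotheses as Theorem~\ref{thm[coll-approx]} and yields the same rate formula~\eqref{[beta]1}; the halving arises because Lemma~\ref{lemma[bcdm]} with $s=2$ (i.e.\ over $\cF_2$) gives the stronger summability $(p_\bnu(\tau,\lambda)\sigma_{i;\bnu}^{-1})\in\ell^{q_i/2}$, so Assumption~\ref{Assumption II} is verified with exponents $q_i/2$ in place of $q_i$, and then \eqref{[beta]1} applied with these halved exponents produces the stated $\beta$.
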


\begin{proof}
	From Theorem \ref {thm[ell_2summability]}, Lemma \ref{lemma[bcdm]} and Assumption \ref{Assumption2} we can see that 	the assumptions of Theorem~\ref{thm[coll-approx]} hold for $X^1=V$	and $X^2 = W$ with $0 < q_1/2 \le q_2/2 < \infty$ and $q_1/2 < 2$. Hence, by applying Theorem~\ref{thm[quadrature]} we prove the theorem.
	\hfill
\end{proof}

\subsubsection{Applications to holomorphic functions}
\label{Applications to holomorphic functions}

As noticed, the proof of the weighted $\ell_2$-summability result formulated in Theorem~\ref {thm[ell_2summability]}  employs bootstrap arguments and induction on the differentiation order of derivatives with respect to the parametric variables, for details see \cite{BCDS,BCDM}. In the log-Gaussian case, this approach and technique are too complicated and difficult for extension to more general parametric PDE problems, in particular, of higher regularity. As it has been seen in the previous sections, the approach to a unified summability analysis of Wiener-Hermite PC expansions of various scales of function spaces based on  parametric holomorphy, covers a wide range of parametric PDE problems. 
In this section, we apply the results in 
Sections~\ref{Interpolation} and \ref{Integration} on linear approximations and integration 
in Bochner spaces to approximation and numerical integration of parametric holomorphic functions  
based on weighted  $\ell^2$-summabilities of the coefficient sequences of the Wiener-Hermite PC expansion.   

The following theorem on weighted $\ell_2$-summability for  
$(\bb,\xi,\delta,X)$-holomorphic functions can be derived 
from Theorem \ref{thm:bdXSum} and Lemma \ref{lemma[bcdm]}.  
\begin{theorem} \label{thm:Holom-AssumpA} 
        Let $v$ be
	$(\bb,\xi,\delta,X)$-holomorphic for some $\bb\in \ell^p(\N)$ with $0< p <1$. 
        Let  $s =1,2 $ and $\tau, \lambda \ge 0$.  
        Let  further the sequence $\bvarrho=(\varrho_j)_{j \in \NN}$ 
        be defined by 
	$$
	\varrho_j:=b_j^{p-1}\frac{\xi}{4\sqrt{r!}} \|\bb\|_{\ell^p}.
	$$
	Then, 	for any $r > \frac{2s (\tau + 1)}{q}$,
		\begin{equation} \nonumber
		\sum_{\bnu\in\cF_s} (\sigma_{\bnu} \|v_\bnu\|_{X})^2 \le M <\infty \quad  \text{and} \quad 
		\left(p_{\bnu}(\tau,\lambda)\sigma_{\bnu}^{-1}\right)_{\bnu \in \cF_s} \in \ell^{q/s}(\cF_s),
	\end{equation}		
where	
$q := \frac{p}{1-p}$, $M:= \delta^2C(\bb)$ and $(\sigma_\bnu)_{\bnu \in \cF}$
	with $\sigma_\bnu:= \beta_\bnu(r,\bvarrho)^{1/2}$.
\end{theorem}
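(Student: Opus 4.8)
The plan is to combine the two summability results already at our disposal: Theorem~\ref{thm:bdXSum}, which gives the weighted $\ell^2$-summability of the Wiener-Hermite PC expansion coefficients of a $(\bb,\xi,\delta,X)$-holomorphic function with the specific weight $\beta_\bnu(r,\bvarrho)$ and the specific choice of $\bvarrho$ in \eqref{varrho_j}, and Lemma~\ref{lemma[bcdm]}, which provides the finer $\ell^q$-summability of the weight sequence against the polynomial factor $p_\bnu(\tau,\lambda)$ over the index sets $\cF_s$ for $s=1,2$. The point is that Theorem~\ref{thm:bdXSum} is stated with a fixed auxiliary parameter $r$ chosen as in Theorem~\ref{thm:bdHolSum}, but its proof actually works for \emph{any} $r\in\N$ with $r>2(1-p)/p = 2/q$; here we must select $r$ large enough that the hypothesis $r>\frac{2s(\tau+1)}{q}$ of Lemma~\ref{lemma[bcdm]} is met as well (note $\frac{2s(\tau+1)}{q}\ge\frac{2}{q}$ since $s\ge 1$ and $\tau\ge 0$, so this is the binding constraint), and this is exactly the $r$ appearing in the statement.

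First I would fix $s\in\{1,2\}$, $\tau,\lambda\ge 0$, and $r>\frac{2s(\tau+1)}{q}$ with $q=p/(1-p)$, and set $\bvarrho=(\varrho_j)_{j\in\N}$ with $\varrho_j=b_j^{p-1}\frac{\xi}{4\sqrt{r!}\,\norm[\ell^p]{\bb}}$ and $\sigma_\bnu:=\beta_\bnu(r,\bvarrho)^{1/2}$. Applying Theorem~\ref{thm:bdXSum} (re-reading its proof to confirm it is valid for this $r$, since $r>2/q$) yields $\sum_{\bnu\in\FF}\beta_\bnu(r,\bvarrho)\norm[X]{v_\bnu}^2\le\delta^2C(\bb)=:M<\infty$, which immediately restricts to $\sum_{\bnu\in\cF_s}(\sigma_\bnu\norm[X]{v_\bnu})^2\le M$ since $\cF_s\subseteq\FF$. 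Next I would verify that the weight sequence $\brho=(\varrho_j)_{j\in\N}$ satisfies the hypothesis of Lemma~\ref{lemma[bcdm]}: because $\varrho_j^{-1}=b_j^{1-p}\cdot\frac{4\sqrt{r!}\norm[\ell^p]{\bb}}{\xi}$, one has $(\varrho_j^{-1})_{j\in\N}\in\ell^{q}(\N)$ with $q=p/(1-p)$, since $\sum_j b_j^{(1-p)q}=\sum_j b_j^p<\infty$. Then Lemma~\ref{lemma[bcdm]}, applied with this $q$, this $s$, these $\tau,\lambda$, and $r>\frac{2s(\tau+1)}{q}$, gives $\sum_{\bnu\in\cF_s}p_\bnu(\tau,\lambda)\sigma_\bnu^{-q/s}<\infty$, i.e.\ $(p_\bnu(\tau,\lambda)\sigma_\bnu^{-1})_{\bnu\in\cF_s}\in\ell^{q/s}(\cF_s)$. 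This is precisely the second assertion, and together with the bound on the weighted $\ell^2$-norm it completes the proof.

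The routine bookkeeping here is checking the exponent arithmetic in the summability of $(\varrho_j^{-1})$ and confirming that $M=\delta^2C(\bb)$ with $C(\bb)$ the constant from Theorem~\ref{thm:bdHolSum}/\ref{thm:bdXSum}. The one genuine subtlety — the ``hard part'' such as it is — is ensuring that Theorem~\ref{thm:bdXSum} may legitimately be invoked with the enlarged value of $r$ demanded by Lemma~\ref{lemma[bcdm]}: the statement of Theorem~\ref{thm:bdXSum} as displayed fixes $r$ to the choice in Theorem~\ref{thm:bdHolSum}, so I would either point out that its proof goes through verbatim for any $r>2/q$ (the constant $C(\bb)$ being $r$-independent, as the $\ell^1$-norm bound used there does not involve $r$, while the summability of $(\beta_\bnu(r,\bvarrho)^{-1/2})_{\bnu}$ via Lemma~\ref{lem:beta-summability} only requires $r>2/q$), or re-derive the needed inequality $\sum_{\bnu}\beta_\bnu(r,\bvarrho)\norm[X]{v_\bnu}^2\le\delta^2C(\bb)$ directly from Lemma~\ref{holo-lem2} and Lemma~\ref{holo-lem1} for the desired $r$. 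Everything else is a direct composition of the two cited results.
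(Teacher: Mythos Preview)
Your proposal is correct and follows exactly the route the paper indicates: the paper does not give a detailed proof but simply states that Theorem~\ref{thm:Holom-AssumpA} ``can be derived from Theorem~\ref{thm:bdXSum} and Lemma~\ref{lemma[bcdm]}'', which is precisely what you do. Your one ``genuine subtlety'' is in fact a non-issue: Theorem~\ref{thm:bdHolSum} (and hence Theorem~\ref{thm:bdXSum}) is already stated for an \emph{arbitrary} $r\in\N$, with $\bvarrho$ defined in terms of that $r$ via \eqref{varrho_j}, so no re-inspection of the proof is needed to accommodate the larger $r$ required by Lemma~\ref{lemma[bcdm]}.
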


To treat multilevel approximations and integration of parametric, holomorphic functions, 
it is appropriate to replace  Assumption \ref{Assumption II} by its modification.

\begin{assumption} \label{Assumption3}
	Assumption \ref{Assumption II} holds with item {\rm (ii)} replaced with item 

\begin{itemize}
	\item [(ii')] 	For $i=1,2$,   $v$ is
	$(\bb_i,\xi,\delta,X^i)$-holomorphic for some $\bb_i\in \ell^{p_i}(\N)$ with $0< p_1 \le p_2 <1$. 
\end{itemize}
\end{assumption}

Assumption \ref{Assumption3} is a condition for fully discrete approximation 
of $(\bb,\xi,\delta,X)$-holomorphic functions.
This is formalized in the following corollary of Theorem \ref{thm:Holom-AssumpA}. 

\begin{corollary} \label{corollary[AssumptionII]}
	Assumption \ref{Assumption3}  implies Assumption \ref{Assumption II} 
	for
	$q_i := \frac{p_i}{1-p_i}$ and $(\sigma_{i;\bnu})_{\bnu \in \cF}$, $i=1,2$, 
        where
	$$
	\sigma_{i;\bnu}:= \beta_{i;\bnu}(r,\bvarrho_i)^{1/2}, \quad \varrho_{i;j}:=b_{i;j}^{p_i-1}\frac{\xi}{4\sqrt{r!}} \|\bb_i\|_{\ell^{p_i}}.
	$$
\end{corollary}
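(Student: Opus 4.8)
The plan is to note that Assumption~\ref{Assumption3} coincides with Assumption~\ref{Assumption II} except that item {\rm (ii)} is replaced by item {\rm (ii')}; hence items {\rm (i)} (the embedding $X^2\hookrightarrow X^1$) and {\rm (iii)} (the spatial approximation property with linear operators $P_n$ and rate $\alpha$) are available verbatim, and the only thing to prove is that {\rm (ii')} implies {\rm (ii)}. First I would fix $\tau>17/6$ and $\lambda\ge 0$, and for each $i\in\{1,2\}$ set $q_i:=p_i/(1-p_i)$, choose $r\in\NN$ large enough that $r>2(\tau+1)/q_i$ for both $i$, and put $\varrho_{i;j}:=b_{i;j}^{p_i-1}\frac{\xi}{4\sqrt{r!}}\|\bb_i\|_{\ell^{p_i}}$ and $\sigma_{i;\bnu}:=\beta_{i;\bnu}(r,\bvarrho_i)^{1/2}$ with $\beta_\bnu$ as in \eqref{beta}.

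Since, by {\rm (ii')}, $v$ is $(\bb_i,\xi,\delta,X^i)$-holomorphic with $\bb_i\in\ell^{p_i}(\N)$ and $0<p_i<1$, Theorem~\ref{thm:Holom-AssumpA} applied with $X=X^i$, $\bb=\bb_i$, $p=p_i$ and $s=1$ yields immediately
\begin{equation*}
 \sum_{\bnu\in\cF}\big(\sigma_{i;\bnu}\|v_\bnu\|_{X^i}\big)^2\le \delta^2C(\bb_i)=:M_i<\infty
 \qquad\text{and}\qquad
 \big(p_\bnu(\tau,\lambda)\sigma_{i;\bnu}^{-1}\big)_{\bnu\in\cF}\in\ell^{q_i}(\cF),
\end{equation*}
which are exactly the two displayed conditions of Assumption~\ref{Assumption II}{\rm (ii)}.

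What then remains is the verification of the structural side-conditions. The inequalities $0<q_1\le q_2<\infty$ follow because $t\mapsto t/(1-t)$ is strictly increasing on $(0,1)$ and $0<p_1\le p_2<1$. For the normalization one observes that each factor in the product defining $\beta_\bnu(r,\bvarrho_i)$ is at least $1$, so $\sigma_{i;\bnu}\ge 1$; if the strict inequality $\sigma_{i;\bnu}>1$ is insisted upon one replaces $\sigma_{i;\bnu}$ by $\sqrt{2}\,\sigma_{i;\bnu}$, which is harmlessly absorbed into $M_i$ and into the $\ell^{q_i}$-bound. Finally, a direct computation gives $\beta_{\bee_j}(r,\bvarrho_i)=1+\varrho_{i;j}^2$, so that $\sigma_{i;\bee_j}=(1+\varrho_{i;j}^2)^{1/2}$; because $p_i-1<0$, the monotonicity $\sigma_{i;\bee_{j'}}\le\sigma_{i;\bee_j}$ for $j'<j$ amounts to $(b_{i;j})_{j}$ being non-increasing, which may be assumed after a harmless reordering of the representation system.

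The main point to be careful about, if one wants the \emph{full} Assumption~\ref{Assumption II}, is the constraint $q_1<2$, which forces $p_1<2/3$; so strictly the implication needs this restriction on $p_1$ (the weaker $q_1<4$, i.e.\ $p_1<4/5$, being enough for the quadrature applications, via the $s=2$ refinement of Theorem~\ref{thm:Holom-AssumpA} together with Lemma~\ref{lemma[bcdm]}). Apart from this bookkeeping there is essentially no obstacle: the analytic content is already packaged in Theorem~\ref{thm:Holom-AssumpA}, and the only genuine check is that the $r$-threshold there, which grows with $\tau$, is compatible with Assumption~\ref{Assumption II}{\rm (ii)} demanding the summability for \emph{every} $\tau>17/6$ — handled by letting $r=r(\tau)$ depend on $\tau$, exactly as in that theorem.
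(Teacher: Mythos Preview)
Your proposal is correct and follows exactly the approach the paper intends: the corollary is stated immediately after Theorem~\ref{thm:Holom-AssumpA} precisely because the analytic content is that theorem applied with $s=1$, $X=X^i$, $\bb=\bb_i$, $p=p_i$, and the paper gives no further proof. Your additional bookkeeping (monotonicity of $q_i$, the $\sigma_{i;\bnu}\ge 1$ issue, the ordering $\sigma_{i;\bee_{j'}}\le\sigma_{i;\bee_j}$ via reordering, and the $p_1<2/3$ caveat for $q_1<2$) fills in details the paper leaves implicit, and is handled correctly.
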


We formulate results on multilevel quadrature of parametric holomorphic functions 
as consequences of Corollary \ref{corollary[AssumptionII]} and 
Theorems \ref{thm[coll-approx]} and \ref{thm[quadrature]}.

\begin{theorem}\label{thm[L^2-approx]X}
	Let Assumption \ref{Assumption3} hold for the Hilbert spaces $X^1$ and $X^2$ with $p_1<2/3$, and
	$v \in L^2(U,X^2; \gamma)$. 
	For $\xi >0$, 
        let $G(\xi)$ be the set defined by \eqref{G(xi)} for $\sigma_{i;\bnu}$, $i=1,2$ 
        as given in Corollary \ref{corollary[AssumptionII]}. 
	Then for every $n \in \NN$ there exists a number $\xi_n$  such that  
		$\dim \Vv(G(\xi_n)) \le  n$ and 
		\begin{equation} \label{u-I_Gu, p le 2-X}
			\|v -\Ii_{G(\xi_n)}v\|_{L^2(U,X^1; \gamma)} \leq Cn^{-R},
		\end{equation}
		where $R$ is given by the formula \eqref{R-I} and 
	the constant $C$ in\eqref{u-I_Gu, p le 2-X} is independent of $v$ and $n$.
\end{theorem}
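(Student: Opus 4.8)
\textbf{Proof plan for Theorem \ref{thm[L^2-approx]X}.}

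The plan is to deduce this from Theorem \ref{thm[coll-approx]} by verifying that Assumption \ref{Assumption3} on a $(\bb_i,\xi,\delta,X^i)$-holomorphic function $v$ implies the hypotheses of Theorem \ref{thm[coll-approx]}, i.e.\ Assumption \ref{Assumption II}, with the concrete weight sequences $(\sigma_{i;\bnu})_{\bnu\in\cF}$ prescribed in Corollary \ref{corollary[AssumptionII]}. First I would invoke Corollary \ref{corollary[AssumptionII]} directly: it asserts precisely that Assumption \ref{Assumption3} $\Rightarrow$ Assumption \ref{Assumption II} with $q_i=p_i/(1-p_i)$ and $\sigma_{i;\bnu}=\beta_{i;\bnu}(r,\bvarrho_i)^{1/2}$ for the stated choice of $\bvarrho_i$. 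Since by hypothesis $0<p_1\le p_2<1$, we have $0<q_1\le q_2<\infty$; moreover $p_1<2/3$ forces $q_1=p_1/(1-p_1)<2$, which is exactly the summability restriction $q_1<2$ needed in Theorem \ref{thm[coll-approx]}. The monotonicity property $\sigma_{i;\bee_{j'}}\le\sigma_{i;\bee_j}$ for $j'<j$ follows from the monotone rearrangement used in defining $\bvarrho_i$ (so that $j\mapsto\varrho_{i,j}^{-1}=b_{i,j}^{1-p_i}\cdot\text{const}$ can be taken nondecreasing), and the weighted $\ell^{q_i}$-summability $(p_\bnu(\tau,\lambda)\sigma_{i;\bnu}^{-1})_{\bnu\in\cF}\in\ell^{q_i}(\cF)$ for all $\tau>17/6$, $\lambda\ge 0$ is exactly the content of Theorem \ref{thm:Holom-AssumpA} (which in turn rests on Theorem \ref{thm:bdXSum} and Lemma \ref{lemma[bcdm]}); the bounded-sum condition $\sum_\bnu(\sigma_{i;\bnu}\|v_\bnu\|_{X^i})^2\le M_i<\infty$ is the relation \eqref{eq:general} of Theorem \ref{thm:bdXSum} applied in $X^i$. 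Finally, item (iii) of Assumption \ref{Assumption II} (the spatial approximation property with rate $\alpha>0$) is simply retained from Assumption \ref{Assumption3}, which contains Assumption \ref{Assumption II}(iii) verbatim.

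Having checked all hypotheses of Theorem \ref{thm[coll-approx]}, the conclusion follows immediately: for each $n\in\NN$ there exists $\xi_n$ with $\dim\Vv(G(\xi_n))\le n$ and
\begin{equation*}
\|v-\Ii_{G(\xi_n)}v\|_{L^2(U,X^1;\gamma)}\le Cn^{-\min(\alpha,\beta)},
\end{equation*}
where $\beta=(1/q_1-1/2)\alpha/(\alpha+\delta)$ with $\delta=1/q_1-1/q_2$. It then remains to rewrite this rate in terms of $p_1,p_2$: substituting $q_i=p_i/(1-p_i)$ gives $1/q_i=(1-p_i)/p_i=1/p_i-1$, so $1/q_1-1/2=1/p_1-3/2$ and $\delta=1/q_1-1/q_2=1/p_1-1/p_2$, whence
\begin{equation*}
\beta=\frac{(1/p_1-3/2)\,\alpha}{\alpha+1/p_1-1/p_2},
\end{equation*}
which is exactly the rate $R$ in formula \eqref{R-I}. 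The constant $C$ is independent of $v$ and $n$ because the constant in Theorem \ref{thm[coll-approx]} has that property and the passage through Corollary \ref{corollary[AssumptionII]} only involves constants depending on $\bb_1,\bb_2,\xi,r$.

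The only genuinely nontrivial point, and the one I would present most carefully, is the bookkeeping in Corollary \ref{corollary[AssumptionII]}: one must check that the single weight exponent $r$ can be chosen large enough to serve simultaneously in both weight families $\sigma_{1;\bnu}$ and $\sigma_{2;\bnu}$ and to satisfy the threshold $r>2s(\tau+1)/q$ of Theorem \ref{thm:Holom-AssumpA} for $s=1,2$ and all relevant $\tau>17/6$ — this is where $p_1<2/3$ (equivalently $q_1<2$) is used, to guarantee that such a finite $r$ exists and that the $\ell^{q_1}$-summability is not vacuous. Everything else is a direct citation; no new estimates are required beyond those already established in Sections \ref{sec:SumHolSol} and \ref{Multilevel approximation and quadrature in Bochner spaces}.
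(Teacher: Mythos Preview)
Your proposal is correct and follows exactly the route the paper takes: the paper does not give an explicit proof of this theorem but simply states that it is a consequence of Corollary~\ref{corollary[AssumptionII]} and Theorem~\ref{thm[coll-approx]}, which is precisely your argument (with the algebraic check that $\min(\alpha,\beta)$ in the $q_i$-variables equals $R$ in~\eqref{R-I}). One small remark: the condition $p_1<2/3$ (equivalently $q_1<2$) is not primarily needed for the existence of a suitable $r$, but is the standing hypothesis $q_1<2$ of Theorem~\ref{thm[coll-approx]}, which ensures $1/q_1-1/2>0$ and hence a positive rate $\beta$; a sufficiently large $r$ exists for any $q_i>0$ and any fixed $\tau$.
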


\begin{theorem}\label{thm[quadrature]X}
	Let Assumption \ref{Assumption3} hold for the Hilbert spaces $X^1$ and $X^2$ with $p_1 < 4/5$, 
	and $v \in L^2(U,X^2;\gamma)$. 
For $\xi >0$, let $G_{\rev}(\xi)$ be the set defined by \eqref{G_rev(xi)} 
for $\sigma_{i;\bnu}$, $i=1,2$, as given in Corollary \ref{corollary[AssumptionII]}.
Then we have the following.
	\begin{itemize}
		\item[{\rm (i)}]
		For each $n \in \NN$ there exists a number $\xi_n$  such that  $\dim\Vv(G_{\rev}(\xi_n))\le n$ and 
		\begin{equation} \label{u-Q_Gu-quadratureX}
			\left\|\int_{U}v(\by)\, \rd \gamma(\by ) - \Qq_{G_{\rev}(\xi_n)}v\right\|_{X^1}\leq Cn^{-R}.
		\end{equation}
		\item[{\rm (ii)}] 
		Let  $\phi \in (X^1)'$ be a bounded linear functional on $X^1$. Then
		for each $n \in \NN$ there exists a number $\xi_n$  such that  $\dim\Vv(G_{\rev}(\xi_n))\le n$ and 
		\begin{equation} \label{u-Q_Gu_phi-X}
			\left|\int_{U} \langle \phi,  v (\by) \rangle\, \rd \gamma(\by ) - \Qq_{G_{\rev}(\xi_n)} \langle \phi,  v \rangle\right| \leq C\|\phi\|_{(X^1)'} n^{-R},
		\end{equation}
	\end{itemize}
	where the convergence rate $R$ is given by the formula \eqref{R-Q} and 
the constants $C$ in \eqref{u-Q_Gu-quadratureX} and \eqref{u-Q_Gu_phi-X} are independent of $v$ and $n$.
\end{theorem}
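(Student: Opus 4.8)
The plan is to reduce Theorem~\ref{thm[quadrature]X} to the abstract quadrature result Theorem~\ref{thm[quadrature]} via Corollary~\ref{corollary[AssumptionII]}. First I would observe that the hypotheses of Theorem~\ref{thm[quadrature]} are exactly Assumption~\ref{Assumption II} together with $v\in L^2_\Ee(U,X^2;\gamma)$; so the task is to verify that Assumption~\ref{Assumption3} (for $X^1$, $X^2$ with $p_1<4/5$, $p_1\le p_2<1$, and $v$ being $(\bb_i,\xi,\delta,X^i)$-holomorphic) implies Assumption~\ref{Assumption II} with the quantitative parameters $q_i=p_i/(1-p_i)$ and the weight families $\sigma_{i;\bnu}=\beta_{i;\bnu}(r,\bvarrho_i)^{1/2}$ from Corollary~\ref{corollary[AssumptionII]}. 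Item (i) of Assumption~\ref{Assumption II} ($X^2\hookrightarrow X^1$ with $\|\cdot\|_{X^1}\le C\|\cdot\|_{X^2}$) and item (iii) (existence of the spaces $V_n$, projections $P_n$ and the rate $\alpha$) are inherited directly, since Assumption~\ref{Assumption3} only modifies item (ii). For item (ii), I would invoke Theorem~\ref{thm:Holom-AssumpA} for $i=1$ and $i=2$ separately: applied to $v$ as a $(\bb_i,\xi,\delta,X^i)$-holomorphic function with $\bb_i\in\ell^{p_i}(\N)$, it produces, for any $\tau>17/6$ (in particular for the $\tau$ required in Assumption~\ref{Assumption II}) and any $\lambda\ge0$, the weighted summability $\sum_{\bnu\in\cF_s}(\sigma_{i;\bnu}\|v_\bnu\|_{X^i})^2\le M_i<\infty$ together with $(p_\bnu(\tau,\lambda)\sigma_{i;\bnu}^{-1})_{\bnu\in\cF_s}\in\ell^{q_i/s}(\cF_s)$, where $\sigma_{i;\bnu}=\beta_\bnu(r,\bvarrho_i)^{1/2}$ and $\bvarrho_i$ is the sequence specified in Corollary~\ref{corollary[AssumptionII]}; for the quadrature case one uses $s=2$, i.e.\ $\cF_s=\cF_{\rev}$ (after the trivial adjustment $\cF_2\subseteq\cF_{\rev}$ up to the even-index restriction), but the argument is uniform. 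One also checks the monotonicity $\sigma_{i;\bee_{j'}}\le\sigma_{i;\bee_j}$ for $j'<j$, which holds because $\varrho_{i,j}=b_{i,j}^{p_i-1}\cdot\mathrm{const}$ is (up to rearrangement) increasing in $j$, so that $\beta_{\bee_j}(r,\bvarrho_i)=1+r\varrho_{i,j}^2+\dots$ is increasing; this is the same bookkeeping already used in the proof of Corollary~\ref{cor:global}.

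With Assumption~\ref{Assumption II} in hand, the remaining point is that $v\in L^2_\Ee(U,X^2;\gamma)$, i.e.\ $v$ has a representative continuous on the set $\Ee$; this is where the abstract $(\bb,\xi,\delta,X)$-holomorphy concept of Definition~\ref{def:bdXHol} does its work. By item~\ref{item:vN} of that definition, $v=\lim_{N\to\infty}\tilde u_N$ in $L^2(U,X^2;\gamma)$, and by Lemma~\ref{lemma:uN} each $\tilde u_N$ equals $\sum_{\bnu\in\cF_1^N}\tilde u_{N,\bnu}H_\bnu$ with pointwise absolute convergence in $X^2$ on all of $U$; together with the tail bound $\sum_{j>N}b_j\to0$ controlling $\|v-\tilde u_N\|_{L^2}$, one obtains that $v$ has a representative which is the pointwise-a.s.\ limit of continuous finite-dimensional functions and which is continuous on the full-measure set $\Ee$ containing all finitely-supported $\by$. (This identification, that $(\bb,\xi,\delta,X)$-holomorphic functions lie in $L^2_\Ee$, is essentially the content of Remark~\ref{rmk:defu} and Lemma~\ref{lemma:uN}; I would spell out the short argument rather than cite it as a black box, since the $\Ee$-continuity is the precise input required by the definition of $\Qq_{G_{\rev}(\xi)}$.) Then Theorem~\ref{thm[quadrature]} applies verbatim: for each $n\in\N$ there is $\xi_n$ with $\dim\Vv(G_{\rev}(\xi_n))\le n$ and the estimates \eqref{u-Q_Gu-quadratureX} and \eqref{u-Q_Gu_phi-X} hold with the rate $\min(\alpha,\beta)$, where $\beta=(1/\tilde q_1-1/2)\alpha/(\alpha+\tilde\delta)$, $\tilde q_i=q_i/2=p_i/(2(1-p_i))$, $\tilde\delta=1/\tilde q_1-1/\tilde q_2$. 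Finally I would simplify this expression: substituting $\tilde q_i=p_i/(2(1-p_i))$ gives $1/\tilde q_1=2(1-p_1)/p_1=2/p_1-2$, so $1/\tilde q_1-1/2=2/p_1-5/2$ and $\tilde\delta=2/p_1-2/p_2$, whence $\beta=\alpha(2/p_1-5/2)/(\alpha+2/p_1-2/p_2)$, which is precisely \eqref{R-Q}; the condition $q_1<4$ in Theorem~\ref{thm[quadrature]pde}'s analogue becomes $\tilde q_1<2$, i.e.\ $p_1<4/5$, matching the hypothesis. The interpolation statement of Theorem~\ref{thm[L^2-approx]X} is obtained the same way through Theorem~\ref{thm[coll-approx]}, with $s=1$, $\tilde q_i=q_i$, yielding the condition $p_1<2/3$ and the rate \eqref{R-I}.

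The step I expect to be the main obstacle is the careful verification that the arithmetic of the weighted-summability exponents lines up correctly between the three layers: Theorem~\ref{thm:Holom-AssumpA} produces $\ell^{q/s}(\cF_s)$-summability of $(p_\bnu(\tau,\lambda)\sigma_\bnu^{-1})$ with $q=p/(1-p)$, while Assumption~\ref{Assumption II}(ii) and Theorem~\ref{thm[quadrature]} are phrased in terms of an exponent "$q_i$" directly, so one must be scrupulous about whether the "$q_i$" appearing in the abstract theorem is the holomorphy exponent $p_i/(1-p_i)$ or its half $p_i/(2(1-p_i))$ — in Theorem~\ref{thm[quadrature]} the relevant summability is $\ell^{q_i}$ (not $\ell^{q_i/s}$), because the operator $\Qq_G$ built on $\cF_{\rev}$ already incorporates the factor-of-two gain from Gaussian symmetry, so the role of $q_i$ there must be matched to $\tilde q_i=q_i/2$ when feeding in the holomorphy-based weights. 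Getting this matching precisely right (and correctly tracking that Corollary~\ref{corollary[AssumptionII]} asserts Assumption~\ref{Assumption II} with $q_i=p_i/(1-p_i)$, so that the halving happens inside Theorem~\ref{thm[quadrature]}'s own proof via the $\cF_{\rev}$ restriction, exactly as noted in the proof of Theorem~\ref{thm[quadrature]pde}) is the delicate part; the rest is a routine chain of citations plus the elementary algebraic simplification of $\beta$ into the stated forms \eqref{R-I} and \eqref{R-Q}.
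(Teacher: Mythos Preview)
Your proposal is correct and follows exactly the route the paper takes. The paper does not give a separate written proof for Theorem~\ref{thm[quadrature]X}; it simply announces both Theorems~\ref{thm[L^2-approx]X} and~\ref{thm[quadrature]X} as ``consequences of Corollary~\ref{corollary[AssumptionII]} and Theorems~\ref{thm[coll-approx]} and~\ref{thm[quadrature]}'', and you have correctly unpacked this, including the key exponent halving $\tilde q_i=q_i/2$ (this is precisely what the paper does explicitly in the proof of the analogous Theorem~\ref{thm[quadrature]pde}). One small slip: you write $\cF_2\subseteq\cF_{\rev}$, but the inclusion goes the other way, $\cF_{\rev}\subseteq\cF_2$; this does not affect your argument.
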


\begin{remark} {\rm\label{rmk:comparison}
	{\rm
		We comment on the relation of the results of Theorems \ref{thm:mlint} and \ref{thm:mlquad}  
        to the results of \cite{dD21} which are presented in Theorems \ref{thm[L^2-approx]pde} and \ref{thm[quadrature]pde}, 
        on multilevel approximation of solutions to parametric divergence-form elliptic PDEs with log-Gaussian inputs.  
	
	Specifically, in \cite{dD21}, by combining spatial and parametric
	approximability in the spatial domain and weighted
	$\ell^2$-summability of the $V:=H^1_0(\domain)$
	and $W$ norms of Wiener-Hermite PC
	expansion coefficients obtained in \cite{BCDM,BCDS}, the author
	constructed linear non-adaptive methods of fully discrete
	approximation by truncated Wiener-Hermite PC expansion and
	polynomial interpolation approximation as well as fully discrete
	weighted quadrature for parametric and stochastic elliptic PDEs with
	log-Gaussian inputs, and proved the convergence rates of approximation
	by them.  
        The results in \cite{dD21} are based on Assumption \ref{Assumption2} 
        that requires the \emph{existence of a sequence 
        $(P_n)_{n \in \NN_0}$ of linear operators independent of $\by$}, 
        from $H^1_0(\D)$ into $n$-dimensional subspaces $V_n \subset H^1_0(\D)$
        such that
	$$
	\|P_n(v)\|_{H^1_0} \leq C_1\|v\|_{H^1_0} \ \ \text{and} \ \ 
	\|v-P_n(v)\|_{H^1_0} \leq C_2n^{-\alpha} \|v\|_{W}
	$$ 
	for all $n \in \NN_0$ and for all $v \in W$, 
        where the constants $C_1, C_2$ are independent of $n$.
	\emph{The assumption of $P_n$ being independent of $\by$ is however
	typically not satisfied if $P_n(u(\by)) = u^n(\by)$ is a numerical
	approximation to $u(\by)$} 
        (such as, e.g., a Finite-Element or a Finite-Difference discretization).
	
	In contrast, the present approximation rate analysis is based
	on quantified, parametric holomorphy of the discrete approximations
	$u^l$ to $u$ as in Assumption \ref{ass:ml}.  For example, assume
	that $u:U\to H_0^1(\D)$ is the solution of the parametric PDE
	$$
	-\div(a(\by)\nabla u(\by))=f
	$$ 
	for some $f\in L^2(\D)$ and a
	parametric diffusion coefficient $a(\by)\in L^\infty(\D)$ such that
	$$
	\underset{\bx\in\D}{\essinf}\, a(\by,\bx)>0 \ \ \ \forall \by\in U.
	$$
	Then
	$u^\lev:U\to H_0^1(\D)$ could be a numerical approximation to $u$,
	such as the FEM solution: for every $\lev\in\N$ there is a finite
	dimensional discretization space $X_\lev\subseteq H_0^1(\D)$, and
	$$
	\int_{\D}\nabla u^\lev(\by)^\top a(\by)\nabla v\dd \bx =\int_{\D}f
	v\dd \bx
	$$ 
	for every $v\in X_\lev$ and for every $\by\in U$. Hence
	$u^\lev(\by)$ is the orthogonal projection of $u(\by)$ onto $X_\lev$
	w.r.t.\ the inner product
	$$
	\langle v,w \rangle_{a(\by)}:= \int_\D \nabla v^\top a(\by)\nabla
	w\dd \bx
	$$ 
	on $H_0^1(\D)$. We may write this as
	$u^\lev(\by)=P_\lev(\by) u(\by)$, for a $\by$-dependent projector
	$$
	P_\lev(\by):H_0^1(\D)\to X_\lev.
	$$
	 This situation is covered by
	Assumption \ref{ass:ml}.
	
	The preceding comments can be extended to the results 
        on multilevel approximation of holomorphic functions in 
        Theorems \ref{thm:mlint} and \ref{thm:mlquad} 
	to the results in Theorems \ref{thm[L^2-approx]X} and \ref{thm[quadrature]X}. 
        On the other hand, as noticed above, the convergence rates in Theorems \ref{thm[L^2-approx]X} 
        and \ref{thm[quadrature]X} are slightly better 
        than those obtained in Theorems \ref{thm:mlint} and \ref{thm:mlquad}. 
}
} \end{remark} 

\newpage
\
\newpage
\section{Conclusions}
\label{sec:Concl}
We established holomorphy of parameter-to-solution maps
$$\data \ni a \mapsto u = \Uu(a) \in X$$ 
for linear, elliptic, parabolic, and other PDEs in various scales of
function spaces $\data$ and $X$, including in particular standard
and corner-weighted Sobolev spaces.  
Our discussion focused on non-compact parameter domains which arise
from uncertain inputs from function spaces expressed in a suitable
basis with Gaussian distributed coefficients. 
We introduced and used a form of quantified, parametric holomorphy 
in products of strips to show that this implies summability results of coefficients
of the Wiener-Hermite PC expansion
of such infinite parametric functions.
Specifically,
we proved weighted $\ell^2$-summability and
$\ell^p$-summability results for 
Wiener-Hermite PC expansions
of certain parametric, deterministic solution families
$\{ u(\by): \by\in U \} \subset X$, for a given ``log-affine''
parametrization \eqref{eq:CoeffAffin} of admissible random input data
$a\in \data$.

We introduced and analyzed constructive, deterministic, sparse-grid
(``stochastic collocation'') algorithms based on univariate
Gauss-Hermite points, to efficiently sample the parametric,
deterministic solutions in the possibly infinite-dimensional parameter
domain $U = \RRi$.  
The sparsity of the coefficients of Wiener-Hermite  PC expansion 
was
shown to entail corresponding convergence rates of the presently
developed sparse-grid sampling schemes.  In combination with suitable
Finite Element discretizations in the physical, space(-time) domain
(which include proper mesh-refinements to account for singularities in
the physical domain) we proved convergence rates for abstract,
multilevel algorithms which employ different combinations of
sparse-grid interpolants in the parametric domain with space(-time)
discretizations at different levels of accuracy in the physical domain.

The presently developed, holomorphic setting was also shown
to apply to the corresponding Bayesian inverse problems subject to
PDE constraints: here, the density of the Bayesian posterior with
respect to a Gaussian random field prior was shown to generically
inherit quantified holomorphy from the parametric forward problem,
thereby facilitating the use of the developed sparse-grid collocation
and integration algorithms also for the efficient deterministic
computation of Bayesian estimates of PDEs with uncertain inputs,
subject to noisy observation data.

Our 
approximation rate bounds are free from the curse-of-dimensionality
and only limited by the PC coefficient summability.  They will
therefore also be relevant for convergence rate analyses of other
approximation schemes, such as Gaussian process emulators or neural
networks (see, e.g., \cite{StTeckGPAppr2018,
  dung2021deep,dung2021collocation,SZ21_2946} and references there).

\bigskip
\noindent {\bf Acknowledgments.}  The work of Dinh D\~ung and Van Kien
Nguyen is funded by Vietnam National Foundation for Science and
Technology Development (NAFOSTED) under Grant No. 102.01-2020.03. A
part of this work was done when Dinh D\~ung was visiting the
Forschungsinstitut f\"ur Mathematik (FIM) of the ETH Z\"urich invited
by Christoph Schwab, and when Dinh D\~ung and Van Kien Nguyen were
at the Vietnam Institute for Advanced Study in Mathematics
(VIASM). They would like to thank the FIM and VIASM for providing a
fruitful research environment and working condition.  Dinh D\~ung
thanks Christoph Schwab for invitation to visit the FIM and for his
hospitality.

\newpage
\
\newpage

\bibliographystyle{amsplain} 
\bibliography{pdes_log.bib}
\newpage

\section*{List of symbols}
\renewcommand{\arraystretch}{1.25}
\begin{tabular}{>{$}l<{$} >{\hspace{2mm}}l}
    C^s(\domain) & Space of $s$-H\"older continuous functions on $\domain$ \\
	\Delta & Laplace operator \\  
    \div & Divergence operator \\
  \domain & Domain in $\R^d$ \\
  \partial \domain & Boundary of the domain $\domain$ \\
  \be_j & The multiindex $(\delta_{ij})_{j\in\N}\in\N_0^\infty$\\
    \gamma & Gaussian measure \\
  \gamma_d & 
             Gaussian measure on $\R^d$ \\
    \nabla & Gradient operator \\  
  H(\gamma) & Cameron-Martin space  of the Gaussian measure $\gamma$ \\
    H_k & $k$th normalized probabilistic Hermite polynomial \\  
    H^s(\domain) & $W^{s,2}(\domain)$ \\  
  H^1_0(\domain) & Space of functions $u\in W^{1,2}(\domain)$ such that $u|_{\partial\domain}=0$\\
  H^s_0(\D) & $H^s(\D)\cap H^1_0(\domain)$ \\
  H^{-1}(\D) & Dual space of $H^1_0(\domain)$ \\
  \VI_{\Lambda} & Smolyak interpolation operator\\
  \VIml_\blev & Multilevel Smolyak interpolation operator\\
    \Im(z) & Imaginary part of the complex number $z$ \\  
  \Kk^s_\varkappa(\domain) & Space of functions $u: \domain \to \CC$ such that $r_\domain^{|\balpha|-\varkappa}D^\balpha u\in L^2(\domain)$ for all $|\balpha|\leq s$\\
  \lambda_d & Lebesgue measure on $\R^d$ \\
  \Lambda & Set of multiindices\\  
    \hat{\mu} & Fourier transform of the measure $\mu$ \\  
    L^p(\Omega) & Space of  Lebesgue measurable, $p$-integrable functions on $\Omega$ \\
    L^p(\Omega,\mu) & Space of $\mu$-measurable, $p$-integrable functions on $\Omega$ \\
  L^p(\Omega,X;\mu) & Space of functions $u:\Omega\to X$ such that $\norm[X]{u}\in L^p(\Omega,\mu)$\\
    L^\infty(\Omega) & Space of  Lebesgue measurable, essentially bounded functions on $\Omega$\\
  \ell^p(I) & Space of sequences $(y_j)_{j \in I}$ such that $(\sum_{j \in I}|y_j|^p)^{1/p} < \infty$\\
  \|f\|_X & Norm of $f$ in the space $X$ \\
  \VQ_{\Lambda} & Smolyak quadrature operator\\
  \VQml_\blev & Multilevel Smolyak quadrature operator\\  
  r_\domain & Smooth function $\domain \to \RR_+$ which equals
 $| \bx - \bc |$ in the vicinity of each corner of $D$\\
    \Re(z) & Real part of the complex number $z$ \\
    U & $\R^\infty$ \\
    W^{s,q}(\domain) & Sobolev spaces of integer order $s$ and integrability $q$ on $\domain$ \\
  \Ww^s_\infty(\domain) & Space of functions $u: \domain\to \CC$ such that
                          $r_\domain^{|\balpha|}D^\balpha u\in L^\infty(\domain)$ for all $|\balpha|\leq s$
\end{tabular}

\newpage
\section*{List of abbreviations}
\renewcommand{\arraystretch}{1.25}
\begin{tabular}{>{$}l<{$} >{\hspace{2mm}}l}
	BIP & Bayesian inverse problems  \\
	BVP & boundary value problem \\
	FE & finite element  \\  
	FEM & finite element method \\	
	GM & Gaussian measure \\
	GRF &  Gaussian random fields \\	
	IBVP & initial boundary value problem\\
	KL & Karhunen-Lo\`eve\\ 
	MC & Monte-Carlo \\
	ONB & orthonormal basis  \\  
	PC & polynomial chaos \\
	PDE & partial differential equations \\	
	QMC & quasi-Monte Carlo \\
	RKHS &reproducing kernel Hilbert space \\  
	RV & random variable \\
	UQ & uncertainty quantification \\  
\end{tabular}
\newpage
\printindex
\end{document}